\renewcommand*{\backref}[1]{}
\renewcommand*{\backrefalt}[4]{
  \ifcase #1 
  [No citations.]
  \or [#2]
  \else [#2]
  \fi }
\let\originalleft\left
\let\originalright\right
\renewcommand{\left}{\mathopen{}\mathclose\bgroup\originalleft}
\renewcommand{\right}{\aftergroup\egroup\originalright}
\newcommand{\calE}{\mathcal{E}}
\newcommand{\calK}{\mathcal{K}}
\newcommand{\calL}{\mathcal{L}}
\newcommand{\calO}{\mathcal{O}}
\newcommand{\calQ}{\mathcal{Q}}
\newcommand{\EE}{\mathbb{E}}
\newcommand{\HH}{\mathbb{H}}
\newcommand{\NN}{\mathbb{N}}
\newcommand{\RR}{\mathbb{R}}
\newcommand{\ZZ}{\mathbb{Z}}
\newcommand{\grad}{\operatorname{grad}}
\newcommand{\from}{\colon} 
\newcommand{\homeo}{\mathrel{\cong}} 
\newcommand{\cover}[1]{{\widetilde{#1}}}
\newcommand{\bdy}{\partial} 
\newcommand{\Isom}{\operatorname{Isom}} 
\newcommand{\SL}{\operatorname{SL}} 
\newcommand{\fakeenv}{} 
 \renewcommand{\fakeenv}{#2} 
 \theoremstyle{plain} 
 \newtheorem*{\fakeenv}{#1~\ref{#2}} 
\newenvironment{restated}[2]  
{ 
 \renewcommand{\fakeenv}{#2} 
 \theoremstyle{definition} 
 \newtheorem*{\fakeenv}{#1~\ref{#2}} 
 \begin{\fakeenv}
}
{
 \end{\fakeenv}
}
\numberwithin{figure}{section}
\numberwithin{equation}{section}
\newcommand{\sinj}{\operatorname{sn}}
\newcommand{\cosj}{\operatorname{cn}}
\newcommand{\ampj}{\operatorname{dn}}
\newcommand{\Riem}{\mathfrak{R}}
\newcommand{\arccosh}{\operatorname{arccosh}}
\newcommand{\arcsinh}{\operatorname{arcsinh}}
\newcommand{\arctanh}{\operatorname{arctanh}}
\newcommand{\sign}{\operatorname{sign}}
\newcommand{\SLR}{{\rm \widetilde{SL}(2,\RR)}}
\newcommand{\bv}{\mathbf{v}}
\newcommand{\be}{\mathbf{e}}
\newcommand{\dist}{{\rm dist}}
\newcommand{\sdf}{{\rm sdf}}
\renewcommand{\leq}{\leqslant}
\renewcommand{\geq}{\geqslant}
\title{Ray-marching Thurston geometries}
\author[Coulon, Matsumoto, Segerman and Trettel]{R\'emi Coulon, Elisabetta A. Matsumoto,\\ Henry Segerman, and Steve J. Trettel} 
\date{\today}
\begin{document}

\begin{abstract}
We describe algorithms that produce accurate real-time interactive in-space views of the eight Thurston geometries using ray-marching. We give a theoretical framework for our algorithms, independent of the geometry involved. In addition to scenes within a geometry $X$, we also consider scenes within quotient manifolds and orbifolds $X / \Gamma$. We adapt the Phong lighting model to non-euclidean geometries. The most difficult part of this is the calculation of light intensity, which relates to the area density of geodesic spheres. We also give extensive practical details for each geometry.
\end{abstract}

\maketitle

\tableofcontents

\section{Introduction}

In this paper we describe a project we initiated at the \emph{Illustrating Mathematics} semester program at ICERM in Fall 2019. The goal of this project is to implement real-time simulations of the eight Thurston geometries in the \emph{in-space view} -- that is, viewed from the perspective of an observer inside of each space, where light rays travel along geodesics. See \reffig{CompareGeometries}. We have collected many of our simulations and videos of them at the website \url{http://www.3-dimensional.space}.
 
These simulations may be experienced with an ordinary keyboard and screen interface, and in some cases in virtual reality. We expect that these simulations will be useful in outreach, teaching, and research. Seeing and moving within a space gives a visceral experience of the geometry, often engendering understanding that is hard or impossible to obtain from ``book learning'' alone. Recent research on embodied understanding~\cite{Lindgren:2016p174,JohnsonGlenberg:2017p24,Gregorcic:2017p020104} addresses these advantages.

The code for our simulations is available online~\cite{github}. We hope that other researchers will be able to use and extend our work to visualize objects of interest in the Thurston geometries and beyond.
In two previous expository papers, we described some surprising features of the Nil~\cite{NEVR3} and Sol~\cite{NEVR4} geometries using images from our simulations. 

\begin{figure}[htbp]
\centering
\subfloat[$\EE^3$]{%
\includegraphics[width=0.49\textwidth]{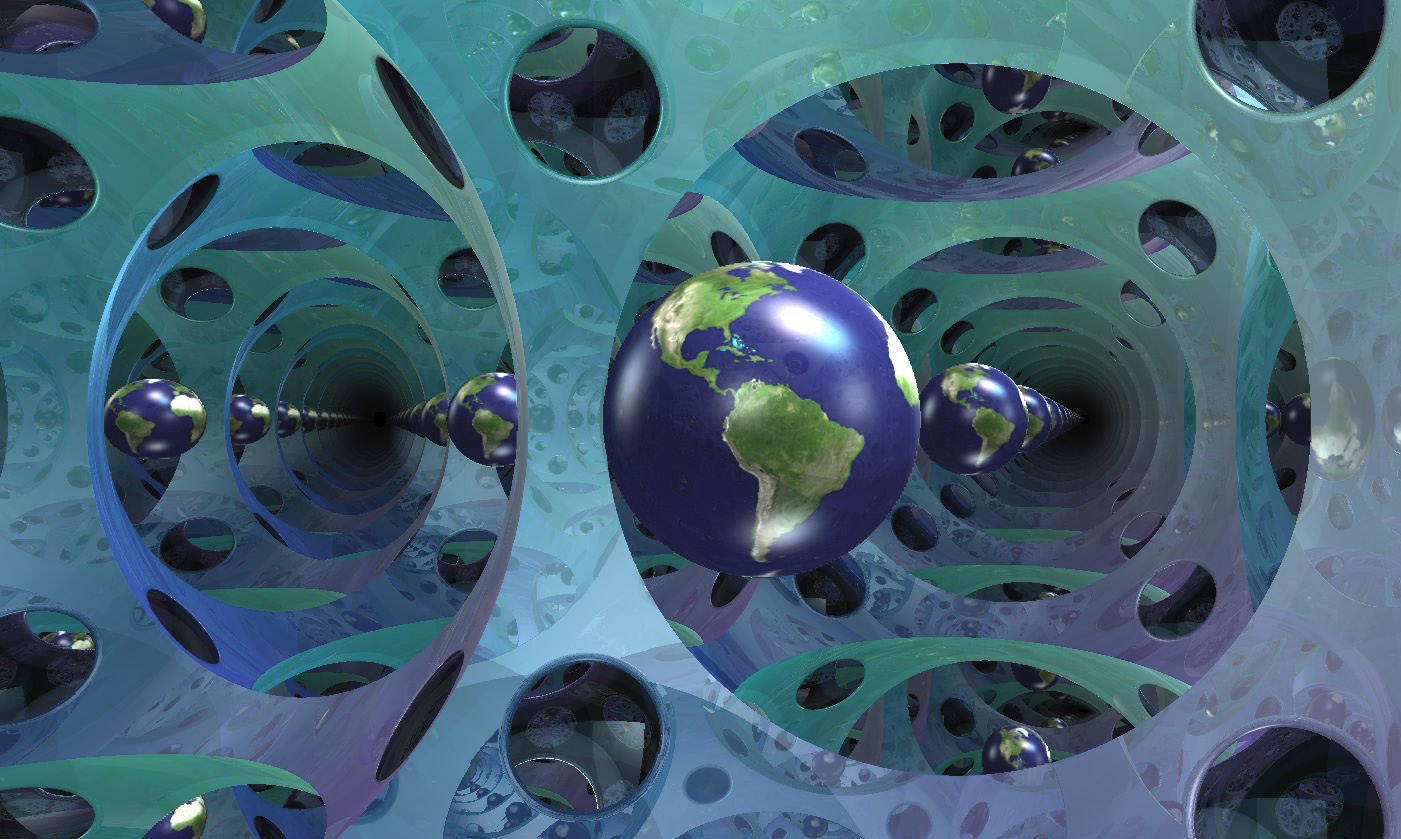}%
\label{Fig:CompareE3}%
}%
\thinspace
\subfloat[$S^3$]{%
\includegraphics[width=0.49\textwidth]{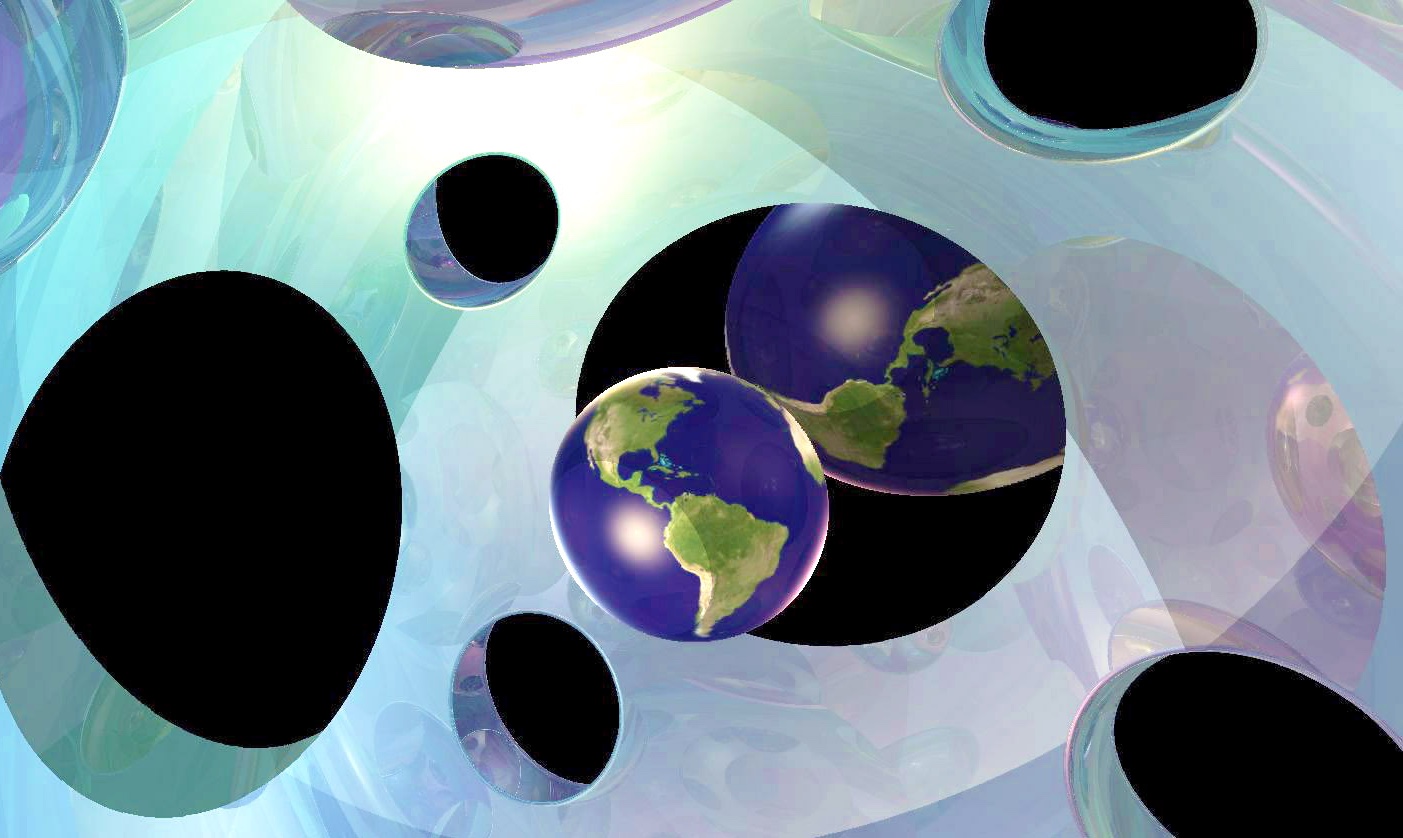}%
\label{Fig:CompareS3}%
}

\subfloat[$\HH^3$]{%
\includegraphics[width=0.49\textwidth]{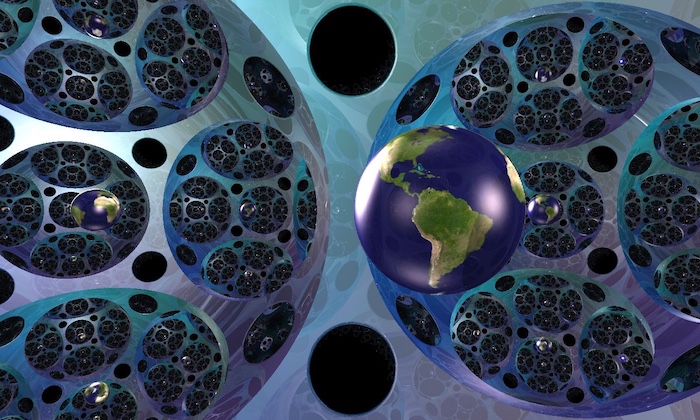}%
\label{Fig:CompareH3}%
}%
\thinspace
\subfloat[$S^2 \times \EE$]{%
\includegraphics[width=0.49\textwidth]{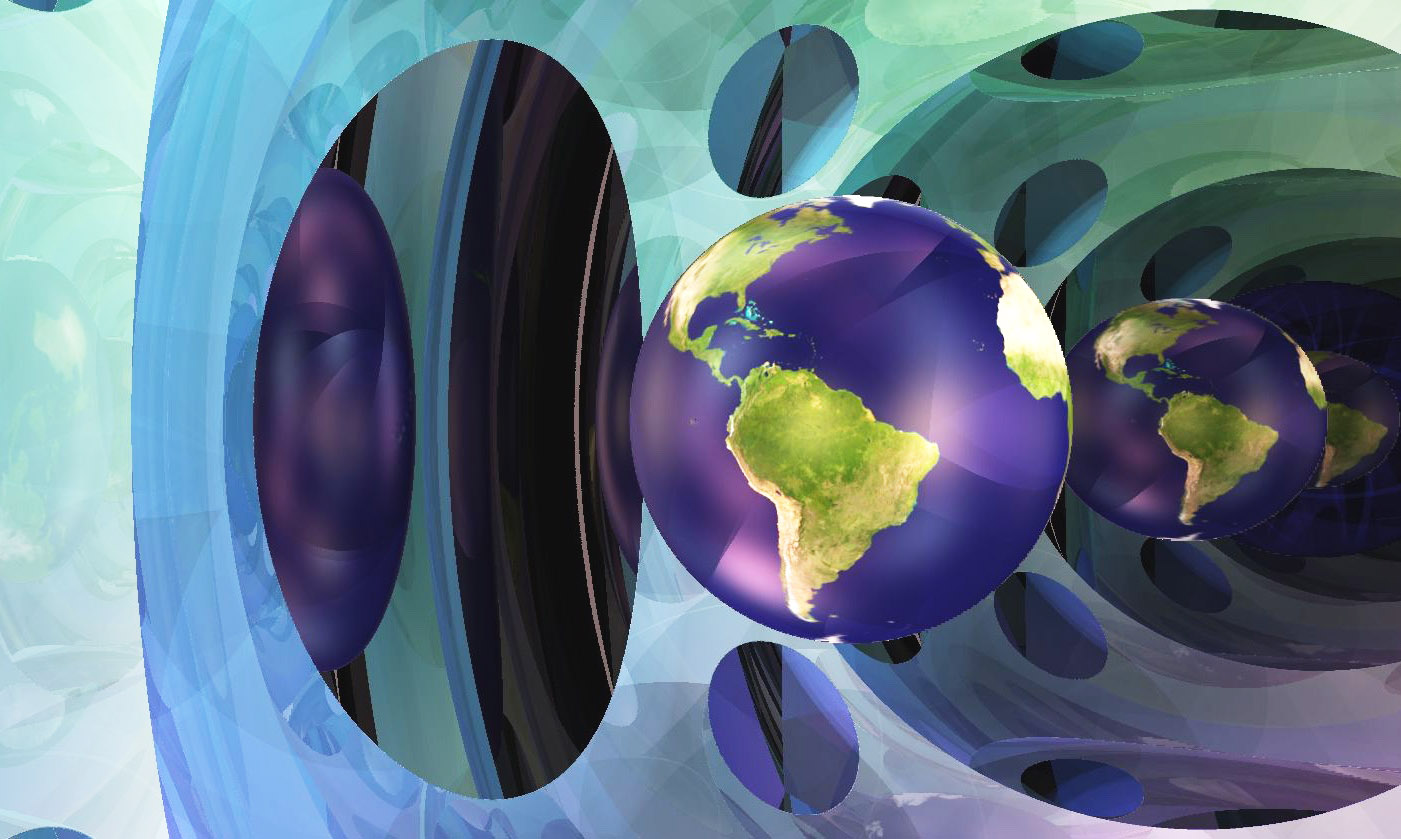}%
\label{Fig:CompareS2xE}%
}

\subfloat[$\HH^2 \times \EE$]{%
\includegraphics[width=0.49\textwidth]{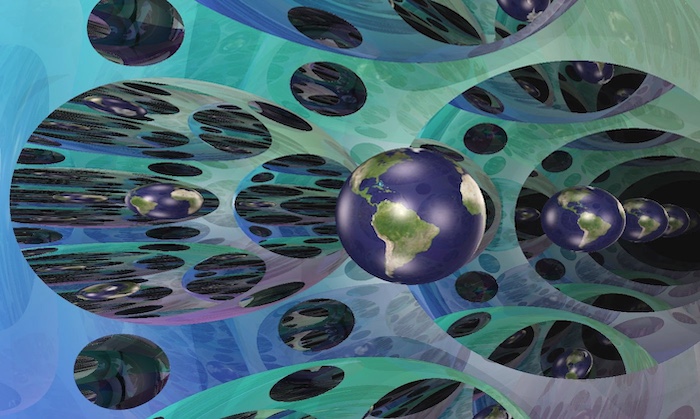}%
\label{Fig:CompareH2xE}%
}%
\thinspace
\subfloat[Nil]{%
\includegraphics[width=0.49\textwidth]{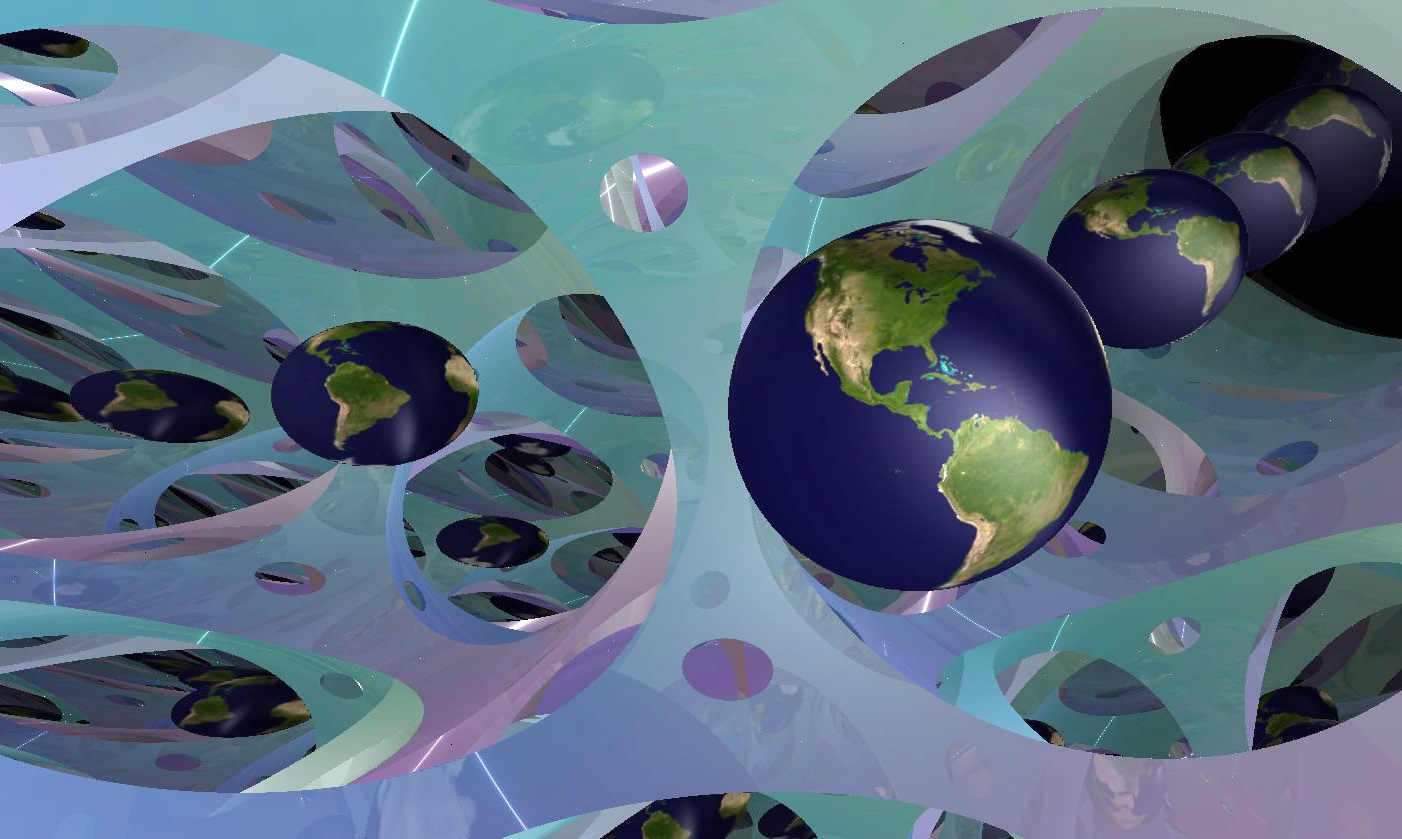}%
\label{Fig:CompareNil}%
}

\subfloat[$\SLR$]{%
\includegraphics[width=0.49\textwidth]{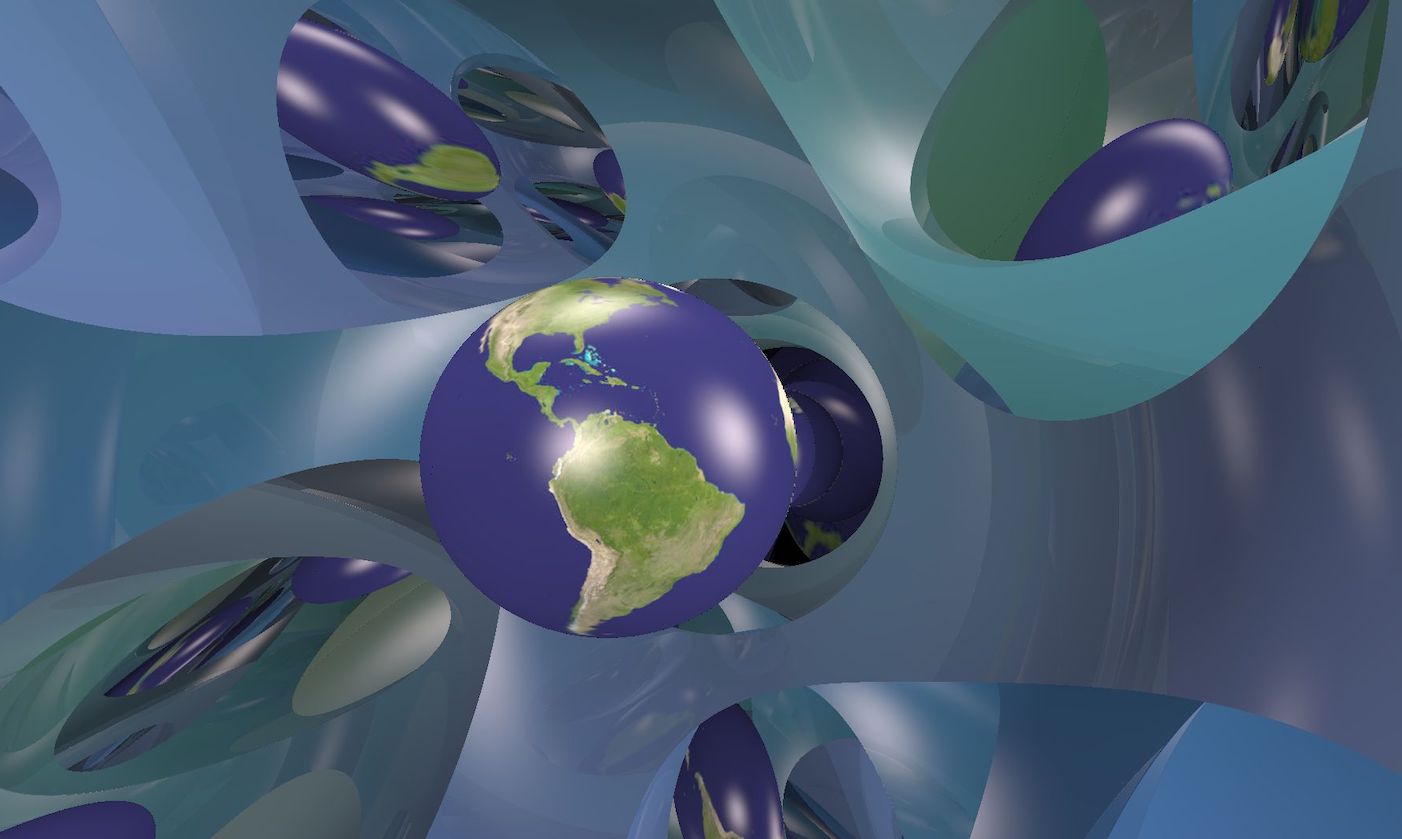}%
\label{Fig:CompareSLR}%
}%
\thinspace
\subfloat[Sol]{%
\includegraphics[width=0.49\textwidth]{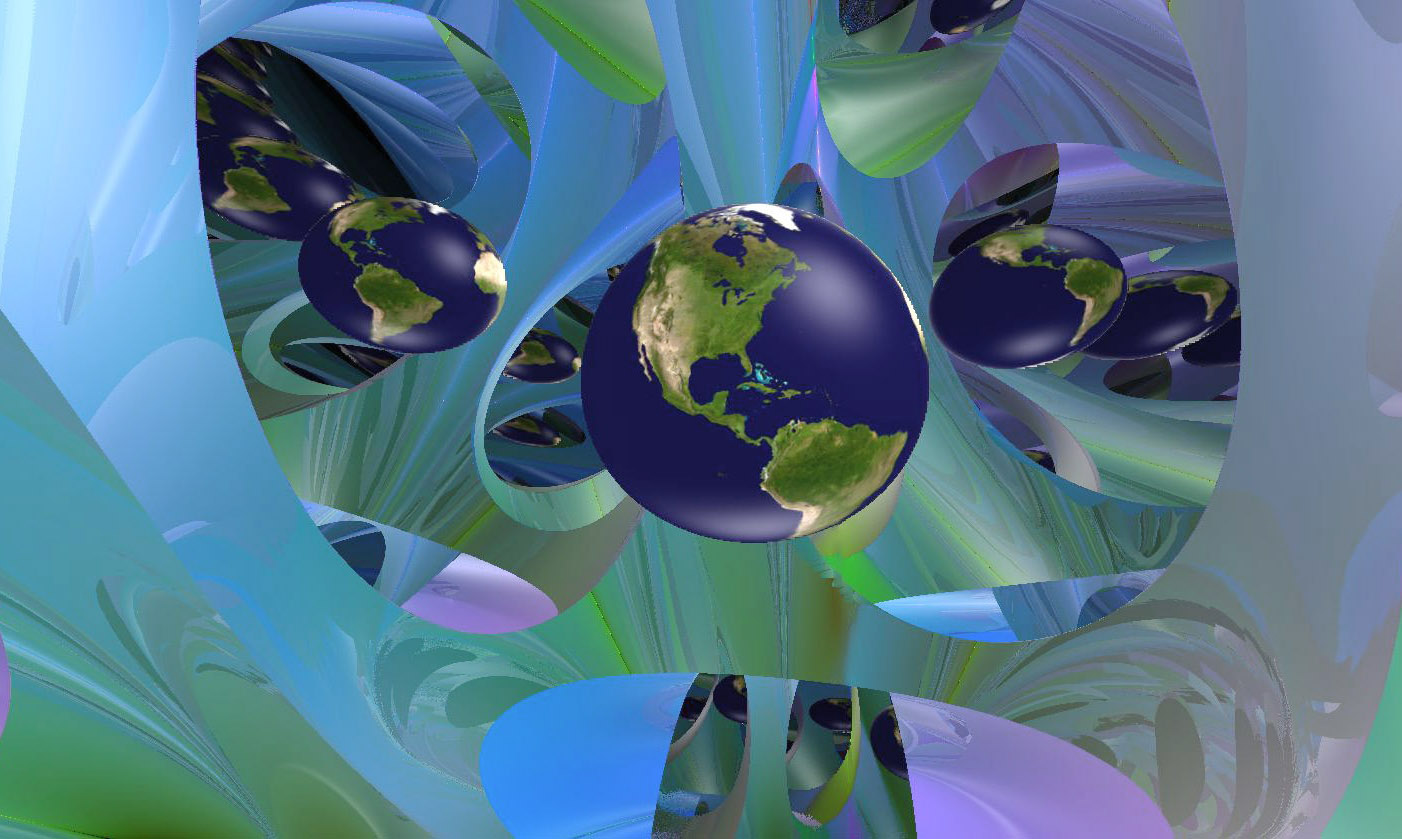}%
\label{Fig:CompareSol}%
}

\caption{Inside views of tilings within each of the eight Thurston geometries. Here we have chosen similar scenes to highlight the differences stemming from the geometries. Each scene is made from tiles as illustrated in \reffig{primitive cell E3}.}
\label{Fig:CompareGeometries}
\end{figure}

\subsection{Thurston's eight geometries}
\label{Sec:ThurstonGeometries}

The expansion of geometry beyond euclidean $n$-space traces its origins to the $19^\textrm{th}$ century discovery of hyperbolic geometry.
From here, Klein made the following wide-reaching generalization.
A \emph{homogeneous geometry} is a pair $(G,X)$ consisting of a smooth manifold $X$, equipped with the transitive action of a Lie group $G$.
The manifold $X$ defines the underlying space of the geometry, and the group $G$ defines the collection of allowable motions.
This convenient mathematical formalism turns some of our traditional geometric thinking upside down.
Instead of defining euclidean geometry as $\RR^n$ with a particular metric, we define it as $\RR^n$ with a particular group of allowable diffeomorphisms (rotations, reflections, and translations), and derive as a consequence the existence of an invariant metric.

In dimension two, homogeneous geometries play an outsized role in mathematics, in large part due to the uniformization theorem.
This implies that every two-dimensional manifold can actually be equipped with a geometric structure modeled on one of the homogeneous spaces $\HH^2,\EE^2,$ or $S^2$.
Because of this, one may often use geometric tools in settings without an obviously geometric nature.
In the 1970s and 1980s, Thurston came to realize that a similar (but more complicated) result might hold in three dimensions.
Thurston's geometrization conjecture stated that every closed three-manifold may be cut into finitely many pieces, each can be built from some homogeneous geometry.
The proof of geometrization was completed by Perelman in 2003~\cite{perelman2002entropy, perelman2003finite, perelman2003ricci} and provides a powerful tool in three-dimensional topology. This also resolved the Poincar\'e conjecture, which had been open for more than a century.
The eight geometries required for geometrization can be defined abstractly as follows.
A homogeneous space $(G,X)$ is a \emph{Thurston geometry} if it has the following four properties:

\begin{enumerate}
\item $X$ is connected and simply connected.
\item $G$ acts transitively on $X$ with compact point stabilizers.
\item $G$ is not contained in any larger group of diffeomorphisms acting with compact stabilizers.
\item There is at least one compact $(G,X)$ manifold.	
\end{enumerate}

The first of these conditions rules out unnecessary duplicity in our classification.
Every connected $(G,X)$ geometry is covered by a simply connected universal covering geometry, so it suffices to consider these.
The second condition is the group-theoretic way of requiring that $X$ has a $G$-invariant riemannian metric, and the third condition is just the statement that $G$ is actually the full isometry group.
A geometry satisfying (1)--(3) is called \emph{maximal}.  
The fourth condition recalls our original motivation: to study geometric structures on compact manifolds in dimension three; we need only concern ourselves with geometries which can be used to build geometric structures!

Three dimensions is small enough that all of the Thurston geometries arise from relatively simple constructions\footnote{There are 19 maximal geometries in dimension four~\cite{Hillman2002}, and 58 in dimension five~\cite{Geng20165dimensionalGI}.  While many of these can be constructed by analogous procedures, some new phenomena also arise.}, growing out of either two-dimensional geometry or three dimensional Lie theory. 
This divides the set of Thurston geometries into a collection of overlapping families of geometries constructed by similar means.
Some of these families are listed below and illustrated in \reffig{ThurstonGeos1}. 

\begin{figure}
\centering
\labellist
\large\hair 2pt
\pinlabel $\HH^2\times\EE$ at 520 760
\pinlabel $S^2\times\EE$ at 940 760
\pinlabel $\HH^3$ at 150 610
\pinlabel $\EE^3$ at 660 510
\pinlabel Nil at 1060 510
\pinlabel $S^3$ at 370 330
\pinlabel $\SLR$ at 805 230
\pinlabel Sol at 1140 200

\normalsize
\pinlabel Isotropic [l] at 1710 742
\pinlabel Product [l] at 1710 596
\pinlabel {Isometry group} [l] at 1710 449
\pinlabel Bundle [l] at 1710 302
\pinlabel {3D Lie group} [l] at 1710 156
\endlabellist
\includegraphics[width=0.9\textwidth]{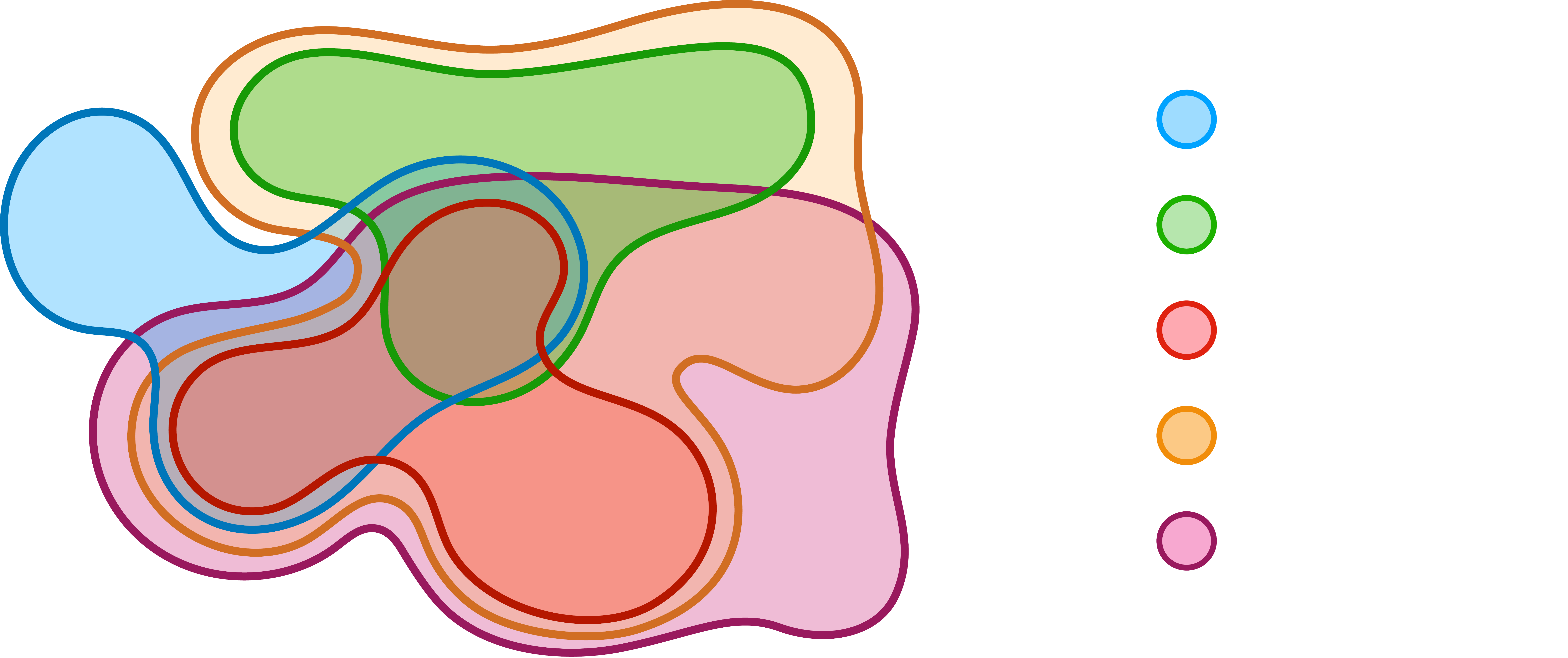}
\caption{The Thurston Geometries, and natural families grouping geometries with similar constructions.}
\label{Fig:ThurstonGeos1}
\end{figure}

\begin{enumerate}
	\item {\bf Isotropic Geometries.} A geometry $(G,X)$ is \emph{isotropic} if the point stabilizer contains $O(3)$.  This acts transitively on the unit tangent sphere at a point. Since directions and planes are dual to each other, 
	any $G$-invariant metric on $X$ must have constant sectional curvature.  Thus, this family consists of $S^3=(O(4),S^3),\EE^3=(O(3)\rtimes\RR^3,\RR^3)$ and $\HH^3=(O(3,1),\HH^3)$.
	\item {\bf Products of Lower Dimensional Geometries.} The product of the unique one-dimensional geometry (denoted $\EE$ in this paper) and any two-dimensional geometry gives a geometry of dimension three. This family consists of the three geometries $S^2\times \EE, \HH^2\times\EE$ and $\EE^2\times\EE$. The latter is not maximal: its isometry group is contained in that of $\EE^3$.
	\item {\bf Isometry groups of two-dimensional geometries.} Each of the two-dimensional geometries $(G,X)$ is isotropic, so $G$ acts transitively on the unit tangent bundle $UTX$.  
	Thus we may consider the three-dimensional geometry $(G,UTX)$, and get a maximal geometry by taking covers and extending the isometry group if necessary.	This gives the geometries $S^3$ and $\EE^3$, as well as the new geometry $\SLR$ (built from $UT S^2$, $UT\EE^2$ and $UT\HH^2$ respectively).
 \item {\bf Bundles over two-dimensional geometries.} Generalizing both of the previous cases, we may construct all geometries $(G,X)$ where $X$ has a $G$-invariant bundle structure over a two-dimensional geometry.  This produces one new example: Nil, a line bundle over $\EE^2$.  This bundle structure has an important geometric consequence: all manifolds with these geometries are \emph{Seifert fibered}.
 	\item {\bf Three-dimensional Lie groups.} Every three-dimensional Lie group $H$ acts on itself freely by left translation.  Starting from the homogeneous geometry $(H,H)$, we may build a maximal geometry by taking covers and extending the group of isometries, if necessary.
Assuming that $H$ is unimodular, this construction recovers the unit tangent bundle geometries and Nil, and produces our final geometry, $\mathrm{Sol}$~\cite[Section 4]{Milnor1976}. Allowing Lie groups that are not unimodular, we also recover $\HH^3$ and $\HH^2 \times \EE$.
	\end{enumerate}
	
\noindent
For a proof that there are only eight Thurston geometries, see for example~\cite{patrangenaru1996}.
As a general reference for Thurston's geometries, see \cite{Scott}.

\subsection{Goals}
\label{Sec:Goals}

We have the following goals for the algorithms we use to render our in-space views. 
\begin{enumerate}
\item Our images must be accurate -- assuming that light rays travel along geodesics, there is a correct picture of what an observer inside of a given geometry would see. Our images should accurately portray this picture.
\item Real-time graphics algorithms must be very efficient in order to run at an acceptable frame rate. This is particularly important  in virtual reality -- around 90 frames per second is recommended to reduce nausea. Modern graphics cards allow for the required speed, given efficient algorithms.
\item Our algorithms must allow for a full six degrees of freedom in the position and orientation of the camera, even when the simulated geometry may not have a natural corresponding isometry. A user in a virtual reality headset can make such motions, and the view they see must react in a sensible way.
\item As much as is possible, our algorithm should be independent of the geometry being simulated. The idea here is that it should be possible to change the code in a small number of places to convert between simulations of different geometries. Compartmentalizing the code in this way will make it easier to extend it to further geometries, beyond Thurston's eight.
\item When possible, we should make our images beautiful, allowing for graphical effects including lighting, (hard and soft) shadows, reflections, fog, etc.
\end{enumerate}

Some of these goals are of course in conflict. Adding features such as shadows and reflections increases the amount of work needed to be done per frame, which can reduce the frame rate. The frame rate is also dependent on the desired screen resolution. There are many trade-offs to be made between fidelity and speed.

We use the relatively new technique of \emph{ray-marching} in our implementation. We discuss this technique and compare it with other graphics techniques in \refsec{Ray-marching}. One key feature is that the data and calculations needed to generate images for each geometry are relatively simple in comparison to other techniques, which makes it easier to write geometry independent code.

\subsection{Related work}
\label{Sec:RelatedWork}
This project owes its existence to a long history of previous work.  
It is a direct descendant of the hyperbolic ray-marching program created by Nelson, Segerman, and Woodard~\cite{woodard_github}, which itself was inspired by previous
work in $\mathbb H^3$ and $\mathbb H^2\times \mathbb E$ by  Hart, Hawksley,  Matsumoto, and Segerman \cite{NEVR1,NEVR2}, all of which aim to expand upon Weeks' \emph{Curved Spaces}~\cite{curved_spaces} which in turn is a descendant of work by Gunn, Levy and Phillips~\cite{PhillipsGunn, Geomview} and others at the Geometry Center in the 1990's.  Thurston was a driving force for much of this visualization work. He often spoke about what it would be like to be inside of a three-manifold~\cite{Thurston98}. The software SnapPy~\cite{SnapPy} was originally developed by Weeks to calculate the geometry on hyperbolic three-manifolds using Thurston's hyperbolic ideal triangulations. Concurrent with this project's development at ICERM in Fall 2019, Matthias Goerner implemented an inside view for hyperbolic manifolds within SnapPy, using a ray-tracing strategy.

Perhaps the earliest work concerned with rendering the inside-view of non-euclidean geometries is due to theoretical physicists predicting the appearance of black holes; this field goes back to the 1970's~\cite{Luminet}. 

The past few years have seen a number of independent projects building real-time simulations of inside views for the Thurston geometries, including the last three ``harder'' geometries. To our knowledge, Berger~\cite{berger, BergerLaierVelho} produced the first in-space images of all eight Thurston geometries. He uses ray-tracing, with a fourth-order Runge--Kutta method for numerical integration to approximate geodesic rays. 

The \emph{HyperRogue} project~\cite{HyperRogue}, by Kopczy\'nski and Celi\'nska-Kopczy\'nska implements all eight geometries with a triangle rasterization based strategy. They restrict the 
parts of the world that the viewer can see in order to avoid some issues with this approach that we identify in \refsec{Polygons}. For example, in certain geometries one can only see a limited distance in particular directions. They also use a fourth-order Runge--Kutta method to approximate geodesic rays, and rely in part on lookup tables for speed. Their motivation is more towards implementation for use in computer games. Here it is very useful to be able to use polygon meshes to represent the player character, enemies, and other objects in the game world. Kopczy\'nski and  Celi\'nska-Kopczy\'nska~\cite{Kopczysk} also provide a real-time ray-tracing implementation of Nil,  $\SLR$ and Sol.

Novello, Da Silva, and Velho~\cite{Velho:2020:10.20380/GI2020.42, NOVELLO2020219} share our interest in implementing virtual reality experiences. They also implement in-space views with a ray-tracing approach, tackling all of the Thurston geometries other than the product geometries. They use Euler's method for numerical integration to approximate geodesic rays for $\SLR$ and Sol. 

Other than ours, the only ray-marching approach we are aware of is due to MagmaMcFry~\cite{MagmaMcFry}, who implements $\EE^3, \HH^3$, Nil, $\SLR$, and Sol. They use a second-order Runge--Kutta method to approximate geodesic rays. 

A numerical integration approach is unavoidable in some cases, for example in generic inhomogeneous geometries~\cite{novello2020design}. These approaches can also minimize the differences in the code for different geometries. However, such algorithms must take many steps along each ray to maintain accuracy, and so may be slow. This may be acceptable when the scene is ``dense'' -- implying that few rays travel very far before hitting an object. This often happens for example, with a co-compact lattice. For scenes in which rays travel large distances we lose accuracy unless the number of steps is large, meaning that we lose rendering speed. 

We instead use explicit solutions for our geodesic rays in almost all cases. This moves the problem of accuracy versus speed to the implementation of the functions involved in the solutions. In this setting however, we have reduced the problem of understanding the long-term behavior of the geodesic flow to studying the long-term behavior of these component functions. It turns out that these functions are well-understood for the eight Thurston geometries (they are trigonometric, hyperbolic trigonometric, and Jacobi elliptic functions). 
Thus we can often take large steps along geodesics and achieve both accuracy and speed, even for objects that are distant from the viewer. 
We exploit this ability to illustrate counterintuitive, long-range behavior of geodesics in Nil and Sol~\cite{NEVR3, NEVR4}. In Appendix \ref{Sec:CompareFlowAlgorithms} we give the results of some numerical experiments comparing the performance and accuracy of Euler and Runge--Kutta numerical integration with explicit solutions in Nil and $\SLR$.

\subsection*{Acknowledgements}
This material is based in part upon work supported by the National Science Foundation under Grant No. DMS-1439786 and the Alfred P. Sloan Foundation award G-2019-11406 while the authors were in residence at the Institute for Computational and Experimental Research in Mathematics in Providence, RI, during the Illustrating Mathematics program.
The first author acknowledges support from the \emph{Centre Henri Lebesgue} ANR-11-LABX-0020-01 and the \emph{Agence Nationale de la Recherche} under Grant \emph{Dagger} ANR-16-CE40-0006-01.
The second author was supported in part by National Science Foundation grant DMR-1847172 and a Cottrell Scholars Award from the Research Corporation for Science Advancement. The third author was supported in part by National Science Foundation grant DMS-1708239.

We thank Joey Chahine for telling us about a computable means of finding area density. We thank Arnaud Ch\'eritat, Matei Coiculescu, Jason Manning, Saul Schleimer, and Rich Schwartz for enlightening discussions about the Thurston geometries at ICERM.

\section{Ray-marching}
\label{Sec:Ray-marching}

\emph{Ray-marching} is a relatively new technique to produce real-time graphics using modern GPUs~\cite{Wong}, although its roots go back to the 1980's at least~\cite{HartSandinKauffman89}. Ray-marching is similar to ray-tracing in that for each pixel of the screen, we shoot a ray from a virtual camera to determine what color the pixel should be. Unlike most ray-tracing implementations however, the objects in the world that our ray can hit are not described using polygons. Instead, we use \emph{signed distance functions}, which we describe in the following. 

\begin{definition}
\label{Def:SDF}
Let $X$ be the ambient space, and suppose that $S$ is a closed subset of $X$. We refer to $S$ as a \emph{scene}. We define the \emph{signed distance function} $\sigma\from X \to \RR$ for $S$ as follows. For a point $p \in X - S$, the function $\sigma$ returns the radius of the largest ball centered at $p$ whose interior is disjoint from $S$. For $p\in S$ the function is non-positive, and $|\sigma(p)|$ is the radius of the largest ball centered at $p$ contained in $S$.
\end{definition}
We will sometimes write $\sdf(p,S)$ for $\sigma(p)$. We often refer to a part of a scene as an \emph{object}.
As an example, suppose that $X$ is euclidean three-space, $\EE^3$, and our scene $S$ is a ball of radius $R$, centered at the origin. Then the signed distance function is
\begin{equation}
\label{Eqn:Ball}
\sigma(p) = |p| - R.
\end{equation}
Suppose that we have multiple scenes, described by signed distance functions $\sigma_i$. Then the signed distance function for the union of the scenes is $\min_i \{\sigma_i\}$. The complement of a scene is given by the negative of its signed distance function. We often draw a tiling in an inexpensive manner by deleting a ball from the center of each tile. See~\reffig{primitive cell E3} and \refrem{SphereFundDom}.  For more examples of signed distance functions in $\EE^3$, and more ways to combine signed distance functions, see~\cite{Quilez}. 

\begin{figure}[htbp]
		\centering%
		\subfloat[A tile.]{%
		\includegraphics[width=0.32\textwidth]{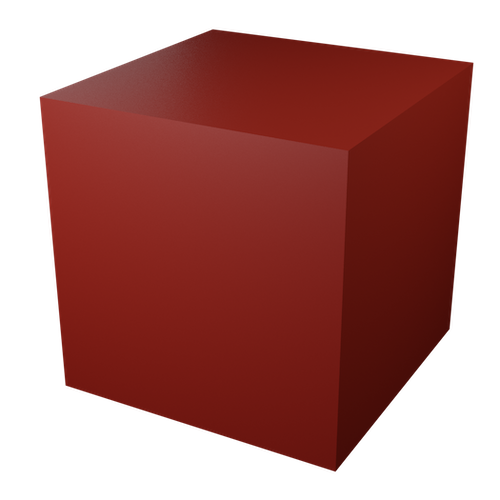}%
		\label{Fig:primitive cell E3 - fundamental domain}
		}%
		\subfloat[A ball is deleted from the center of the tile.]{%
		\includegraphics[width=0.32\textwidth]{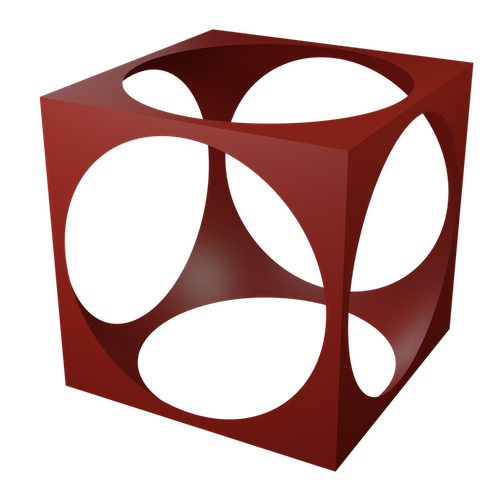}%
		\label{Fig:primitive cell E3 - primitive cell}
		}%
		\subfloat[A ball is deleted from the center and each vertex of the tile.]{%
		\includegraphics[width=0.32\textwidth]{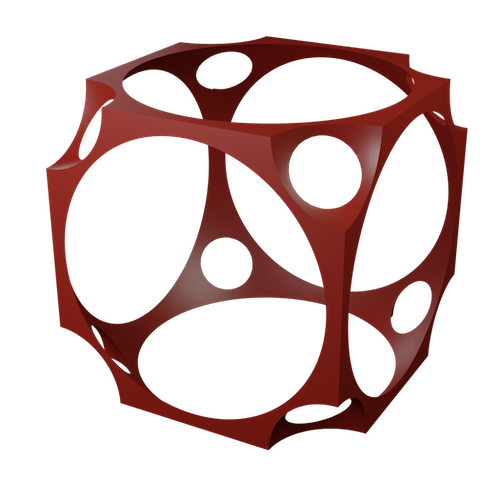}%
		\label{Fig:primitive cell E3 - advanced cell}%
		}%
		\caption{Extrinsic view of some scenes with inexpensive signed distance functions for a $\ZZ^3$-invariant tiling in $\EE^3$.}%
		\label{Fig:primitive cell E3}%
	\end{figure}

To render an image of our scene, we place a virtual camera in the space $X$ at a point $p_0$. We identify each pixel of the computer screen with a tangent vector at $p_0$, and so determine a geodesic ray for this pixel, starting at $p_0$. To color the pixel, we must work out what part of the scene the ray hits. The algorithm is illustrated in \reffig{Ray-march}. We start at $p_0$, the position of the camera, as shown in \reffig{Ray-march1}. We assume that $p_0$ is not inside the scene. We evaluate the signed distance function $\sigma$ at $p_0$. Since no part of the scene is within $\sigma(p_0)$ of $p_0$, we can safely march along our ray by a distance of $\sigma(p_0)$ without hitting the scene. We call the resulting point $p_1$. We can then safely march forward again by $\sigma(p_1)$ to reach $p_2$. We repeat this procedure until either we reach a maximum number of iterations, or we reach a maximum distance, or the signed distance function evaluates to a sufficiently small threshold value, $\varepsilon$ say. In the first two cases we color the pixel by some background color. In the third case (as shown in \reffig{Ray-marchFinal}) we declare that we have hit the scene. 

In the case that we hit the scene, we may then choose a color for the pixel based on which part of the scene we hit, apply a texture, and/or apply various lighting techniques, for example the Phong reflection model~\cite{Phong}. 
Note that this model requires the normal vector to the surface at the point our ray hits; this is easily approximated using the gradient of the signed distance function.

\begin{figure}[htbp]
\centering
\subfloat[]{
\labellist
\scriptsize\hair 2pt
\pinlabel $p_0$ [bl] at 160 210
\endlabellist
\includegraphics[width=0.45\textwidth]{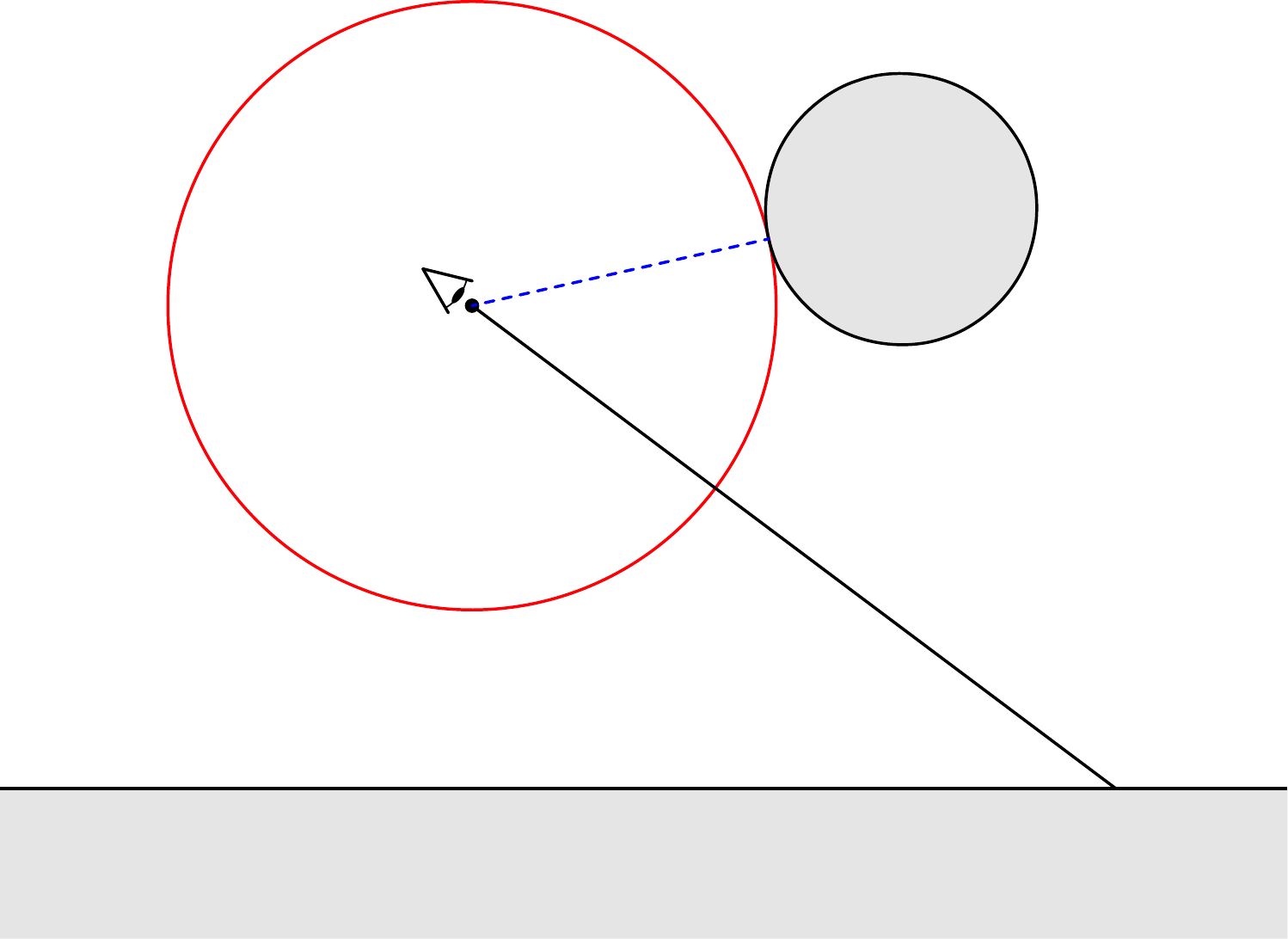}
\label{Fig:Ray-march1}
}
\quad
\subfloat[]{
\labellist
\scriptsize\hair 2pt
\pinlabel $p_0$ [bl] at 160 210
\pinlabel $p_1$ [b] at 237 160
\endlabellist
\includegraphics[width=0.45\textwidth]{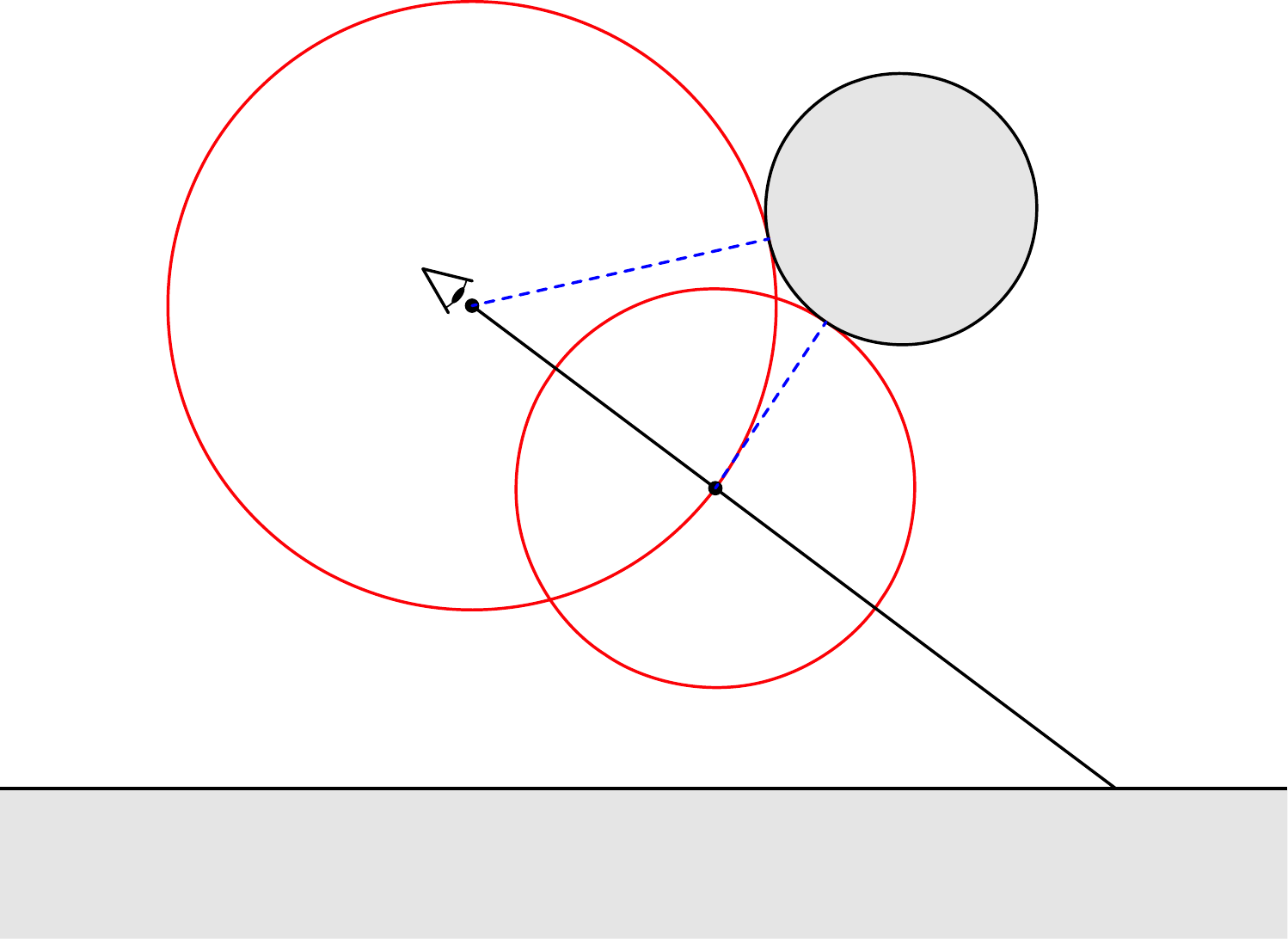}
\label{Fig:Ray-march2}
}

\subfloat[]{
\labellist
\scriptsize\hair 2pt
\pinlabel $p_0$ [bl] at 160 210
\pinlabel $p_1$ [b] at 237 160
\pinlabel $p_2$ [b] at 290 120
\endlabellist
\includegraphics[width=0.45\textwidth]{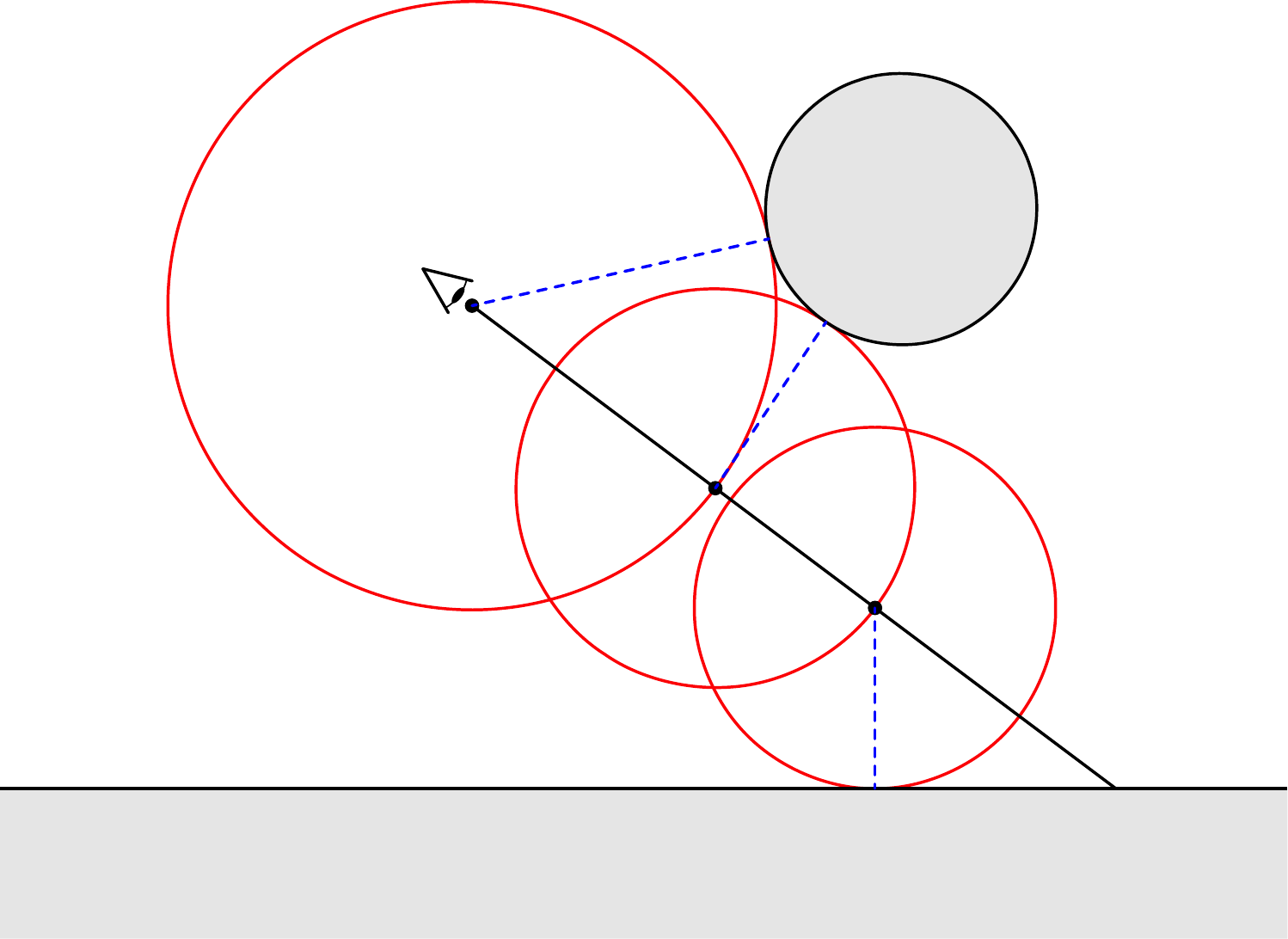}
\label{Fig:Ray-march3}
}
\quad
\subfloat[]{
\labellist
\scriptsize\hair 2pt
\pinlabel $p_0$ [bl] at 160 210
\pinlabel $p_1$ [b] at 237 160
\pinlabel $p_2$ [b] at 290 120
\endlabellist
\includegraphics[width=0.45\textwidth]{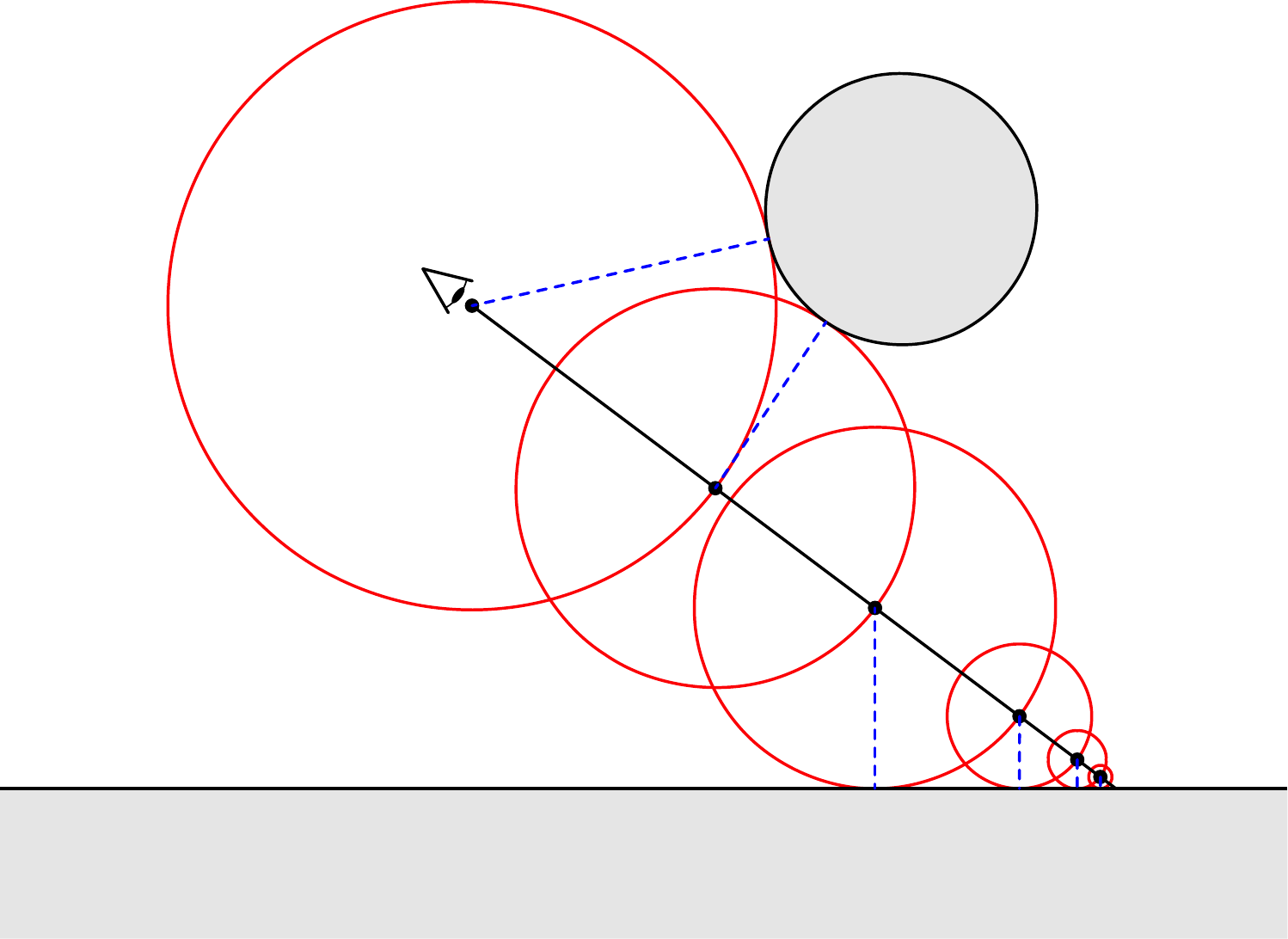}
\label{Fig:Ray-marchFinal}
}
\caption{Ray-marching to find the point at which a ray hits an object, for a scene in $\EE^2$ consisting of a disk and a half-plane.}
\label{Fig:Ray-march}
\end{figure}

\subsection{Geometric convergence}
\label{Sec:GeometricConvergence}
A concern one might have over the ray-marching algorithm is the potentially large number of steps taken before we are close enough to the scene to declare that we have hit it. Indeed, functions called in the innermost loop of the algorithm must be made as efficient as possible. However, the number of steps used is generally not prohibitive. Suppose that our scene $S$ has a smooth boundary. In this case, when we are close enough to $S$ its boundary may be approximated by a plane $P$. If our ray continues to approach $P$, then we converge to it as a geometric series, see \reffig{Ray-marchGeometricSeries}. The base of the exponent $\lambda$ depends on the angle of incidence of the ray, approaching the worst case of $\lambda = 1$ as the ray becomes tangent to $S$. 

\begin{figure}[htbp]
\centering
\labellist
\small\hair 2pt
\pinlabel $d$ [b] at 140 15
\pinlabel $d\lambda$ [b] at 251 15
\pinlabel $d\lambda^2$ [b] at 295 15
\pinlabel $d\lambda^3$ [b] at 330 15
\endlabellist
\includegraphics[width=0.65\textwidth]{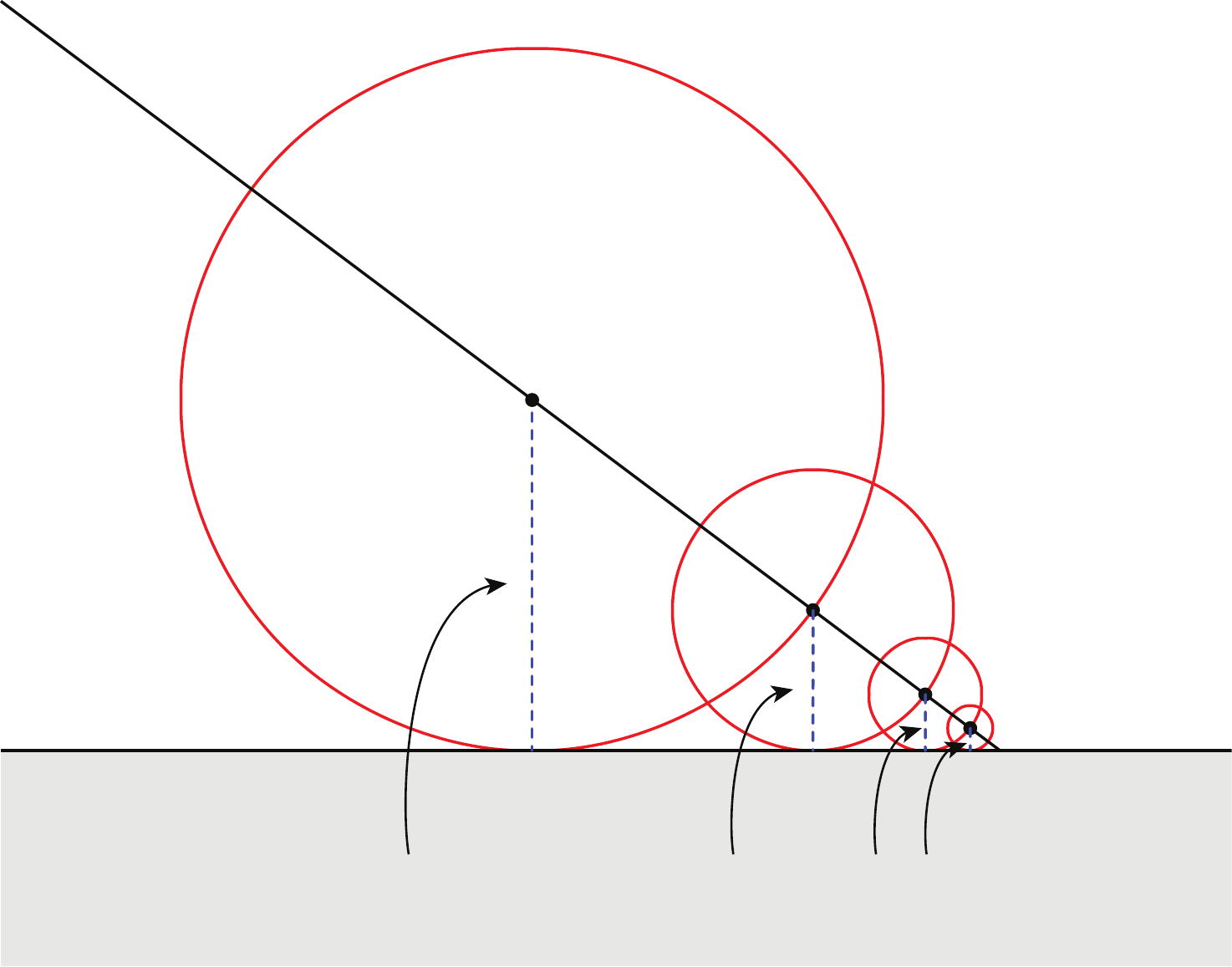}
\caption{Typically, convergence of a ray marching into an object is geometric. If our first distance from the object is $d$, then subsequent distances follow a geometric sequence with the base of the exponent some number $\lambda < 1$.}
\label{Fig:Ray-marchGeometricSeries}
\end{figure}

\begin{remark}
If the maximum number of steps we allow before giving up is too small, then we may erroneously color pixels with the background color whose rays would eventually hit an object. This will often be most visible around the outer edges of an object in the scene, as these rays are the closest to tangent. These rays spend many steps moving a small distance close the object. Thus, they may run out of iterations before converging.
\end{remark}

\subsection{Distance underestimators}
\label{Sec:DistanceUnderestimators}
The signed distance function for a scene may be difficult or expensive to calculate. In these cases we may wish to replace it with an easier to calculate approximation. 
\begin{definition}
Suppose that $\sigma\from X \to \RR$ is the signed distance function for a scene $S$. We say that a function $\sigma'\from X \to \RR$
is a \emph{distance underestimator} if 
\begin{enumerate}
\item The signs of $\sigma'(p)$ and $\sigma(p)$ are the same for all points $p \in X$, 
\item $ |\sigma'(p)| \leq |\sigma(p)|$ for all $p\in X$, and 
\item If $\{p_1, p_2, \ldots\}$ is a sequence of points in $X$ such that $\lim \sigma'(p_n) = 0$, then $\lim \sigma(p_n) = 0$. \qedhere
\end{enumerate}
\end{definition}

We do not require that $\sigma'$ is continuous, but the second and third conditions here imply that a distance underestimator vanishes only on the boundary of $S$.

\begin{lemma}
\label{Lem:Underestimator}
When ray-marching with a distance underestimator $\sigma'$ in place of a signed distance function $\sigma$, we limit to the same point as when using $\sigma$. 
\end{lemma}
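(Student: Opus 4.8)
I would show that, regardless of which of the two functions drives it, the ray-march along a pixel's geodesic ray produces a non-decreasing sequence of arc-length parameters whose limit is precisely the first time the ray meets the scene $S$ (and is $+\infty$ if the ray never meets $S$). Since that limiting parameter does not depend on the choice between $\sigma$ and $\sigma'$, the two marches converge to the same point of $X$ (or both fail to converge).

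\textbf{Set-up.} Let $\gamma\from[0,\infty)\to X$ be the unit-speed geodesic ray of the pixel, with $\gamma(0)=p_0\notin S$ (in the Thurston geometries $\gamma$ is defined on all of $[0,\infty)$, so the manipulations below are legitimate), and set $L\defeq\inf\{t\geq 0:\gamma(t)\in S\}\in(0,\infty]$; since $S$ is closed, $\gamma(L)\in S$ whenever $L<\infty$. Note that $\sigma$ itself satisfies conditions (1)--(3) of the definition of a distance underestimator, so it suffices to prove: for \emph{any} $f$ satisfying (1)--(3), the march $s_0=0$, $s_{n+1}=s_n+f(\gamma(s_n))$ has $\lim_n s_n=L$.

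\textbf{Key steps.} First, by induction $\gamma(s_n)\notin\interior(S)$; hence $\sigma(\gamma(s_n))\geq 0$, hence by (1) $f(\gamma(s_n))\geq 0$ and by (2) $0\le f(\gamma(s_n))\leq\sigma(\gamma(s_n))=\sdf(\gamma(s_n),S)$. Then $\gamma(s_{n+1})$ lies in the closure of the open ball of radius $\sigma(\gamma(s_n))$ about $\gamma(s_n)$ --- because $d(\gamma(s_n),\gamma(s_{n+1}))\leq s_{n+1}-s_n\leq\sigma(\gamma(s_n))$ --- and that closure misses $\interior(S)$; this closes the induction and shows $(s_n)$ is non-decreasing. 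Second, another induction gives $s_n\leq L$: when $L<\infty$, $\sigma(\gamma(s_n))=\sdf(\gamma(s_n),S)\leq d(\gamma(s_n),\gamma(L))\leq L-s_n$, so $s_{n+1}\leq L$. Hence $s_\infty\defeq\lim_n s_n$ exists in $(0,\infty]$ with $s_\infty\leq L$. Third, the reverse inequality: if $s_\infty<\infty$ then $f(\gamma(s_n))=s_{n+1}-s_n\to 0$, so by (3) $\sdf(\gamma(s_n),S)\to 0$; since $\gamma(s_n)\to\gamma(s_\infty)$ and $S$ is closed, $\gamma(s_\infty)\in S$, forcing $s_\infty\geq L$. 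Thus $s_\infty=L$ when $s_\infty<\infty$; and if $L=\infty$ the second step is vacuous while the third shows $s_\infty$ cannot be finite, so again $s_\infty=L$. Applying this with $f=\sigma$ and with $f=\sigma'$ yields the same limiting parameter $L$, and hence the same limit point $\gamma(L)$ (or, if $L=\infty$, both marches run off without converging).

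\textbf{Main obstacle.} The only substantive point --- and the one place where all three conditions on $\sigma'$ get used --- is the identification $s_\infty=L$. Conditions (1)--(2), together with the fact that $\sigma$ is the \emph{honest} distance to $S$, are what prevent the march from overshooting $L$; condition (3) is exactly what stops the $\sigma'$-march from stalling at a limit point still lying a positive distance from $S$ (without it, $\sigma'$ could be bounded away from $0$ near a point the ray approaches, and the march would converge before reaching the scene). Everything else is bookkeeping with the triangle inequality and the elementary fact that $\sdf(\cdot,S)$ is $1$-Lipschitz as the distance to a closed set.
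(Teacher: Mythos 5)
Your proof is correct and follows essentially the same approach as the paper: a monotone, bounded sequence of march parameters converges, forcing the step sizes to zero, and condition (3) then forces the limit onto $\partial S$, which by first-hit minimality must be the desired point. The only differences are cosmetic --- the paper tracks the remaining distance $d_n=\dist_\gamma(p_n,q)$ rather than the arc-length parameter, and is slightly less explicit about the divergent case and about why the limit is $q$ rather than some later boundary point --- so your version is a somewhat more careful write-up of the same argument.
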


This result implies that a distance underestimator will give us essentially the same images as the signed distance function, given enough iterations and a small enough threshold $\varepsilon$. If a distance underestimator is significantly easier to compute than the signed distance function then trading an increased number of iterations for improved speed of computation can be advantageous. See Sections~\ref{Sec:distance underestimator Nil} and \ref{Sec:distance underestimator SL2} for examples of distance underestimators.

\begin{proof}[Proof of \reflem{Underestimator}]
Consider a ray $\gamma$ starting at a point $p \notin S$. Suppose that $\gamma$ first meets the scene $S$ at the point $q$. Using the distance underestimator $\sigma'$, we march through a sequence of points $p=p_1, p_2, \ldots$ Consider the distances $d_n = \dist_\gamma(p_n, q)$, measured along the ray $\gamma$ from $p_n$ to $q$. 
By conditions (1) and (2), we know that the sequence $\{d_n\}$ is a non-negative non-increasing sequence. Thus $\{d_n\}$ converges, and so the sequence of points $\{p_n\}$, converges. Thus the distances $\dist_\gamma(p_n, p_{n+1})$ must go to zero. These are the distances we march along the ray, using the distance underestimator $\sigma'$, so $\dist_\gamma(p_n, p_{n+1}) = \sigma'(p_n)$. Therefore $\lim \sigma'(p_n) = 0$. By condition (3), $\lim \sigma(p_n) = 0$, and so $\lim p_n = q$.
\end{proof}

In practice we want $\sigma'$ and $\sigma$ to be ``coarsely the same''. In particular, to get condition (3), we want $|\sigma'(p)|$ to be bounded below by some function of $|\sigma(p)|$. This also allows us to control how many extra iterations are needed in ray-marching with a distance underestimator.

Any real-world implementation cannot go all the way to the limit point $q$ and instead stops at some sufficiently small value, $\varepsilon$. Thus, a distance underestimator should not return a value smaller than $\varepsilon$ unless the signed distance function is also small.

\subsection{Advantages of ray-marching in non-euclidean geometries}

Ray-marching is an attractive technique in euclidean geometry, in part because of the simplicity of its implementation. This is also true for non-euclidean geometries. Here we discuss some alternative techniques.

\subsubsection{Z-buffer triangle rasterization}
\label{Sec:Polygons}
Real-time graphics in euclidean geometry are usually rendered using \emph{z-buffer triangle rasterization}. In this technique, objects in the scene are represented by polygon meshes. A projection matrix maps each triangle of a mesh onto the plane of the virtual camera's screen. For each pixel $P$, we look at the triangles whose projections contain the center of $P$. Of these triangles, the one closest to the camera determines the color of $P$.

This works well for the isotropic Thurston geometries, $\EE^3, S^3$ and $\HH^3$, in particular because geodesics in these geometries are straight lines in their projective models, see~\cite{Weeks:2002}.
Jeff Weeks uses these in his \emph{Curved Spaces} software~\cite{curved_spaces}. There is one complication with $S^3$ here, in that a single object is visible in two different directions: the two directions along the great circle containing the camera and the object. This means that each object must be projected twice. This is acceptable for $S^3$. In Nil, Sol, and $\SLR$, a single object can be visible from the camera in many directions, with no  uniform bound on the number of such directions. Even worse, in $S^2 \times \EE$ a single object can be visible in infinitely many directions from a single camera position. 

The projection matrix used in triangle rasterization implements the inverse of the exponential map. In the cases listed above, the exponential map is not one-to-one. This is not a problem for ray-tracing and ray-marching, which both use the forward direction of the exponential map instead.

\subsubsection{Ray-tracing}
Ray-tracing is very similar to ray-marching, with the difference being in how we determine where in the scene a ray hits. In many applications the objects in the scene are described by polygon meshes, as in triangle rasterization. The algorithm checks for intersection between the ray and the polygons of the mesh. To make this efficient for (euclidean) scenes with a large number of polygons, much effort is put into checking as few triangles for collision as possible, even though each individual check is inexpensive. However, objects described by simple equations such as spheres and other conics can also be used: all that is needed is a way to check whether or not a ray intersects the object, and at what distance along the ray. The distance is used to decide which object is closest to the camera and so should be drawn. For a conic in euclidean space for example, this check and distance may be calculated by solving a quadratic equation. 

One advantage of ray-tracing over ray-marching is that ray-tracing is well suited to rendering objects given by polygon meshes. It therefore has access to decades of development in polygon modeling techniques and rendering efficiency for polygonal models. On the other hand, depending on the geometry, checking for intersection between a ray and an object may be difficult. In place of this check in ray-tracing, for ray-marching we only need a signed distance function (or distance underestimator). If for example we make our scene from balls, then we only need to calculate distances between points.

\subsection{Accuracy}
\label{Sec:Accuracy}

One of our goals in this project is to be able to render features accurately, even at long distances. 
We identify two potential sources of error here. 

\subsubsection{Floating point representation of number}
\label{Sec:FloatingPoint}
First, the representation of real numbers by floating point numbers is necessarily inaccurate. This can be a problem in a number of ways, whether one is ray-marching, ray-tracing, or using polygon rasterizing methods.

\begin{enumerate}

\item \label{Itm:ExpCoords}
In certain models, the coordinates of points grow exponentially with distance in the geometry, and floating point numbers quickly lose precision. In particular, this causes problems when rendering objects that are far from the camera. Of the eight Thurston geometries, this is an issue in $\HH^3, \HH^2 \times \EE$, Sol, and $\SLR$. This can be mitigated by the choice of model~\cite{FloydWeberWeeks}. Even without exponential growth in coordinates, floating point numbers cannot exactly represent geometric data.
\item \label{Itm:RemovableSingularities}
In certain regimes, a formula may be unstable. For example, the formula $(1 - \cos(t))/t^2$ approaches $1/2$ as $t$ approaches zero. However, the available precision in the floating point representation of $(1 - \cos(t))$ near $t=0$ is not very good in comparison to the precision of $t^2$. In such a regime, it is better to use a different representation of the formula. Here for example, we will get much better results by using an asymptotic expansion, say $1/2 - t^2/24 + \cdots$.


\end{enumerate}

\subsubsection{Accumulation of errors}
\label{Sec:Accumulation}

Any iterative algorithm that takes the result from the previous step as the input for the next step may accumulate errors. These errors may come from lack of precision due to floating point representations as described above. They may also come from limitations in the methods used to calculate geodesic flow. As mentioned at the end of \refsec{RelatedWork}, to remove this second source of error we avoid the numerical integration approach whenever possible, preferring explicit solutions. 

\section{General implementation details}
\label{Sec:General}

As mentioned in \refsec{Goals}, one of our goals in this project is to make as much of our code as possible independent of the geometry being simulated. Following this goal, in the next few sections we describe components needed for our simulations that are shared across geometries. Many of these apply to all eight Thurston geometries.
However, it is also useful to discuss strategies for tackling smaller collections of geometries with certain geometric or group theoretic features. Thus to begin, we provide a second grouping of the Thurston geometries into overlapping families, distinct from our first grouping by method of construction in \refsec{ThurstonGeometries}.

Consider the following properties:

\begin{enumerate}
\item The geodesic flow is achieved by isometries. That is, every geodesic is the orbit of a point under a one-parameter subgroup.
\item The projective model has straight-line geodesics. Each of the Thurston geometries (up to covers) has a model with  $X\subset\mathbb{RP}^3$ and $G<\mathsf{GL}(4;\RR)$.  With this property, the geodesics of $(G,X)$ are projective lines in this model.
\item The group $G$ has a normal subgroup whose action is free and transitive on $X$.
\end{enumerate}

Property (1) implies that parallel transport is achievable directly via elements of $G$. Property (2) implies that totally geodesic surfaces are planes in the projective model, which makes testing membership in polyhedral domains (for example, Dirichlet domains) efficient.
Property (3) allows us to canonically identify tangent spaces at distinct points of $X$. This allows us to reduce certain difficult calculations (for example, the geodesic flow) to differential equations in a fixed tangent space.

The constant curvature and product geometries all have properties (1) and (2), while Nil, Sol, and $\SLR$ have neither.
These properties are very useful in practice, so we call the five geometries possessing them  the \emph{easier geometries}, while Nil, Sol, and $\SLR$ are the \emph{harder geometries}.
However, Nil, Sol, and $\SLR$ do have property (3) (along with $\EE^3$ and $S^3$). See \reffig{ThurstonGeos2}.

\begin{figure}
\centering
\labellist
\large\hair 2pt
\pinlabel $\HH^2\times\EE$ at 520 760
\pinlabel $S^2\times\EE$ at 900 780
\pinlabel $\HH^3$ at 170 610
\pinlabel $\EE^3$ at 660 530
\pinlabel Nil at 1040 510
\pinlabel $S^3$ at 370 330
\pinlabel $\SLR$ at 765 250
\pinlabel Sol at 1140 200

\normalsize
\pinlabel Easier [l] at 1460 758
\pinlabel Harder [l] at 1460 609
\pinlabel {\parbox{1.0in}{Transitive\\ normal\\ subgroup}} [l] at 1460 375
\endlabellist
\includegraphics[width=0.95\textwidth]{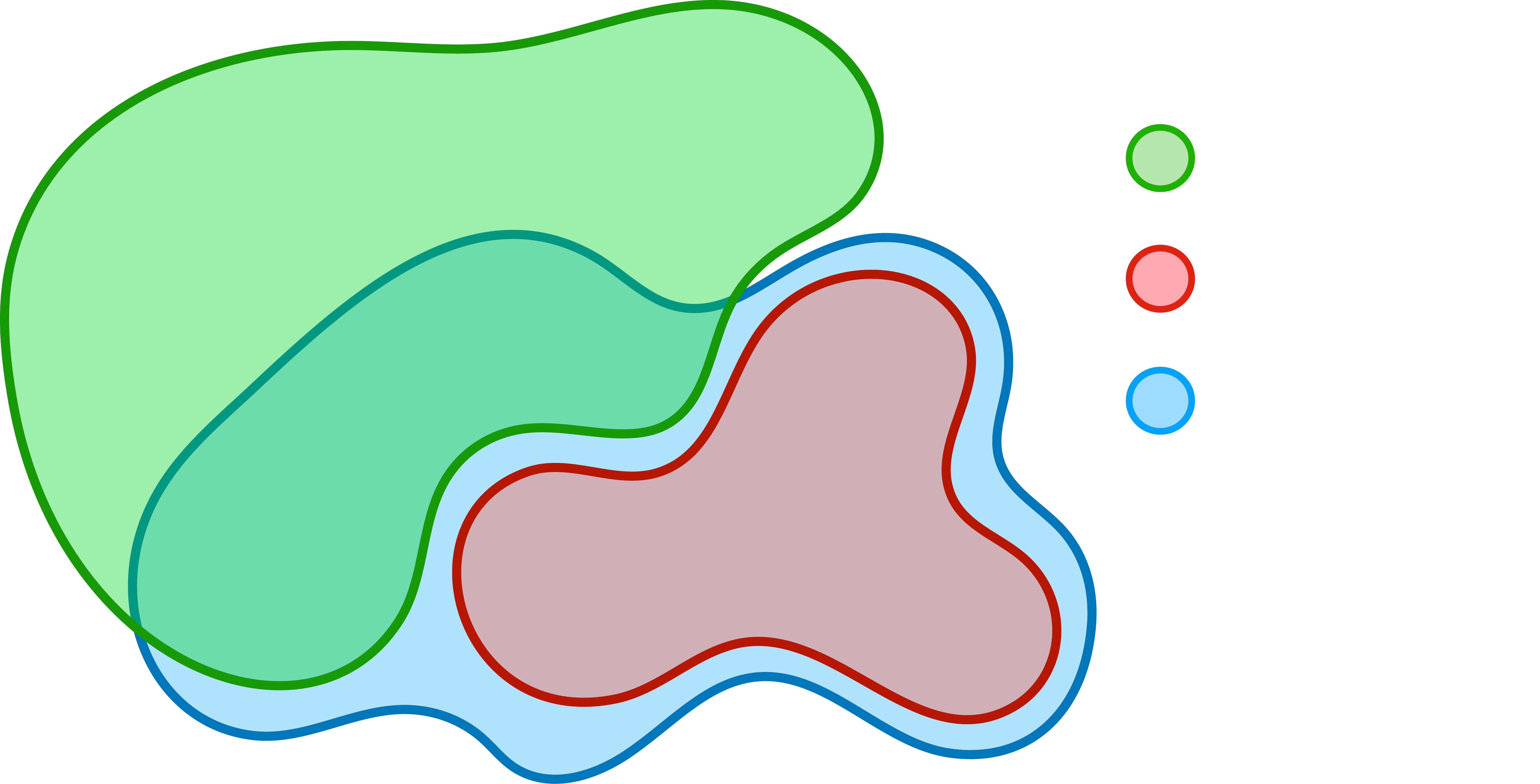}
\caption{The Thurston geometries, grouped into useful categories for our implementation.}
\label{Fig:ThurstonGeos2}
\end{figure}

\subsection{Notation}
Recall that the underlying space $X$ of a Thurston geometry $(G,X)$ is both connected and simply connected, and can be equipped with a $G$-invariant riemannian metric $ds^2$.
We fix a base point $o \in X$, which we call the \emph{origin} of the space $X$. We denote by $K$ the stabilizer of $o$ in $G$. Thus $X$ is isomorphic to $G/K$.

\medskip
\subsection{Geodesic flow}
\label{Sec:GeodesicFlow}
In order to follow light rays, we need to understand geodesics in $X$. Moreover, since we want to march along our geodesics by specified distances, they must be given by arc length parametrizations.
These are paths $\gamma \colon \RR \to X$ such that 
\begin{equation*}
	\nabla_{\dot \gamma(t)}\dot \gamma(t) = 0, \quad \forall t \in \RR.
\end{equation*}
where $\nabla$ is the Levi-Civita connection on $(X, ds^2)$.
This condition corresponds to a five-dimensional second-order (non-linear) differential system.
In some cases (for example, $\EE^3$, $S^3$ or $\HH^3$) these systems are comparatively easy to solve. See \reftab{GeometryDetails}.
Other geometries such as Nil, Sol, and $\SLR$ are more subtle.
Next, we describe a method to split this problem into two first-order differential systems.
This strategy has both practical and theoretical advantages that we will discuss later.

\subsubsection{Grayson}
\label{Sec:GeodesicFlow - Grayson}
We follow here an idea of Grayson~\cite{Grayson:1983aa}. 
Assume that $G$ contains a normal subgroup $G_0$ which acts freely and transitively on $X$.
The group $G_0$ provides a preferred way to compare the tangent space at different points of $X$.
For every $x \in X$ we denote by $L_x$ the (unique) isometry in $G_0$ sending the origin $o$ to $x$.
Let $\gamma \colon \RR \to X$ be a geodesic of $X$.
For every $t \in \RR$, we denote by $u(t) \in T_oX$ the vector such that
\begin{equation}
\label{Eqn:GraysonMethodPullBack}
	\dot \gamma(t) = d_o L_{\gamma(t)}u(t)
\end{equation}
It follows from the construction that $u$ is a path on the unit sphere of the tangent space $T_oX$.
Observe that once $u$ is known, the trajectory $\gamma$ is the solution of the first-order differential equation given by \refeqn{GraysonMethodPullBack}.

Since geodesics are invariant under isometries, the path $u$ satisfies a two-dimensional first-order autonomous differential system 
\begin{equation}
\label{Eqn:GraysonMethodFlowSphere}
	\dot u = F(u)
\end{equation}
where $F$ does not depend on $\gamma$.
In practice, \refeqn{GraysonMethodFlowSphere} is often straightforward to solve, see for example \refsec{Nil} and \refsec{SLR}.
The corresponding flow on the unit sphere also provides qualitative information on the geodesic flow \cite{Coiculescu:2019aa}.

Let $h \in K$ be an isometry on $X$ fixing $o$. Observe that the path
\begin{equation*}
u' = d_oh \circ u
\end{equation*}
is also a solution of \refeqn{GraysonMethodFlowSphere}.
Indeed, consider the geodesic $\gamma' \colon \RR \to X$ defined by $\gamma' =  h \circ \gamma$.
Since $G_o$ is a normal subgroup of $G$, for every $x \in X$ we have
\begin{equation*}
	h \circ L_x \circ h^{-1} = L_{hx}.
\end{equation*}
It follows that 
\begin{equation*}
	\dot \gamma'(t) = d_o L_{\gamma'(t)}u'(t), \quad \forall t \in \RR.
\end{equation*}
This proves our claim.
Thanks to this observation we can take advantage of the symmetries of $X$ to reduce the amount of computation needed to solve \refeqn{GraysonMethodFlowSphere}. 
See for example Sections \ref{Sec:nil geo flow} and \ref{Sec:flow sl2}.

\medskip
\subsection{Position and facing}
\label{Sec:PositionFacing}

For the moment, we will think of the observer as a single camera, based at a point of $X$. In \refsec{Stereoscopic}, we will consider an observer with stereoscopic vision.

In order to render the scene viewed by such an observer, we need to know its \emph{position}, given by a point $p \in X$, and its orientation in the space (which we call its \emph{facing}).
The latter is represented by an orthonormal frame $f = (f_1, f_2, f_3)$ of the tangent space $T_pX$.
We adopt the following convention: from the viewpoint of the observer,
\begin{itemize}  
	\item $f_1$ points to the right
	\item $f_2$ points upward
	\item $f_3$ points backward.
\end{itemize}
See \reffig{FrameScreen}.

\begin{figure}[htbp]
\centering
\vspace{10pt}
\labellist
\small\hair 2pt
\pinlabel $f_1$ [l] at 76 41
\pinlabel $f_2$ [b] at 52 102
\pinlabel $f_3$ [r] at 0 45
\endlabellist
\includegraphics[width=0.65\textwidth]{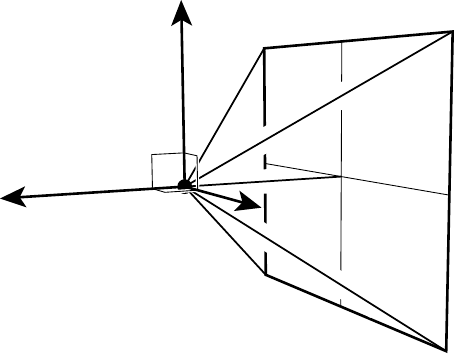}
\caption{The initial tangent vector is of the form $s f_1 + t f_2 - f_3$, where $s$ and $t$ are coordinates on the screen. }
\label{Fig:FrameScreen}
\end{figure}

Let $\calO X$ be the bundle of all orthonormal frames on $X$.
We fix once and for all a reference frame $e = (e_1,e_2, e_3)$ at the origin $o$.
This provides an identification of $\calO_oX$, the space of orthonormal frames at $o$, with ${\rm O}(3)$.
In particular, this induces an embedding of the stabilizer of the origin, $K$, into ${\rm O}(3)$, given by $k \mapsto d_ok$.

\subsubsection{Parametrizing the frame bundle}
Our goal is to make simulations of Thurston geometries to better understand their properties. Our audience in this endeavor consists of entities with primary experience in $\EE^3$, as far as we are aware. Thus, our audience will naturally expect to be able to move in any direction, and orient their view in any way they wish. Thus, the user should be able to move and rotate to achieve any element of the frame bundle $\calO X$ (while preserving their orientation class). Therefore the data we use to record the position and facing of the user must map \emph{onto} $\calO X$.

When $X$ is isotropic, $G$ acts transitively on the frame bundle $\calO X$. In this case one could use an element of $G$ to record this data. However, when $X$ is anisotropic, this action is not transitive. For example, if $X$ is one of the product geometries $S^2 \times \EE$ or $\HH^2 \times \EE$, there is no isometry that rotates in way that breaks the product structure. 

Thus, we parametrize $\calO X$ by the following map.

\begin{equation*}
	\begin{array}{ccc}
		G \times {\rm O}(3) &  \to & \mathcal O(X) \\
		(g,m) & \mapsto & d_og \circ m  (e)
	\end{array}
\end{equation*}

\noindent Since the action of $G$ on $X$ is transitive, there is an element $g$ taking $o$ to any given point $p = g o$. 
The map $d_o g$ sends $T_oX$ to $T_pX$. By varying $m$, we can send the reference frame $e$ to any frame in $T_pX$. Thus, the map is onto.

The group $G$ acts on the left on $G\times {\rm O}(3)$ by multiplication of the first factor so that the map $G \times {\rm O}(3)  \to  \mathcal O(X)$ is $G$-equivariant.
Note that the stabilizer $K$ of the origin $o$, also acts on the right on $G\times {\rm O}(3)$ as follows: for every $(g,m) \in G \times {\rm O}(3)$ and for every $k \in K$ we have 
\begin{equation*}
	(g,m) \cdot k = \left(gk, d_ok^{-1} \circ m\right).
\end{equation*}
This action commutes with the left action of $G$.
Moreover the application $G \times {\rm O}(3)  \to  \calO X$ above induces a $G$-equivariant bijection from the quotient $(G\times {\rm O}(3))/K$ to $\calO X$.

\subsubsection{Using a transitive normal subgroup}
For geometries with a transitive normal subgroup $G_0<G$ of isometries, there is a natural section of the frame bundle $X\to\mathcal{O}X$ given by the $G_0$-orbit of the reference frame $e$ at the origin.
Using this frame, we can encode unit tangent vectors in $T_pX$ by points of the unit sphere of $\RR^3$. The coordinates needed to describe these unit tangent vectors are thus uniformly bounded at all points $p\in X$.
This choice of representation helps reduce numerical errors, for example its implementation in Sol.

\medskip
\subsection{Moving in the space} 
\label{Sec:moving in the space}
Using the parameterization above, a pair $(g,m)\in G\times O(3)$ specifies a location $p\in X$ of the user, and a frame $f$ in $T_pX$. This provides the necessary data to orient the user's virtual camera within the space.  
To produce a real-time simulation, we need a means of converting user input into this form.

Assume that at the current frame, the virtual camera is at a point $p \in X$.
At each frame of the simulation, the virtual reality system records the position and facing of the headset in the play area, which is (very well) approximated as a subset of $\EE^3$.  We interpret the change in position between this frame and the next as a tangent vector $v \in T_p X \homeo \EE^3$, given by coordinates in the local frame $f = (f_1,f_2,f_3)$ representing the facing of the observer. Alternatively, keyboard input can provide the same information.

\begin{remark}
There is a choice to be made here in the relationship between the distance moved in the real world and the magnitude of the vector $v$. In our implementation, by default one meter in the real world corresponds to one unit in the virtual world. One may wish to change this relationship by a scaling factor to, for example, vary the perceived effects of curvature in $\HH^3$~\cite{SteveBlog}.
\end{remark}

We move the observer along the geodesic $\gamma \colon \RR \to X$ such that $\gamma(0) = p$ and $\dot \gamma(0) = v$.
In addition, we update the facing of the observer using parallel transport. 
Parallel transport along $\gamma$ can be seen as a collection of orientation-preserving isometries
\begin{equation*}
	T(t) \colon T_{\gamma(0)} X \to T_{\gamma(t)}X
\end{equation*}
such that 
\begin{equation}
\label{Eqn:ParallelTransportEqn}
	\nabla_{\dot \gamma(t)} T(t) = 0, \quad \forall t \in \RR.
\end{equation}

In the easier geometries ($\EE^3, S^3, \HH^3, S^2 \times \EE,$ and $\HH^2 \times \EE$), for each geodesic $\gamma$ through a point $p$, there is a one-parameter subgroup $\{g(t)\} \subset G$ such that $\gamma(t) = g(t)p$. In these cases, the parallel transport operator is $T(t) = d_p g(t)$. 

\subsubsection{Using a transitive normal subgroup} 
\label{Sec:Parallel transport - Grayson}
In Nil, Sol, and $\SLR$, we do not have the above one-parameter subgroup. Instead,
in order to compute the path of isometries $t \to T(t)$ we again use Grayson's method.
Assume as above that
 $G_0$ is a connected normal subgroup of $G$ acting freely and transitively on $X$.
Define $u \colon \RR \to T_oX$ by the relation
\begin{equation*}
	\dot \gamma(t) = d_o L_{\gamma(t)}u(t)
\end{equation*}
where $L_p$ is the unique isometry of $G_0$ sending $o$ to $p$.
Similarly, we define a path $Q \colon \RR \to \rm SO(3)$ by letting
\begin{equation}
\label{Eqn:ParallelTransportPullBack}
	T(t) \circ d_oL_{\gamma(0)} = d_o L_{\gamma(t)} \circ Q(t) 
\end{equation}
It turns out that for each of our harder geometries, $Q$ satisfies a linear differential equation of the form 
\begin{equation}
\label{Eqn:ParallelTransportLinEq}
	\dot Q + B(u)Q = 0
\end{equation}
where $B$ is skew-symmetric matrix which only depends on $u$ (and not on $\gamma$) and with initial condition $Q(0) = {\rm Id}$.
To solve \refeqn{ParallelTransportLinEq} we use the following observation.
By definition of parallel transport, for every $t \in \RR$, we have $T(t) \dot\gamma(0) = \dot \gamma(t)$, hence 
\begin{equation*}
	Q(t)u(0) = u(t).
\end{equation*}
Fix now an arbitrary vector $e_0 \in \RR^3$ and a path $R \colon \RR \to {\rm SO}(3)$ such that $R(t)u(t) = e_0$, for every $t \in \RR$.
Then 
\begin{equation*}
	S(t) = R(t)Q(t)R(0)^{-1}
\end{equation*}
is a rotation of angle $\theta(t)$ around $\RR e_0$.
Hence, in order to compute $Q$, and thus $T$, it suffices to know the value of the angle $\theta$.
To that end, we substitute $Q(t) = R(t)^{-1}S(t)R(0)$ into \refeqn{ParallelTransportLinEq} and obtain a first-order differential equation on $\theta$ that we solve.
This strategy gives an effective way to compute the parallel-transport operator.

Assume that $k \in K$ is an isometry of $X$ fixing $o$ and let $u' = d_ok \circ u$.
We observed previously that $u'$ is also a solution of \refeqn{GraysonMethodFlowSphere}.
With the same kind of computation we get that $Q'(t) = d_ok \circ Q(t) \circ d_ok^{-1}$ is a solution of 
\begin{equation*}
	\dot Q' + B(u')Q' = 0
\end{equation*}
Again we can use the symmetries of $X$ to reduce the amount of computation needed to solve \refeqn{ParallelTransportLinEq}.

During a motion it is convenient to use the pulled-back parallel-transport operator $Q$ to update the position and facing.
Recall that we store the position and facing of the observer as a pair $(g,m) \in G \times {\rm O}(3)$.
At time $t = 0$ the observer is at the point $\gamma(0) = go$ where $g = L_{\gamma(0)}$.
Its facing is given by the frame 
\begin{equation*}
	f = d_o g \circ m(e) = d_o L_{\gamma(0)} \circ m (e)
\end{equation*}
After moving along the geodesic $\gamma$ for time $t$ the observer reaches the point $\gamma(t)$.
The observer's new facing corresponds to the frame 
\begin{equation*}
	f' = T(t) f = T(t) \circ d_o L_{\gamma(0)} \circ m (e).
\end{equation*}
By the definition of $Q$, we get
\begin{equation*}
	f' =  d_o L_{\gamma(t)} \circ Q(t) \circ m (e).
\end{equation*}
Hence the position and facing of the observer after time $t$ is given by the pair $(L_{\gamma(t)}, Q(t)m)$.

\subsection{Rendering an image from a fixed location}
\label{Sec:render fixed image}
Assume that the position and the facing of the observer is given as pair $(g,m) \in G \times {\rm O}(3)$.
In order to render what the observer would see, we proceed as follows. Let $p$ be the point obtained by applying $g$ to the origin $o$. Recall that the observer is looking in the direction $-f_3$, where $f = (f_1, f_2, f_3)$ is the frame $f = me$. The set of vectors $u \in T_pX$ such that $\left<u, f_3\right> = -1$
defines an affine plane $P$ in $T_pX$.
We identify the screen of the computer with a rectangle in $P$ centered at $-f_3$. See \reffig{FrameScreen}.
The exact size of the rectangle is computed in terms of the field of view of the observer.
For each vector $u \in T_pX$ in this rectangle, we follow (using the ray-marching algorithm) the geodesic starting at $p$ in the direction of $u$ (or more precisely the unit vector with the same direction) until it hits an object.
We color the corresponding pixel on the screen with the color of this object, or more realistically, using a physical model of lighting as described in \refsec{Lighting}.

The formulas for geodesic flow starting from an arbitrary point $p$ can be efficiently factored using the homogeneity of $X$. That is, a conjugation by $g$ identifies the flow from $o$ with the flow from $p$.
In practice, for the easier geometries one might as well work at the position of the observer, $p$, rather than at $o$. However, for the harder geometries,  this significantly simplifies the code.

\subsection{Stereoscopic vision}
\label{Sec:Stereoscopic}
A virtual reality headset has a separate screen for each eye. This allows it to show the two eyes slightly different images -- parallax differences between these images can then be interpreted by the user's brain to give depth cues.

Given positions and facings for the left eye, $(p^\triangleleft, f^\triangleleft)$, and the right eye, $(p^\triangleright, f^\triangleright)$, we can render an image for each eye exactly as in \refsec{render fixed image}.
The question is how to determine the positions and facings for the two eyes. 
Let $\ell$ be the \emph{interpupillary distance}; that is, the distance between the eyes. We track the position and facing $(p,f)$ of the user's nose, using the sensors of the virtual reality headset as in \refsec{moving in the space}. In $\EE^3$, the canonical thing to do is to set  $f^\triangleleft$ and $f^\triangleright$ equal to $f$, and to set 
\[
p^\triangleleft = p - (\ell/2) f_1 \qquad p^\triangleright = p + (\ell/2) f_1
\]
recalling that $f_1$ is the frame vector in $f$ pointing to the right. 

This works because in euclidean space, one may naturally identify the tangent spaces at all points.
For non-euclidean geometries, a natural analogue is as follows. We set $(p^\triangleleft, f^\triangleleft)$ to be the result of flowing from $(p,f)$ for distance $\ell/2$ in the direction of $-f_1$, and we set $(p^\triangleright, f^\triangleright)$ to be the result of flowing from $(p,f)$ for distance $\ell/2$ in the direction of $f_1$.

This works reasonably well for $S^3$, $\HH^3$, and $\HH^2 \times \EE$, although there are some problems. As mentioned in~\cite[Section 6]{NEVR2}, in geometries in which geodesics diverge, parallax cues tell our euclidean brains that all objects are relatively nearby. In $\HH^3$ for example, two eyes pointing directly at an object that is infinitely far away are angled towards each other. 
One alternate strategy we briefly experimented with was to rotate the frames $f^\triangleleft$ and $f^\triangleright$ slightly inwards, so that geodesics emanating from $p^\triangleleft$ and $p^\triangleright$ in the directions of their forward vectors $-f_3^\triangleleft$ and $-f_3^\triangleright$ converge at infinity. This might then match the behavior our euclidean brains expect: that objects at infinity can be seen by looking straight ahead with both eyes. We did not notice much difference in our ability to perceive the space in making this change, although this line of thinking leads us to conclude that predators in hyperbolic space would evolve to look somewhat cross-eyed to us native euclideans.

In $S^3$, points at distance $\pi/2$ away from the user appear to be ``infinitely far away'', while objects further than $\pi/2$ away have depth cues reversed. One possible future direction to try to improve this experience is as follows. Modern virtual reality headsets have the ability to track where the user's eyes are looking. Based on this information, we could determine what object the user is looking at. Using the distance from the viewer to the object, we could rotate the frames $f^\triangleleft$ and $f^\triangleright$ to imitate the effects of parallax for objects at that distance in $\EE^3$. It remains to be seen whether or not these frequent rotations would induce nausea. 

The situation is worse in $S^2 \times \EE$, Nil, Sol, and $\SLR$, where geodesics ``spiral''. 
\reffig{parallax nil} illustrates how a small parallax in Nil can produce very different pictures:
On each row, the scene consists of a single ball textured as the earth. The different images are views of this ball from slightly different positions.
Using the convention that one unit represents one meter, the offset between two consecutive images is approximately half the interpupillary distance.
Our euclidean brains are not able to interpret the combination of these pictures.
One might think that the sphere is too small (a few centimeters) and too far away form the observer (a few meters) for our eyes to see that level of detail.
However geodesic rays in Nil spiral in such a way that the angular size of the object in the observer's view is very large. This makes the object appear as if it is very close to the observer.
Thus this parallax distortion cannot be ignored.
New ideas are thus needed to produce stereoscopic images in all eight geometries that can be pertinently analyzed by the brain. 

Weeks~\cite[Section 5]{Weeks20} uses the following approach in $S^3$ and $\HH^3$. The observer is represented by a point $p$ in the space $X$. Using the inverse of the exponential map, we send objects in that space to the tangent space $T_pX$ based at $p$. We then implement stereoscopic vision using cameras based at two points near the origin of $T_pX$. This works well when the inverse exponential map is single-valued, but seems challenging in the harder geometries.

\begin{figure}[htbp]
\centering
\subfloat[$r = 0.2$, $L = 7.3$, $\alpha = 68\degree$ and $L' = 0.36$.]{
	\stackunder{\includegraphics[width=0.3\textwidth]{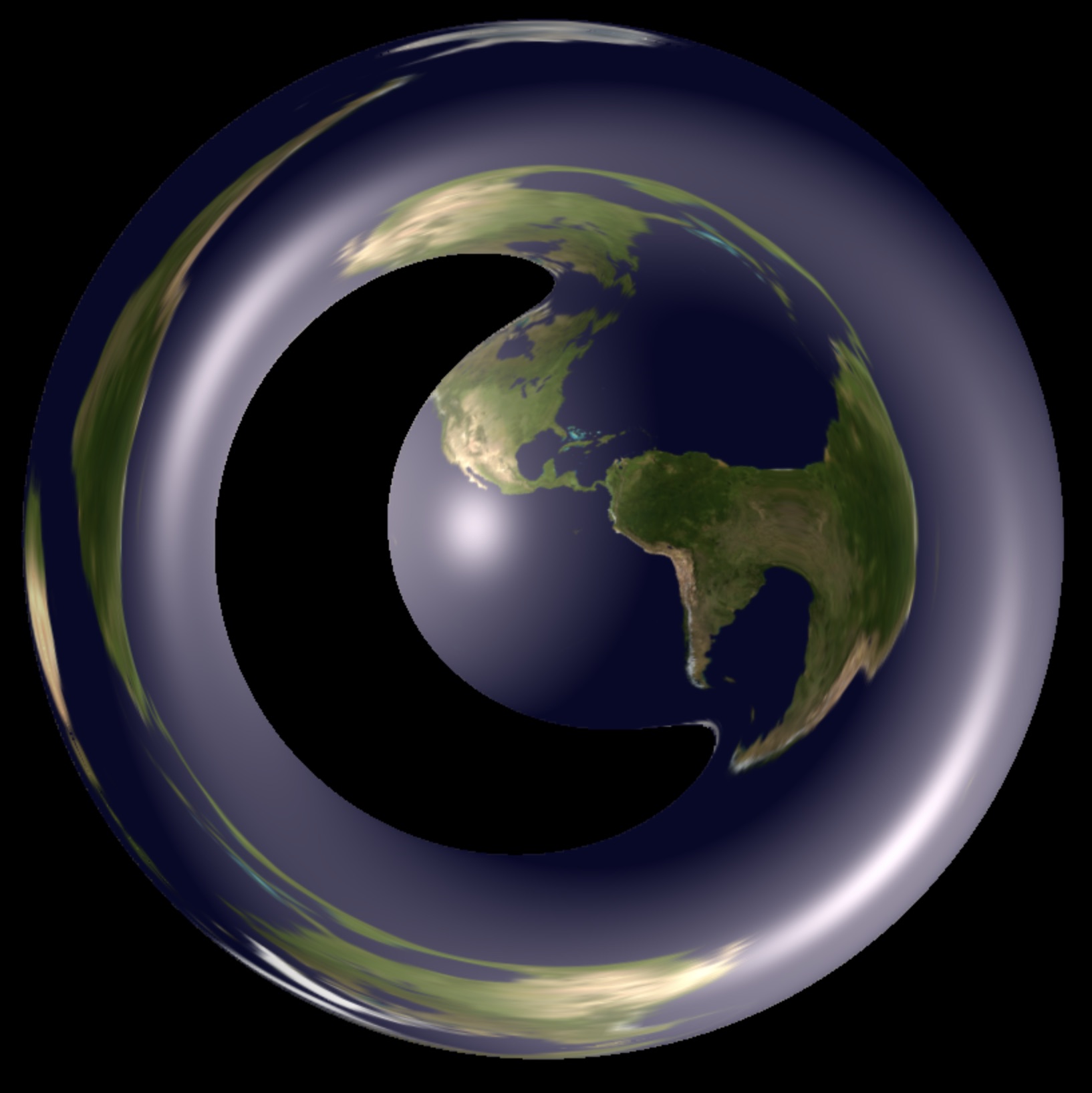}}{\footnotesize $\Delta = -0.03 $}
	\stackunder{\includegraphics[width=0.3\textwidth]{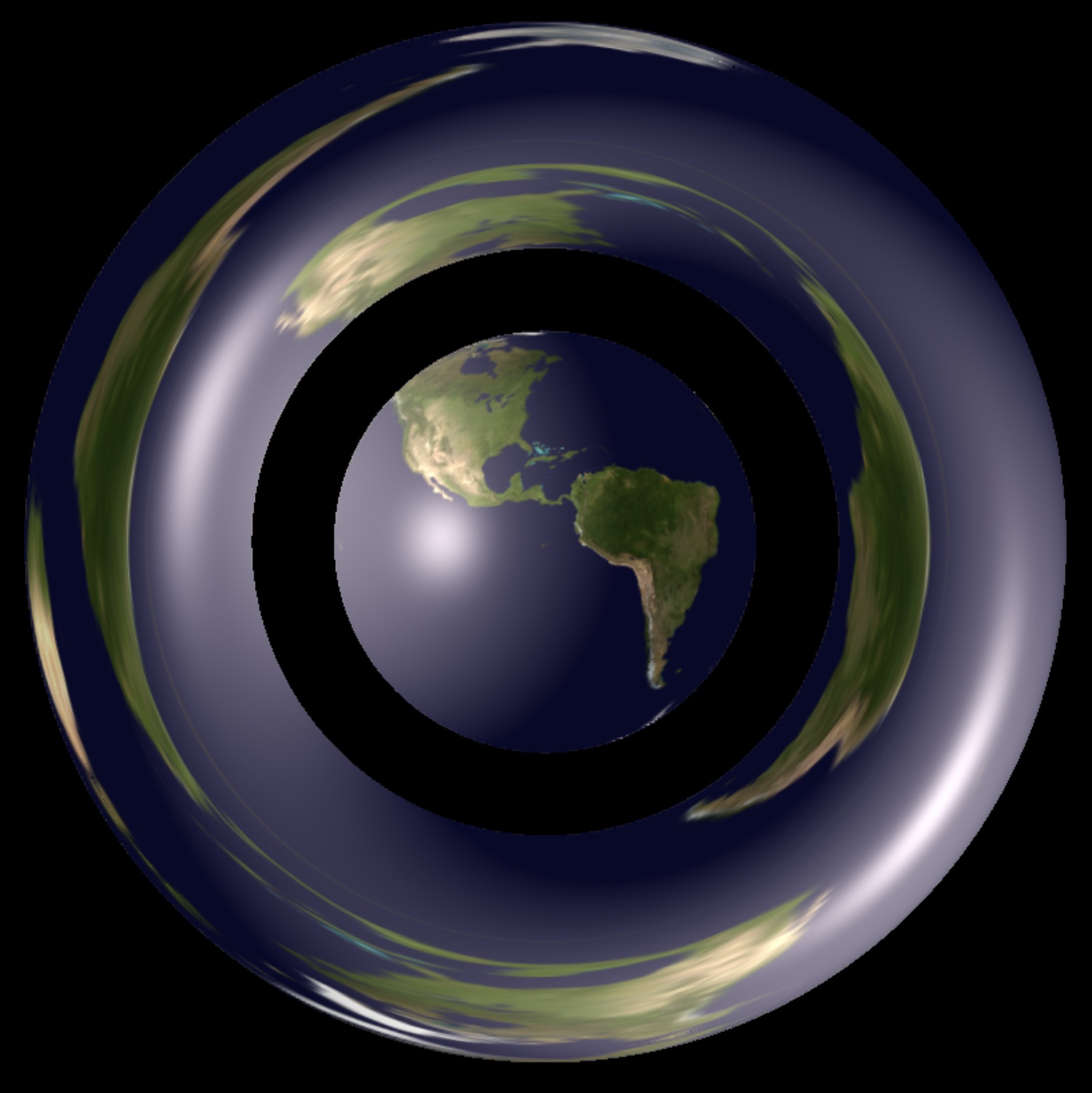}}{\footnotesize $\Delta = 0 $}
	\stackunder{\includegraphics[width=0.3\textwidth]{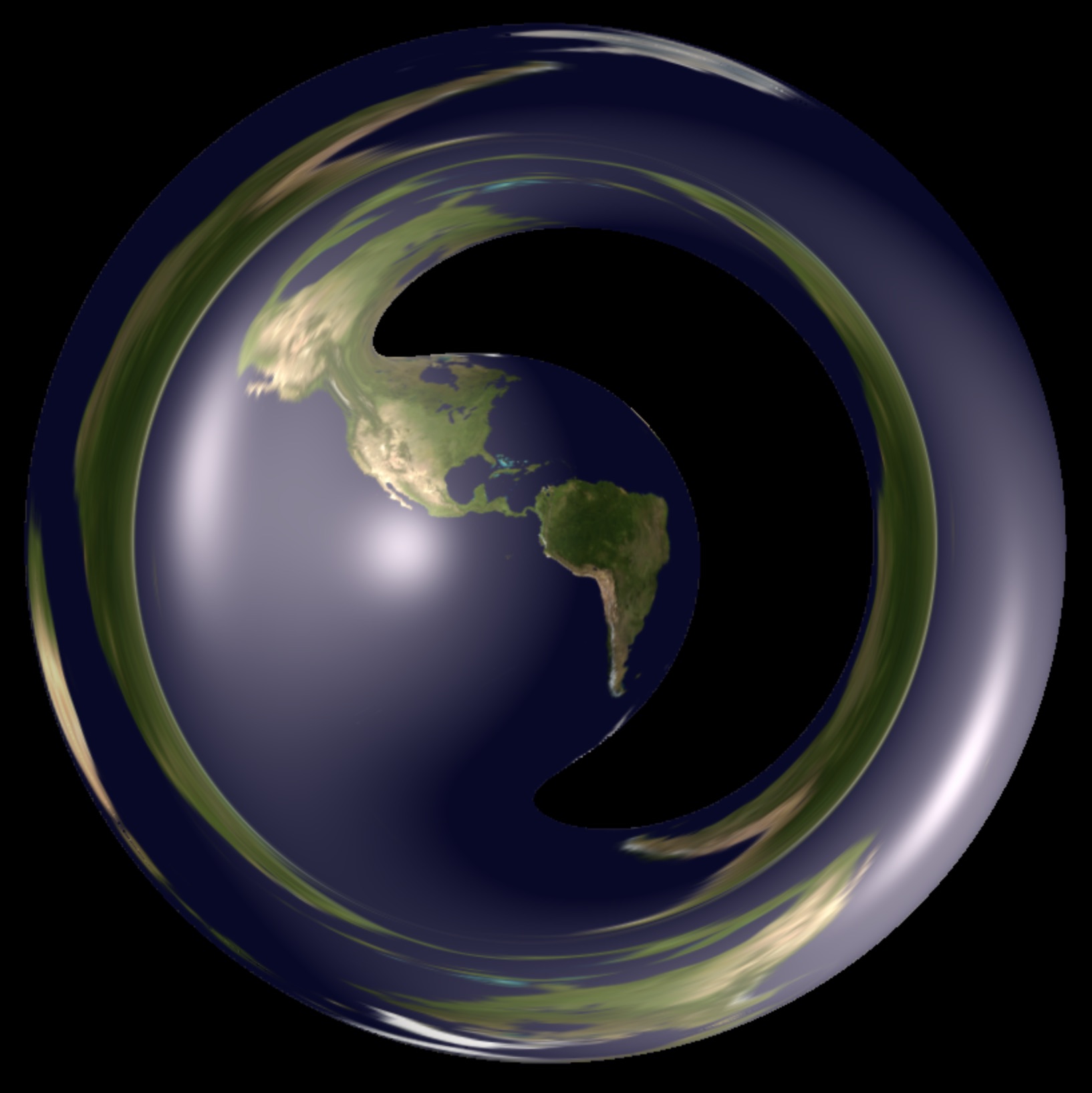}}{\footnotesize $\Delta = +0.03 $}
}\\
\subfloat[$r = 0.06$, $L = 6.55$, $\alpha = 39\degree$ and  $L' = 0.18$.]{
	\stackunder{\includegraphics[width=0.3\textwidth]{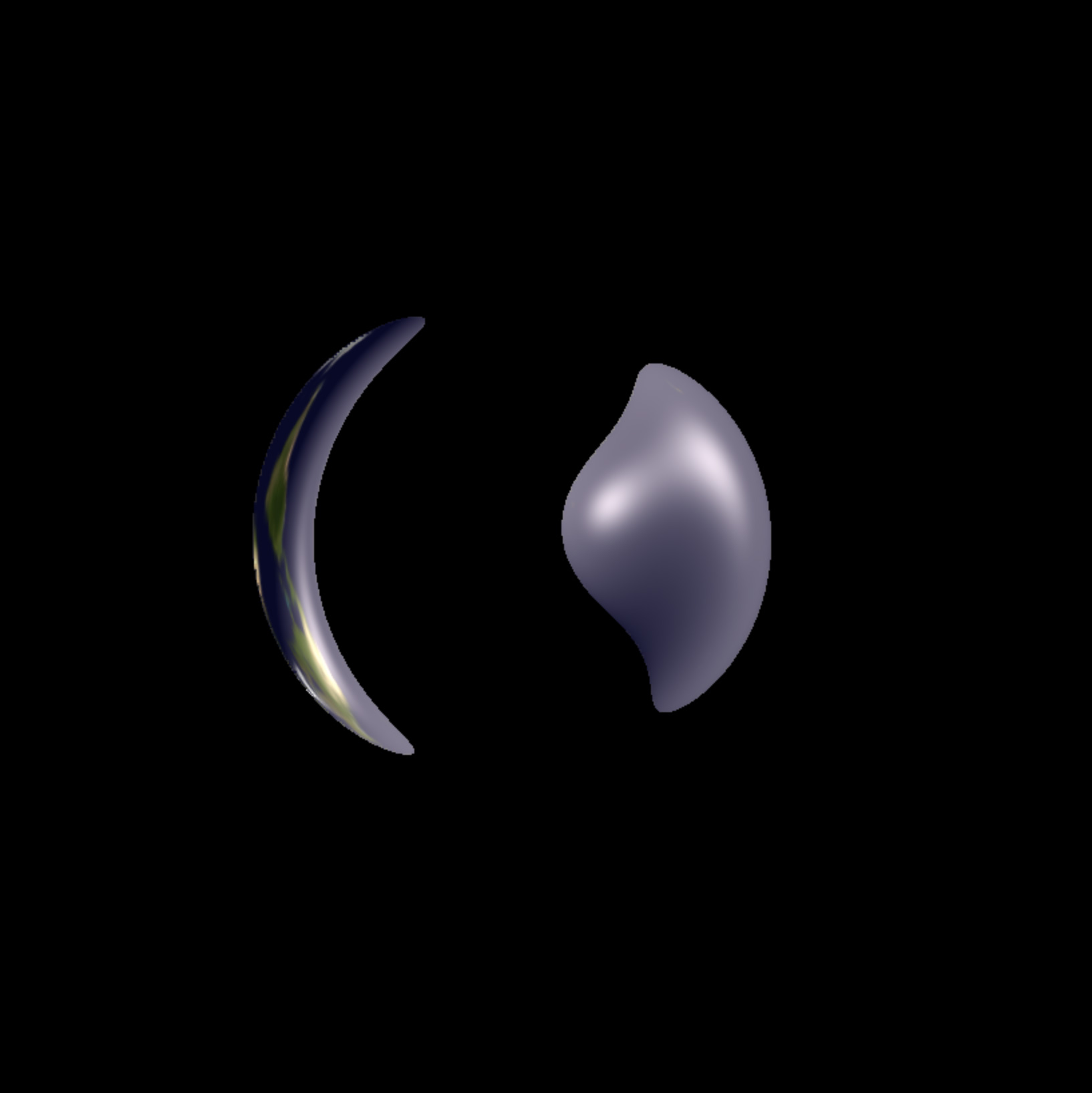}}{\footnotesize $\Delta = -0.03 $}
	\stackunder{\includegraphics[width=0.3\textwidth]{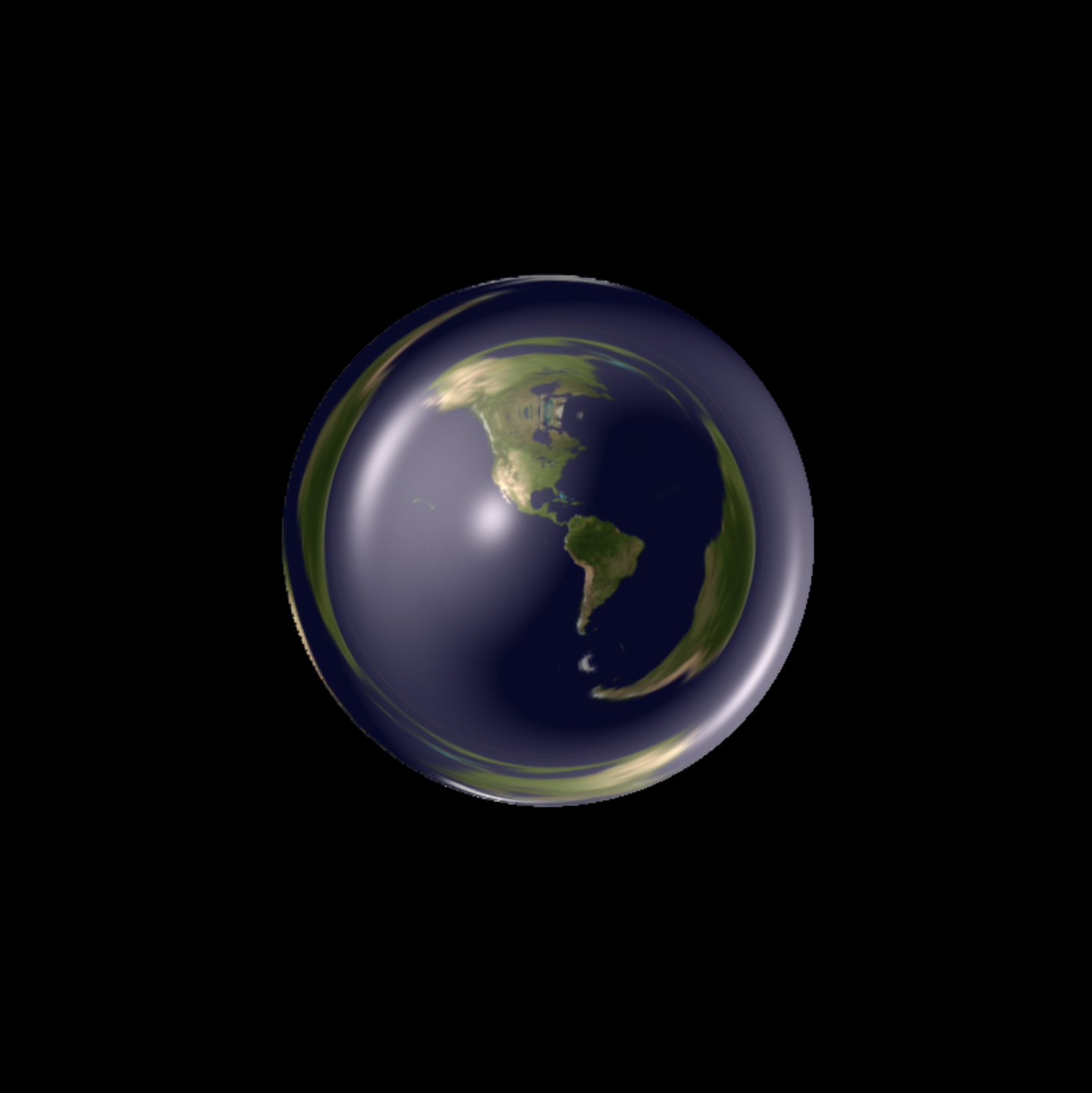}}{\footnotesize $\Delta = 0 $}
	\stackunder{\includegraphics[width=0.3\textwidth]{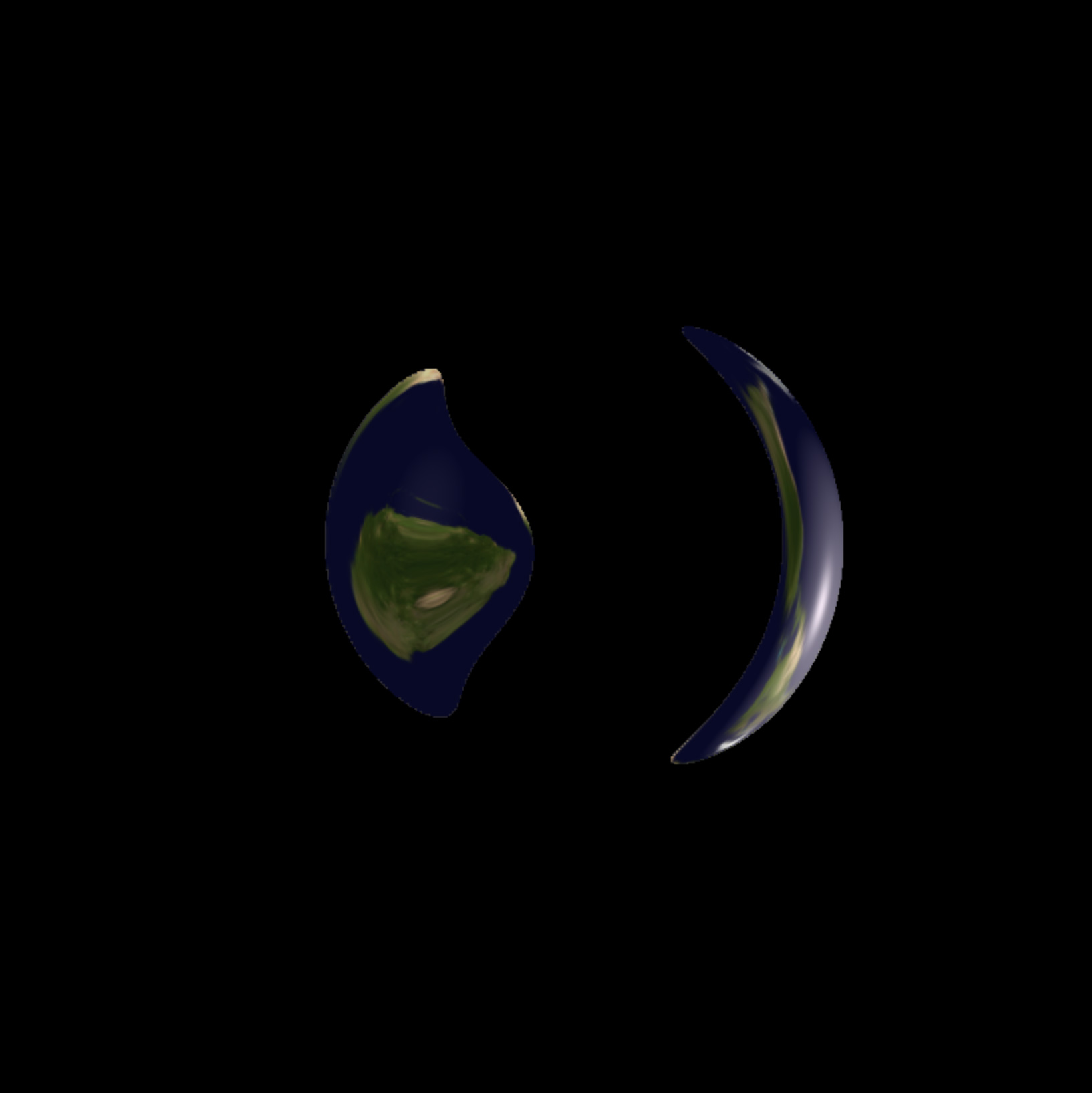}}{\footnotesize $\Delta = +0.03 $}
}
\caption{Parallax in Nil makes stereoscopic vision difficult.  
The earth has radius $r$ and is centered at the origin.
In the middle picture the observer is located on the $z$-axis at a distance $L$ from the origin. 
On the left and right pictures, the observer is offset by a distance $\Delta$ in the $x$-direction.
The angular size of the ball in the observer's view is $\alpha$. 
Note that due to the spiraling of geodesics in Nil, this angular size is much larger than it would be for an equivalent ball in euclidean space. Indeed, an observer assuming that they are in euclidean space would think that the ball is at distance $L'$ from them.
}
\label{Fig:parallax nil}
\end{figure}

\subsection{Signed distance functions in $X$}
\label{Sec:SDF}

The algorithms described so far render the in-space view of a scene in the geometry $X$, given a signed distance function $\sigma\colon  X\to\RR$ for it.
In the interest of both simplicity and geometric accuracy, we focus on scenes built from intrinsically defined objects, including
\begin{itemize}
\item balls (bounded by equidistant surfaces from a point),
\item solid cylinders (bounded by equidistant surfaces from a geodesic), and
\item half-spaces (bounded by totally geodesic codimension one submanifolds).
\end{itemize}

Note that a single object may fall into more than one of the above categories. For example, a hemi-hypersphere of $S^3$ is both a ball and a half-space.

\subsubsection{Simple Scenes}
In some cases, viewing and moving relative to a single simple object is all that is needed to illustrate surprising features of a geometry. In previous work for example, we qualitatively described counterintuitive features of Nil geometry~\cite{NEVR3}  with a scene consisting of a single ball, and we studied a single isometrically embedded copy of the euclidean plane in Sol geometry~\cite{NEVR4}.
From a collection of basic objects, many other simple scenes can be created through finitely many applications of union, intersection and difference.
These operations of constructive solid geometry are particularly suited to producing scenes in a ray-marching application, as $\{\cup, \cap, \smallsetminus\}$ are faithfully represented on the space of signed distance functions by $\{\min,\max, -\}$ respectively~\cite{Quilez}.

In many cases however, the interesting features of the geometry are best exhibited by more complex, unbounded scenes, which cannot be built from the basic objects in finitely many operations.

\subsubsection{Complex Scenes and Symmetry}

Scenes which display interesting features across unbounded regions are useful to highlight various geometric features, including
\begin{itemize}
\item exponential growth of volume in negative curvature,
\item anisotropy in the product geometries,
\item non-integrability of the contact distribution in Nil, and
\item the lack of any continuous rotation symmetry in Sol.	
\end{itemize}

The particular details of the scene's contents do not matter so much as the requirement that the user may travel unbounded distances in any direction and still be surrounded with an approximately homogeneous collection of objects.

One way to do this is to use the homogeneity of $X$ to build an extremely symmetric scene, by choosing a signed distance function $\sigma\colon X\to \RR$ invariant under the action of a discrete subgroup $\Gamma<G$. 

As geometric topologists however, we cannot help but note that covering space theory provides an alternative perspective.
Consider a scene invariant under the action of $\Gamma$.  
This is described by a signed distance function $\sigma\colon X\to \RR$ with $\sigma\circ\gamma=\sigma$ for all $\gamma\in \Gamma$.
Such maps are in natural correspondence with maps from the quotient $\overline{\sigma}\colon X/\Gamma\to \RR$.

Indeed, the view from a point $q\in X/\Gamma$ of a signed distance function $\overline{\sigma}$ is identical to the view from a lift $\cover{q} \in X$ of a signed distance function $\sigma$ invariant under $\Gamma$. This follows from the above topological correspondence together with the fact that the covering map is a local isometry.

This suggests exploring the unbounded geometry of $X$ indirectly, through the geometry of its quotients $X/\Gamma$.

\section{Non-simply connected manifolds}
\label{Sec:NonSimplyConnected}

Let $(G,X)$ be a homogeneous geometry. A $(G,X)$-manifold is a smooth manifold $M$ together with an atlas of charts 
$$\{(U_\alpha\subset M, \quad f_\alpha\colon U_\alpha\to X)\}$$ 
with transition maps in $G=\Isom(X)$.
The elementary theory of such $(G,X)$-manifolds shows that one may globalize this atlas into a \emph{developing map} from $\widetilde{M}$ to  $X$, equivariant with respect to a \emph{holonomy homomorphism} from $\pi_1M$ to $G$~\cite{GoldmanBook}.
Furthermore, if $M$ is geodesically complete,  
then the developing map is a diffeomorphism and $M\cong X/\Gamma$ is a quotient, where $\Gamma\cong\pi_1(M)$ is the image of the holonomy homomorphism.
The simplest $(G,X)$-manifold is $X$ itself, and we have seen above how to ray-march simple scenes in $X$.
Covering space theory implies that $X$ is the unique complete simply connected $(G,X)$-manifold, but non-simply connected $(G,X)$-manifolds abound.
Indeed the classification of compact hyperbolic manifolds up to diffeomorphism is still incomplete.
Additionally, while there are only ten  euclidean manifolds up to diffeomorphism, there are uncountably many distinct euclidean structures in each diffeomorphism class.
Simulating not just the Thurston geometry $X$ but also various $(G,X)$-manifolds is a natural extension of our original goals. These manifolds may or may not have finite volume, corresponding to the discrete subgroups $\Gamma < G$ being lattices or not. Generalizing further, our algorithms can also simulate $(G,X)$-orbifolds and incomplete $(G,X)$-manifolds.
Thus we may experience both the three-dimensional homogeneous spaces, and also the atomic building blocks of geometrization.

In the next section, we describe a method to ray-march (or ray-trace) within a quotient manifold, using a fundamental domain. Similar ideas are outlined in~\cite{BergerLaierVelho} and~\cite{Kopczysk}.

\subsection{Teleporting}
\label{Sec:Teleporting}
Let $\Gamma$ be a discrete subgroup of $G$, and $M=X/\Gamma$.
To produce an intrinsic simulation of $M$, we wish to reuse as much as possible the work that goes into producing a simulation of $X$. To that end, we describe $M$ using a connected fundamental domain $D\subset X$ with $2n$ faces $\{F_i^\pm\}_{i=1\ldots n}$. (Alternatively, one could embed $M$ in a higher-dimensional ambient space, and try to implement the techniques of \refsec{General} in that context.)
The quotient manifold $M$ is obtained by identifying each $F_i^-$ with $F_i^+$ via an isometry $\gamma_i\in\Gamma$.
These face pairings form a generating set $\{\gamma_1,\ldots,\gamma_n\}$ for $\Gamma$.
This allows us to ray-march using the geodesic flow on $D\subset X$, and calculate parallel transport and position/facing using the parametrization of $\mathcal{O}X$ restricted to $D$.
Indeed, given a signed distance function $\sigma\colon X/\Gamma\to \RR$ pulled back to $D$, the only substantial change is that we must modify the ray-march algorithm to keep the geodesic flow in $D$. We can do this by using the face pairings. Similarly, when the user moves outside of $D$, we move them by an isometry to keep them inside of $D$. In either case, we call this process \emph{teleporting}. See \reffig{LightRayInQuotientManifold}.

\begin{figure}[htbp]
\centering
\includegraphics[width=0.45\textwidth]{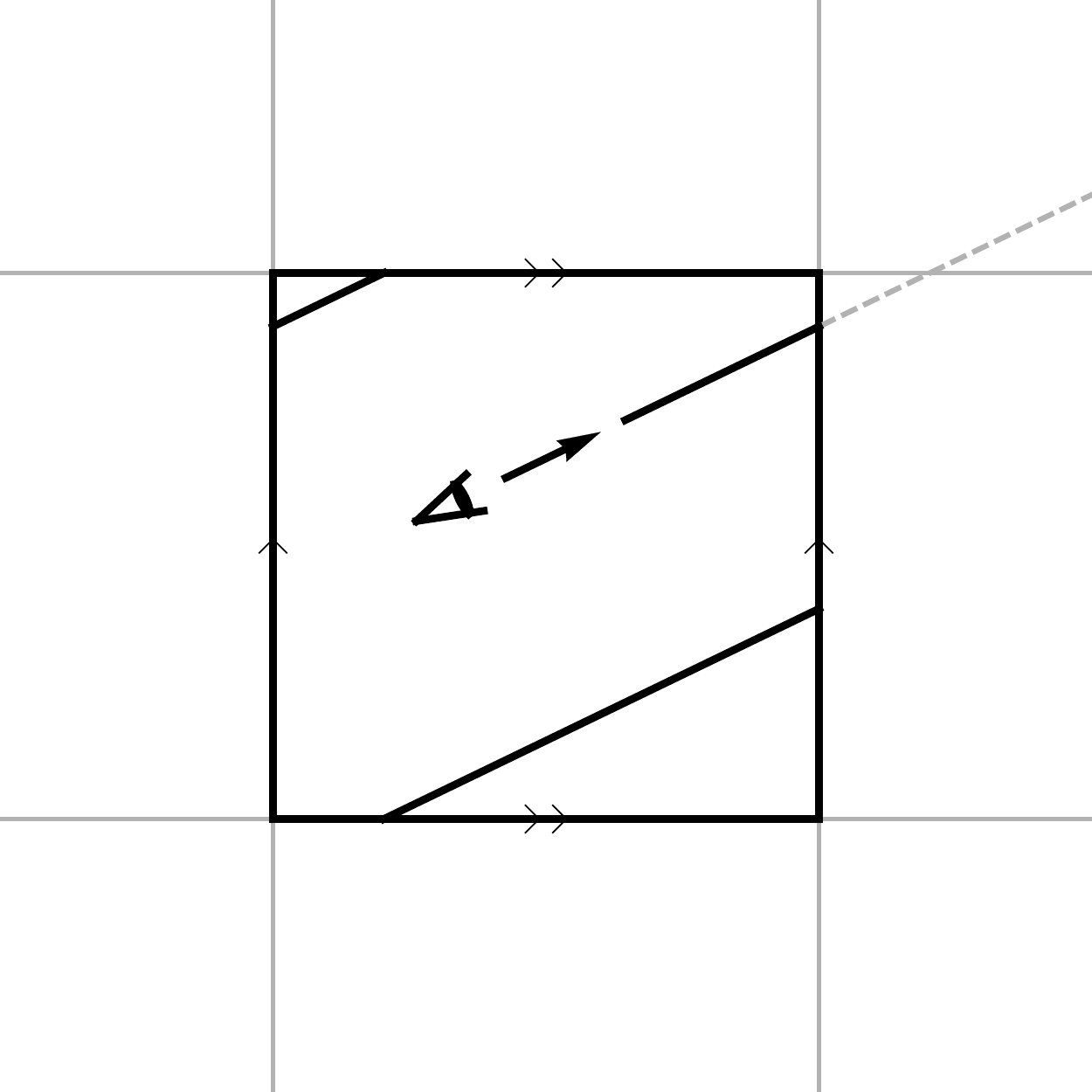}
\caption{A light ray traveling in a domain $D$ must teleport at the boundary to simulate the view within a torus.}
\label{Fig:LightRayInQuotientManifold}
\end{figure}

\begin{remark}
\label{Rem:SideBenefit}
As a side benefit, the quotient manifold approach helps with floating point errors. At each step of our ray-marching algorithm, the basepoint of our ray is within $D$. In the case that $M$ is compact for example, the coordinates of our basepoint are bounded by a function of the diameter of $D$. This then avoids problem \refitm{ExpCoords} of \refsec{FloatingPoint}. In our experience, we see less noise in images such as \reffig{CompareH3} with this strategy, despite the potential accumulation of errors (see \refsec{Accumulation}) introduced by repeatedly teleporting a ray's position and tangent vector back inside of $D$.
\end{remark} 

\begin{remark}
\label{Rem:SideBenefit2}
It may be useful to employ teleporting even when we are simulating a scene inside of the simply connected geometry $X$ rather than inside a quotient manifold. That is, we have a discrete subgroup of isometries and a fundamental domain $D$, and we use teleportation to keep the viewer always within $D$. Whenever we teleport the user, we also teleport all other objects in the scene, and update the signed distance function as appropriate. The advantage here is that rays begin inside of $D$, where their coordinates are small. Therefore floating point errors only accumulate to a noticeable degree for objects which are far from the viewer. For some geometries, such distant objects will be very small on the visual sphere. Alternatively, they may be hidden by fog.
\end{remark}

\subsubsection{Teleporting with a Dirichlet domain}
\label{Sec:TeleportingDirichlet}
A simple, geometry independent implementation involves choosing the Dirichlet domain $D$ for the action of $\Gamma$, centered at the origin $o\in X$.
To determine whether or not a point $p$ is outside of $D$, we compare the distance $d(p,o)$ with $d(p,\gamma_i^\pm o)$ for each face pairing isometry $\gamma_i$. 
When $d(p,o) > d(p,\gamma_i^\pm o)$, the point $p$ can be brought back closer to $o$ via an application of $\gamma_i^\mp$. Iterating this (relabelling our point as $p$ after each step) until $d(p,o) \leq d(p,\gamma_i^\pm o)$, we ensure that $p$ is inside of $D$.

An advantage of this approach is that one does not need an analytic description of the boundary $\partial D$ to accurately adjust the ray-march.
When the intrinsic distance $d$ is expensive to calculate however, this adds a significant extra computational burden.

\subsubsection{Teleporting with a projective model and linear algebra}
\label{Sec:TeleportingProjective}
A second implementation that removes the need to calculate distances is possible for the Thurston geometries. Up to covers (in the cases of $\SLR$ and $S^3$), these have \emph{projective models}: a representation of the geometry as an open subset $r\colon X\hookrightarrow\RR\mathbb{P}^3$, together with a linear representation $\Isom(X)\to PGL(4;\RR)$~\cite{Molnar97}.  

To lighten the notation in this section, we identify $X$ with its image under $r$. We choose our fundamental domain $D$ for the action of $\Gamma$ such that $D=\bigcap_i H_i^\pm$, where $\{H_i^\pm\}$ is a collection of $2n$ half-spaces of $X$. 
The point $p$ is outside of $D$ if and only if there is a half-space $H_i^\pm$ such that $p \not\in H_i^\pm$. Each half-space $H$ of $\RR^3$ is in natural correspondence with a linear functional $\phi\colon\RR^3\to\RR$, where $v\in H$ if and only if $\phi(v)\geq 1$, so we can check if $p \in H_i^\pm$ by computing the value $\phi_i^\pm(p)$.
The embeddings $r\colon X\to \RR\mathbb{P}^3$ are inexpensive to compute in our models (see \reftab{GeometryDetails}): for $S^3,\HH^3,S^2\times\RR,\HH^2\times\RR$ we divide by the fourth coordinate, and $\EE^3$, Nil, Sol are already affine patches. The situation for $\SLR$ is slightly more complicated, but similar ideas work for the fundamental domains we have implemented.
Thus, we reduce the problem to a quick calculation in linear algebra.

Knowing which of the half-planes $p$ is not contained in, we now must find 
the element of $\Gamma$ which moves $p$ back into $D$. We iteratively construct this element from the $\gamma_i^\pm$ for which (at each step) $\phi_i^\pm(p) > 1$. 
In many cases (for example when $\Gamma$ is a finite index subgroup of a reflection group), it does not matter which such $\gamma_i^\pm$ we choose at each step. In other cases, for reasons of efficiency, one must be more careful with the ordering, see for example \refsec{NilDiscreteSubgroupsFundDoms}.

Since we have projective models for the eight Thurston geometries, we use this strategy rather than the Dirichlet domain strategy. 

\begin{remark}
\label{Rem:ProjectiveNoFacePairings}
In practice, when using the projective model we can take $S = \{\gamma_i\}$ to be an arbitrary generating set for $\Gamma$. We then generate the half-spaces $H^\pm_i$ from $S$. Their intersection forms a fundamental domain $D$. Note that multiple faces of $D$ may lie in the boundary of a single half-space, and the face pairings of $D$ may involve elements of $\Gamma$ other than those in $S$.  However, we need only use elements of $S$ to implement teleportation. See \refsec{NilDiscreteSubgroupsFundDoms} for a detailed example.
\end{remark}

\subsection{Signed distance functions in $X/\Gamma$}
\label{Sec:SDF in X/Gamma}
With the addition of teleportation, we may draw scenes in any complete $(G,X)$-manifold using the same algorithms as we use in $X$ itself, given the input data of a signed distance function mapping $X/\Gamma$ to $\RR$ describing the scene.
Unfortunately, efficiently calculating a signed distance function (or even a distance underestimator) for a scene in a quotient manifold is often non-trivial. In practice, we will often use an approximation. 

We can construct a very simple approximation for a scene $S$ as follows. Let $D \subset X$ be a fundamental domain for the quotient manifold $X/\Gamma$. We then view $S$ as a subset of $D$. For a point $p \in D$, we may then return the signed distance from $p$ to $S$, where we measure distance in $X$, ignoring the quotient manifold structure entirely. 
Let us call this simplest approximation $\sigma \from X \to \RR$. (Here we implicitly extend the signed distance function from $D$ to $X$.)

\begin{figure}[htbp]
\centering
\subfloat[The signed distance function for a disk in a torus, drawn in the universal cover. ]{
\includegraphics[width=0.45\textwidth]{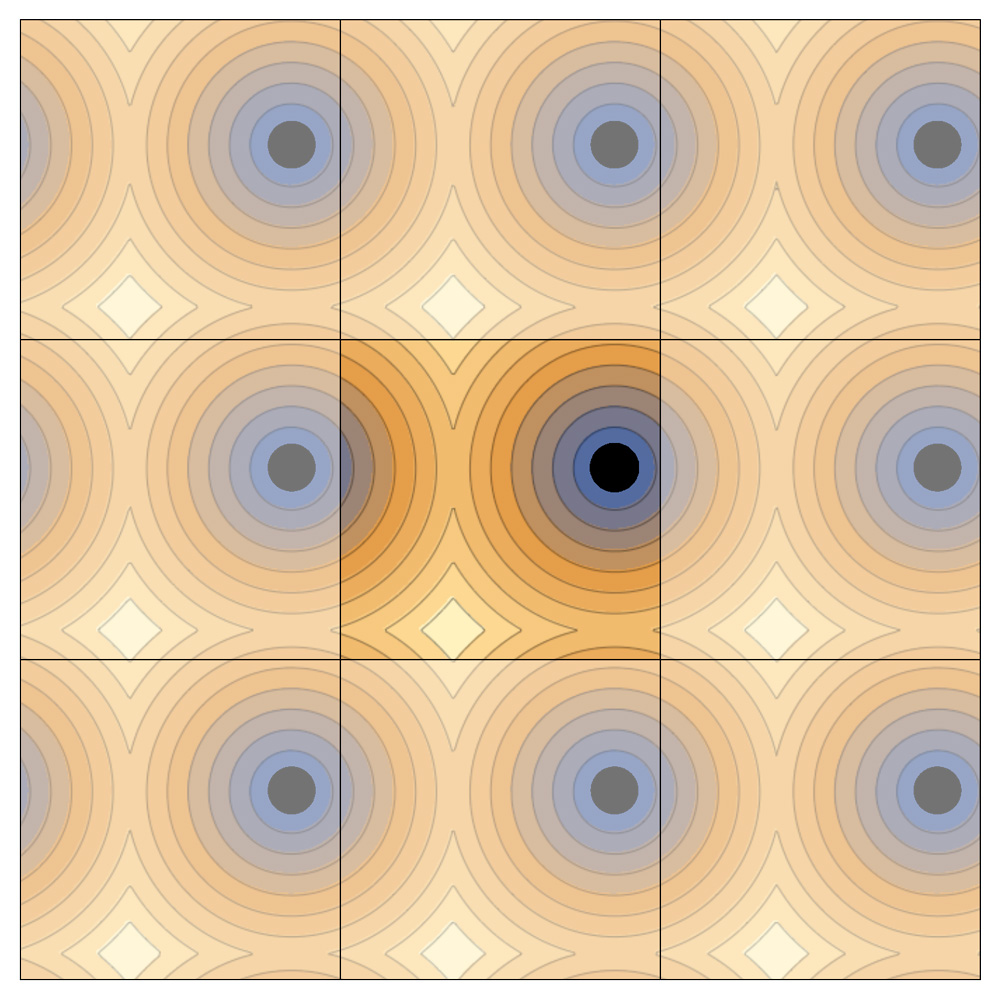}
\label{Fig:SDF_in_M}
}
\quad
\subfloat[The simplest approximation to the signed distance function, $\sigma$.]{
\includegraphics[width=0.45\textwidth]{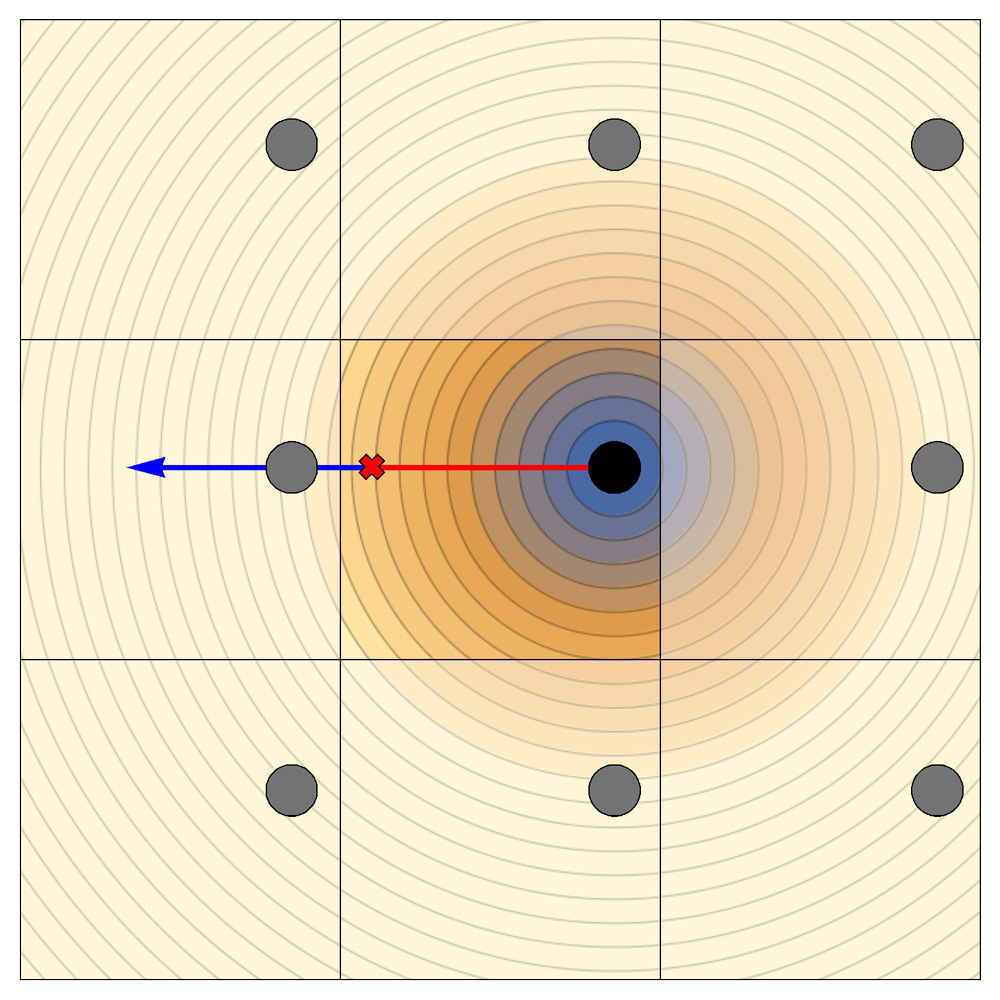}
\label{Fig:SDF_in_X}
}
\caption{Functions on a torus. We indicate the level sets by bands of color. }
\label{Fig:SDFsTorus}
\end{figure}
 
As an example, \reffig{SDF_in_M} shows the correct signed distance function for a disk in a square torus, while \reffig{SDF_in_X} shows $\sigma$. 
For such a square torus, $\sigma |_D$ will be the correct signed distance function for the quotient torus only if the disk is centered in the square. 
Using $\sigma |_D$ in place of the correct signed distance function can lead to some serious visual artifacts. For example, consider a ray starting at the position $p$ marked with a small red ``$\times$'' in \reffig{SDF_in_X} and heading to the left. This ray should leave through the left side of $D$, teleport to the right side of $D$, then hit the disk. However, the function $\sigma |_D$ reports that the distance from $p$ to the disk (indicated with the red interval) is more than half the width of the square. A march along the ray by this distance is shown with the blue arrow: we jump straight through the disk. The result is that this lift of the disk is invisible when viewed from $p$. 

A similar but less extreme form of visual artifact is shown in \reffig{Jaggies}. Here we see jagged errors on the boundaries between cells. In some places near the boundary of $D$ we erroneously jump through points of the scene. Whether or not we make such a jump depends on how close to the boundary of $D$ we land before jumping across the boundary. The variability in this leads to the jaggedness. \reffig{BasicSDF} shows related artifacts.

\subsubsection{Creeping over the boundary of $D$}
\label{Sec:Creep}
One strategy to avoid these kinds of errors uses the observation that flowing by the distance given by $\sigma$ is only dangerous if our ray leaves $D$. Thus, we should detect when a ray passes outside of $D$, and stop just outside. As usual, we are teleported back inside of $D$, and continue ray-marching.

\begin{figure}[htbp]
\centering
\subfloat[No Creeping.]{
\includegraphics[width=0.45\textwidth]{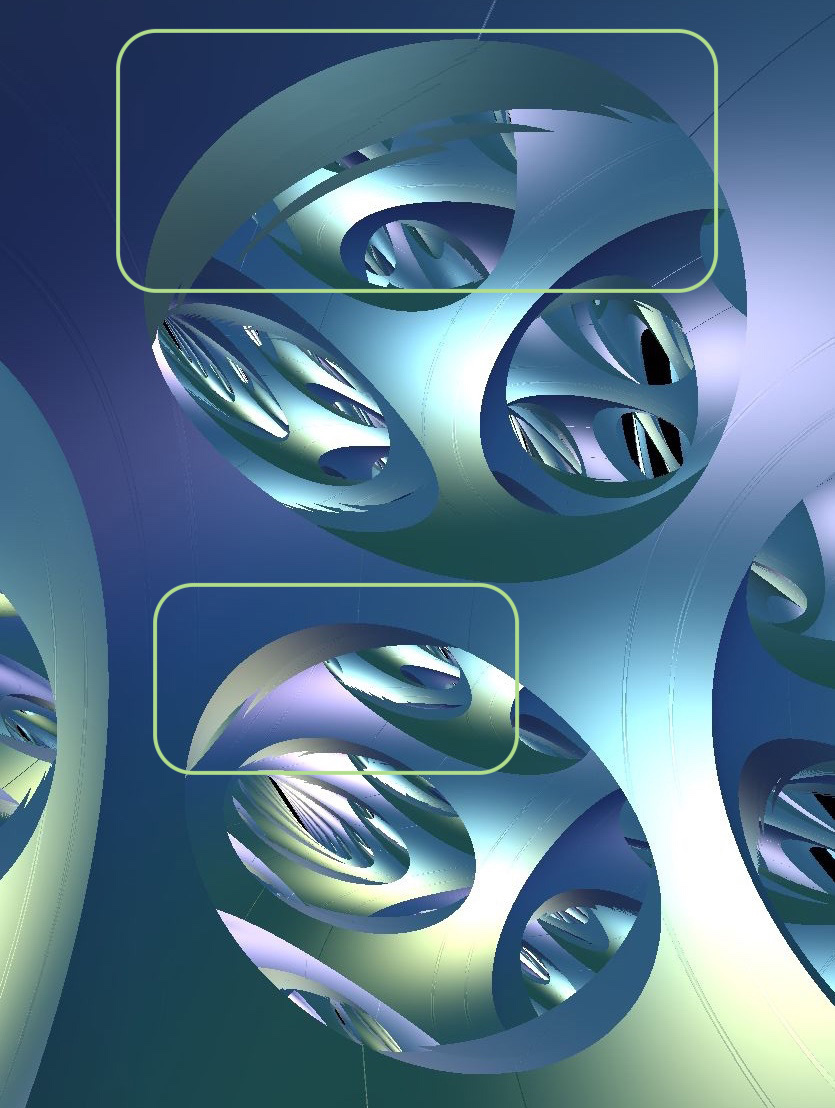}
\label{Fig:Jaggies}
}
\quad
\subfloat[Creeping.]{
\includegraphics[width=0.45\textwidth]{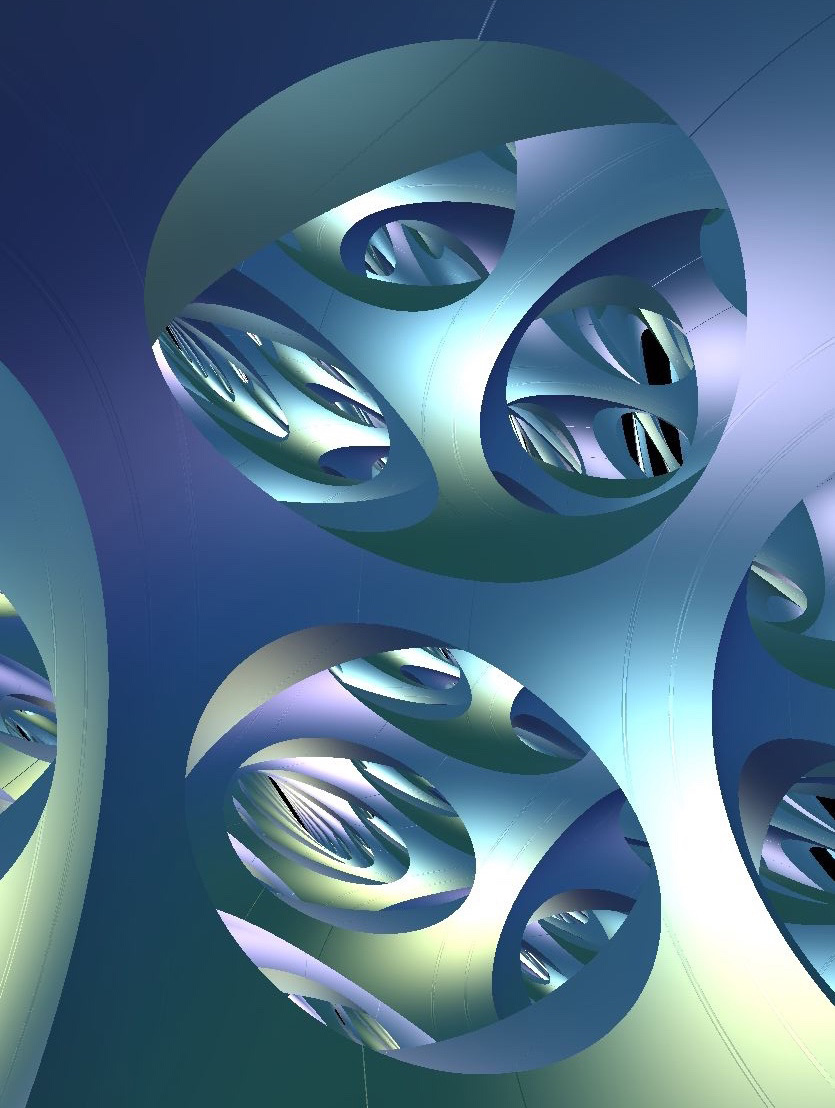}
\label{Fig:JaggiesGone}
}
\caption{Allowing the ray-march to leave the fundamental domain can cause visual artifacts on objects near its faces.  Creeping up to the boundary fixes this.}
\label{Fig:JaggiesProblem}
\end{figure}

Detecting when a ray hits $\bdy D$ is a similar problem to that of detecting when the ray hits an object in the scene. We employ a variety of different methods, as follows.
\begin{enumerate}
\item One way to do this is to use ray-tracing: we solve for the intersection between the ray and the boundary, and measure the distance between this intersection point and the start of the ray. 
\label{Itm:CreepRayTrace}
\item If it is difficult to solve for this point of intersection, but the faces of $D$ have computable signed distance functions, then we can instead use ray-marching. We flow by the minimum of $\sigma$ and the distance to $\bdy D$.\footnote{In practice, we allow a march of the distance to the nearest wall plus some small $\varepsilon$: this prevents wasting many steps approaching the boundary to no appreciable theoretical disadvantage: the teleportation scheme returns us to $D$ immediately upon overstep.}
\label{Itm:CreepRay-march}
\item When the faces do not have computable signed distance functions but we can still detect whether or not we are inside of $D$, we proceed as follows: We flow by the distance given to us by $\sigma$, and ask if the result puts us outside of $D$. If it does, then we perform binary search on the distance we flow to find a point just outside of $D$.
\label{Itm:CreepBinarySearch}
\end{enumerate}

Creeping just over the boundary solves the problem shown in \reffig{Jaggies}, giving the correct image, \reffig{JaggiesGone}. In general, creeping produces the correct pictures as long as all objects in the scene are contained within the domain $D$. However, this breaks down if we wish to, for example, move a ball from one domain to another. When a ball intersects $\bdy D$, calculating the approximation $\sigma$ requires measuring the distance to the center of the ball in $D$, and at least one translate of its center under some element of $\Gamma$. See \reffig{CrossingBorder}. Without this extra calculation, one sees objects cut in half by the boundary of $D$. See \reffig{Creep}.
Solving this problem led us to the following alternate (or additional) strategy to creeping.

\begin{figure}[htbp]
\centering
\subfloat[An incorrect calculation of $\sigma$, using only the disk whose center is in $D$.]{
\includegraphics[width=0.45\textwidth]{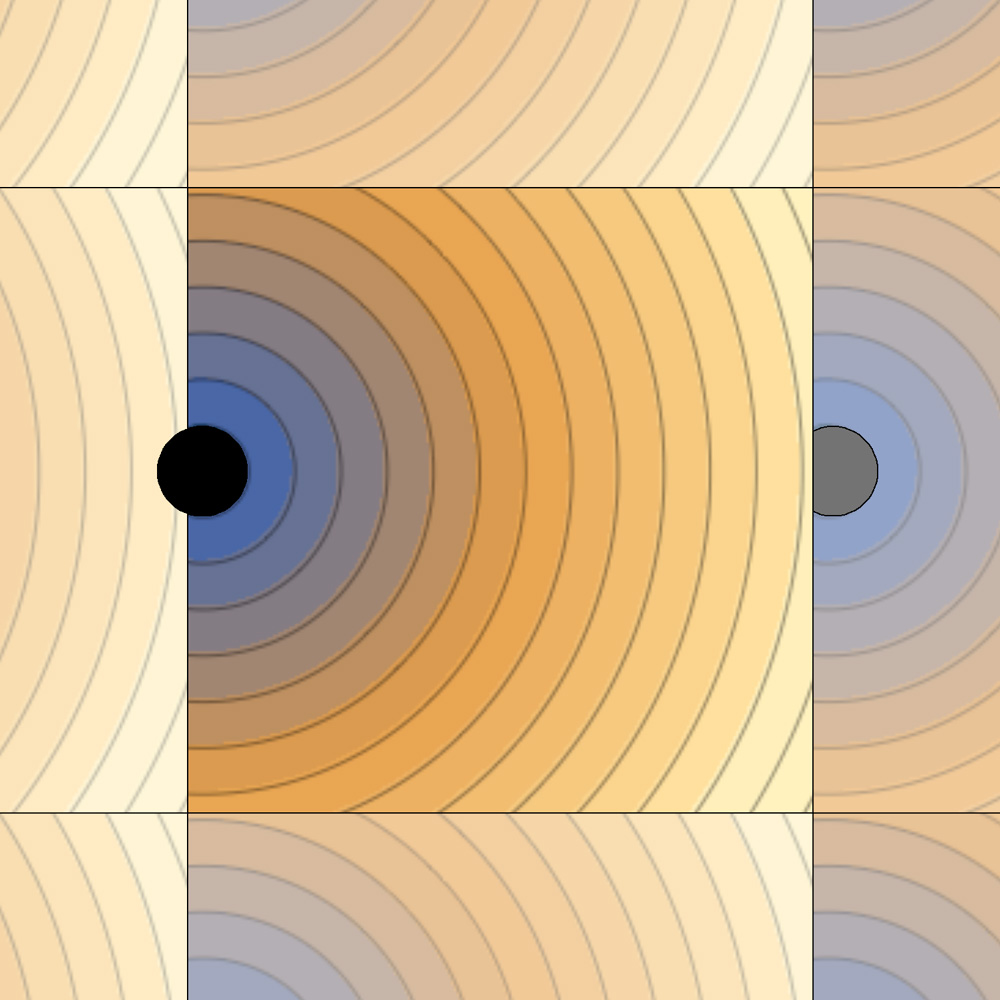}
\label{Fig:WrongSigma}
}
\subfloat[The correct calculation of $\sigma$ requires calculation of the distance to at least two points.]{
\includegraphics[width=0.45\textwidth]{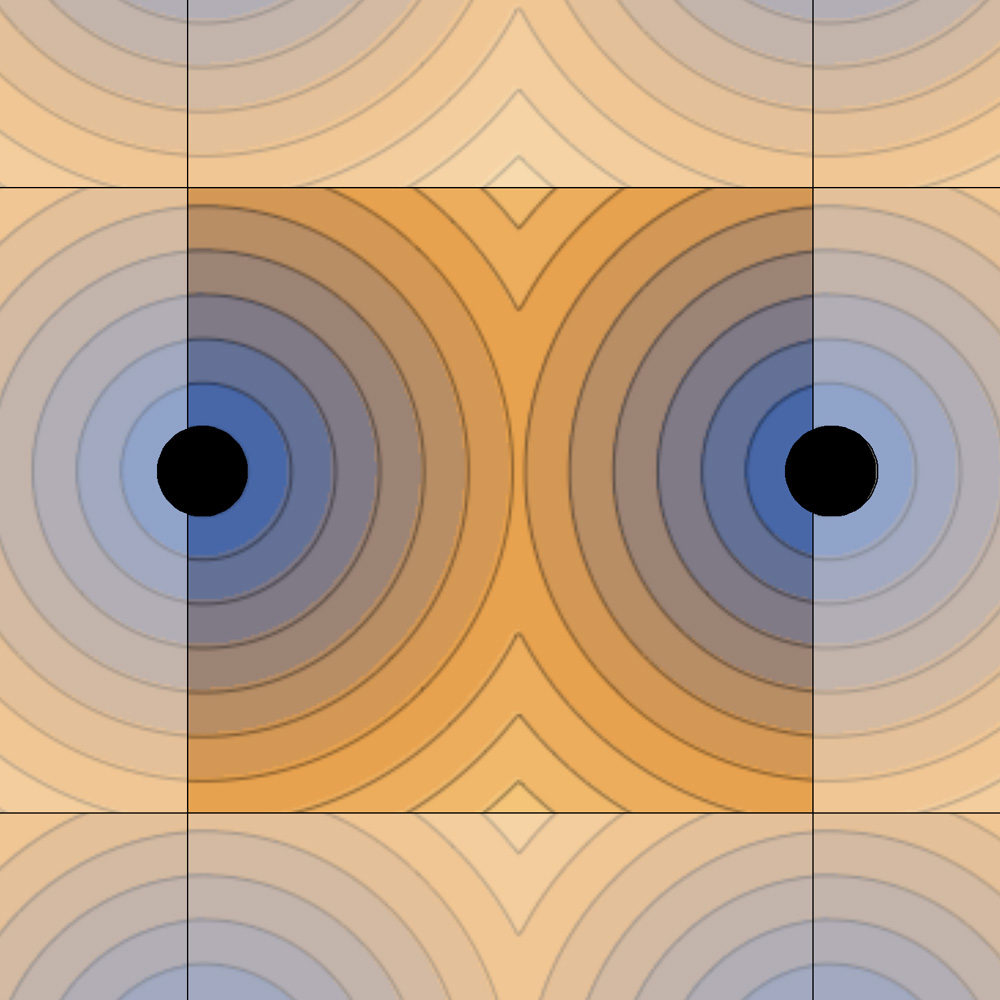}
\label{Fig:RightSigma}
}

\caption{Calculating $\sigma$ for a disk overlapping the boundary of $D$. }
\label{Fig:CrossingBorder}
\end{figure}

\subsubsection{Nearest neighbors signed distance functions}
\label{Sec:NearestNeighbors}

Here we use a signed distance function on $D$ that takes into account the effects of the nearby translates of $D$. 

Let $A \subset \Gamma$ be a set of isometries. Define
$$\sigma_A = \min_{a\in A}\{\sigma \circ a\}$$

\noindent For example, $\sigma_{\{\text{id}\}}$ is just $\sigma$, and $\sigma_\Gamma$ is the correct $\Gamma$-invariant signed distance function. 
If $\Gamma$ is infinite, then we cannot calculate $\sigma_\Gamma$ directly. However, if the tiling of $X$ by copies of the fundamental domain is locally finite, then there is a finite subset $A\subset \Gamma$ such that $\sigma_A$ and $\sigma_\Gamma$ are equal on $D$. Indeed, we may choose for $A$ the set of all $\gamma \in \Gamma$ such that the distance from $D$ to $\gamma(D)$ is at most the diameter of $D$.
Depending on the shape of the fundamental domain and how it is glued to itself however, the size of $A$ may be large. If so, calculating this signed distance function may be prohibitively expensive.

We find that most visual artifacts can be resolved without the use of creeping by using $\sigma_A$, where $A = \{ \text{id} \} \cup \{ \gamma_i^\pm \}$. That is, we use $\sigma$ in $D$ and its nearest neighbors, directly connected by face pairings. See \reffig{NearestNeighbor}.
In some circumstances this may not be enough; see for example \reffig{NearestNeighborNotEnough}. Here a ray passing close to a vertex of the tiling may not see an object diagonally adjacent to the starting domain. In three dimensions the equivalent problem can appear for rays crossing close to an edge of the tiling. 

\begin{figure}[htbp]
\centering
\subfloat[Signed distance function restricted to $D$. Note the striped artifacts in various copies of the red ball.]{
\includegraphics[width=0.90\textwidth]{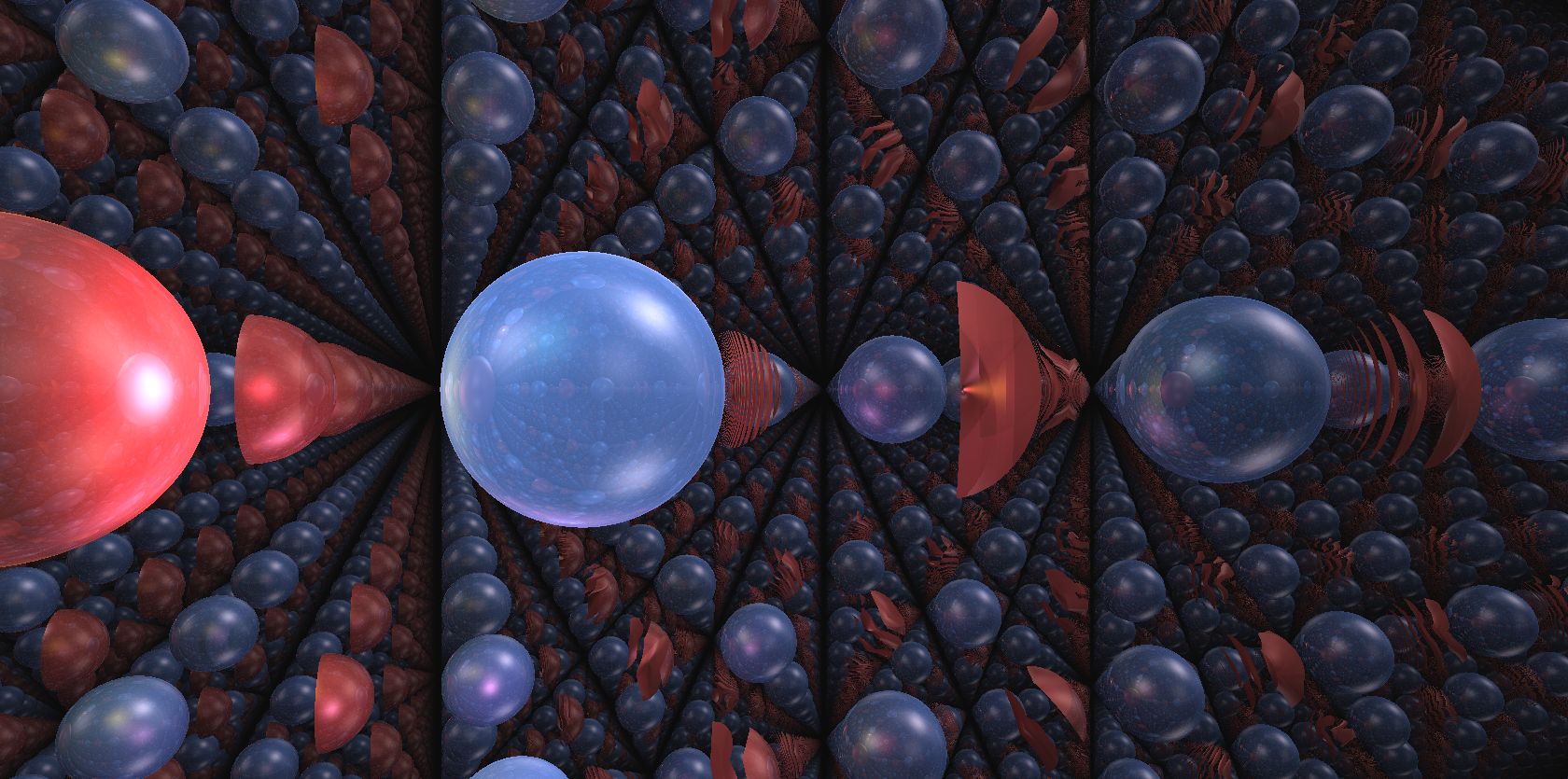}
\label{Fig:BasicSDF}
}\
\subfloat[Creeping to the boundary of $D$. The striped artifacts are gone, but we can see only half of the red ball.]{
\includegraphics[width=0.90\textwidth]{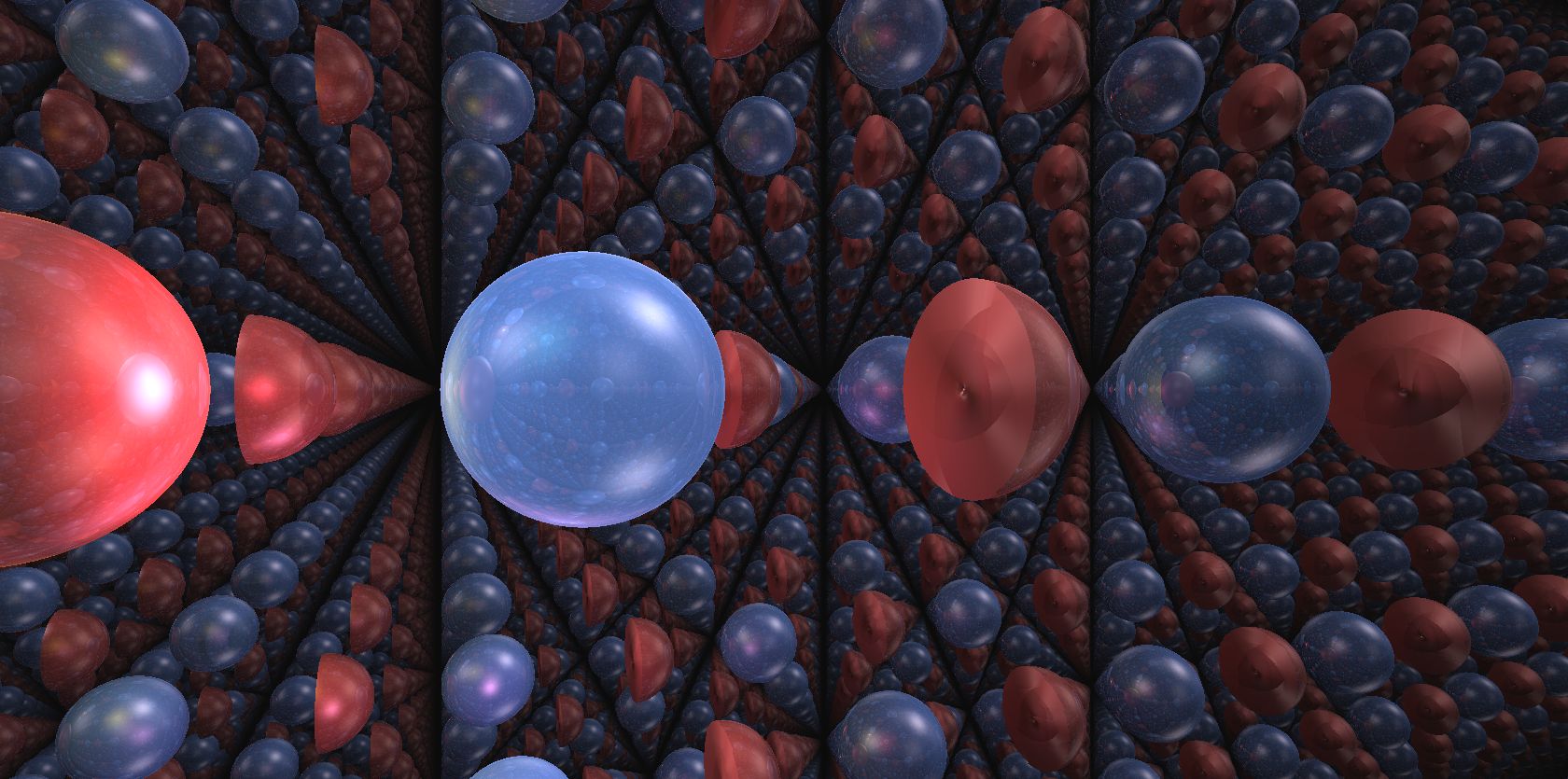}
\label{Fig:Creep}
}\
\subfloat[Using a nearest neighbors signed distance function, without creeping.]{
\includegraphics[width=0.90\textwidth]{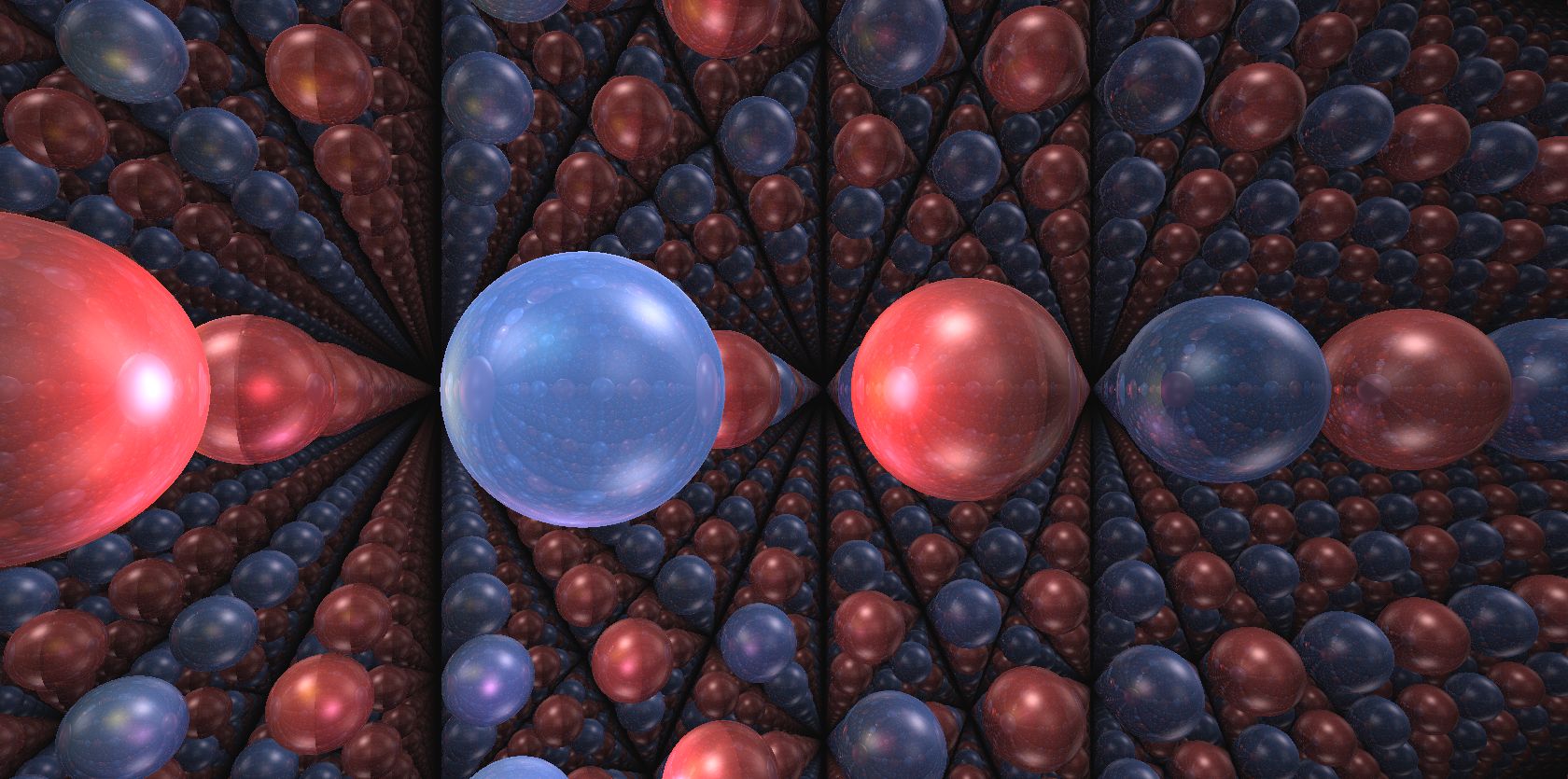}  
\label{Fig:NearestNeighbor}
}\
\caption{Difficulties when ray-marching in a fundamental domain $D$.  The blue sphere is contained fully in $D$.  The red sphere is only half contained in $D$. }
\label{Fig:CreepAndNeighbor}
\end{figure}

\begin{figure}[htbp]
\centering
\includegraphics[width=0.45\textwidth]{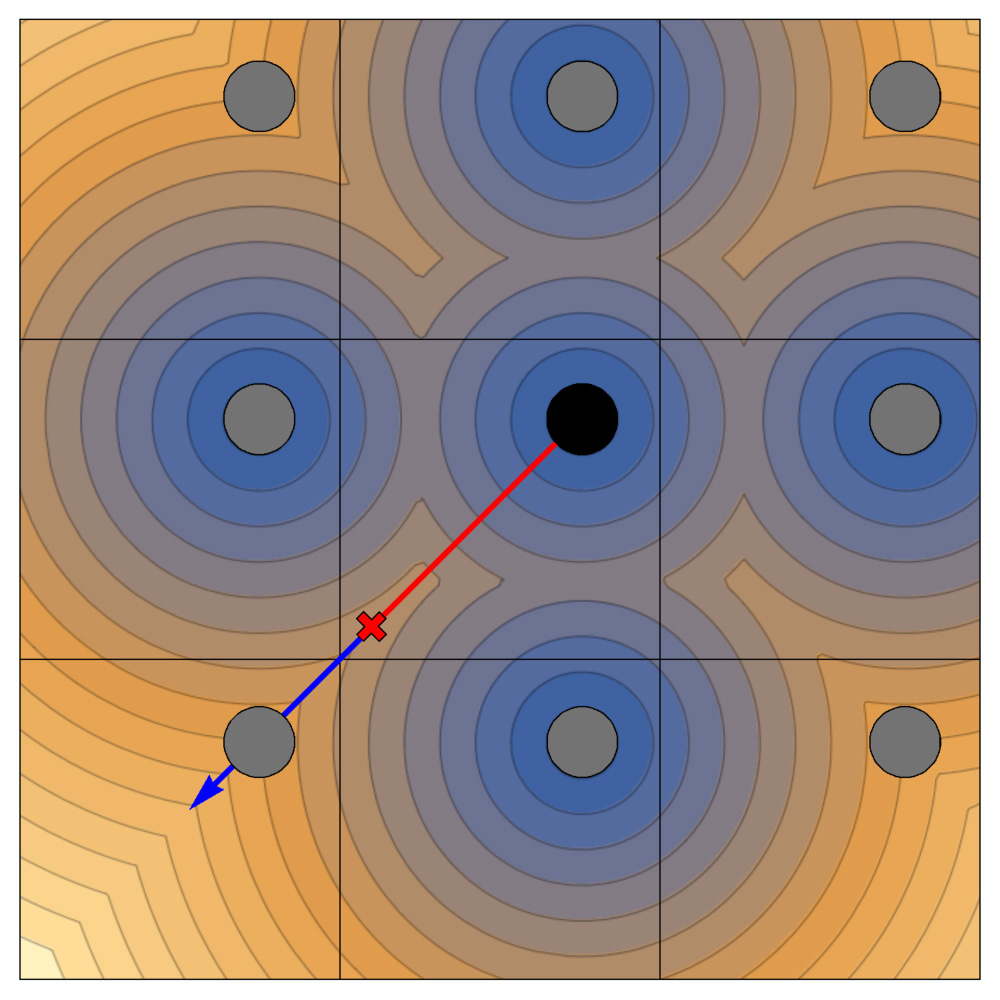}
\caption{For rays traveling near to a vertex, only using the nearest neighbors of a tile may not be enough to remove all visual artifacts without creeping.}
\label{Fig:NearestNeighborNotEnough}
\end{figure}

In general, depending on the circumstance, either creeping or using a nearest neighbors signed distance function, or some combination of the strategies may be the most efficient strategy to obtain correct images. Even the combination of both strategies can produce errors in some circumstances.  In \reffig{SphereAtEdge}, the only solution would be to use more translates of $\sigma$ than just the nearest neighbors. 

\begin{figure}[htbp]
\centering
\includegraphics[width=0.90\textwidth]{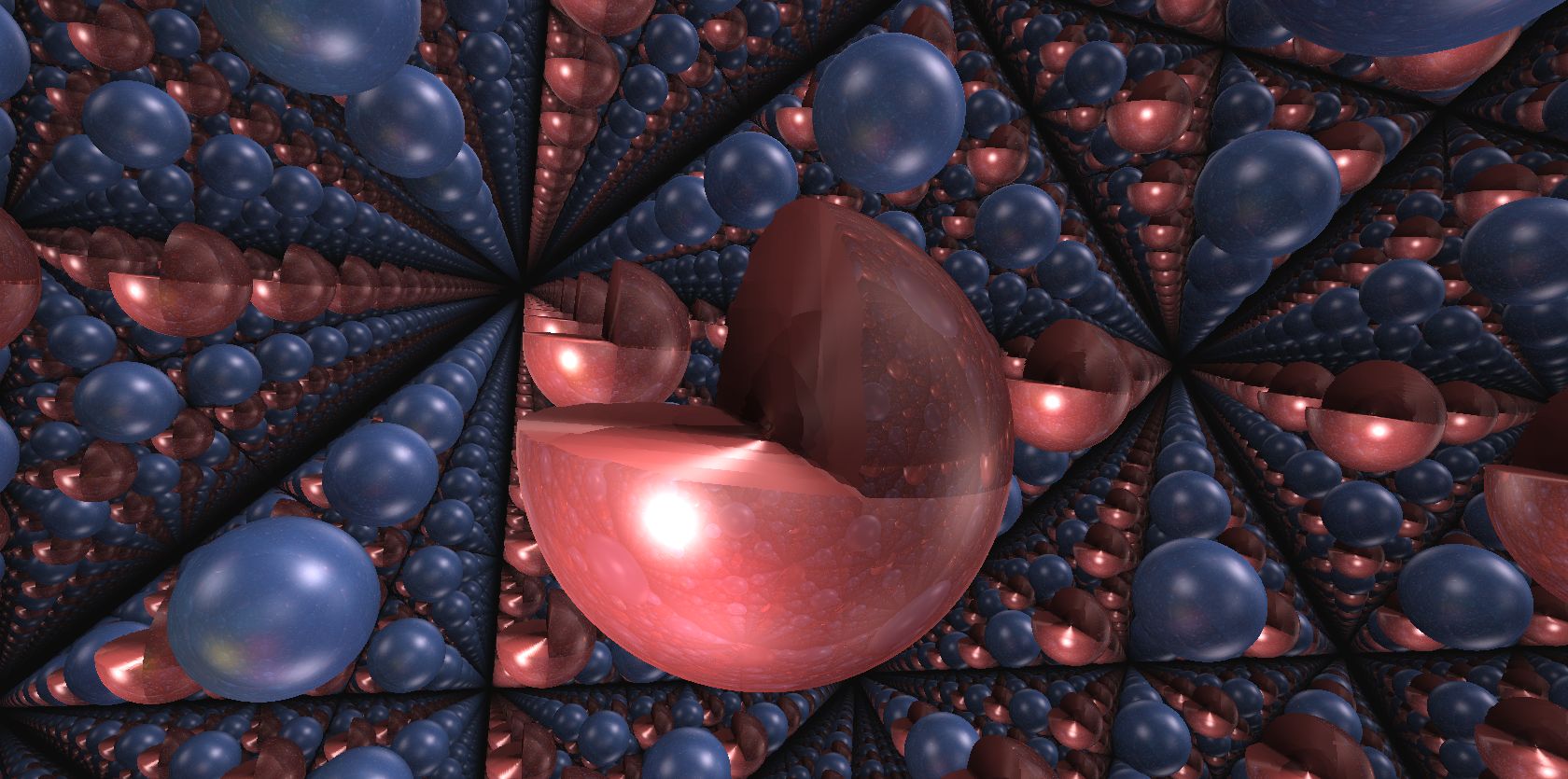}
\caption{Even combining creeping to the boundary with nearest neighbors may not fix all problems. Here the scene consists of a ball that overlaps an edge of a cubical domain $D$.}
\label{Fig:SphereAtEdge}
\end{figure}

\begin{remark}
\label{Rem:SphereFundDom}

We would like to choose a scene for $X/\Gamma$ which illustrates the geometry and topology while having a signed distance function that is very efficient to calculate. We often use the following strategy.
	We delete from a fundamental domain $D$ a large ball (or solid ellipsoid).
	The signed distance function for the complement of a ball in $D$ is
	\begin{equation*}
		\sigma(p) = r - \dist(o,p).
	\end{equation*}
	Here $r$ is a sufficiently large radius so that the deleted ball opens windows into neighboring fundamental domains.
	The corresponding tile for the cubic lattice in $\EE^3$ is shown in \reffig{primitive cell E3 - primitive cell}.
	Depending on the geometry, we may also remove a sphere centered at each vertex of the fundamental domain, as in \reffig{primitive cell E3 - advanced cell}.
\end{remark}

\subsection{Orbifolds and incomplete structures}

In our discussion so far we have assumed that $X/\Gamma$ is a manifold, but in fact nothing is lost by generalizing to orbifolds.
Briefly, an orbifold is a topological space locally modeled on patches of $\RR^n/ G$ for $G$ some finite group of diffeomorphisms. When $G$ is the trivial group, this reduces to the definition of a manifold.
This additional flexibility in the definition allows for certain controllable singularities, such as cone axes (with cone angle $\pi/k$ for some integer $k>0$), while still behaving very similarly to the manifold case.
Indeed, many topological notions such as fundamental groups, covering spaces, and geometric structures carry over directly to orbifolds.
Geometric structures on orbifolds are defined similarly to those on manifolds (see the beginning of \refsec{NonSimplyConnected}), with the main difference being that the action of the fundamental group under the holonomy homomorphism need not be free.
However, as the image $\Gamma$ of the holonomy homomorphism is still discrete, we may find a fundamental domain $D$ for its action and draw pictures of the quotient orbifold $X/\Gamma$ as before.
There is however little change in visual effect:
by~\cite[Corollary~2.27]{CooperHK}, every orbifold with a $(G,X)$ structure is finitely covered by some $(G,X)$ manifold. Thus, up to a finite amount of local information in the scene, the large scale picture will look the same as its manifold cover.

We can generalize still further.  Manifolds and orbifolds have complete geometric structures, meaning that 
 the developing map is a diffeomorphism. This allows the identification $M\cong X/\Gamma$.
The more general notion of \emph{incomplete} $(G,X)$-manifolds are also fundamental objects in geometric topology. Allowing general immersions as developing maps $\widetilde{M}\to X$ naturally captures various kinds of singularities, such as cone axes (where the cone angle can now be any real number) or punctures.
This sort of flexibility is crucial in some core results of geometric topology. For example, the natural extension of the Geometrization Theorem to orbifolds requires the analysis of incomplete hyperbolic structures.
However, incomplete structures are typically difficult to deal with, as the image of the holonomy homomorphism is indiscrete. Previous work here includes hand-drawn examples by Thurston (including two-dimensional structures in chapter three of \cite{thurston_book}, and a three-dimensional drawing reproduced here in \reffig{FromThurstonPaper} from \cite{Thurston98}) and tilings of $\HH^2$ by Bonahon~\cite{bonahonlow}.

Our ray-marching procedure for quotient manifolds extends without change to incomplete structures, allowing the accurate rendering of these as well.
Note that throughout the algorithm, only local data is required: the existence of a fundamental domain $D$ and face pairings $\{\gamma_i^\pm\}$. Both of these exist equally well for incomplete structures.
Here the inside view is quite different than the complete case.
The ability to render incomplete structures may aid in visualization projects, such as animating hyperbolic Dehn surgery or geometric transitions. Indeed, version~2.8 of SnapPy~\cite{SnapPy} implements the inside view of hyperbolic manifolds undergoing hyperbolic Dehn surgery.
However, interpreting these requires more mathematical sophistication than for more familiar manifolds and orbifolds, so we will not focus on them in this paper.

\begin{figure}[htbp]
\centering
\subfloat[A cone axis of angle $2\pi-\varepsilon$ causes double images. These images are Figures 1 and 3 in Thurston's paper \emph{How to See Three Manifolds}~\cite{Thurston98}.]{
\includegraphics[width=0.6\textwidth]{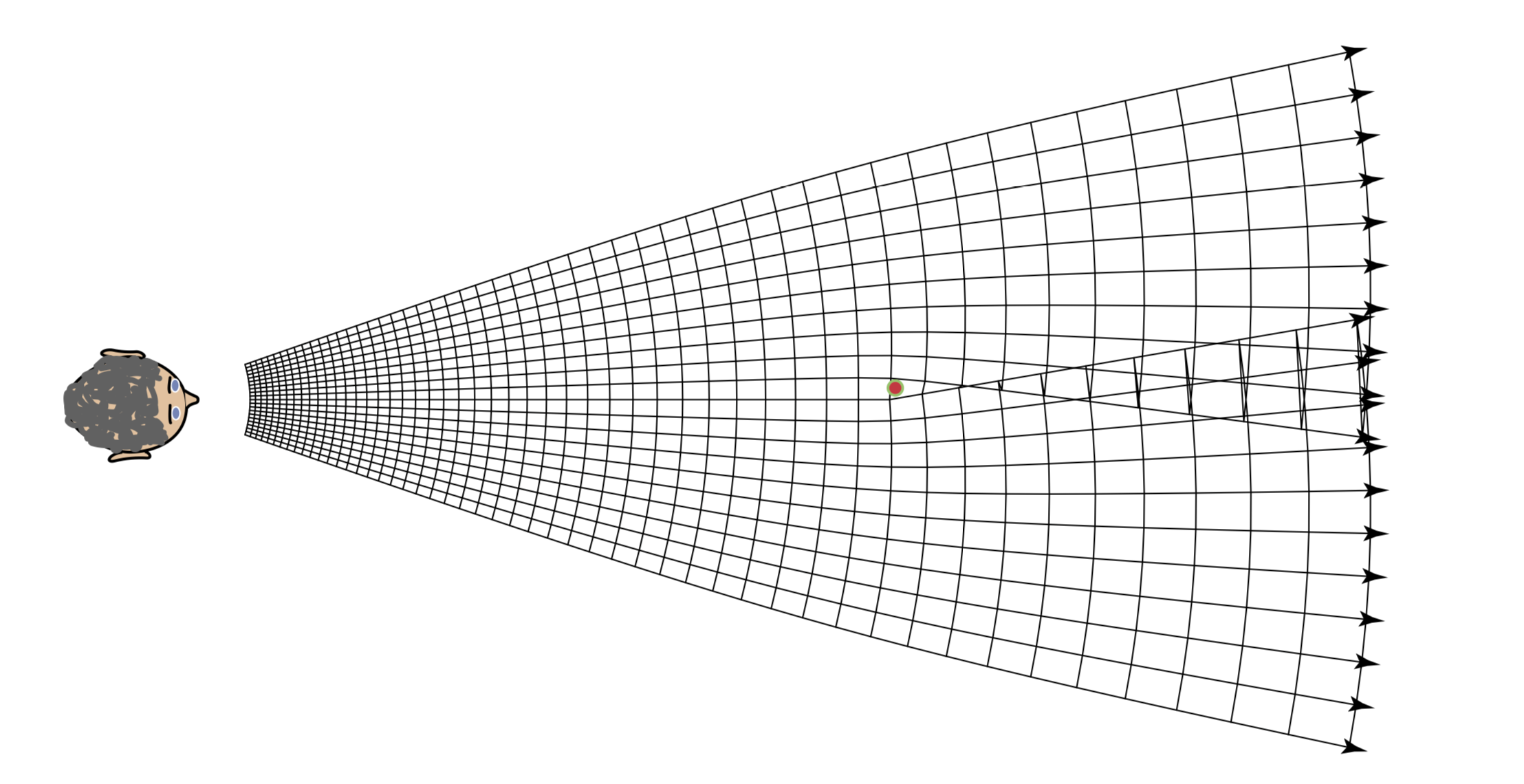}
\includegraphics[width=0.3\textwidth]{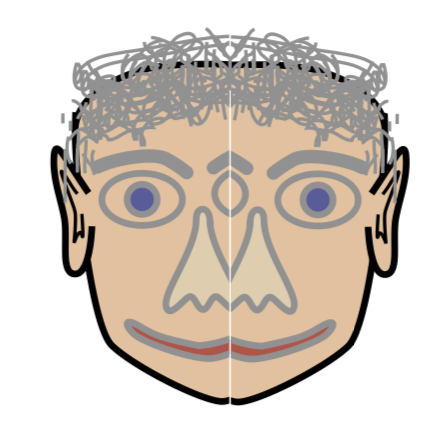}
\label{Fig:FromThurstonPaper}
}
\\
\subfloat[Hyperbolic cone manifold with cone axis of angle $2\pi-\varepsilon$.]{
\includegraphics[width=0.45\textwidth]{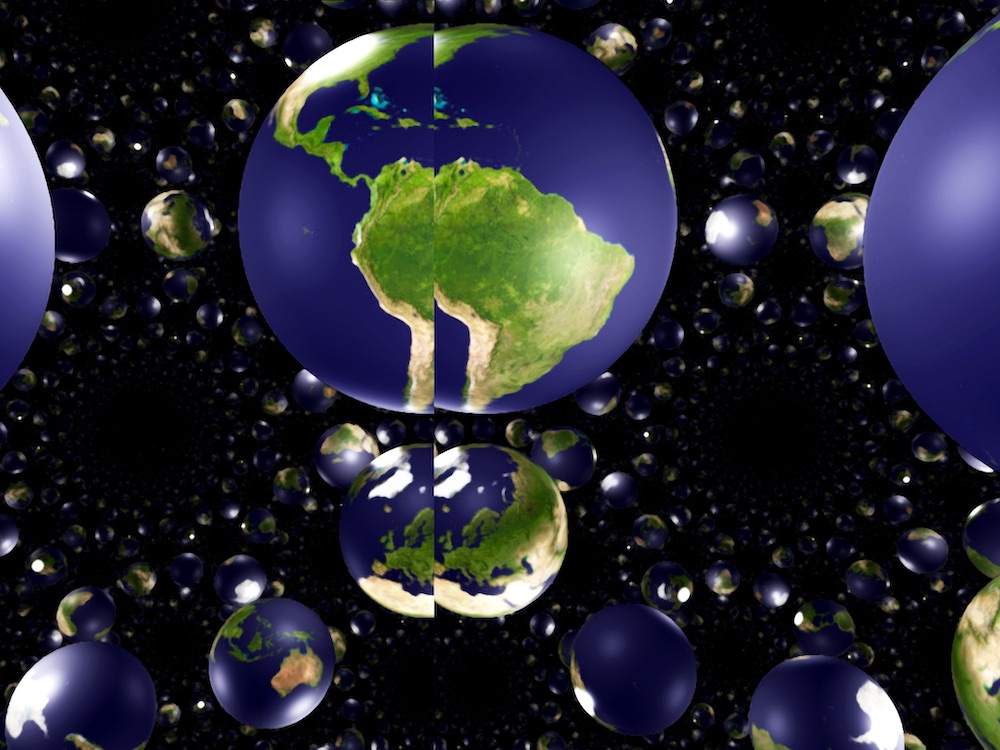}}
\subfloat[Hyperbolic cone manifold with cone axis of angle $2\pi+\varepsilon$.]{
\includegraphics[width=0.45\textwidth]{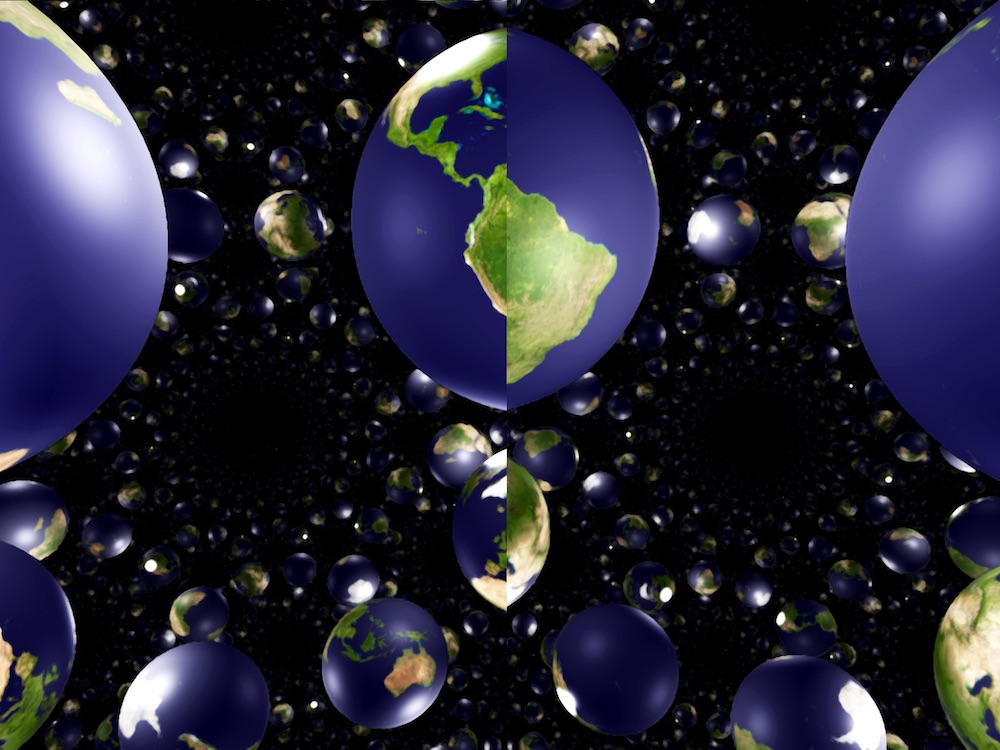}}
\caption{The inside view of a manifold with a cone axis has double imaging of some points when the cone angle is slightly less than $2\pi$, and hidden regions when the cone angle is slightly greater than $2\pi$.
}
\label{Fig:ConeAxis}
\end{figure}

\begin{remark}
We create some of our spaces by directly constructing a fundamental domain $D$, then later figure out which manifold, orbifold, or incomplete manifold it is. In other cases, we start with a desired manifold, or lattice $\Gamma < G$, and have to work out a fundamental domain $D$. For the easier geometries, this generally involves (spherical, hyperbolic, or euclidean) trigonometry. We discuss the construction of fundamental domains for the harder geometries in Sections~\ref{Sec:NilDiscreteSubgroupsFundDoms},~\ref{Sec:SLRDiscreteSubgroupsFundDoms}, and~\ref{Sec:SolDiscreteSubgroupsFundDoms}.
\end{remark}

\section{Lighting} 
\label{Sec:Lighting}
Common physics-based shading techniques in computer graphics (diffuse and specular lighting, reflections, shadows, ambient occlusion, and atmospheric effects) are all computed from geometric data, and so generalize naturally to riemannian geometry. 
Below we briefly review some of these techniques, and the modifications required.  Also see~\cite{NOVELLO202061} for a path-tracing lighting model in the constant curvature spaces.

The effect from each light source in the scene can be computed separately, and the final color determined through a weighted (by intensity) average of each light's contribution.
Thus it suffices to describe the contribution of a single light source.
However, in the geometries with positive sectional curvatures ($S^3,S^2\times\EE$, Nil, Sol, $\SLR$), non-uniqueness of geodesics may cause even a single light source to illuminate an object from multiple directions.
As these individual contributions also combine linearly to the total, we may further reduce the problem to understanding single-source lighting from a single direction at a time.

\begin{figure}[htbp]
\centering
\labellist
\small\hair 2pt
\pinlabel $q$ [l] at 102 335
\pinlabel $\ell$ [tr] at 110 293
\pinlabel $d_L$ [tr] at 148 190
\pinlabel $L$ [tr] at 214 129
\pinlabel $s$ [t] at 254 55
\pinlabel $N$ [b] at 263 147
\pinlabel $R$ [bl] at 299 127
\pinlabel $V$ [tl] at 323 100
\pinlabel $d_V$ [tl] at 390 103
\pinlabel $v$ [tl] at 439 165
\pinlabel $p$ [r] at 480 203
\endlabellist
\includegraphics[width=0.65\textwidth]{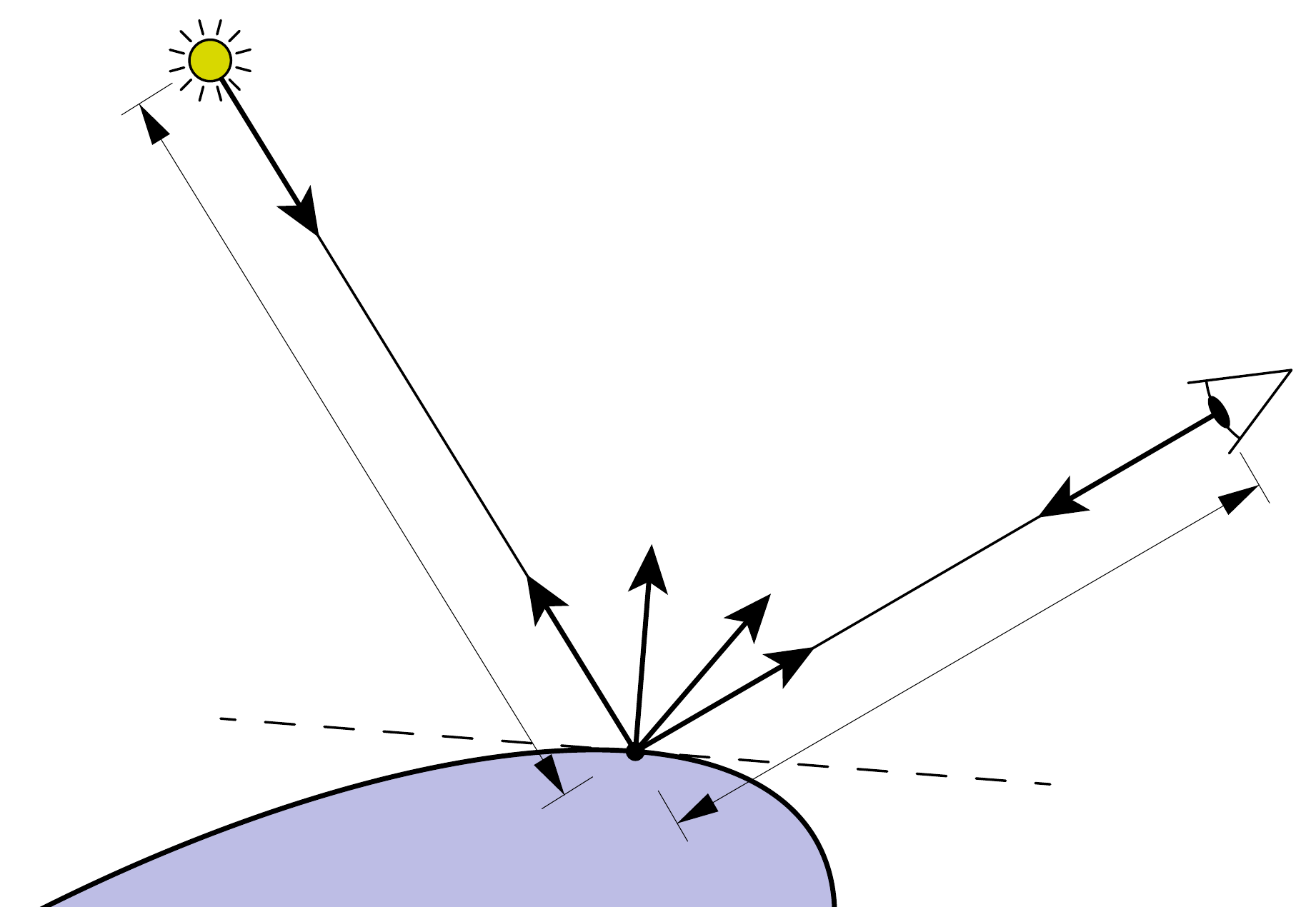}
\caption{The geometric data required to calculate the color observed when looking from the point $p$ in the direction $v\in T_p X$ at a point $s$, lit by a light at a point $q$ from the direction $L \in T_s X$. }
\label{Fig:LightingSetup}
\end{figure}

To fix notation, let $S$ be a scene in $X$ given by a signed distance function $\sigma$, lit by a light source at $q\in X$.  
See \reffig{LightingSetup}.
Let $C_s$ be the base color of the point $s$ of the scene, (represented as a three-vector storing its red-green-blue components), let $C_{\rm light}$ be the color of light source, and $I_{\rm light}$ be its intensity.
Now suppose that we are at a point $p\in X$, looking in the direction $v\in T_p X$. Assume that this line of sight ends by impacting the point $s\in S$ of the scene. To compute the aforementioned lighting effects, we need the following data:

\begin{itemize}
\item $N\in T_sX$: unit outwards normal to $\bdy S$ at $s$,
\item $L\in T_s X$: unit vector at $s$ pointing to $q$,
\item $R\in T_s X$: reflection of $-L$ with respect to $N$,
\item $V\in T_s X$: unit vector at $s$ pointing to $p$,
\item $v\in T_p X$: unit vector at $p$ pointing to $s$,
\item $\ell\in T_q X$: unit vector at $q$ pointing to $s$,	
\item $d_L$: distance from $s$ to $q$ along the geodesic with tangent $L$, 
\item $d_V$: distance from $s$ to $p$ along the geodesic with tangent $V$, and
\item $I_{L}$: the light intensity experienced at $s$ from the direction $L$.
\end{itemize}
\noindent
Here we employ the convention that vectors in the tangent space at $s$ are written in upper case, while vectors in tangent spaces at other points are written in lower case.

\begin{remark}
The base colour $C_s$ for a point $s$ of the scene can be a single colour for each object, or we can texture objects in a more complicated way.
For example, we sometimes texture balls 
as the Earth. This provides a globally 
recognized coordinate system and allows one to infer the final endpoints of geodesics leaving your eye. See \reffig{EarthTexture}.
\end{remark}

\begin{figure}[htbp]
\centering
\subfloat[A ball in spherical geometry: more than half of its surface is visible.]{
\includegraphics[width=0.4\textwidth]{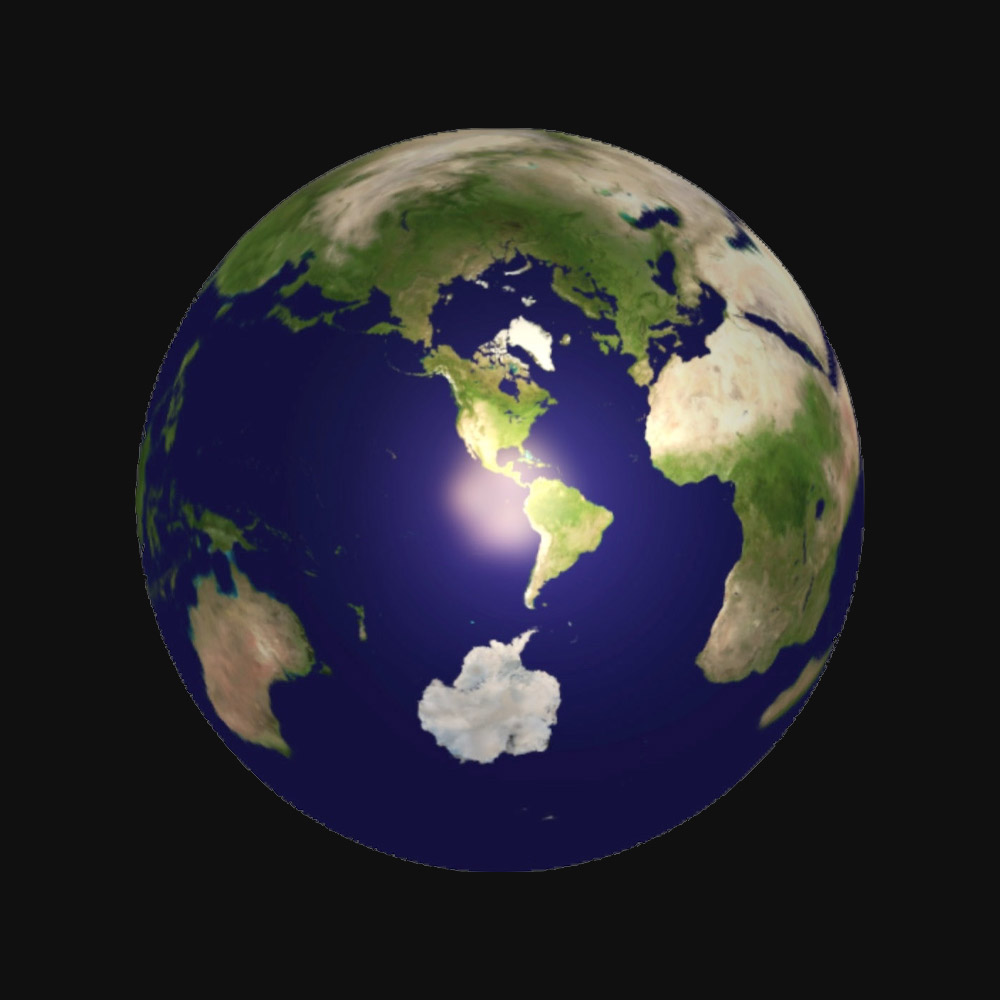}
\label{Fig:SphEarth}
}
\quad
\subfloat[A ball in Nil geometry: the non-uniqueness of geodesics causes a triple image of South America.]{
\includegraphics[width=0.4\textwidth]{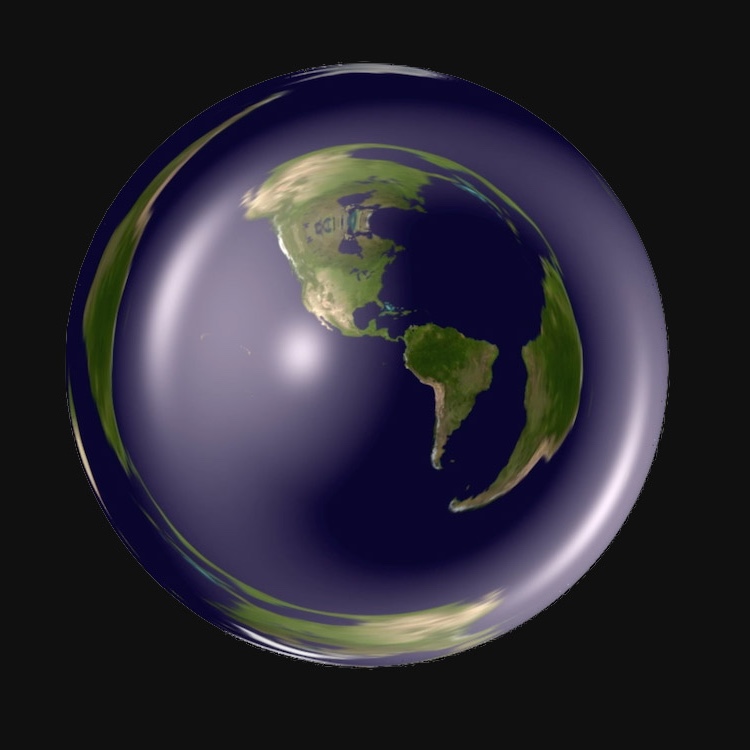}
\label{Fig:NilEarth}
}
\caption{Balls textured as the Earth.}
\label{Fig:EarthTexture}
\end{figure}

\subsection{Phong lighting model}

An empirical formula for accurate diffuse and specular reflection in computer graphics was published by Phong in his 1975 dissertation~\cite{Phong} and now bears his name.
The \emph{Phong lighting model} (also called the Phong reflection model) decomposes the total color of the surface as a sum of three components: \emph{ambient, diffuse} and \emph{specular}.
The ambient contribution is simply the base color $C_s$ of the object at $s$.
The remaining two terms are proportional to the light color $C_{\rm light}$ and the intensity $I_L$ of the light source, as well as a third geometric quantity, as follows.
Diffuse lighting is also proportional to the cosine of the angle between the light direction and the surface normal.
Specular reflection is proportional to some power of the cosine of the angle between the viewer and reflected ray directions. This power is a parameter controlling the ``shininess'' of the material of the object.
When either of these angles is obtuse, the corresponding lighting contribution is taken to be zero.
This allows us to express the total lighting contribution of Phong lighting using the riemannian metric at $s$:

\begin{equation}\mathrm{Phong}(N,L,R,V,I_L)=k_{\rm amb}C_s+\big(k_{\rm diff} \langle N,L\rangle +k_{\rm spec}\langle R,V\rangle^\alpha\big)I_L C_{\rm light},\end{equation}

\noindent
where the constants are chosen to satisfy $k_{\rm amb}+k_{\rm diff}+k_{\rm spec}=1$. These control the relative contribution of each of these factors.

\begin{figure}[htbp]
\centering
\subfloat[Diffuse lighting.]{
\includegraphics[width=0.45\textwidth]{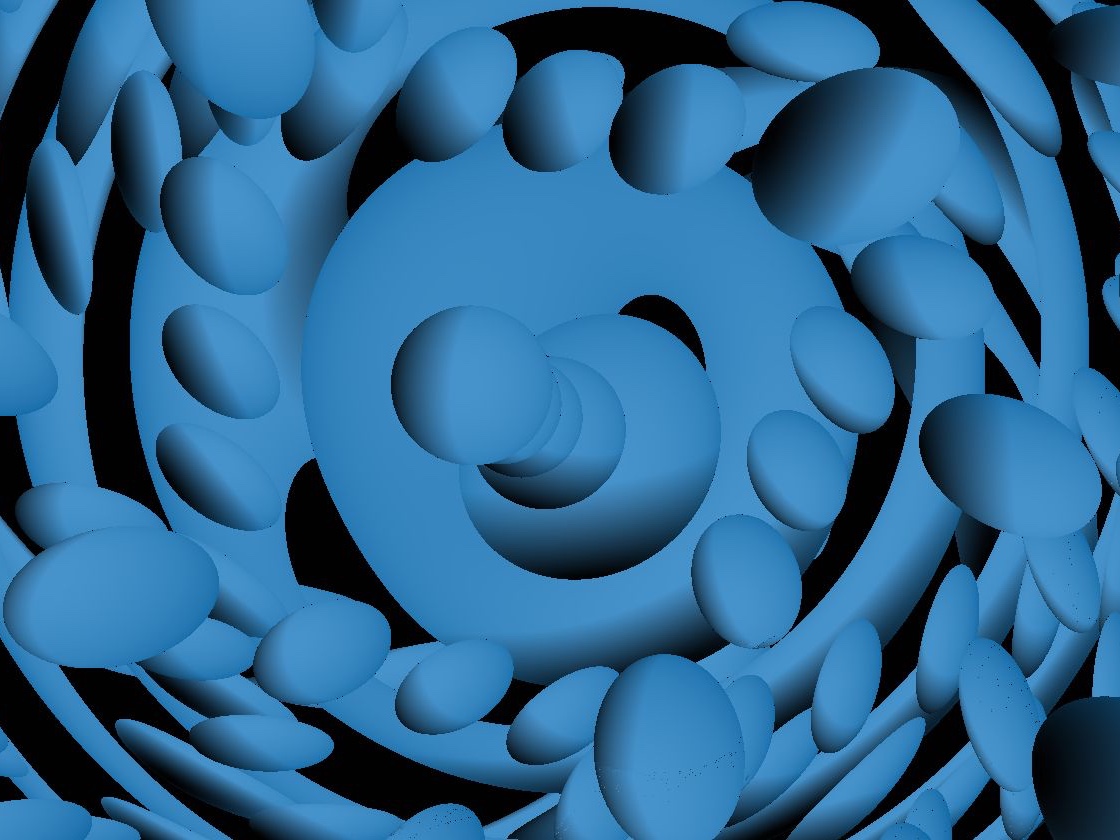}
}
\quad
\subfloat[Specular highlights.]{
\includegraphics[width=0.45\textwidth]{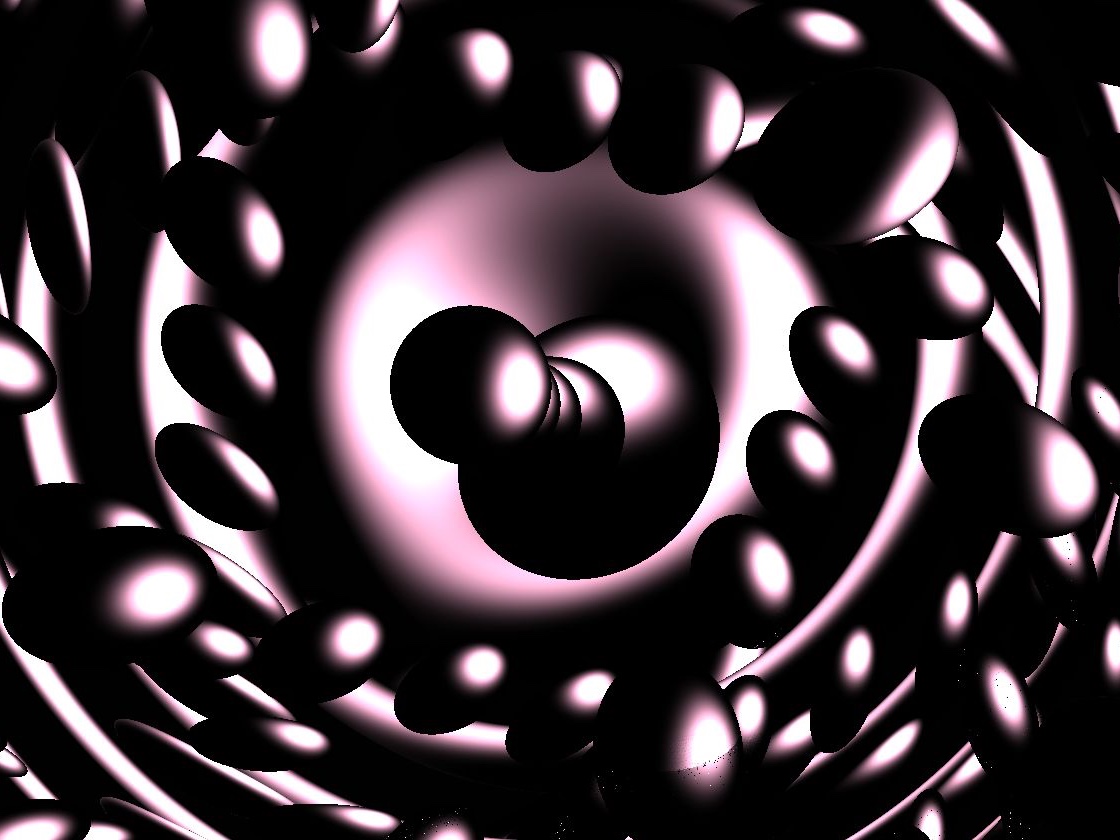}
}

\subfloat[Phong model: ambient, diffuse, and specular.]{
\includegraphics[width=0.45\textwidth]{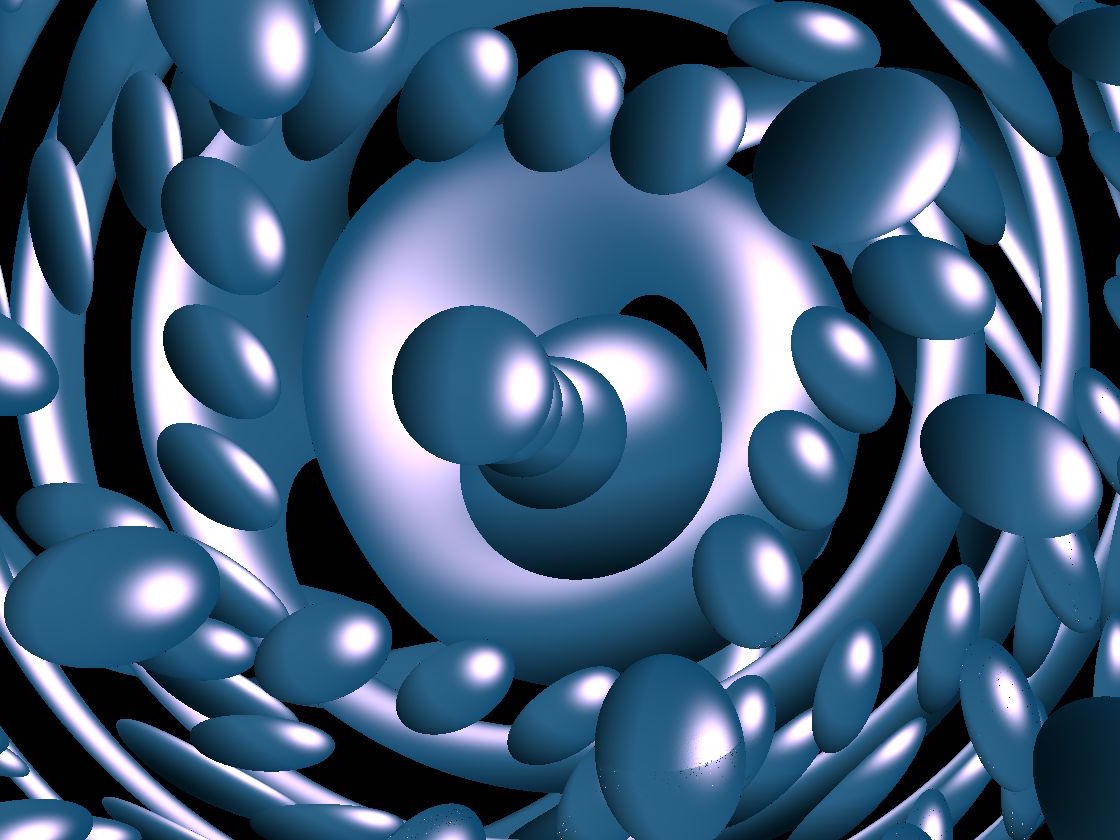}
}
\quad
\subfloat[Phong lighting with multiple light sources provides realistic depth cues.]{
\includegraphics[width=0.45\textwidth]{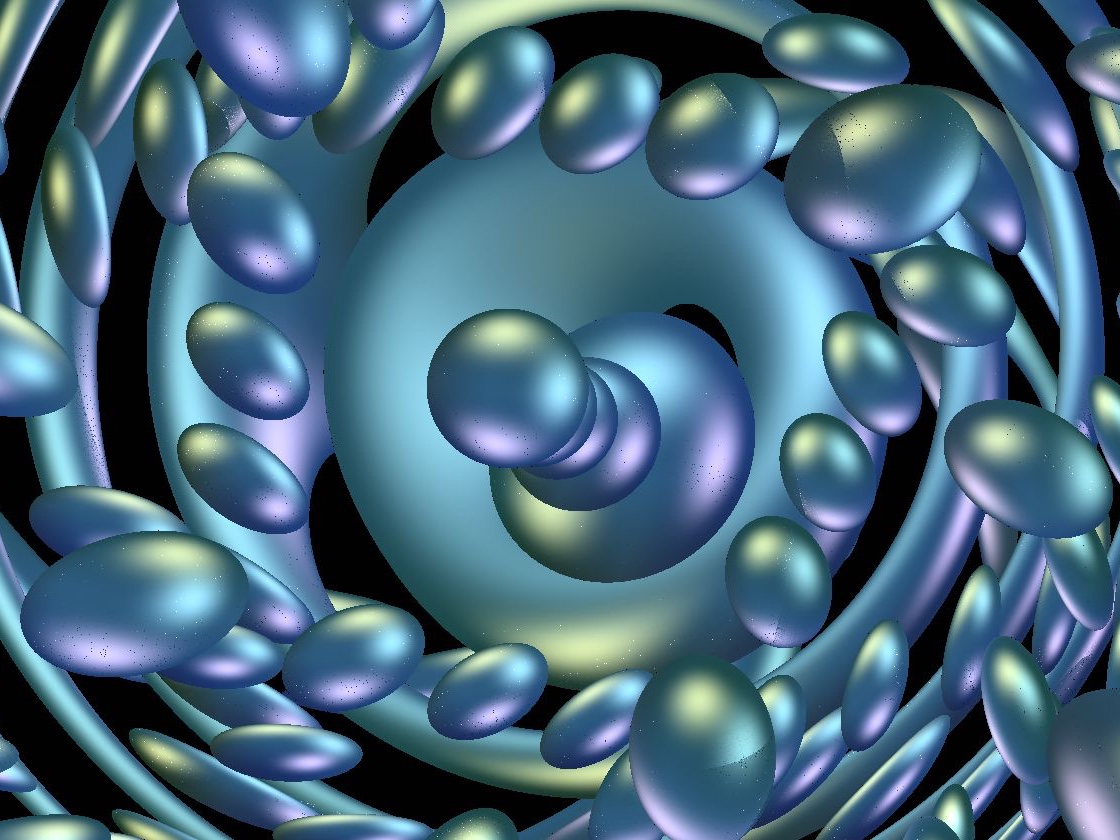}
}
\caption{A collection of balls in Nil geometry.}
\label{Fig:QuotientLighting}
\end{figure}

\begin{remark}
Phong justifies his model empirically, by comparing a render with a real-life photograph of a (euclidean) scene. We use his model far outside of the setting in which it was designed for, so one could question whether or not it produces accurate results in our non-euclidean spaces. A reasonable test would be to compare our results with a more physically correct ray-tracer. 
\end{remark}

\subsection{Shadows}

Phong lighting calculates the contribution of the observed color at $s$ due to a light source in the direction $L$ using only local computations in $T_sX$.
While efficient, this ignores the existence of other objects in the scene, effectively rendering them transparent to the lighting calculation.

Happily there is a simple solution to detecting objects which block the path from $s$ to the light: simply ray-march starting at $s$ in the direction towards the light and see if you hit anything.
If you do then there is no need to calculate the Phong lighting contribution for that light/direction, as $s$ is in shadow.
When modeling lights as point sources, this produces \emph{hard} shadows. 
Realistic light sources which emit light over an area instead produce \emph{soft} shadows, as there are points in space where the light source is only partially obscured.
While modeling an extended source is computationally demanding, a multitude of empirical formulas for approximating soft shadows with point source lights have been developed in computer graphics.
We briefly discuss a solution particularly well suited for ray-marching below. See~\cite{QuilezSoftShadows} for more details.

Instead of a simple binary value, the shadow is modeled as a scaling factor to be multiplied by the Phong lighting contribution, smoothly interpolating between zero and one.
To compute this value, we track the distance of the light ray from other objects in the scene as we follow it backwards from $s$ in the direction $L$.
Let $\gamma \from [0,T] \to X$ be the arc length parametrized geodesic from $s$ to the light at $q$ with initial tangent $L$.
The degree of shadow imparted by the surrounding scene at a point $\gamma(t)$ is modeled by the distance of $\gamma(t)$ from an object in the scene, normalized by the distance traveled from $s$.
The total degree of shadow is proportional to the minimal value of this ratio over the path, or

\begin{equation}
\mathrm{Shadow}(s,L)=\min\left\{1,K \frac{ \sigma(\gamma(t))}{t}: t\in [0, T] \right\}.
\end{equation}

Here $K\geq 1$ is a parameter determining softness. As $K\to\infty$ this reproduces the hard shadows above.
In practice, we approximate this by computing this ratio at each step of the ray-march from $s$ to $q$, and then take the minimum.

\begin{figure}[htbp]
\centering
\subfloat[Hard Shadow.]{
\includegraphics[width=0.43\textwidth]{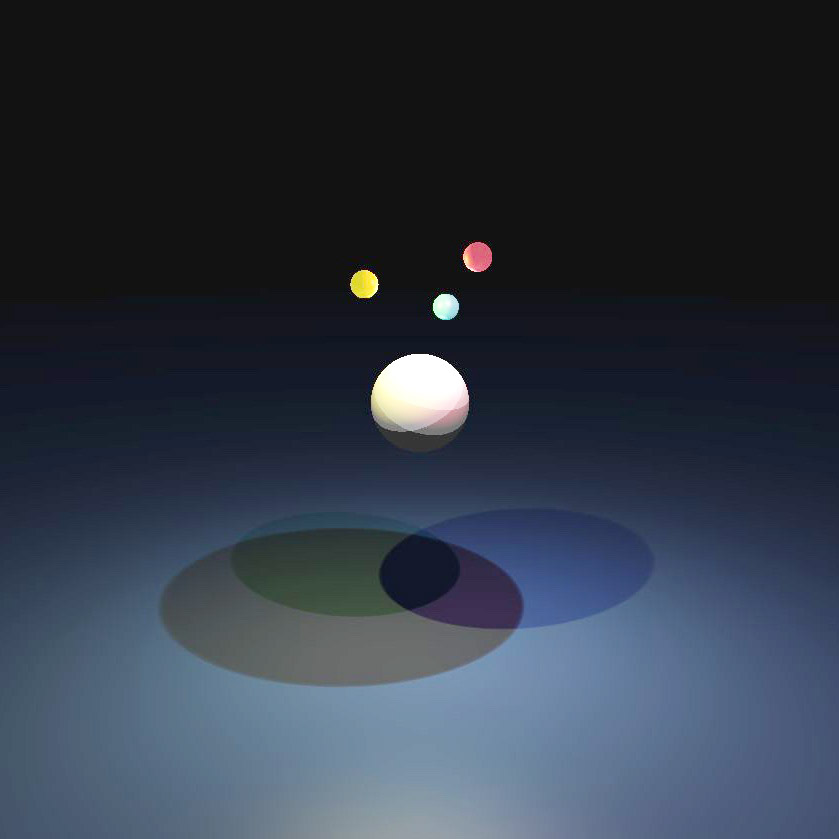}
\label{Fig:HardShadow}
}
\quad
\subfloat[Soft Shadow ($K=5$).]{
\includegraphics[width=0.43\textwidth]{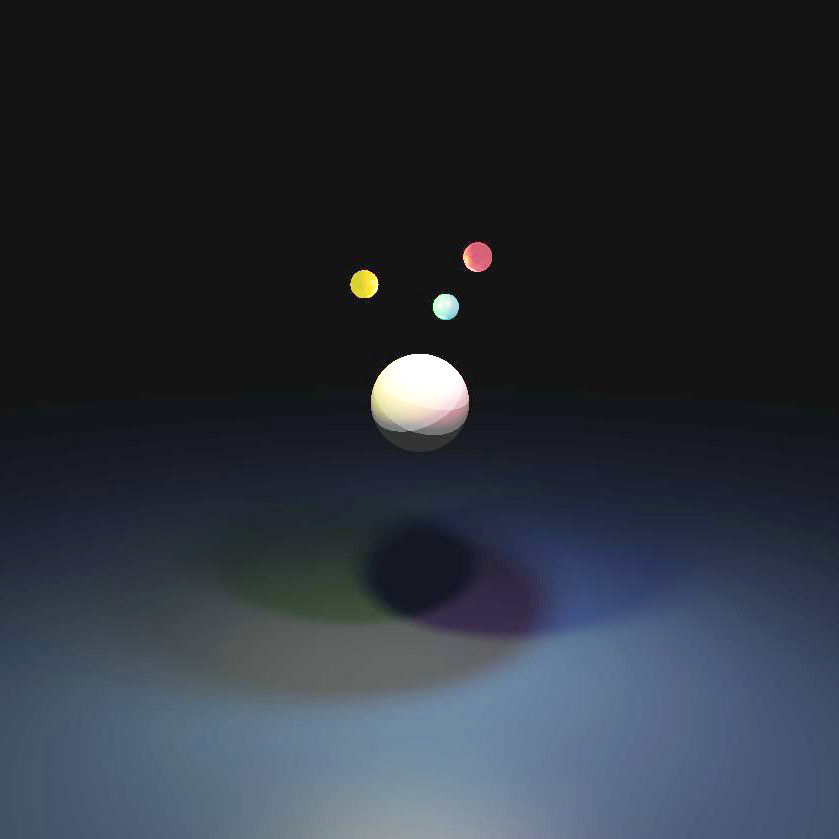}
\label{Fig:SoftShadow}
}
\caption{A comparison of different shadow rendering techniques with a sphere lit by three light sources above a plane in euclidean space.}
\label{Fig:RenderShadows}
\end{figure}

\subsection{Atmospheric Effects}
The fact that computing the total distance traveled along a path is trivial in a ray-marching application makes the above soft shadow approximation efficient.
This almost free availability of path lengths also lends itself well to volumetric rendering: accounting for contributions to the lighting from atmospheric media encountered along the path.
The simplest such effect, \emph{distance fog}, is computationally inexpensive to implement and provides helpful distance cues in complex scenes. This replaces a fraction of the color of a pixel with a ``fog'' color, $C_{\rm fog}$, depending on the distance the ray travels before hitting an object.

In many computer graphics applications, this fraction is linear in path length. This has the advantage that there is a distance at which all of the pixel is given the fog color, and no further calculation is necessary.
However, a physically realistic model based on scattering along a path (the Beer-Lambert law in physics) implies that the fraction is actually exponential in the path length.
We give these two models below.
\begin{equation}
\mathrm{Fog}(d_V)=1-\min\left\{\frac{d_V}{K},1\right\}, 
\hspace{1cm}
\mathrm{Fog}(d_V)=e^{-K d_V}	
\end{equation}

Here $K>0$ is a constant determining the rate of scattering.
Each of these are extremely easy to implement, as they are standard functions of the already-available path length.

\begin{figure}[htbp]
\centering

\subfloat[Without fog.]{
\includegraphics[width=0.29\textwidth]{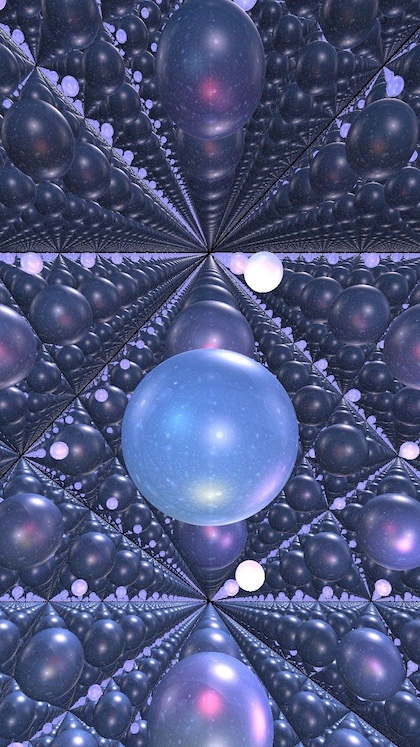}
\label{Fig:NoFog}
}
\;\;
\subfloat[With linear fog.]{
\includegraphics[width=0.29\textwidth]{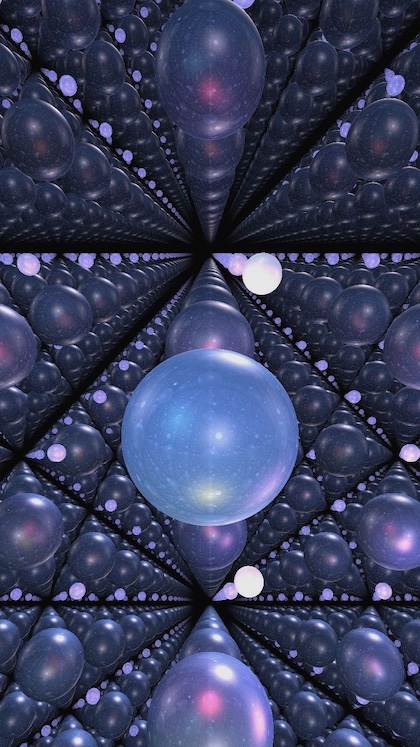}
\label{Fig:Fog}
}
\;\;
\subfloat[With exponential fog.]{
\includegraphics[width=0.29\textwidth]{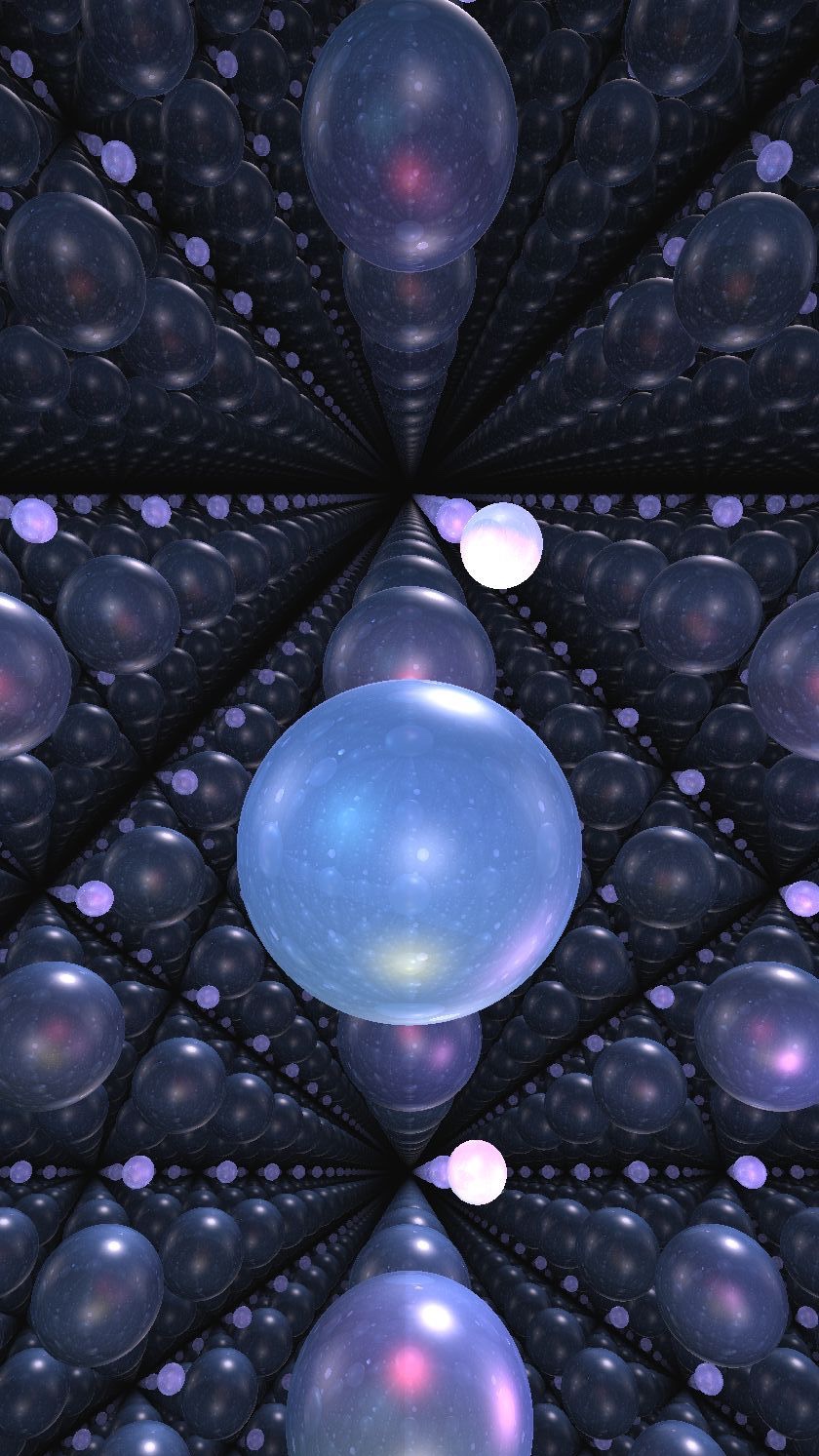}
\label{Fig:Fog}
}
\caption{A lattice of balls in euclidean space. }
\label{Fig:AllDirections}
\end{figure}

Combining the contributions from both shadows and fog, we obtain the following.

\begin{equation}
\begin{split}
{\rm Col} = \,& {\rm Fog}(d_V)\cdot{\rm Shadow}(L)\cdot {\rm Phong}(N,L,R,V,I_L)  + \\ &  (1 - {\rm Fog}(d_V))\cdot C_{\rm fog}
\end{split}
\end{equation}
	
Outside of this section, our in-space images use exponential fog unless otherwise noted. We always set $C_{\rm fog}$ to be black.

\subsection{Reflections}

It is also relatively simple to allow for reflective materials in ray-marching,
Upon impacting a reflective surface at $s_1$, one simply initiates a new ray-march from $s_1$ in the direction of the reflected ray. This ray-march may impact another object, at $s_2$ say. If so, we may reflect again.
Computing the observed colors ${\rm Col}_i$ at the points $s_i$ as above, the final color is an average, weighted by the reflectivity $r_i\in[0,1]$ of the material at $s_i$.
This can be carried out iteratively with no additional difficulty (other than increase in computation time). 
The weighted averages for one and two reflections are given below.

$$(1-r_1){\rm Col}_1+r_1{\rm Col}_2\hspace{1cm}
(1-r_1){\rm Col}_1+r_1\left((1-r_2){\rm Col}_2+r_2{\rm Col}_3\right)$$

\begin{figure}[htbp]
\centering
\subfloat[No reflections.]{
\includegraphics[width=0.90\textwidth]{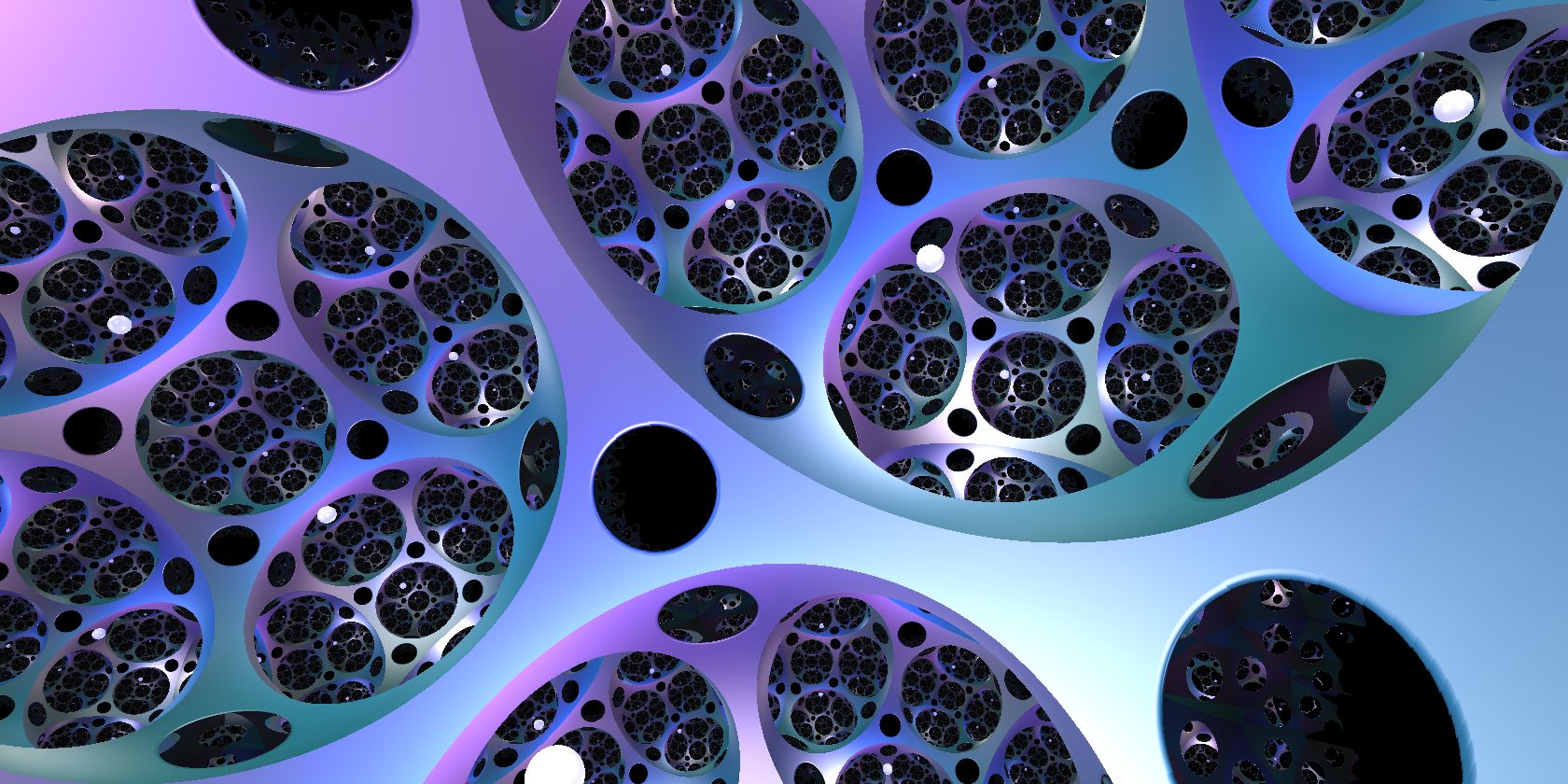}
\label{Fig:NoReflect}
}\
\subfloat[A single reflection pass.]{
\includegraphics[width=0.90\textwidth]{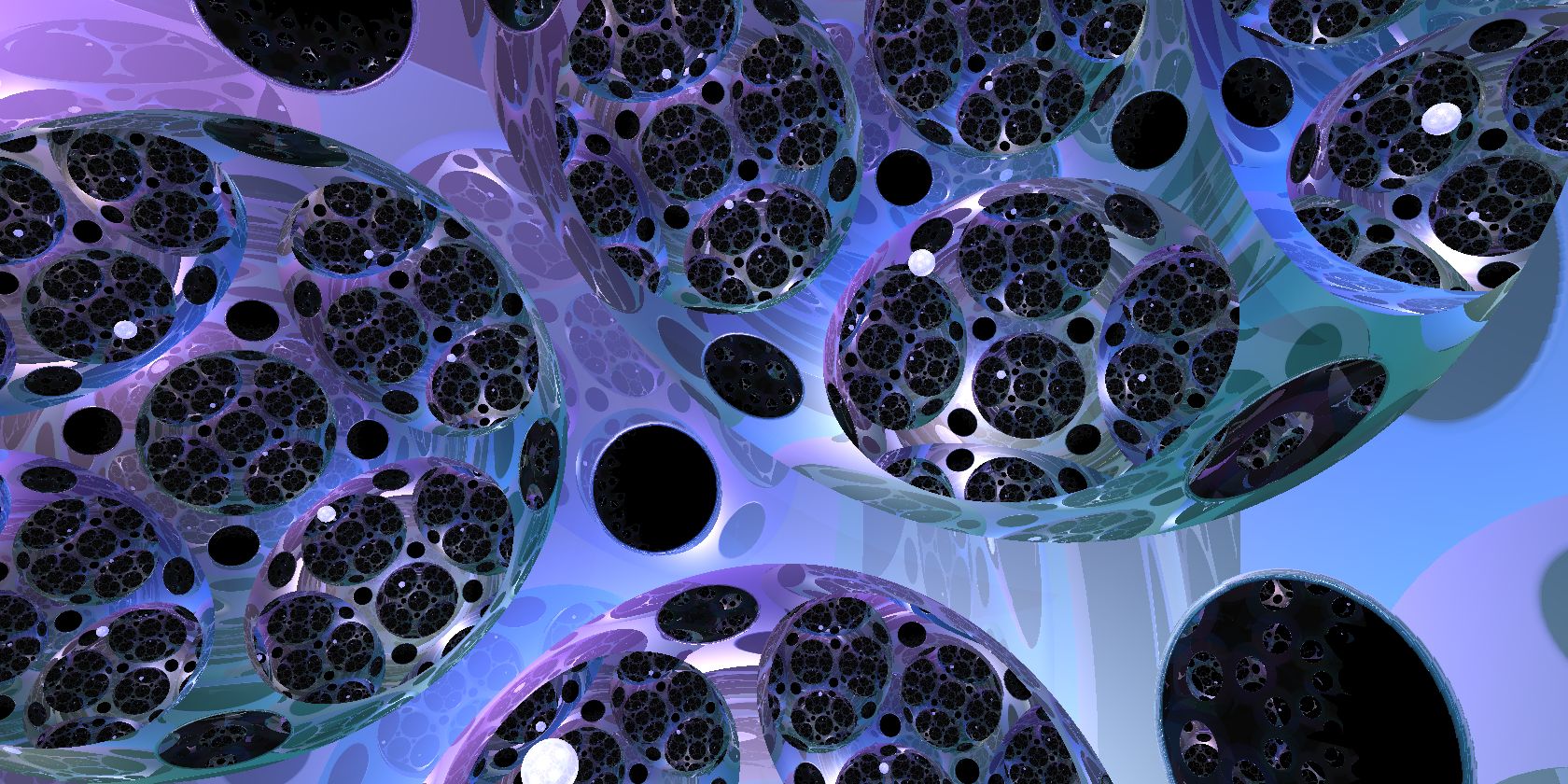}
\label{Fig:OneReflect}
}\
\subfloat[Two reflection passes.]{
\includegraphics[width=0.90\textwidth]{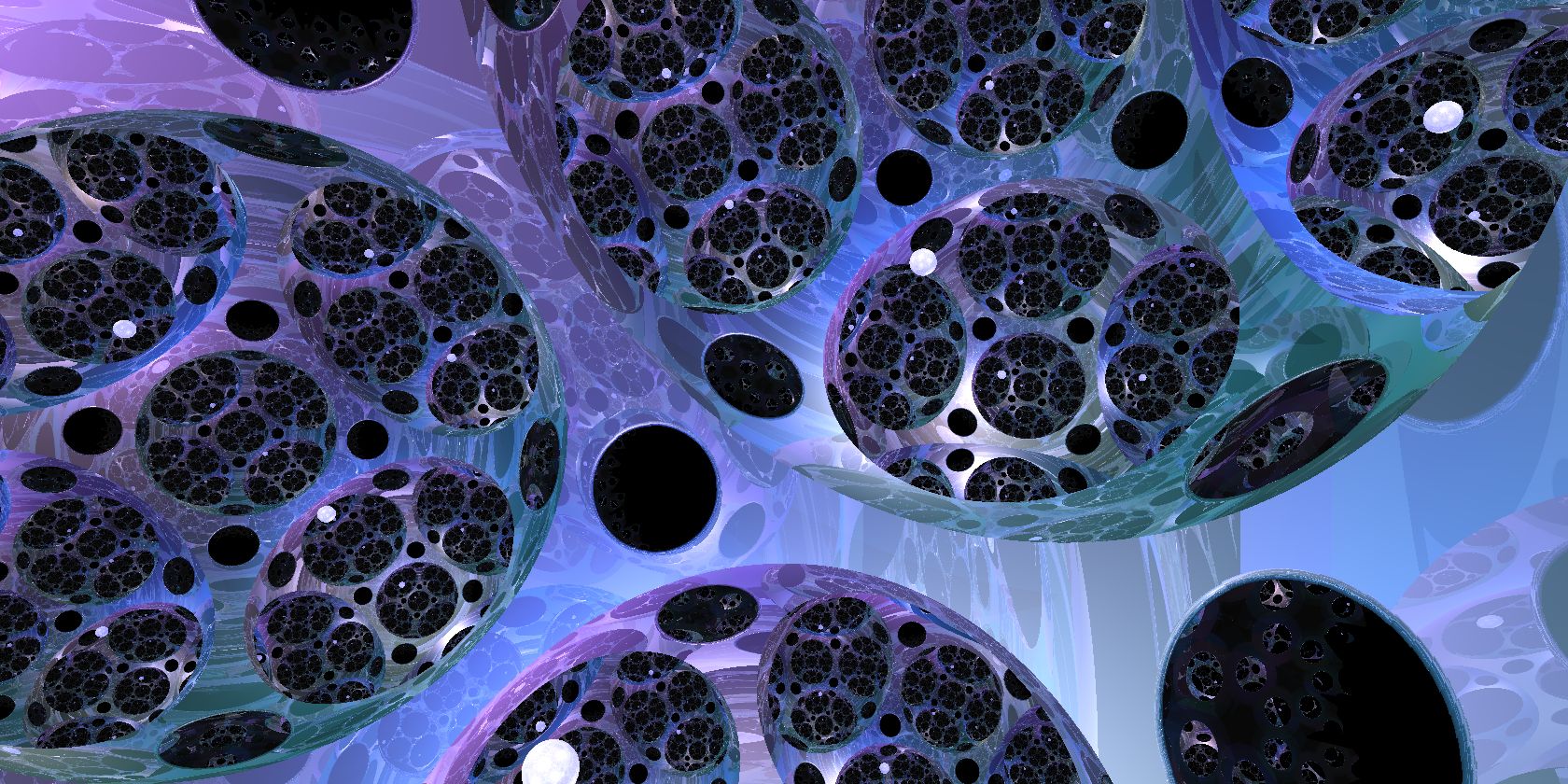}
\label{Fig:TwoReflect}
}\
\caption{Reflections in a complicated scene in hyperbolic space.}
\label{Fig:AllDirections}
\end{figure}

\subsection{Computing the necessary geometric quantities}

As the above sections illustrate, it is relatively straightforward to calculate accurate lighting, given the geometric quantities listed at the beginning of this section.  Here we turn to the issues involved in computing these.
Some of these quantities are available directly from the ray-march itself.

\subsubsection{Computing $v$}
The vector $v\in T_pX$ pointing from the viewer to the observed point $s\in S$ is the initial tangent vector for the ray-march.

\subsubsection{Computing $V$}
The vector $V\in T_s X$ pointing back at the viewer is the negation of the final tangent vector for the ray-march.

\subsubsection{Computing $d_V$}
The distance $d_V$ from the viewer to the observed point is the path length returned by the ray-march.

Other quantities require further computation.

\subsubsection{Computing $N$}
The unit surface normal $N\in T_sX$ is computable directly from the signed distance function $\sigma$. It is the gradient vector $\grad\sigma(s)$ dual to $d_s\sigma$ via the riemannian metric.
As in multivariable calculus, fixing a basis $\{f_1,f_2,f_3\}$ for $T_s X$, this is approximated for some small $\varepsilon>0$ by

$$\grad\sigma(s)\simeq \sum_{i=1}^3 \frac{\sigma(s+\varepsilon f_i)-\sigma(s-\varepsilon f_i)}{2\varepsilon}f_i$$

While in principle any choice of basis of $T_sX$ suffices, even slight discontinuities in the normal field over a surface are plainly visible in the output of the Phong lighting model.
To prevent this source of error, we make a globally continuous choice of basis by selecting a section of the frame bundle.
A simple construction of such a section follows from the transitivity of the $G$-action.
Let $B\subset G$ be a subset (not necessarily a subgroup) of the isometry group such that the orbit map $B\to X$ defined by $g\mapsto g.o$ is a diffeomorphism.
(For example, when $G$ has a subgroup acting simply transitively, we may take this as $B$.)
The inverse of this orbit map provides a section $X\to G$ with image $B$, sending $s\in X$ to $g(s)$.
We promote this to a section of $\mathcal{O}X$ by choosing an orthonormal frame $f  = \{f_1,f_2,f_3\}$ for $T_o X$ and translating by the $G$-action.
This assigns to $s\in X$ the frame $d_o g(s) f$.

\subsubsection{Computing $R$}
The unit normal provides a means of reflecting rays in the surface.
Given any vector $U\in T_sX$ we may compute its reflection in the surface by
$$\mathrm{Refl}(U)=U-2\langle U,N\rangle N$$
Thus, given the direction to the light source $L\in T_s X$, we may find the final direction needed for Phong lighting, $R=-\mathrm{Refl}(L)$.
This leaves only four quantities to be computed, all dealing with the location of the light source; two directions $L,\ell$ and two scalars $d_L,I_L$.
These require global information about the geometry of $X$. We discuss this next.

\subsection{Computing lighting directions, $L$, $\ell$, and distance $d_L$}
\label{Sec:Lighting directions}
Calculating the direction $L$ in which a light is visible from a point on the surface (and the other related quantities)
cannot be reduced to linear algebra in some tangent space: it involves the global geometry of $X$.
This requires a procedure that takes two points $s,q\in X$ and returns the set of \emph{lighting pairs} $\calL_s(q)\subset T_s X \times \RR_+$. Here each element $(L,d_L) \in \calL_s(q)$ represents the direction, $L$, of a geodesic $\gamma$ connecting $s$ to $q$, and the length, $d_L$, of the geodesic segment $\gamma$ connecting $s$ to $q$.
Since we use explicit formulas for the geodesic flow, one can directly compute from $(L, d_L)$ the direction $\ell \in T_qX$ and the reverse geodesic $\gamma'$ joining $q$ to $s$. 
In all cases, we may use the homogeneity of $X$ to reduce the problem to understanding geodesics from the origin, and focus on calculating the lighting pairs $\calL_o(q)$ for $q\in X$.
However, for the convenience of the reader, in the isotropic and product geometries we provide formulas for a general point $s \in X$.

In geometries with nonpositive sectional curvature, geodesics are unique by Cartan-Hadamard. Thus for each $s, q\in X$ the
set $\calL_s(q)$ is a singleton. 
In other geometries $\calL_s(q)$ may be a singleton, finite, countably infinite, or uncountably infinite, depending on $q$.
See Figure \ref{Fig:AllDirections} for examples of lighting along multiple geodesics in $S^3$ and $S^2\times\EE$.
There is no uniform approach to calculate $\calL_s(q)$, so we deal with this computation in later, geometry-dependent sections of this paper.

\begin{figure}[htbp]
\centering
\subfloat[Only the shortest geodesic.]{
\includegraphics[width=0.45\textwidth]{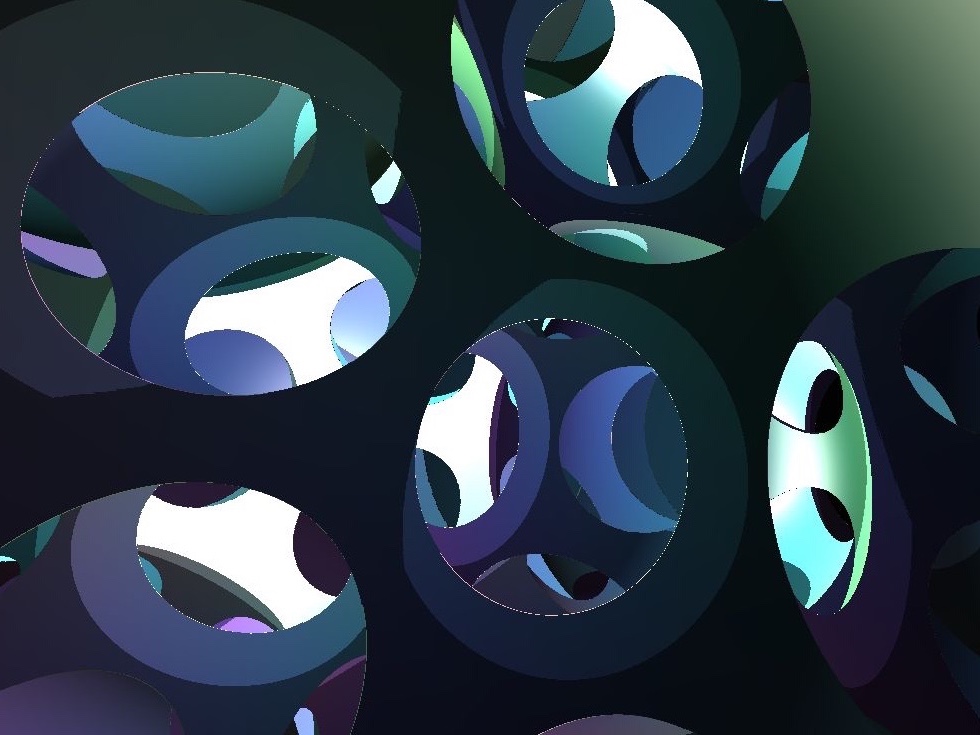}
\label{Fig:OneDirection}
}
\subfloat[Correct lighting (two geodesics)]{
\includegraphics[width=0.45\textwidth]{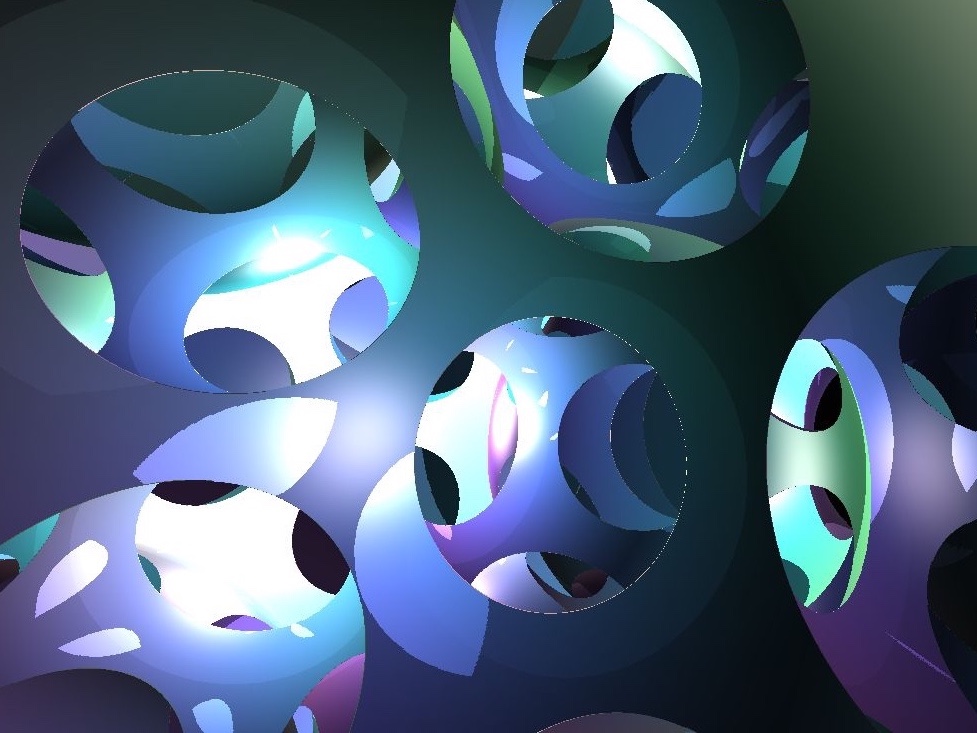}
\label{Fig:AllDirections}
}\\
\subfloat[Only the shortest geodesic.]{
\includegraphics[width=0.45\textwidth]{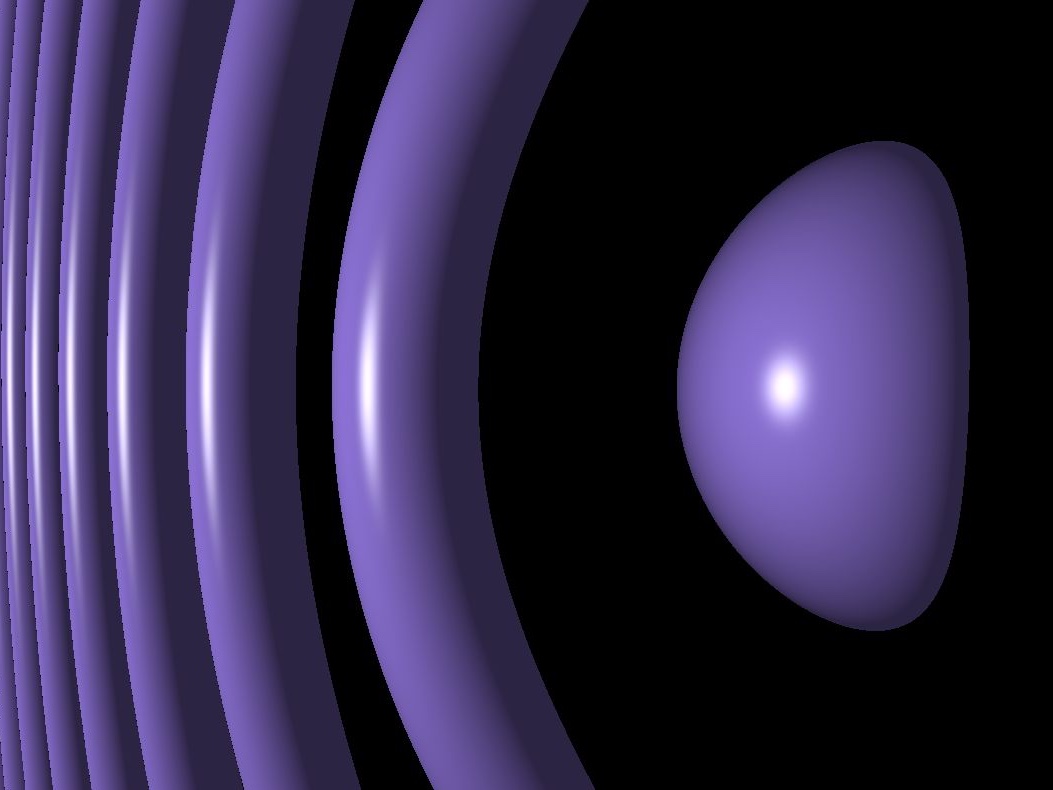}
\label{Fig:OneDirectionS2xE}
}
\subfloat[200 Geodesics.]{
\includegraphics[width=0.45\textwidth]{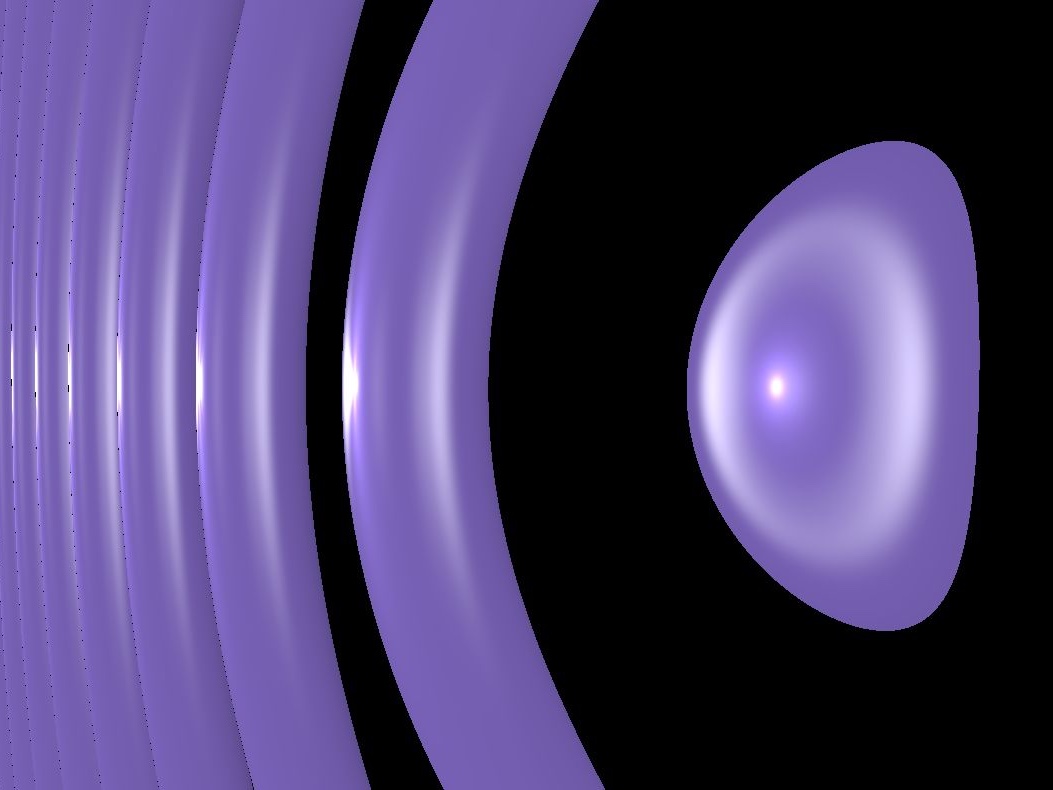}
\label{Fig:AllDirectionsS2xE}
}

\caption{A single light in $S^3$ (top) and $S^2\times\EE$ (bottom).  This demonstrates the necessity of dealing with multiple directions in $\calL_p(q)$.}
\label{Fig:AllDirections}
\end{figure}

\subsection{Computing the light intensity $I_L$}

We have one remaining quantity to compute: $I_L$, the intensity of the light source at $q$, as observed at $s$ from direction $L$.
We model our light source as isotropic with constant intensity $I_{\rm light}$.
To fix some notation, for any distance $t>0$ and unit direction vector $u\in T_q X$, let $I(t,u)$ be the intensity arriving from the light source after traveling along the geodesic ray in the direction $u$ for distance $t$.  
For any solid angle $\Omega$ (that is a subset of the unit tangent sphere at $q$ denoted by $UT_qX$), let $\Omega_t\subset X$ be the surface formed by flowing outwards from $q$ along geodesics in the directions in $\Omega$ by distance $t$. See \reffig{SolidAnglesSOL}.

\begin{figure}[htbp]
\subfloat[Solid angle around the $x$-axis.]{
\includegraphics[height=4.3cm]{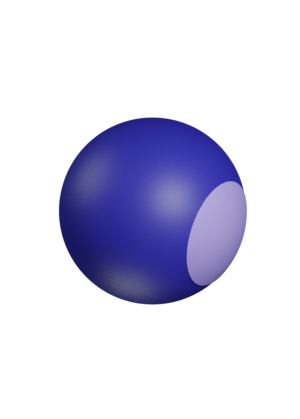}\qquad
\includegraphics[height=4.3cm]{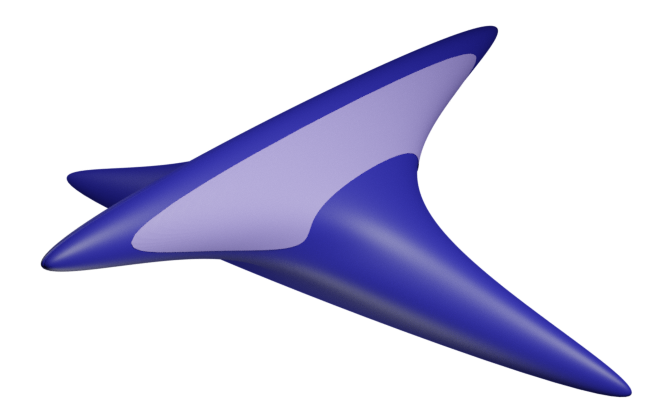}
}
\\
\subfloat[Solid angle around the $z$-axis. The image of the lighter area is a tiny strip on the top of the Sol sphere.]{
\includegraphics[height=4.3cm]{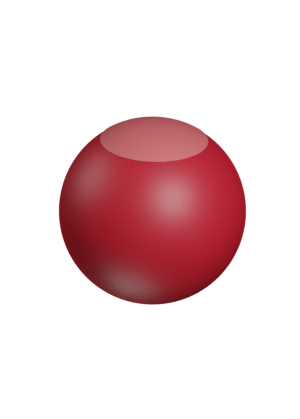}\qquad
\includegraphics[height=4.3cm]{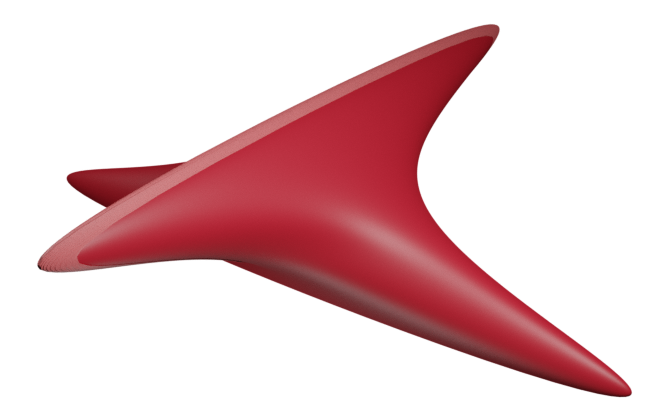}
}
\\
\subfloat[Solid angle around a diagonal line]{
\includegraphics[height=4.3cm]{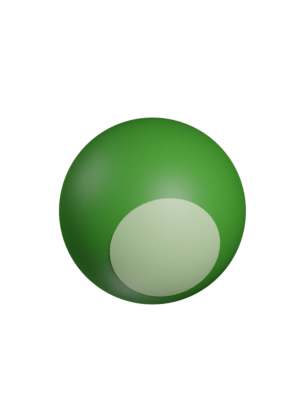}\qquad
\includegraphics[height=4.3cm]{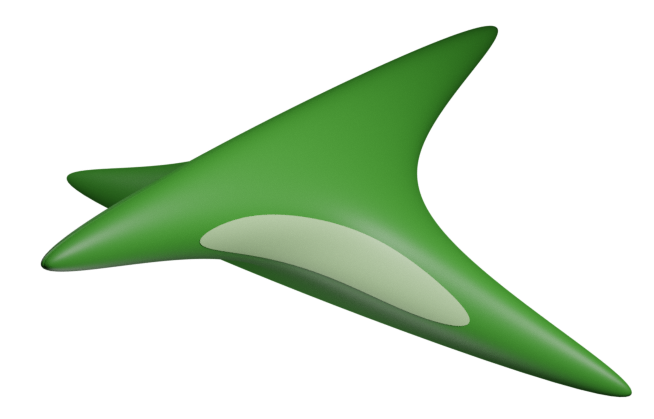}
}
\caption{Extrinsic views of spheres in Sol.
In each figure, the left hand picture represents the unit sphere in the tangent space at the origin of Sol. 
The lighter areas correspond to solid angles $\Omega$ with the same measure, but pointing in different directions.
The right hand picture shows an extrinsic view of the image of the unit tangent sphere after following the geodesic flow for time $r=3$.
The lighter area is the image $\Omega_t$ of $\Omega$.}
\label{Fig:SolidAnglesSOL}
\end{figure}

We assume that the total energy flux through the surface $\Omega_t$ is constant, independent of the distance traveled. (Energy is transported by the light rays along geodesics, but not created or destroyed along the way.)
This relates $I(t,u)$ directly to the area density of geodesic spheres. 
That is, for any $\Omega,t$ we have
$$\int_\Omega I_{\rm light} dA=\int_{\Omega}I(t,u)dA^\prime$$
where $dA$ is the standard area form on the unit sphere in the tangent space, and $dA^\prime$ is the pullback of the area form on $\Omega_t\subset X$ to $\Omega \subset UT_qX$.
We may express $dA^\prime$ in terms of $dA$; the resulting scale factor is the area density $dA^\prime=\mathcal{A}(t,u)dA$.
Thus, the quantity $\int_\Omega I(t,u)\mathcal{A}(t,u)dA$ is constant in $r$ for every solid angle $\Omega\subset UT_qX$.
Assuming continuity and taking the limit over shrinking solid angles promotes this to a pointwise invariant: $I(t,u)\mathcal{A}(t,u)$ is independent of $t$.
Thus, $I$ is inversely proportional to $\mathcal{A}$, and 
\begin{equation}
\label{Eqn:IntensityDensity}
I(t,u)=\frac{I_{\rm light}}{\mathcal{A}(t,u)}.
\end{equation}

\begin{remark}
The intensity $I_L$ experienced at $s$ from the direction $L$ is then just
$I_L=I(d_L,\ell)=I_{\mathrm{light}}/\mathcal{A}(d_L,\ell)$.
A further correction to $I_L$ can occur when we add fog.
Here the intensity drops off due both to (1) divergence/convergence of geodesics, and (2) distance traveled through the medium.
A physically correct model for scattering from an isotropic source is already complex in euclidean space.
However, as the primary goal of modeling fog is to provide useful depth cues (and hide sins), we treat these sources of loss as if they were independent, and use
$$I_L^{\rm fog}=\mathrm{Fog}(d_L)\cdot I_L(d_L,\ell)=e^{-K d_L}\frac{I_{\rm light}}{\mathcal{A}(d_L,\ell)}$$
when distance-dependent attenuation (fog) is desired.
\end{remark}

\refeqn{IntensityDensity} reduces the calculation of lighting intensity directly to the area density $\mathcal{A}$.
In the next section, we calculate this area density by following infinitesimal patches of area along the geodesic flow.

\subsubsection{Area density under the geodesic flow}

Fix $q\in X$ to be the location of a light source, and
let $F\colon T_q X\to X$ be the exponential map.
 For fixed $t>0$, define $f_t(u)=F(tu)$, so $f_t\colon UT_qX\to X$ is a map of the unit tangent sphere at $q$, $UT_qX$ into $X$, formed by flowing along geodesics from $q$ for distance $t$. Note that the image is not the sphere of radius $t$ about $q$ when $t$ is greater than the injectivity radius of $X$.
Recalling the notation above $\Omega_t$ is defined as $f_t(\Omega)$, for a solid angle $\Omega\subset UT_qX$. 
We denote the entire image as $S^2_t=f_t(UT_qX)$.
Let $dA$ be the standard area form on $UT_qX$, and let $dA_t$ be the area form on $S_t^2\subset X$.
Recall that the area density $\mathcal{A}(t,u)$ is the proportionality factor of the pullback $f^\ast_t dA_t$ to $dA$.
We may compute this given any choice two non-collinear vectors $\{v,w\}$ in $u^\perp$ as
$$\mathcal{A}(t,u)=\frac{(f^\ast_t dA_t)(v,w)}{dA(v,w)}=\frac{dA_t\left((df_t)_uv,(df_t)_uw\right)}{dA(v,w)}.$$

The area forms $dA$ and $dA_t$ measure the areas in $X$ of infinitesimal parallelograms in $T_qX$ and $T_{f_t(u)}X$ respectively, and so may be evaluated in the algebra of bivectors on $TX$, where the area spanned by $v,w\in T_pX$ is given by 
$$\|v\wedge w\|=\sqrt{\langle v,v\rangle\langle w,w\rangle-\langle v,w\rangle^2}$$

Thus, we have 
\begin{equation}
\label{Eqn:AreaDensityDifferential}\mathcal{A}(t,u)=\frac{\|(df_t)_u v\wedge (df_t)_u w\|}{\|u\wedge w\|}.\end{equation}
Note that the numerator is the Jacobian derterminant of $f_t$.
As computing area elements requires nothing more than some evaluations of the metric, this reduces the calculation of area density to the computation of the differential $df_t$.

Recall that $f_t(u)=F(tu)$, where $F$ is the exponential map. We see that $(df_t)_uv=dF_{tu}v$ for all $u\in T_q X$ and $v\in T_u(T_qX)$. 
To lighten notation, for the rest of this paragraph we identify $T_{u'}(T_qX)$ with $T_qX$ for every $u'\in T_q X$.
Given $u$ in $UT_qX$ and $v$ in $u^\perp$ of unit length, this allows an explicit computation of $(df_t)_uv$ in terms of the exponential map, as follows.  
Let $\eta(v,s)=\cos(s)u+\sin(s)v$ be the unit vector in $T_q X$ making angle $s$ with $u$ in the plane spanned by $\{u,v\}$.
Note that $\eta'(0)=v$ so we may calculate $(df_t)_u v$ as
$$(df_t)_uv=dF_{tu}v=\left.\frac{d}{ds}\right|_{s=0}F(t\eta(v,s)).$$
For each fixed $s$, 
the map $t\mapsto F(t\eta(v,s))$ is a unit speed geodesic in $X$,
and the derivative $dF_{tu}v\in T_{F(tu)}X$ is a vector field along this geodesic.
Computed as above, we see this is a particularly nice vector field:
it is the derivative of the geodesic flow along a one-parameter family of geodesics.
Such vector fields are called \emph{Jacobi fields}.

Given a smooth one-parameter family of geodesics $\{\gamma_s(t)\}$ through $\gamma_0 = \gamma$, the \emph{Jacobi field} associated to $\gamma_s$ is given by $J(t) =\partial \gamma_s(t) / \partial s |_{s = 0}$.
In general, one may bypass explicit computations involving $\gamma_s$, and compute such Jacobi fields by solving a differential equation. 
The Jacobi field $J_v$ along $\gamma$ with initial conditions $J(0)=0,\dot{J}(0)=v$ satisfies the so called \emph{Jacobi equation}, 
\begin{equation}
\label{Eqn:Jacobi}
\ddot{J}_v=\Riem\left(J_v,\dot{\gamma}\right)\dot{\gamma}
\end{equation}
where $\Riem$ is the Riemann curvature tensor.
For us then, $(df_t)_uv$ and $(df_t)_uw$ are the Jacobi fields along $f_t(u)$ corresponding to the variations $F(t\eta(v,s))$ and $F(t\eta(w,s))$ respectively, so 
\begin{equation}\label{Eqn:Jacob_df}
(df_t)_uv=J_v(t)\quad\textrm{and}\quad(df_t)_uw=J_w(t).
\end{equation}

In the isotropic geometries and product geometries, \refeqn{Jacobi} reduces to a second-order differential equation with constant coefficients. In any geometry where one may solve \refeqn{Jacobi}, the area density is given as follows.
For fixed $u\in UT_qX$, choose two vectors $v,w\in u^\perp$  with $\|v\wedge w\|=1$ and solve the Jacobi equation for the two Jacobi fields $J_v,J_w$.  
Then using Equations \ref{Eqn:AreaDensityDifferential} and \ref{Eqn:Jacob_df}, we have

\begin{equation}
\label{Eqn:AreaDensity_Jacobi}
\mathcal{A}(t,u)=\|J_v(t)\wedge J_w(t)\|\end{equation}

In the harder geometries, solving \refeqn{Jacobi} is more challenging.  
Following \refsec{GeodesicFlow - Grayson}, one could use Grayson's method to replace \refeqn{Jacobi} with a system of differential equations on $T_oX$.
This is not what we do though.
Since we already computed the exponential map $F$ (using Grayson's method) we directly compute its differential $dF_{tu}$.

Let $r,\theta, \phi$ be the standard spherical coordinates on $T_qX$, with $\phi$ the angle measured from the north pole. Let $u\in UT_qX$ have coordinates $[\theta,\phi]$.
Note that as the coordinate vector fields $\partial_\theta,\partial_\phi$ are orthogonal to $\partial_r$,
we may use them to make a uniform choice $v=\partial_\phi,w=\partial_\theta$, and compute

\begin{equation*}
\mathcal{A}(r,u)=\frac{\|dF_{ru}(\partial_\phi)\wedge dF_{ru}(\partial_\theta)\|}{\|\partial_\phi\wedge \partial_\theta\|}
=\frac{\|\frac{\partial F}{\partial\phi}(r,\theta,\phi)\wedge\frac{\partial F}{\partial\theta}(r,\theta,\phi)\|}{\sin\phi}.
\end{equation*}

In practice, due to the rotational symmetry in Nil and $\SLR$ about a single axis, it is more convenient to perform this computation in cylindrical coordinates, with $\rho=r\cos\phi$ and $z=r\sin\phi$.  For ease of notation, we retain $r=\sqrt{\rho^2+z^2}$ from spherical coordinates to denote the distance traveled along the geodesic.

\begin{equation}
\label{Eqn:AreaDensity_Coordinates}
\mathcal{A}(r,u)=\frac{2}{r}\left\|\left(\frac{\partial F}{\partial \rho}-\frac{\rho}{z}\frac{\partial F}{\partial z}\right)\wedge \frac{\partial F}{\partial \theta}\right\|
\end{equation}

Using either \refeqn{AreaDensity_Jacobi} or \refeqn{AreaDensity_Coordinates}, the computation of area density is necessarily geometry-dependent, so we give details for each geometry in the corresponding section later. See Sections~\ref{Sec:IsotropicLighting}, \ref{Sec:ProductLighting}, \ref{Sec:NilLighting}, and \ref{Sec:SLRLighting}.

\subsection{Lighting in quotient manifolds}

The basic algorithms for lighting remain virtually unchanged in a quotient manifold.
Phong lighting is still computed in the tangent space, and the only modification to the computation of shadows and reflections is to modify the ray-march as in \refsec{NonSimplyConnected}.
There is only one major change worthy of discussion: the calculation of direction vectors pointing from the surface to a given light.
This is even more necessarily multi-valued here, as light may travel in loops around the manifold before impacting the surface.
Indeed, a light in $X/\Gamma$ is the same as a $\Gamma$-equivariant collection of lights in $X$. 
When required for disambiguation, we will denote the set of lighting pairs in a space $Y$ as $\calL^Y$.
For the location of a light $q$ in $D$, thought of as the fundamental domain for $X/\Gamma$, 
the lighting pairs $\calL_p^{X/\Gamma}(q)$ can be written in terms of the lighting pairs $\calL^X_p$ of \refsec{Lighting directions}:
$$\calL_p^{X/\Gamma}(q):=\bigcup_{\gamma\in\Gamma}\calL_p^X(\gamma.q)$$ 
Note that there is no sense in which $\calL_p^X(\gamma.q)$ is some sort of ``$\gamma$-translate'' of $\calL_p^X(q)$: the individual sets in this union may not even have the same cardinality.
This occurs for instance in Nil, where even if the distance from $p$ to $q$ is less than the injectivity radius, there may be a $\gamma\in\Gamma$ with arbitrarily many geodesics from $p$ to $\gamma.q$.
As lighting is calculated individually for each direction and summed  weighted by intensity, it is in general impossible to compute this exactly for any manifold with infinite fundamental group.
Instead, for all but spherical manifolds and orbifolds, we must approximate the lighting by computing only for those paths with significant intensity.

Light intensity is inversely correlated with geodesic length of a segment from $p$ to $q$ in geometries with non-positive sectional curvature, and in all geometries if we use fog.
Thus we get a reasonable approximation to the correct image by restricting to directions corresponding to `sufficiently short' geodesics.
Considering only the directions from lights within $D$ (that is, when $\gamma = \textrm{id}$)
is not enough, as some nearby translates $\gamma.q$ still contribute significantly. Compare \reffig{NoNeighbors} with \reffig{LightingAllCells}. The latter shows the correct lighting in the quotient of the three-sphere by the binary tetrahedral group. The former shows lighting using one of the 24 light sources. An improved approximation is to use the `nearest neighbors' idea from \refsec{NearestNeighbors}, and consider only tangent directions at $p$ which reach the light at $q\in D$, or its translates \emph{through the faces of $D$}. See \reffig{LightingNeighbors}.

This is even an issue in euclidean manifolds. Note that there is a discontinuity in the lighting of the red balls in \reffig{NearestNeighbor}. The left and right hemispheres are lit by different collections of lights, since they sit in different fundamental domains.

\begin{figure}[htbp]
\centering
\subfloat[Lighting from within $D$ only.]{
\includegraphics[width=0.85\textwidth]{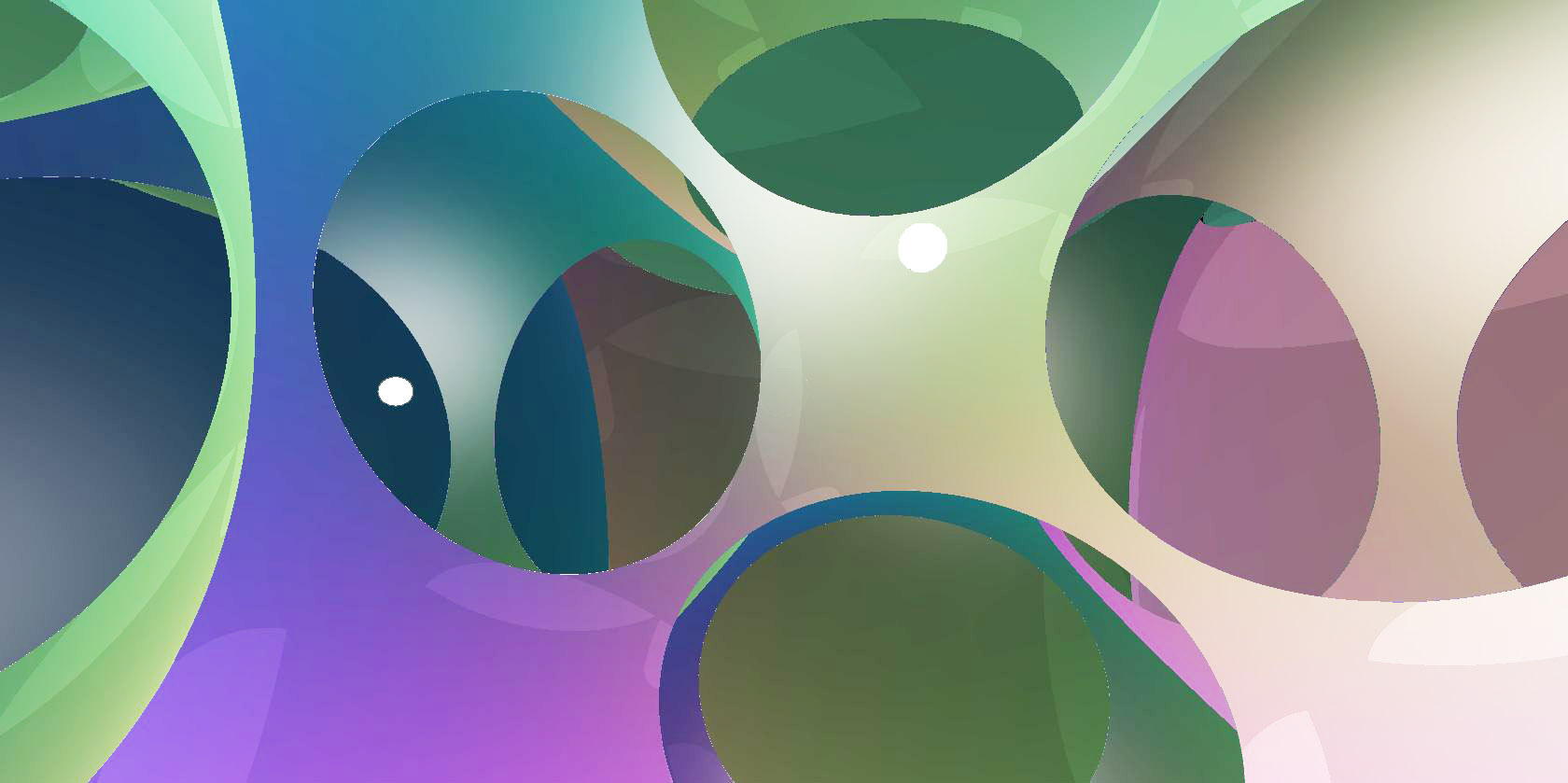}
\label{Fig:NoNeighbors}
}

\subfloat[Lighting from within $D$ and its eight neighbors.]{
\includegraphics[width=0.85\textwidth]{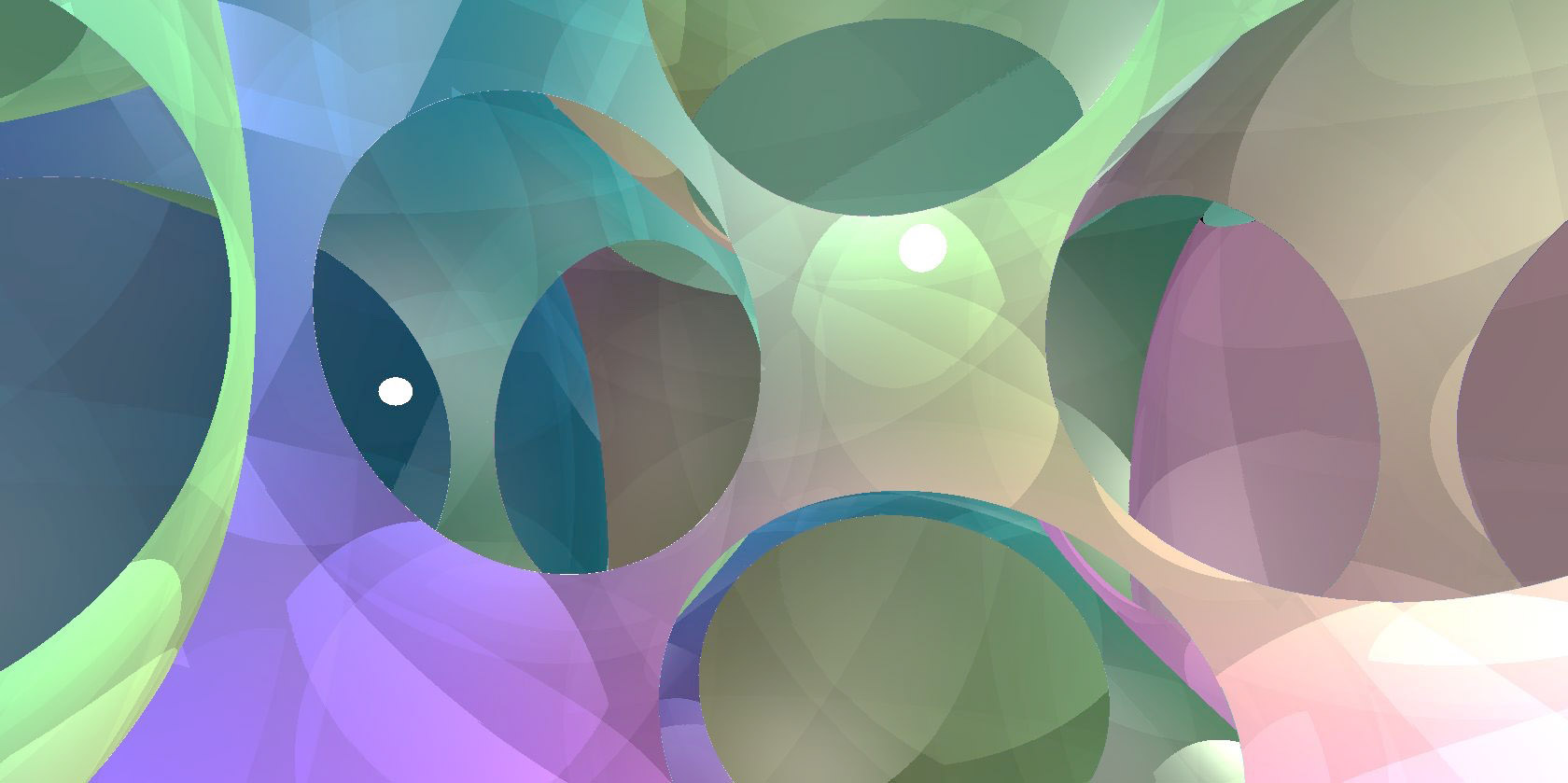}
\label{Fig:LightingNeighbors}
}

\subfloat[Lighting from all 24 cells.]{
\includegraphics[width=0.85\textwidth]{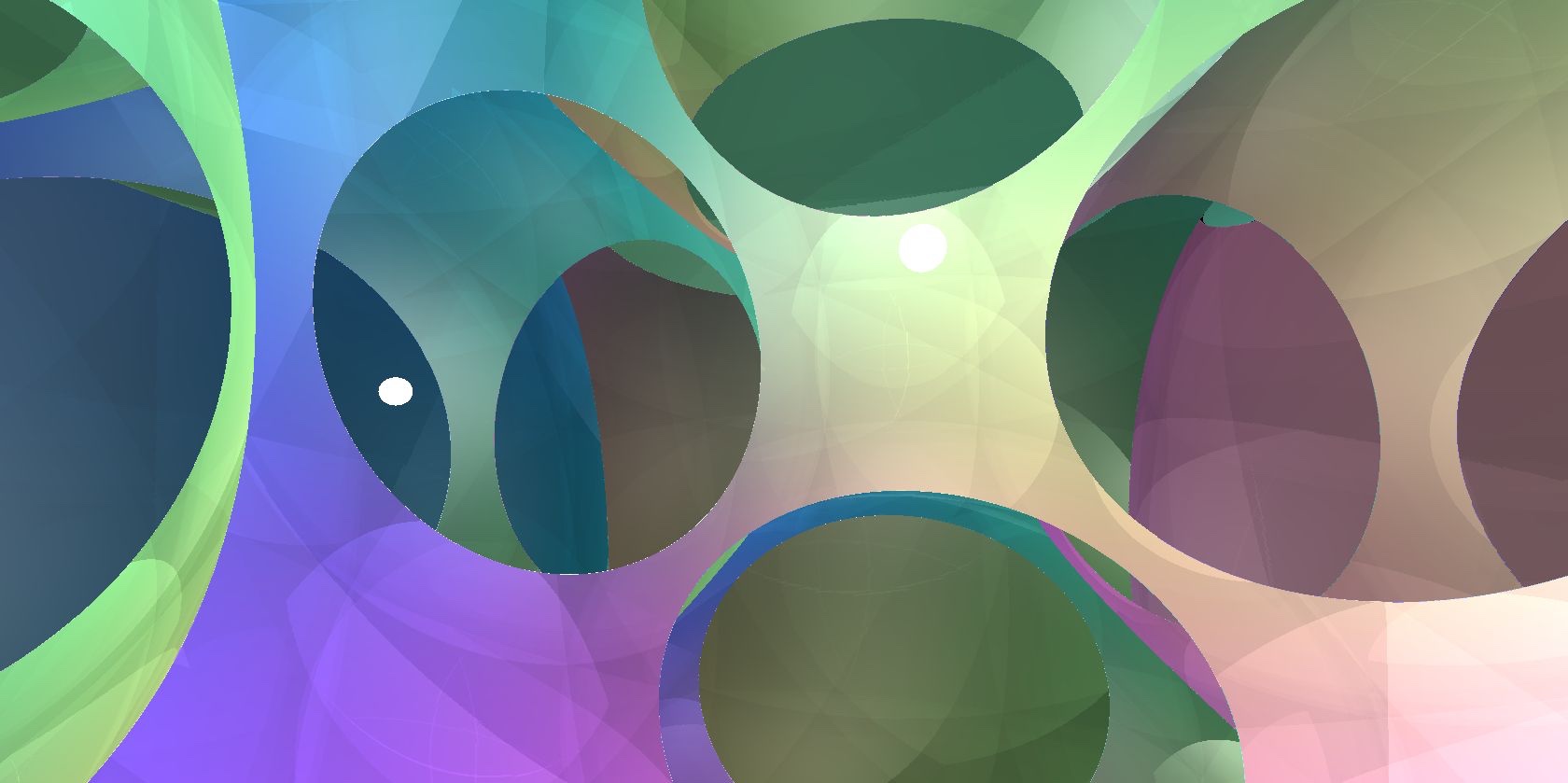}
\label{Fig:LightingAllCells}
}
\caption{Lighting of the quotient of $S^3$ by the binary tetrahedral group, with a single point source light. There are no reflections: the patterns are the result of (hard) shadows cast by the scene.}
\label{Fig:QuotientLighting}
\end{figure}

In geometries with positive sectional curvatures, light can converge again over long distances, meaning that there are certain directions where even long geodesics make significant contributions to the overall sum unless we use fog.
Which translates of the lights to include in a calculation then depends on both the geometry and the scene. So far, we have only a heuristic understanding of how to choose translates appropriately, based on the light intensity function for each geometry.

\subsection{Cheating}
\label{Sec:Cheating}

Accurate lighting and shading is a complex problem,  requiring many calculations, and many ray-marches per pixel to perform correctly.
As we strive to produce as accurate a simulation as possible, we have worked to implement lighting, shadows, reflections, and fog as described above.
However, insistence on complete ``physical'' accuracy is not ideal for all applications.
Sometimes lighting is best thought of as a means for euclidean humans to better perceive the geometry, rather than as a feature of the geometry in itself. This is analogous to astrophysical simulations, where it is more important to correctly render the size and position of celestial bodies, rather than to faithfully reproduce the brightness of the sun. 
In these situations it is often desirable to purposely employ nonphysical lighting to improve speed and/or visibility.

We find that the most often useful change to make is in the relationship of light intensity $I_L$ with distance.
There are two main problems that we can solve here. 
\begin{itemize}
\item First, correct lighting may give intensities of vastly different magnitudes for different parts of the same scene. This means that parts of the scene will be too dark for our eyes to see any structure. Alternatively, we can increase the brightness of the lights, but then other parts of the scene will be oversaturated. 
\item Second, and more subtly, we use variation in brightness as a depth cue, telling us how far away an object is from a light source. 
\end{itemize}

\reffig{HypCorrect} shows a scene in $\HH^3$ lit by a single light. Here, exponential falloff in intensity with distance leaves everything other than the central cell shrouded in darkness. We see similar behavior in \reffig{HypDirection}, when looking in a hyperbolic direction in $\HH^2 \times \EE$. When we look in a euclidean direction in \reffig{RealDirection}, we do see neighboring cells, giving the impression that cells are closer in that direction than in the hyperbolic directions. In \reffig{S2ECorrect}, the correct lighting calculations in $S^2 \times \EE$ give an approximately even brightness over the whole image, even though only the ball at the center is particularly close to the viewer. 
The space $S^2 \times \EE$ works like a fiber-optic cable -- on average, the intensity of the light does not decrease with distance as we move along the cable.

Instead of the correct lighting intensity $I_L$, we may cheat, and use 
an artificial slowly decreasing intensity (say, inversely proportional to geodesic length). This provides more helpful depth cues and may also be less expensive to compute. See Figures~\ref{Fig:HypLinear},~\ref{Fig:S2ELinear},~\ref{Fig:IsoRealDirection}, and~\ref{Fig:IsoHypDirection}. 
As a side benefit, this also allows one to see distant reaches of a negatively curved space with only a few light sources. This also reduces computational cost.

\begin{figure}[htbp]
\centering
\subfloat[Correct intensity calculation.]{
\includegraphics[width=0.45\textwidth]{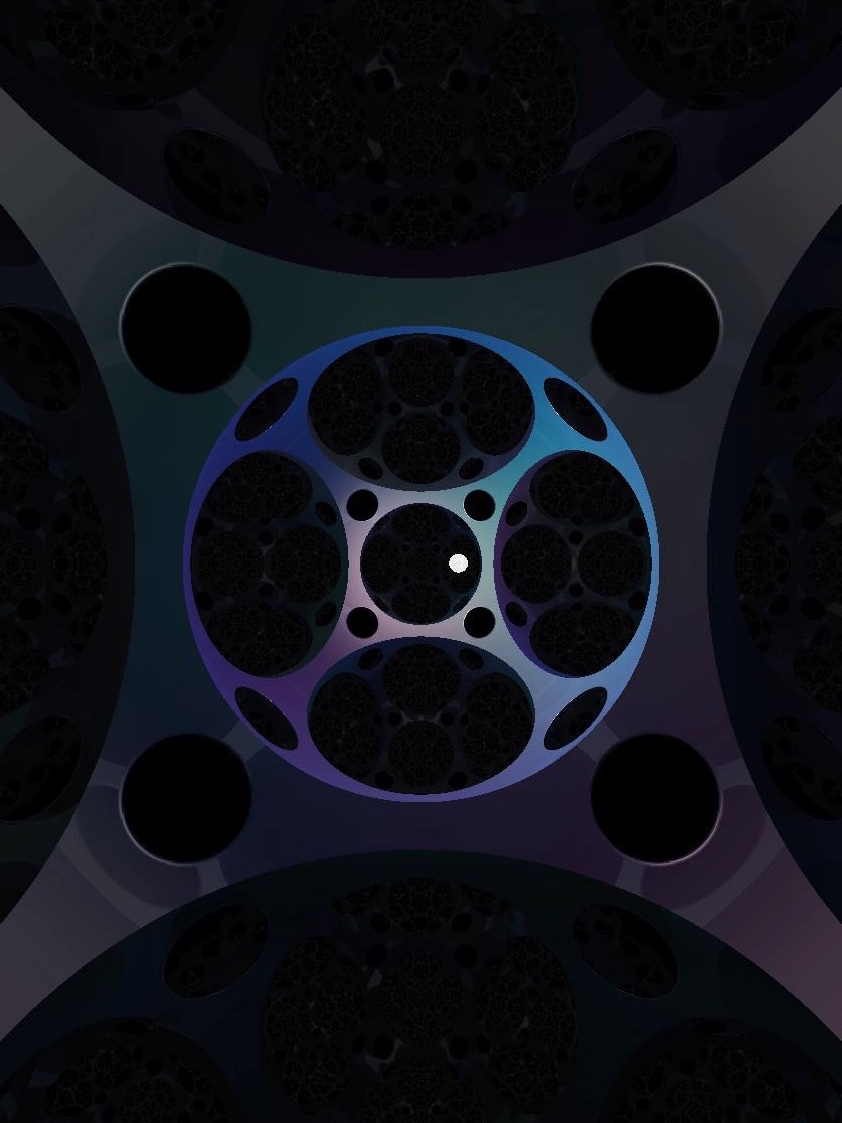}
\label{Fig:HypCorrect}
}
\quad
\subfloat[Intensity inversely proportional to geodesic length.]{
\includegraphics[width=0.45\textwidth]{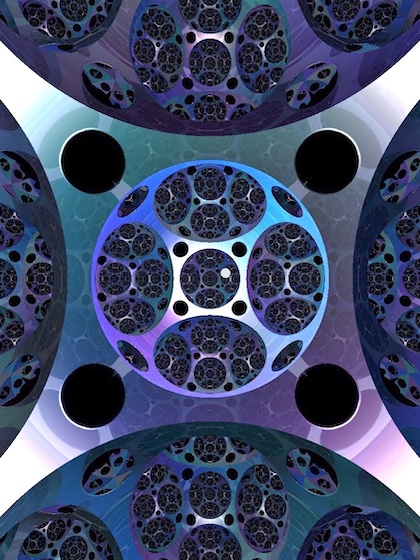}
\label{Fig:HypLinear}
}
\caption{A single light in hyperbolic space.}
\label{Fig:AllDirectionsH3}
\end{figure}

\begin{figure}[htbp]
\centering
\subfloat[Correct intensity calculation.]{
\includegraphics[width=0.45\textwidth]{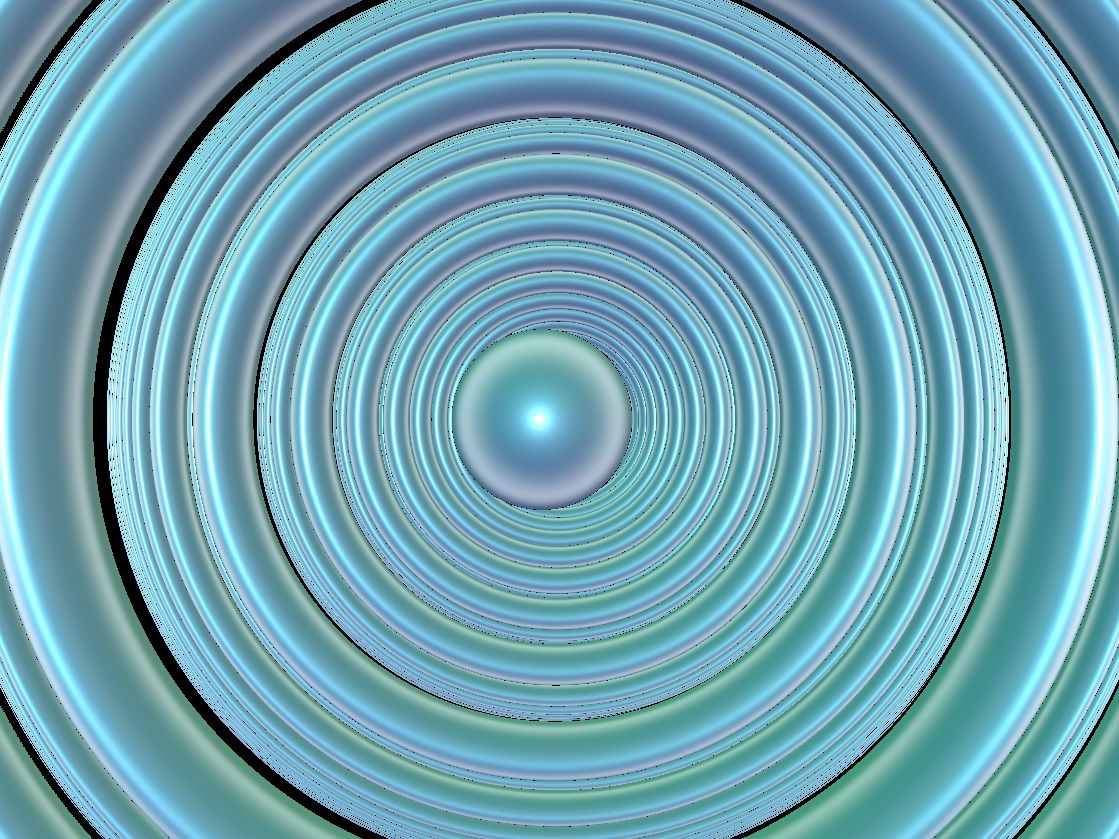}
\label{Fig:S2ECorrect}
}
\quad
\subfloat[Intensity inversely proportional to geodesic length.]{
\includegraphics[width=0.45\textwidth]{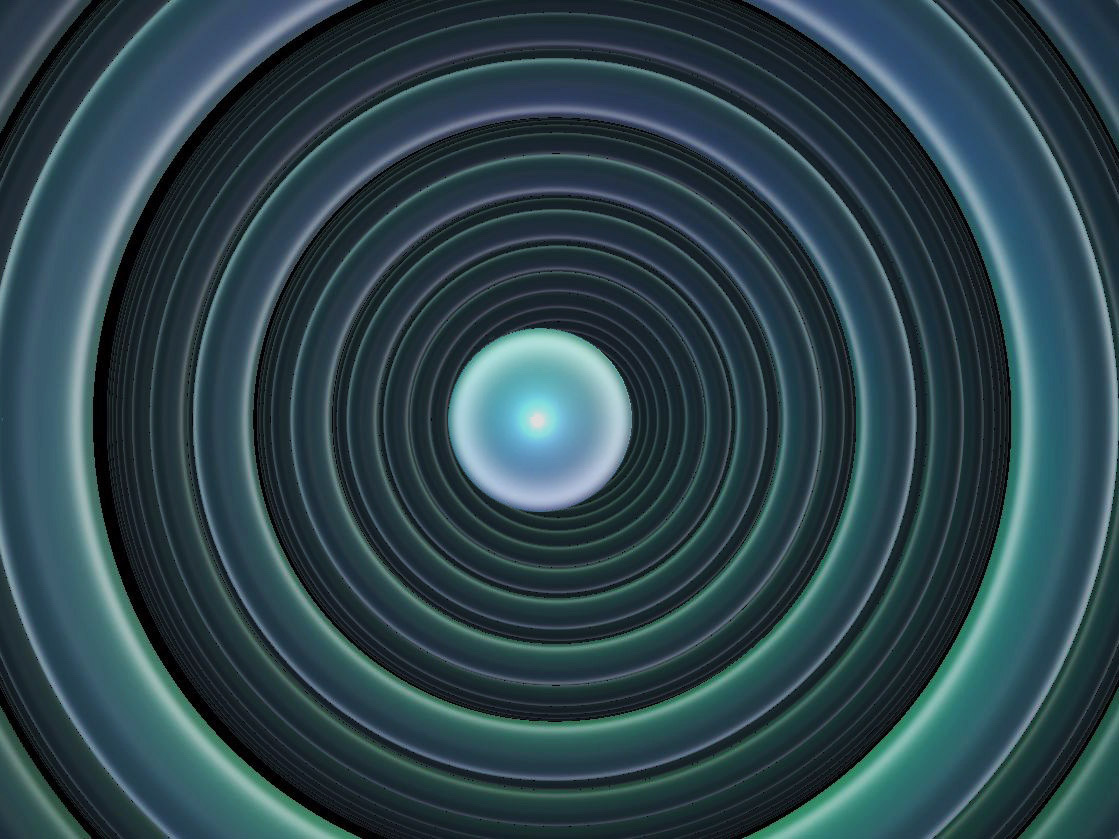}
\label{Fig:S2ELinear}
}
\caption{A line of balls in $S^2 \times \EE$ lit by a single light. Each ball is also visible as a collection of rings, seen along rays that wrap around the $S^2$ direction at least once.}
\label{Fig:AllDirectionsS2xS1}
\end{figure}

\begin{figure}[htbp]
\centering
\subfloat[Correct lighting, view in the $\EE$ direction.]{
\includegraphics[width=0.45\textwidth]{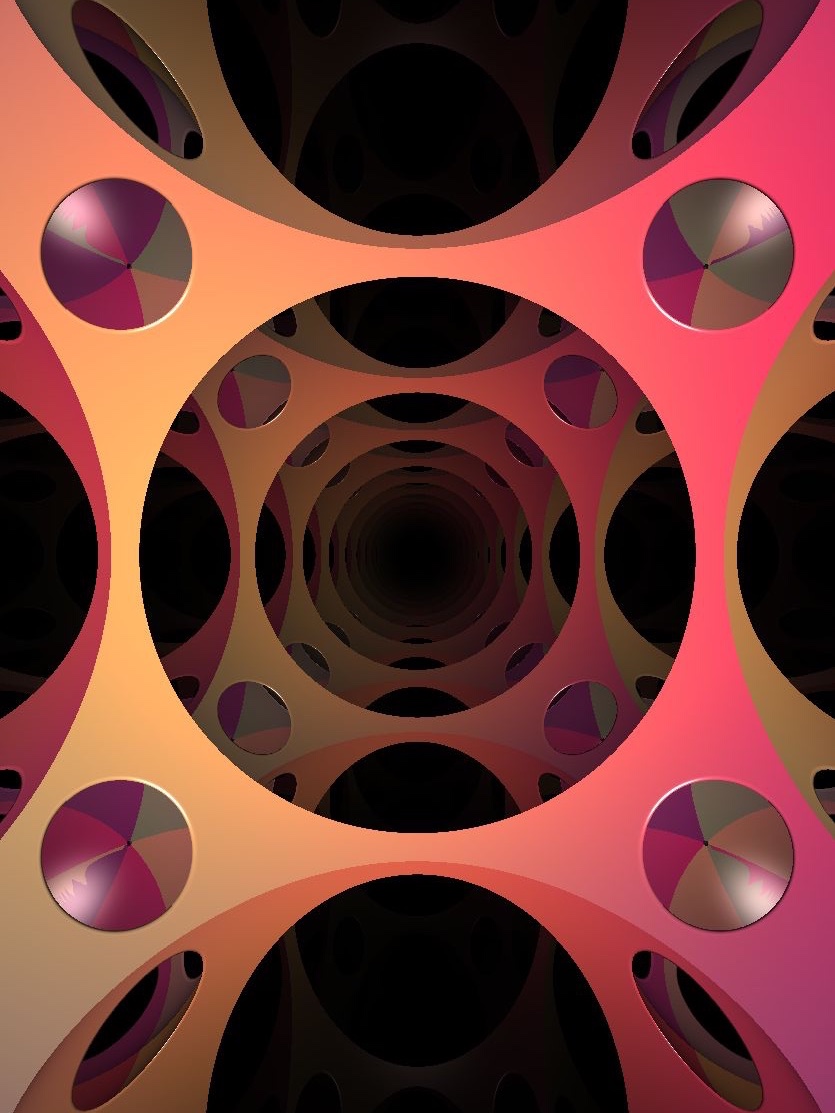}
\label{Fig:RealDirection}
}
\quad
\subfloat[Intensity inversely proportional to geodesic length, view in the $\EE$ direction.]{
\includegraphics[width=0.45\textwidth]{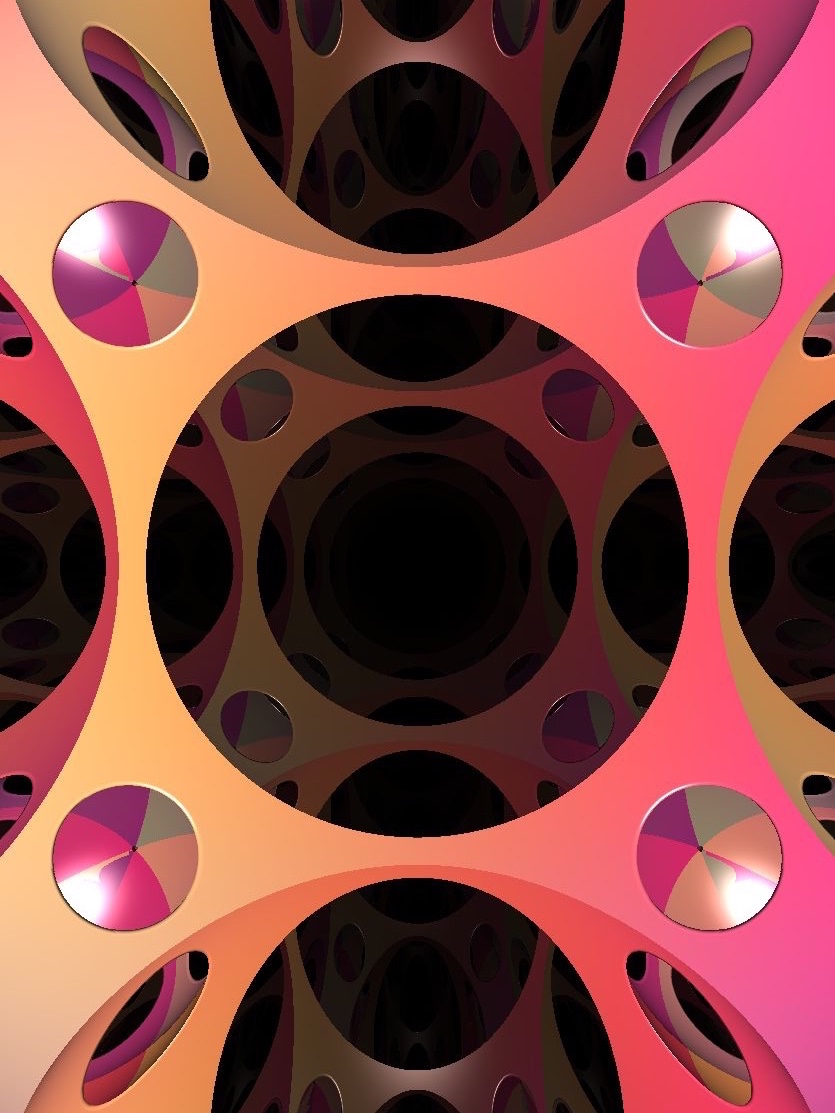}
\label{Fig:IsoRealDirection}
}\\
\subfloat[Correct lighting, view in an $\HH^2$ direction.]{
\includegraphics[width=0.45\textwidth]{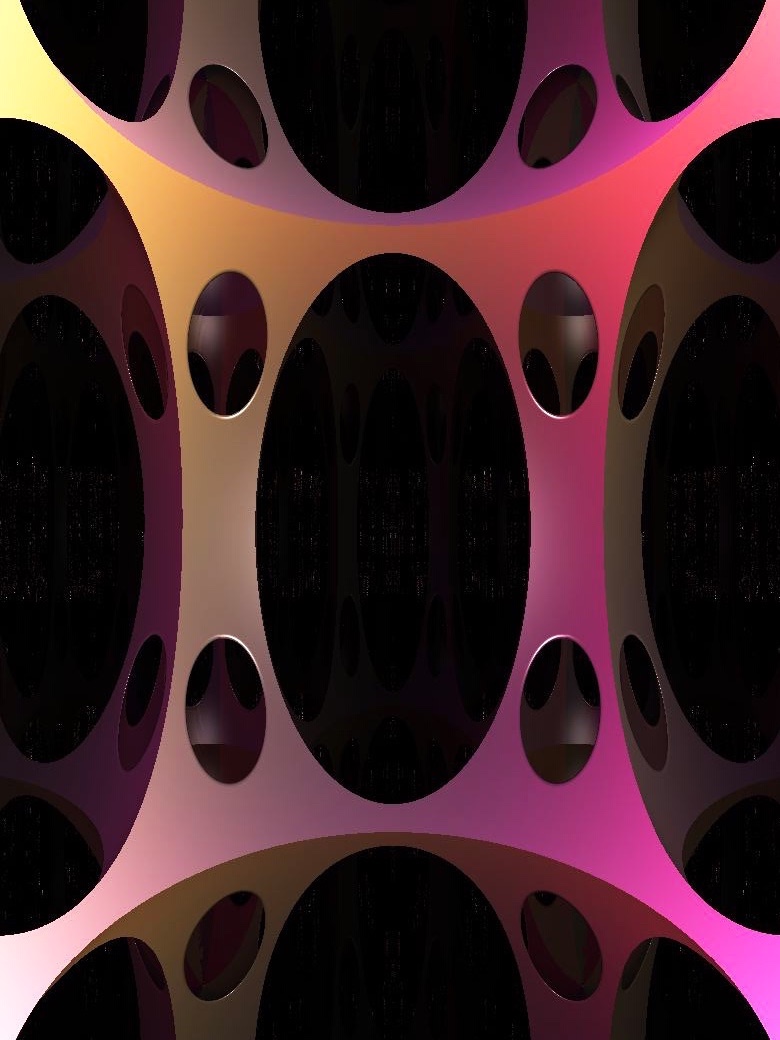}
\label{Fig:HypDirection}
}
\quad
\subfloat[Intensity inversely proportional to geodesic length, view in an $\HH^2$ direction.]{
\includegraphics[width=0.45\textwidth]{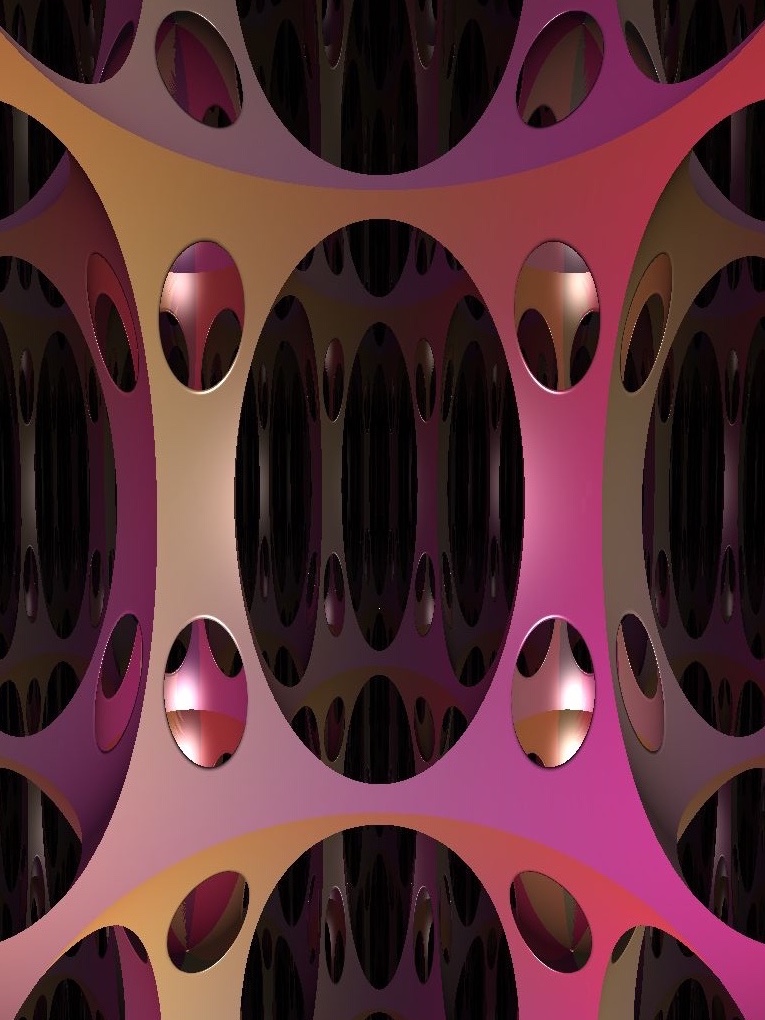}
\label{Fig:IsoHypDirection}
}\\
\caption{A lattice lit by a single light in $\HH^2\times\EE$. 
The distance between the centers of neighboring cells of the lattice is the same in all directions.
With correct lighting, we see many cells in the $\EE$ direction, while we can barely see our neighbor in an $\HH^2$ direction. With fake lighting, cells dim with distance equally in all directions. (Note that there is no fog in these images.) }
\label{Fig:DirectionalLight}
\end{figure}

When it comes to improving speed, we may pare down the lighting pipeline to focus on giving accurate depth cues.
This means preserving Phong lighting and fog, while perhaps ignoring shadows, or not using reflective materials.
Another efficiency gain which does not affect the intelligibility of the scene is to consider only the direction to the light along the \emph{shortest geodesic}, instead of the set of all directions.
Even when attempting accurate rendering, it is often acceptable to ignore lighting along all but the shortest few geodesics. This is the case when using fog, or when the intensity fall-off makes the contribution to the weighted average along longer geodesics negligible. 

However, using fewer geodesics can introduce very visible errors. In a quotient manifold, as we saw in Figures \ref{Fig:QuotientLighting} and \ref{Fig:NearestNeighbor} we may lose shadows, or introduce discontinuities in the perceived light intensity. In some geometries, using fewer geodesics can in fact remove discontinuities in lighting intensity that should be there.  

We usually indicate the position of a light with a ball in the scene centered on the light source, making sure that the shadow calculation for that light ignores the ball. To remove visual complication, we sometimes choose to not render these balls. 
Along these lines, in some situations we may not actually care, or may not be able to efficiently calculate, the lighting pairs $\calL_s(q)$.  Instead, we may simply choose for each light source a continuously varying direction field $X \to TX$. We give up on correctness, but still provide a seamless view and give visual cues.
\reffig{NilAllDirections} compares different choices of illumination in Nil.

\begin{figure}[htbp]
\centering
\subfloat[Artificial direction field (straight line in $\RR^4$ from $s \in X \subset \RR^4$ to the light position).]{
\includegraphics[width=0.47\textwidth]{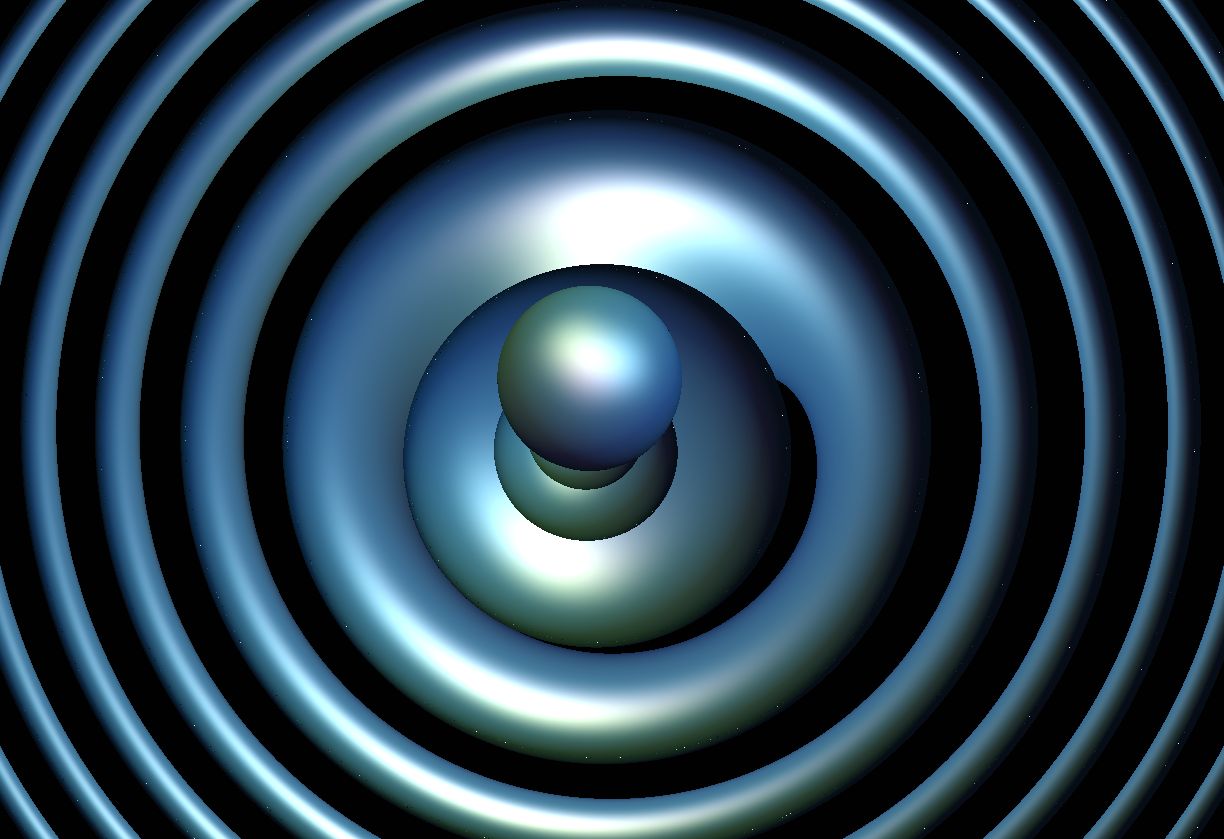}
\label{Fig:nilfake}
}
\subfloat[Direction of the shortest geodesic only.]{
\includegraphics[width=0.47\textwidth]{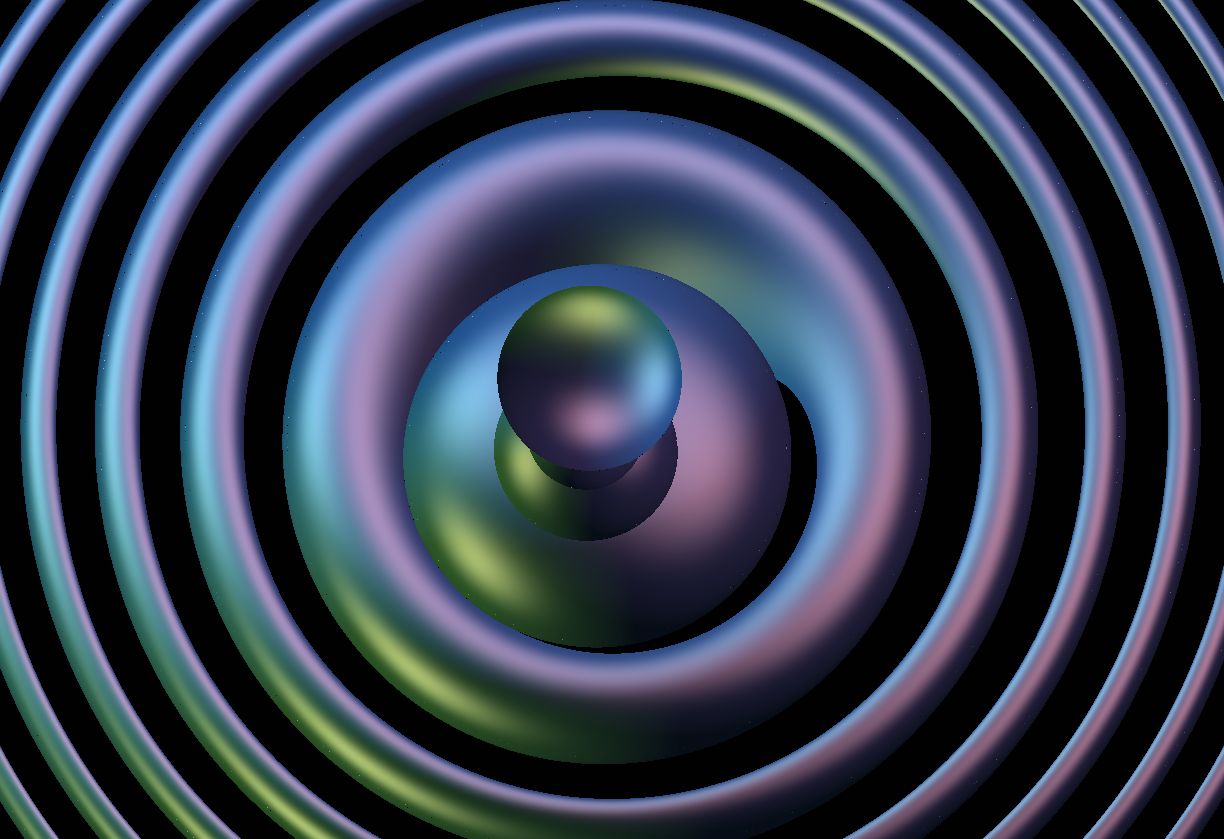}
\label{Fig:niltrue1}
}\\
\subfloat[At most two geodesics.]{
\includegraphics[width=0.47\textwidth]{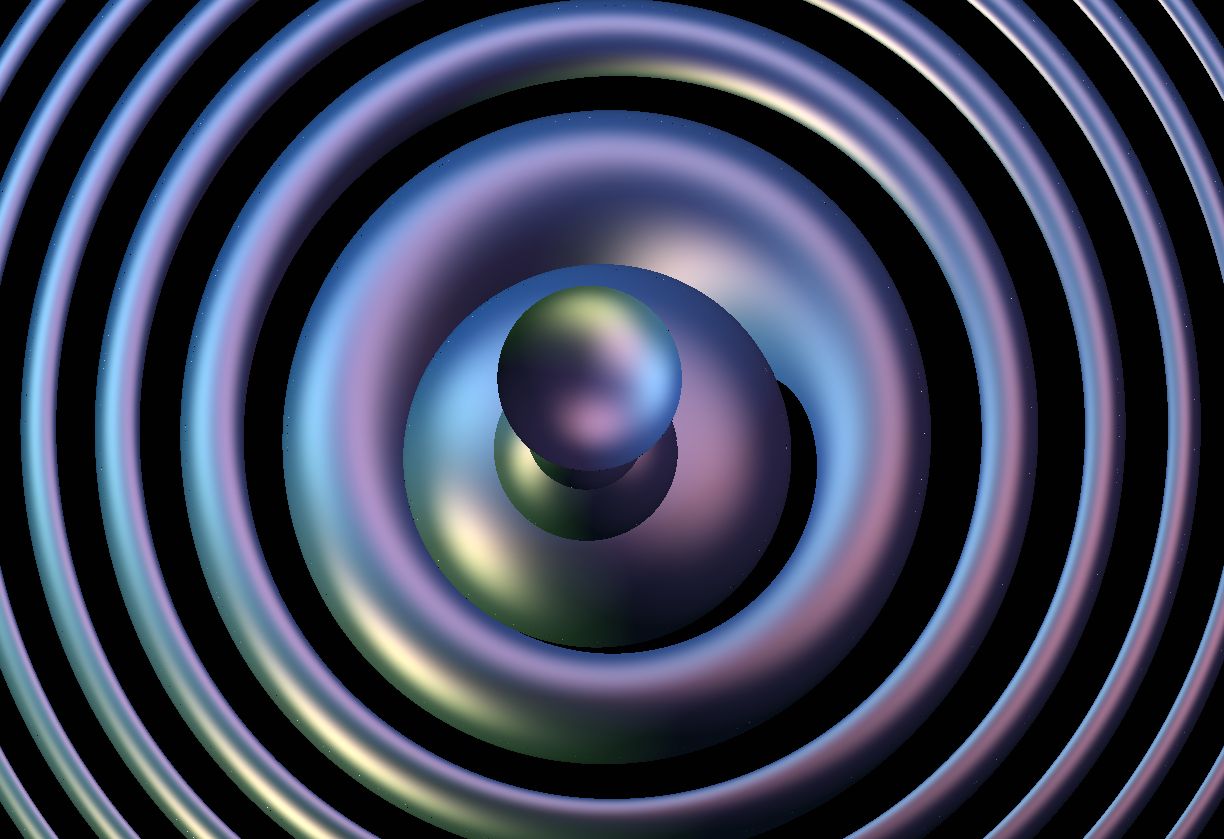}
\label{Fig:niltrue2}
}
\subfloat[At most three geodesics.]{
\includegraphics[width=0.47\textwidth]{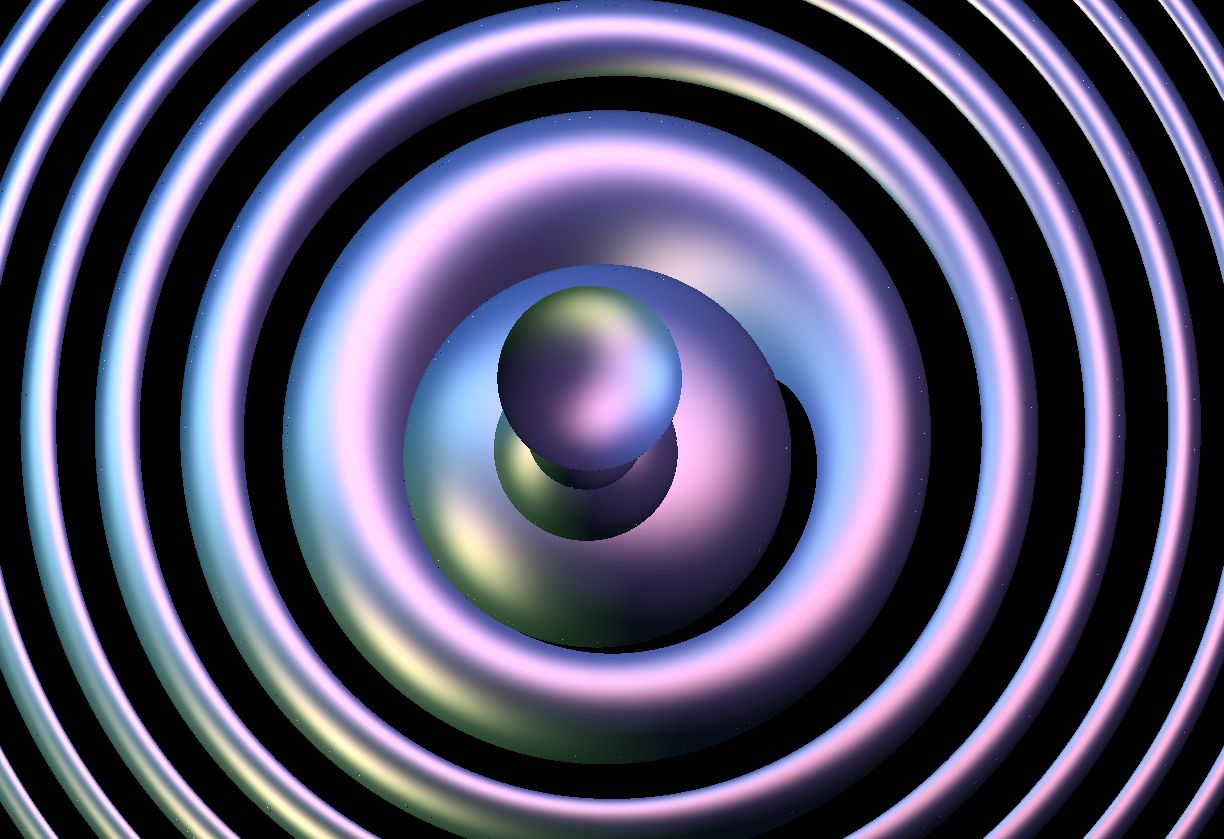}
\label{Fig:niltrue2}
}
\caption{A line of balls in Nil along the $z$-axis, lit by three light sources (cyan, yellow, and magenta). 
The magenta light is sufficiently far away from the first ball that they are connected by several geodesics.
The intensity attenuation has been turned off to emphasize the contribution of each source of light.}
\label{Fig:NilAllDirections}
\end{figure}

\section{Implementing specific geometries}

In previous sections we have described our strategies in a more-or-less geometry independent manner.
Here we begin to give specific details for each of the eight Thurston geometries.
To summarize the previous sections, for each geometry, we require the following:

\begin{enumerate}
\item A model for $X$ with action of the group of isometries $G$. That is, we must now be explicit about how points and isometries are described by vectors or matrices of floating point numbers.
\item Arc length parametrized geodesics in the model. That is, a way to flow a position and tangent vector at that position along the ray by a given distance, as described in \refsec{GeodesicFlow}.
\item Signed distance functions in the model.
\end{enumerate}

\noindent In order to render a quotient manifold with this geometry, we also need:
\begin{itemize}
\item[(4)] A fundamental domain $D$ with face pairings  $\{\gamma_i\} \subset G$.  
\end{itemize}
\noindent
For the Phong reflection model of lighting, we need:
\begin{itemize}
\item[(5)] For a point $s$ (where a ray hits a surface) and the location of a light source $q$, the set of lighting pairs $\calL_s(q)$ of geodesics joining $s$ to $q$ and vice versa. See \refsec{Lighting directions}.
\end{itemize}

\noindent To allow the user to move, we also require
\begin{itemize}
\item[(6)] Parallel transport along geodesic arcs. (Used to translate movement of the user's frame in $\mathbb{R}^3$ into isometries of $X$.)
\end{itemize}

For each of the eight Thurston geometries, we list some of these ingredients in \reftab{GeometryDetails}. All of our models are subsets of $\RR^4$. 

We give further details in the following sections. We consider the isotropic geometries in \refsec{Isotropic}, the product geometries in \refsec{Product}, and Nil, $\SLR$, and Sol in Sections \ref{Sec:Nil}, \ref{Sec:SLR}, and \ref{Sec:Sol} respectively.
A general reference for Thurston's geometries is \cite{Scott}.

    \thispagestyle{empty}
    \begin{landscape}
		\begin{table}[htp]
		\renewcommand{\arraystretch}{3.2}
		\scriptsize
		\begin{tabular}{| c || l |  c | c |  c |}
		\hline
		\bfseries Geometry & {\bfseries Model} (Set, Metric, Origin $o$) & \bfseries  \parbox{1.5in}{\centering Geodesic from $o$\\ in direction $\bv$} & \bfseries Isometries & \bfseries Example Lattices \\ 
		\hline
		\hline
		$\EE^3$ 			& \parbox{4.2cm}{$\RR^4$, $w = 1$,\\ $ds^2 = dx^2 + dy^2 + dz^2$, \\ $o = \be_w$}  									
		& $t \bv$ & $\RR^3 \rtimes {\rm O}(3)$ & $\ZZ^3$ \\
		\hline
		
		$S^3$ 				& \parbox{4.2cm}{$\RR^4$, $x^2 + y^2 + z^2 + w^2 = 1$\\ $ds^2 = dx^2 + dy^2 + dz^2 + dw^2$, \\ $o = \be_w$} 			
		& $\cos(t)\be_w + \sin(t)\bv$ & ${\rm O}(4)$ & \parbox{1.3in}{\centering The eight element quaternion group.}\\ 
		\hline
		
		$\HH^3$				& \parbox{4.2cm}{$\RR^{3,1}$, $x^2 + y^2 + z^2 - w^2 = -1$\\$ds^2 = dx^2 + dy^2 + dz^2 - dw^2$, \\ $o = \be_w$}		
		& $\cosh(t)\be_w + \sinh(t)\bv$ & ${\rm O}(3,1)$ & \parbox{1.3in}{\centering The isometry group of Seifert-Weber space.}\\ 
		\hline
		
		$S^2 \times \EE$ 	& \parbox{4.2cm}{$\RR^3 \times \RR$, $x^2 + y^2 + z^2 = 1$\\ $ds^2 = dx^2 + dy^2 + dz^2 + dw^2$, \\ $o = \be_z$} 		
		& \parbox{1.8in}{ \centering $\displaystyle \left(\cos(\lambda t)\be_z + \sin(\lambda t)\frac{\bv_{S^2}}{\lambda},  t \bv_{\EE}\right)$ where $\bv = (\bv_{S^2}, \bv_\EE)$\\ and $\lambda = \|\bv_{S^2}\|$} 
		& ${\rm O}(3) \times {\rm Isom}(\RR)$ & \parbox{1.3in}{\centering $\Lambda \times \ZZ$ \\ where $\Lambda$ is a discrete \\subgroup of $\textrm{Isom}(S^2)$}\\ 
		\hline
		
		$\HH^2 \times \EE$	& \parbox{4.2cm}{$\RR^{2,1} \times \RR$, $x^2 + y^2 - z^2 = -1$\\ $ds^2 = dx^2 + dy^2 - dz^2 + dw^2$, \\ $o = \be_z$} 
		& \parbox{1.9in}{ \centering $\displaystyle \left(\cosh(\lambda t)\be_z + \sinh(\lambda t)\frac{\bv_{\HH^2}}{\lambda},  t \bv_{\EE}\right)$ where $\bv = (\bv_{\HH^2}, \bv_\EE)$\\ and $\lambda = \|\bv_{\HH^2}\|$} 
		& ${\rm O}(2,1) \times {\rm Isom}(\RR)$ & 
		\parbox{1.32in}{\centering $\Lambda \times \ZZ$ \\ where $\Lambda$ is a discrete \\subgroup of $\textrm{Isom}(\HH^2)$} \\ 
		\hline
		
		Nil 				& \parbox{4.2cm}{$\RR^4, w=1,$ \\ See \refsec{Nil rot model}, \\ $o = \be_w$}						
		& See \refsec{nil geo flow} & \parbox{1.5in}{\centering ${\rm Nil} \rtimes {\rm O}(2)$} & \parbox{1.3in}{\centering$\ZZ^2 \rtimes_M \ZZ$ \\ with $M\in {\rm SL}(2,\ZZ)$, \\ parabolic}\\
		\hline
			
		$\SLR$ 				& \parbox{4.2cm}{$\RR^{2,1} \times \RR$, $x^2 + y^2 - z^2 = -1$\\ See \refsec{model sl2}, \\ $o = \be_z$}									
		& See Sections~\ref{Sec:flow sl2} and \ref{Sec:flow sl2 tilde} & \parbox{1.5in}{\centering$\SLR \rtimes {\rm O}(2)$} &  \parbox{1.3in}{\centering ``Lift'' of $\pi_1(\Sigma_g)$ \\ with $\Sigma_g$ compact genus $g$ surface}\\ 
		\hline
		
		Sol 				& \parbox{4.2cm}{$\RR^4, w=1,$\\ $ds^2 = e^{-2z}dx^2 + e^{2z}dy^2 + dz^2$, \\ $o = \be_w$} 							
		& See \refsec{FlowSol} & ${\rm Sol} \rtimes D_8$&  \parbox{1.3in}{\centering$\ZZ^2 \rtimes_M \ZZ$ \\ with $M\in {\rm SL}(2,\ZZ)$, \\ hyperbolic}\\ 
		\hline

		\end{tabular}
		
		\caption{The eight Thurston geometries. We denote the canonical basis $\{ \be_x, \be_y, \be_z, \be_w \}$.  We write $(x,y,z,w)$ for the coordinates of a vector vector $\bv$ in this basis.  Note that $ {\rm Isom}(\RR) \cong \RR \rtimes \ZZ/2$.}
		\label{Tab:GeometryDetails}
		\renewcommand{\arraystretch}{1}
		\end{table}
	\end{landscape}

\section{Isotropic geometries}
\label{Sec:Isotropic}

In this section we give implementation details for $\EE^3$, $S^3$ and $\HH^3$.
For further background, we refer the reader to \cite[Chapter~I.2]{Bridson:1999ky}. 
See also~\cite{Weeks:2002}. Many details for these three geometries are very similar; for the convenience of the reader, we list these explicitly. In particular, we give distance functions for some simple shapes in standard positions. They can be conjugated by isometries to give signed distance functions for these shapes in general position. We also reference the possible discrete groups (or equivalently, manifolds) for each geometry.

\subsection{Euclidean space}\label{sec:Euclidean}
We represent $\EE^3$ as the affine subspace $X = \{ w = 1\}$ of $\RR^4$.
The origin is the point $o = [0,0,0,1]$. 
The distance between two points $p_1 = [x_1, y_1, z_1, 1]$ and $p_2 = [x_2, y_2, z_2, 1]$ is given by
\begin{equation*}
	\dist(p_1, p_2) = \sqrt{(x_1-x_2)^2 + (y_1-y_2)^2 + (z_1-z_2)^2}.
\end{equation*}
Using a hyperplane to represent $\EE^3$ is standard in computer graphics because the isometry group of $\EE^3$ acts on $X$ by linear transformations of $\RR^4$ preserving $X$.
We identify the tangent space $T_pX$ at a point $p \in X$ with the linear subspace $\{w =0\}$ of $\RR^4$.
The arc length parametrized geodesic $\gamma(t)$ starting at $p$ and directed by the unit vector $v \in T_pX$ is simply $\gamma(t) = p + tv$.
In \reftab{SDF E3}, we list signed distance functions for some simple objects in $\EE^3$.
\begin{table}[htp]
\renewcommand{\arraystretch}{1.5}
\begin{tabular}{|c|c|}\hline
	Object & Signed distance function \\ \hline \hline
	\parbox{5cm}{\centering Ball of radius $r$ centered at the origin $o$} & $\sigma(p) = \sqrt{x^2 + y^2 + z^2} - r$ \\ \hline
	\parbox{5cm}{\centering Solid cylinder of radius $r$ with axis the geodesic $\gamma(t) = o + t \be_z$} & $\sigma(p) = \sqrt{x^2 + y^2} - r$ \\ \hline
	\parbox{5cm}{\centering Half-space $\{ z \leq 0\}$} & $\sigma(p) = z$ \\ \hline
\end{tabular}
\renewcommand{\arraystretch}{1.}
\caption{Examples of signed distance functions in $\EE^3$.}
\label{Tab:SDF E3}
\end{table}

From a group theoretic point of view, the co-compact discrete subgroups of $\EE^3$ have been classified. These are the crystallographic groups \cite{bradley1972mathematical}. Note that every finite volume euclidean three-manifold is finitely covered by the three-torus. 
In \reffig{EucExamples}, we show the in-space view for various scenes within the regular three-torus, rendered with a multicolor collection of five lights. In these images, light intensity falls off proportional to the inverse square of distance. An object receives lighting from the cell it is contained in and that cell's nearest neighbors. 

\begin{figure}[htbp]
\centering
\subfloat[A single large ball.]{
\includegraphics[width=0.90\textwidth]{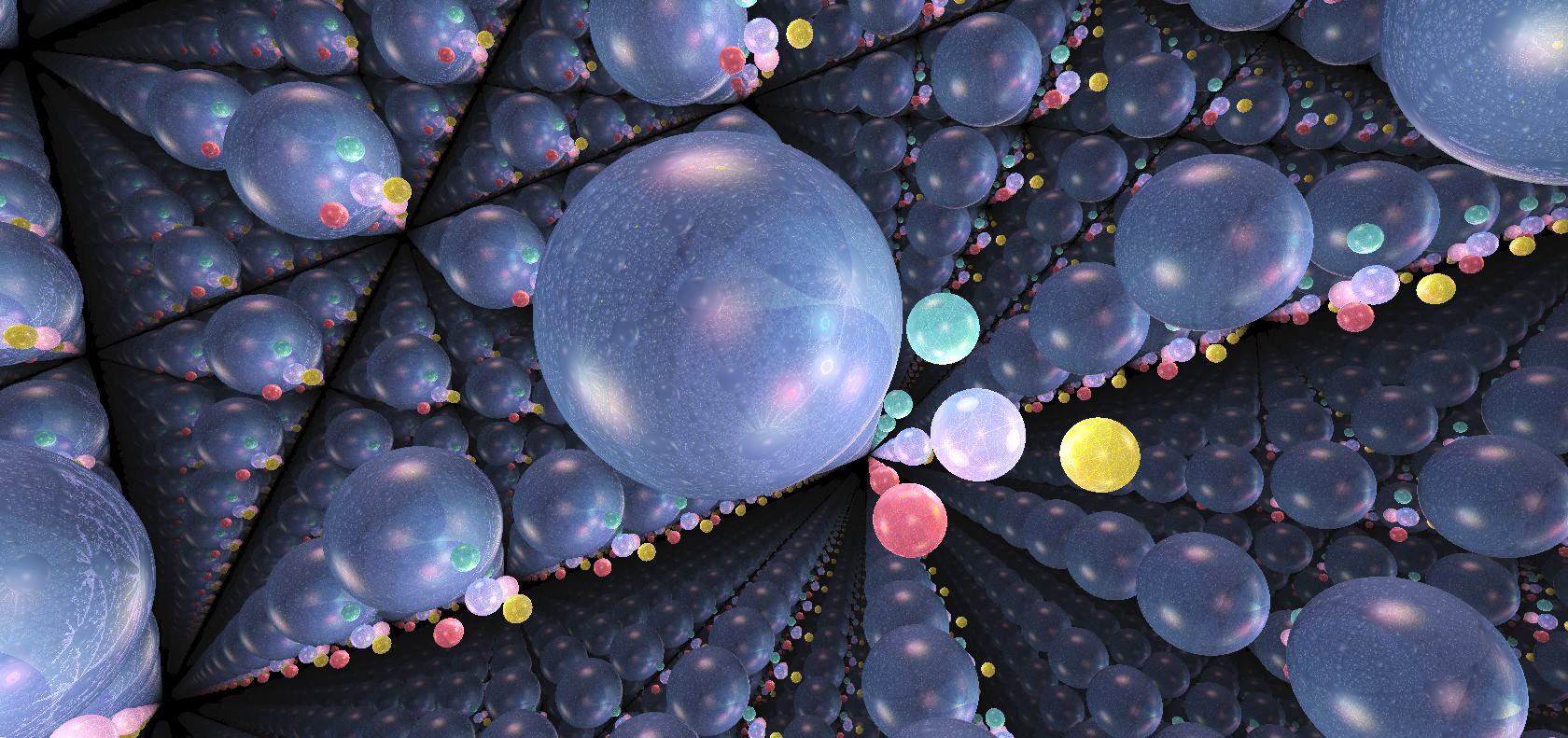}
\label{Fig:EucBalls}
}\
\subfloat[Solid cylinders around the edges of a fundamental domain.]{
\includegraphics[width=0.90\textwidth]{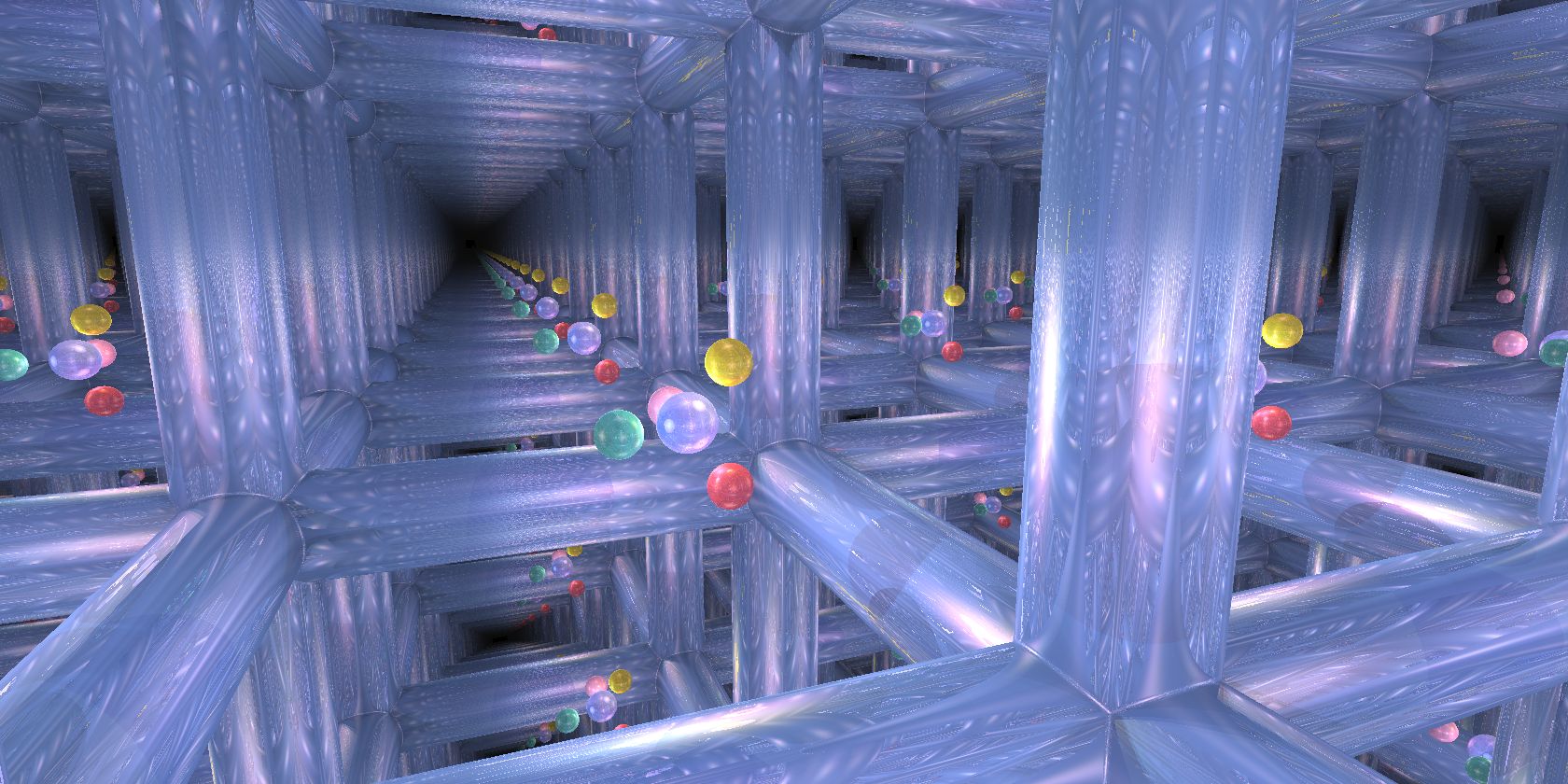}
\label{Fig:EucTubes}
}\
\subfloat[Edges of the fundamental domain rendered by deleting a large ball from the center and smaller balls from the vertices, as in \reffig{primitive cell E3 - advanced cell}.]{
\includegraphics[width=0.90\textwidth]{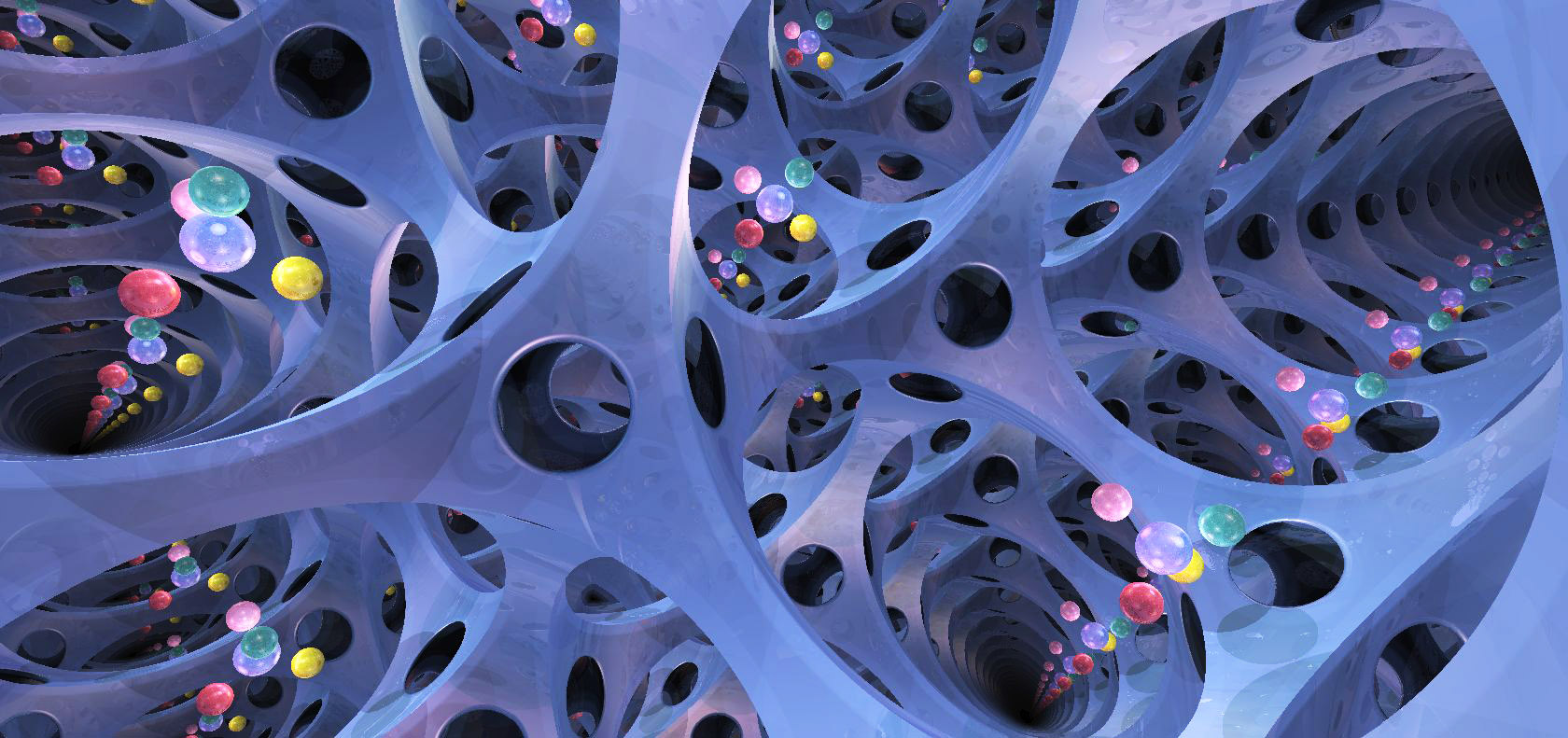}
\label{Fig:EucTiles}
}\
\caption{Scenes in the regular three-torus, lit by a collection of lights represented by balls.}
\label{Fig:EucExamples}
\end{figure}

\subsection{The three-sphere}
We endow $\RR^4$ with the standard scalar product. That is, given $p_1 = [x_1, y_1, z_1, w_1]$ and $p_2 = [x_2, y_2, z_2, w_2]$ we let
\begin{equation*}
	\left< p_1, p_2 \right> = x_1x_2 + y_1y_2 + z_1z_2 + w_1w_2.
\end{equation*}
We view $S^3$ as the set $X$ of points $p\in\RR^4$ satisfying the identity $\left< p, p \right> = 1$.
We choose for the origin the point $o = [0,0,0,1]$.
The distance between two points $p_1 $ and $p_2$ is characterized by 
\begin{equation*}
	\cos\left( \dist(p_1,p_2)\right) = \left< p_1, p_2 \right>.
\end{equation*}
The isometry group of $S^3$ acts on $X$ by linear transformations of $\RR^4$ preserving the scalar product and so $X$.
We identify the tangent space $T_pX$ at a point $p$ in $X$ with the linear subspace 
\begin{equation*}
	\left\{ v \in \RR^4 \mid \left<p,v\right> = 0\right\}
\end{equation*}
of $\RR^4$.
The arc length parametrized geodesic $\gamma(t)$ starting at $p$ and directed by the unit vector $v \in T_pX$ is given by $\gamma(t) = \cos(t)p + \sin(t)v$.
In \reftab{SDF S3}, we list a few examples of signed distance functions in $S^3$.

\begin{table}[htp]
\renewcommand{\arraystretch}{1.5}
\begin{tabular}{|c|c|}\hline
	Object & Signed distance function \\ \hline \hline
	\parbox{5cm}{\centering Ball of radius $r$ centered at the origin $o$} & $\sigma(p) = \arccos(w) - r$ \\ \hline
	\parbox{5cm}{\centering Solid cylinder of radius $r$ whose axis is the geodesic $\gamma(t) = \cos(t)o + \sin(t) \be_z$} & $\sigma(p) = \arccos(\sqrt{w^2+z^2}) - r$ \\ \hline
	\parbox{5cm}{\centering Half-space $\{ z \leq 0\}$} & $\sigma(p) = \arcsin(z)$ \\ \hline
\end{tabular}
\renewcommand{\arraystretch}{1.}
\caption{Examples of signed distance functions in $S^3$.}
\label{Tab:SDF S3}
\end{table}

\begin{figure}[htbp]
\centering
\subfloat[The quotient of $S^3$ by the quaternion group of order eight, $Q_8$.]{
\includegraphics[width=0.90\textwidth]{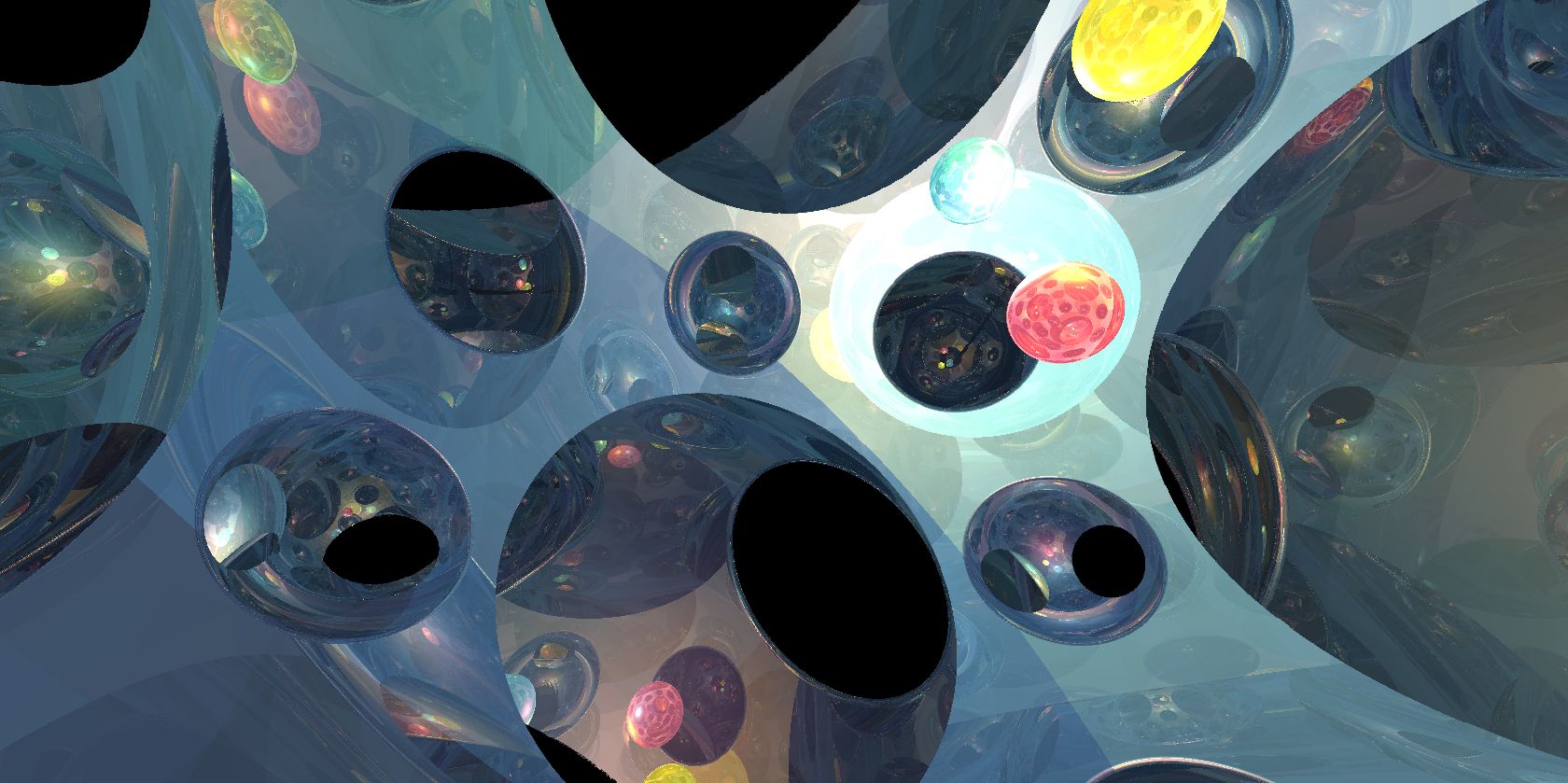}
\label{Fig:ThreeSph_Quaternion}
}

\subfloat[Poincar\'{e} dodecahedral space.]{
\includegraphics[width=0.90\textwidth]{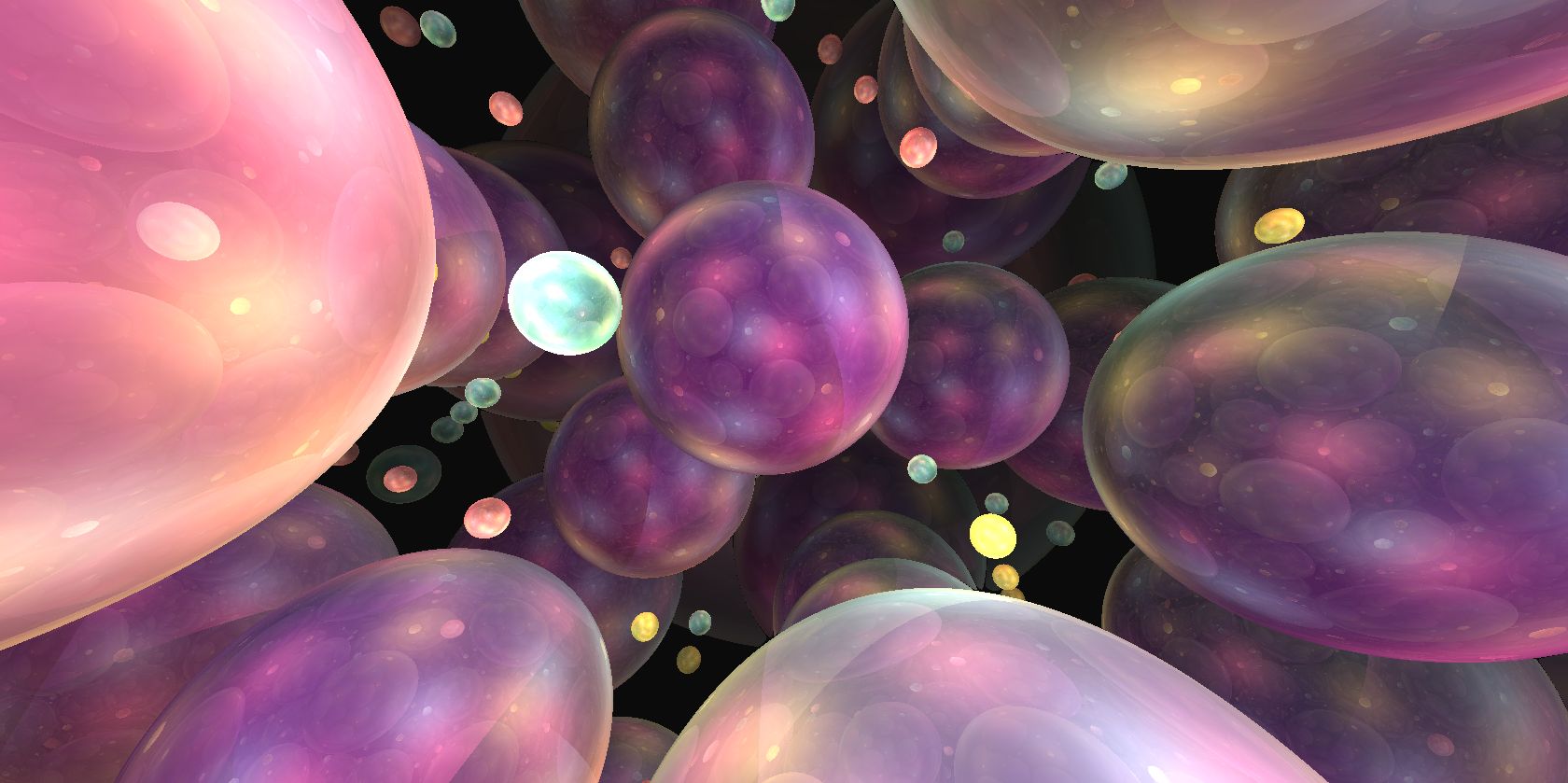}
\label{Fig:PoincareDodecahedral}
}

\subfloat[Hopf fibration.]{
\includegraphics[width=0.90\textwidth]{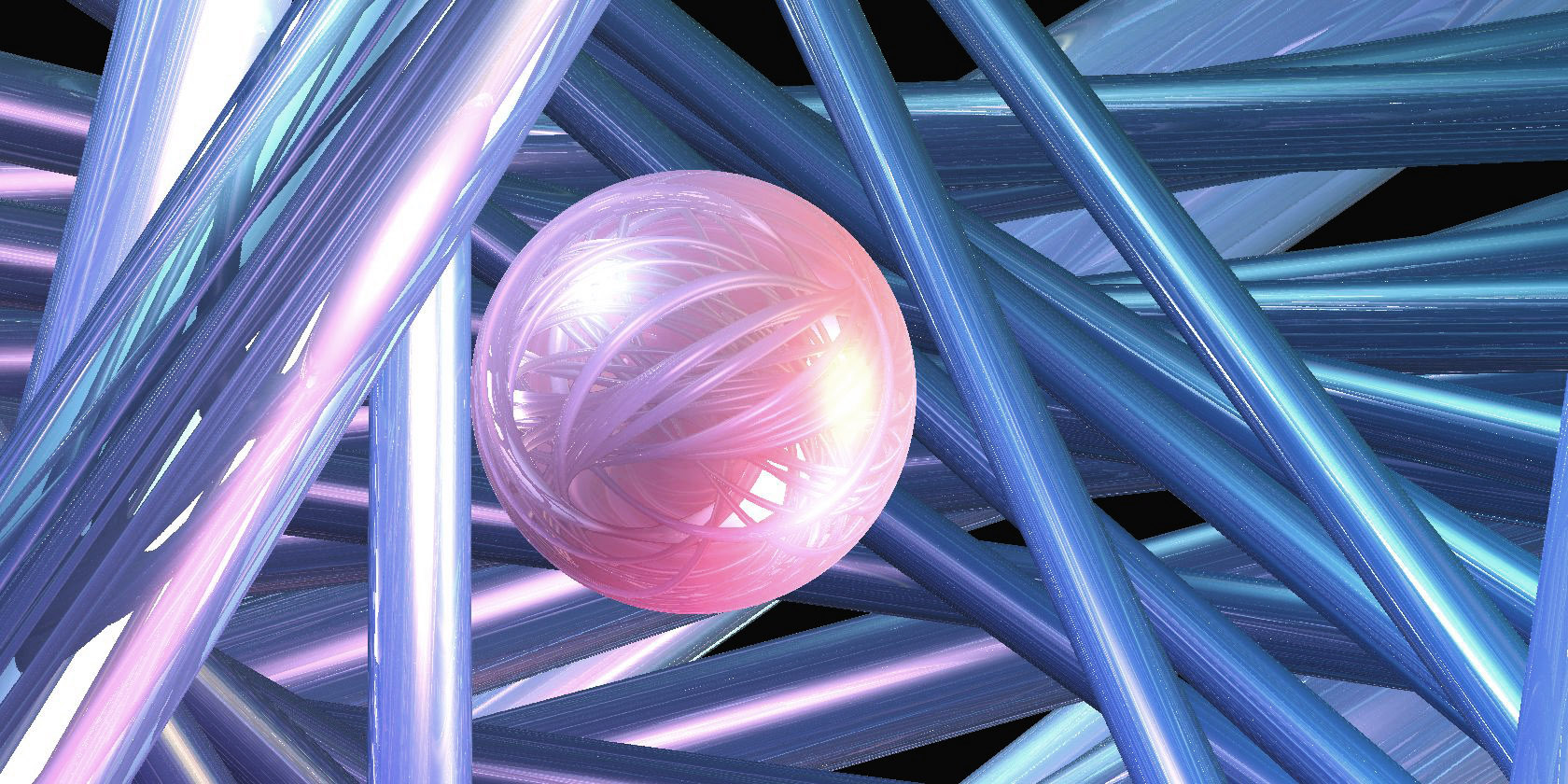}
\label{Fig:HopfFibration}
}
\caption{Spherical Geometry.}
\label{Fig:SphExamples}
\end{figure}

The finite subgroups of ${\rm O}(4)$ are classified in~\cite[page 449]{Scott}. In \reffig{SphExamples} we show the in-space view for various scenes in spherical geometry. \reffig{ThreeSph_Quaternion} shows the quotient of $S^3$ by the quaternion group of order eight, $Q_8$.  Edges of the fundamental domain are shown as in \reffig{primitive cell E3 - advanced cell}, but with balls also deleted from the edge midpoints. \reffig{PoincareDodecahedral} shows a single mirrored ball and three light sources in Poincar\'{e} dodecahedral space. \reffig{HopfFibration} shows the lifts of some randomly chosen fibers of the unit tangent bundle over $S^2$ (the Hopf fibration), and their reflected images in a ball.  These are the fibers of the Seifert fiber space structure on spherical three-manifolds.

\subsection{Hyperbolic space}
\label{Sec:H3}
We endow $\RR^4$ with a lorentzian inner product: for every $p_1 = [x_1, y_1, z_1, w_1]$ and $p_2 = [x_2, y_2, z_2, w_2]$ we let
\begin{equation*}
	\left< p_1, p_2 \right> = x_1x_2 + y_1y_2 + z_1z_2 - w_1w_2.
\end{equation*}
We use the hyperboloid model of $\HH^3$.
This consists of the set $X$ of points $p = [x,y,z,w]$ in $\RR^4$ such that $\left< p, p \right> =-1$ and $w > 0$.
We choose for the origin the point $o = [0,0,0,1]$.
The distance between two points $p_1 $ and $p_2$ is given by 
\begin{equation*}
	\cosh\left(\dist(p_1,p_2)\right) = - \left< p_1, p_2 \right>.
\end{equation*}
The isometry group of $\HH^3$ acts on $X$ by linear transformations of $\RR^4$ preserving the lorentzian product and so $X$.
We identify the tangent space $T_pX$ at a point $p = [x,y,z,w]$ in $X$ with the linear subspace 
\begin{equation*}
	\left\{ v \in \RR^4 \mid \left<p,v\right> = 0\right\}
\end{equation*}
of $\RR^4$.
The arc length parametrized geodesic $\gamma(t)$ starting at $p$ and directed by the unit vector $v \in T_pX$ is given by $\gamma(t) = \cosh(t)p + \sinh(t)v$.
In \reftab{SDF H3}, we list a few examples of signed distance functions in $\HH^3$.
\begin{table}[htp]
\renewcommand{\arraystretch}{1.5}
\begin{tabular}{|c|c|}\hline
	Object & Signed distance function \\ \hline \hline
	\parbox{5cm}{\centering Ball of radius $r$ centered at the origin $o$} & $\sigma(p) = \arccosh(w) - r$ \\ \hline
	\parbox{5cm}{\centering Solid cylinder of radius $r$ whose axis is the geodesic $\gamma(t) = \cosh(t)o + \sinh(t) \be_z$} & $\sigma(p) = \arccosh(\sqrt{w^2-z^2}) - r$ \\ \hline
	\parbox{5cm}{\centering Half-space $\{ z \leq 0\}$} & $\sigma(p) = \arcsinh(z)$ \\ \hline
\end{tabular}
\renewcommand{\arraystretch}{1.}
\caption{Examples of signed distance functions in $\HH^3$.}
\label{Tab:SDF H3}
\end{table}

\begin{figure}[htbp]
\centering
\subfloat[Seifert-Weber dodecahedral space.]{
\includegraphics[width=0.90\textwidth]{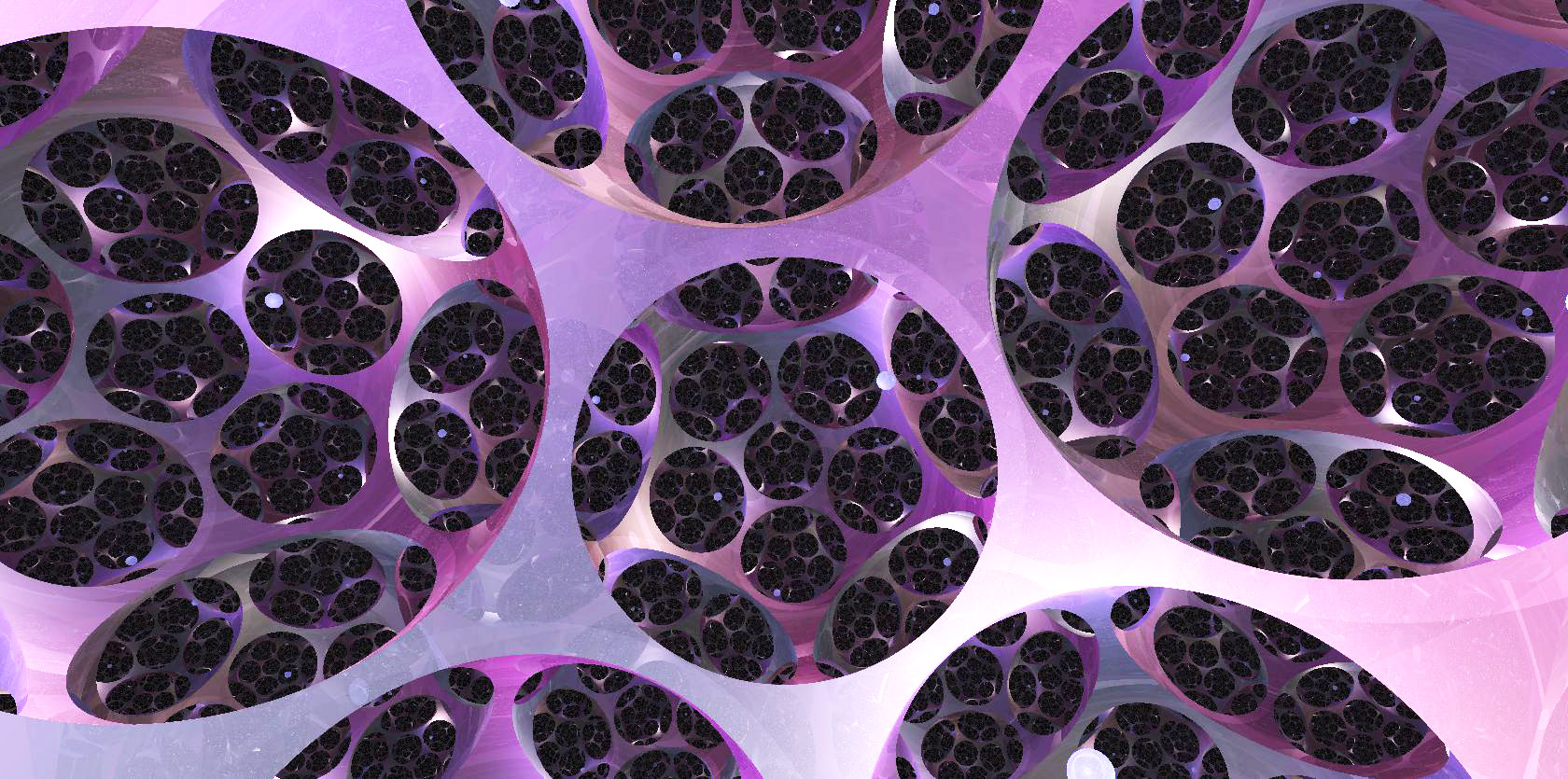}
\label{Fig:SeifertWeber}
}\
\subfloat[A finite volume hyperbolic orbifold.]{
\includegraphics[width=0.90\textwidth]{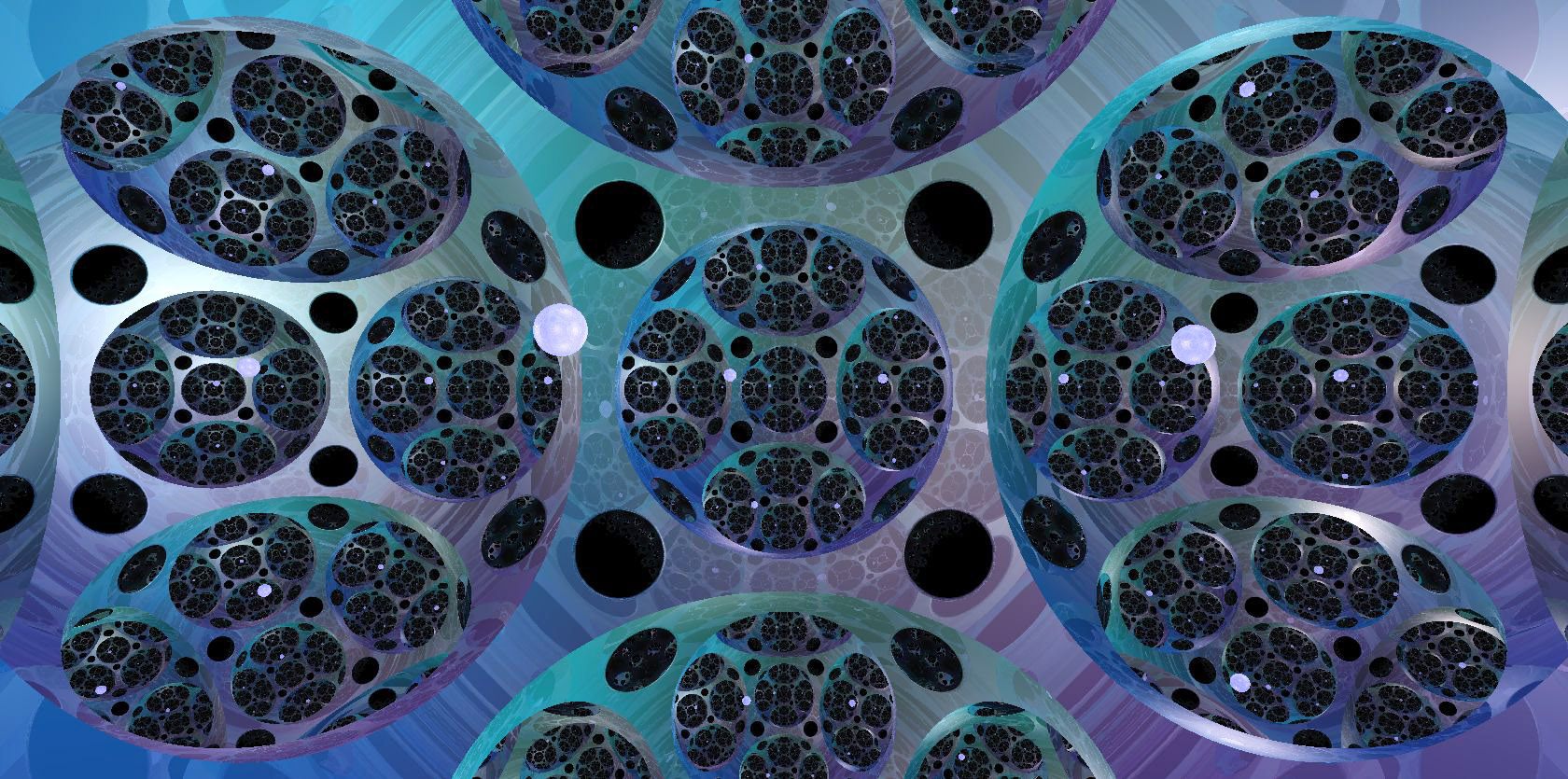}
\label{Fig:HypCube}
}\
\subfloat[An infinite volume hyperbolic orbifold.]{
\includegraphics[width=0.90\textwidth]{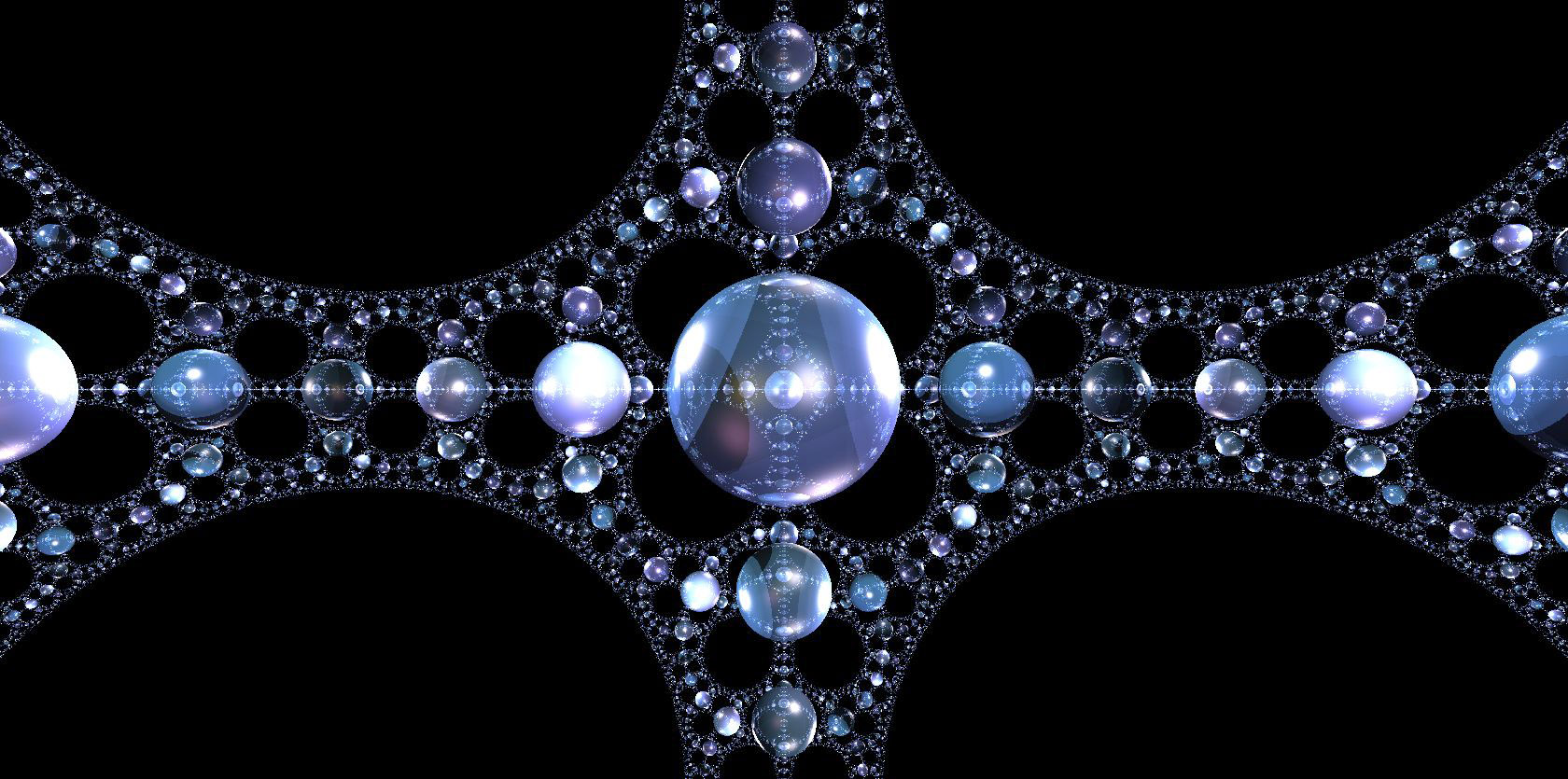}
\label{Fig:HypInfVol}
}\
\caption{Hyperbolic geometry.}
\label{Fig:HypExamples}
\end{figure}

Of the eight Thurston geometries, the classification of hyperbolic manifolds (and orbifolds) is the least well understood.
The software SnapPy~\cite{SnapPy} lists numerous censuses of finite volume hyperbolic manifolds. In \reffig{HypExamples} we show the in-space view for various scenes in hyperbolic geometry. \reffig{SeifertWeber} shows  Seifert-Weber dodecahedral space, with a fundamental domain drawn in a style similar to \reffig{primitive cell E3 - primitive cell}. \reffig{HypCube} shows the finite volume cusped orbifold formed from an ideal cube (with dihedral angles of $\pi/3$), by identifying opposite faces with a $\pi/2$ turn. The underlying manifold is $S^3 / Q_8$ (see \reffig{ThreeSph_Quaternion}) minus the vertices of the cube, with cone angles of $\pi$ at each edge of the cube.
\reffig{HypInfVol} shows a sphere in an infinite volume hyperbolic orbifold formed from a hyperideal cube~\cite[Section 6.1]{visualizing_hyperbolic_honeycombs} (with dihedral angles of $\pi/4$), by identifying opposite faces by translation.  The limit set is the visible as the limiting pattern of spheres. The underlying manifold is the three-torus, minus a ball around the vertex, with cone angles of $\pi$ at each edge of the cube.

\subsection{Facing and parallel transport}
By definition, for each isotropic geometry $X$, the isometry group $G = {\rm Isom}(X)$ acts transitively on the unit tangent bundle of $X$.
It follows that the position and facing of an observer can be captured by a single isometry, as explained in \refsec{PositionFacing}.
Nevertheless, to keep the code as geometry-independent as possible, we encode our position and facing by a pair $(g,{\rm id})$ where $g$ is an isometry of $X$ and ${\rm id} \in {\rm O}(3)$ is the identity.

As we noted in \refsec{moving in the space}, given any geodesic $\gamma \colon \RR \to X$ starting at $p \in X$, there is a one-parameter orientation preserving subgroup $h \colon \RR \to G$ such that $\gamma(t) = h(t)p$.
Thus the corresponding parallel transport operator $T(t) \colon T_{\gamma(0)}X \to T_{\gamma(t)}X$ is simply $T(t)  = d_ph(t)$.
This considerably simplifies the computations: if an observer starts at $(g,{\rm id})$ and follows $\gamma$ for time $t$, then the observer's new position and facing are $(h(t)g,{\rm id})$.

\subsection{Lighting}
\label{Sec:IsotropicLighting}
The calculation of lighting intensity for the isotropic geometries is straightforward in comparison to the other geometries.
Recall from \refeqn{IntensityDensity} that the intensity $I(r,u)$ is inversely proportional to the area density of geodesic spheres.\refeqn{AreaDensity_Jacobi} relates area density directly to Jacobi fields along the geodesic in the direction $u$. 
Here, all sectional curvatures are equal, so all Jacobi fields are parallel along geodesics, and have magnitude controlled by the curvature.
Precisely, if $v\in u^\perp$ and $v_t$ is the parallel transport of $v$ along the geodesic with initial tangent $u$, the corresponding Jacobi fields $J$ are below.
$$J_{\EE^3}(t)=tv_t\hspace{1cm}J_{S^3}(t)=\sin(t)v_t\hspace{1cm}J_{\HH^3}(t)=\sinh(t)v_t$$

Choosing a pair of orthonormal initial conditions and using \refeqn{AreaDensity_Jacobi} gives the area densities:
$$
\mathcal{A}_{\EE^3}(r,u)=r^2\hspace{1cm}	\mathcal{A}_{S^3}(r,u)=\sin(r)^2\hspace{1cm}\mathcal{A}_{\HH^3}(r,u)=\sinh(r)^2.
$$

\noindent
Thus light intensity falls off quadratically with distance in euclidean space, and exponentially in hyperbolic space.
In the three-sphere, the intensity initially decreases with distance, but beyond a distance of $\pi/2$, it increases as all light rays begin to converge towards the antipode.
\reffig{IsotropicIntensities} shows graphs of the intensity function $I(r,u)$ on the tangent space $T_q X$.
A point at distance $r$ from the origin in the direction $u$ is colored by the value of $I(r,u)$. Dark blues represent low intensity, and yellows represent high intensity.
Each plot depicts a ball of radius ten.
Note that $I(r,u)$ for the three-sphere diverges to infinity along spheres with $r=\pi n$ as, under the exponential map, all light refocuses at the light source or its antipode.

\begin{figure}[htbp]
\centering
\subfloat[Euclidean intensity.]{
\includegraphics[width=0.31\textwidth]{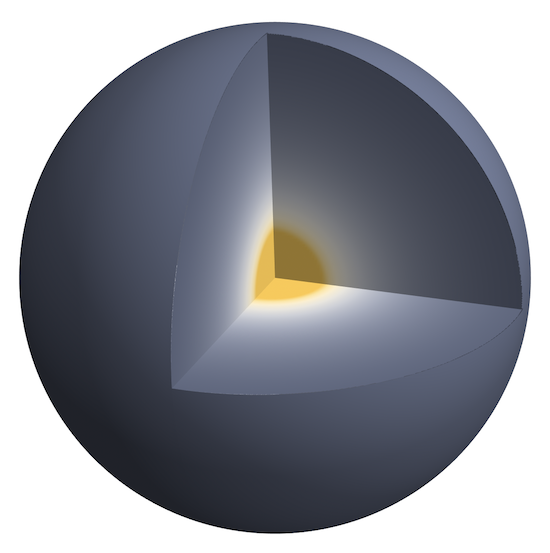}
\label{Fig:EucIntensity}
}
\subfloat[Spherical intensity.]{
\includegraphics[width=0.31\textwidth]{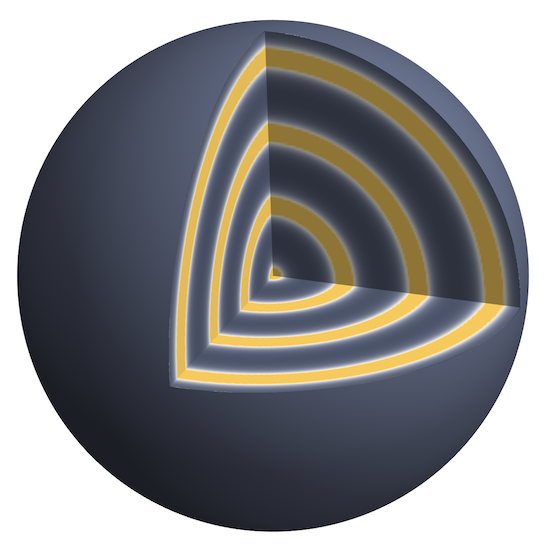}
\label{Fig:SphIntensity}
}
\subfloat[Hyperbolic intensity.]{
\includegraphics[width=0.31\textwidth]{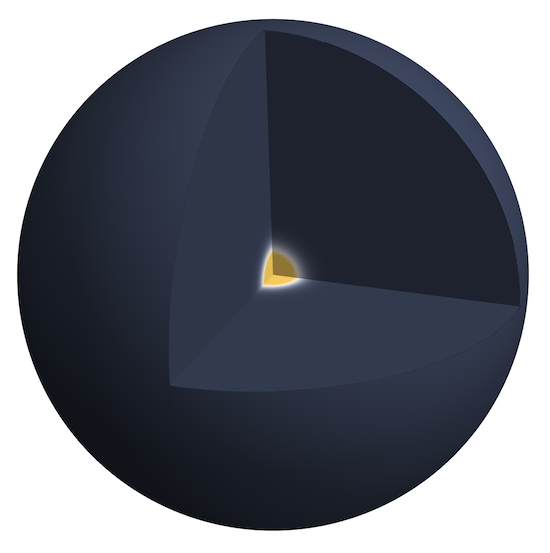}
\label{Fig:HypIntensity}
}
\caption{Graphs of the lighting intensity functions $I(r,u)$ for the isotropic geometries, drawn in the tangent space at the light source.}
\label{Fig:IsotropicIntensities}
\end{figure}

We now turn to the calculation of the lighting pairs $\calL_s(q)$: the set of pairs $(L,d_L)$ of initial tangent vectors $L$ to geodesics joining $s$ to $q$, and their corresponding lengths $d_L$.
In all three isotropic geometries, this can be calculated using linear algebra in the ambient space $\RR^4$ where the models reside.

In euclidean space, geodesics are unique. Given $s,q \in \EE^3$, the required direction vector is simply $q-s$.
$$\calL_s^{\EE^3}(q)=\left\{\left(\frac{q-s}{\|q-s\|},\|q-s\|\right)\right\}$$
In spherical geometry, given $s,q\in S^3$ non-antipodal, let $\theta=\arccos\langle q,s\rangle$ be the acute angle between them.
The shortest geodesic from $s$ to $q$ has length $\theta$ and direction $v=q-\langle s,q\rangle q$, appropriately rescaled.
The second geodesic points in the opposite direction, with length $2\pi-\theta$.
$$\calL_s^{S^3}(q)=\left\{
\left(\frac{v-\cos(\theta)s}{\sin\theta},\theta \right),\left(\frac{\cos(\theta)s-v}{\sin\theta},2\pi-\theta \right)\right\}$$

\begin{remark}
\label{Rem:GoingInCircles}
Strictly speaking, we should also include copies of the above pairs with distances modified by adding $2\pi n$ for all integers $n > 0$. However, if either the light source or the scene is opaque then these copies are never relevant. 
\end{remark}

In practice, we don't worry about $s$ and $q$ being antipodal: in a generic render, no pixels will involve such a situation.
Moreover, if we are exceedingly unlucky and do have such a pixel, GPU code does not crash when asked to, for example, divide by zero. It just gives up and moves on to the next pixel. 
However, one could special-case this situation:
for a pair of antipodal points $s,q$, all directions from $s$ reach $q$ after traveling a distance $\pi$, and so we find that the set $\calL^{S^3}_s(q)$ is uncountable. As the lighting intensity diverges to infinity as one approaches such a configuration, the pixels should be colored as bright as possible.

In hyperbolic geometry we proceed analogously to the three sphere, except that we use the Minkowski inner product.
Given $s,q\in\HH^3$, let $\delta=\arccosh|\langle q,s\rangle|$ be the hyperbolic distance between them.
Geodesics between pairs of points in $\HH^3$ are unique, so $\calL^{\HH^3}_s(q)$ is again a singleton:
$$\calL_s^{\HH^3}(q)=\left\{\left(\frac{v-\cosh(\delta)s}{\sinh\delta},\delta \right)\right\}.$$

\section{Product geometries}
\label{Sec:Product}
Before describing the product geometries, we quickly introduce model spaces for $S^2$ and $\HH^2$.

\subsection{Models of $S^2$ and $\HH^2$}
\label{Sec:S2H2Models}
Our models for $S^2$ and $\HH^2$ are the same as those for $S^3$ and $\HH^3$, with one fewer dimension:
\begin{itemize}
	\item We view $S^2$ as the set $\mathcal S$ of points $q =[x,y,z]$ in $\RR^3$ such that $\left< q,q \right> = 1$, where $\left< \cdot\, , \cdot \right>$ is the canonical scalar product in $\RR^3$.
	\item We represent $\HH^2$ as the set $\mathcal H$ of points $q = [x,y,z]$ in  $\RR^3$ such that $\left< q,q \right> = -1$, where 
$	\left<q_1, q_2 \right> = x_1x_2 + y_1y_2 - z_1z_2 $
is the lorentzian product in $\RR^3$.
\end{itemize}

\subsection{Product geometries}
Our model for $S^2 \times \EE$  (respectively $\HH^2 \times \EE$) is the subset $X = Y \times \RR$ of $\RR^4$, where $Y = \mathcal S$ (respectively  $Y = \mathcal H$).
We choose for the origin the point $o = [0,0,1,0]$.
The space $X$ is equipped with the product distance. That is, given two points $p_1 = (q_1, w_1)$ and $p_2 = (q_2, w_2)$ in $Y \times \RR$ we have
\begin{equation*}
	\dist_X(p_1,p_2)^2 = \dist_Y(q_1,q_2)^2 + |w_1 - w_2|^2.
\end{equation*}
The tangent space $T_pX$ at a point $p = (q,w)$ naturally splits as $T_q Y \times \RR$.
Given a vector $v \in T_pX$ we denote by $v_Y$ and $v_{\EE}$ its components in $T_q Y$ and $\RR$ respectively.
The arc length parametrized geodesic $\gamma(t)$ starting at $p=(q,w)$ in the direction of the unit vector $v \in T_pX$ is given by 
\begin{equation*}
	\gamma(t) = \big( \gamma_Y(\| v_Y\| t), w + t v_{\EE}\big),
\end{equation*}
where $\gamma_Y \colon \RR \to Y$ is the geodesic ray in $Y$ starting at $q$ with initial tangent vector $v_Y / \| v_Y\|$.

Next, we consider signed distance functions.
As usual, the distance formula gives us the signed distance function for a ball.
We call an object $\mathcal V$ \emph{vertical} if it is the pre-image of a non empty subset $\mathcal U \subset Y$ by the projection $\pi \colon X \to Y$.
The signed distance function for such an object $\mathcal V$ is given by 
\begin{equation*}
	\sigma(p) = \dist_X(p, \mathcal V) =  \dist_Y(\pi (p), \mathcal U).
\end{equation*}
We define \emph{horizontal} objects, and obtain their signed distance functions in an analogous way.
Tables~\ref{Tab:SDF S2xE} and \ref{Tab:SDF H2xE} list a few examples of such signed distance functions.

\begin{table}[htp]
\renewcommand{\arraystretch}{1.5}
\begin{tabular}{|c|c|}\hline
	Object & Signed distance function \\ \hline \hline
	\parbox{5cm}{\centering Solid cylinder of radius $r$ with axis the geodesic $\gamma(t) = o + t \be_w$} & $\sigma(p) = \arccos(z) - r$ \\ \hline
	\parbox{5cm}{\centering Half-space $\{ y \leq 0\}$} & $\sigma(p) = \arcsin(y)$ \\ \hline
	\parbox{5cm}{\centering Half-space $\{ w \leq 0\}$} & $\sigma(p) = w$ \\ \hline
\end{tabular}
\renewcommand{\arraystretch}{1.}
\caption{Examples of signed distance functions in $S^2 \times \RR$.}
\label{Tab:SDF S2xE}
\end{table}

\begin{table}[htp]
\renewcommand{\arraystretch}{1.5}
\begin{tabular}{|c|c|}\hline
	Object & Signed distance function \\ \hline \hline
	\parbox{5cm}{\centering Cylinder of radius $r$ whose axis is the geodesic $\gamma(t) = o + t \be_w$} & $\sigma(p) = \arccosh(z) - r$ \\ \hline
	\parbox{5cm}{\centering Half-space $\{ y \leq 0\}$} & $\sigma(p) = \arcsinh(y)$ \\ \hline
	\parbox{5cm}{\centering Half-space $\{ w \leq 0\}$} & $\sigma(p) = w$ \\ \hline
\end{tabular}
\renewcommand{\arraystretch}{1.}
\caption{Examples of signed distance functions in $\HH^2 \times \RR$.}
\label{Tab:SDF H2xE}
\end{table}

\begin{figure}[htbp]
\centering
\subfloat[$S^2\times\EE$.]{
	\includegraphics[width=0.45\textwidth]{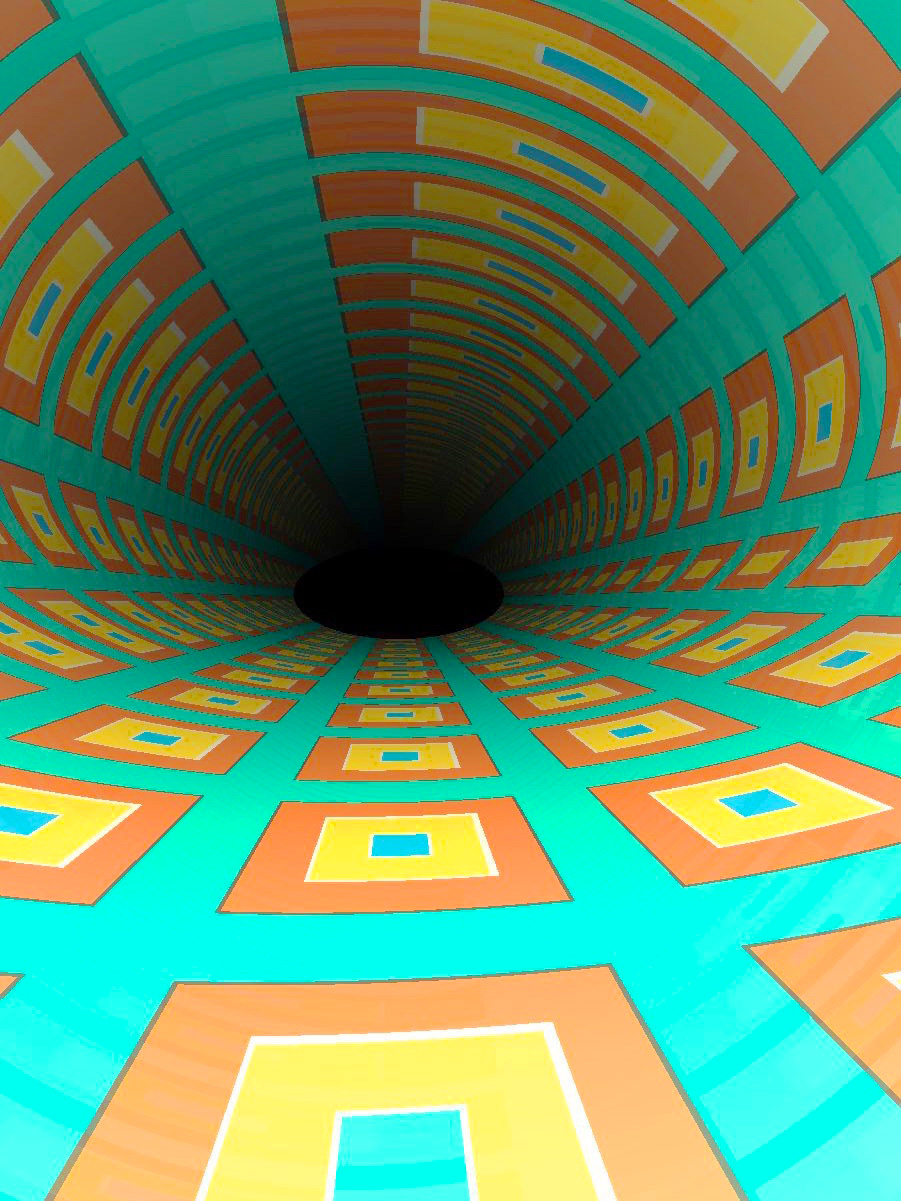}
}
\subfloat[$\HH^2\times\EE$.]{
	\includegraphics[width=0.45\textwidth]{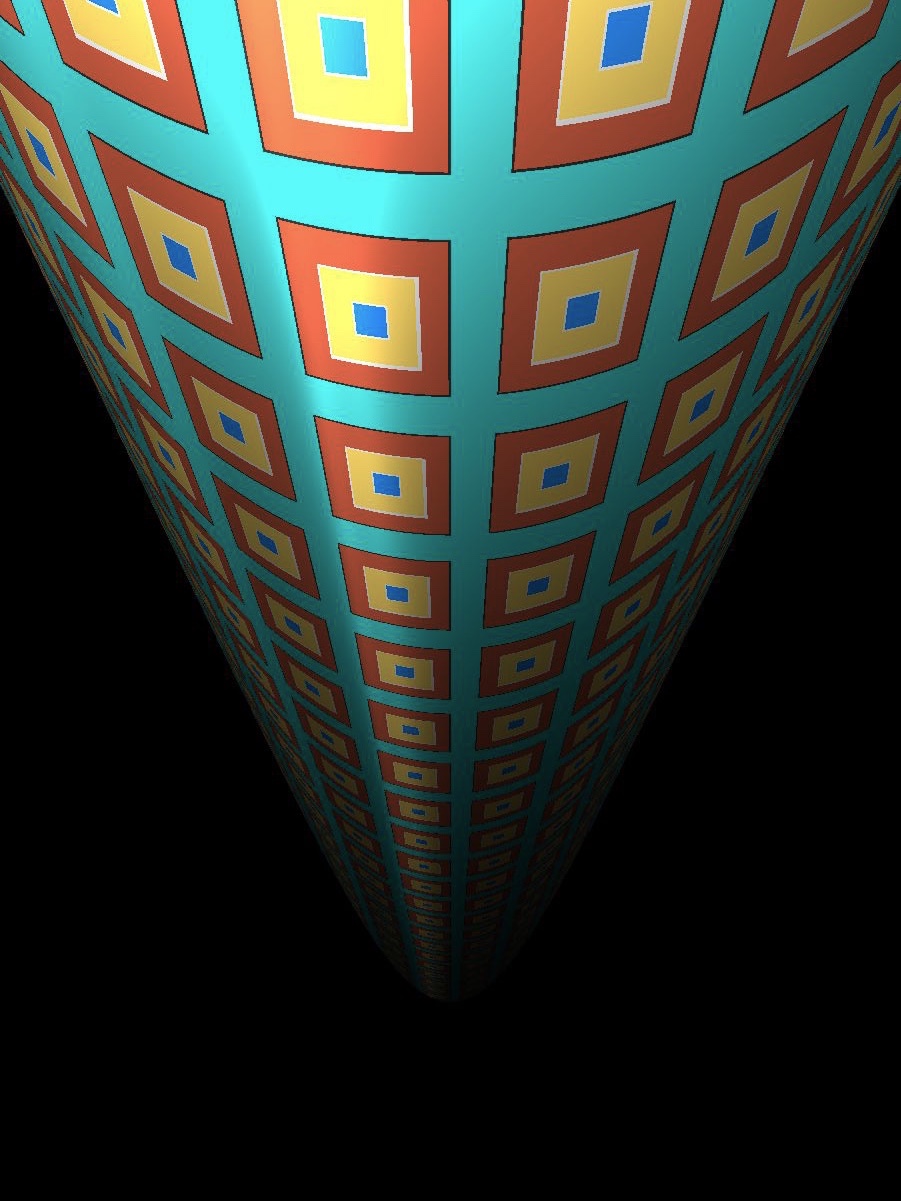}
}
\caption{Vertical half-spaces in the $S^2\times\EE$ and $\HH^2\times\EE$ geometries.}
\label{Fig:ProductVertPlanes}
\end{figure}

\begin{figure}[htbp]
\centering
\subfloat[Fibers in the $S^2\times\EE$. ]{
\includegraphics[width=0.45\textwidth]{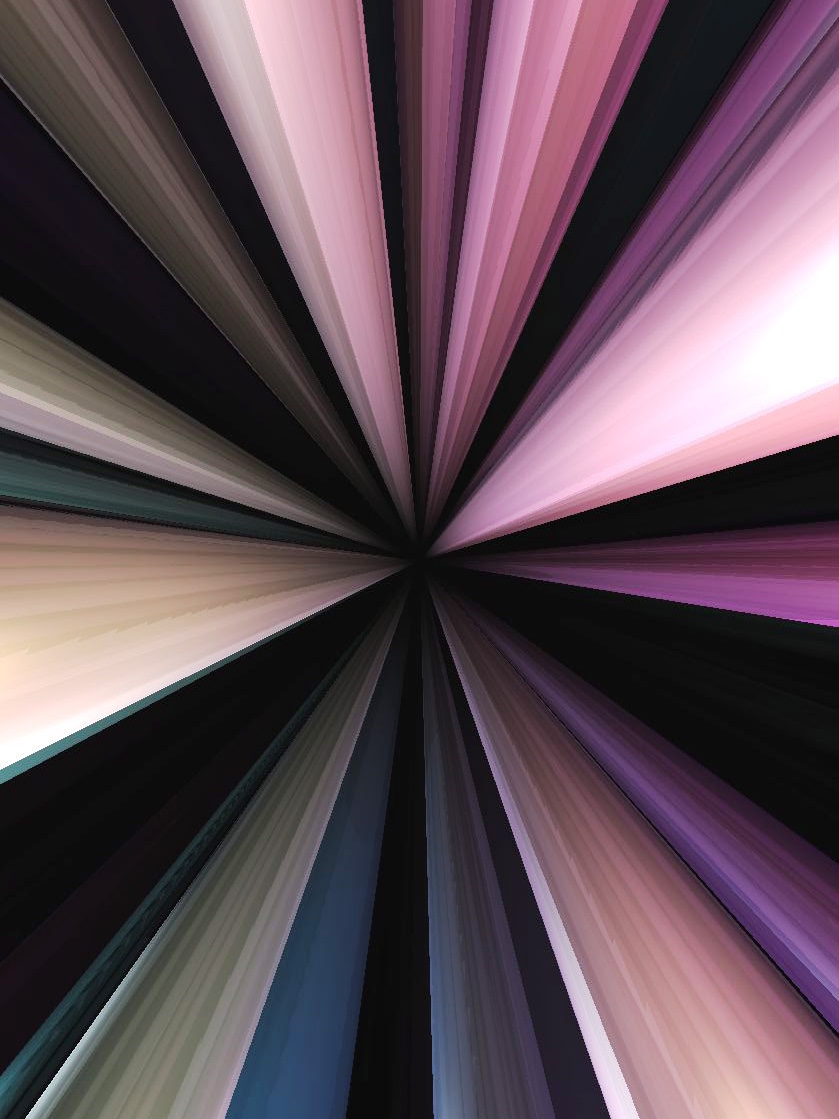}
\label{Fig:S2xEFibers}
}\
\subfloat[Fibers in $\HH^2\times\EE$. ]{
\includegraphics[width=0.45\textwidth]{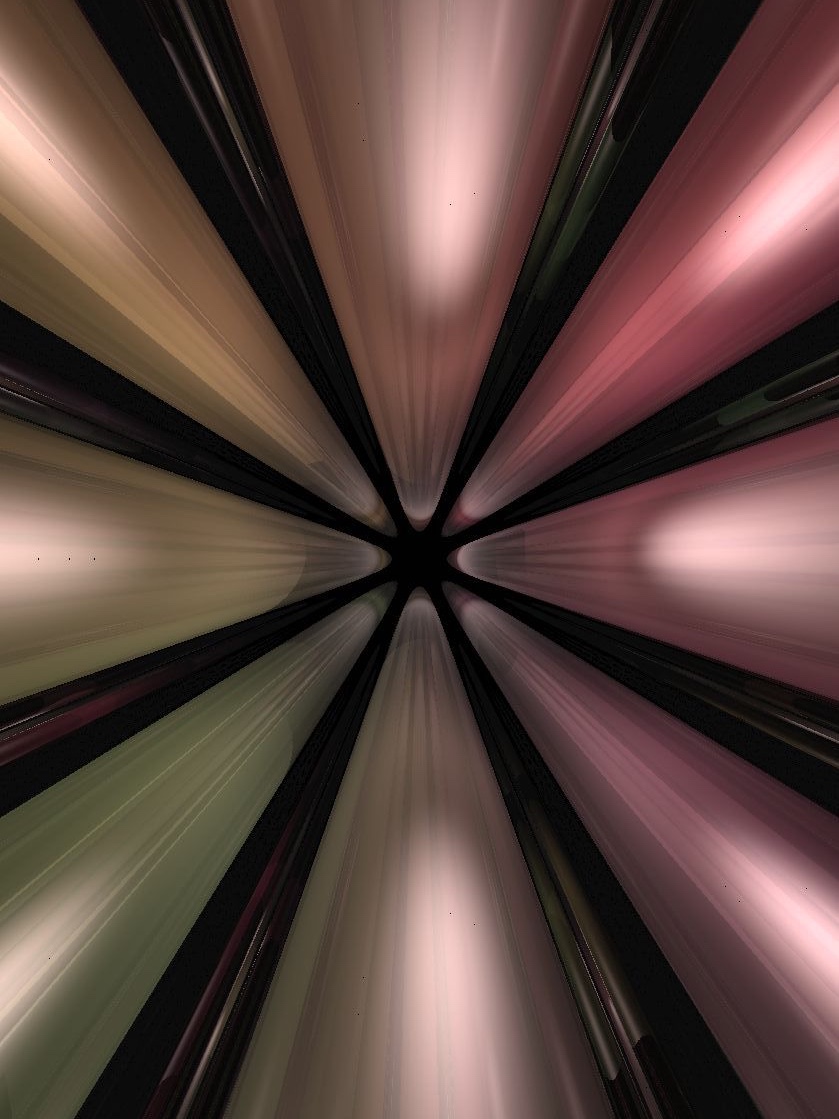}
\label{Fig:H2xEFibers}
}\
\caption{Fibers of the Seifert fiber space structures in manifolds with product geometry.}
\label{Fig:SeifFiberProduct}
\end{figure}

\reffig{ProductVertPlanes} shows vertical half-spaces in the product geometries.
\reffig{SeifFiberProduct} shows solid cylinders around some fibers in the $\EE$ direction for the product geometries.
In \reffig{S2xEFibers} we place solid cylinders around fibers above the vertices of an icosahedron in the $S^2$ factor. In   
 \reffig{H2xEFibers} the solid cylinders are around fibers in the $\EE$ direction.

\subsection{Facing and parallel transport}
Unlike for the isotropic geometries, the position and facing of the observer cannot be encoded with a single element of $G = {\rm Isom}(X)$.
Hence we represent it by a pair $(g,m) \in G \times {\rm O}(3)$ as explained in \refsec{PositionFacing}.
Nevertheless, if $\gamma \colon \RR \to X$ is a geodesic starting at the observer's position $p$, there is still a one-parameter orientation preserving subgroup $h \colon \RR \to G$ such that $\gamma(t) = h(t)p$.
Thus after moving along $\gamma$ for a time $t$, the observer's new position and facing is $(h(t)g,m)$.

\subsection{Lighting}
\label{Sec:ProductLighting}
We again use \refeqn{AreaDensity_Jacobi} to reduce the calculation of  area density (and hence light intensity) to the computation of Jacobi fields.
Let $q\in X$, choose a unit vector $u\in T_qX$ and let $\gamma$ be the geodesic starting at $q$ with initial tangent $u$.
General Jacobi fields need not be parallel along $\gamma$, and may rotate in the presence of a gradient in sectional curvature.
When $v\in u^\perp$ is such that the curvature $\kappa$ of the plane spanned by $\{u,v\}$ is a local extremum however, then the Jacobi field with initial condition $\dot{J}(0)=v$ is parallel along $\gamma$. In this case, its magnitude is determined by $\kappa$, as in \refsec{IsotropicLighting}.

If $u$ is vertical (that is, $u_Y = 0$), then $X$ is symmetric under rotation about $u$, and all planes containing $u$ have zero sectional curvature.
If $v\in u^\perp$ has parallel translate $v_t$ along $\gamma$, then the corresponding Jacobi field is $J(t)=tv_t$.
Choosing two such orthonormal conditions, \refeqn{AreaDensity_Jacobi} implies that $\mathcal{A}_X(r,u)=r^2$.

In general, suppose that $u$ makes an angle of $\beta$ with the vertical.
Then $u$ is contained in a unique vertical plane $V$, which again has zero sectional curvature. This realizes one of the extremal curvatures at $u$ (it is a maximum for $\HH^2\times\EE$ and a minimum for $S^2\times\EE$).
Choosing $v\in T_q X$ extending $u$ to an orthonormal basis for $V$, the Jacobi field with initial condition $v$ is $J(t)=tv_t$ as above.

The other extremal curvature is realized by the plane $P$, orthogonal to $V$ and containing $u$.
Using the bilinearity of the Riemann curvature tensor, one can calculate this extremal curvature from the angle $\beta$ that $u$ makes with the vertical, and the curvature $K(H)=\pm 1$ of the horizontal $H$ plane $H$:
$$K(P)=\cos^2(\beta) K(V)+\sin^2(\beta)K(H)=\pm\sin^2(\beta)$$

Let $w\in T_qX$ extend $u$ to an orthonormal basis for $P$, and $w_t$ be its parallel translate along $\gamma$.  The Jacobi field with initial condition $w$ is $J(t)=\frac{f(t\sin\beta)}{\sin\beta}w_t$, where $f$ is either sine or hyperbolic sine as $K(P)$ is greater or less than zero respectively.
Combining these with \refeqn{AreaDensity_Jacobi} 
gives the area density for each of the product geometries below.

\begin{equation}
\mathcal{A}_{S^2\times\EE}(r,u)=r\frac{\sin(r\sin\beta)}{\sin\beta}
\hspace{1cm}
\mathcal{A}_{\HH^2\times\EE}(r,u)=r\frac{\sinh(r\sin\beta)}{\sin\beta}
\end{equation}

\reffig{ProductIntensity} shows the behavior of $I(r,u)=1/\mathcal{A}(r,u)$ on a ball of radius ten in the tangent space at $q$ for the two product geometries. \reffig{Product_LightInSpace} shows some effects of this behavior on the in-space view.

\begin{figure}[htbp]
\centering
\subfloat[The intensity in $S^2\times\EE$ periodically blows up.]{
\includegraphics[width=0.4\textwidth]{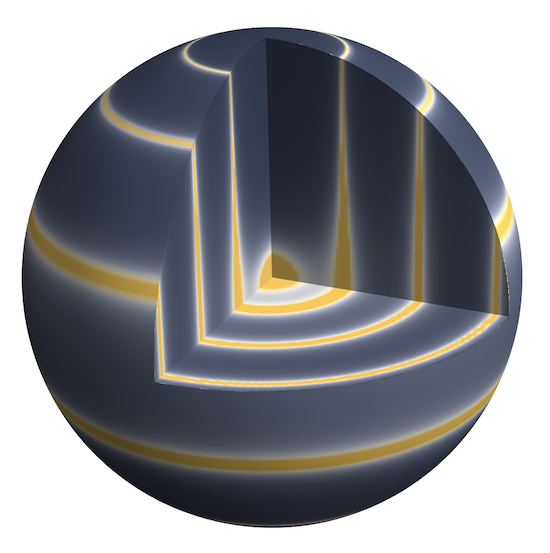}
\label{Fig:H2E_Intensity}
}
\quad
\subfloat[The intensity in $\HH^2\times\EE$ drops off exponentially away from the $\EE$ direction.]{
\includegraphics[width=0.4\textwidth]{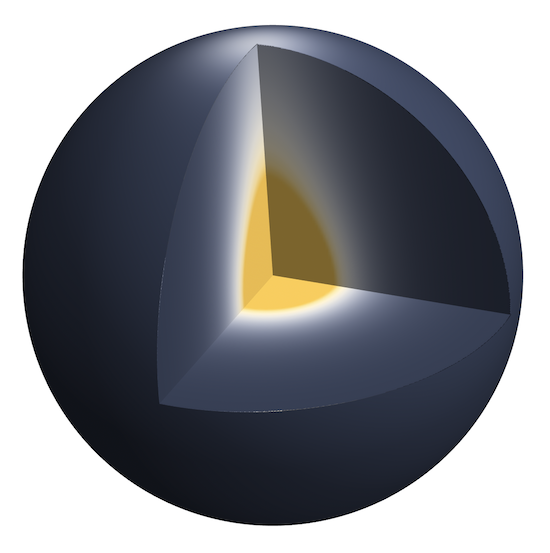}
\label{Fig:H2E_Intensity}
}
\caption{The lighting intensity functions $I(r,u)$ in the product geometries.}
\label{Fig:ProductIntensity}
\end{figure}

\begin{figure}[htbp]
\centering
\subfloat[The sphere $S^2 \times \{0\}$ in $S^2\times\EE$, lit by a single light above the north pole. The viewer is in the same position as the light, looking along the $\EE$ direction. The light intensity blows up at both the north and south poles of $S^2 \times \{0\}$. The viewer sees each pole as a collection of concentric rings, together with a point for the north pole directly below.]{
\includegraphics[width=0.90\textwidth]{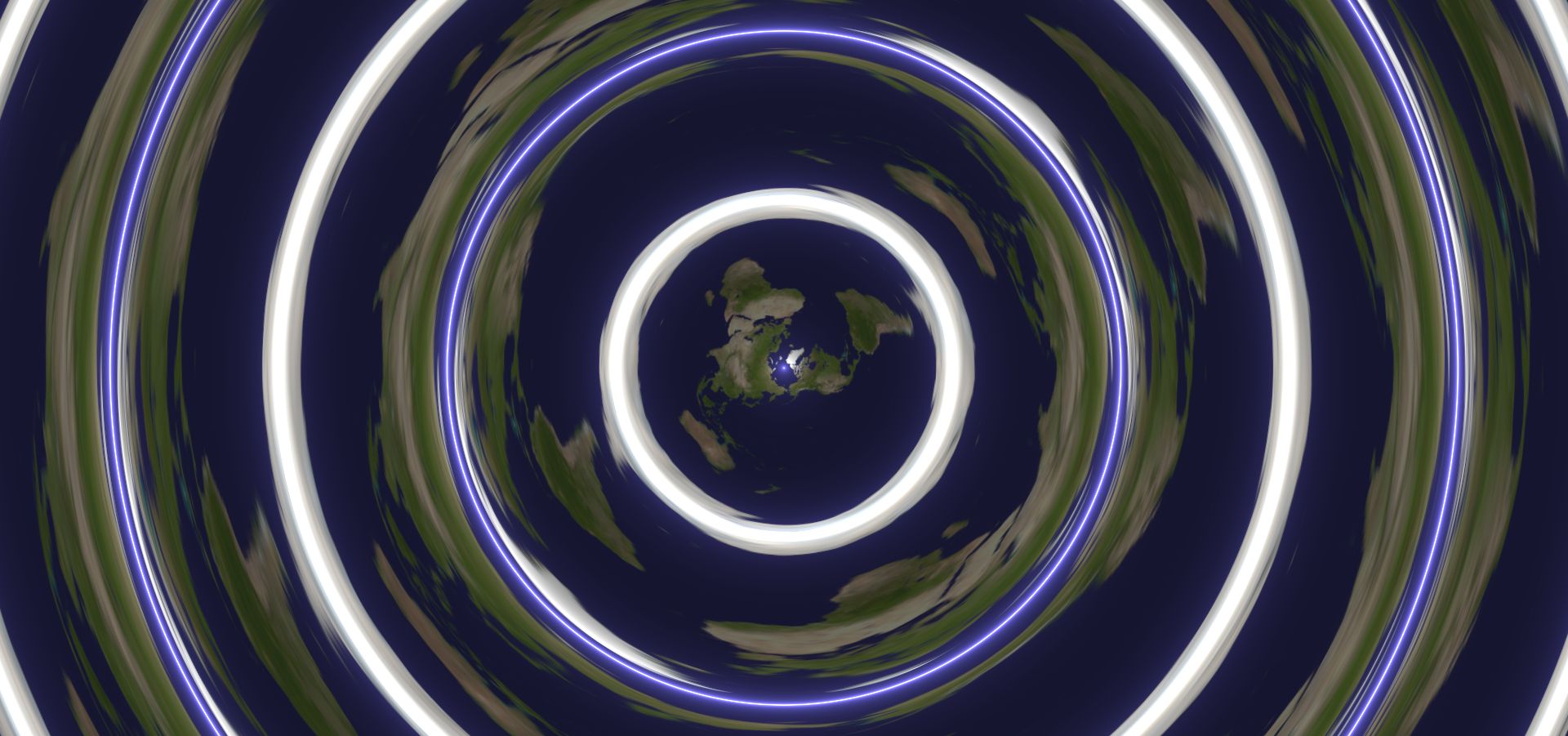}
\label{Fig:S2E_LightInSpace}
}\\
\subfloat[A tiling with a single light source in $\HH^2\times\EE$.  The light source is in the center of one of the bright tiles in front of the viewer. From a distance, the exponential fall-off in the hyperbolic directions makes the light look like a spotlight shining along the $\EE$ direction.]{
\includegraphics[width=0.90\textwidth]{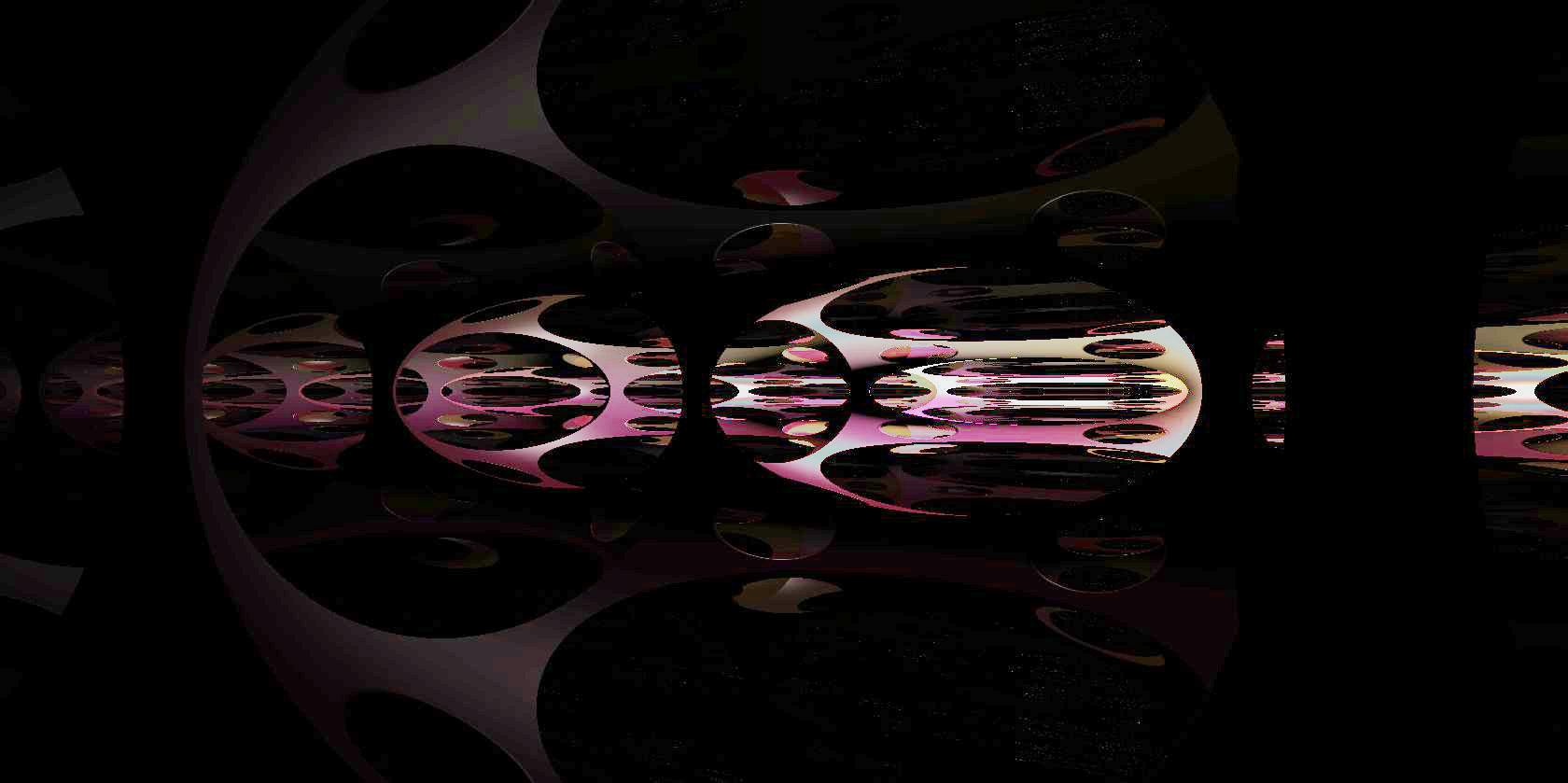}
\label{Fig:H2E_LightInSpace}
}\
\caption{In-space views highlighting consequences of the lighting intensities for the product geometries.}
\label{Fig:Product_LightInSpace}
\end{figure}

Finally, we must also compute the directions from a point $s\in X$ to the light source at $q\in X$.
To simplify the notation here, we will write each lighting pair of $\calL_s(q)$ 
not as a pair $(L,d_L)$, but as a vector $d_L L$ of length $d_L$ in the direction $L$.
Let $s,q\in X$ and let $d_Y=\dist_Y(s_Y,q_Y)$, $d_\EE=|q_\EE-s_\EE|$ be the distances between their projections into the respective factors of $X=Y\times\EE$. Recall that the standard basis vector $\be_w$ points along the $\EE$ direction.
We compute the unit vector $v_Y\in T_{s_Y} Y$ pointing along the shortest geodesic from $s_Y$ to $q_Y$ as in \refsec{IsotropicLighting}.
The element of $\calL_s(q)$ corresponding to the shortest geodesic is then $d_Y v_Y+d_\EE \be_w$.
In $\HH^2\times\EE$ geodesics are unique, so with this we are done:
$$\calL^{\HH^2\times\EE}_s(q)=\left\{d_Y v_Y+d_\EE \be_w\right\}=\left\{d_Y\frac{s_Y-\cosh(d_Y)q_Y}{\sinh(d_Y)}+d_\EE \be_w\right\}$$

In $S^2\times\EE$, there are three cases to deal with: first the generic case, second when $s,q$ lie on the same horizontal $S^2$, and third when $s_Y, q_Y$ are antipodal. As for $S^3$, in the implementation we don't worry about the non-generic cases; the lighting intensity at such points is the limit of the lighting intensity for the generic case.

In the generic case, there are countably many geodesics between $s$ and $q$. All of these geodesics lie on the cylinder formed by taking the product of the $\EE$ direction with the great circle containing $s_Y$ and $q_Y$.
For each natural number $n\geq 0$, there are two geodesics -- one starting by traveling the `short way' around the $S^2$ factor, followed by $n$ additional full turns, and the other the `long way' followed by $n$ additional turns. All together, this gives the set of directions
$$\calL^{S^2\times\EE}_s(q)=\bigcup_{n\geq 0}\Big\{
(2\pi n+d_Y)v_Y+d_\EE \be_w,(2\pi(n+1)-d_Y)v_Y+d_\EE \be_w
\Big\}.$$

If $s_\EE=q_\EE$ and $s_{S^2},q_{S^2}$ are not antipodal, then we just set $d_\EE = 0$ above. As in \refrem{GoingInCircles}, all but the shortest two are irrelevant if either the light source or the scene is opaque.

In the third case, where $s_Y,q_Y$ are antipodal in $S^2$, there are uncountably many geodesics joining $s$ to $q$. Their directions are a countable sequence of rings in the unit sphere in $T_s X$ accumulating on the horizontal equatorial circle. 

\begin{figure}[htbp]
\centering
\subfloat[
Looking along the $\EE$ direction.]{
\includegraphics[width=0.90\textwidth]{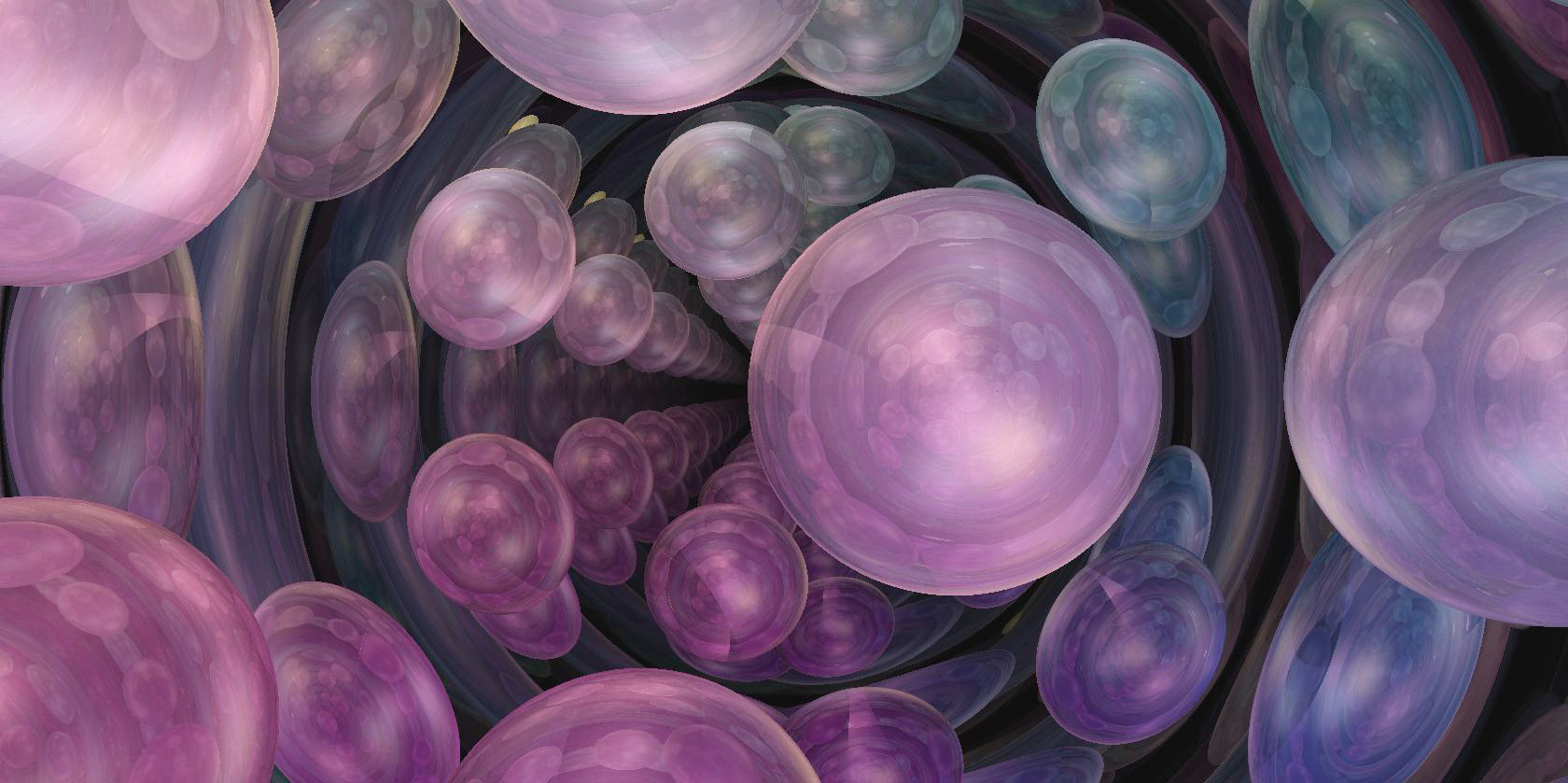}
\label{Fig:HopfManifold}
}\
\subfloat[Looking along an $S^2$ direction.]{
\includegraphics[width=0.90\textwidth]{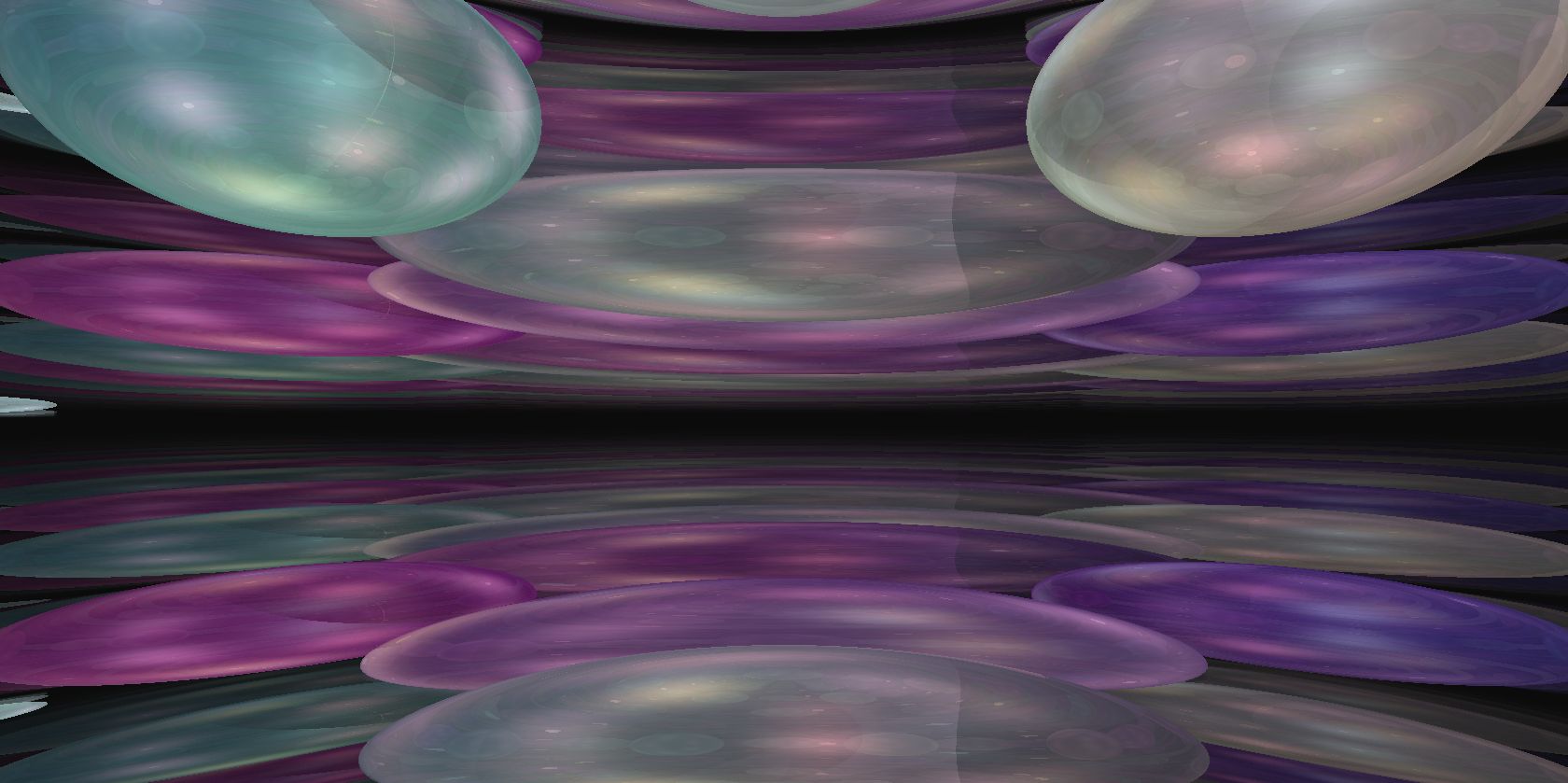}
\label{Fig:HopfManifold2}
}\
\subfloat[Looking along the $\EE$ direction.]{
\includegraphics[width=0.90\textwidth]{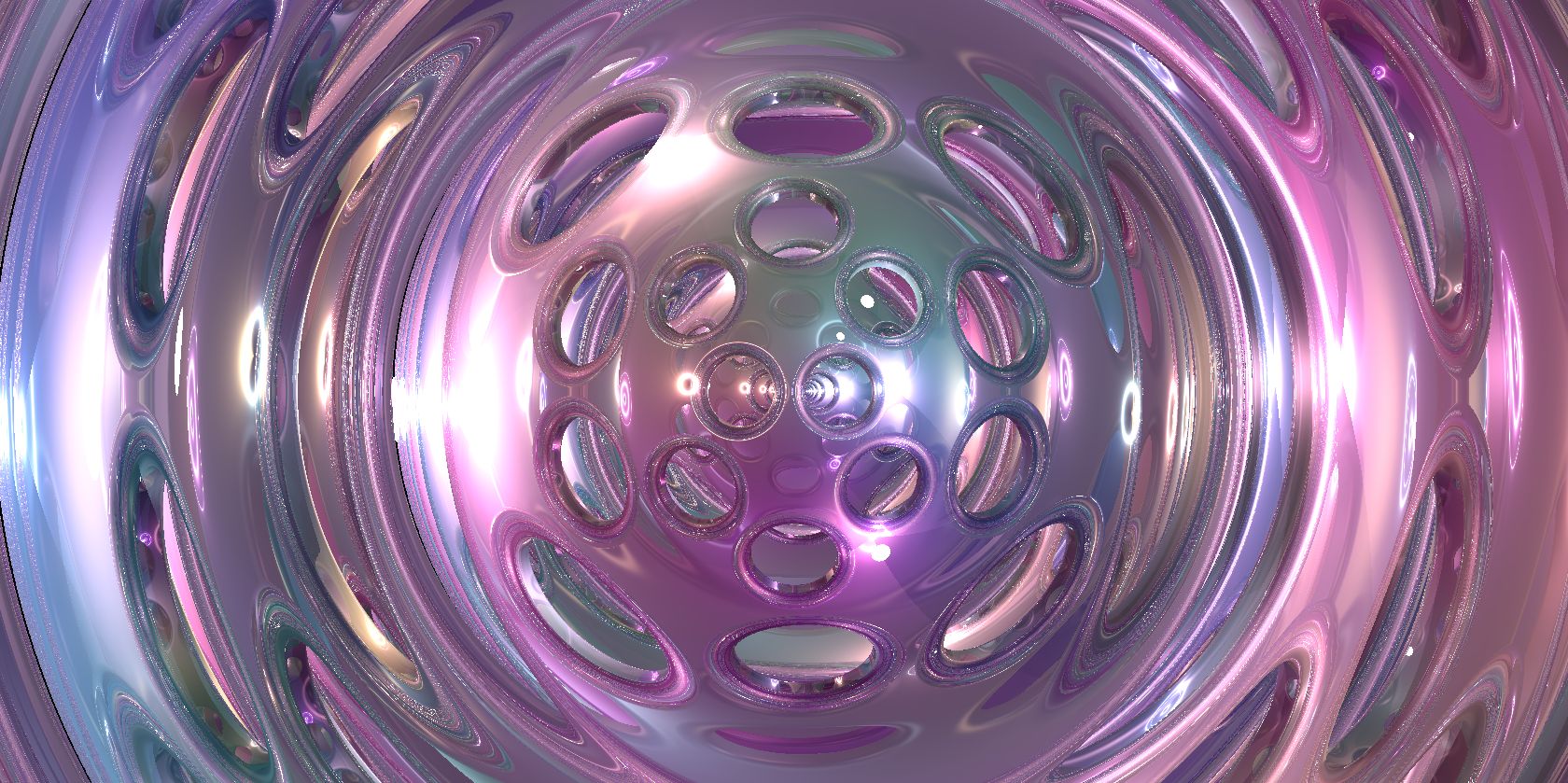}
\label{Fig:HopfDodecahedron}
}\
\caption{$S^2\times\EE$ Geometry. The Hopf manifold $S^2\times S^1$.}
\label{Fig:S2xEExamples}
\end{figure}

There are only seven manifolds with $S^2\times\EE$ geometry. These are listed in~\cite[page 457]{Scott}. In \reffig{S2xEExamples}, we show the in-space view for various scenes in the Hopf manifold $S^2 \times S^1$.
Figures \ref{Fig:HopfManifold} and \ref{Fig:HopfManifold2} show a collection of spheres spaced at the vertices of a regular dodecahedron.  \reffig{HopfDodecahedron} shows a slab $S^2 \times [-\epsilon, \epsilon]$, with holes cut out at the vertices of the dodecahedron.

\begin{figure}[htbp]
\centering
\subfloat[Looking along the $\EE$ direction.]{
\includegraphics[width=0.90\textwidth]{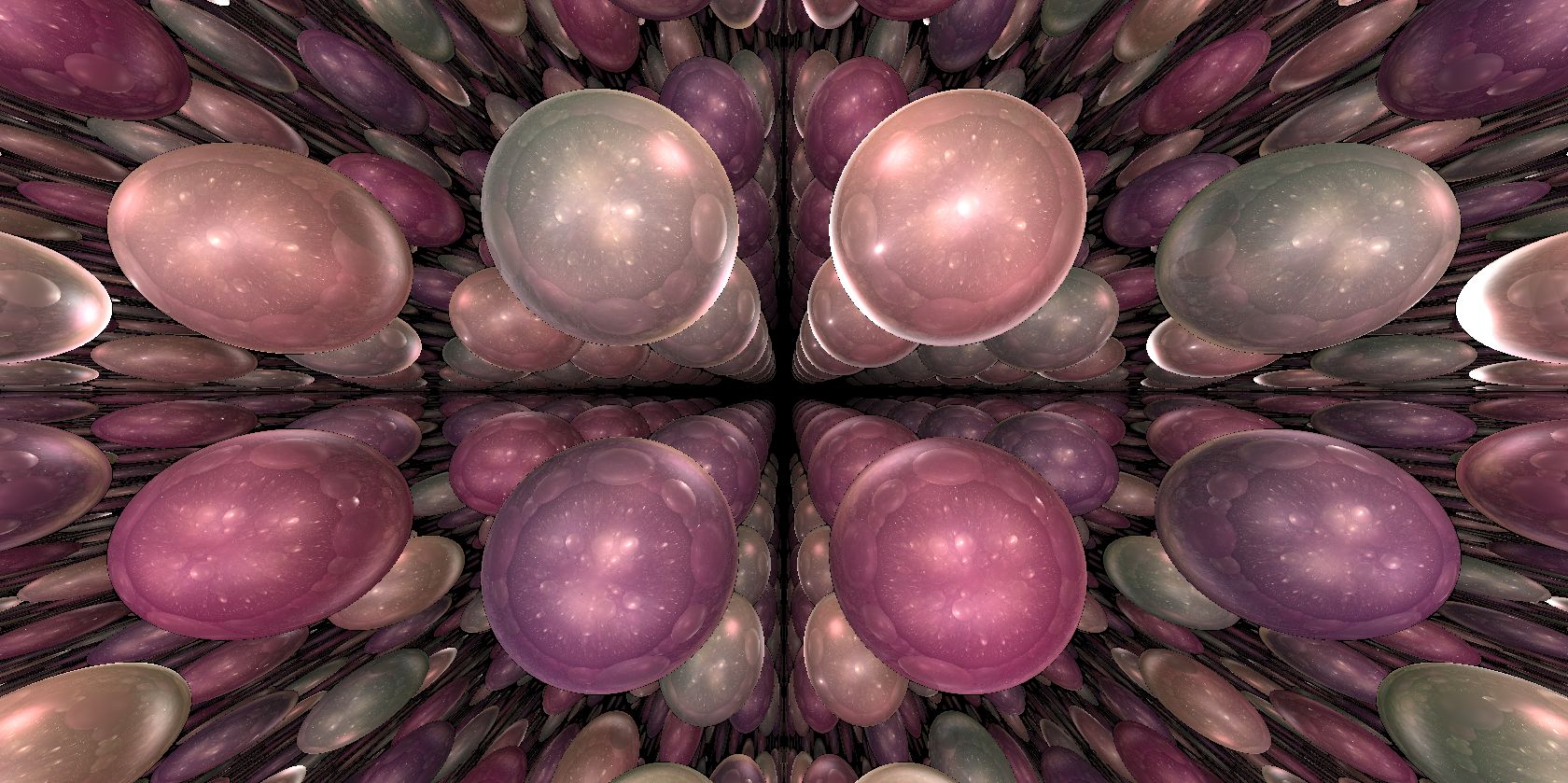}
\label{Fig:H2xEOrbifoldSpheres}
}\
\subfloat[Looking along an $\HH^2$ direction.]{
\includegraphics[width=0.90\textwidth]{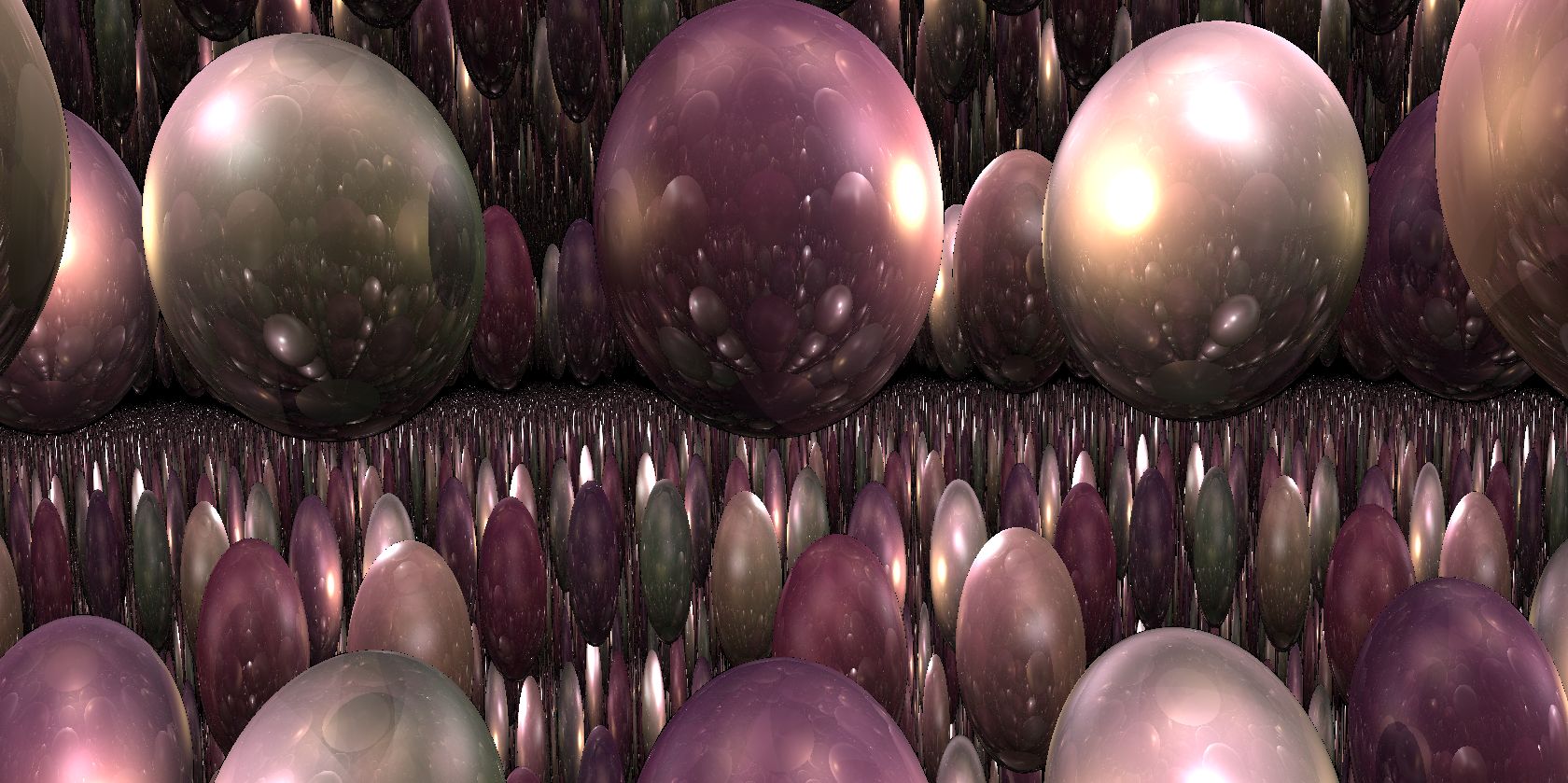}
\label{Fig:H2xEOrbifoldSpheres2}
}\
\subfloat[Looking along the $\EE$ direction.]{
\includegraphics[width=0.90\textwidth]{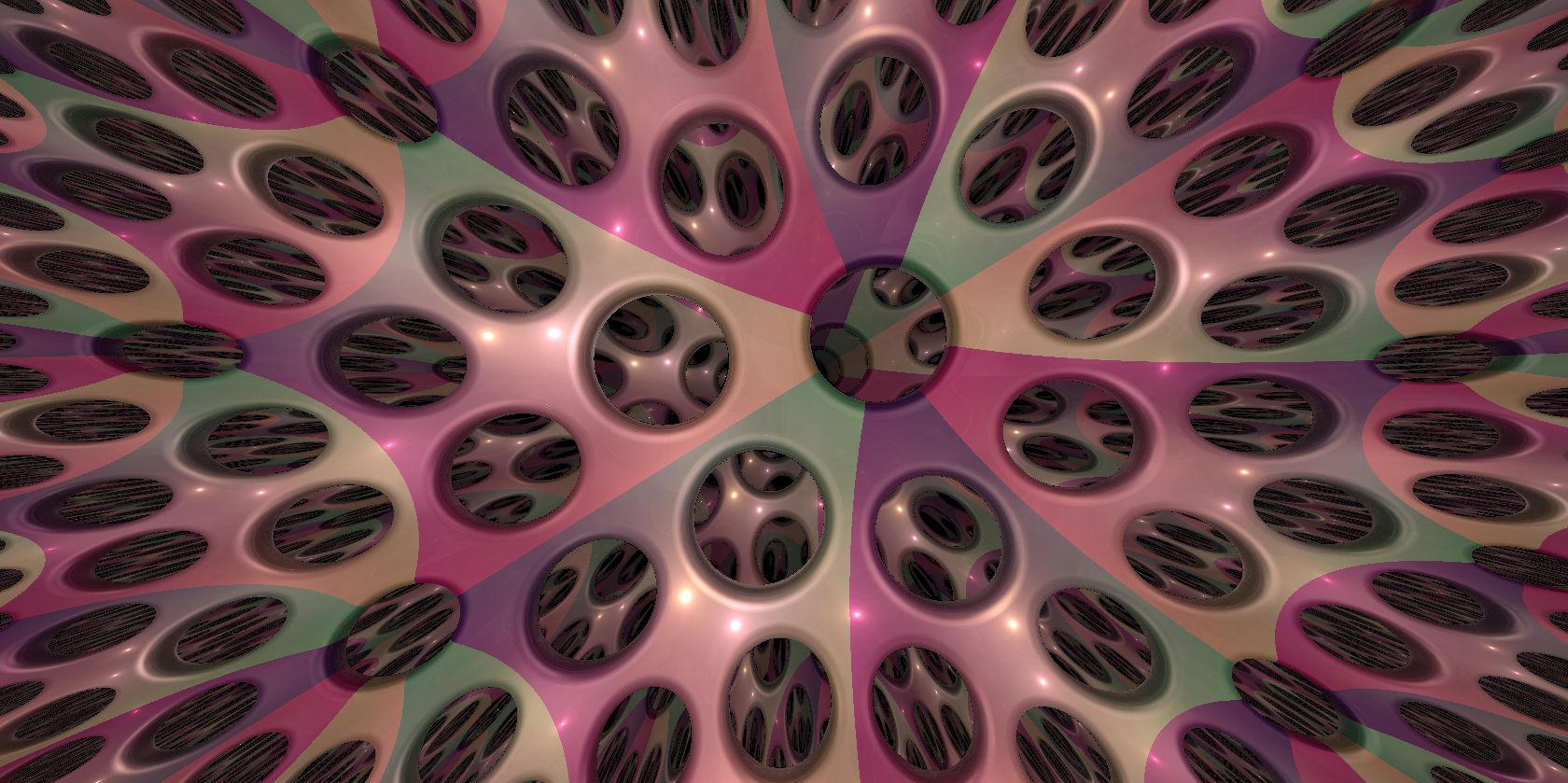}
\label{Fig:H2xEHoles}
}\
\caption{$\HH^2\times\EE$ Geometry. The product of a torus with cone point of angle $\pi$, with a circle.}
\label{Fig:H2xEExamples}
\end{figure}

The manifolds with $\HH^2\times\EE$ geometry are classified in~\cite[Theorem~4.13]{Scott}. In \reffig{H2xEExamples}, we show the in-space view for various scenes in $\HH^2\times\EE$ geometry. All of these images show the orbifold which is the product of a circle with a torus $T$ containing a cone point of angle $\pi$. Figures \ref{Fig:H2xEOrbifoldSpheres} and \ref{Fig:H2xEOrbifoldSpheres2} show a collection of spheres, four in each fundamental domain. 
\reffig{H2xEHoles} shows a slab $T \times [-\epsilon, \epsilon]$, with four holes cut from the fundamental domain of $T$, and a further hole cut around the cone point.

\section{Nil}
\label{Sec:Nil}

\subsection{Heisenberg model of Nil}
\label{Sec:Nil Heis model}
There are several models for Nil.
Probably the most commonly used is the Heisenberg model (also known as the polarized model of the first Heisenberg group).
The Heisenberg group $H$ is the group of $3\times 3$ upper triangular matrices of the form 
\begin{equation*}
	\left[\begin{array}{ccc}
		1 & x & z \\
		0 & 1 & y \\
		0 & 0 & 1
	\end{array}\right].
\end{equation*}
We identify this with $\RR^3$ through the $x$-, $y$-, and $z$-coordinates.
The metric 
\begin{equation*}
	ds^2 = dx^2 + dy^2 + (dz - xdy)^2
\end{equation*}
is invariant under the left action of $H$ on itself.

The space $(H,ds^2)$ has a major drawback for our purposes. 
To see this, let $o$ be point $[0,0,0]$ (corresponding to the identity matrix) which we see as the origin of the space.
The group of isometries of $(H,ds^2)$ fixing $o$ is isomorphic to ${\rm O}(2)$.
In particular, it contains a one-parameter subgroup of rotations.
These rotations are difficult to visualize in the Heisenberg model of Nil. See Figure~\ref{fig: nil balls - blender}.

\begin{figure}[h!tbp]
\begin{center}
	\includegraphics[width=\textwidth]{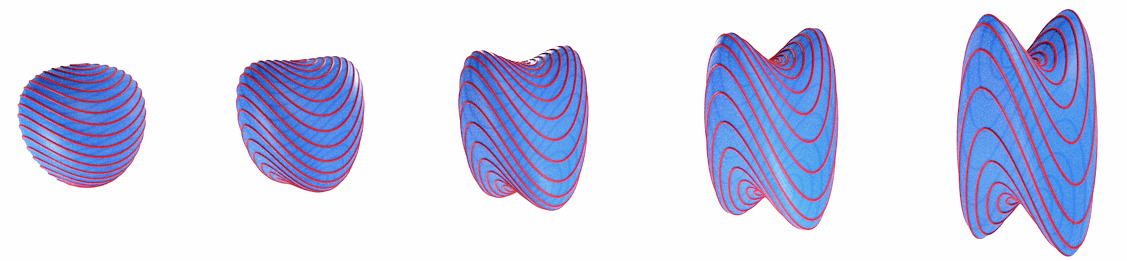}
\caption{Balls of radius one to five in the Heisenberg model of Nil. The images have been rescaled to take up approximately the same space on the page. The red curves are invariant under the rotations fixing the origin.}
\label{fig: nil balls - blender}
\end{center}
\end{figure}

\subsection{Rotation invariant model of Nil}
\label{Sec:Nil rot model}
For our computations we use a ``rotation invariant'' model of Nil.
The underlying space of the model is the affine subspace $X$ of $\RR^4$ defined by $w= 1$.
The group law is as follows: the point $[x,y,z,1]$ acts on $X$ on the left as the matrix
\begin{equation*}
	\left[\begin{array}{cccc}
		1 & 0 & 0 & x\\
		0 & 1 & 0 & y\\
		-y/2 & x/2 & 1 & z\\
		0 & 0 & 0 & 1
	\end{array}\right].
\end{equation*}
The origin $o$ is the point $[0,0,0,1]$.
Its tangent space $T_oX$ is identified with the linear subspace of $\RR^4$ given by the equation $w = 0$.
Our reference frame is $e = (\be_x,\be_y,\be_z)$ where $(\be_x, \be_y, \be_z, \be_w)$ is the standard basis of $\RR^4$.
The metric tensor at the point $p = [x,y,z,1]$ is now given by 
\begin{equation*}
	ds^2 = dx^2 + dy^2 + \left(dz - \frac12\left(xdy - ydx\right) \right)^2.
\end{equation*}
The map 
\begin{equation*}
	\begin{array}{ccc}
		H & \to & X \\
	\left[x,y,z\right] & \mapsto & \left[x,y,z - \frac 12 xy, 1 \right]
	\end{array}
\end{equation*}
is an isometry between the Heisenberg model and the rotation invariant model of Nil.
For every $\alpha \in \RR$, we write $R_\alpha$ 
for the transformation with matrix
\begin{equation*}
	\left[\begin{array}{cccc}
		\cos \alpha & -\sin \alpha & 0 & 0\\
		\sin \alpha & \cos \alpha & 0 & 0\\
		0 & 0 & 1 & 0\\
		0 & 0 & 0 & 1
	\end{array}\right].
\end{equation*}
One can check that $R_\alpha$ is an isometry of $X$, rotating by angle $\alpha$ around the $z$-axis.
Let $F$ be the transformation with matrix
\begin{equation*}
	\left[\begin{array}{cccc}
		0& 1 & 0 & 0\\
		1 & 0 & 0 & 0\\
		0 & 0 & -1 & 0\\
		0 & 0 & 0 & 1
	\end{array}\right].
\end{equation*}
This is another isometry of $X$, in this case it \emph{flips} the $z$-axis, and satisfies $F \circ R_\alpha \circ F^{-1} = R_{-\alpha}$.
These two kinds of isometries generate the stabilizer $K = {\rm O(2)}$ of $o$ in $G = {\rm Isom}(X)$.

\subsection{Geodesic flow and parallel transport}
\label{Sec:nil geo flow}
The solution of the geodesic flow in the Heisenberg model of Nil has been computed, for example in \cite{Molnar:2003aa}.
We could convert the solution into our rotation invariant model $X$. Instead, we take this opportunity to illustrate Grayson's method (Sections~\ref{Sec:GeodesicFlow - Grayson} and \ref{Sec:Parallel transport - Grayson}) and calculate the geodesic flow and parallel-transport operator directly in $X$, as follows.

Let $\gamma \colon \RR \to X$ be a geodesic in Nil and $T(t) \colon T_{\gamma(0)}X \to T_{\gamma(t)}X$ be the corresponding parallel-transport operator.
We define two paths $u \colon \RR \to T_oX$ and $Q \colon \RR \to {\rm SO}(3)$ by the following relations
\begin{equation*}
	\begin{split}
		\dot \gamma(t) & = d_oL_{\gamma(t)} u(t), \\
		T(t) \circ d_oL_{\gamma(0)} & = d_oL_{\gamma(t)} Q(t).
	\end{split}
\end{equation*}
Recall that the identification of parallel transport with the path $Q$ in ${\rm SO}(3)$ is done via our reference frame $e = (\be_x,\be_y,\be_z)$ at the origin $o$.
After some computation, Equations (\ref{Eqn:GraysonMethodFlowSphere}) and (\ref{Eqn:ParallelTransportLinEq}) respectively become
\begin{equation*}
	\left\{ \begin{split}
		\dot u_x & = -u_zu_y \\
		\dot u_y & = u_zu_x \\
		\dot u_z & = 0
	\end{split}\right.
\end{equation*}
and 
\begin{equation*}
	\dot Q + BQ = 0
	\quad \text{where} \quad
	B = \frac 12 \left[\begin{array}{ccc}
		0 & u_z & u_y \\
		-u_z & 0 & -u_x \\
		-u_y & u_x & 0
	\end{array}	\right].
\end{equation*}

For the initial condition $u(0) = [a \cos\alpha, a \sin\alpha, c, 0]$, where $a \in \RR_+$ and $c \in \RR$ satisfy $a^2 + c^2 = 1$, one gets 
\begin{equation*}
	u(t) = \left[ a \cos(ct + \alpha) , a \sin (ct + \alpha), c, 0 \right].
\end{equation*}
In order to get the expression for $Q$, we follow the strategy detailed in \refsec{Parallel transport - Grayson} and obtain
\begin{equation*}
	Q(t) = dR_\alpha e^{ct U_1} P e^{-\frac 12tU_2} P^{-1}dR_\alpha^{-1}, \quad \forall t \in \RR,
\end{equation*}
where
\begin{equation*}
	U_1 = 
	\left[\begin{array}{cccc}
	    0 & -1 & 0  \\
	    1 & 0 & 0  \\
	    0 & 0 & 0 
	\end{array}\right],	
	\quad
	U_2 =
	\left[\begin{array}{ccc}
	    0 & 0  & 0 \\
	    0 & 0 & -1 \\
	    0 & 1 & 0 
	\end{array}\right],
\end{equation*}
and 

\begin{equation*}
	dR_\alpha = \left[\begin{array}{ccc}
	    \cos\alpha & -\sin\alpha & 0 \\
	    \sin\alpha & \cos\alpha & 0 \\
	    0 & 0 & 1  \\
	\end{array}\right],
	\quad
	P = 
	\left[\begin{array}{ccc}
	    a & 0 & -c \\
	    0 & 1 & 0 \\
	    c & 0 & a  \\
	\end{array}\right].
\end{equation*}
Note that $dR_\alpha \colon T_oX \to T_oX$ is the differential of the rotation $R_\alpha$ written in the reference frame $e = (\be_x, \be_y, \be_z)$.

Let us now move back to the original geodesic $\gamma \colon \RR \to X$, which we write as
\begin{equation*}
	\gamma(t) = \left[x(t), y(t), z(t), 1\right].
\end{equation*}
Without loss of generality we can assume that $\gamma(0) = o$.
\refeqn{GraysonMethodPullBack} becomes
\begin{equation*}
	\left\{
		\begin{split}
			\dot x & = u_x \\
			\dot y & = u_y \\
			\dot z & = u_z + \frac 12(x u_y - y u_x)
		\end{split}
	\right..
\end{equation*}
Plugging in  our solution for $u$, we finally get
\begin{equation}
\label{Eqn:nil geodesic flow 1}
	\left\{
		\begin{split}
			x(t) & = \frac {2a}c \sin \left(\frac{ct}2\right)\cos\left(\frac{ct}2 + \alpha\right) \\
			y(t) & = \frac {2a}c \sin \left(\frac{ct}2\right)\sin\left(\frac{ct}2 + \alpha\right)\\
			z(t) & = ct + \frac 12 \frac {a^2}{c^2} \big(ct - \sin (ct) \big)
		\end{split}
	\right.\quad 
	\text{whenever}\ c \neq 0,
\end{equation}
and otherwise
\begin{equation}
\label{Eqn:nil geodesic flow 2}
	\left\{
		\begin{split}
			x(t) & = a \cos(\alpha)t \\
			y(t) & = a \sin(\alpha)t \\
			z(t) & = 0.
		\end{split}
	\right.
\end{equation}

\begin{remark}
	If $c$ is very small but not zero, the above formulas are the source of significant numerical errors.
	This is due to the term 
	\begin{equation*}
		\frac {ct - \sin (ct) }{c^2},
	\end{equation*}
	see \refsec{FloatingPoint}\refitm{RemovableSingularities}.
	In practice, this causes noise around the $xy$-plane, see \reffig{nil noise}.
	To fix this issue, when $ct$ is small, we replace the formula given in \refeqn{nil geodesic flow 1} by its asymptotic expansion of order seven around zero.
\end{remark}

\begin{figure}[htbp]
\subfloat[][Using \refeqn{nil geodesic flow 1}. One observes noise around the $xy$-plane.]{
	\includegraphics[width=0.7\textwidth]{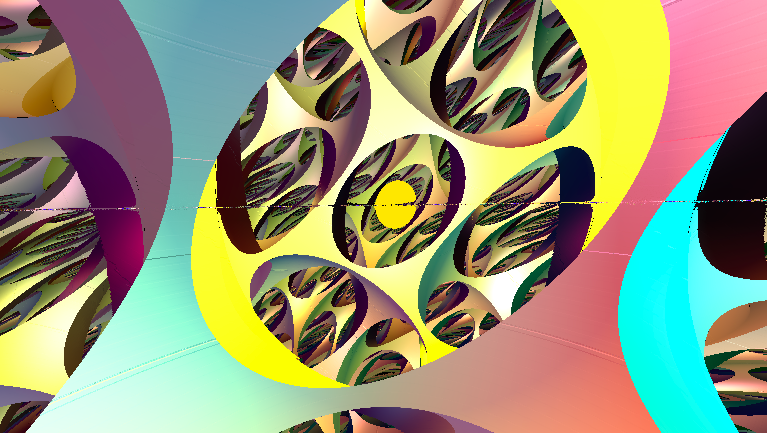}
}

\subfloat[][Replacing \refeqn{nil geodesic flow 1}  by an asymptotic expansion in a neighborhood of the $xy$-plane.]{
	\includegraphics[width=0.7\textwidth]{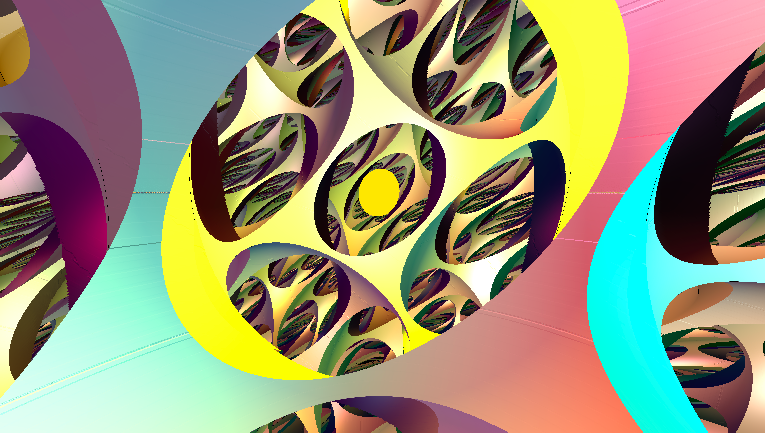}
}
\caption{Fixing the instability of the formula around $c = 0$. Both pictures represent the lattice of Nil given by the integer Heisenberg group. 
The yellow ball in the center represents a light.
Here we choose a simple color scheme to highlight the noise.}
\label{Fig:nil noise}
\end{figure}

\subsection{Distance to a vertical object}
Observe that Nil comes with a natural $1$-Lipschitz projection $\pi \colon X \to \EE^2$, sending $[x,y,z,1]$ to $[x,y]$.
In analogy with objects in the product geometries, we call the pre-image under $\pi$ of any non-empty subset of $\EE^2$ a \emph{vertical} object.
For example, any affine plane with equation $\alpha x + \beta y = \gamma$ is a vertical object. 

\begin{lemma}
	Let $S$ be a subset of $\EE^2$ and $Z = \pi^{-1}(S)$ the associated vertical object.
	The distance from any point $p \in X$ to $Z$ coincides with the distance between $\pi(p)$ and $S$ in $\EE^2$.
\end{lemma}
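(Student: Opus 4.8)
The plan is to prove the two inequalities $\dist_X(p,Z) \geq \dist_{\EE^2}(\pi(p), S)$ and $\dist_X(p,Z) \leq \dist_{\EE^2}(\pi(p), S)$ separately. The first is immediate from the fact that $\pi$ is $1$-Lipschitz: reading off the metric $ds^2 = dx^2 + dy^2 + \left(dz - \tfrac12(x\,dy - y\,dx)\right)^2 \geq dx^2 + dy^2$, the differential of $\pi$ is norm-nonincreasing, so $\pi$ does not increase path lengths. Hence for any $q \in Z$ we have $\dist_{\EE^2}(\pi(p), S) \leq \dist_{\EE^2}(\pi(p), \pi(q)) \leq \dist_X(p, q)$, and taking the infimum over $q \in Z$ gives the first inequality.

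For the reverse inequality the idea is to realize paths in $\EE^2$ by horizontal lifts. Given a smooth curve $c \colon [0,\ell] \to \EE^2$, $c(s) = (x(s), y(s))$, with $c(0) = \pi(p)$, define its \emph{horizontal lift starting at $p$} to be $\tilde c(s) = [x(s), y(s), z(s), 1]$, where $z$ is the solution of $\dot z = \tfrac12(x\dot y - y \dot x)$ with $z(0)$ equal to the $z$-coordinate of $p$; this is well defined by integration. Along $\tilde c$ the one-form $dz - \tfrac12(x\,dy - y\,dx)$ vanishes, so the length element reduces to $\sqrt{\dot x^2 + \dot y^2}$, and therefore $\mathrm{length}_X(\tilde c) = \mathrm{length}_{\EE^2}(c)$. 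Moreover $\pi(\tilde c(\ell)) = c(\ell)$, so if the endpoint $c(\ell)$ lies in $S$ then $\tilde c$ is a path in $X$ from $p$ to a point of $Z = \pi^{-1}(S)$.

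To finish, fix $\varepsilon > 0$ and choose $s \in S$ with $|\pi(p) - s| < \dist_{\EE^2}(\pi(p), S) + \varepsilon$. Lifting the straight segment from $\pi(p)$ to $s$ produces a path in $X$ from $p$ to a point of $Z$ of length $|\pi(p) - s|$, whence $\dist_X(p, Z) < \dist_{\EE^2}(\pi(p), S) + \varepsilon$; letting $\varepsilon \to 0$ gives the second inequality. I do not expect any genuine obstacle here: the only points needing (minor) care are that $S$ need not be closed — which is why the argument is phrased with an $\varepsilon$ rather than with a length-minimizing segment — and the length-preservation of horizontal lifts, which is transparent from the explicit form of the Nil metric in this model.
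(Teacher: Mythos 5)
Your proof is correct and takes a genuinely different route from the paper's. The paper reduces by the symmetry group: it translates so $p$ is the origin $o$ and rotates so that the nearest point of $S$ lies on the positive $x$-axis, then invokes the previously computed fact that $t \mapsto [t,0,0,1]$ is a geodesic of Nil. Your argument skips the symmetry reduction entirely and instead observes that \emph{every} planar curve admits a length-preserving horizontal lift (solve $\dot z = \tfrac12(x\dot y - y\dot x)$), which in particular lifts a nearly-optimal straight segment from $\pi(p)$ into $S$ to a path of the same length from $p$ into $Z$. What the paper's approach buys is that it reuses the explicit geodesic formula already in hand; what yours buys is generality and transparency --- it is really the contact-geometry observation that the Nil metric restricted to the horizontal distribution is isometric to $\EE^2$, and the same argument would work verbatim for any Riemannian submersion with a flat horizontal distribution. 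Your $\varepsilon$-argument correctly handles the possibility that $S$ is not closed, a point the paper glosses over by speaking of ``the projection of $o$ on $S$'' as though a nearest point exists. One small thing to note: the paper's $x$-axis geodesic is exactly the horizontal lift of the segment in the reduced case, so your argument subsumes the paper's; you have simply stated the general mechanism rather than a single instance of it.
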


\begin{proof}
	Any isometry of $X$ preserves the fibers of the projection $\pi$ and induces an isometry of $\EE^2$.
	Hence, applying a translation, it suffices to prove the claim in the case that $p$ is the origin $o$.
	Similarly, applying a rotation, we can assume that the projection $\pi(o)$ on $S$ is a point $q$ of the form $q = [x,0]$, with $x \geq 0$.
	Since $\pi$ is $1$-Lipschitz, we have 
	\begin{equation*}
		\dist\left( \pi(o), S \right) \leq \dist\left(o, Z\right).
	\end{equation*}
	Let us explain the reverse inequality.
	We have seen previously that the map $\gamma \colon \RR \to X$, mapping $t$ to $[t,0,0,1]$, is a geodesic of Nil.
	Hence the distance in Nil between $o$ and the pre-image $\cover q = [x,0,0,1]$ of $q$ is at most $x$.
	Consequently 
	\begin{equation*}
		\dist\left( o, Z \right) \leq \dist\left(o, \cover q\right) \leq x \leq \dist\left(\pi(o), S\right).\qedhere
	\end{equation*}
\end{proof}

\reffig{Nil_VertPlane} shows a vertical half-space in Nil. We texture the boundary with squares of side length one in its euclidean metric. Note that here (and in Sections \ref{Sec:SLR} and \ref{Sec:Sol}) we extend the notion of a half-space from that given at the start of \refsec{SDF}: the boundary may not be totally geodesic.
\reffig{NilFibers} shows vertical solid cylinders.

\begin{figure}[htbp]
\centering
\subfloat[$p=(0.5,0,0,1)$ looking along DIR][]{
	\includegraphics[width=0.3\textwidth]{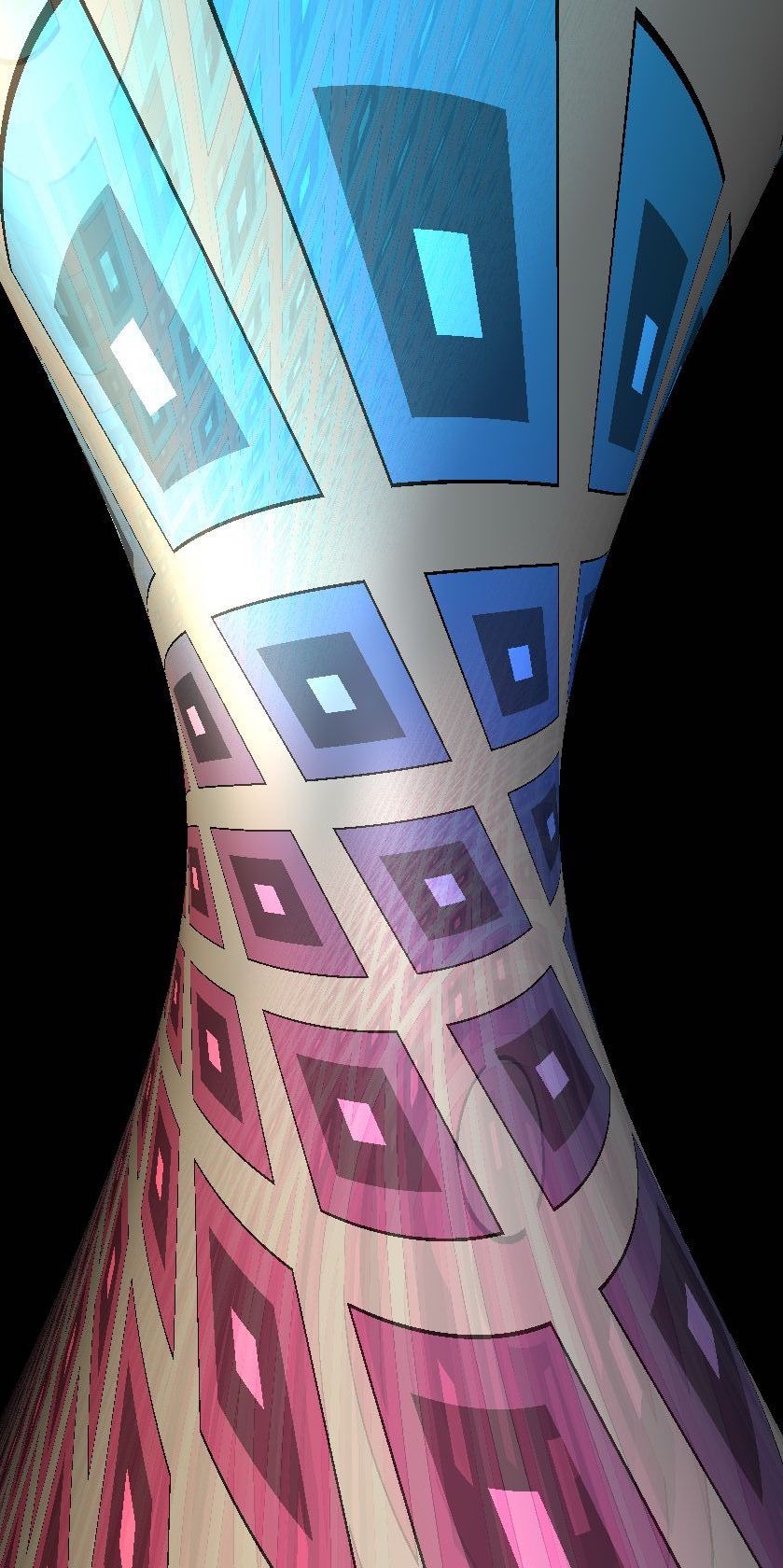}
}
\subfloat[$p=(1.5,0.5,0,1)$ looking along DIR][]{
	\includegraphics[width=0.3\textwidth]{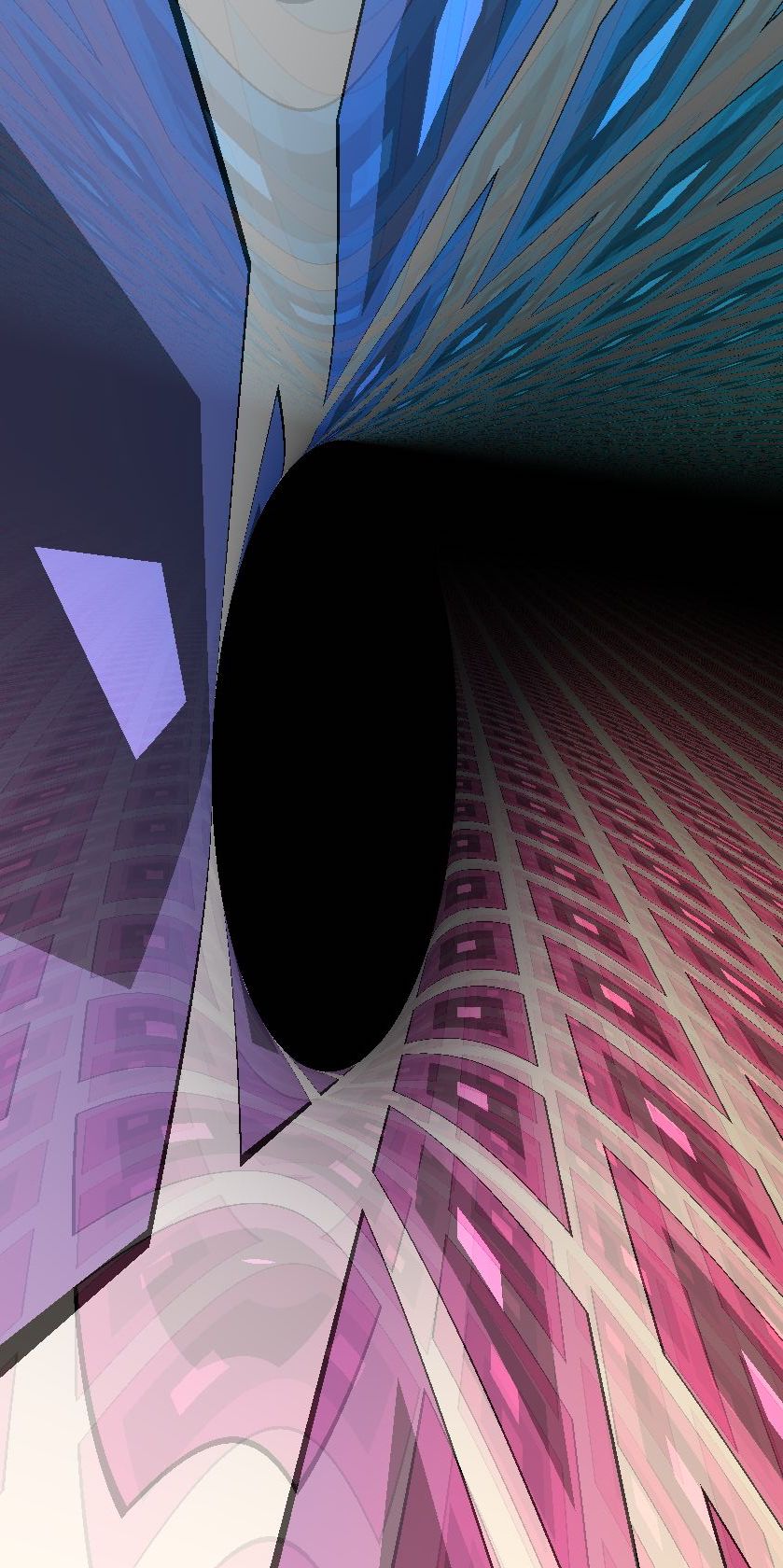}
}
\subfloat[$p=$ looking along DIR][]{
	\includegraphics[width=0.3\textwidth]{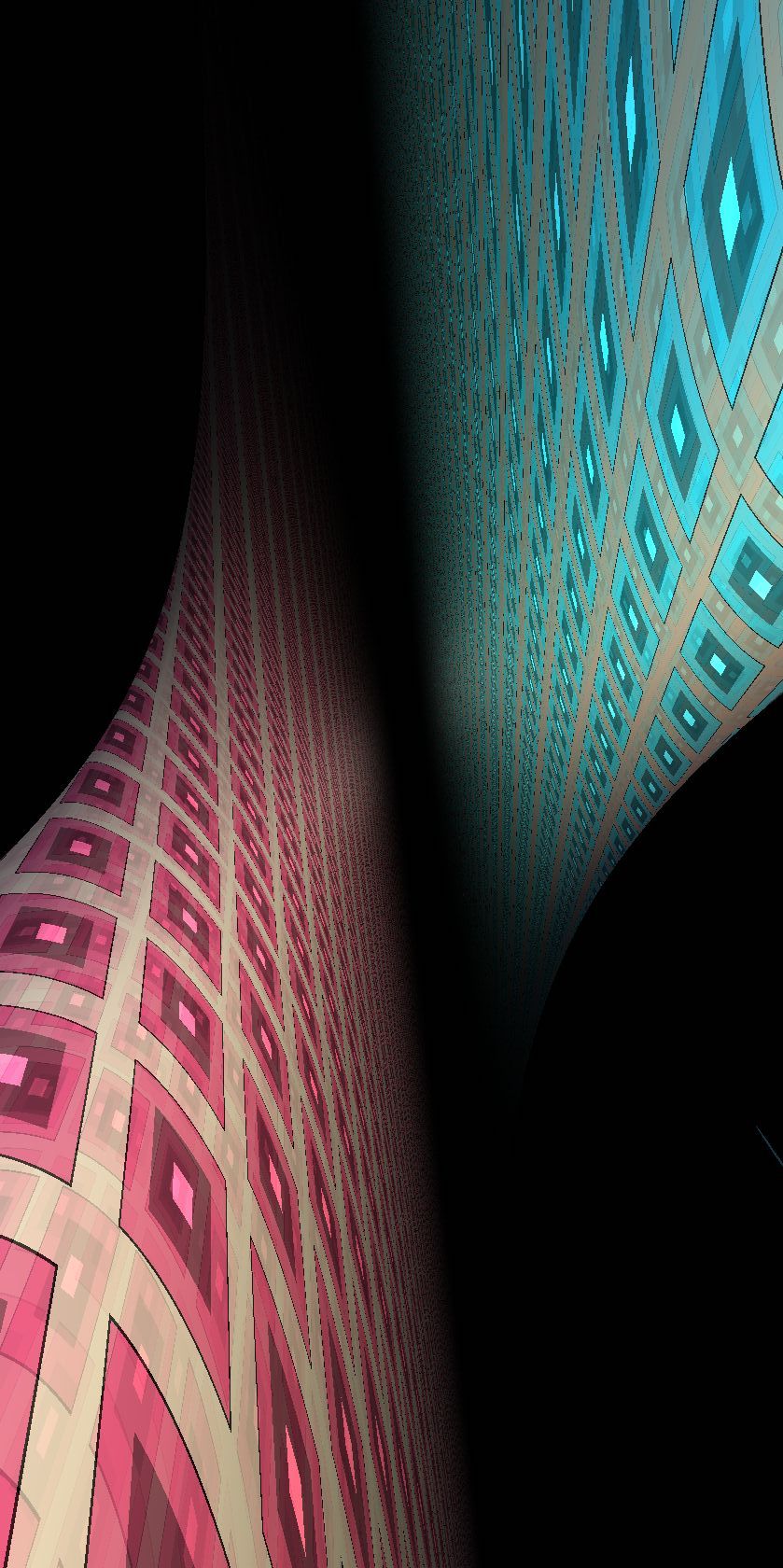}
}
\caption{Three views of a vertical half-space in Nil geometry. We see multiple reflections of the plane in itself, due to the spiraling of geodesics. Rendered with artificial (constant) light intensity, and fog.}
\label{Fig:Nil_VertPlane}
\end{figure}

\subsection{Exact distance and direction to a point}
\label{Sec:ExactGeodesicsNil}

Since $X$ is homogeneous, we only need to compute the distance between any point $p \in X$ and the origin.
In order to calculate lighting pairs, we need to compute the direction at the origin $v\in T_o X$ of the geodesics $\gamma$ from $o$ to $p$. Unfortunately, in Nil there is no closed-form expression for either of these two quantities. 
We compute both using the same numerical approach.
Using the flip symmetry, we may assume that the coordinates $[x,y,z,1]$ of $p$ satisfy $z \geq 0$.

Assume first that the point $p = [x,y,z,1]$ lies neither on the $xy$-plane nor on the $z$-axis.
Let $\gamma$ be a geodesic from $o$ to $p$. That is, $\gamma(0) = o$ and $\gamma(t) = p$, for some $t \geq 0$.
As in \refsec{nil geo flow}, we write $v= [a \cos\alpha,a \sin\alpha,c,0]$ for its (unit) tangent vector at $o$.
We deduce from \refeqn{nil geodesic flow 1} that 
\begin{equation*}
	z = \phi + \frac{\rho^2}{8\sin^2(\phi/2)}\left(\phi - \sin \phi \right),
\end{equation*}
where $\rho^2 = x^2 + y^2$ and $\phi = ct$.
These quantities have the following useful geometric interpretation:
\begin{itemize}
	\item $\rho$ is the distance in $\EE^2$ between $\pi(o)$ and $\pi(p)$, and
	\item $\phi$ is the angle described by the projection of $\gamma$ in $\EE^2$.
\end{itemize}
Computing the directions from $o$ to $p$ consists of solving a system with five unknowns ($a$, $c$, $\alpha$, $t$, and $\phi$) and five equations (the three given by \refeqn{nil geodesic flow 1} along with the relations $a^2 + c^2 = 1$ and $\phi = c t$).
Once $\phi$ has been found, it is an exercise to uniquely recover $a,c, \alpha$ and $t$ by directly solving the equations.
Hence there is a one-to-one correspondence between the geodesics joining $o$ to $p$ and the zeros of the function
\begin{equation*}
	\chi_{\rho,z} (\phi) = -z + \phi + \frac{\rho^2}{8\sin^2(\phi/2)}\left(\phi - \sin \phi \right),
\end{equation*}
see \reffig{graph chi nil}.

\begin{figure}[htbp]
\centering
\includegraphics[width=\textwidth]{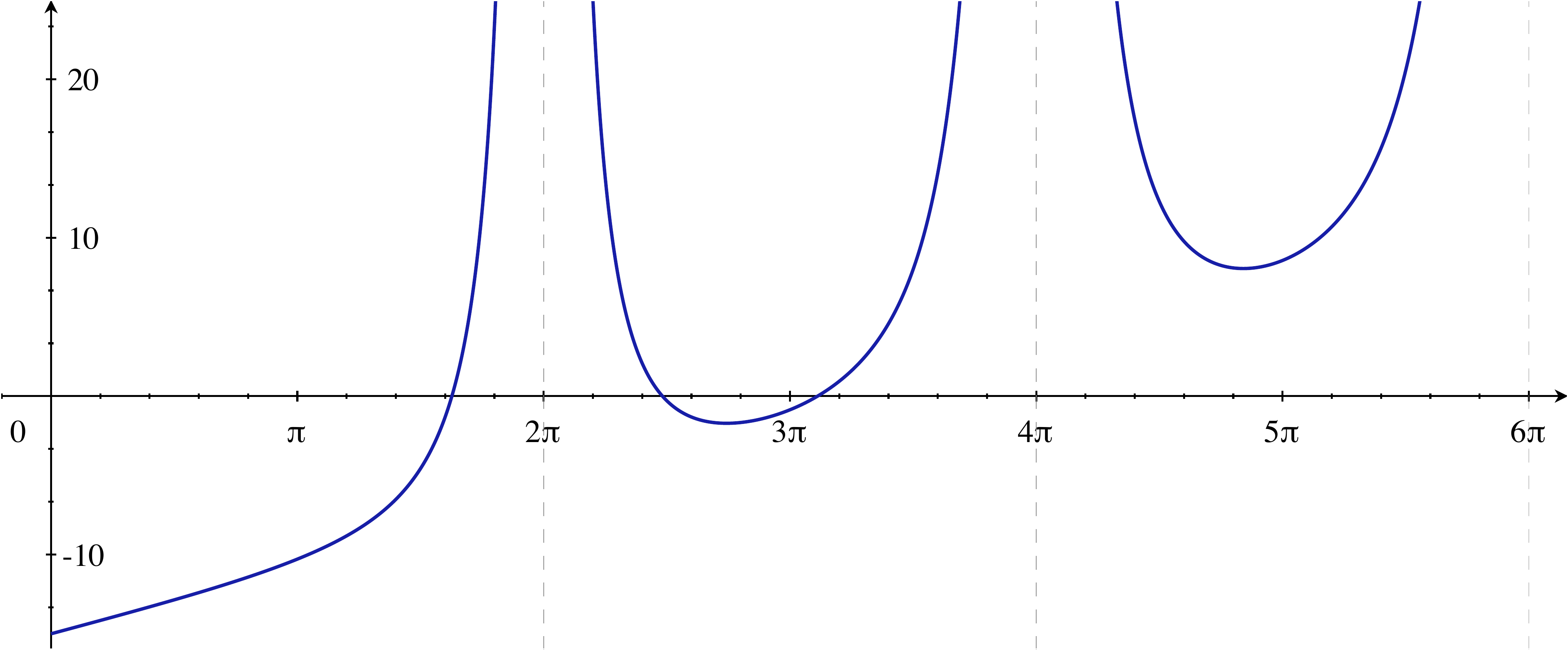}
\caption{The graph of the function $\chi_{\rho, z}$ for $\rho = 2$ and $z = 15$.
The function is not defined at $\phi=2k\pi$ for $k\in\ZZ_{>0}$.
In this case, there are exactly three geodesics joining the origin to any point $p$ with coordinates $[\rho \cos \theta, \rho \sin\theta, z,1]$.}
\label{Fig:graph chi nil}
\end{figure}

A geodesic $\gamma$ is minimizing if and only if the corresponding angle $\phi$ belongs to $(0, 2\pi)$.
It turns out that $\chi_{\rho, z}$ is strictly convex on the interval $(2k\pi, 2k\pi + 2\pi)$ for every integer $k\geq 0$.
Moreover, it is increasing on $(0,2\pi)$.
In order to find the minimizing geodesic from $o$ to $p$ we numerically compute the unique zero of $\chi_{\rho, z}$ on $(0, 2\pi)$ using Newton's method.

For physically accurate lighting, we also need the lighting pairs, $\calL_o(p)$, as defined in \refsec{Lighting directions}. Using binary search, we find a value of $\phi_0 \in (2\pi, 4\pi)$ where $\chi_{z, \rho}$ is positive and $d\chi_{z, \rho}/d\phi$ is negative.
We then run Newton's method, starting from $\phi_0$, producing a sequence $\{\phi_n\}$.
Recall that $\chi_{z, \rho}$ is strictly convex on $(2\pi, 4\pi)$.
Hence if the equation $\chi_{z, \rho}(\phi) = 0$ admits a solution in this interval, then $\{\phi_n\}$ will converge toward the first such solution.
Otherwise, either $\{\phi_n\}$ escapes the interval $(2\pi, 4\pi)$, or the sign of the derivative $d\chi_{z, \rho}/d\phi$ becomes positive. Either case is a halting condition for our algorithm. Repeating this procedure starting with a point for which $d\chi_{z, \rho}/d\phi$ is positive, we find the other solution in the interval.
Depending on the level of precision that we want for lighting, we can repeat the procedure on the next intervals $(4\pi, 6\pi)$, $(6\pi, 8\pi)$, \dots

Assume now that $p = [x,y,0,1]$ lies in the $xy$-plane.
Then there is a unique geodesic $\gamma$ joining $o$ to $p$. 
It coincides with the euclidean geodesic of $\RR^2$ between the same points. 
Hence its direction and length can be computed explicitly.
Alternatively, using continuity, we can extend the definition of the previous function $\chi_{\rho, z}$ at $\phi = 0$ by letting $\chi_{\rho, z} (0) = -z$.
In this way, this particular case is included in the previous discussion.
Indeed the only zero of $\chi_{\rho, 0}$ is $\phi = 0$.

If $p = [0,0,z,1]$ lies on the $z$-axis, then the path $\gamma(t) = [0,0,t,1]$ is a geodesic from $o$ to $p$ with initial direction $v = [0,0,1,0]$ and length $t = z$.
If $2n \pi \leq z < 2n\pi + 2\pi$, for some integer $n \geq 1$, then $o$ and $p$ are joined by $n$ other rotation-invariant families of geodesics $\{\gamma_{1,\alpha}\}, \dots, \{\gamma_{n,\alpha}\}$ where $\alpha$ runs over $[0, 2\pi)$.
The $k$th of these has length 
\begin{equation*}
	t_{k,\alpha} = 2k \pi \sqrt{\frac z{k\pi} - 1}
\end{equation*}
and direction at the origin
\begin{equation*}
	v_{k,\alpha} = \left[ \sqrt{\frac{z - 2k\pi }{z-k\pi}}\cos(\alpha), \sqrt{\frac{z - 2k\pi }{z-k\pi}}\sin(\alpha), \sqrt{\frac{k\pi }{z-k\pi}},0  \right].
\end{equation*}

\subsection{Distance underestimator for a ball} 
\label{Sec:distance underestimator Nil}
As mentioned in \refsec{DistanceUnderestimators}, we don't necessarily need the exact distance to an object to perform ray-marching. 
A distance underestimator also works.
A rough estimate using the solution of the geodesic flow shows the following. 

\begin{lemma}
Let $f \colon \RR_+ \to \RR_+$ be the continuous increasing map defined by
\begin{equation*}
	f(d) = \left\{\begin{split}
		d, &\ \text{if}\ d < \sqrt 6, \\
		\frac 43 \left( 1 + \frac 1{12} d^2\right)^{3/2}, &\ \text{if}\  \sqrt 6 \leq d< 2 \sqrt 6, \\
		\frac 1{2\sqrt 3} d^2,&\ \text{if}\ 2 \sqrt 6 \leq d.
	\end{split}\right.
\end{equation*}
If $p = [x,y,z,1]$ is a point at distance $d$ from the origin $o$, then  $\sqrt{x^2 + y^2} \leq d$ and $|z| \leq f(d)$. \qed
\end{lemma}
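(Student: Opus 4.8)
The plan is to use the explicit formulas for the geodesic flow from Section~\ref{Sec:nil geo flow} together with the geometric interpretation of the parameters developed in Section~\ref{Sec:ExactGeodesicsNil}. Let $\gamma$ be a minimizing geodesic from $o$ to $p = [x,y,z,1]$, parametrized by arc length, so its length is $d = \dist(o,p)$. Write its initial tangent vector as $v = [a\cos\alpha, a\sin\alpha, c, 0]$ with $a^2 + c^2 = 1$, $a \geq 0$. The bound $\sqrt{x^2+y^2} \leq d$ is immediate: the projection $\pi\colon X \to \EE^2$ is $1$-Lipschitz, so $\sqrt{x^2+y^2} = \dist_{\EE^2}(\pi(o),\pi(p)) \leq \dist(o,p) = d$. (Alternatively this is a special case of the lemma on distance to vertical objects, taking $S$ to be the point $\pi(p)$.)

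For the bound on $|z|$, using the flip symmetry we may assume $z \geq 0$. First dispose of the two degenerate cases: if $p$ lies on the $z$-axis, the vertical path has length $z$, and a minimizing geodesic among the rotation-invariant families has length $t_{1,\alpha} = 2\pi\sqrt{z/\pi - 1}$ when $z \geq 2\pi$ — so in all cases $z \leq d$ forces nothing better than a direct check that $z \leq f(d)$; and if $p$ lies in the $xy$-plane then $z = 0 \leq f(d)$ trivially. In the generic case, set $\rho^2 = x^2+y^2$ and $\phi = ct$, where $t = d$. From \refeqn{nil geodesic flow 1} we have the key identity
\begin{equation*}
z = \phi + \frac{\rho^2}{8\sin^2(\phi/2)}\bigl(\phi - \sin\phi\bigr),
\end{equation*}
and for a minimizing geodesic $\phi \in (0, 2\pi)$. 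I would now bound each term. For the first term, $\phi = ct = cd \leq d$. For the second term, one needs an estimate of the form $\dfrac{\phi - \sin\phi}{8\sin^2(\phi/2)} \leq C\phi$ (or a similar power-law bound) valid on $(0,2\pi)$, combined with $\rho \leq d$ and $\phi \leq cd$; keeping track of the factor $c = \sqrt{1-a^2}$ and of $\rho = |x|\cdot(\text{something}) \leq$ a multiple of $a$-times-trigonometric-factors from \refeqn{nil geodesic flow 1} is what produces the three regimes. The regime boundaries $\sqrt6$ and $2\sqrt6$, and the exponents $1$, $3/2$, $2$ in the definition of $f$, should fall out of optimizing this estimate: for small $d$ the dominant behavior is $z \approx \phi \lesssim d$; for large $d$ the spiraling makes $z$ grow quadratically in $d$ (this is exactly the $\frac{1}{2\sqrt3}d^2$ asymptotic, matching the known fact that on the $z$-axis a geodesic of length $t$ reaches height roughly $t^2/(4\pi)\cdot 2\pi = $ order $t^2$), with the middle $3/2$-power interpolating.

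The main obstacle I anticipate is the bookkeeping in the second term: one must bound $\dfrac{\rho^2(\phi-\sin\phi)}{8\sin^2(\phi/2)}$ using only $\rho \leq d$, $\phi \in (0, 2\pi)$, and the arc-length constraint $a^2 + c^2 = 1$, and the naive bound $\rho \leq d$ together with $\phi \leq d$ is too lossy to give the sharp $f$ — one really needs to exploit the relation $\rho = \frac{2a}{c}\sin(\phi/2)$ (so that $\rho^2/\sin^2(\phi/2) = 4a^2/c^2$) to cancel the singular denominator, giving $z = \phi + \frac{a^2}{2c^2}(\phi - \sin\phi)$, and then bound $\phi = cd$, $\phi - \sin\phi \leq \phi^3/6$, so that $z \leq cd + \frac{a^2}{2c^2}\cdot\frac{(cd)^3}{6} = cd + \frac{a^2 c d^3}{12}$. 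Maximizing $cd + \frac{(1-c^2)cd^3}{12}$ over $c \in (0,1]$ (or over $\phi = cd \in (0, 2\pi)$, whichever constraint binds) then yields precisely the piecewise formula: the unconstrained maximum of $g(c) = cd + \frac{(1-c^2)cd^3}{12}$ occurs at $c^2 = \frac13(1 + 12/d^2)$, which lies in $(0,1]$ exactly when $d \geq \sqrt6$, giving the middle branch $\frac43(1 + d^2/12)^{3/2}$ after simplification; for $d < \sqrt6$ the maximum is at $c = 1$, giving $z \leq d$; and for $d \geq 2\sqrt6$ the constraint $\phi = cd \leq 2\pi$ would bind before... — here I would double-check whether it is $c\leq 1$ or $\phi \leq 2\pi$ that governs the large-$d$ regime, which is the one computational point that genuinely needs care, but the structure is clearly a constrained single-variable optimization and should go through cleanly.
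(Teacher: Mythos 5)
The paper gives no proof of this lemma — it is stated with a terminal \qed after ``A rough estimate\dots shows the following'' — so you are on your own here, and your proposal must be judged on its own merits.

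Your key algebraic step is correct and is the right idea: using $\rho^2 = \frac{4a^2}{c^2}\sin^2(\phi/2)$ from \refeqn{nil geodesic flow 1} to cancel the $\sin^2(\phi/2)$ denominator and obtain $z = \phi + \frac{a^2}{2c^2}(\phi - \sin\phi)$, then substituting $\phi = cd$ and reducing to a one-variable optimization. Combined with $\phi - \sin\phi \leq \phi^3/6$, this gives $z \leq \phi\bigl(12 + d^2 - \phi^2\bigr)/12$, whose critical point is at $\phi_* = \sqrt{4 + d^2/3}$. The constraint $c \leq 1$ (i.e.\ $\phi \leq d$) binds for $d < \sqrt 6$, giving branch one, and for $d \geq \sqrt 6$ the interior maximum gives $\frac{4}{3}\bigl(1 + \tfrac{1}{12}d^2\bigr)^{3/2}$. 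Those two branches are proved.

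However, your speculation about what governs branch three is wrong, and this is a genuine gap, not a bookkeeping issue. The constraint $\phi < 2\pi$ does \emph{not} bind at $d = 2\sqrt 6$: there $\phi_* = \sqrt{4 + 8} = 2\sqrt 3 \approx 3.46$, well below $2\pi$, and in fact $\phi_* < 2\pi$ holds all the way up to $d \approx \sqrt{12(\pi^2-1)} \approx 10.3$. Worse, even after imposing $\phi \leq 2\pi$, the cubic estimate gives $z \leq \frac{2\pi(12 + d^2 - 4\pi^2)}{12} \sim \frac{\pi}{6}d^2 \approx 0.52\,d^2$, which \emph{exceeds} the claimed bound $\frac{1}{2\sqrt 3}d^2 \approx 0.29\,d^2$. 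So the Taylor bound $\phi - \sin\phi \leq \phi^3/6$ is simply too weak to establish branch three; no amount of constraint juggling fixes it. What is needed is a sharper estimate that exploits $\phi < 2\pi$ from the start: for instance $\phi - \sin\phi \leq \phi^2/\pi$ on $(0,2\pi)$, which holds because $g(\phi) = \phi^2/\pi - \phi + \sin\phi$ satisfies $g(0)=g'(0)=g(\pi)=g'(\pi)=0$ and is nonnegative on the interval. Substituting this into $z = \phi + \frac{d^2 - \phi^2}{2\phi^2}(\phi - \sin\phi)$ gives $z \leq \phi + \frac{d^2 - \phi^2}{2\pi}$, maximized at $\phi = \pi$ to yield $z \leq \frac{\pi}{2} + \frac{d^2}{2\pi}$, and one checks that $\frac{\pi}{2} + \frac{d^2}{2\pi} \leq \frac{d^2}{2\sqrt 3}$ once $d^2 \geq \frac{\pi^2\sqrt 3}{\pi - \sqrt 3} \approx 12.1$, which holds for $d \geq 2\sqrt 6$ (where $d^2 \geq 24$). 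You also wave at the $z$-axis case; it is worth spelling out that for $|z| < 2\pi$ the minimizing geodesic is vertical and $d = |z| \leq f(d)$ since $f(d) \geq d$ always, while for $|z| \geq 2\pi$ one has $d = 2\pi\sqrt{|z|/\pi - 1} \geq 2\pi$, so $|z| = \pi + d^2/(4\pi) \leq d^2/(2\sqrt 3)$.
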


As a consequence, for every $\psi \in (0,1)$, for every $m \geq 1$, we have
\begin{equation*} 
  0 < \left[ (1-\psi) \left(x^2 + y^2\right)^{\frac m2} + \psi \left( f^{-1}(|z|) \right)^m\right]^{\frac 1 m} \leq \dist(o, p).
\end{equation*}
This allows us to build a distance underestimator $\sigma' \from X \to \RR$ to render a ball of radius $r$ centered at $o$, as follows. Let

\begin{equation*}
	\sigma'(p) = \left\{
	\begin{split}
		\sigma(p) - r, & \ \text{if}\ \sigma(p) > r + \eta \\
		\dist(o,p) - r, & \ \text{otherwise}
	\end{split}
	\right.
\end{equation*}
where 
\begin{equation*}
	\sigma(p) = \left[ (1-\psi) \left(x^2 + y^2\right)^{\frac m2} + \psi \left( f^{-1}(|z|) \right)^m\right]^{\frac 1 m} 
\end{equation*}
and $\eta > 0$ is a constant that is much larger than the threshold $\epsilon$ used to stop the ray-marching algorithm.
This is more efficient than the exact signed distance function: here, the rough (and inexpensive to calculate) estimate $\sigma$ is used to handle points at a large distance from the ball.
When the point $p$ is close  to the ball we replace this estimate by the exact distance computed numerically as explained in \refsec{ExactGeodesicsNil}.  
We use this distance underestimator to render the balls in \reffig{NilAllDirections} (a line of balls along the fiber direction), and \reffig{NilBalls} (a lattice of balls in Nil).

\subsection{Creeping to horizontal half-spaces}
\label{Sec:CreepingNilz=0Plane}
In the case of vertical objects, we can use the geometry of Nil to help us build signed distance functions. For ``horizontal'' objects, for example the $z \leq 0$ half-space, we do not have anything equivalent. Thus, it is difficult to come up with a signed distance function (or even a distance underestimator). However, we can still detect whether a point is in a half-space or not, and so we can use the same binary search algorithm as used to detect the boundary of a fundamental domain in \refsec{Creep}. \reffig{Nil_XYPlane} shows the $z \leq 0$ half-space in Nil geometry, with boundary textured by squares in the coordinate grid of side length one.

\begin{figure}[htbp]
\centering
\subfloat[ {$p=[0,0,8.5,1]$} ]{
	\includegraphics[width=0.3\textwidth]{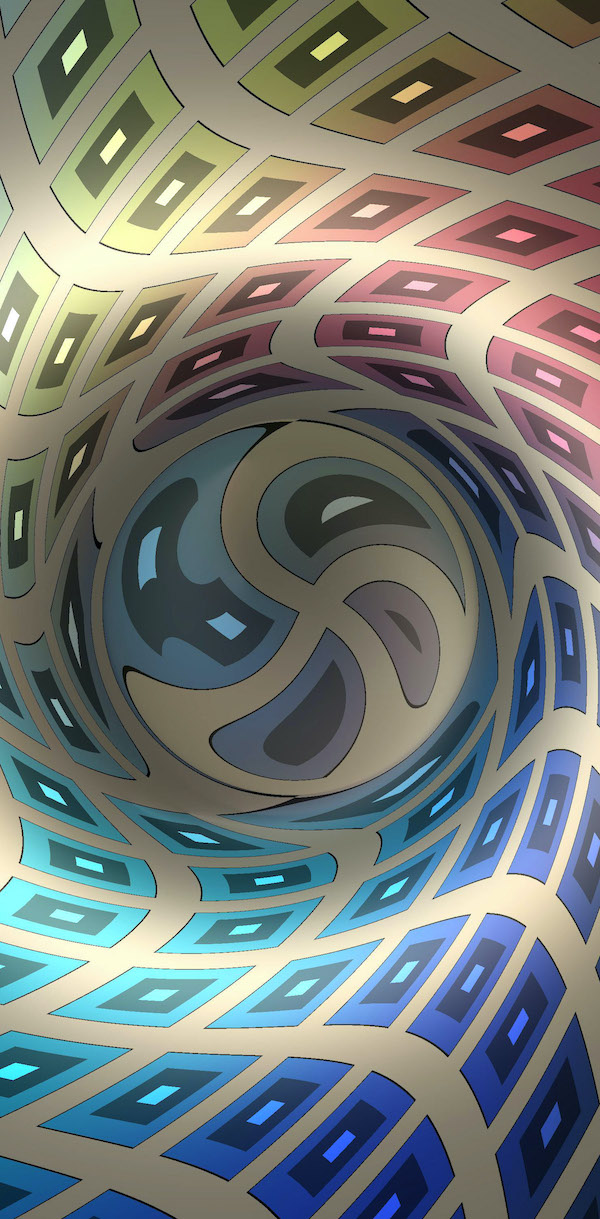}
}
\subfloat[{$p=[5,0,3,1]$}]{
	\includegraphics[width=0.3\textwidth]{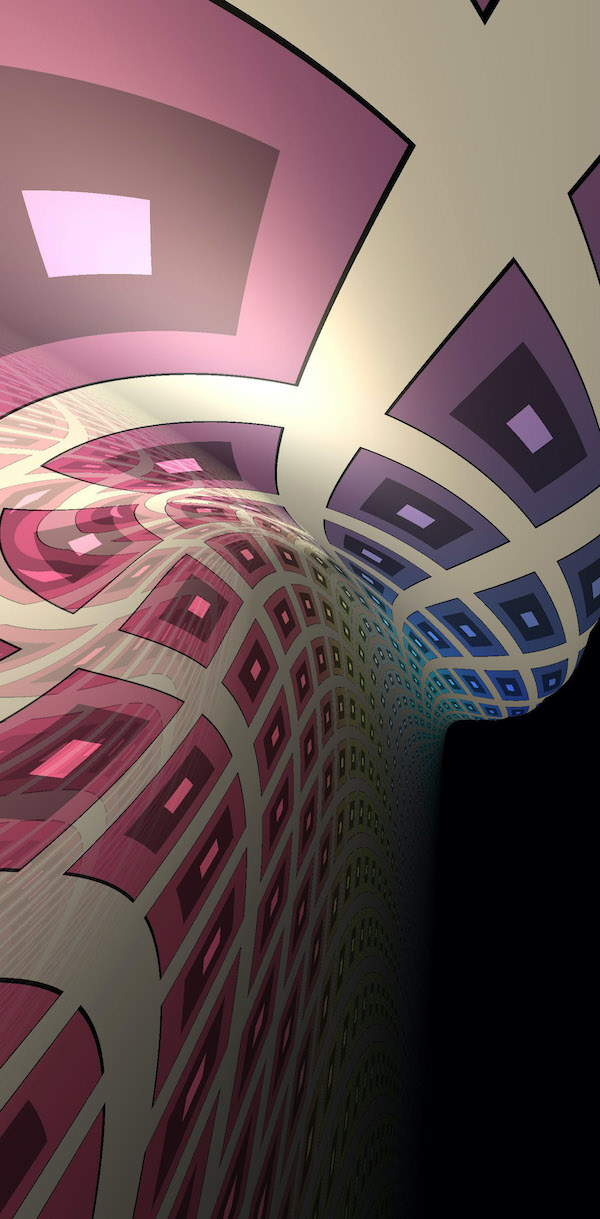}
}
\subfloat[{$p=[0,5,15,1]$}]{
	\includegraphics[width=0.3\textwidth]{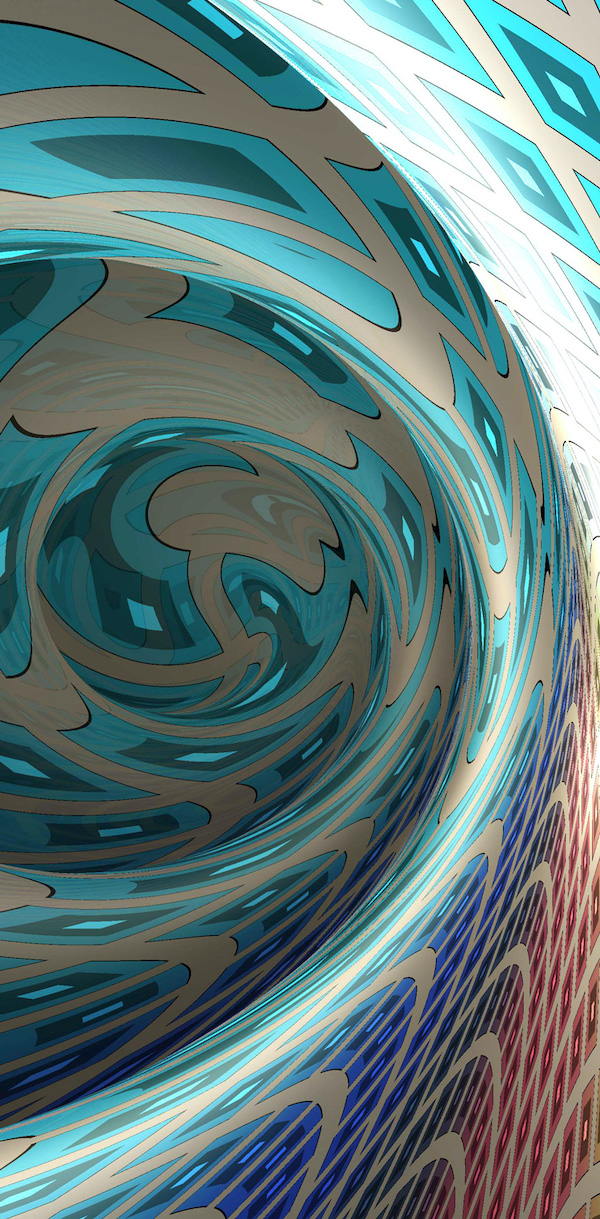}
}
\caption{The $z \leq 0$ half-space in Nil geometry, viewed from the point $p$.  Rendered with artificial (constant) light intensity, and fog.}
\label{Fig:Nil_XYPlane}
\end{figure}

\subsection{Lighting}
\label{Sec:NilLighting}

We addressed the calculation of lighting pairs in \refsec{ExactGeodesicsNil}. Here, we calculate the intensity $I(r,u)$ experienced from an isotropic light source traveling a distance $r$ with initial tangent $u$. Recall that this is inversely proportional to the area density $\mathcal{A}(r,u)$.
Here we calculate this area density directly by taking the derivative of the geodesic flow as in \refeqn{AreaDensityDifferential}.

A parameterization of the speed geodesic starting at the origin $o=\be_4$ with arc length parameter $r$ in the direction $u=[a\cos\alpha,a\sin\alpha, c,0]\in T_o \textrm{Nil}$ is given  by \refeqn{nil geodesic flow 1} for the generic case (when $c\neq 0$) and by \refeqn{nil geodesic flow 2} for geodesics in the $xy$-plane.
Below we concern ourselves with the generic case.
Let $(L,z,\alpha)$ be the cylindrical coordinates on $T_o\textrm{Nil}$ with $(L,z)$ the norm of the projections onto the $xy$ plane and $z$ axis respectively, and $\alpha\in[0,2\pi)$ measured from the positive $x$ axis.
In these coordinates the point $ru\in T_o\textrm{Nil}$ is expressed $(L,z,\alpha)=(ra,rc,\alpha)$.
Thus, using \refeqn{AreaDensity_Coordinates} we may calculate the area density in terms of the $L,z$ and $\alpha$ derivatives of \refeqn{nil geodesic flow 1}.

\begin{equation}
\label{Eqn:nil_AreaDensity}
\mathcal{A}=\frac{2r^2}{z^4}\left|\sin\frac{z}{2}\right|\left|L^2z\cos\frac{z}{2}-2r^2\sin\frac{z}{2}\right|.\end{equation}
See \reffig{NilIntensity}.
As with the computation of the geodesic flow in \refsec{nil geo flow}, to obtain correct lighting along the $xy$ plane direction, one should use the asymptotic expansion of \refeqn{nil_AreaDensity} around $z=0$.

\begin{figure}[htbp]
\centering
\subfloat[Within a ball of radius 10.]{
\includegraphics[width=0.4\textwidth]{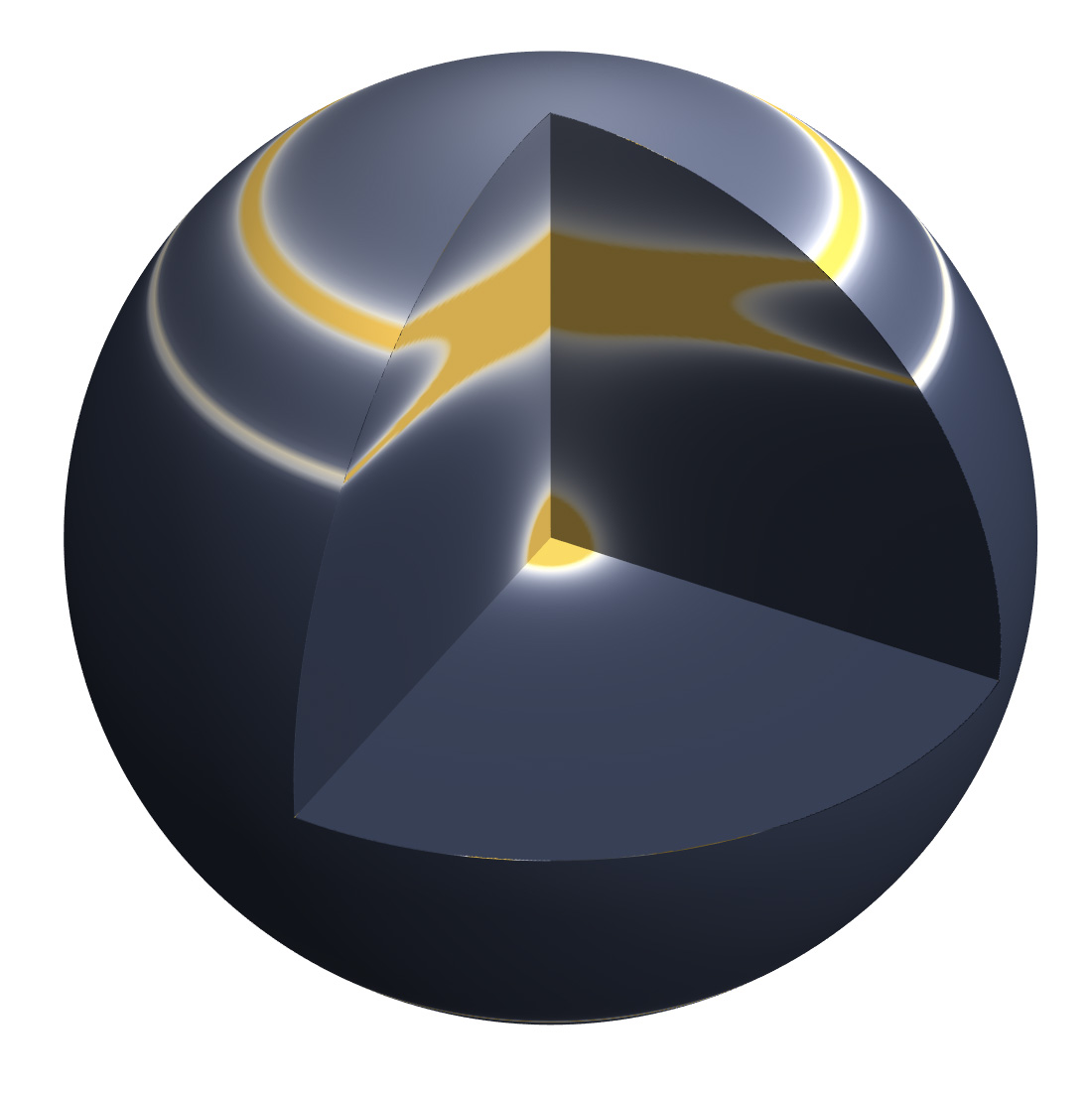}
}
\quad
\subfloat[Within a ball of radius 30.]{
\includegraphics[width=0.4\textwidth]{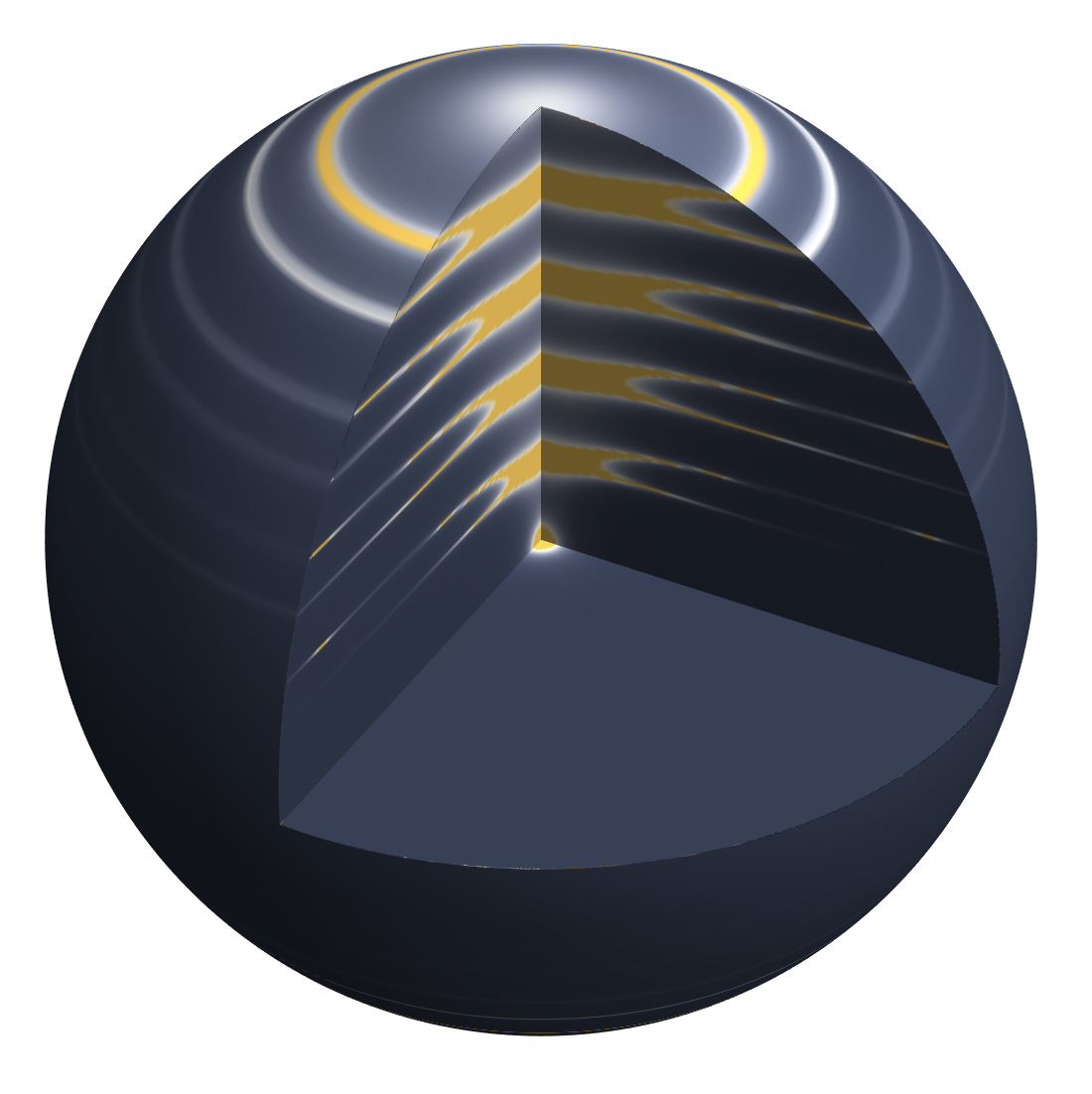}
}
\caption{The lighting intensity function $I(r,u)$ in Nil geometry.}
\label{Fig:NilIntensity}
\end{figure}

In horizontal directions, the light intensity quickly drops away. Near the vertical axis, the intensity of a light source periodically blows up as geodesics reconverge.  See Figures \ref{Fig:NilIntensityBallLine}, \ref{Fig:NilIntensityExample}, \ref{Fig:NilIntensityExampleXZPlane}, and \ref{Fig:NilIntensityExampleXYPlane}.

\begin{figure}[htbp]
\centering
\subfloat[At most 2 geodesics.]{
\includegraphics[width=0.47\textwidth]{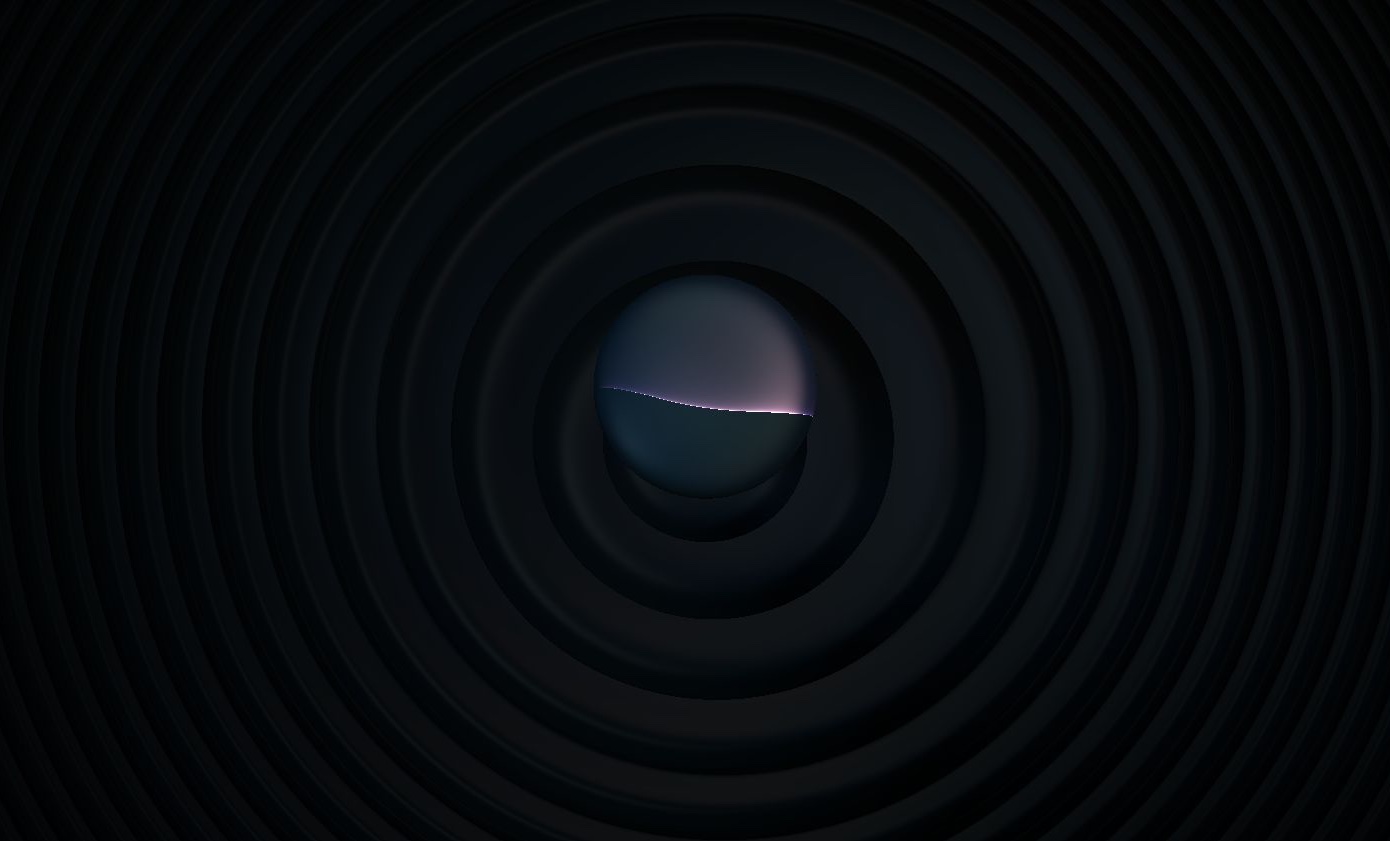}
\label{}
}
\subfloat[At most 4 geodesics.]{
\includegraphics[width=0.47\textwidth]{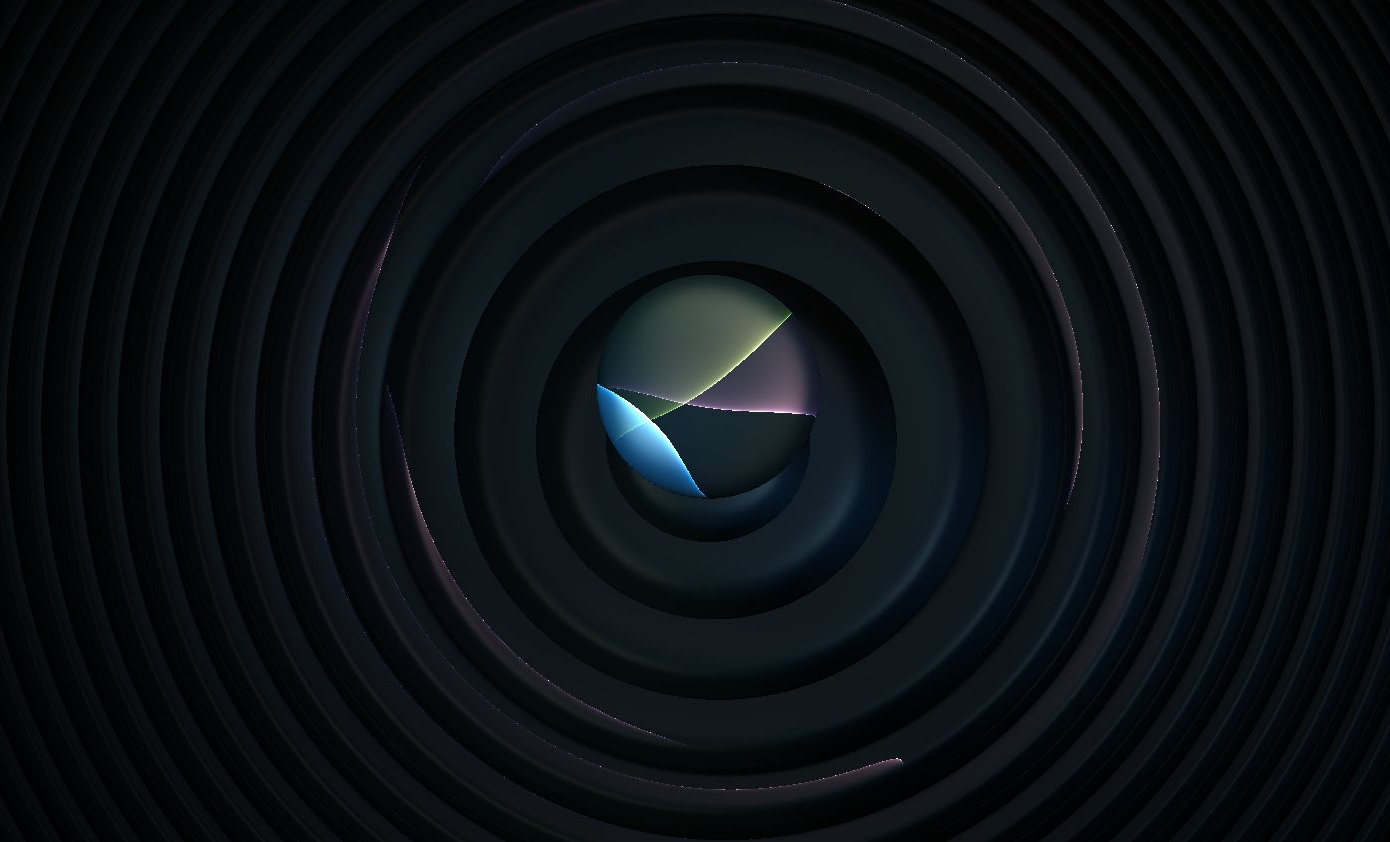}
\label{}
}\\
\subfloat[At most 8 geodesics.]{
\includegraphics[width=0.47\textwidth]{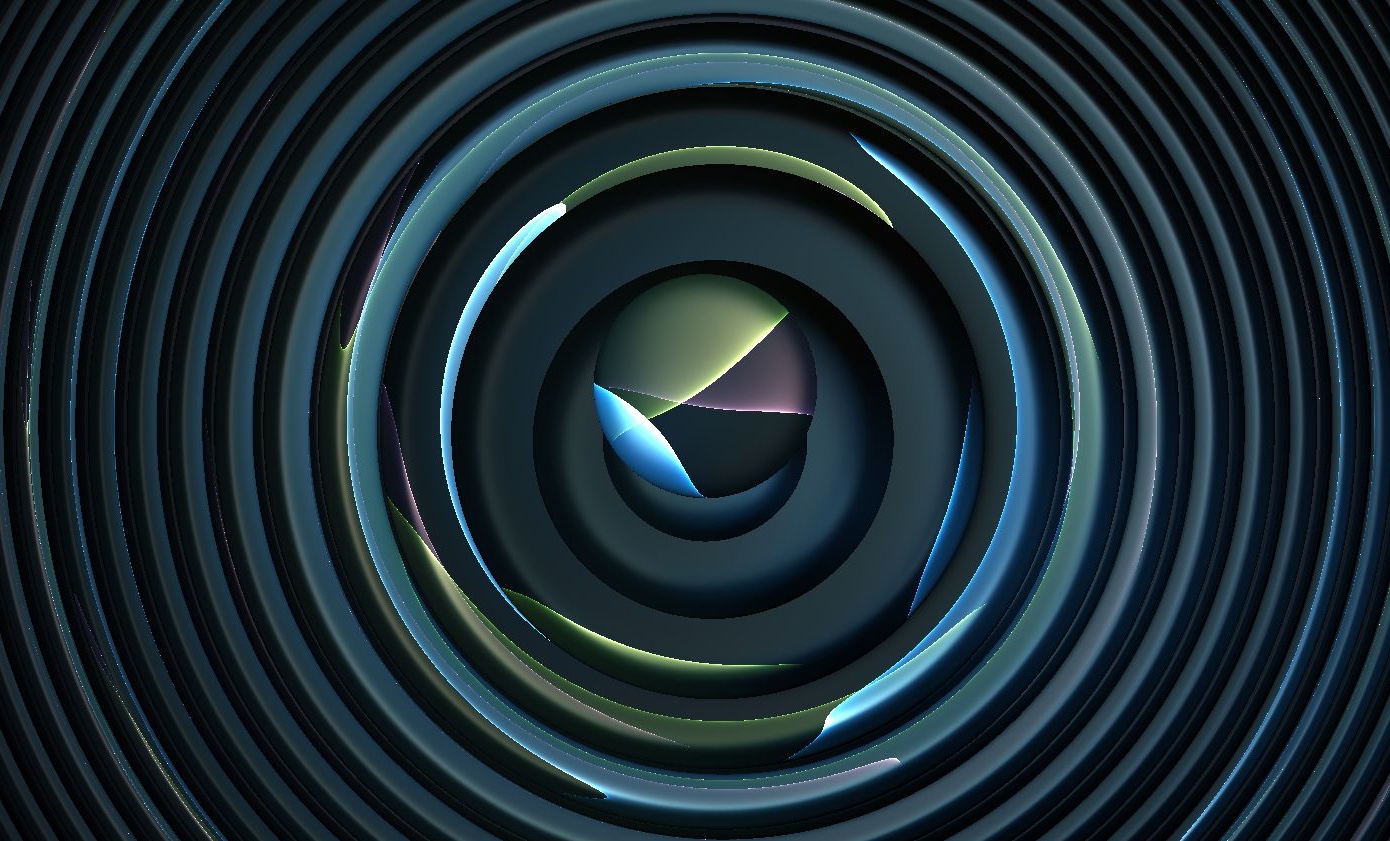}
\label{}
}
\subfloat[At most 16 geodesics.]{
\includegraphics[width=0.47\textwidth]{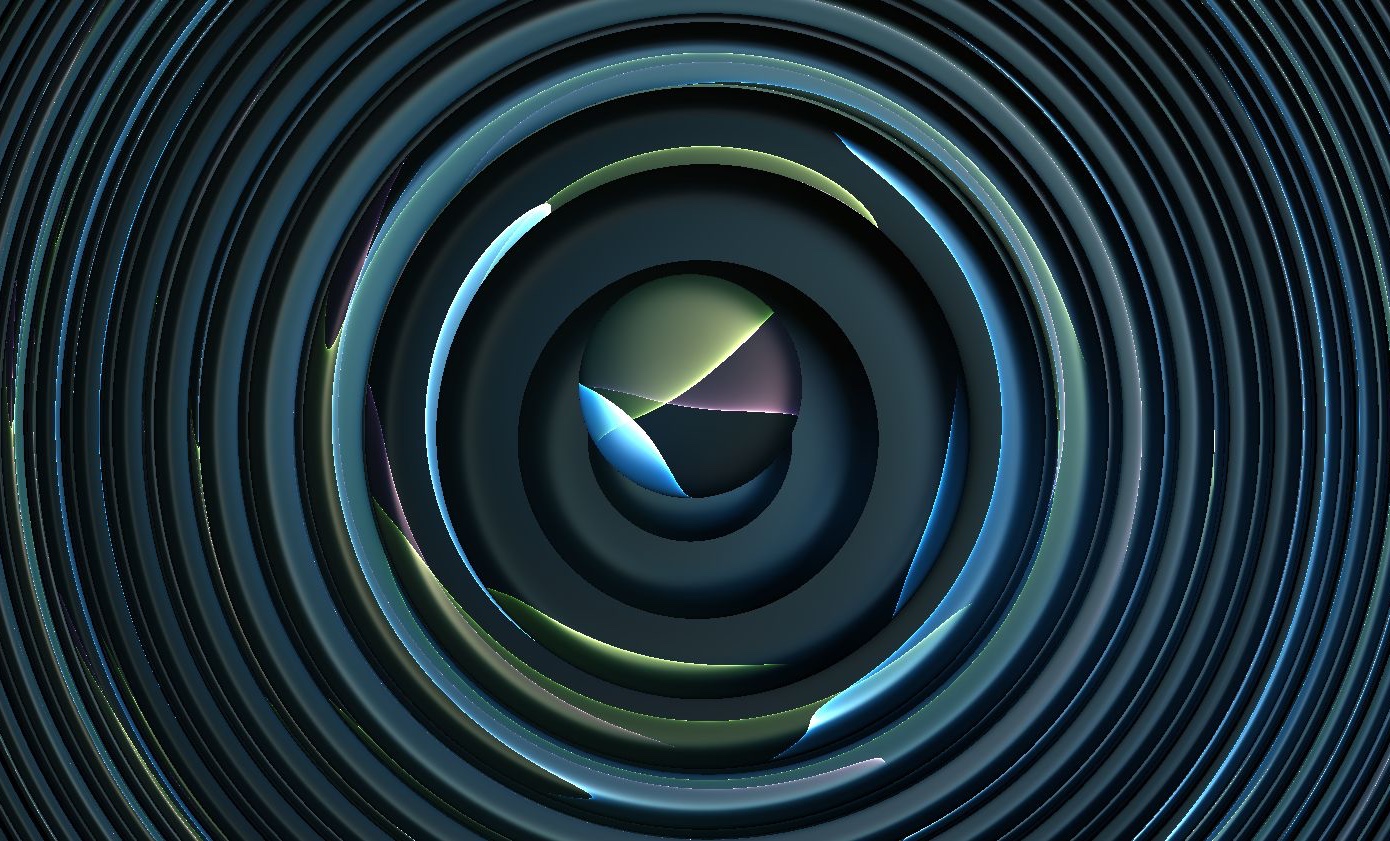}
\label{}
}
\caption{A line of balls in Nil along the $z$-axis, lit by three light sources (cyan, yellow, and magenta) far behind the viewer, using correct light intensity.  Compare with \reffig{NilAllDirections}, which is rendered with constant light intensity.
As almost every point is reached by finitely many geodesics, one may render accurate lighting for any compact region of Nil by computing a sufficiently large number of possible directions.}
\label{Fig:NilIntensityBallLine}
\end{figure}

\begin{figure}[htbp]
\centering
\subfloat[Near the lights (one unit in front of viewer, in the $z$-direction).]{
\includegraphics[width=0.45\textwidth]{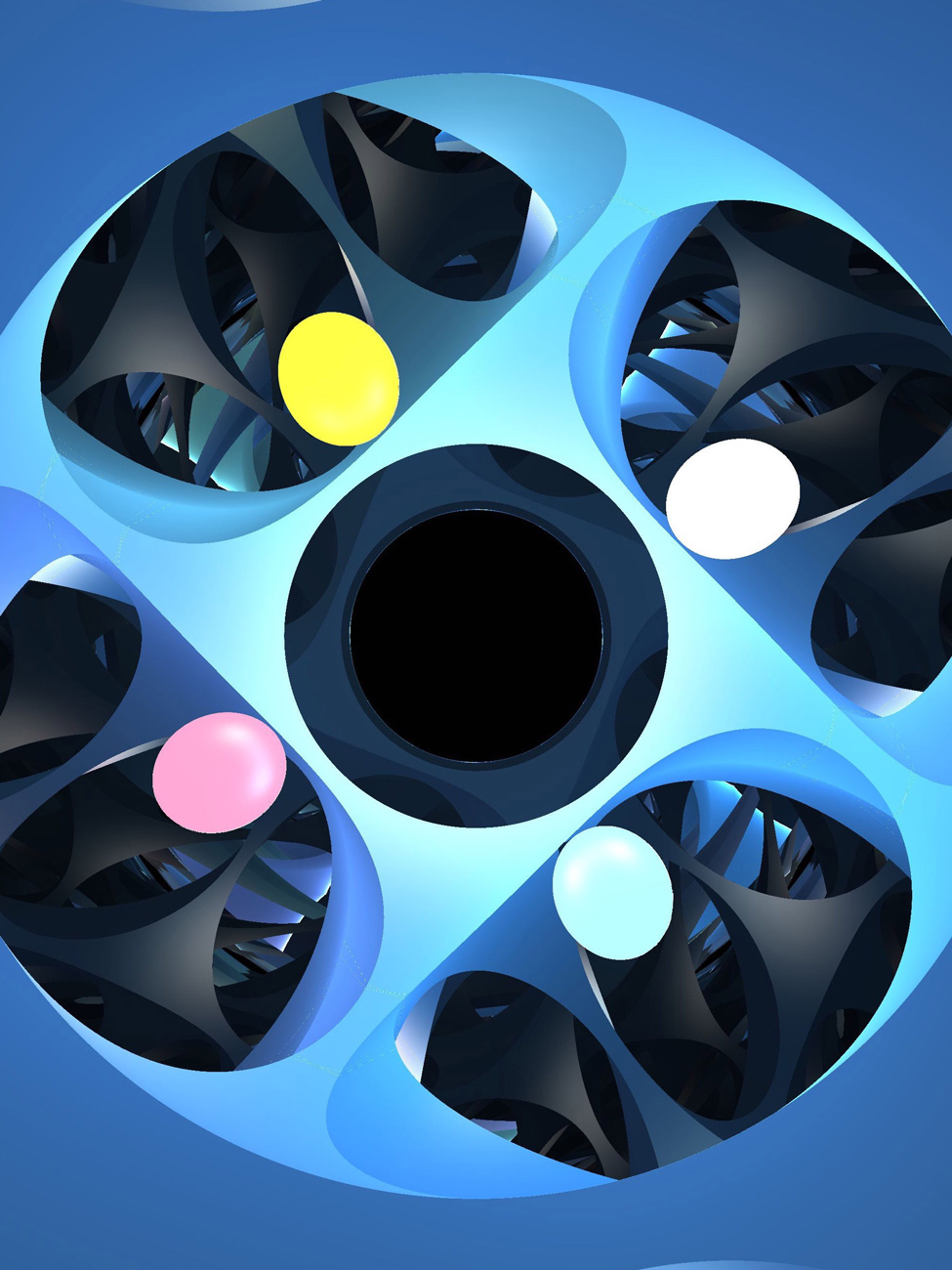}
\label{Fig:Nil_Close}
}
\quad
\subfloat[Far from the lights (seven units behind the viewer, in the $z$-direction).]{
\includegraphics[width=0.45\textwidth]{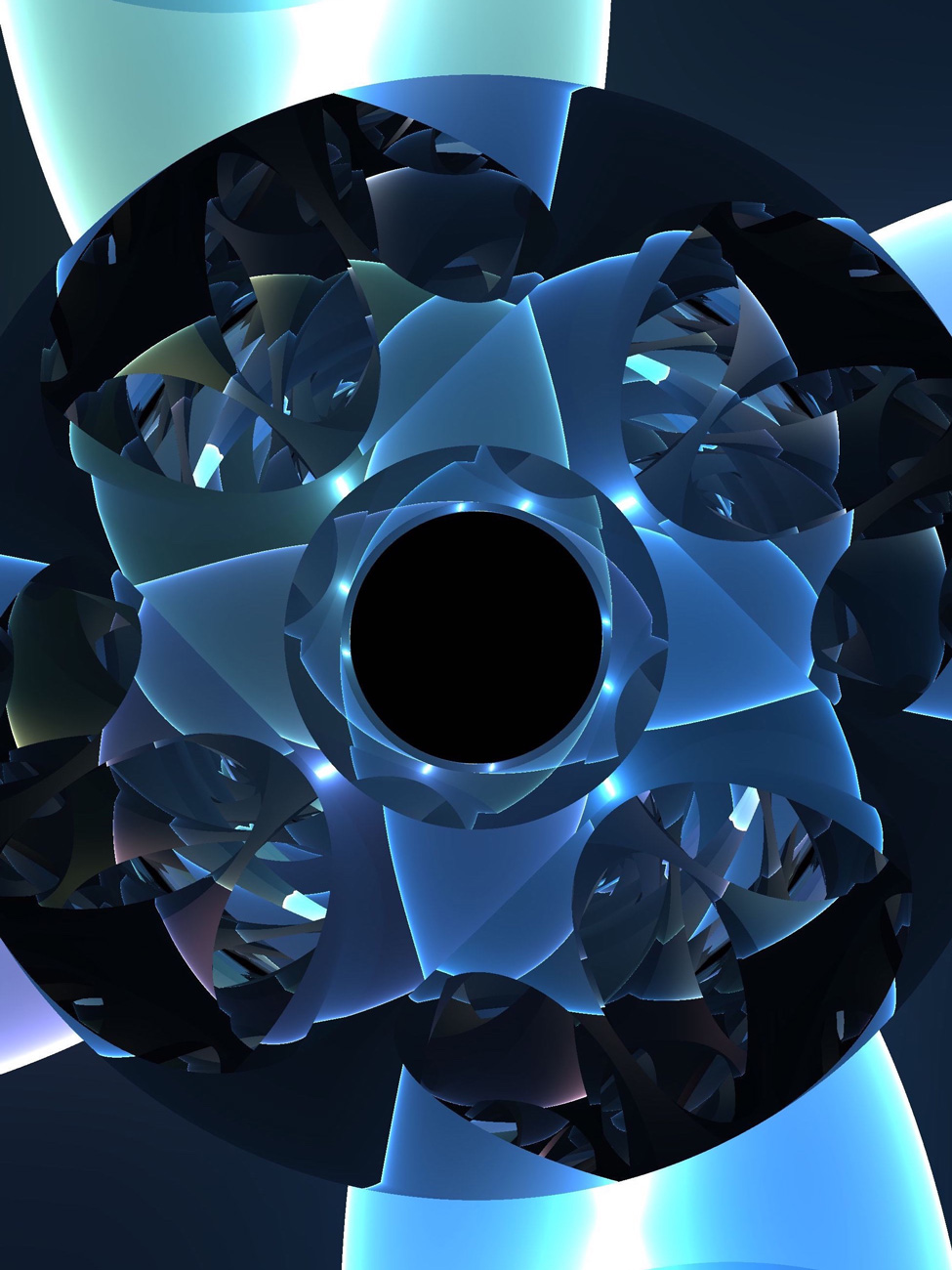}
\label{Fig:Nil_Far}
}
\caption{Four lights (white, yellow, cyan, and magenta) illuminate a tiling of Nil in the style of \reffig{primitive cell E3 - primitive cell}.  Far away, there are curves of high intensity light caused by the convergence of one-parameter families of geodesics. The scene does not cast shadows in these images.}
\label{Fig:NilIntensityExample}
\end{figure}

\begin{figure}[htbp]
\centering
	\includegraphics[width=0.97\textwidth]{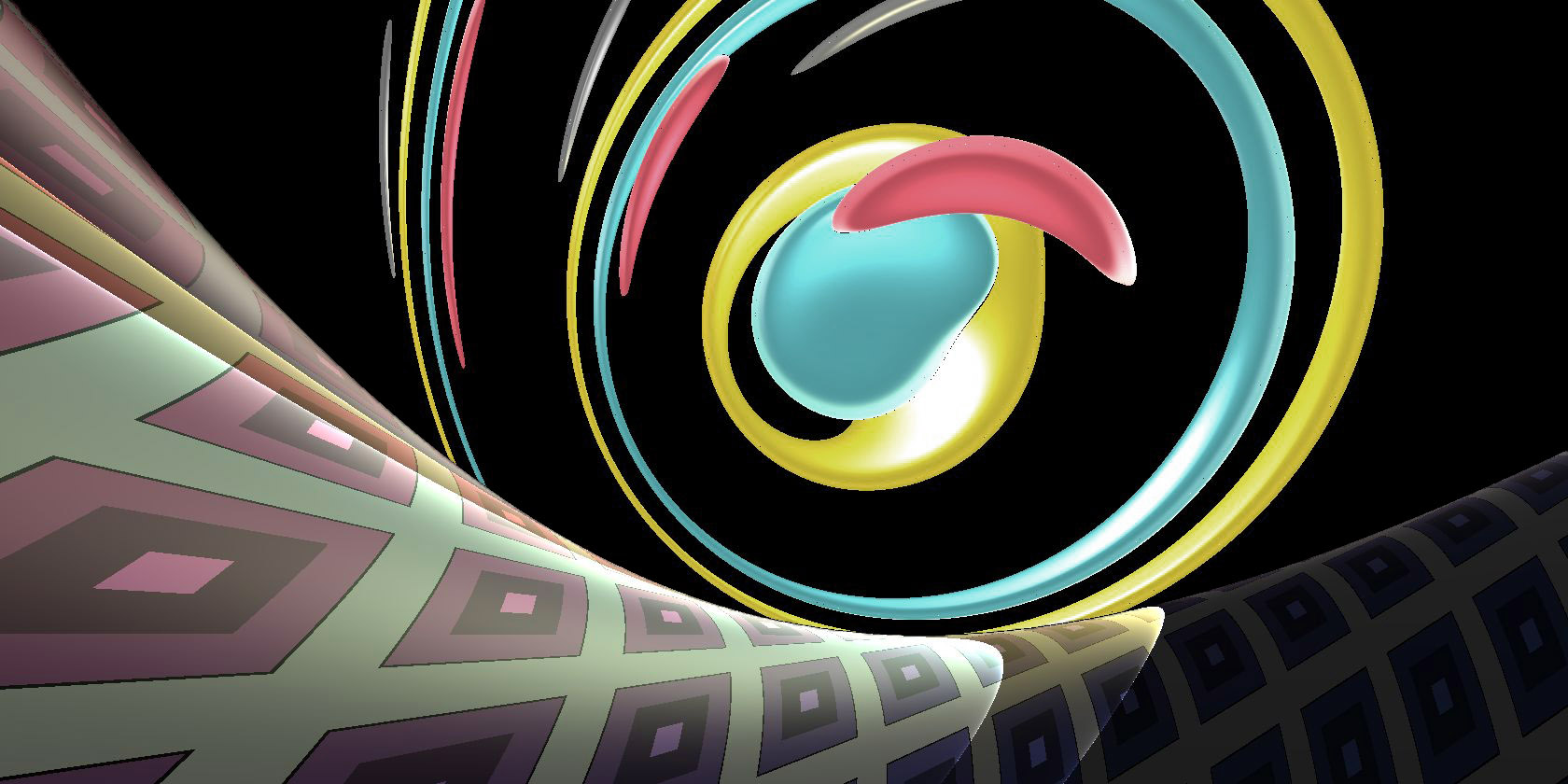}
\caption{Sunset in Nil. When the light intensity blows up far away from the light source, it may illuminate distant parts of an otherwise dark object.  Standing at such a location, the distant light sources appear large in the sky.}
\label{Fig:NilIntensityExampleXZPlane}
\end{figure}

\begin{figure}[htbp]
\centering
\subfloat[$(d,h)=(1,1)$]{
\includegraphics[width=0.3\textwidth]{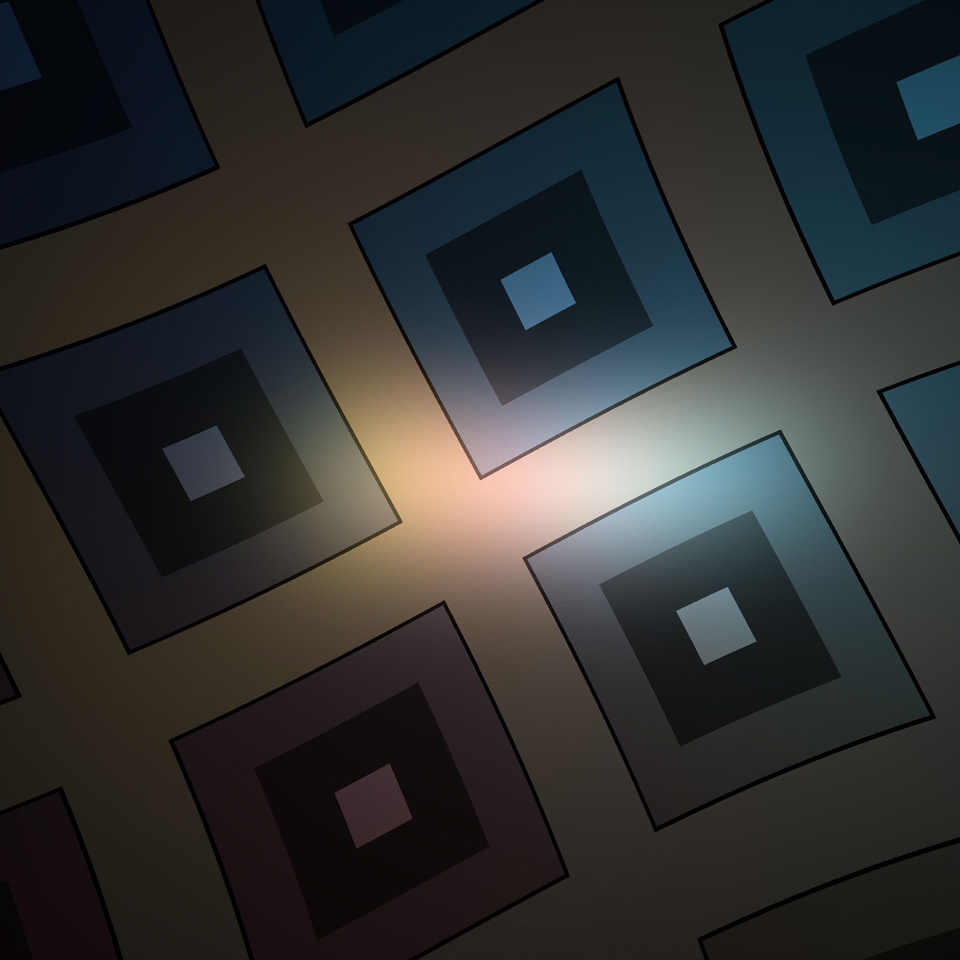}
\label{Fig:NilPlane1}
}
\subfloat[$(d,h)=(8.5,1)$]{
\includegraphics[width=0.3\textwidth]{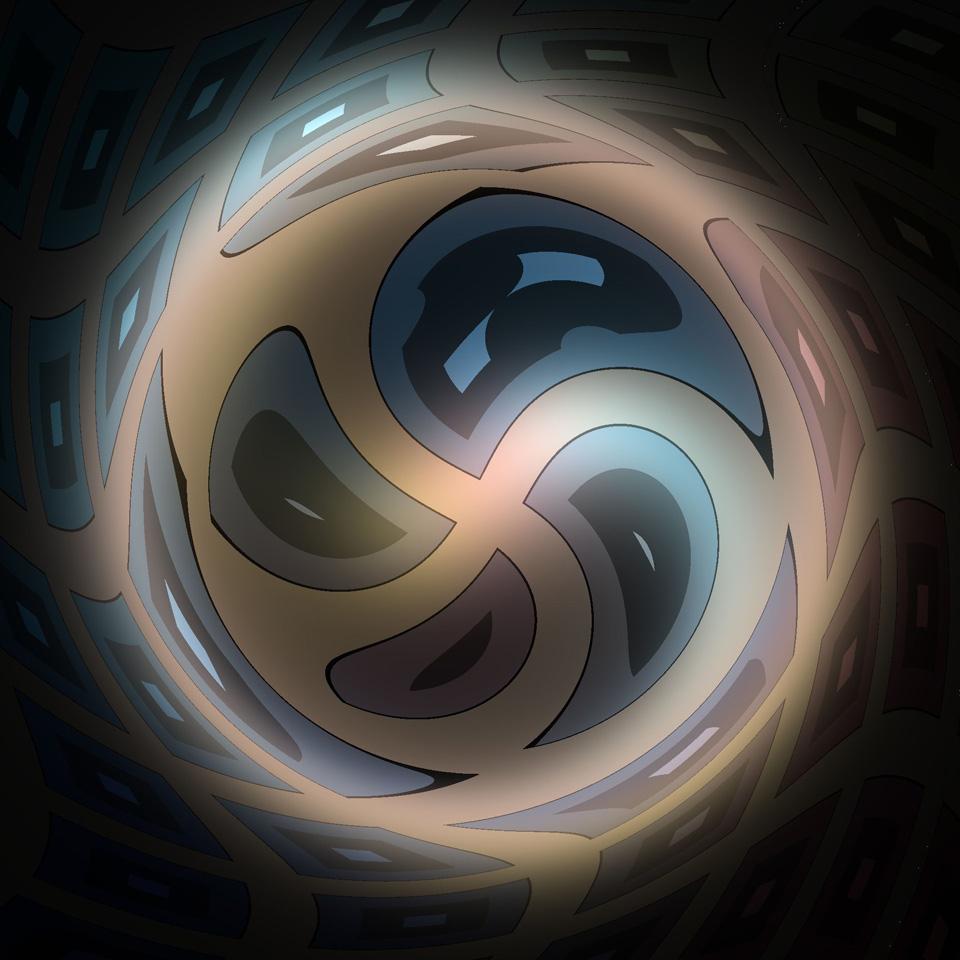}
\label{Fig:NilPlane2}
}
\subfloat[$(d,h)=(45,1)$]{
\includegraphics[width=0.3\textwidth]{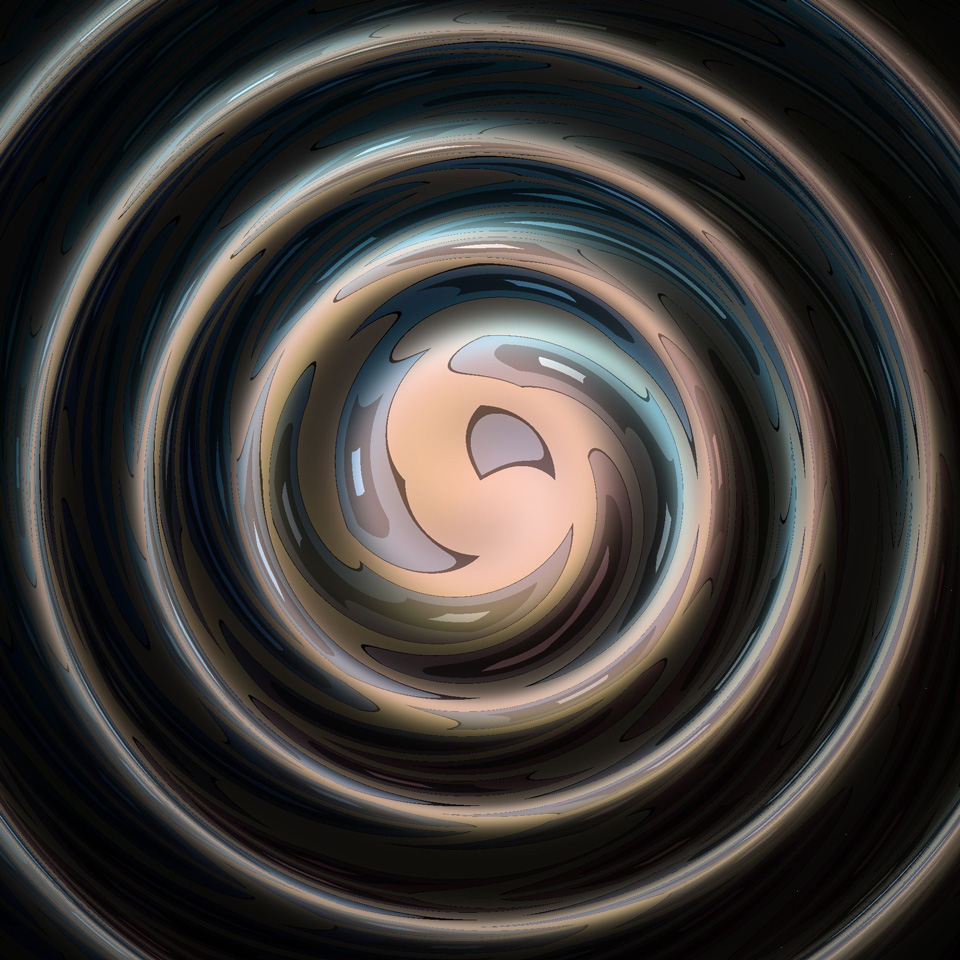}
\label{Fig:NilPlane3}
}\\
\subfloat[$(d,h)=(1,7.7)$]{
\includegraphics[width=0.3\textwidth]{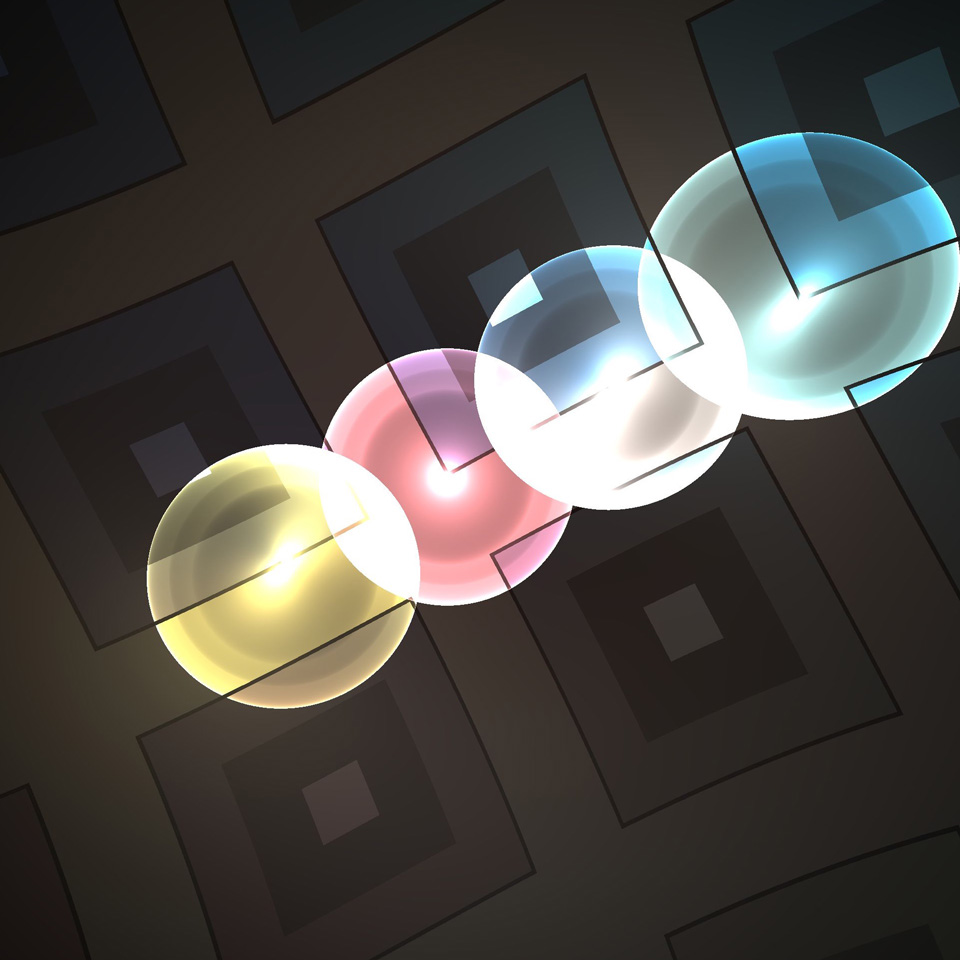}
\label{Fig:NilPlane4}
}
\subfloat[$(d,h)=(8.5,7.7)$]{
\includegraphics[width=0.3\textwidth]{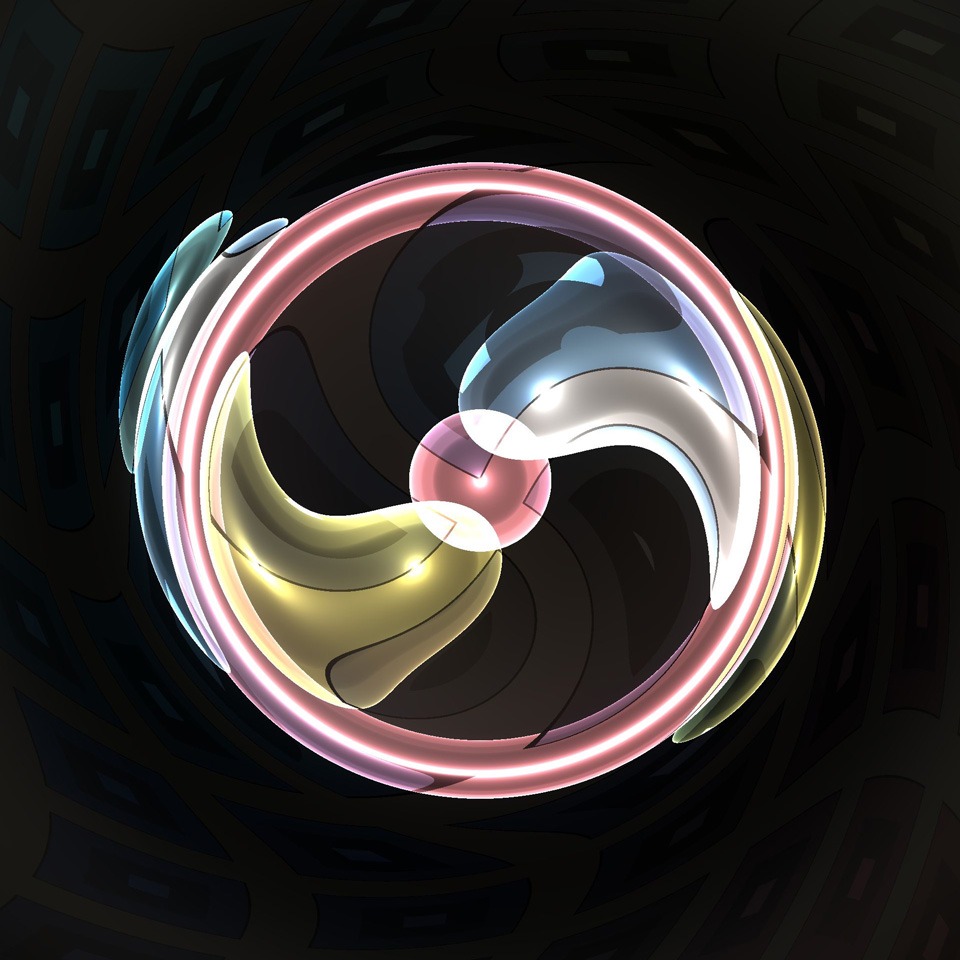}
\label{Fig:NilPlane5}
}
\subfloat[$(d,h)=(45,7.7)$]{
\includegraphics[width=0.3\textwidth]{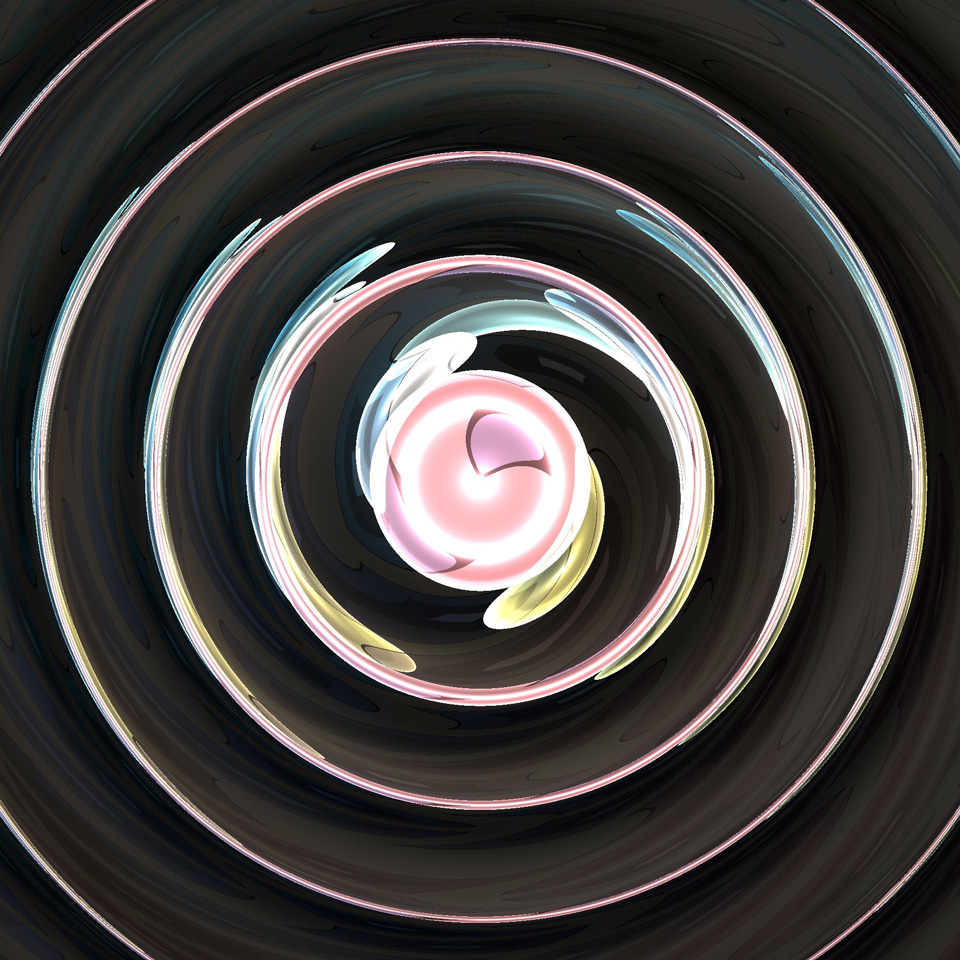}
\label{Fig:NilPlane6}
}\\
\subfloat[$(d,h)=(1,16)$]{
\includegraphics[width=0.3\textwidth]{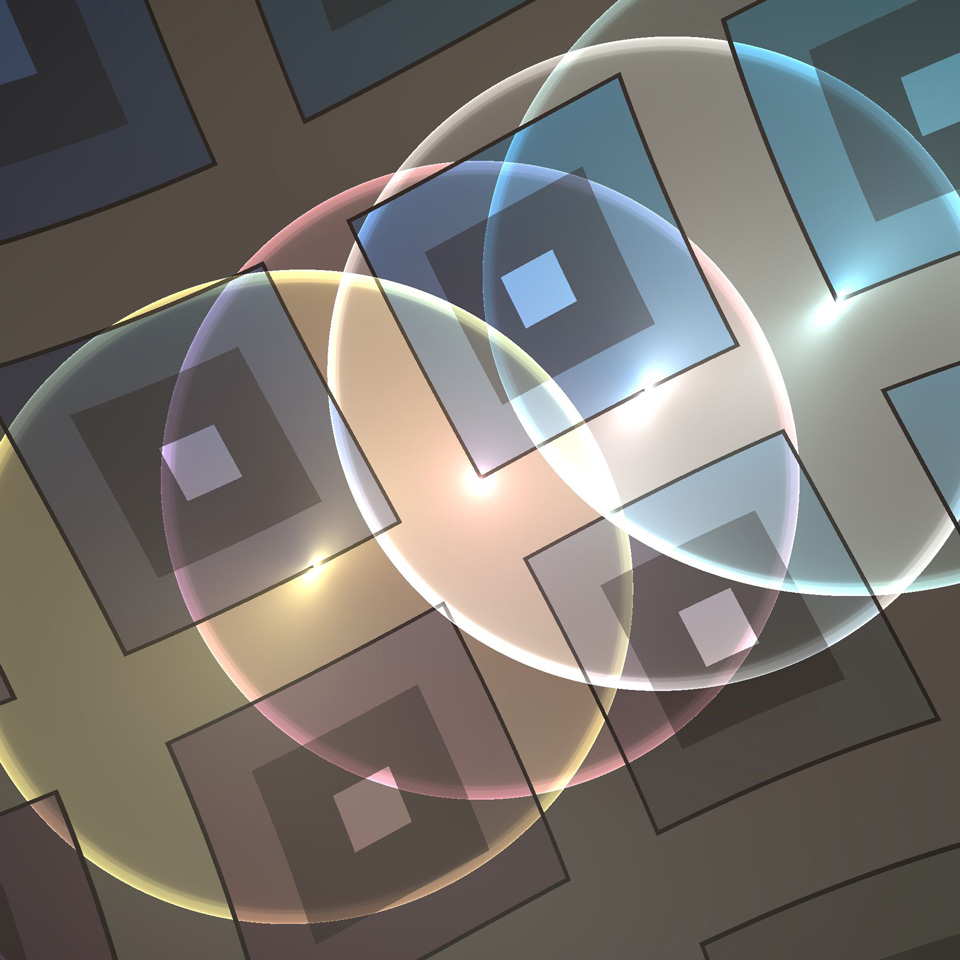}
\label{Fig:NilPlane7}
}
\subfloat[$(d,h)=(8.5,16)$]{
\includegraphics[width=0.3\textwidth]{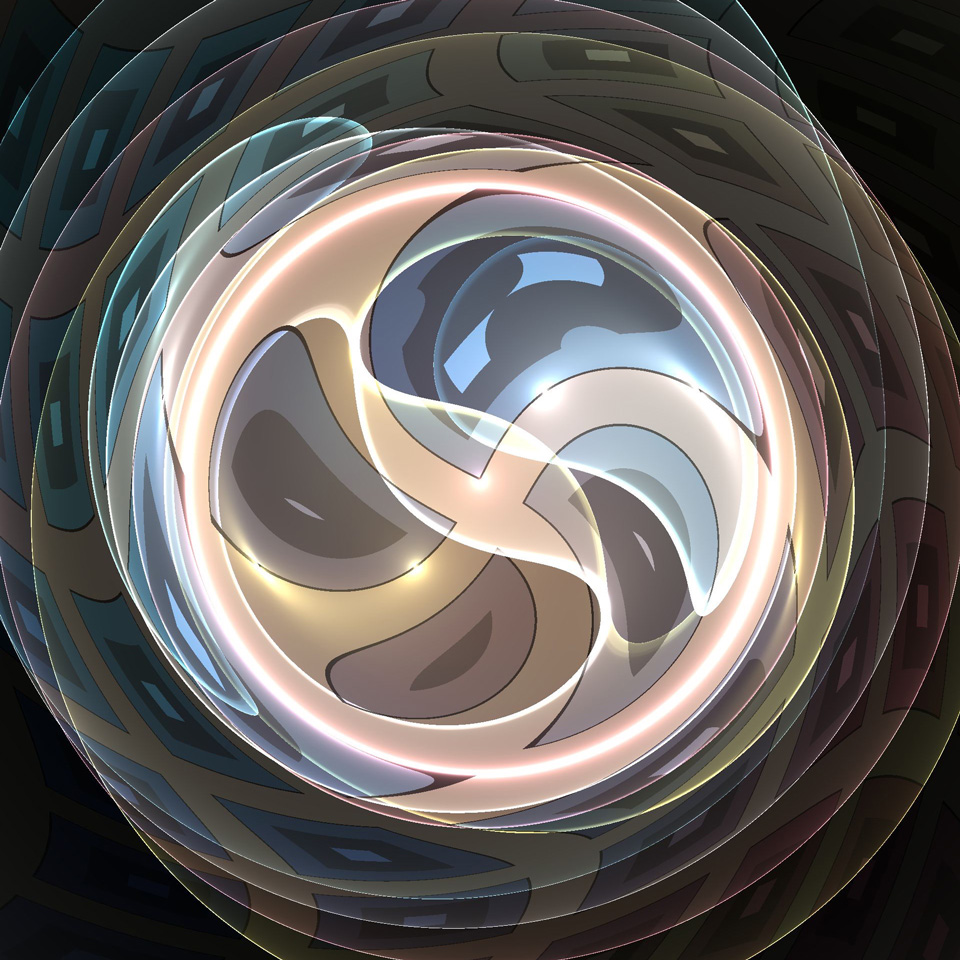}
\label{Fig:Plane8}
}
\subfloat[$(d,h)=(45,16)$]{
\includegraphics[width=0.3\textwidth]{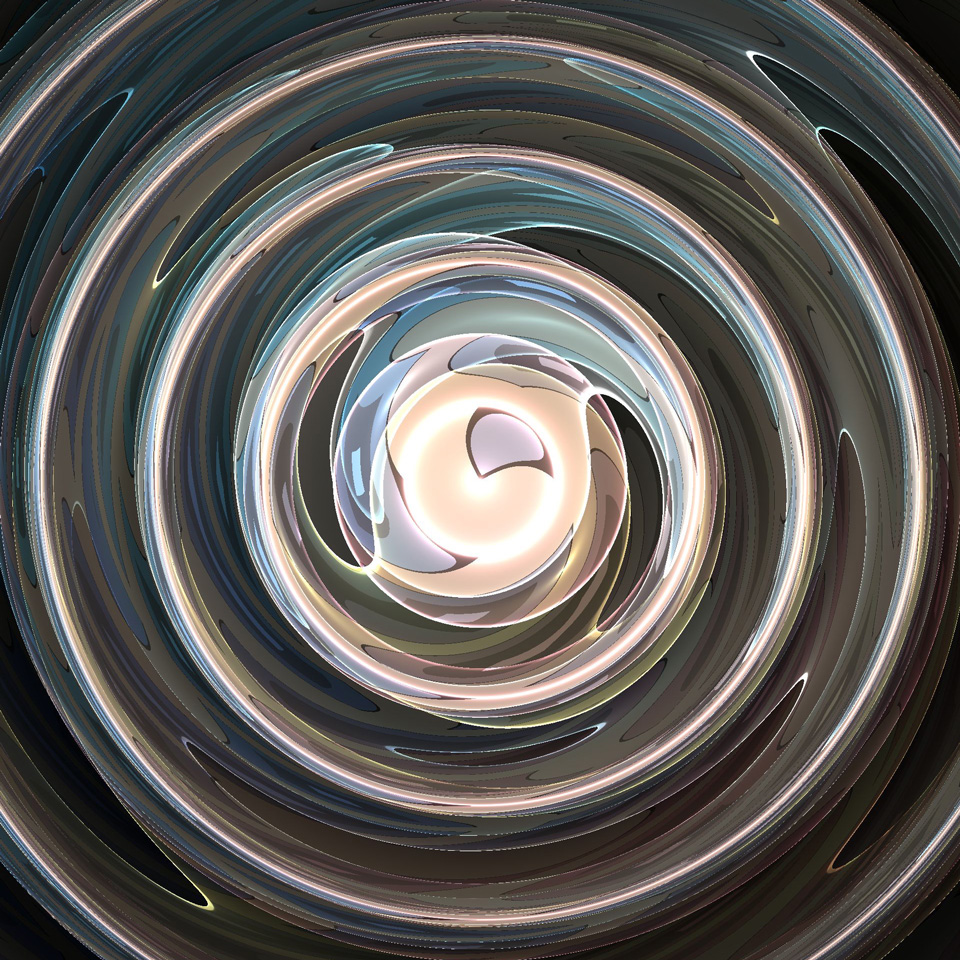}
\label{Fig:NilPlane9}
}\\
\subfloat[$(d,h)=(1,30)$]{
\includegraphics[width=0.3\textwidth]{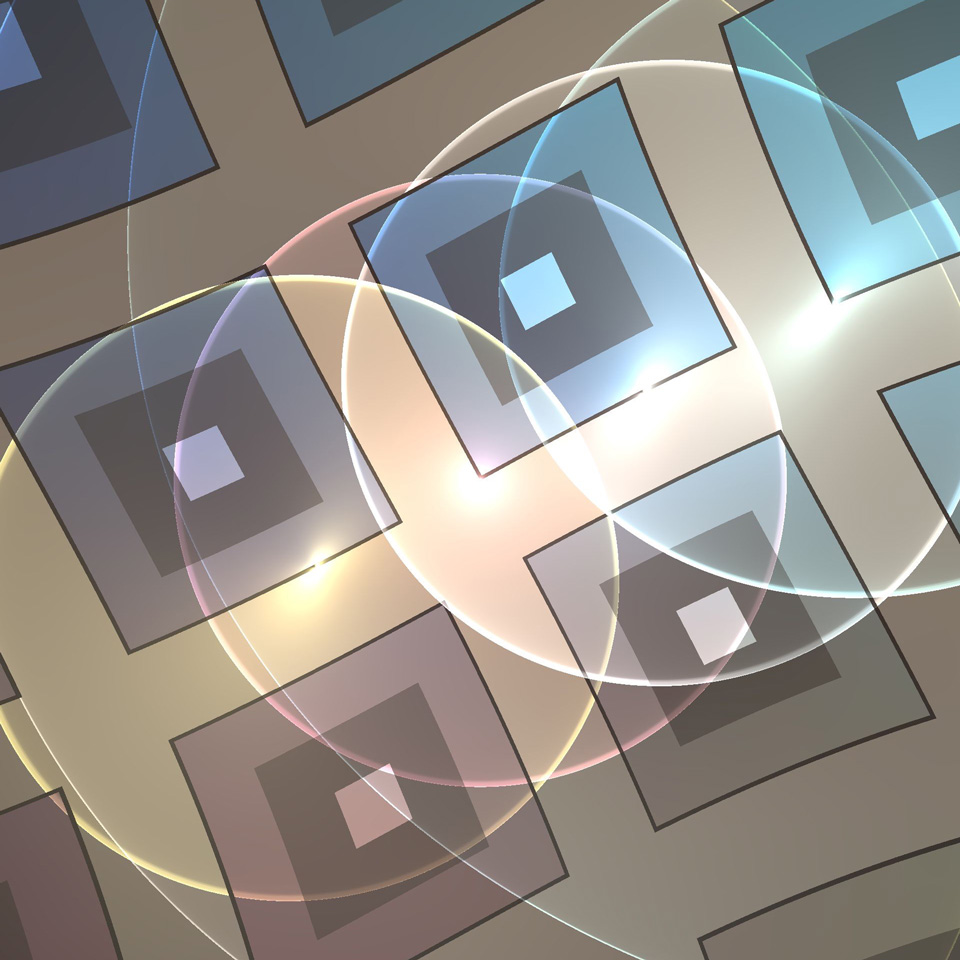}
\label{Fig:NilPlane7}
}
\subfloat[$(d,h)=(8.5,30)$]{
\includegraphics[width=0.3\textwidth]{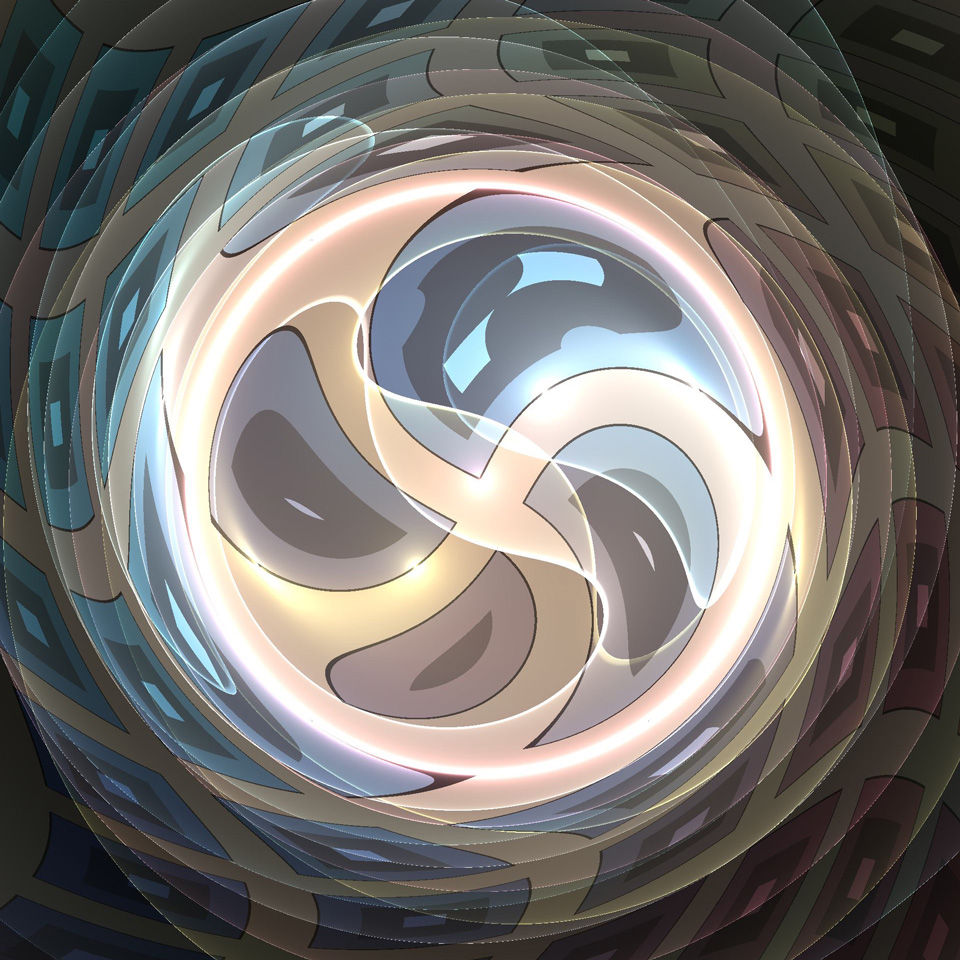}
\label{Fig:Plane8}
}
\subfloat[$(d,h)=(45,30)$]{
\includegraphics[width=0.3\textwidth]{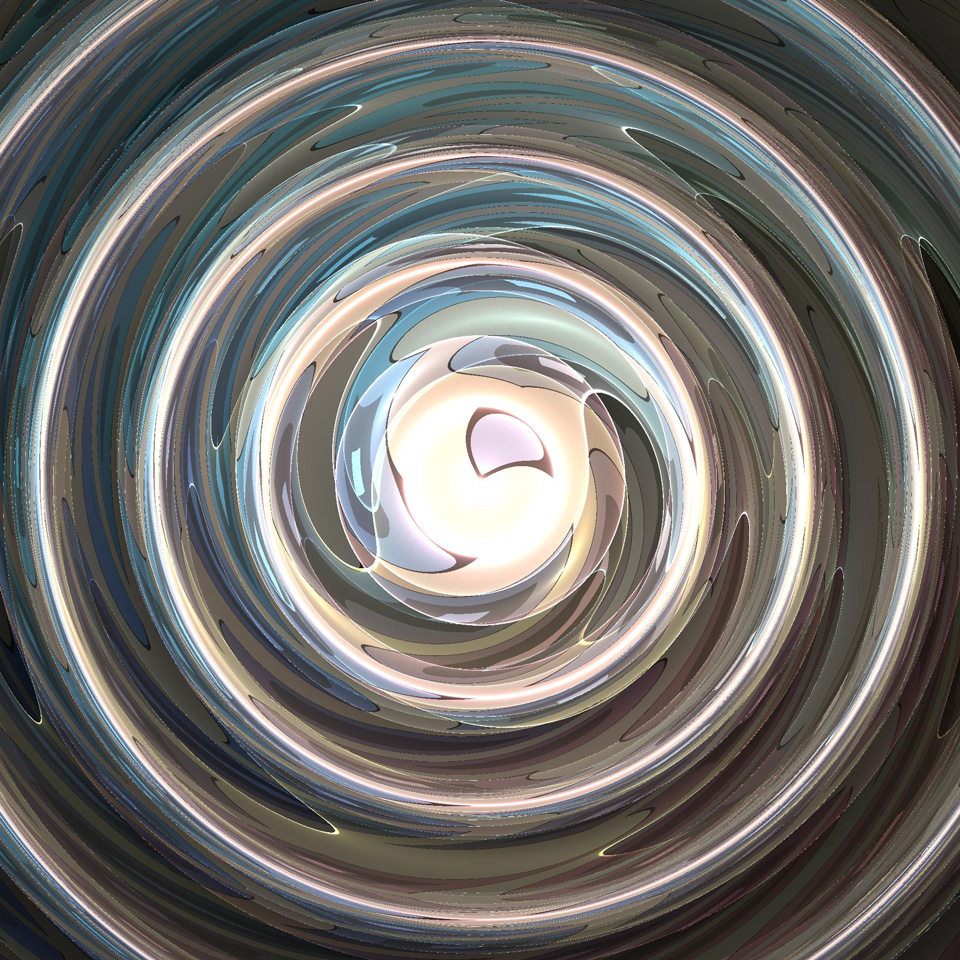}
\label{Fig:NilPlane9}
}\\
\caption{Four lights illuminate the $z\leq 0$ half-space in Nil.
The viewer is above the plane at position $[0,0,d,1]$, and the light sources are positioned at $[k/2,0,h,1]$ for $k\in\{-1,0,1,2\}$.
We use correct lighting with up to three geodesics, and no fog.}
\label{Fig:NilIntensityExampleXYPlane}
\end{figure}

\subsection{Discrete subgroups and fundamental domains.}
\label{Sec:NilDiscreteSubgroupsFundDoms}

The compact Nil manifolds are circle bundles over euclidean two-orbifolds with non-zero Euler class~\cite[Theorem~4.17]{Scott}.
The simplest example of a Nil manifold can also be seen as the suspension $M$ of a regular two-torus $T$ by a Dehn twist.
The fundamental group $\Gamma$ of $M$ is a lattice in $G$. 
We explain here with a concrete example how to construct a fundamental domain $D$ for the action on $\Gamma$ on $X$.  

Let $f$ be the Dehn-twist of the standard two-torus $T = \RR^2 / \ZZ^2$ with action given by the matrix 
\begin{equation*}
	\left[\begin{array}{cc}
		1 & 1 \\
		0 & 1
	\end{array}
 \right].
\end{equation*}
Consider the Dehn-twist torus bundle which is the mapping torus of $T$ with monodromy $f$.
Its fundamental group $\Gamma$ has presentation
\begin{equation*}
	\Gamma = \left< A,B,C \mid [A,B] = C, [A,C] = 1, [B, C] = 1\right>.
\end{equation*}
Here $A$ and $C$ can be interpreted as the standard generators of $\pi_1(T) \cong \ZZ^2$. The conjugation by $B$ is the automorphism of $\ZZ^2$ induced by $f$.
Note that $C$ is central, hence corresponds to the loop along which we are performing our Dehn twist.
The group $\Gamma$ is actually generated by $A$ and $B$ only.
Nevertheless it is more convenient to keep three generators as they represent translations in three independent directions.
The group $\Gamma$ can be identified with the discrete Heisenberg group, that is the set of points with integer coordinates in the Heisenberg model of Nil.
(Recall that $X$ is the rotation-invariant model of Nil.
Hence the group $\Gamma$ is not the set of integer points in $X$.
This set is actually even not a subgroup.)
Concretely, $A$, $B$, and $C$ are the elements of Nil whose coordinates in $X$ are 
\begin{equation*}
	A = [1,0,0,1], \quad B = [0,1,0,1], \quad \text{and} \quad C = [0,0,1,1].
\end{equation*}
Observe that via the projection $\pi \colon X \to \EE^2$, every element of $\Gamma$ induces an isometry of $\EE^2$: $A$ and $B$ correspond to translations along the $x$- and $y$-axis respectively, while $C$ acts trivially on $\EE^2$.
It follows that the ``cube''
\begin{equation*}
	D = \left[ -1/2, 1/2 \right]^3 \times \{1 \}
\end{equation*}
is a fundamental domain for the action of $\Gamma$ on $X$. Note that $A, B$, and $C$ do not directly pair the square sides of the ``cube''. See \refrem{Cellular}.
Our rotation-invariant model $X$ for Nil is also a projective model.
The fundamental domain $D$ can be seen as the intersection of a collection of half-spaces $H_x^\pm$, $H_y^\pm$, $H_z^\pm$ as described in \refsec{TeleportingProjective}.
Here
\begin{equation*}
	H_x^- = \left\{ x \geq -1/2\right\} \quad \text{and} \quad H_x^+ = \left\{ x \leq 1/2\right\}.
\end{equation*}
while $H_y^\pm$ and $H_z^\pm$ are defined in the same way.
The teleporting algorithm has two main steps.
Let $p = [x,y,z,1]$ be a point in $X$.
\begin{enumerate}
	\item If $p$ does not belong to $H_x^-$ (respectively $H_x^+$, $H_y^-$ $H_y^+$), then we move it by $A$ (respectively $A^{-1}$, $B$, $B^{-1}$).
	After finitely many steps, the new point $p$ will lie in 
	\begin{equation*}
		H_x^- \cap H_x^+ \cap H_y^- \cap H_y^+.
	\end{equation*}
	The isometries of $\EE^2$ induced by $A$ and $B$ commute, so we don't pay attention to the order in which we perform these operations.
	\item Once this is done, if $p$ does not belong to $H_z^-$ (respectively $H_z^+$), then we move it by $C$ (respectively $C^{-1}$).
	Note that $C$ does not affect the $xy$-coordinates of $p$. Therefore, after this process, $p$ lies in $D$.
\end{enumerate}

\begin{remark}
\label{Rem:Cellular}
	Note that the collection of isometries $\{ A^{\pm 1}, B^{\pm 1}, C^{\pm 1}\}$ does not provide a face pairing of our fundamental domain $D$ in the sense of \refsec{Teleporting}.
	Consider for example the square sides $F_x^-$ and $F_x^+$ which are the intersections of $D$ with the affine planes $\partial H_x^-$ and $\partial H_x^+$ respectively.
	The generator $A$ is a shear, not an affine translation in $\RR^4$ along the $x$-axis. See \reffig{NilFacePairing_Shear}.
	Thus it does not map $F_x^-$ to $F_x^+$.
	In order to get a proper face pairing, one must subdivide the sides of $D$ and increase the number of generators.
	This is illustrated on \reffig{NilFacePairing_Pairing}.
	We draw the one-skeleton of $D$ and color the sides $F_x^-$ (on the left) and $F_x^+$ (on the right).
	The yellow (respectively blue, red) face in $F_x^-$ is mapped bijectively to the face with the same color in $F_x^+$ via $A$ (respectively $AC$, $AC^{-1}$).
	\begin{figure}[htbp]
	\centering
		\subfloat[The fundamental domain $D$ (yellow) and its image (red) under the shear $A$. The red, green, and blue lines correspond to the $x$-, $y$-, and $z$-axes in our model of Nil.]{
			\includegraphics[width=0.48\textwidth]{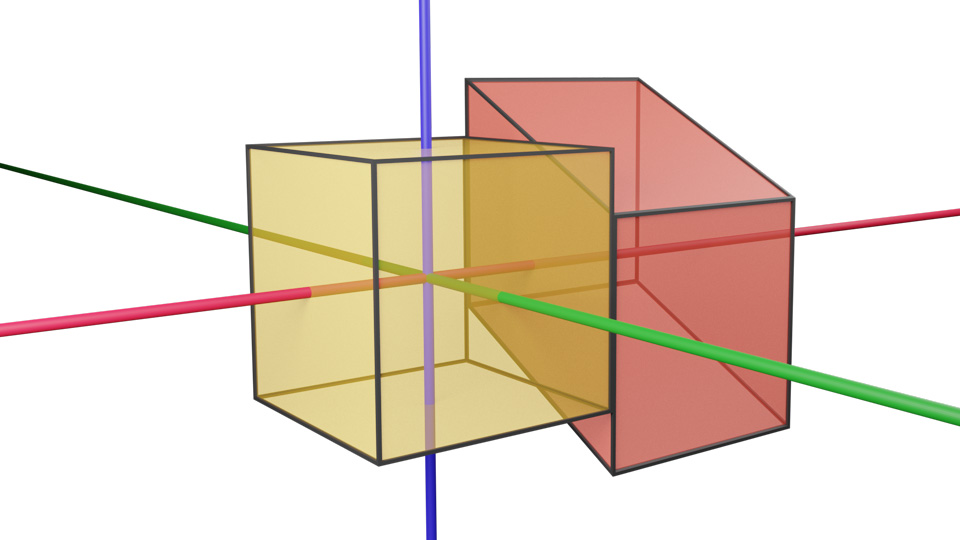}
			\label{Fig:NilFacePairing_Shear}
		}
		\subfloat[The yellow, blue, and red faces are in one-to-one correspondence via $A$, $AC$, and $AC^{-1}$. The decorations indicate the edge identifications induced by $A$ and $C$ only.]{
			\labellist
			\footnotesize\hair 2pt
			\pinlabel $a$ at 250 375
			\pinlabel $b$ at 200 543
			\pinlabel $b$ at 223 175
			\pinlabel $c$ at 104 541 
			\pinlabel $d$ at 89 422
			\pinlabel $e$ at 129 310
			\pinlabel $e$ at 100 620
			\pinlabel $f$ at 226 274
			\pinlabel $g$ at 23 570
			\pinlabel $h$ at 312 214
			
			\pinlabel $a$ at 550 411
			\pinlabel $b$ at 465 569
			\pinlabel $c$ at 354 626 
			\pinlabel $c$ at 338 360 
			\pinlabel $d$ at 323 531
			\pinlabel $e$ at 360 439
			\pinlabel $f$ at 458 363
			\pinlabel $f$ at 493 660
			\pinlabel $g$ at 320 421
			\pinlabel $h$ at 572 584		
			\endlabellist
			\includegraphics[width=0.47\textwidth]{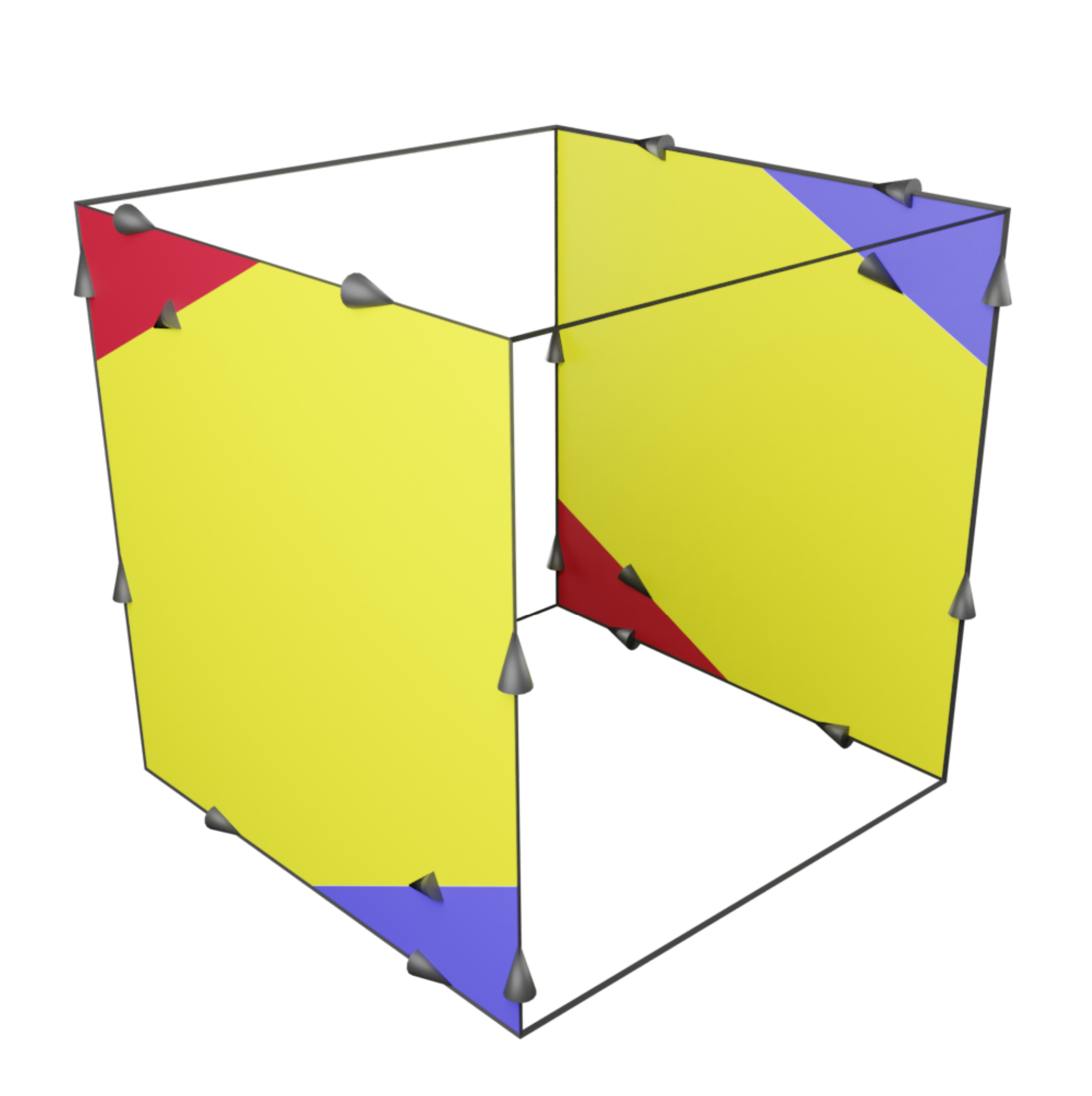}
			\label{Fig:NilFacePairing_Pairing}
		}

	\caption{Face pairing in Nil.}
	\label{Fig:NilFacePairing}
	\end{figure}
	A similar subdivision can be found for the sides $F_y^- = D \cap \partial H_y^-$ and $F_y^+ = D \cap \partial H_y^+$. (No subdivision of the horizontal faces of $D$ is needed as $C$ is an affine translation along the $z$-axis.)
	As explained in \refrem{ProjectiveNoFacePairings} and illustrated by the above algorithm, when using a fundamental domain defined as the intersection of projective half-spaces, we do not need a proper face pairing to implement teleportation.
\end{remark}

In \reffig{NilExamples}, we show the in-space view for various scenes in Nil geometry. \reffig{NilTorusBundle} shows the 
Dehn-twist torus bundle with monodromy $f$ as in \refsec{NilDiscreteSubgroupsFundDoms}, with a fundamental domain drawn in the style of \reffig{primitive cell E3 - primitive cell}. 
\reffig{NilBalls} shows a lattice of spheres, textured as the Earth, lit by a corresponding lattice of light sources.
\reffig{NilFibers} shows solid cylinders (which we implement as vertical objects) around fibers of Nil. Compare with \reffig{SeifFiberProduct}.

\begin{figure}[htbp]
\centering
\subfloat[The Dehn-twist torus bundle with monodromy $f$.]{
\includegraphics[width=0.90\textwidth]{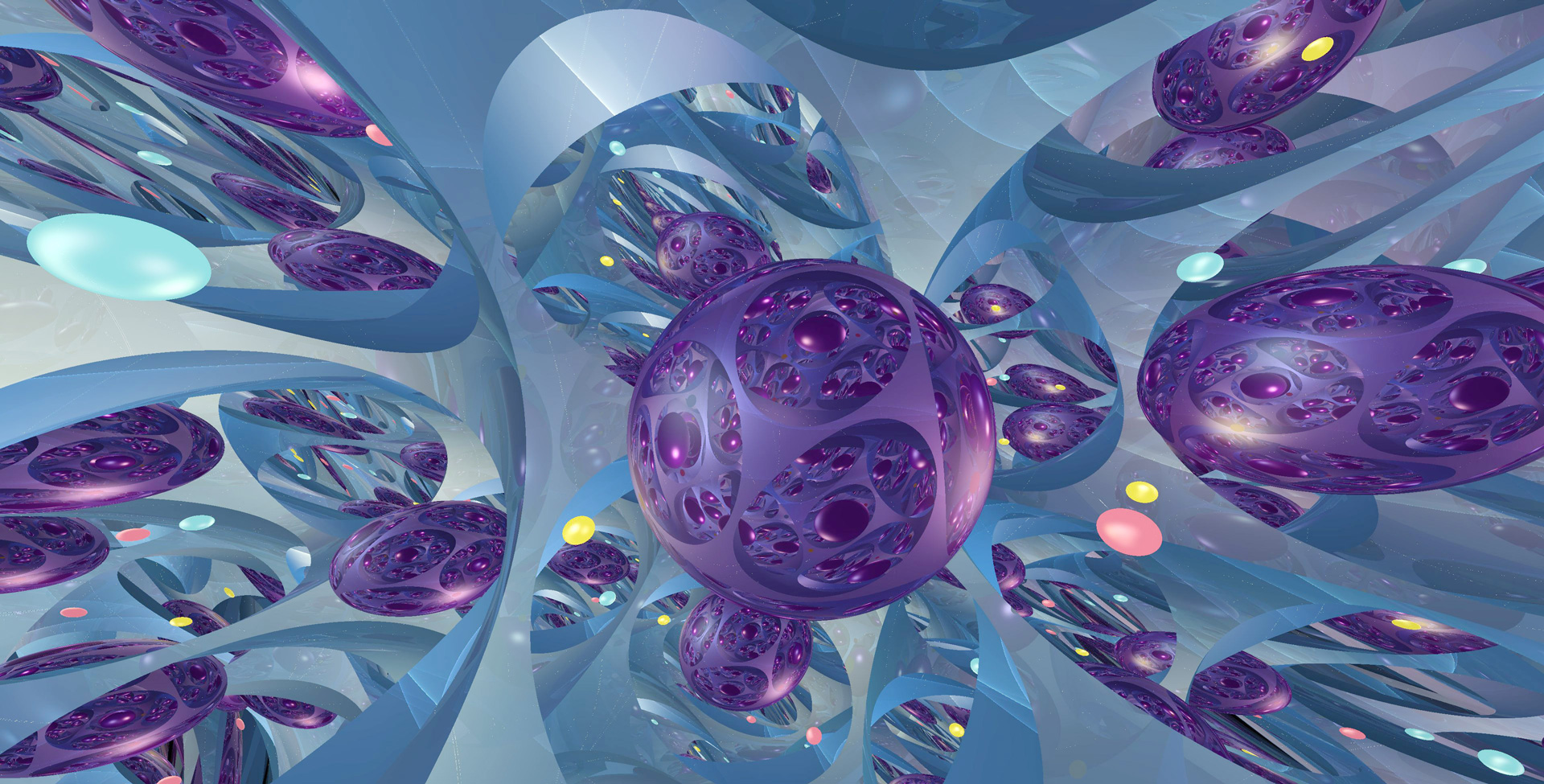}
\label{Fig:NilTorusBundle}
}\
\subfloat[Lattice of balls.]{
\includegraphics[width=0.90\textwidth]{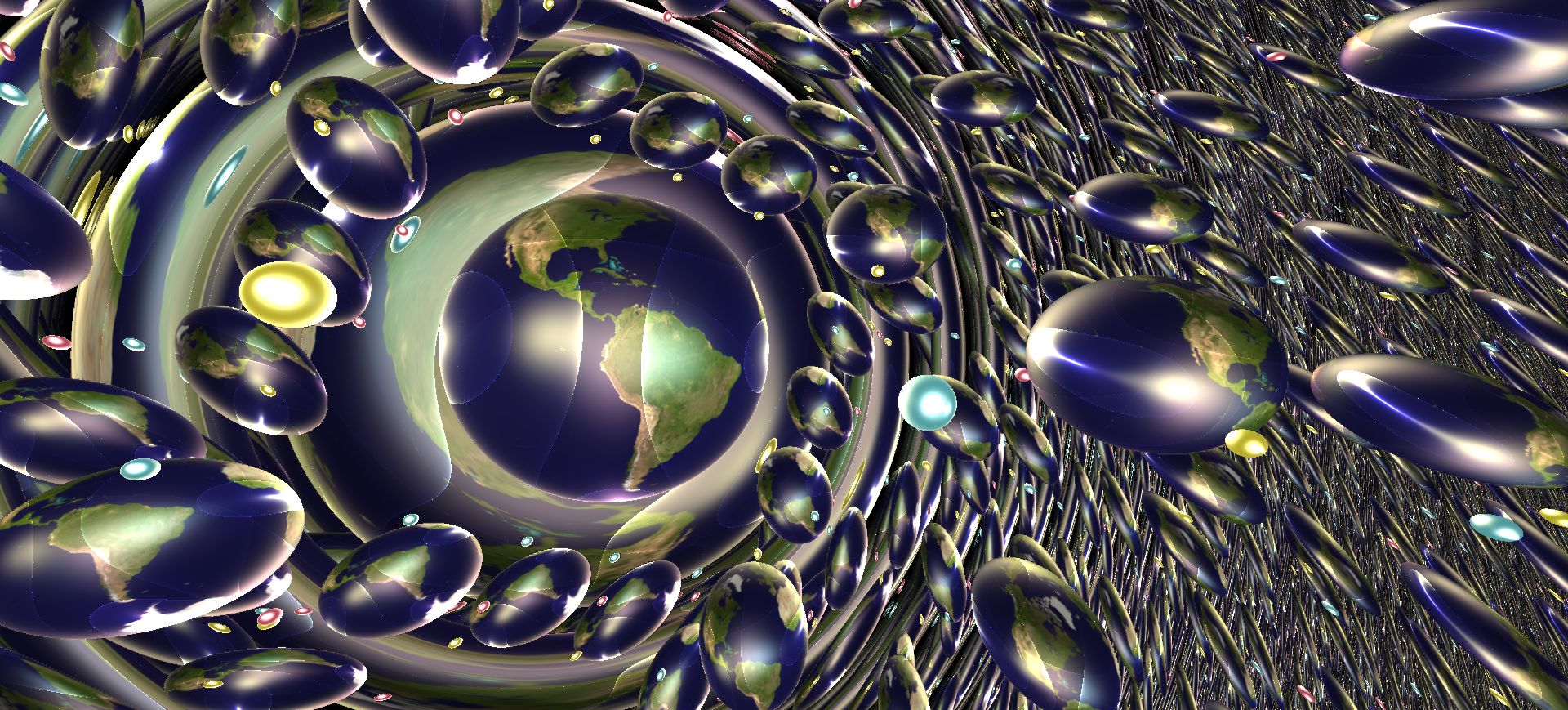}
\label{Fig:NilBalls}
}\
\subfloat[Fibers.]{
\includegraphics[width=0.90\textwidth]{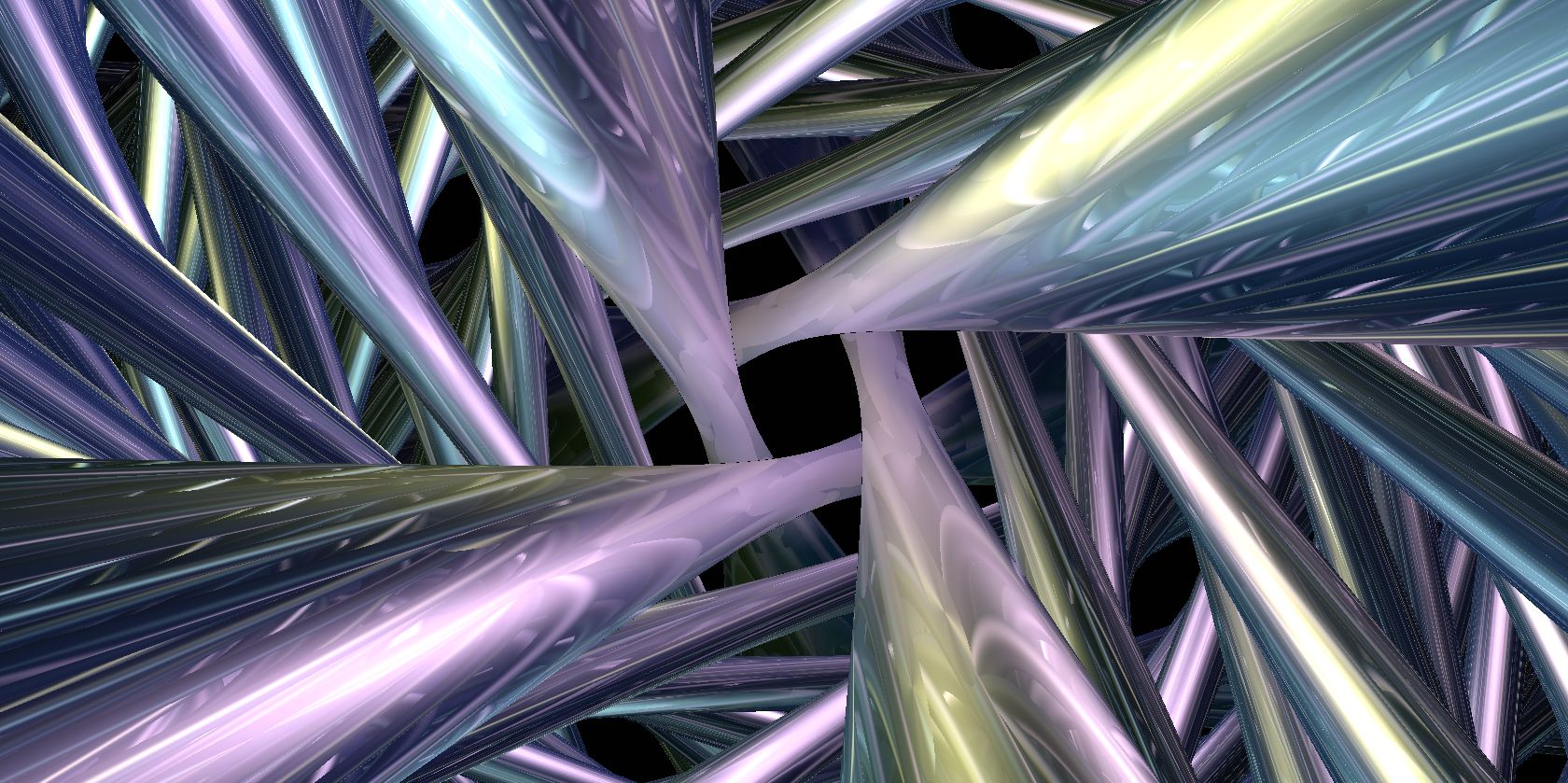}
\label{Fig:NilFibers}
}\
\caption{Nil Geometry.}
\label{Fig:NilExamples}
\end{figure}

\section{$\SLR$}
\label{Sec:SLR}
\subsection{Model}
\label{Sec:model sl2}

In order to build our model space, we view ${\rm SL}(2,\RR)$ as a circle bundle over $\HH^2$.
This construction is analogous to the Hopf fibration ${\rm SU}(2) \to S^2$.
In the spherical case, conjugation by  ${\rm SU}(2)$ (thought of as the unit quaternions) defines an action by rotations on $\RR^3$, seen as the Lie algebra of ${\rm SU}(2)$.
More precisely, this action preserves the Killing form, which in this case has signature $(0, 3)$.
In particular, after fixing a base point in the unit sphere of $\RR^3$, the orbit map defines a projection from ${\rm SU}(2)$ onto $S^2$, whose fibers are circles.

We follow the same strategy for  ${\rm SL}(2,\RR)$.
The action by conjugation of ${\rm SL}(2,\RR)$ on its Lie algebra $\mathfrak{sl}_2$ preserves the Killing form which here has signature (2,1).
The level set of this form is a model $\HH^2$.
As above, the orbit map defines a projection from ${\rm SL}(2,\RR)$ onto $\HH^2$, whose fibers are also circles.
Topologically this realizes ${\rm SL}(2,\RR)$ as a trivial bundle homeomorphic to $\HH^2 \times S^1$.
Its universal cover $\SLR$ is homeomorphic to $\HH^2 \times \RR$.
This is the description we use for our model.

We now give detailed computations.
We identify the space $\mathcal M_{2,2}(\RR)$ of $2\!\times\!2$-matrices with $\RR^4$ via the basis $E = (E_0, E_1,E_2,E_3)$ given by 
\begin{equation*}
	\begin{split}
		E_0 = \left[\begin{array}{cc}
			1 & 0 \\
			0 & 1
		\end{array}\right],\quad
		E_1 = \left[\begin{array}{cc}
			0 & 1 \\
			-1 & 0
		\end{array}\right],\\
		E_2 = \left[\begin{array}{cc}
			0 & 1 \\
			1 & 0
		\end{array}\right],\quad
		E_3 = \left[\begin{array}{cc}
			1 & 0 \\
			0 & -1
		\end{array}\right].
	\end{split}
\end{equation*}
The quadratic form $k = -\det$ is diagonal in this basis: given any point $p = [p_0, p_1, p_2, p_3]$ in $\RR^4$, we have 
\begin{equation*}
	k(p) = -p_0^2 -p_1^2 + p_2^2 + p_3^2.
\end{equation*}
In particular ${\rm GL}(2,\RR)$ and ${\rm SL}(2,\RR)$ correspond to the subsets
\begin{equation*}
	\mathcal Q_0 = \{ p \in \RR^4 \mid k(p) \neq 0\}
	\quad\text{and}\quad
	\mathcal Q = \{ p \in \RR^4 \mid k(p) = -1\}
\end{equation*}
of $\RR^4$.
We choose for the origin the point $o = [1,0,0,0]$. This corresponds to the identity.
The group law can be rewritten as follows: given a point $p = [p_0,p_1,p_2,p_3]$ in $\mathcal Q_0$, the corresponding element of ${\rm GL}(2,\RR)$ acts on $\mathcal Q_0$ as the matrix
\begin{equation*}
	\left[\begin{array}{rrrr}
		p_0 & -p_1 & p_2 & p_3 \\
		p_1 & p_0 & p_3 & -p_2 \\
		p_2 & p_3 & p_0 & -p_1 \\
		p_3 & -p_2 & p_1 & p_0
	\end{array}\right].
\end{equation*}
We endow $\mathcal Q_0$ with an ${\rm GL}(2,\RR)$-invariant riemannian metric:
\begin{align*}
	ds^2  =\ & \frac{4\beta_0(p)}{k(p)^2} \left(dp_0^2 + dp_1^2 + dp_2^2 + dp_3^2\right) \\
	& - \frac{4\beta_1(p)}{k(p)^2} \left(dp_0dp_2 - dp_1dp_3\right) - \frac{4\beta_2(p)}{k(p)^2} \left(dp_0dp_3 + dp_1dp_2\right),
\end{align*}
where
\begin{equation*}
	\left\{ \begin{split}
		\beta_0(p) & = p_0^2 + p_1^2 + p_2^2 + p_3^3 \\
		\beta_1(p) & = p_0p_2 - p_1p_3\\
		\beta_2(p) & = p_0p_3 + p_1p_2.
	\end{split} \right.
\end{equation*}
It turns out that the level sets of $k$ are totally geodesic subspaces of $\mathcal Q_0$.
The stabilizer $K < G$ of the origin $o \in \mathcal Q$ is generated by: 
\begin{itemize}
	\item \emph{rotations} $R_\alpha$ of angle $\alpha$, with matrix
	\begin{equation*}
		\left[\begin{array}{cccc}
			1 & 0 & 0 & 0 \\
			0 & 1 & 0 & 0 \\
			0 & 0 & \cos \alpha & -\sin \alpha \\
			0 & 0 & \sin \alpha & \cos \alpha
		\end{array}\right],\textrm{ and}
	\end{equation*}
	\item the \emph{flip} $F$, with matrix
	\begin{equation*}
		\left[\begin{array}{cccc}
			1 & 0 & 0 & 0 \\
			0 & -1 & 0 & 0 \\
			0 & 0 & 0 & 1 \\
			0 & 0 & 1 & 0
		\end{array}\right].
	\end{equation*}
\end{itemize}
Observe that $F \circ R_\alpha \circ F^{-1} = R_{-\alpha}$, so $K$ is isomorphic to ${\rm O}(2)$.

\medskip
The space we are really interested in is not ${\rm SL}(2, \RR)$, but its \emph{universal cover}.
Topologically, the latter is a line bundle over $\HH^2$.
The identification goes as follows.
Consider the adjoint representation of ${\rm SL}(2,\RR)$ on its Lie algebra
\begin{equation*}
	\mathfrak{sl}_2 = \{M \in \mathcal M_{2,2}(\RR) \mid {\rm Tr}(M) = 0\}.
\end{equation*}
This action preserves the Killing quadratic form 
\begin{equation*}
	\mathfrak K(M) = \frac 12 {\rm Tr}(M^2)
\end{equation*}
which has signature $(2,1)$.
Hence it induces an action by isometries on the hyperboloid model of $\HH^2$.
In our context, the Lie algebra $\mathfrak{sl}_2$ is isomorphic to the linear space $T_o \mathcal Q \subset \RR^4$ spanned by
\begin{equation*}
	e_x = -E_3,\quad e_y = E_2,\quad \text{and}\quad e_z = E_1. 
\end{equation*}
The Killing form is diagonal in this basis: if $M = xe_x + ye_y + ze_z$, then $\mathfrak K(M) = x^2 + y^2 - z^2$.
So we choose for the hyperboloid model of $\HH^2$ the set $\mathcal H$ as defined in \refsec{S2H2Models}: 
\begin{equation*}
	\mathcal H = \left\{ [x, y, z] \in T_o\mathcal Q \mid x^2 + y^2 - z^2 = -1 \ \text{and} \ z > 0 \right\}.
\end{equation*}
We define a $1$-Lipschitz, ${\rm SL}(2, \RR)$-equivariant projection $\pi \colon {\rm SL}(2, \RR) \to \HH^2$ by sending the origin $o$ to the point $[0,0,1] \in \mathcal H$. (The scaling factor four in the metric on $\mathcal Q_0$ was precisely chosen so that the best Lipschitz constant for $\pi$ is one.)
The fiber of the point $q = [x, y, z]$ is a circle parametrized as follows.
\begin{equation*}
	\pi^{-1}(q) = \left\{ S_w \zeta(q) \mid w \in [0,4\pi) \right\},
\end{equation*}
where $\zeta \colon \mathcal H \to \mathcal Q$ is the section given by 
\begin{equation*}
	\zeta(q) = \left[ \sqrt{\frac{z+1}2}, 0, \frac{x}{\sqrt{2(z+1)}}, \frac{y}{\sqrt{2(z+1)}}\right]
\end{equation*}
and $S_w$ is the transformation of $\mathcal Q$ with matrix
\begin{equation*}
	\renewcommand{\arraystretch}{1.2}
	\left[\begin{array}{rrrr}
		\cos\left(\frac w2\right) & -\sin\left(\frac w2\right) & 0 & 0 \\
		\sin\left(\frac w2\right) & \cos\left(\frac w2\right) & 0 & 0 \\
		0 & 0 & \cos\left(\frac w2\right) & \sin\left(\frac w2\right) \\
		0 & 0 & -\sin\left(\frac w2\right) & \cos\left(\frac w2\right) 
	\end{array}\right].
	\renewcommand{\arraystretch}{1}
\end{equation*}
Note that $S_w$ translates points along the fiber by an angle $w/2$, not $w$. 
This accounts for the fact that the map ${\rm SL}(2, \RR) \to {\rm SO}(2,1)$ is a two-sheeted cover.
Finally one observes that the projection 
\begin{equation*}	
	\begin{array}{lccc}
		\lambda \colon & \mathcal H \times \RR & \to & \mathcal Q \\
		& (q, w) & \mapsto & S_w \zeta(q)
	\end{array}
\end{equation*}
is a covering map, providing an identification between $\SLR$ and our model space $X = \mathcal H \times \RR$.
We call the factor $\mathcal H$ (respectively $\RR$) the \emph{horizontal} (respectively \emph{vertical} or \emph{fiber}) component of $X$.

\begin{remark}
	In practice, we adopt a slightly different point of view. We store a point $p \in X$ as a pair $(g, w) \in {\rm SL}(2,\RR) \times \RR$ where $g$ is the image of $p$ by the covering map $\lambda$ and $w$ is the fiber component of $p$.
	This representation is redundant, but allows us to go quickly back and forth between ${\rm SL}(2,\RR)$ and its universal cover.
\end{remark}

We choose as a base point of $X$ the point $\cover o = [0,0,1,0]$ which is a pre-image of $o$.
The covering map $\lambda$ induces an isomorphism between the stabilizer of $\cover o$ and the stabilizer of $o$, that is ${\rm O}(2)$.
\begin{itemize}
	\item We choose a lift $\cover R_\alpha$ of $R_\alpha$ with the following properties. It fixes the fiber component, and acts on the horizontal component as the usual rotation of $\HH^2$ by angle $\alpha$ centered at $\pi(o)$.
Beware that $R_\alpha$ is a rotation of our model space $\mathcal Q$ of ${\rm SL}(2,\RR)$ which is distinct from the element of ${\rm SL}(2,\RR)$ representing a rotation of $\HH^2$.
	\item The map $\cover F$ sending $p = [x,y,z,w]$ of $X$ to $p' = [y,x,z,-w]$ is a lift of $F$.
\end{itemize}

\subsection{Geodesic flow and parallel transport in ${\rm SL}(2, \RR)$}
\label{Sec:flow sl2}
The solution of the geodesic flow has been computed in \cite{Divjak:2009aa}.
We follow a slightly more geometric approach.

Since the covering map is a local isometry, the geodesics of $X$ are lifts of geodesics in $\mathcal Q$.
Hence we first integrate the geodesic flow in $\mathcal Q$ using Grayson's method.
We endow the tangent space $T_{\cover o}X $ with the reference frame $\cover e = (\cover e_x, \cover e_y, \cover e_w)$, where
\begin{equation*}
	\cover e_x = \frac \partial{\partial x}, \quad
	\cover e_y = \frac \partial{\partial y}, \quad \text{and}\quad
	\cover e_w = \frac \partial{\partial w}.
\end{equation*}
We write $e = (e_x, e_y, e_w)$ for its image under $d_{\cover o}\lambda \colon T_{\cover o} X \to T_o \mathcal Q$. (Note that $e_x$ and $e_y$ coincide with the previous definition.)
It follows from our choice of metric that $e$ is an orthonormal basis of $T_o\mathcal Q$.

Let $\gamma \colon \RR \to \mathcal Q$ be a geodesic in ${\rm SL}(2,\RR)$ and let $T(t) \colon T_{\gamma(0)}\mathcal Q \to T_{\gamma(t)}\mathcal Q$ be the corresponding parallel-transport operator.
As in Sections~\ref{Sec:GeodesicFlow - Grayson} and \ref{Sec:Parallel transport - Grayson}, we define paths $u \colon \RR \to T_o\mathcal Q$ and ${Q \colon \RR \to {\rm SO}(3)}$ by the relations
\begin{equation*}
	\begin{split}
		\dot \gamma(t) & = d_oL_{\gamma(t)} u(t), \textrm{ and} \\
		T(t) \circ d_oL_{\gamma(0)} & = d_oL_{\gamma(t)} Q(t).
	\end{split}
\end{equation*}
After some computation, Equations (\ref{Eqn:GraysonMethodFlowSphere}) and (\ref{Eqn:ParallelTransportLinEq}) can be written relative to the basis $e$ as follows
\begin{equation*}
	\left\{ \begin{split}
		\dot u_x & =  2 u_y u_w \\
		\dot u_y & = -2 u_x u_w \\
		\dot u_w & = 0
	\end{split}\right.
\end{equation*}
and 
\begin{equation*}
	\dot Q + BQ = 0,
	\quad \text{where} \quad
	B = \frac 12 \left[\begin{array}{ccc}
		0 & -3u_w & -u_y \\
		3u_w & 0 & u_x \\
		u_y & -u_x & 0 
	\end{array}	\right].
\end{equation*}
For the initial condition $u(0) = a \cos(\alpha) e_x +  a \sin(\alpha) e_y + c e_w$, where $a \in \RR_+$ and $c \in \RR$ satisfy $a^2 + c^2 = 1$, one gets 
\begin{equation*}
	u(t) = a \cos(\alpha - 2ct)e_x + a \sin (\alpha - 2ct)e_y + ce_w.
\end{equation*}
In order to calculate the expression for $Q$, we follow the strategy detailed above and obtain
\begin{equation*}
	Q(t) = dR_\alpha e^{-2ct U_1} P e^{\frac 12tU_2} P^{-1}dR_\alpha^{-1}, \quad \forall t \in \RR,
\end{equation*}
where
\begin{equation*}
	U_1 = 
	\left[\begin{array}{ccc}
	    0 & -1 & 0  \\
	    1 & 0 & 0  \\
	    0 & 0 & 0 
	\end{array}\right],	
	\quad
	U_2 =
	\left[\begin{array}{ccc}
	    0 & 0  & 0 \\
	    0 & 0 & -1 \\
	    0 & 1 & 0 
	\end{array}\right], \quad
\end{equation*}
and 

\begin{equation*}
	dR_\alpha = \left[\begin{array}{ccc}
	    \cos\alpha & -\sin\alpha & 0 \\
	    \sin\alpha & \cos\alpha & 0 \\
	    0 & 0 & 1  \\
	\end{array}\right],
	\quad
	P = 
	\left[\begin{array}{ccc}
	    a & 0 & -c \\
	    0 & 1 & 0 \\
	    c & 0 & a  \\
	\end{array}\right].
\end{equation*}
Note that $dR_\alpha \colon T_o \mathcal Q \to T_o \mathcal Q$ is the differential at $o$ of the rotation $R_\alpha$, written in the frame $e = (e_x, e_y, e_w)$.

Let us now move back to the original geodesic $\gamma \colon \RR \to X$.
\refeqn{GraysonMethodPullBack} becomes $\dot \gamma(t) = A(t)\gamma(t)$, where 
\begin{equation*}
	A(t) = \frac 12
	\left[\begin{array}{cccc}
		0 & -u_w & u_x & u_y \\
		u_w & 0 & -u_y & u_x \\
		u_x & -u_y & 0 & u_w \\
		u_y & u_x & - u_w & 0
	\end{array}\right].
\end{equation*}
Using a change of variables, one can reformulate the previous equation into a first-order differential system with \emph{constant} coefficients that we integrate with standard methods.
We obtain that the geodesic $\gamma$, such that $\gamma(0) = o$ and $\dot \gamma(0) = a \cos(\alpha) e_x +  a \sin(\alpha) e_y + c e_w$, decomposes  (up to a rotation) as a product of two one-parameter subgroups:
\begin{equation}
\label{Eqn:sl2_Geodesic_1PSubg}
	\gamma(t) = R_\alpha \left(\eta(t) \ast \xi(t)\right).
\end{equation}
As before, $R_\alpha$ is the rotation of $\mathcal Q$ by angle $\alpha$ and $\ast$ is group multiplication in ${\rm SL}(2, \RR)$.
The \emph{spin} factor $\xi \colon \RR \to {\rm SL}(2, \RR)$ represents a rotation of $\HH^2$ fixing the origin $\pi(o) \in \mathcal H$. It can be written in $\mathcal Q$ as
\begin{equation*}
	\xi(t) = [\cos(ct), \sin(ct), 0, 0].
\end{equation*}
The \emph{translation} factor $\eta \colon \RR \to {\rm SL}(2, \RR)$ can have three forms, corresponding to the three types of isometries of $\HH^2$.
For simplicity we let $\kappa = \sqrt{|c^2 - a^2|}$.
\begin{itemize}
	\item If $c > a$, then $\eta$ is an \emph{elliptic} transformation, given in $\mathcal Q$ by 
	\begin{equation*}
		\eta(t) = \left[\cos\left(\frac{\kappa t}2\right), - \frac c\kappa \sin \left(\frac{\kappa t}2\right), \frac a \kappa \sin\left(\frac{\kappa t}2\right), 0\right].
	\end{equation*}
	\item If $c = a$, then $\eta$ is a \emph{parabolic} transformation, given in $\mathcal Q$ by 
	\begin{equation*}
		\eta(t) = \left[1, - \frac t {2\sqrt 2} , \frac t {2\sqrt 2}, 0\right].
	\end{equation*}
	\item If $c < a$, then $\eta$ is a \emph{hyperbolic} transformation, given in $\mathcal Q$ by 
	\begin{equation*}
		\eta(t) = \left[\cosh\left(\frac{\kappa t}2\right), - \frac c\kappa \sinh \left(\frac{\kappa t}2\right), \frac a \kappa \sinh\left(\frac{\kappa t}2\right), 0\right].
	\end{equation*}
\end{itemize}

\subsection{Passing to the universal cover}
\label{Sec:flow sl2 tilde}
Let us now consider the geodesic $\cover \gamma$ in the universal cover $X = \mathcal H \times \RR$ starting at $\cover o$ with initial velocity $a \cos(\alpha)\cover e_x + a \sin(\alpha)\cover e_y+c \cover e_w$.
This is a lift of the geodesic $\gamma$ computed above.
The horizontal component of $\cover \gamma$ is obtained as the image of $\gamma$ under the projection $\pi \colon {\rm SL}(2,\RR) \to \HH^2$.
Note that the spin factor $\xi(t)$ fixes the base point $\pi(o) \in \mathcal H$.
Moreover, the rotation $R_\alpha$ of $\mathcal Q$ induces (via the projection $\pi$) the rotation $r_\alpha$ of $\HH^2$ by angle $\alpha$ centered at $\pi (o)$.
Consequently, $\pi \circ \gamma(t)$ is the image under $r_\alpha$ of one of the following points, depending on whether $c > a$, $c = a$, or $c < a$ respectively:
\begin{equation*}
	\renewcommand{\arraystretch}{1.3}
	\left[\begin{array}{r}
		\frac {2a}\kappa \sin\left(\frac{\kappa t}2\right) \cos\left(\frac{\kappa t}2\right) \\
		 -\frac {2ac}{\kappa^2} \sin^2\left(\frac{\kappa t}2\right)\\
		 1 + \frac{2a^2}{\kappa^2} \sin^2\left(\frac{\kappa t}2\right)
	\end{array}\right], \quad
	\left[\begin{array}{r}
		\frac{\sqrt 2}2 t\\
		 -\frac 14t^2 \\
		 1 + \frac 14t^2
	\end{array}\right], \quad
	\left[\begin{array}{r}
		\frac {2a}\kappa \sinh\left(\frac{\kappa t}2\right) \cosh\left(\frac{\kappa t}2\right) \\
		 -\frac {2ac}{\kappa^2} \sinh^2\left(\frac{\kappa t}2\right) \\
		 1 + \frac{2a^2}{\kappa^2} \sinh^2\left(\frac{\kappa t}2\right)
	\end{array}\right].
	\renewcommand{\arraystretch}{1}
\end{equation*}
These are parametrizations of orbits under the one-parameter subgroups above. Their images are a circle, a horocycle, and an equidistant curve to a geodesic, respectively. 

In order to compute the fiber component of $\cover \gamma$ it is convenient to introduce \emph{cylindrical coordinates} on $\mathcal Q$.
Given $\rho \in \RR_+$, $\theta \in [0, 2\pi)$ and $w \in [0, 4\pi)$, the point of $\mathcal Q$ with cylindrical coordinates $[\rho, \theta, w]$ is 
\begin{equation*}
	\renewcommand{\arraystretch}{1.3}
	\left[\begin{array}{l}
		\cosh\left(\frac\rho2\right) \cos\left(\frac w2\right) \\
		\cosh\left(\frac\rho2\right) \sin\left(\frac w2\right) \\
		\sinh\left(\frac\rho2\right) \cos\left(\theta - \frac w2\right) \\
		\sinh\left(\frac\rho2\right) \sin\left(\theta - \frac w2\right)
	\end{array}\right].
	\renewcommand{\arraystretch}{1}
\end{equation*}
This choice has been made so that the projection $\pi$ from ${\rm SL}(2,\RR)$ (in cylindrical coordinates) to $\HH^2$ (in polar coordinates) is given by ${[\rho, \theta, w] \mapsto [\rho, \theta]}$.
In view of the expression for $\gamma$ and its projection onto $\mathcal H$, 
we may calculate an expression for the fiber component $w(t)$ of $\cover \gamma(t)$. This calculation is greatly simplified by the use of polar coordinates. We obtain
\begin{equation*}
	w(t) = 2 c t + 2 \phi(t)
\end{equation*}
where $\phi(t)$ is characterized by 
\begin{equation*}
	\tan \phi(t) = \left\{
	\begin{split}
		- \frac c\kappa \tan\left(\frac{\kappa t}2\right), & \ \text{if}\ c > a \\
		- \frac t{2\sqrt 2}, & \ \text{if}\ c = a \\
		- \frac c\kappa \tanh\left(\frac{\kappa t}2\right), & \ \text{if}\ c < a
	\end{split}
	\right.
\end{equation*}
Observe that if $c \leq a$, then $\phi(t) \in (-\pi/2, \pi/2)$.
Therefore its value can be computed from the above equation using the standard $\arctan$ function.
On the other hand, if $c > a$, then the geodesic $\cover \gamma$ spirals, and the value of $\phi(t)$ needs to be adjusted by the correct multiple of $2\pi$.

Note that the covering map $\lambda \colon X \to \mathcal Q$ is a local isometry.
Hence the parallel transport operator in $X$ can be obtained by lifting the parallel transport operator in $\mathcal Q$.
In view of Grayson's method, this operator is encoded by a local path $\cover Q \colon \RR \to {\rm SO}(3)$, see \refsec{Parallel transport - Grayson}.
The identification relies on a choice of a preferred frame $\cover e$ in the tangent space $T_{\cover o}X$ at the origin.
By construction $\lambda$ is equivariant with respect to the projection $\SLR \to {\rm SL}(2, \RR) $.
Moreover it maps $\cover e$ to our preferred frame $e$ in $T_o\mathcal Q$.
Thus $\cover Q$ and $Q$ actually coincide.

\begin{figure}[htbp]
\centering
\subfloat[One half-space.]{
	\includegraphics[width=0.3\textwidth]{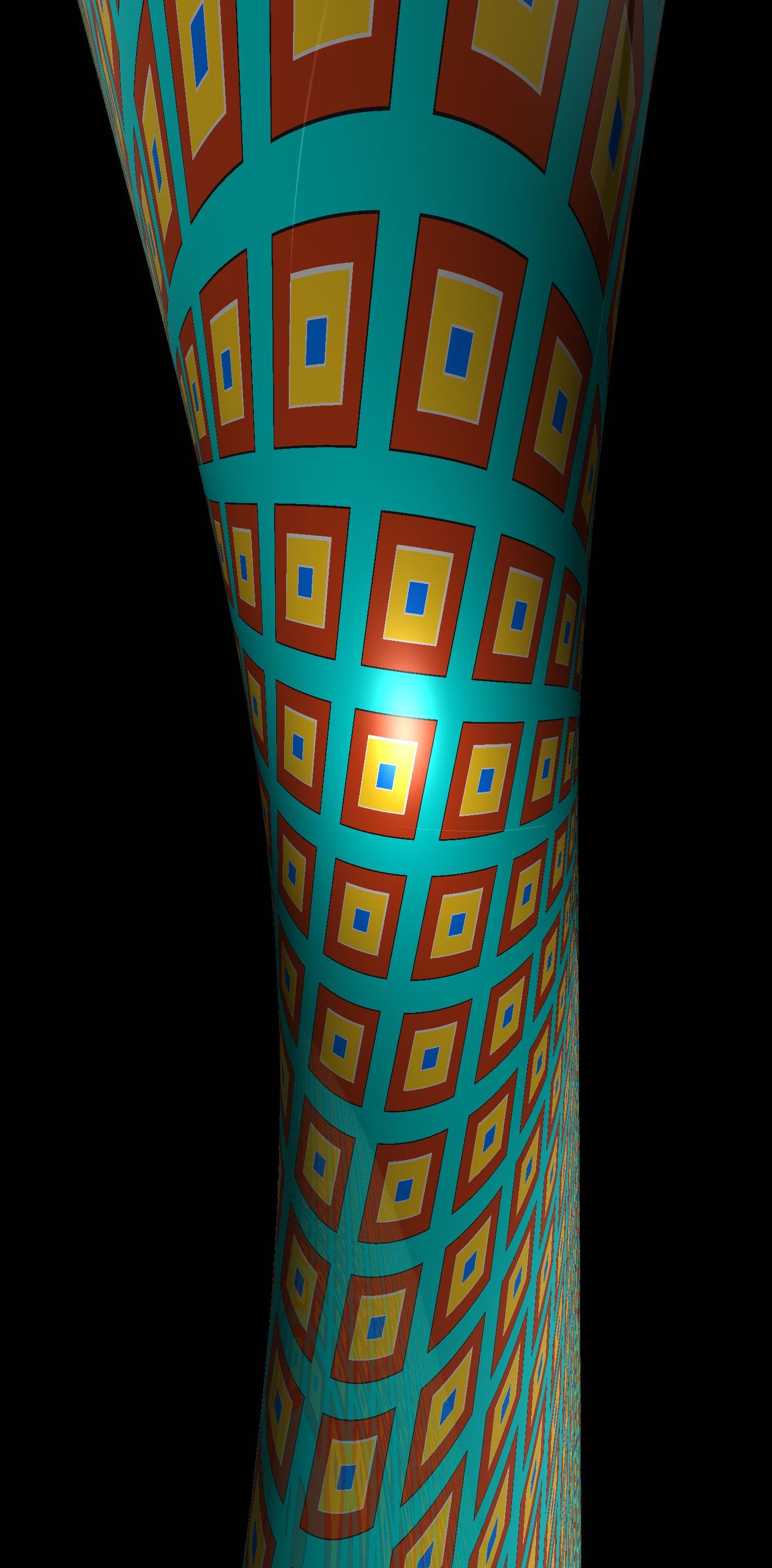}
}
\subfloat[One half-space.]{
	\includegraphics[width=0.3\textwidth]{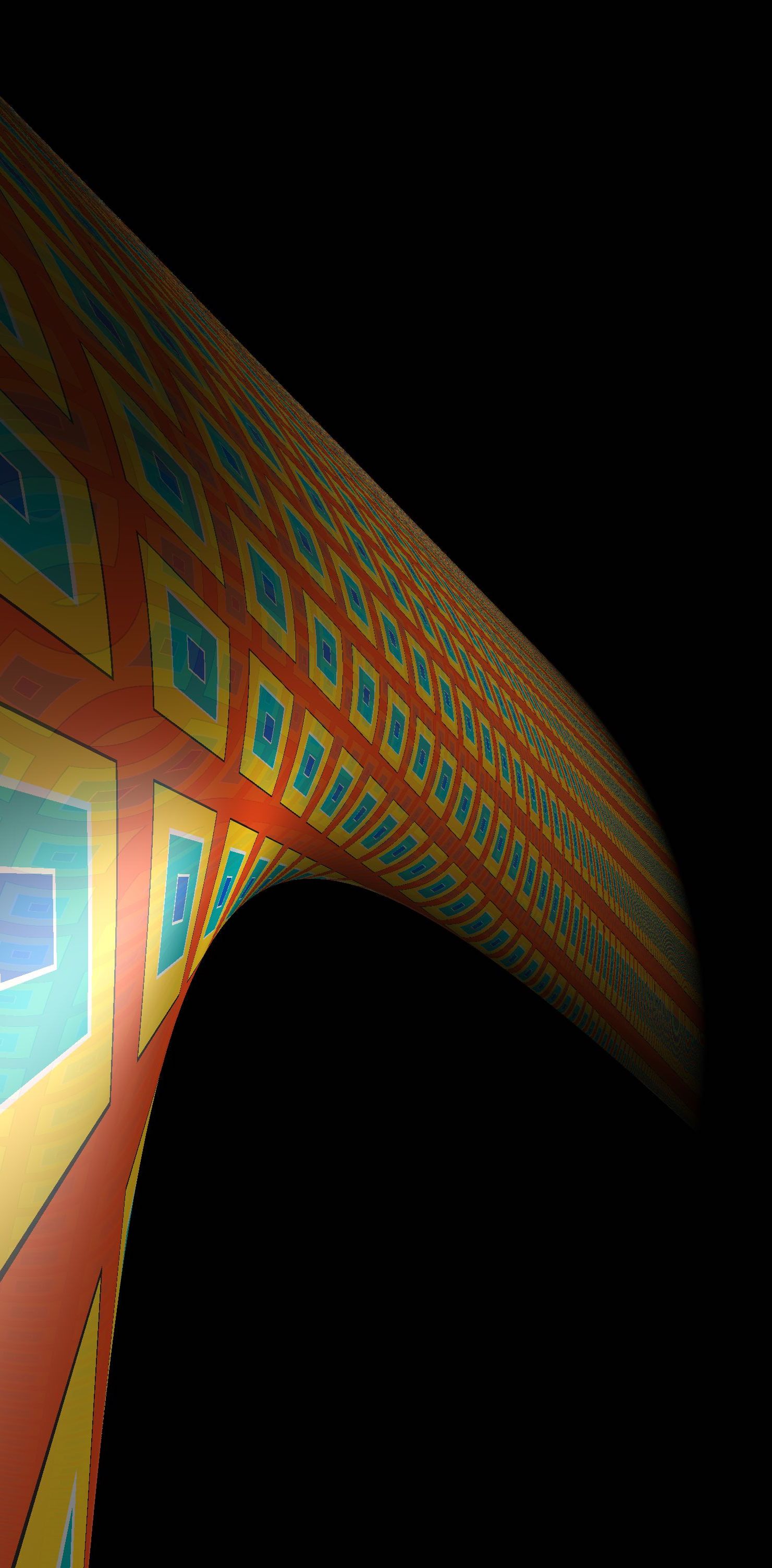}
}
\subfloat[Two half-spaces.]{
	\includegraphics[width=0.3\textwidth]{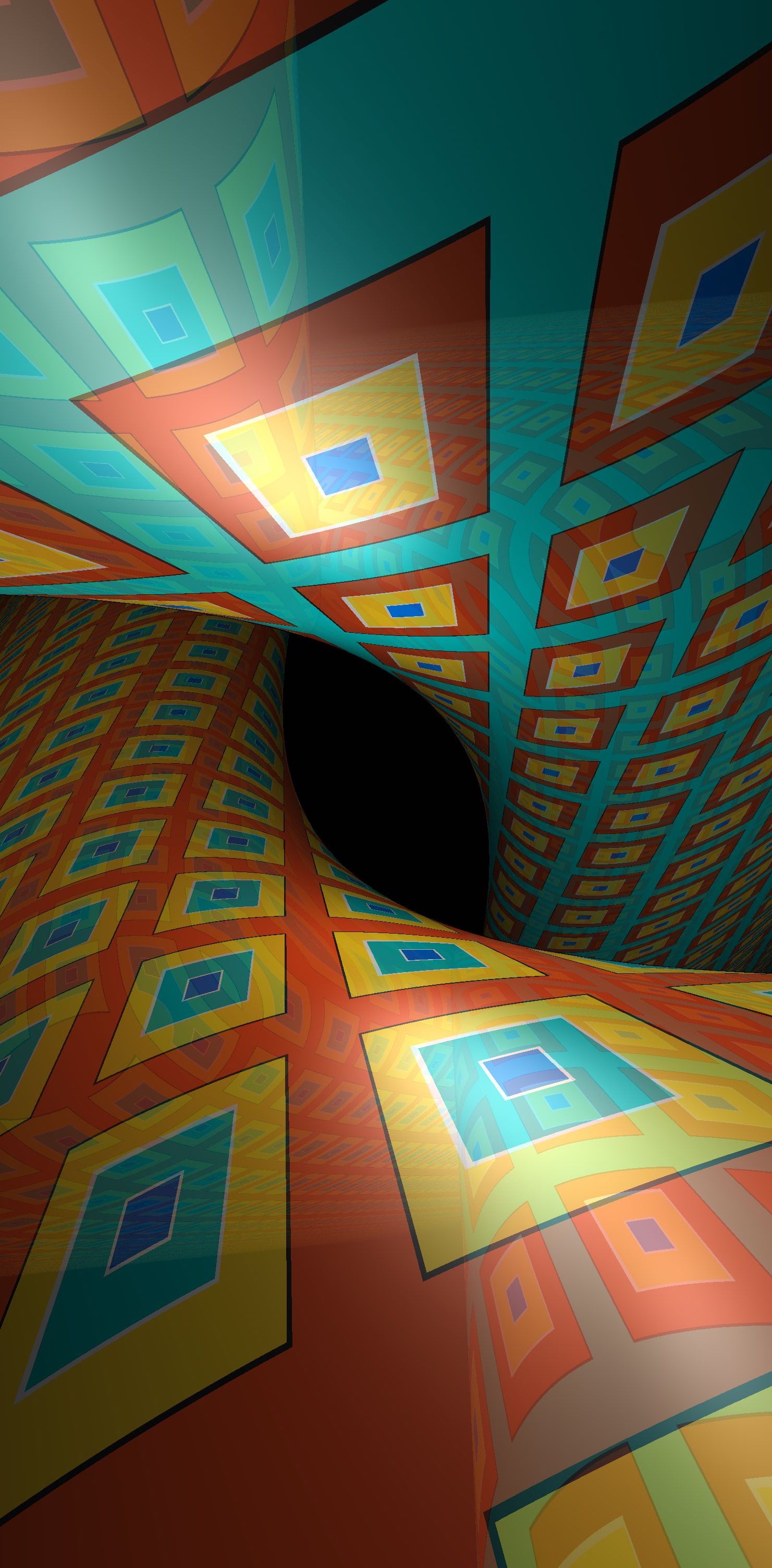}
}
\caption{Vertical half-spaces in $\SLR$ geometry.}
\label{Fig:SLR_VertPlanes}
\end{figure}

\subsection{Distance to a vertical object}
Exactly as in Nil, we say that an object $Z \subset X$ is \emph{vertical} if it is the pre-image of the projection ${\pi\circ\lambda}\colon X \to \HH^2$ of a non-empty subset $S$ of $\HH^2$.
In this situation, for any point $p \in X$ we have
\begin{equation*}
	\dist_X (p, Z) = \dist_{\HH^2}(\pi \circ \lambda(p), S).
\end{equation*}

\reffig{SLR_VertPlanes} shows pre-images of a half-space with geodesic boundary in $\HH^2$. The boundary of each is patterned with a square grid following the induced euclidean metric on the plane. The grid has side-length $1/2$.

\subsection{Exact distance and direction to a point}
\label{Sec:ExactGeodesicsSLR}
The strategy to compute the distance and direction from the origin to an arbitrary point $p$ with cylindrical coordinates $[\rho, \theta, w]$ is similar to the strategy used in Nil.
Because of the flip symmetry, we may assume that $w \geq 0$.
First assume that $\rho > 0$.
Using the solution of the geodesic flow, we observe that the geodesics $\cover \gamma$ joining $\cover o$ to $p$ are in one-to-one correspondence with the zeros of a function
\begin{equation}
	\phi \mapsto \chi_{\rho, w}(\phi).
\end{equation}
We define this function in \reffig{ChiSLR}; see \reffig{graph chi sl2} for its graph.

\thispagestyle{empty}
\begin{landscape}
        \begin{figure}[htbp]
        \small
        \renewcommand{\arraystretch}{1.5}
	\begin{equation*}
		\chi_{\rho,w}(\phi) = 
		\left\{ 
			\begin{array}{l}
				\displaystyle
				- \frac 12 w  + \phi - 2 \tan \phi  \frac{\cosh(\rho/2) }{\sqrt{\sinh^2(\rho/2) - \tan^2\phi}} \arctanh\left( \frac{\sqrt{\sinh^2(\rho/2) - \tan^2\phi}}{\cosh(\rho/2) }\right),\\
				\displaystyle
				\hspace{0.5\linewidth} \text{if}\ \phi > -\frac\pi2 \ \text{and}\ |\tan \phi | < \sinh(\rho/2) \\%
				\\%
				 \displaystyle
				 - \frac 12 w  + \phi - 2 \tan\phi, \\
				 \displaystyle
				\hspace{0.5\linewidth} \text{if}\ \phi > -\frac\pi2 \ \text{and}\ |\tan\phi| = \sinh(\rho/2) \\%
				\\%

				\displaystyle
				- \frac 12 w  + \phi - 2 \tan \phi  \frac{\cosh(\rho/2) }{\sqrt{\tan^2\phi - \sinh^2(\rho/2) }} \left(\arctan\left( \frac{\sqrt{\tan^2\phi - \sinh^2(\rho/2)}}{\cosh (\rho/2) }\right) - \sign(\tan\phi) \left\lfloor \frac 12 - \frac \phi \pi \right\rfloor \pi \right), \\
				\displaystyle
				\hspace{0.5\linewidth} \text{if}\  |\tan\phi| > \sinh(\rho/2) \ \text{and}\ \pi \neq -\frac\pi2\mod \pi \\%
				\displaystyle
				- \frac 12 w  + \phi - 2 \cosh(\rho/2), \\%
				\displaystyle
				\hspace{0.5\linewidth} \text{if}\  \phi =-\frac\pi2\mod \pi 
			\end{array}
		\right.
	\end{equation*}
	\normalsize
	\caption{The map $\chi_{\rho, w}$. The first regime corresponds to geodesics with a hyperbolic translation factor, the second to geodesics with a parabolic translation factor, and the third and fourth to geodesics with an elliptic translation factor. In \reffig{graph chi sl2}, these are drawn in red, green, and blue respectively.}
	\label{Fig:ChiSLR}
	\renewcommand{\arraystretch}{1}
	\end{figure}
\end{landscape}

Observe that along a given geodesic $\cover\gamma$, the angle $\phi$ is a decreasing function of the time parameter $t$.
Said differently, when $\cover\gamma$ is moving up in the fiber direction, then its projection in $\HH^2$ turns clockwise.
Hence the domain of $\chi_{\rho, w}$ is contained in $\RR_-$. Moreover $\chi_{\rho, w}$ is decreasing around $\phi = 0$.

\begin{figure}[htbp]
\centering
\includegraphics[width=\textwidth]{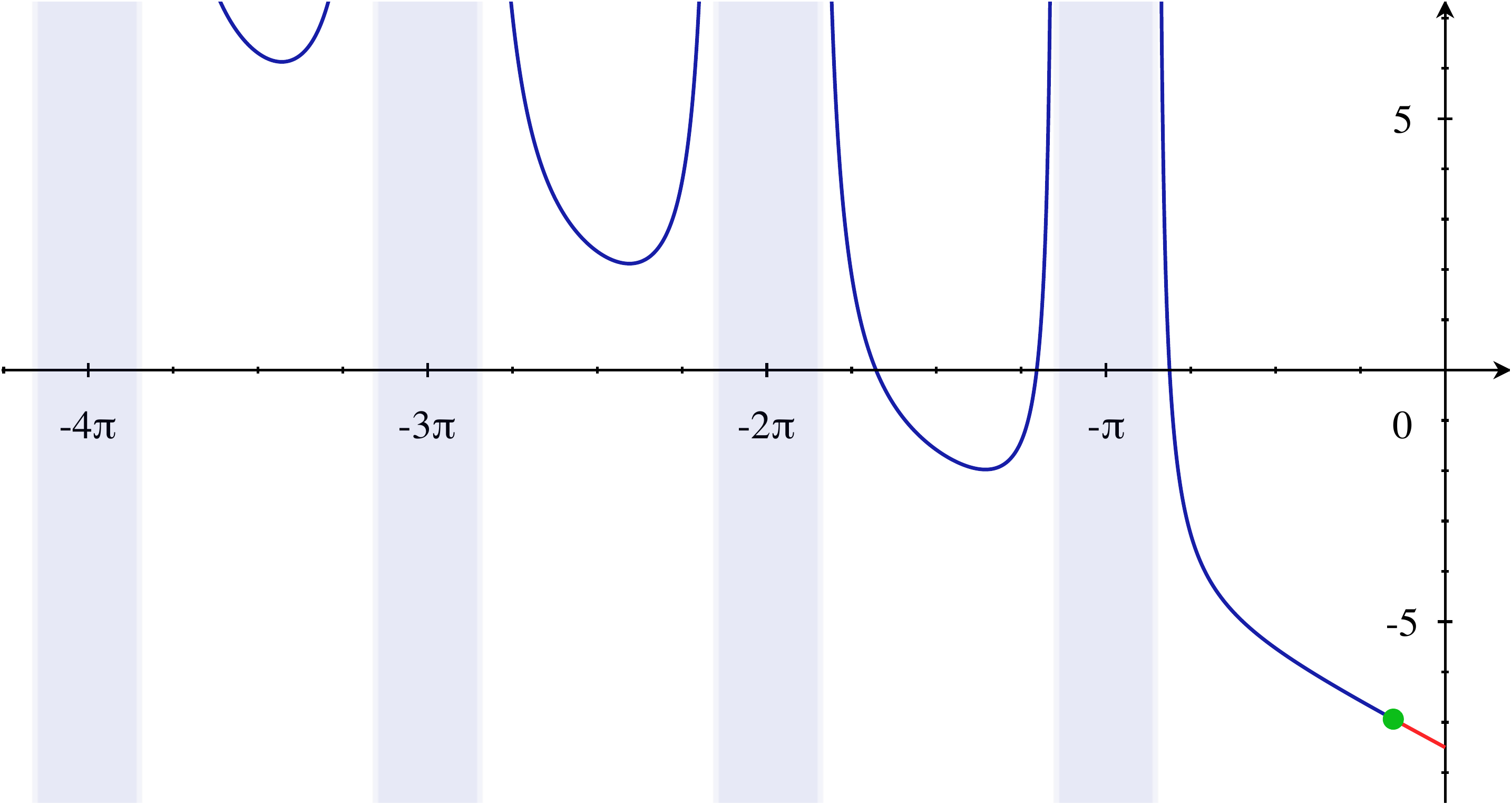}
\caption{The graph of the function $\chi_{\rho, w}$ for $\rho = 1$ and $w = 15$.
The green dot corresponds to a geodesic whose translation part is parabolic.
It separates the geodesics whose translation part are elliptic (dark blue) from those that are hyperbolic (red).
The light blue strips indicate the values of $\phi$ for which $\chi_{\rho,w}$ is not defined.
There are exactly three geodesics joining the origin to any point $p$ with cylindrical coordinates $[1,\theta, 15]$.}
\label{Fig:graph chi sl2}
\end{figure}

As in \refsec{ExactGeodesicsNil}, 
we compute the zeros of $\chi_{\rho,w}$ using Newton's method, and thus calculate the lighting pairs $\calL_\cover{o}(p)$.

Assume now that $\rho = 0$.
The path $\gamma(t) = [0,0,1,t]$ is a geodesic from $\cover o$ to $p$ with initial direction $v = \cover e_w$ and length $t = w$.
If $2n \pi \leq w < 2n\pi + 2\pi$, for some integer $n \geq 1$, then $\cover o$ and $p$ are joined by $n$ other rotation-invariant families of geodesics $\{\gamma_{1,\alpha}\}, \dots, \{\gamma_{n,\alpha}\}$, where $\alpha$ runs over $[0, 2\pi)$.
Each geodesic in the $k$th family has length 
\begin{equation*}
	t_{k,\alpha} = 2k \pi\sqrt{\frac 12 \left(\frac w{2k \pi}+1\right)^2 - 1}.
\end{equation*}
Moreover, the initial direction at the origin is characterized by 
\begin{equation*}
	\begin{split}
		d \cover R_\alpha^{-1} v_{k, \alpha} 
		& = v_{k,0} \\
		& = \sqrt{\frac{(w +2k\pi)^2 - (4k\pi)^2}{2(w +2k\pi)^2 - (4k\pi)^2}} \cover e_x + \frac {w + 2k \pi}{\sqrt{2(w+2k\pi)^2 - (4k\pi)^2}} \cover e_w.
	\end{split}
\end{equation*}

\subsection{Distance underestimator for a ball.}
\label{Sec:distance underestimator SL2}
As we explained in \refsec{model sl2}, $X \cong \SLR$ is a (metrically) twisted line bundle over $\HH^2$. 
As a subset of $\RR^4$, our model for $\SLR$ is identical to our model for $Y = \HH^2 \times \EE$ (see \refsec{Product}). This gives an identification (of course, not an isometry) between $X$ and $Y$, which we use to approximate distances in $X$ as follows. 

\begin{lemma}
\label{Lem:SLR-H2xE}
	For every point $p \in X$, we have 
	\begin{equation*}
		\dist_X(o,p) \leq \dist_Y (o,p) \leq 2 \dist_X(o,p).
	\end{equation*}
\end{lemma}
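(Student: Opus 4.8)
The plan is to put both metrics in a single coordinate chart and treat the two inequalities separately; once the $\SLR$-metric is in ``connection form'', the lower bound is a one-line observation and the upper bound is a pointwise comparison of metric tensors.

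First I would set up coordinates. Both $X$ and $Y$ have underlying set $\mathcal{H}\times\RR$; I would use cylindrical coordinates $(\rho,\theta,w)$ in which $(\rho,\theta)$ are geodesic polar coordinates on $\mathcal{H}$ centred at the ($\mathcal{H}$-)projection of the origin, and $w$ is the $\RR$-coordinate. Then $ds_Y^2=d\rho^2+\sinh^2\!\rho\,d\theta^2+dw^2$, while rewriting the $\SLR$-metric of \refsec{model sl2} in the same coordinates puts it in the form
\[
ds_X^2=d\rho^2+\sinh^2\!\rho\,d\theta^2+\bigl(dw+f(\rho)\,d\theta\bigr)^2 ,
\]
where $f$ is the scalar multiple of $\cosh\rho-1$ fixed by the normalisation there; the absence of a $d\rho$-term in the connection $1$-form is forced by the flip symmetry $\cover F$, which acts as $(\rho,\theta,w)\mapsto(\rho,-\theta,-w)$. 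Recording two consequences: the projection onto the $\mathcal{H}$-factor is $1$-Lipschitz for both metrics, and $\dist_Y(o,p)=\sqrt{\rho_p^2+w_p^2}$ since $Y$ is a Riemannian product, where $\rho_p=\dist_{\HH^2}(\pi(o),\pi(p))$ and $w_p$ is the $\RR$-coordinate of $p$.

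For the lower bound $\dist_X(o,p)\leq\dist_Y(o,p)$: let $\bar\sigma$ be the complete geodesic of $\mathcal{H}$ through $\pi(o)$ and $\pi(p)$, and let $\Sigma=\bar\sigma(\RR)\times\RR\subset\mathcal{H}\times\RR$, parametrised by $(s,w)$ with $s$ the arc length along $\bar\sigma$ measured from $\pi(o)$. Since $\bar\sigma$ passes through the centre of the polar coordinates, it is a radial geodesic, so $\theta$ is constant on $\Sigma$ and $d\theta\equiv 0$ there; hence the $X$-metric induced on $\Sigma$ equals $ds^2+dw^2$ in these coordinates, i.e. $\Sigma$ (with its $X$-metric) is (a strip of) the flat Euclidean plane, with $o$ at $(0,0)$ and $p$ at $(\pm\rho_p,w_p)$. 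Restricting to paths inside $\Sigma$ can only increase the infimum, so $\dist_X(o,p)\leq\sqrt{\rho_p^2+w_p^2}=\dist_Y(o,p)$.

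For the upper bound $\dist_Y(o,p)\leq 2\dist_X(o,p)$: I would show the identity map $X\to Y$ is $2$-Lipschitz, i.e. $ds_Y^2\leq 4\,ds_X^2$ as quadratic forms at every point. Subtracting,
\[
4\,ds_X^2-ds_Y^2=3\,d\rho^2+\bigl(3\sinh^2\!\rho+4f^2\bigr)d\theta^2+8f\,d\theta\,dw+3\,dw^2 ,
\]
in which the $d\rho^2$ term decouples, so positive semidefiniteness reduces to $3\bigl(3\sinh^2\!\rho+4f^2\bigr)\geq(4f)^2$, i.e. $|f(\rho)|\leq\tfrac32\sinh\rho$; writing $f(\rho)=c_0(\cosh\rho-1)=c_0\tanh(\rho/2)\sinh\rho$ and using $\tanh(\rho/2)<1$, this holds since the model's constant satisfies $|c_0|\leq\tfrac32$. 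Given the Lipschitz bound, if $\gamma$ is an $X$-minimising geodesic from $o$ to $p$ then $\dist_Y(o,p)\leq L_Y(\gamma)\leq 2L_X(\gamma)=2\dist_X(o,p)$.

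The real work, and the main obstacle, is the bookkeeping in the first paragraph: one must derive the connection-form expression for $ds_X^2$ from the matrix formula for the metric on $\mathcal{Q}_0$ and pin down $c_0$ (which is also what tells us the sharp Lipschitz constant is at most $2$); after that both inequalities are essentially immediate. An alternative that avoids rewriting the metric is to compare the explicit distance function $\chi_{\rho,w}$ of \refsec{ExactGeodesicsSLR} directly with $\dist_Y(o,p)=\sqrt{\rho^2+w^2}$, but that comparison is considerably more painful.
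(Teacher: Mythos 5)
Your proof is correct, and its skeleton is the same as the paper's: bound the $Y$-length of an $X$-minimizing path from $o$ to $p$ by twice its $X$-length, and observe that the $Y$-minimizing path from $o$ to $p$ has $X$-length equal to its $Y$-length (which is how the paper phrases your $\Sigma$ observation). What you add is an explicit connection-form expression for the $\SLR$-metric, which converts the paper's two ``a computation shows'' steps into short, transparent arguments.

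The step you leave open --- pinning down $c_0$ --- does go through, with $c_0=1$. You can avoid wrangling the formula for the metric on $\calQ_0$ as follows: the paper's $e=(e_x,e_y,e_w)$ is an orthonormal frame of left-invariant fields with $e_w$ tangent to the fiber, and the parallel-transport matrix $B$ in \refsec{flow sl2} records the connection coefficients $\langle\nabla_{e_k}e_j,e_i\rangle$. Reading off $\langle\nabla_{e_x}e_y,e_w\rangle=-\tfrac12$ and $\langle\nabla_{e_y}e_x,e_w\rangle=\tfrac12$ gives $[e_x,e_y]=-e_w$, hence $d\omega(e_x,e_y)=-\omega([e_x,e_y])=1=dA(e_x,e_y)$, i.e.\ $d\omega=dA$ and $c_0=1$. (Equivalently, a direct computation in the cylindrical coordinates $[\rho,\theta,w]$ of \refsec{flow sl2 tilde} gives $\langle\partial_w,\partial_w\rangle=1$, $\langle\partial_\theta,\partial_w\rangle=\cosh\rho-1$, $\langle\partial_\theta,\partial_\theta\rangle=\sinh^2\rho+(\cosh\rho-1)^2$.) Since $\tanh(\rho/2)<1$ for all $\rho$, your determinant condition $|f|\leq\tfrac32\sinh\rho$ holds with room to spare; in fact, running the same computation with $K^2ds_X^2-ds_Y^2$ shows the sharp Lipschitz constant of $\mathrm{Id}\colon X\to Y$ is the golden ratio $\tfrac{1+\sqrt5}{2}$, so the constant $2$ in the lemma is not optimal.

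One thing worth recording explicitly: your upper bound is a pointwise comparison of metric tensors, so it yields $\dist_Y(p,q)\leq 2\dist_X(p,q)$ for \emph{all} pairs, not just pairs containing $o$. The paper's closing remark that the two-sided estimate ``does not hold for general geodesics'' is therefore really about the lower bound, and your argument isolates exactly why: only when $o$ is the pole of the coordinates is the vertical half-plane $\Sigma$ a level set of $\theta$, so that $\omega|_{T\Sigma}=dw$ and the induced $X$- and $Y$-metrics on $\Sigma$ agree.
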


\begin{proof}
	Consider an arc length parametrized geodesic ${\gamma \colon [0, \ell] \to X}$ joining $o$ to $p$.
	We write $L_Y(\gamma)$ for its length in $Y$.
	A computation shows that $L_Y(\gamma) \leq 2 \ell$.
	Hence $ \dist_Y (o,p) \leq L_Y(\gamma) \leq 2 \dist_X(o,p)$.
	Second,
	a similar calculation shows that the arc length parametrized geodesic $\gamma'$ of $Y$ joining $o$ to $p$ is still parametrized by arc length when viewed as a path in $X$.
	Consequently $\dist_X(o,p) \leq L_X(\gamma') \leq \dist_Y(o,p)$.
\end{proof}
\begin{remark}
Note that the proof here relies on the fact that these geodesics begin at the origin, $o$. The result does not hold for general geodesics.
\end{remark}

As in Nil, we use this observation to construct a distance underestimator $\sigma' \from X \to \RR$ to render a ball of radius $r$ centered at $o$, as follows. Let 
\begin{equation*}
	\sigma'(p) = \left\{
	\begin{split}
		\sigma(p) - r, & \ \text{if}\ \sigma(p) > r + \eta \\
		2\sigma(p) - r, & \ \text{if}\ \sigma(p) < 2 (r - \eta) \\
		\dist(o,p) - r, & \ \text{otherwise,}
	\end{split}
	\right.
\end{equation*}
where 
\begin{equation*}
	\sigma(p) = \frac 12 \sqrt{ \arccosh^2\left( z^2-x^2 - y^2\right) + w^2}
\end{equation*}
is half the distance from the origin to $p$ in $Y$, and $\eta > 0$ is a constant that is much larger than the threshold $\epsilon$ used to stop the ray-marching algorithm.
In the last case of $\sigma'$, the exact distance is computed numerically as explained in \refsec{ExactGeodesicsSLR}.
We use this distance underestimator to render the balls in 
\reffig{LineOfBallsInSLR}. Compare with \reffig{NilAllDirections}, which shows a line of balls in Nil.

\begin{figure}[htbp]
\centering
\subfloat[Looking along the fiber.]{
	\includegraphics[width=0.3\textwidth]{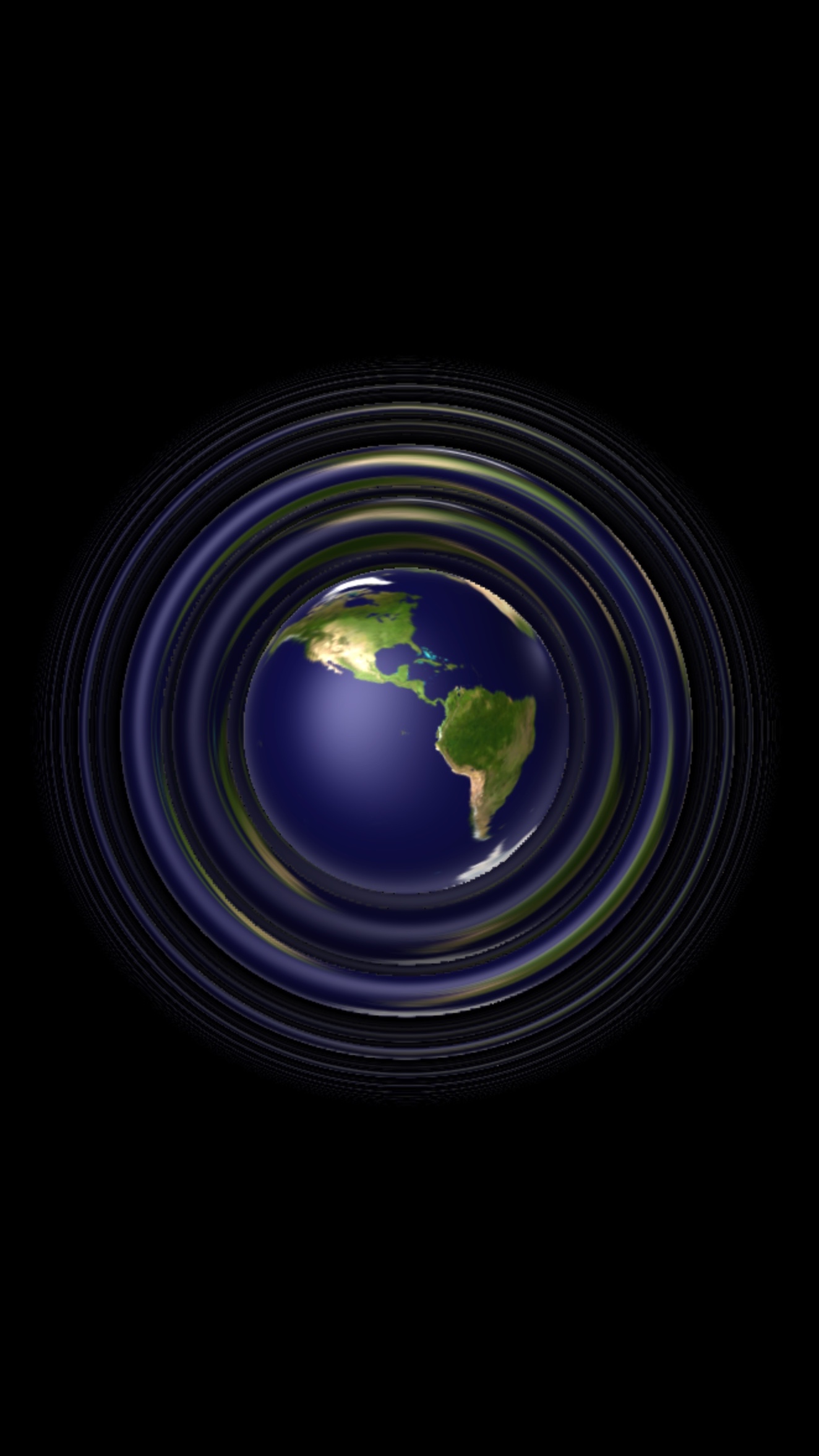}
}
\subfloat[Looking near the fiber direction.]{
	\includegraphics[width=0.3\textwidth]{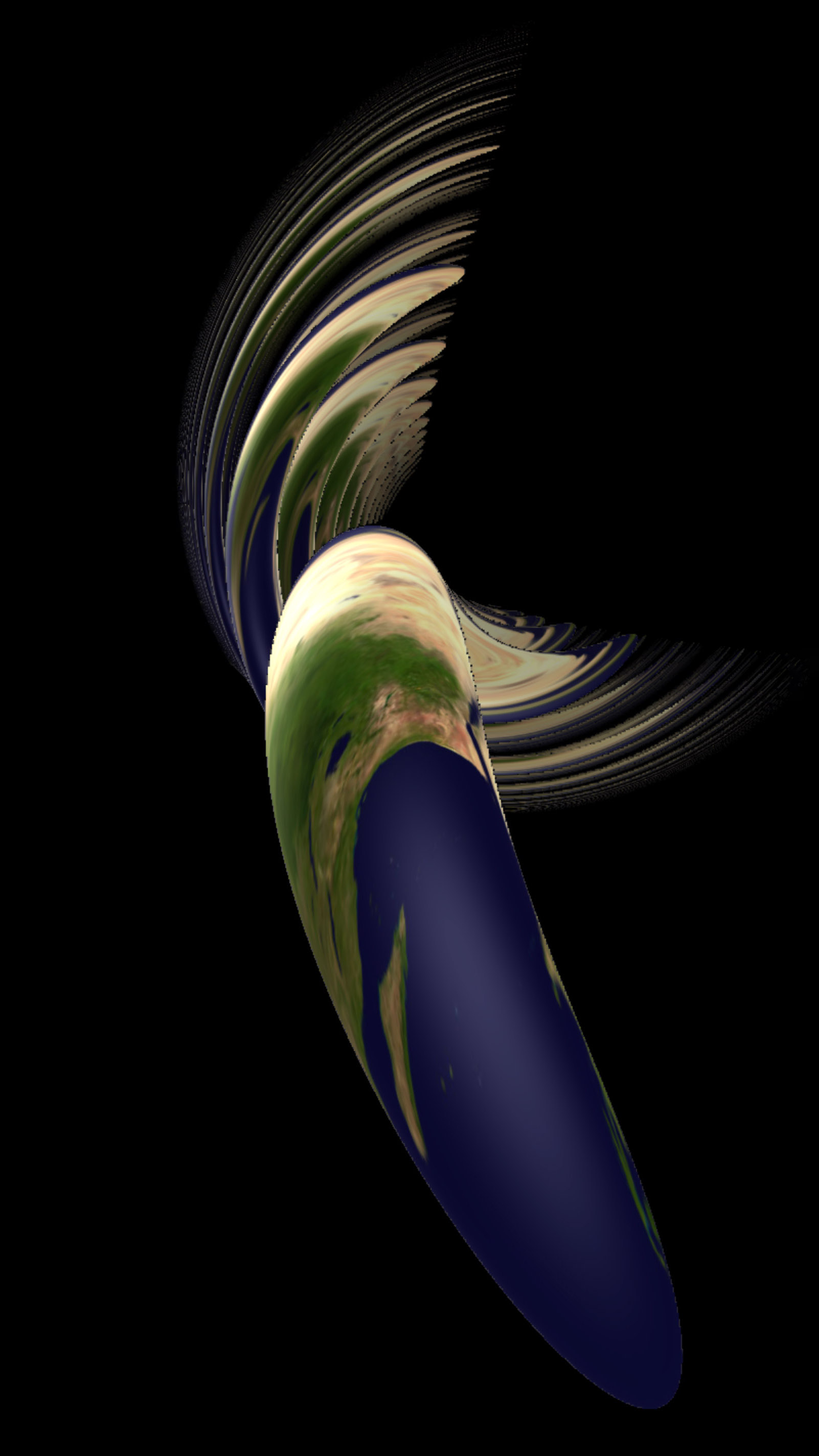}
}
\subfloat[A view further from the fiber direction.]{
	\includegraphics[width=0.3\textwidth]{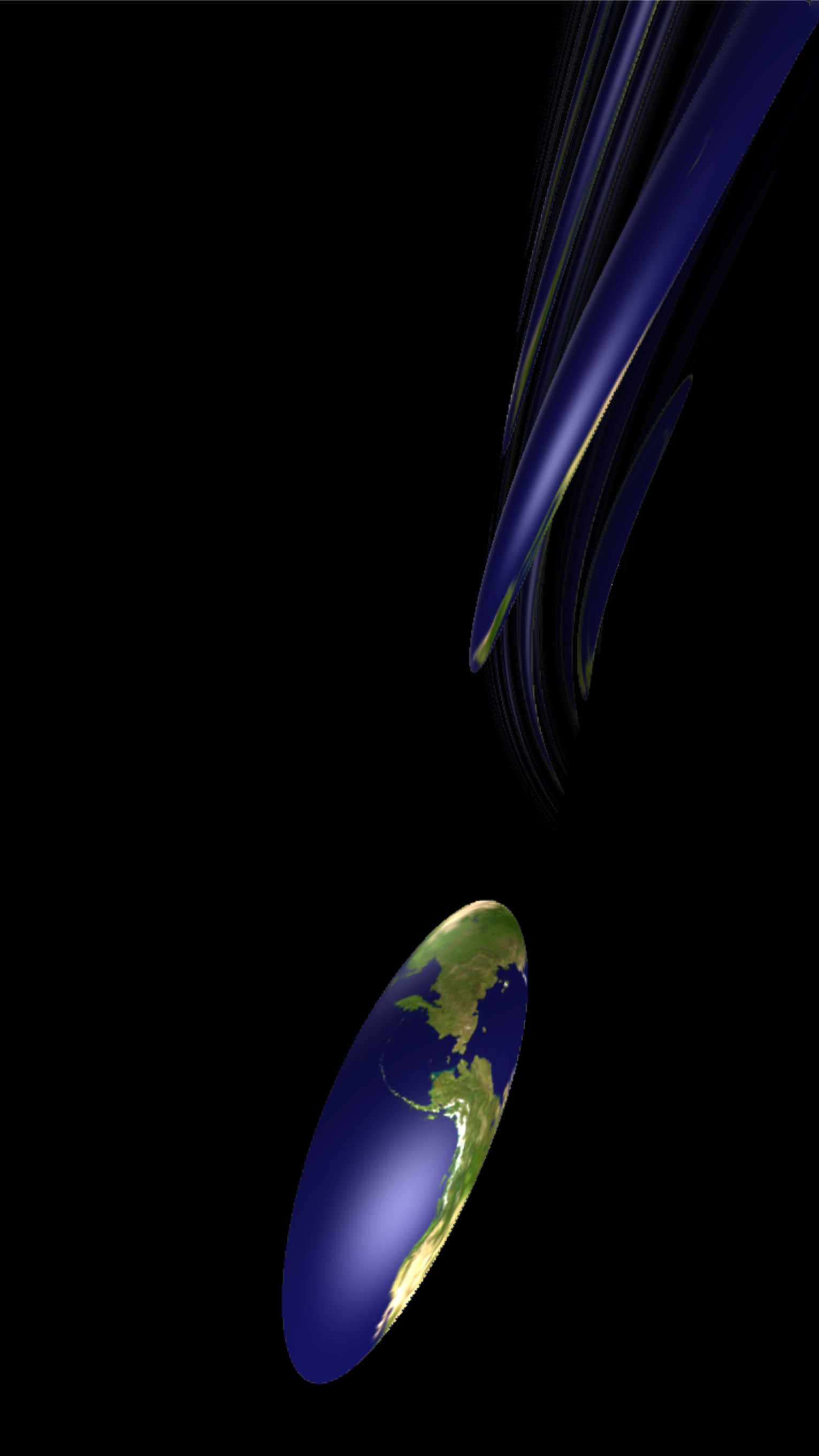}
}
\caption{A line of unit balls spaced every $2\pi$ along the fiber direction in $\SLR$. (Equivalently, 
a single ball of radius one in $\mathrm{SO}(2,1)$.)}
\label{Fig:LineOfBallsInSLR}
\end{figure}

\subsection{Creeping to horizontal half-spaces}
\label{Sec:Creepingw=0Plane}
As for Nil in \refsec{CreepingNilz=0Plane}, we can use a version of creeping to draw pictures of ``horizontal'' half-spaces. For example, in \reffig{HorizontalPlaneSLR} we draw the half-space $w\leq0$, the boundary $\HH^2$ patterned with equidistant curves to a geodesic (in white).

\begin{figure}[htbp]
\centering
\subfloat[$d=2$]{
	\includegraphics[width=0.3\textwidth]{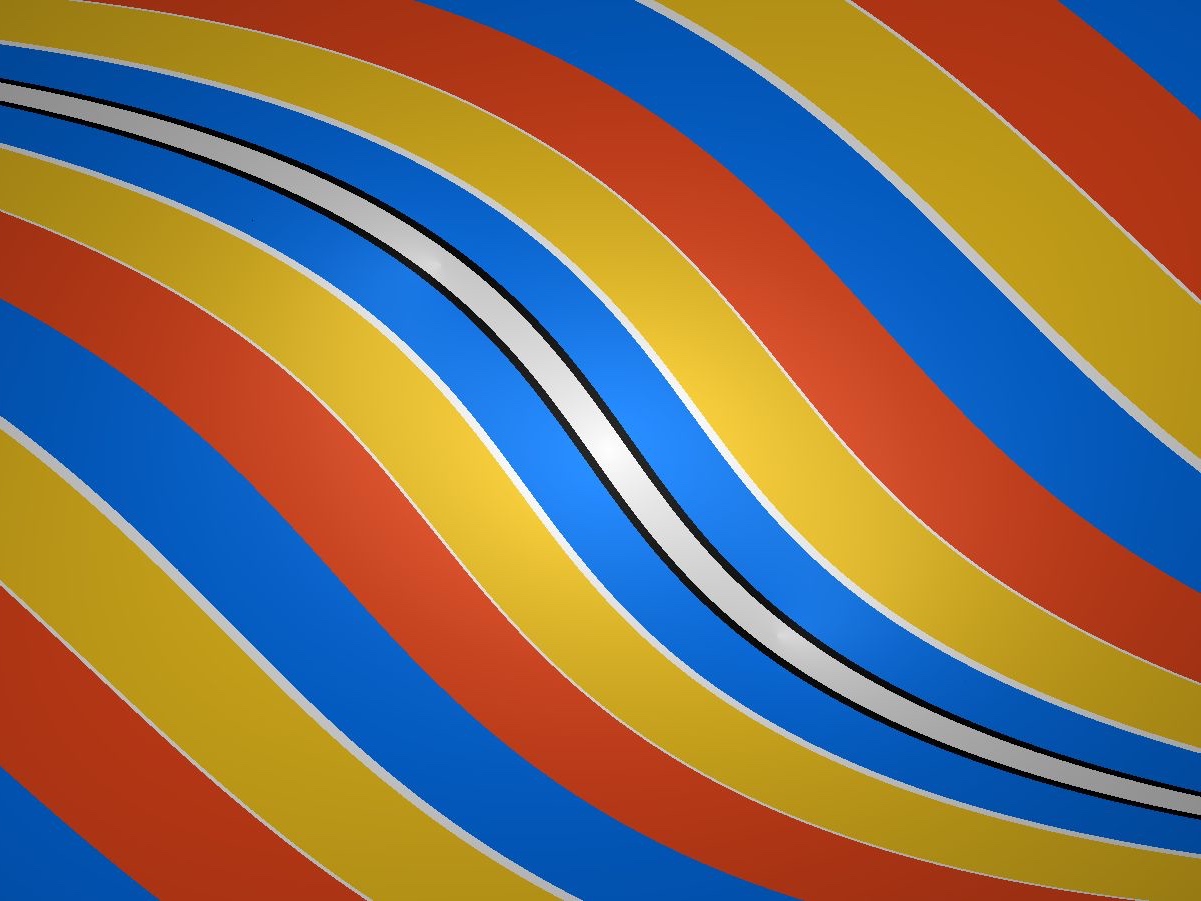}
}
\subfloat[$d=4$]{
	\includegraphics[width=0.3\textwidth]{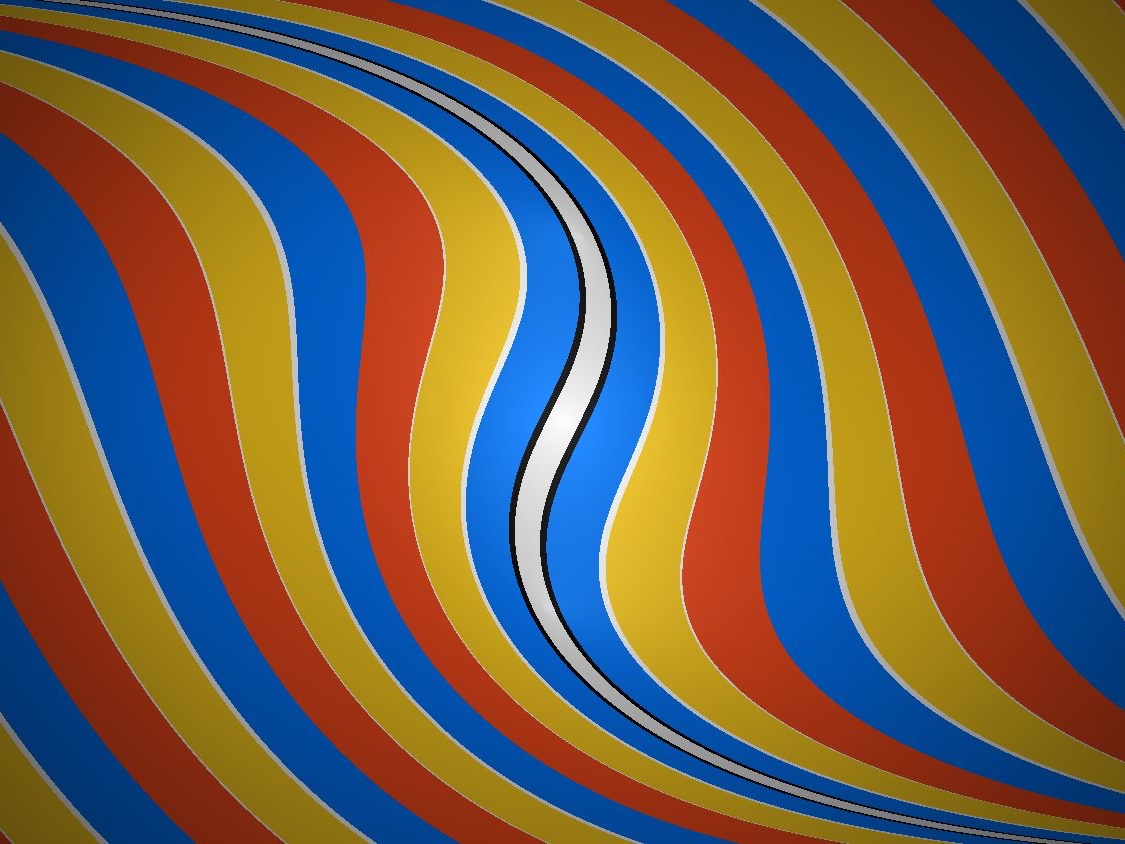}
}
\subfloat[$d=6$]{
	\includegraphics[width=0.3\textwidth]{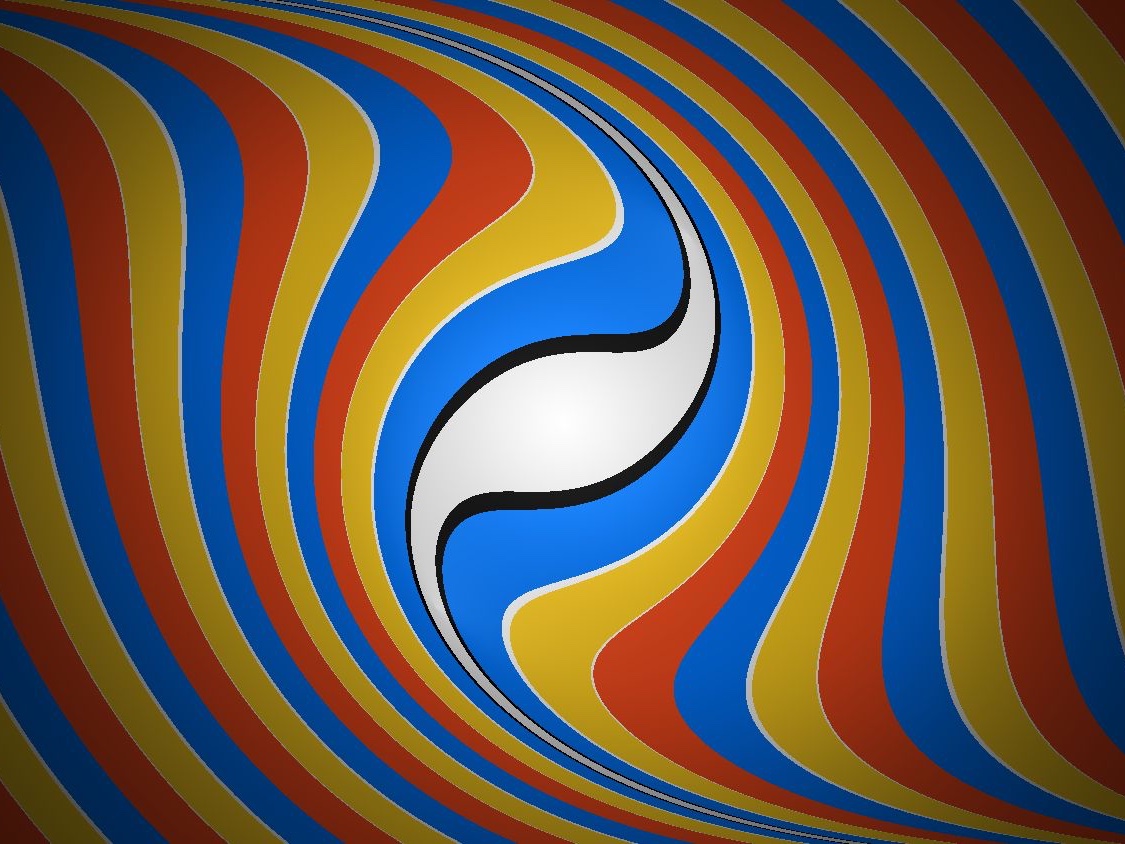}
}\\
\subfloat[$d=7$]{
	\includegraphics[width=0.3\textwidth]{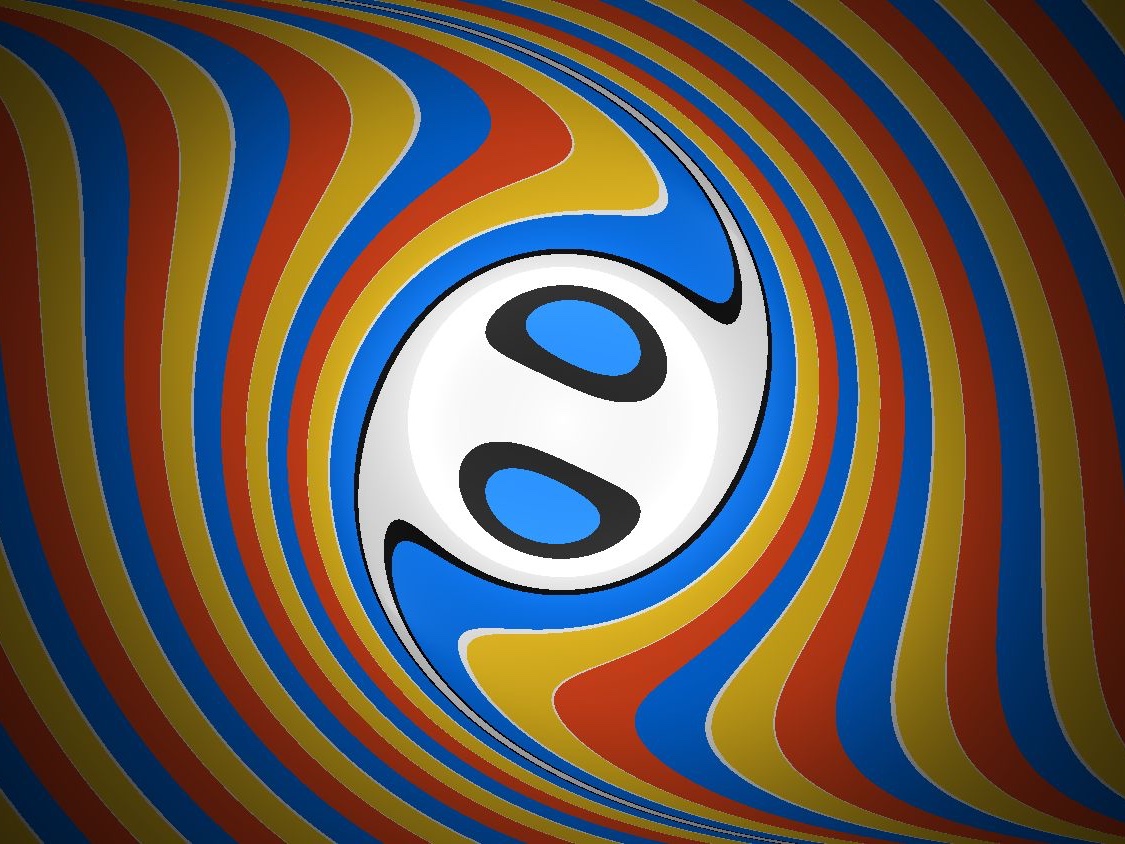}
}
\subfloat[$d=8$]{
	\includegraphics[width=0.3\textwidth]{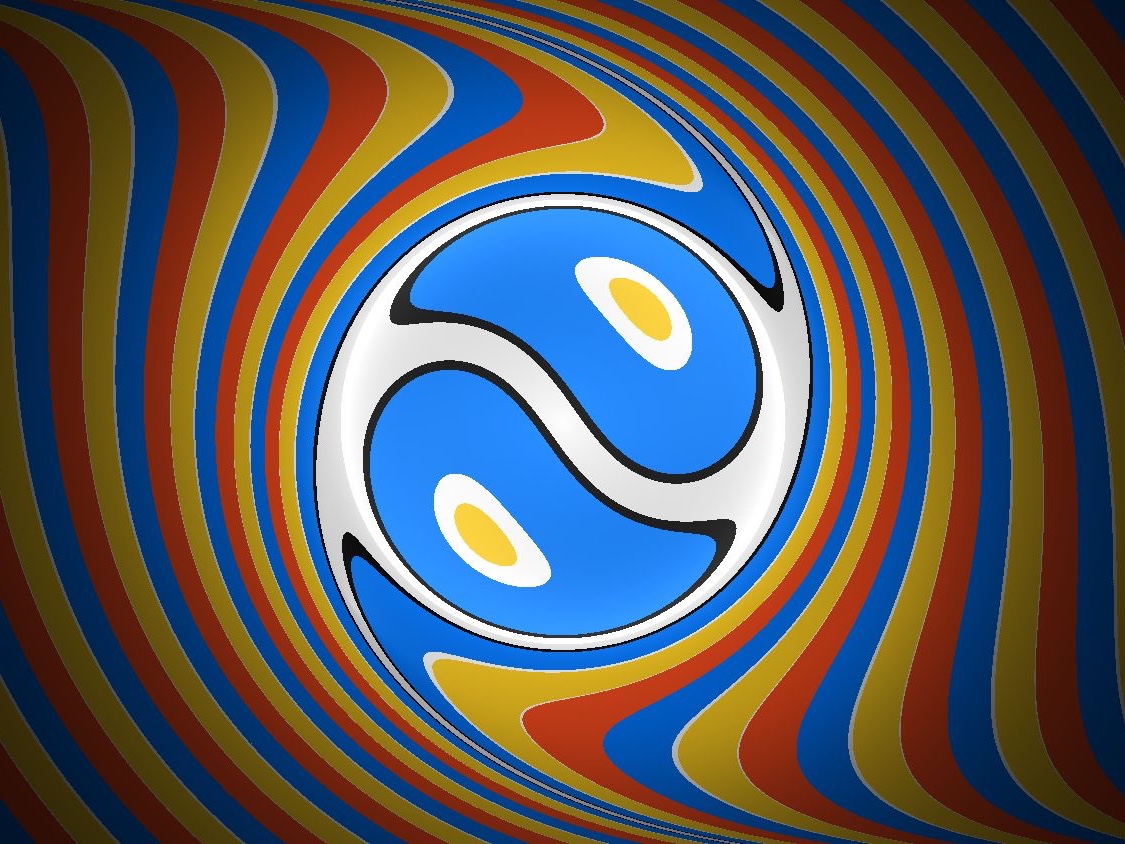}
}
\subfloat[$d=10$]{
	\includegraphics[width=0.3\textwidth]{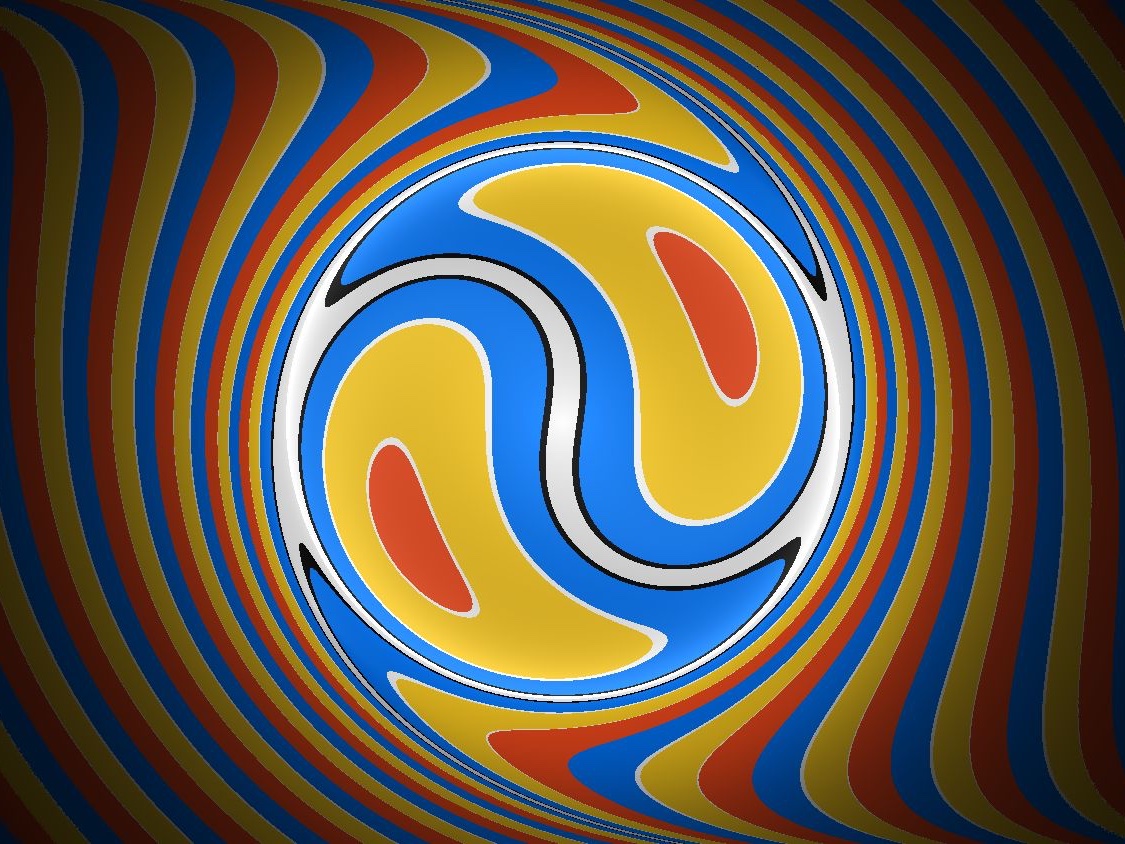}
}\\
\subfloat[$d=15$]{
	\includegraphics[width=0.3\textwidth]{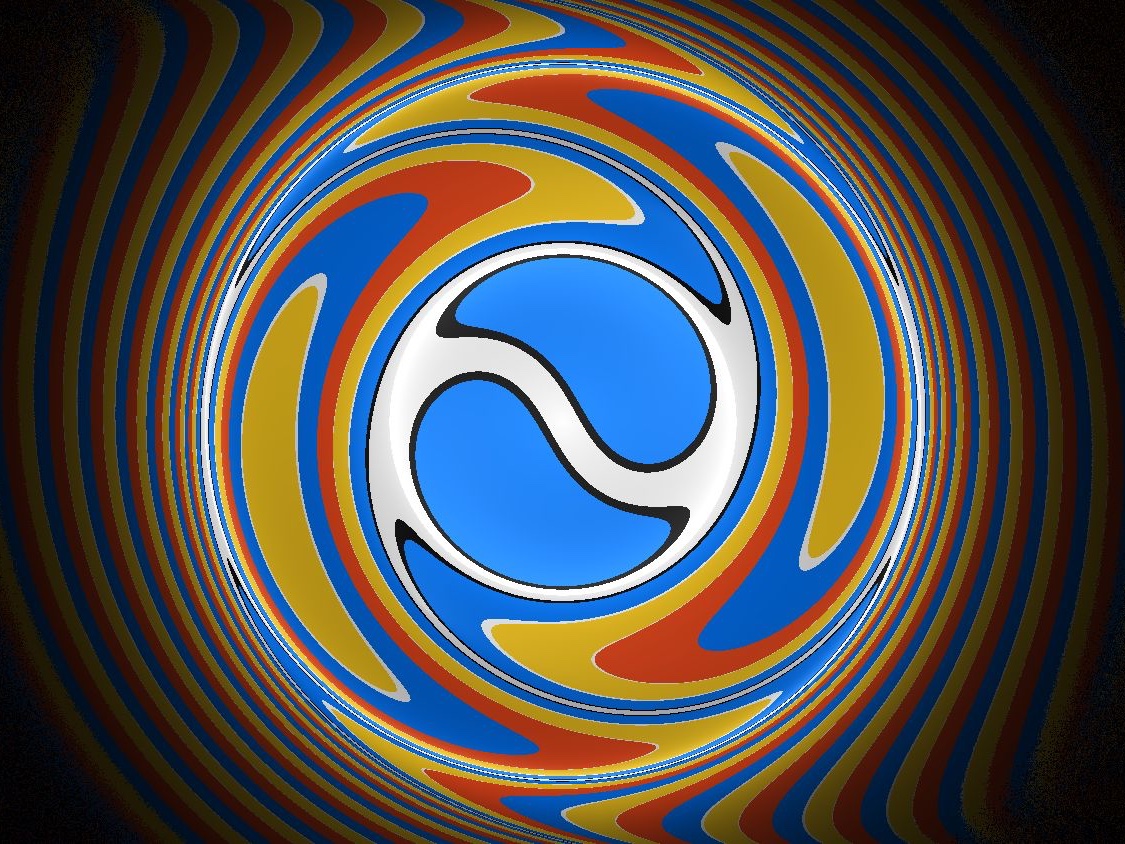}
}
\subfloat[$d=20$]{
	\includegraphics[width=0.3\textwidth]{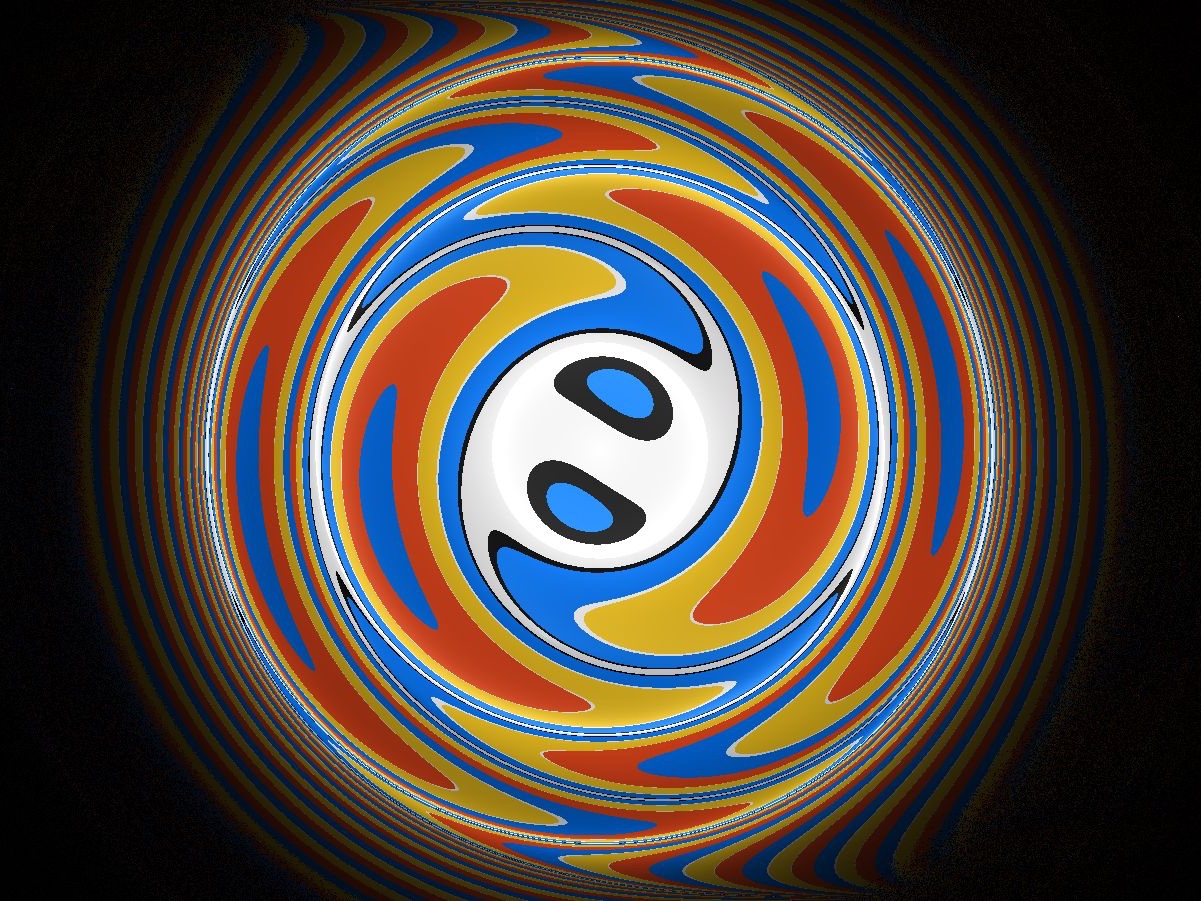}
}
\subfloat[$d=30$]{
	\includegraphics[width=0.3\textwidth]{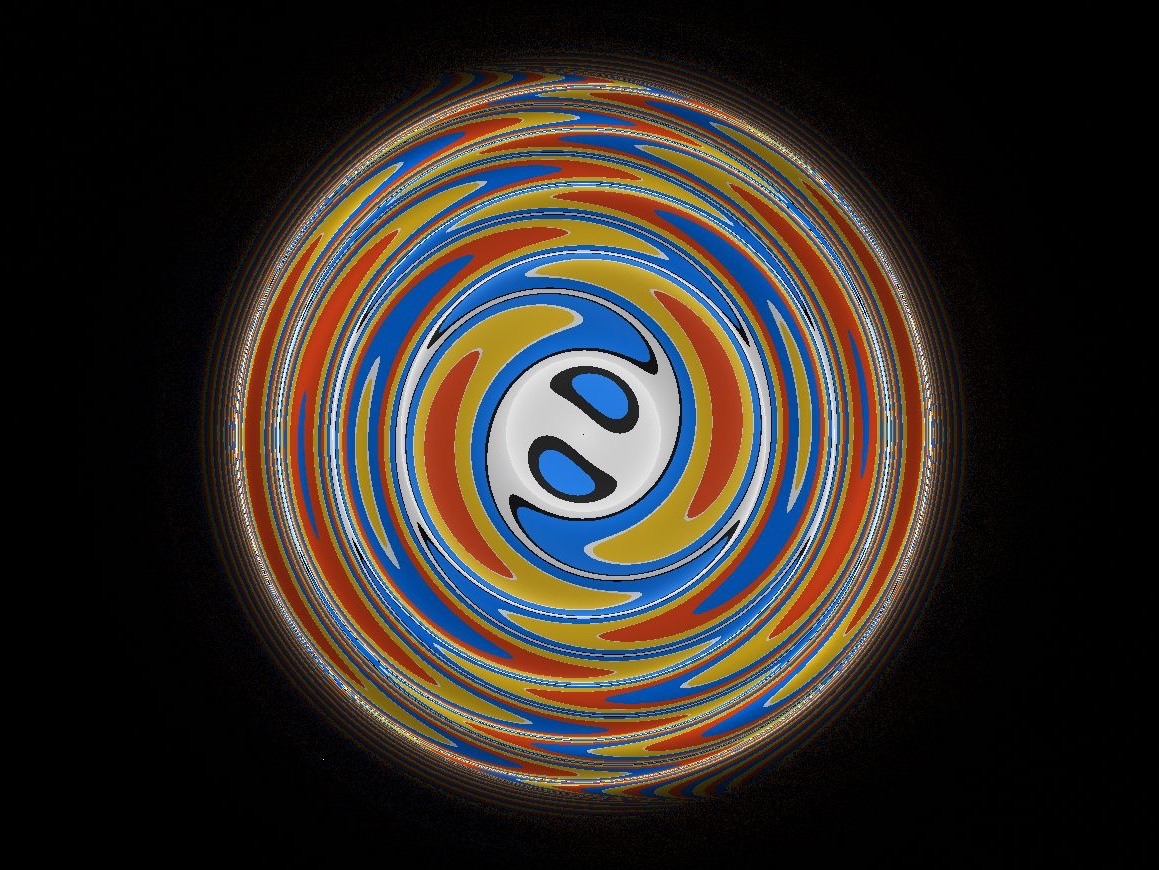}
}
\caption{Horizontal half-space with boundary the hyperbolic plane.  The plane is colored with a geodesic (white) and equidistant curves (primary colors).
The observer is at $p=[0,0,1,d].$
 The figures are rendered with a single light source at height 3 above $\HH^2$, and distance fog. }
\label{Fig:HorizontalPlaneSLR}
\end{figure}

\subsection{Lighting}
\label{Sec:SLRLighting}

We addressed the computation of lighting pairs in \refsec{ExactGeodesicsSLR}.
Here, we calculate the intensity $I(r,u)$ experienced from an isotropic light source at distance $r$ and in the direction $u$. By \refeqn{IntensityDensity}, this is inversely proportional to the area density $\mathcal{A}(r,u)$.
We calculate this directly by taking the derivative of the geodesic flow as in \refeqn{AreaDensityDifferential}.

As a first simplification, note that as the covering map $\lambda\colon X\to \calQ$ is a local isometry and $\mathcal{A}(r,u)$ is a local quantity, we may treat $d\lambda_{\cover o}\colon T_{\cover o}X\to T_o\calQ$ as an identification and work directly in $\calQ = \SL(2,\RR)$.
Let $u$ be the unit vector $u=[a\cos\alpha,a\sin\alpha, c]\in T_o \calQ$ expressed in the basis $(e_x,e_y,e_w)$.
Recall that \refeqn{sl2_Geodesic_1PSubg} gives a parameterization of the unit speed geodesic $\gamma(t)$ in direction $u$ as the product of two one-parameter subgroups of $\calQ = \SL(2,\RR)$ followed by a rotation of angle $\alpha$ about the fiber direction.
These one-parameter subgroups, and hence the geodesic flow, come in three regimes determined by whether $|c/a|$ is greater than, equal to, or less than one. Below we concern ourselves with the two generic cases.

Let $[\rho,\alpha,w]$ be the cylindrical coordinates on $T_o\calQ$ with $(\rho,w)$ the norm of the projections onto the $xy$-plane and $w$-axis respectively, and $\alpha\in[0,2\pi)$ measured from the positive $x$-axis.
In these coordinates, the point $ru\in T_o\calQ$ is expressed as $[\rho,\alpha,w]=[ra,\alpha, rc]$.
Using \refeqn{AreaDensity_Coordinates}, we may calculate the area density in terms of the $\rho, \alpha$, and $w$ derivatives of the geodesic flow.
Here one may deal with the two regimes (in these coordinates, $|\rho|>|w|$ and $|\rho|<|w|$) separately, or unify them into a single computation with complex trigonometric functions. This follows from the particularly nice form of the one-parameter subgroups in \refeqn{sl2_Geodesic_1PSubg}.
In either case, even after much simplification, the resulting formula for area density is rather complicated. We describe it below.

Let $K=\sqrt{|\rho^2-w^2|}$ and let  $f_1\ldots f_6$ denote the polynomials in $\rho,w,$ and $K$:
\begin{equation*}
  \begin{split}
      f_1 &= 17 \rho^6 + 7 \rho^4 w^2 + 16 \rho^2 w^4 + 32 w^6\\
    f_2 &= 48 \rho^2 w^2 (\rho^2 + w^2)\\
    f_3 &= 3 \rho^4 (5 \rho^2 + 3 w^2)\\
    f_4 &=\rho^6 - 2 \rho^2 w^2 - w^4 - \rho^4 (w^2+1)\\
    f_5 &= \rho^6 + 2 \rho^2 w^2 + w^4 - \rho^4 (w^2-1)\\
    f_6 &= 2 \rho^2  (\rho^2 + w^2) K.
      \end{split}
\end{equation*}
\noindent
To combine the two regimes, we let $(S(x),C(x))$ denote $(\sin(x),\cos(x))$ when $|w|>|\rho|$, and $(\sinh(x),\cosh(x))$ for $|w|<|\rho|$.
 Finally, let $g_1$ and $g_2$ be the functions 
\begin{equation*}
\begin{split}
 g_1(\rho,w)=&f_1(\rho,w) - f_2(\rho,w)C(K) + f_3(\rho,w)C(2K)\\
 g_2(\rho,w)=&f_4(\rho,w) + f_5(\rho,w)C(K) \pm f_6(\rho,w)S(K),
 \end{split}
\end{equation*}
 where the $\pm$ in $g_2$ is positive for $|w|>|\rho|$ and negative when $|w|<|\rho|$.  With this notation, the area density is given by 

\begin{equation}
\label{Eqn:sl2_AreaDensity}
\mathcal{A}(r,u)=\frac{\sqrt{\rho^2+w^2}}{2K^{6}}\left|S\left(\frac{K}{2}\right)\right|
\sqrt{
	\Big|
	g_1(\rho,w)g_2(\rho,w)
	\Big|}.
	\end{equation}

See \reffig{SLR_Intensity}.
As with the computation of the geodesic flow in \refsec{flow sl2}, one should use the asymptotic expansion of \refeqn{sl2_AreaDensity} to obtain correct lighting along the null cone $|w|=|\rho|$.

\noindent
\reffig{SLR_Intensity} shows the intensity variation, as seen in the tangent space to a point. 

\begin{figure}[htbp]
\centering
\subfloat[Within a ball of radius 10.]{
\includegraphics[width=0.4\textwidth]{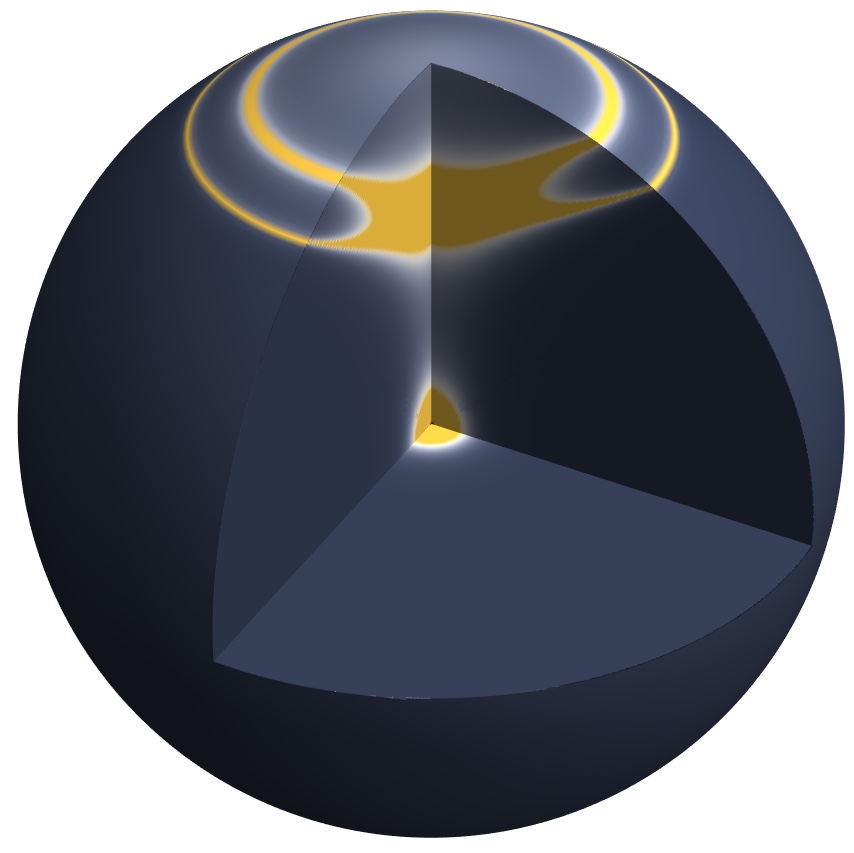}
\label{Fig:SLR_Intensity_10}
}
\quad
\subfloat[Within a ball of radius 30.]{
\includegraphics[width=0.4\textwidth]{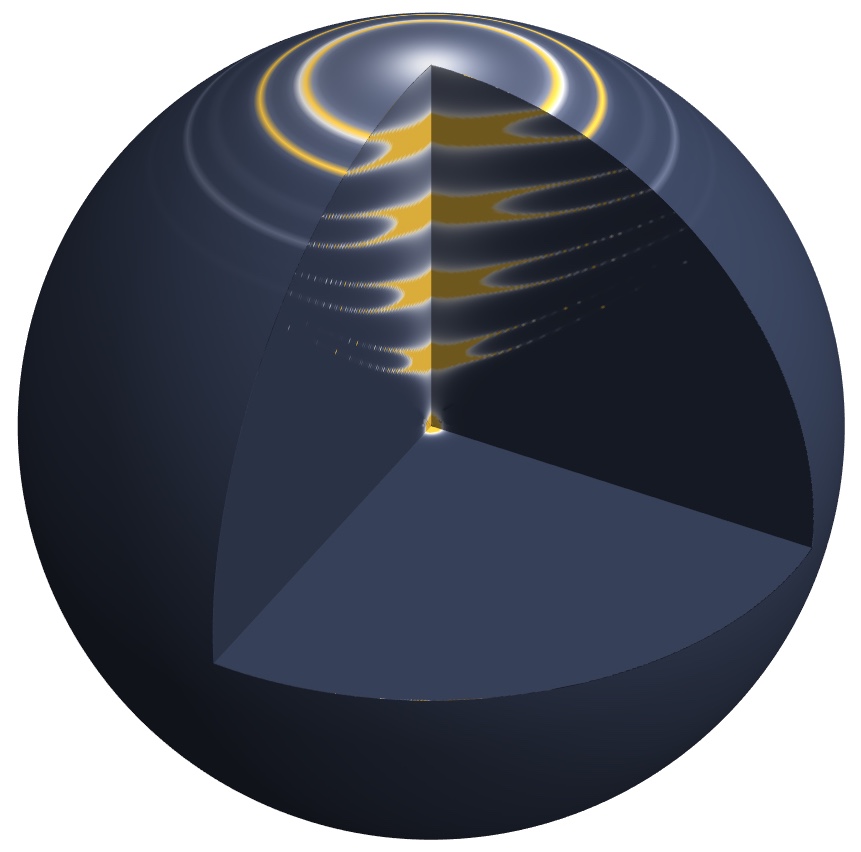}
\label{Fig:SLR_Intensity_30}
}
\caption{The lighting intensity function $I(r,u)$ in $\SLR$ geometry}
\label{Fig:SLR_Intensity}
\end{figure}

\subsection{Discrete subgroups and fundamental domains.}
\label{Sec:SLRDiscreteSubgroupsFundDoms}

The manifolds with $\SLR$ geometry are classified in \cite[Theorem~4.15]{Scott}. The main examples are unit tangent bundles of hyperbolic surfaces and two-dimensional orbifolds.

\medskip
Our model $\mathcal Q$ of ${\rm SL}(2, \RR)$ is a projective model, in the sense that it induces a faithful representation ${\rm Isom}(\mathcal Q) \to {\rm PGL}(4,\RR)$. This is not the case for $X$ however.
Nevertheless, we can adapt the strategy described in \refsec{TeleportingProjective} to produce an efficient fundamental domain.
We explain this strategy with an example.

Let $\Gamma$ be the fundamental group of a genus two surface $\Sigma$:
\begin{equation*}
	\Gamma = \left< A_1,A_2,B_1,B_2 \mid [A_1,B_1][A_2,B_2] = 1\right>.
\end{equation*}
A choice of hyperbolic metric on $\Sigma$ induces a representation $\Gamma \to {\rm SL}(2, \RR)$.
For our example, we choose this metric so that a fundamental domain $U$ for the action of $\Gamma$ on $\HH^2$ is a regular octagon centered at the origin, see \reffig{GenusTwoSurface}.
The generators of $\Gamma$ can now be written as points of $\mathcal Q$:
\begin{align*}
	A_1 & = \left[ \frac {\sqrt 2}2 +1, -\frac {\sqrt 2}2 -1, -\sqrt 2 \sqrt{\sqrt 2 +1}, 0\right], \\
	A_2 & = \left[ \frac {\sqrt 2}2 +1, -\frac {\sqrt 2}2 -1, \sqrt 2 \sqrt{\sqrt 2 +1}, 0\right], \\
	B_1 & = \left[ \frac {\sqrt 2}2 +1, \frac {\sqrt 2}2 +1, \sqrt{\sqrt 2+1}, - \sqrt{\sqrt 2+1}\right], \\
	B_2 & = \left[ \frac {\sqrt 2}2 +1, \frac {\sqrt 2}2 +1, -\sqrt{\sqrt 2+1}, \sqrt{\sqrt 2+1}\right].
\end{align*}

\begin{figure}
\centering
\labellist
\normalsize\hair 2pt
\pinlabel $A_1$ at 74 141
\pinlabel $A_2$ at 129 71
\pinlabel $B_1$ at 150 105
\pinlabel $B_2$ at 60 99

\pinlabel $n_1$ at 198 96
\pinlabel $n_2$ at 179 163
\pinlabel $n_3$ at 116 197
\pinlabel $n_4$ at 30 163

\endlabellist
\includegraphics[width=0.6\textwidth]{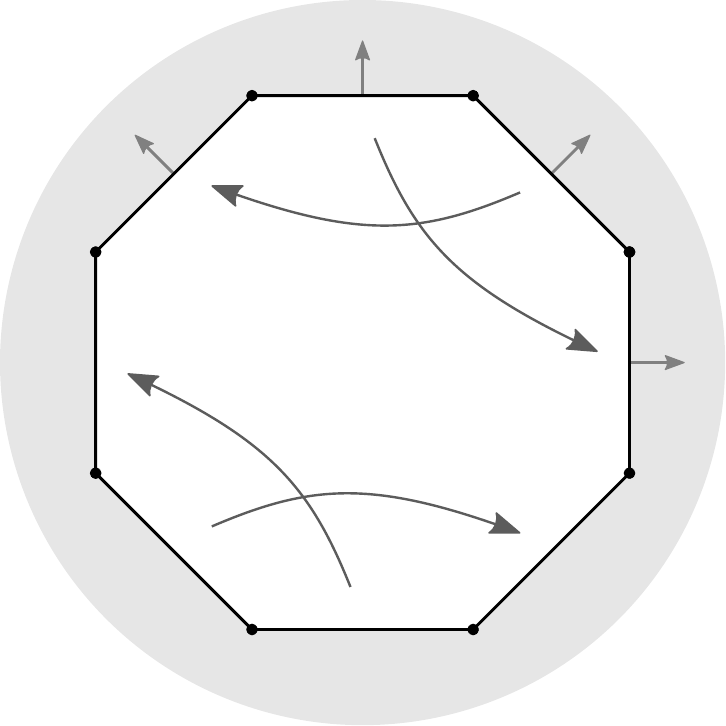}
\caption{A sketch of the fundamental domain in $\HH^2$.
The gray disc is the Klein model of the hyperbolic plane. 
The white octagon is the fundamental domain for the action on $\HH^2$ of the fundamental group $\Gamma$ of a genus-two surface.}
\label{Fig:GenusTwoSurface}
\end{figure}

The pre-image $\cover \Gamma$ of $\Gamma$ by the covering map $\lambda \colon X \to \calQ$ is now a lattice in $\SLR$, viewed as a subset of $G$, the isometries of $X = \SLR$.
We choose lifts $\cover A_1$, $\cover A_2$, $\cover B_1$, and $\cover B_2$ of the previous generators so that their fiber components are respectively $-\pi/2$, $-\pi/2$, $\pi/2$, and $\pi/2$.
For convenience, we define a new element $\cover C$ that is the translation by $2\pi$ along the fiber direction.
One checks that $\cover C^{-2} = [\cover A_1,\cover B_1][\cover A_2,\cover B_2]$ in $\cover \Gamma$.
Note also that $\cover C$ commutes with $\cover A_1$, $\cover A_2$, $\cover B_1$, and $\cover B_2$.
A fundamental domain for the action of $\Gamma$ on $X$ is the subset $D = U \times [-\pi, \pi]$ of $X = \mathcal H \times \RR$.
However, our model $X$ is not well suited to checking easily whether a point belongs to $D$ or not.

To solve this problem, we consider the isometry $h \colon \mathcal H \to \calK$ between the hyperboloid model $\mathcal H \subset \RR^3$ and the Klein model $\calK\subset \RR^2$ of $\HH^2$.
The isometry $h$ extends to a bijection
\begin{equation*}
	\begin{array}{ccc}
		\mathcal H  \times \RR & \to & \calK \times \RR \\
		(q,w) & \mapsto & \left(h(q), w\right).
	\end{array}
\end{equation*}
This provides yet another model $X' = \calK \times \RR$ for $\SLR$.
The image of $D$ under this identification is $D' = U' \times [-\pi, \pi]$ where $U'$ is now an octagon in $\calK$ whose sides are straight lines.
We define the following normal vectors in $\RR^3$
\begin{align*}
	n_1 &= \left[1, 0, 0\right], & n_3 &= \left[0, 1, 0\right], \\	
	n_2 &= \left[\frac {\sqrt 2}2, \frac {\sqrt 2}2, 0\right], & n_4 &= \left[-\frac {\sqrt 2}2 , \frac {\sqrt 2}2, 0\right], \\
	n_5 &= \left[0, 0, 1\right], && 
\end{align*}
see \reffig{GenusTwoSurface}.
To each index $k \in \{1,2,3, 4\}$ we associate two half-spaces 
\begin{equation*}
	H_k^- = \{ v \in \RR^3 \colon \left<v, n_k\right> \geq - \delta \}, 
	\quad \text{and} \quad
	H_k^+ = \{ v \in \RR^3 \colon \left<v, n_k\right> \leq \delta \}, 
\end{equation*}
where $\left< \cdot,\cdot \right>$ is the standard dot product in $\RR^3$ and $\delta = \sqrt 2 \sqrt{\sqrt 2 - 1}$. We choose $\delta$ so that $D'$ is the intersection of these half-spaces.
Similarly, we let 
\begin{equation*}
	H_5^- = \{ v \in \RR^3 \colon \left<v, n_5\right> \geq - \pi \}, 
	\quad \text{and} \quad
	H_5^+ = \{ v \in \RR^3 \colon \left<v, n_5\right> \leq \pi \}. 
\end{equation*}
The teleporting algorithm has two main steps.
Let $p = (q,w)$ be a point in our new model $\calK \times \RR$ of $\SLR$.
\begin{enumerate}
	\item If $q$ does not belong to $H_1^+$ (respectively $H_2^+$, $H_3^+$, $H_4^+$, $H_1^-$, $H_2^-$, $H_3^-$, $H_4^-$), then we move $p$ by $\cover B_1^{-1}$ (respectively $\cover A_1$, $\cover B_1$, $\cover A_1^{-1}$, $\cover B_2^{-1}$, $\cover A_2$, $\cover B_2$, $\cover A_2^{-1}$).
	Observe that $U'$ is also a Dirichlet domain for the action of $\Gamma$ on $\HH^2$.
	More precisely, $H_1^+ \cap H_3^+$ is the set of points in $\calK$ which are closer to the origin $o$ than their translates by $B_1^{\pm 1}$.
	Hence the translation by $\cover B_1^{-1}$ moves the projection $q$ of $p$ to $\calK$ closer to $o$.
	It follows that after finitely many steps, we can ensure that $q$ belongs to $U'$.
	Since we always reduce the distance from $o$ to $p$, the order in which we perform the algorithm does not matter.
	\item Once this is done, if $p$ does not belong to $H_5^-$ (respectively $H_5^+$), then we move it by $\cover C$ (respectively $\cover C^{-1}$).
	Note that $\cover C$ does not affect the horizontal component $q$ of $p$. Therefore, after this process $p$ lies in the fundamental domain $D'$.
\end{enumerate}

\reffig{GenusTwo} shows some views within the unit tangent bundle to a genus two surface, as described in this section. The fundamental domain is a very tall octagonal prism. To better illustrate the geometry, our scene is the complement of three spheres stacked vertically within this domain.

\begin{figure}[htbp]
\centering
\subfloat[Looking along the fiber.]{
	\includegraphics[width=0.3\textwidth]{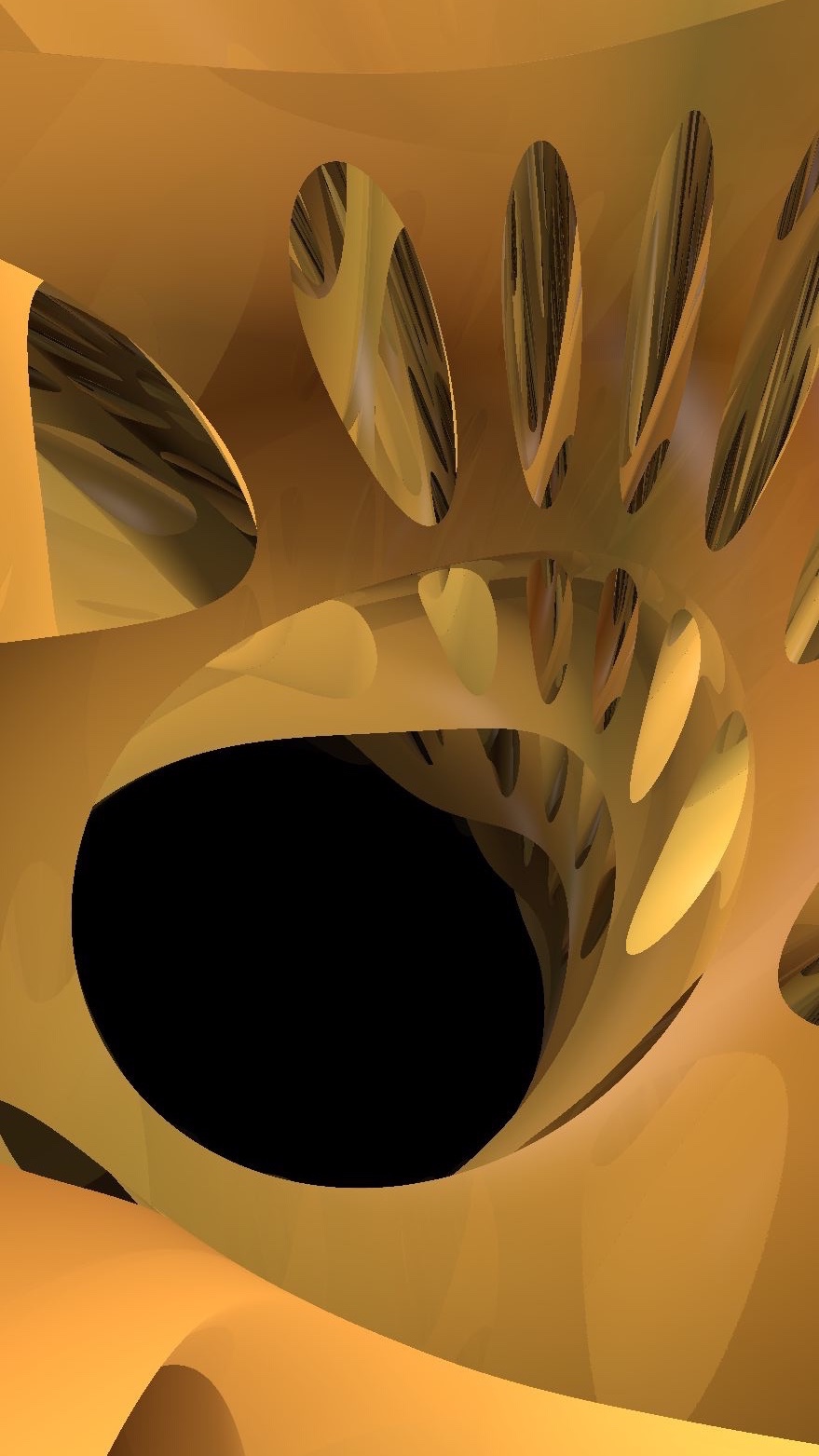}
}
\subfloat[Looking near the fiber direction.]{
	\includegraphics[width=0.3\textwidth]{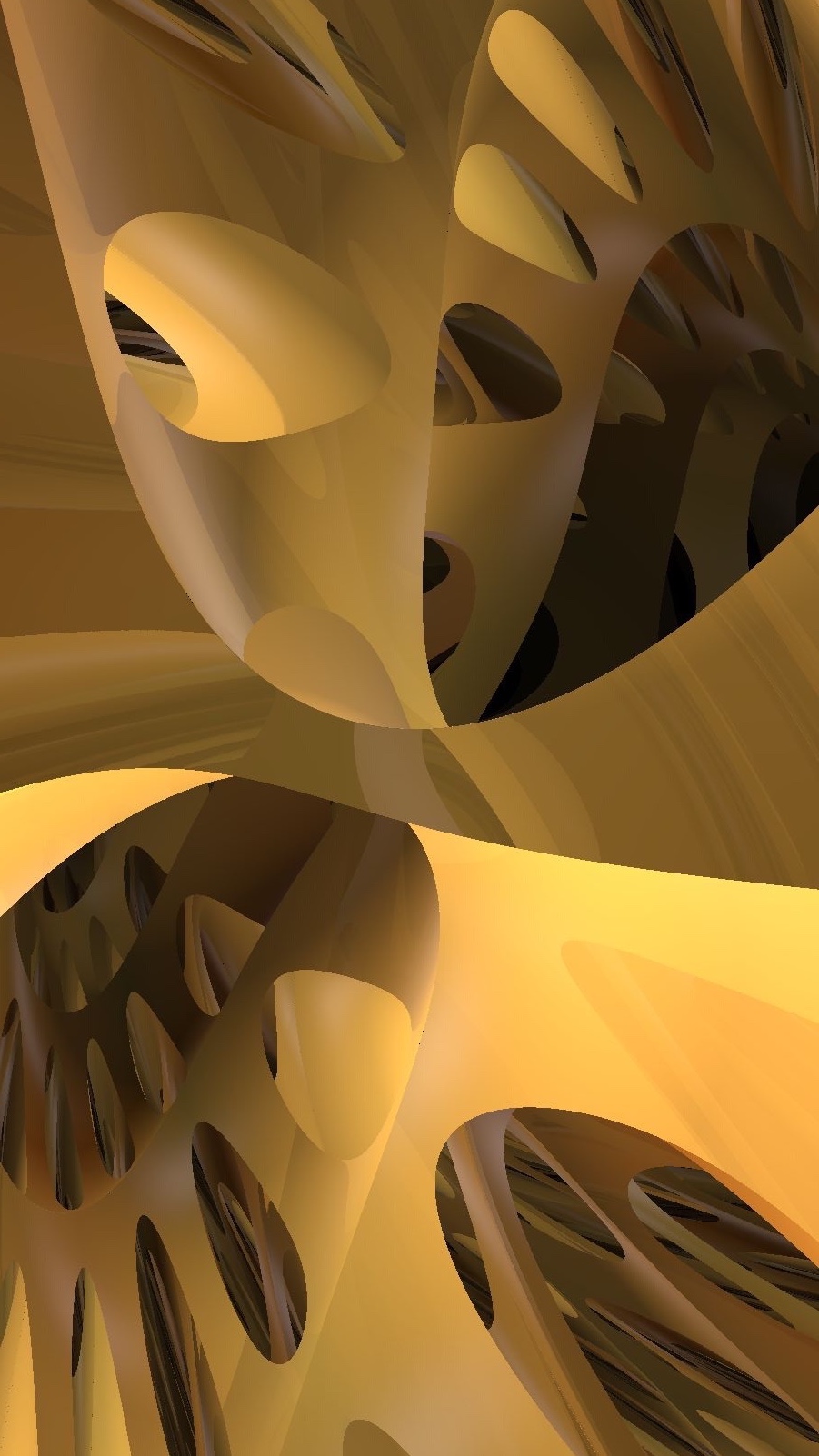}
}
\subfloat[A view further from the fiber direction.]{
	\includegraphics[width=0.3\textwidth]{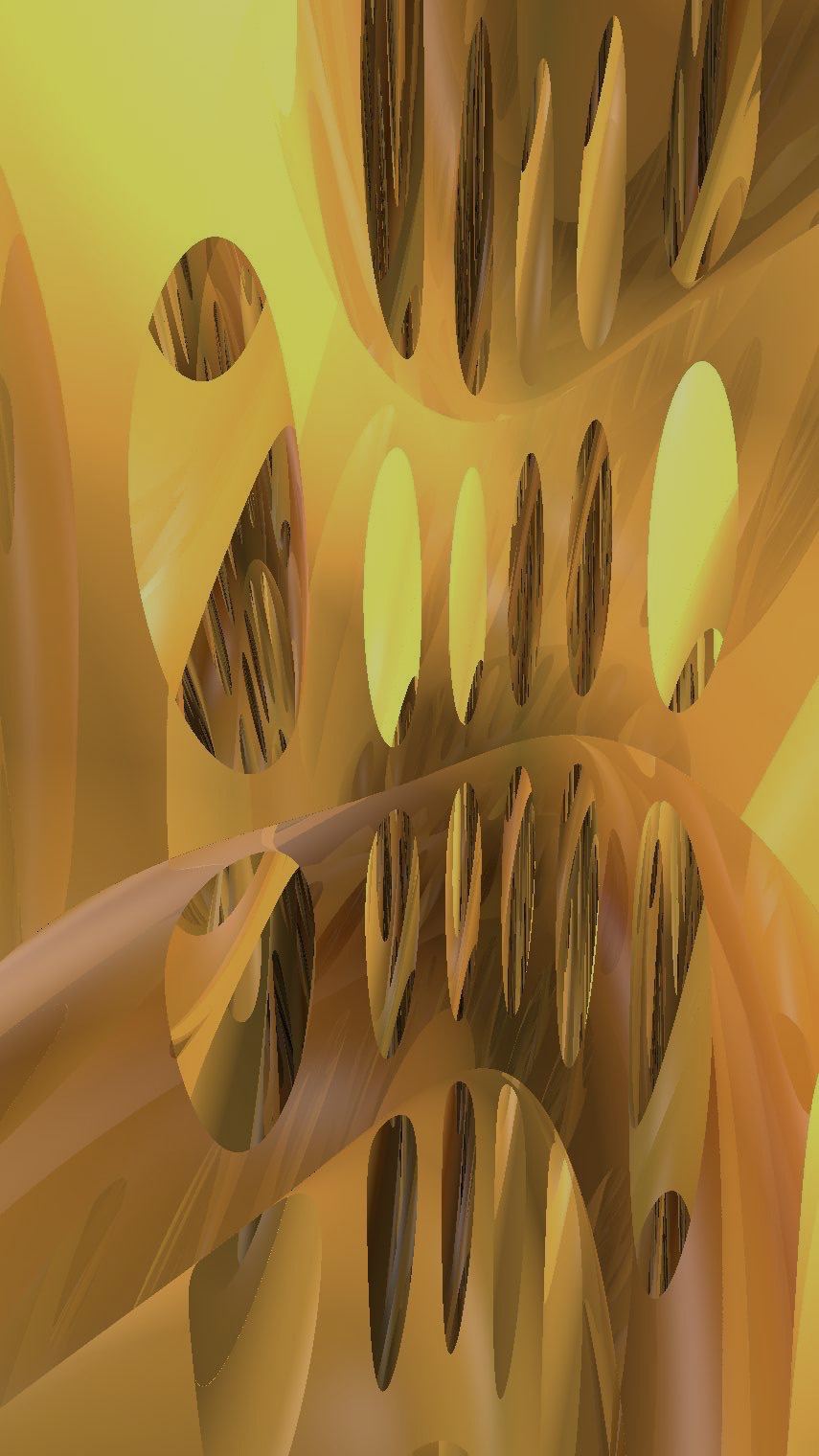}
}
\caption{The unit tangent bundle of a genus two surface.}
\label{Fig:GenusTwo}
\end{figure}

In \reffig{SLRExamples}, we show the in-space view for various scenes in $\SLR$ geometry.  \reffig{SLRGenusTwo} shows the same scene as \reffig{GenusTwo}, with a globe added at the center of each of the three spheres.
\reffig{SLRSpheres} shows a lattice of globes in the unit tangent bundle for a sphere with cone points $\pi/3, \pi/3,$ and $2\pi/3$.
\reffig{SLRFibers} shows solid cylinders (which we implement as vertical objects) around fibers of $\SLR$. The lighting in these images is based on a continuously varying direction field rather than point light sources.

\begin{figure}[htbp]
\centering
\subfloat[A globe within the unit tangent bundle of a genus two surface.]{
\includegraphics[width=0.90\textwidth]{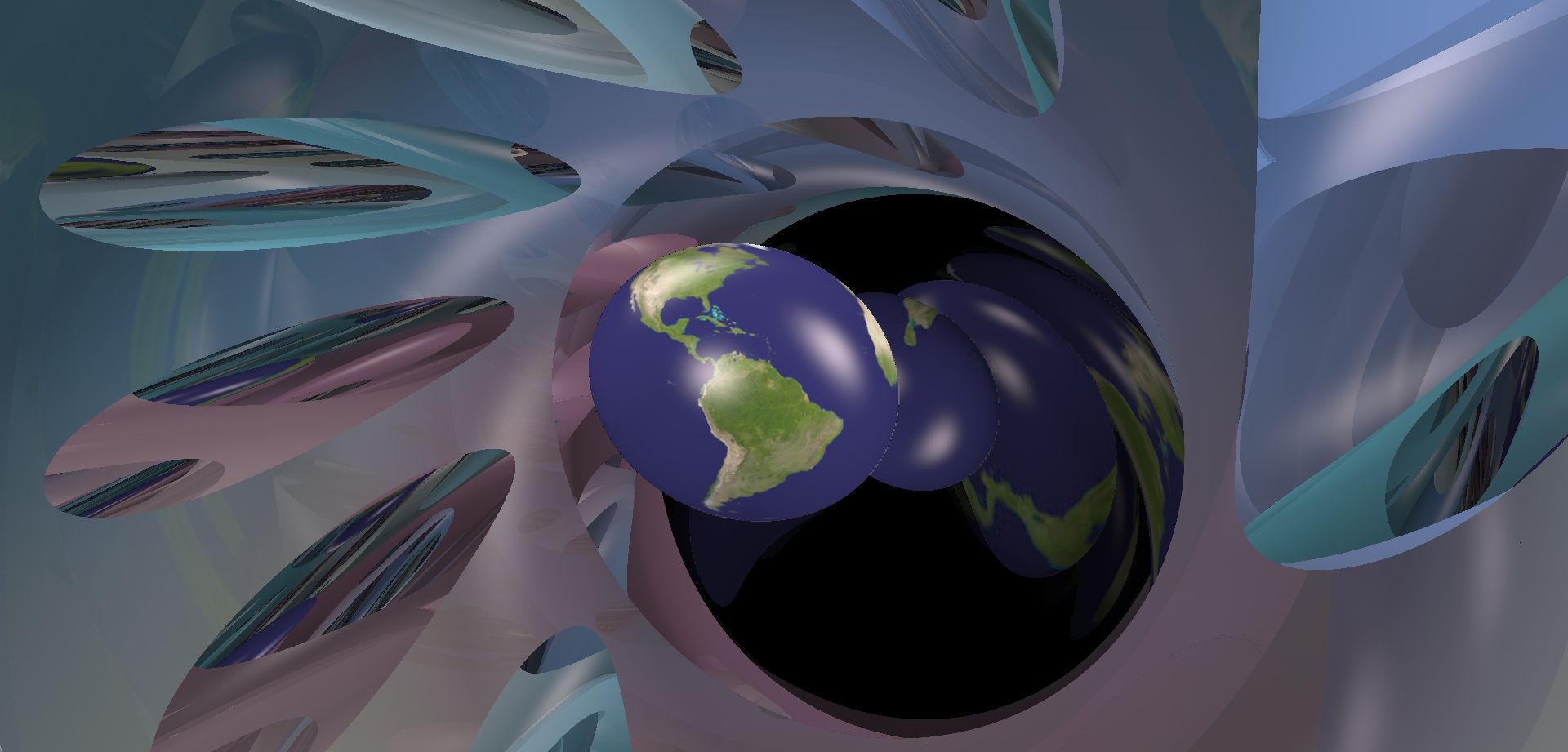}
\label{Fig:SLRGenusTwo}
}\
\subfloat[A lattice of globes.]{
\includegraphics[width=0.90\textwidth]{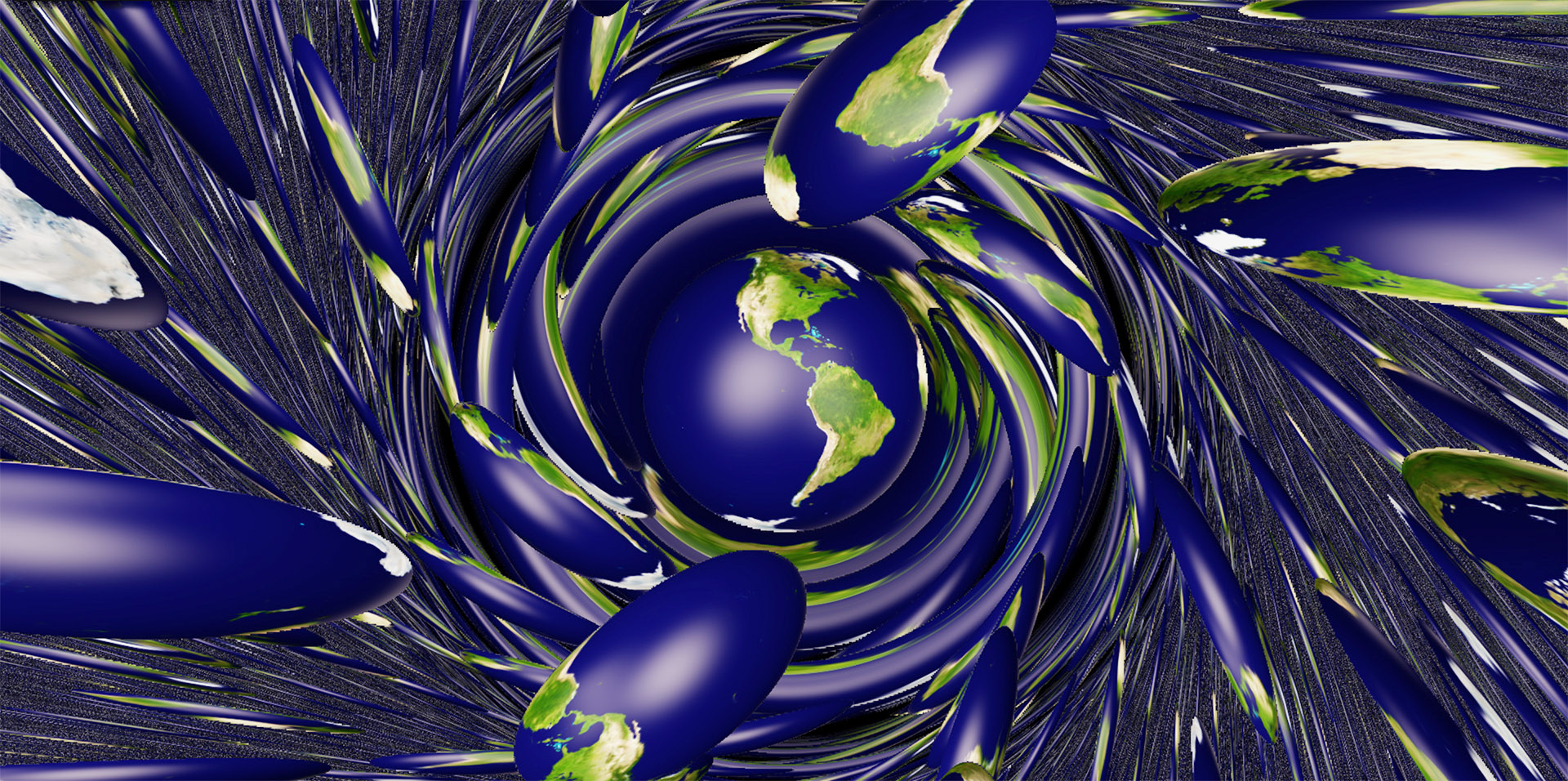}
\label{Fig:SLRSpheres}
}\
\subfloat[Lifts of the fibers in $UT\HH^2$.]{
\includegraphics[width=0.90\textwidth]{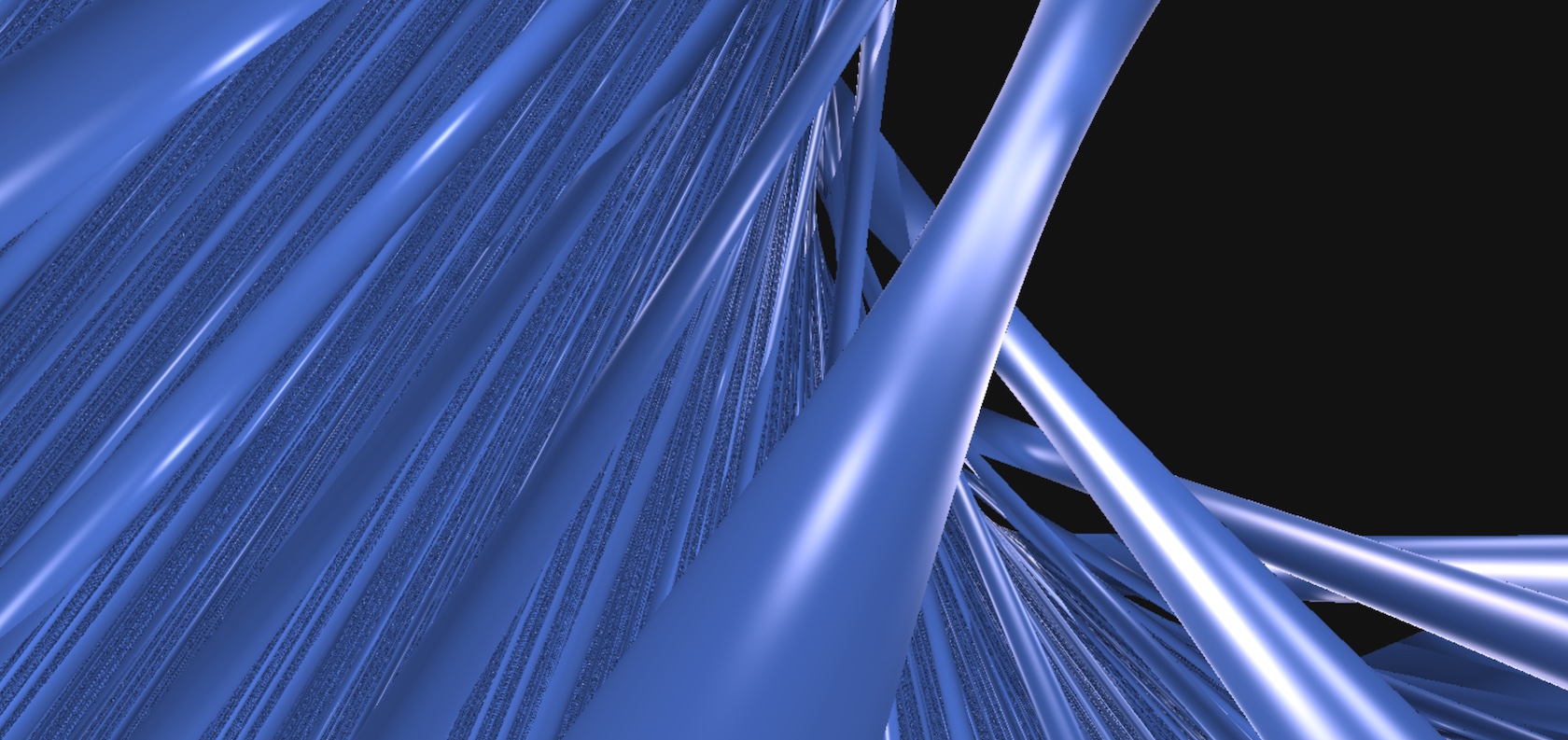}
\label{Fig:SLRFibers}
}\
\caption{$\SLR$ Geometry. }
\label{Fig:SLRExamples}
\end{figure}

\section{Sol}
\label{Sec:Sol}

\subsection{Model}
As with Nil and $\SLR$, Sol is a Lie group.
The underlying space of our model is the affine subspace $X$ of $\RR^4$ defined by $w= 1$.
The group law is as follows: the point $[x,y,z,1]$ acts on $X$ on the left as the matrix
\begin{equation*}
	\left[\begin{array}{cccc}
		e^z & 0 & 0 & x\\
		0 & e^{-z} & 0 & y\\
		0 & 0 & 1 & z\\
		0 & 0 & 0 & 1
	\end{array}\right].
\end{equation*}
The origin $o$ is the point $[0,0,0,1]$.
Its tangent space $T_oX$ is identified with the linear subspace of $\RR^4$ given by the equation $w = 0$.
The metric tensor at an arbitrary point $p = [x,y,z,1]$ is
\begin{equation}
\label{Eqn:metric sol}
	ds^2 = e^{-2z}dx^2 + e^{2z}dy^2 + dz^2.
\end{equation}
With this metric, the action of Sol on itself is an action by isometries.
The stabilizer $K$ of the origin $o$ is isomorphic to the dihedral group of order eight, $D_8$, which is generated by two symmetries acting on $X$ as the matrices
\begin{equation*}
	S_1 = 
	\left[\begin{array}{cccc}
		-1 & 0 & 0 & 0\\
		0 & 1 & 0 & 0\\
		0 & 0 & 1 & 0\\
		0 & 0 & 0 & 1
	\end{array}\right]
	\quad \text{and} \quad
	S_2 = 
	\left[\begin{array}{cccc}
		0 & 1 & 0 & 0\\
		1 & 0 & 0 & 0\\
		0 & 0 & -1 & 0\\
		0 & 0 & 0 & 1
	\end{array}\right]
\end{equation*}
respectively.
These symmetries can be observed in the balls of Sol, see \reffig{sol balls - 3d}.

\begin{figure}[h!tbp]
\begin{center}
	\includegraphics[width=\textwidth]{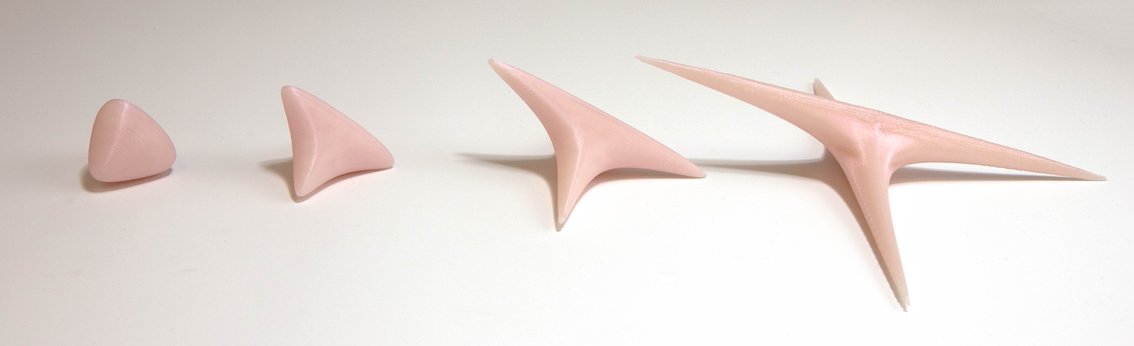}
\caption{3D-printed models of the balls of radius one to four in Sol. We scaled the ball of radius $r$ down by a factor of $r$ in order to keep the physical sizes reasonable. Photograph by Edmund Harriss.}
\label{Fig:sol balls - 3d}
\end{center}
\end{figure}

\subsection{Geodesic flow and parallel transport}
\label{Sec:FlowSol}
As for Nil and $\SLR$ in Sections \ref{Sec:nil geo flow} and \ref{Sec:flow sl2}, we can use Grayson's method to study the geodesic flow and parallel transport.
Let $\gamma \colon \RR \to X$ be a geodesic, and let be $T(t) \colon T_{\gamma(0)}X \to T_{\gamma(t)}X$ be the corresponding parallel-transport operator.
Following Sections~\ref{Sec:GeodesicFlow - Grayson} and \ref{Sec:Parallel transport - Grayson}, we define two paths $u \colon \RR \to T_oX$ and $Q \colon \RR \to {\rm SO}(3)$ by the relations
\begin{equation*}
	\begin{split}
		\dot \gamma(t) & = d_oL_{\gamma(t)} u(t), \textrm{ and} \\
		T(t) \circ d_oL_{\gamma(0)} & = d_oL_{\gamma(t)} Q(t).
	\end{split}
\end{equation*}
After some computation, Equations (\ref{Eqn:GraysonMethodFlowSphere}) and (\ref{Eqn:ParallelTransportLinEq}) respectively become
\begin{equation*}
	\left\{ \begin{split}
		\dot u_x & = u_xu_z \\
		\dot u_y & = -u_yu_z \\
		\dot u_z & = u_y^2 - u_x^2
	\end{split}\right.
\end{equation*}
and 
\begin{equation*}
	\dot Q + BQ = 0,
	\quad \text{where} \quad
	B =\left[\begin{array}{ccc}
		0 & 0 & -u_x \\
		0 & 0 & u_y \\
		u_x & -u_y & 0
	\end{array}	\right].
\end{equation*}

The path $u$, as well as the geodesic $\gamma$, can be computed explicitly \cite{Troyanov:1998aa}.\footnote{
	A commonly cited reference for solving the geodesic flow in Sol is \cite{Bolcskei:2007aa}. 
	However, the authors do not conduct the computation to the final stage -- see their Theorem~4.1(1). Moreover, the formulas given in Theorem~4.1(2) have some errors.
}
Assume that $\gamma$ starts at the origin $o$, so that the initial condition is $u(0) = \dot \gamma(0) = [a,b,c,0]$.
Because of the symmetries of Sol, we can assume without loss of generality that $a \geq 0$ and $b \geq 0$.
We distinguish three cases.

\paragraph{\itshape Case $a = 0$.}
Here the solution for $u$ is
\begin{equation*}
	u(t) = \left[ 0, \frac b{\cosh t + c \sinh t}, \frac{c + \tanh t}{1 + c\tanh t},0 \right].
\end{equation*}
It follows that 
\begin{equation*}
	\gamma(t) = \left[0, \frac{b \tanh t}{1 + c \tanh t}, \ln(\cosh t + c \sinh t), 1\right].
\end{equation*}
In particular, $\gamma$ stays in the plane $\{x = 0\}$.
This plane is totally geodesic and isometric to $\HH^2$.
\paragraph{\itshape Case $b = 0$.}
Here $u$ and $\gamma$ can be deduced from the previous case, via a conjugation by the symmetry $S_2$ fixing the origin.
That is, 
\begin{equation*}
	u(t) = \left[ \frac a{\cosh t - c \sinh t}, 0, \frac{c - \tanh t}{1 - c\tanh t},0 \right]
\end{equation*}
and
\begin{equation*}
	\gamma(t) = \left[\frac{a \tanh t}{1 - c \tanh t}, 0, -\ln(\cosh t - c \sinh t), 1\right].
\end{equation*}
Note that $\gamma$ stays in the plane $\{ y=0\}$, which is also a totally geodesic, isometrically embedded copy of $\HH^2$.

\paragraph{\itshape Case $ab \neq 0$.}
We first define some auxiliary parameters.
Let
\begin{equation*}
	k = \sqrt{\frac{1 - 2ab}{1 + 2ab}} \quad \text{and} \quad k' = 2\sqrt{\frac{ab}{1 + 2ab}}.
\end{equation*}
The associated \emph{complete elliptic integrals} of the first and second kind are respectively
\begin{equation*}
	K(k) = \int_0^{\frac\pi2} \frac {d\theta}{\sqrt{1 - k^2 \sin^2\theta}} \quad \text{and} \quad 
	E(k) = \int_0^{\frac\pi2} \sqrt{1 - k^2 \sin^2\theta}d\theta.
\end{equation*}
We denote by $\sinj$ and $\cosj$ the \emph{Jacobi elliptic sine} and \emph{cosine} functions with elliptic modulus $k$. 
We write $\ampj$ for the \emph{delta amplitude} and $\zeta$ for the \emph{Jacobi zeta function}, also with elliptic modulus $k$.
For an in-depth study of elliptic functions, we refer the reader to \cite{Jacobi:1829aa, Oberhettinger:1949aa, Lawden:1989aa}.
Recall that $\sinj$ and $\cosj$ are $4K(k)$-periodic.	Let 
\begin{equation*}
	\mu = \sqrt{1 + 2 ab}. 
	\end{equation*}
We also fix $\alpha \in [0, 4K(k))$ such that 
\begin{equation*}
	\sinj\alpha = - \frac c{\sqrt{1 - 2ab}} \quad \text{and} \quad \cosj \alpha = \frac {a - b}{\sqrt{1 - 2ab}}.
\end{equation*}
Setting $s = \mu t + \alpha$, we now have 
\begin{equation*}
	u(t) = \left[
		\sqrt{ab}\ \frac {k \cosj s  +  \ampj s }{k'},
		\sqrt{ab}\ \frac {k'}{k \cosj s  +  \ampj s},
		- k \mu \sinj s,
		0
	 \right].
\end{equation*}
In order to write the solution for $\gamma$, we let 
\begin{equation*}
	L = \frac {E(k)}{k'K(k)} - \frac{k'}2.
\end{equation*}
We finally get
\begin{equation*}
	\gamma(t) = \left[\begin{split}
		\sqrt{\frac ba} \left( \frac 1 {k'} \big(\zeta(s) - \zeta(\alpha)\big) + \frac k{k'} \big(\sinj s - \sinj \alpha\big) + \big(s - \alpha\big)L \right)\\
\sqrt{\frac ab} \left( \frac 1 {k'} \big(\zeta(s) - \zeta(\alpha)\big) - \frac k{k'} \big(\sinj s - \sinj \alpha\big) + \big(s - \alpha\big)L \right)\\
		\frac 12 \ln \left(\frac ba\right) + \arcsinh\left( \frac k{k'} \cosj s \right) \\
		1
	\end{split}
	 \right].
\end{equation*}

In practice, we use a mixed approach, as follows.
\begin{itemize}
	\item When we need to flow for a long time (for example when all objects in the scene are very far away from the camera), then we use the explicit formula above.
	However, if the initial direction $\dot \gamma(0)$ is close to one of the hyperbolic planes, this formula suffers from many numerical errors. This is an example of the kind of error described in \refsec{FloatingPoint}\refitm{RemovableSingularities}. 
	In this case, we replace the exact solution by its asymptotic expansion of order two.
	\item  When we need to flow for a short time, the above method again seems to suffer from significant numerical errors.
	This happens when during the ray-marching algorithm some object is very close, or when updating the position and facing of the observer between two frames.
	In this situation, we numerically integrate the geodesic flow and the parallel transport equations using the Runge--Kutta method of order two.
\end{itemize}

\begin{remark*}
	Since Jacobi elliptic and zeta functions are not available in the OpenGL library, we implemented them directly, using the AGM algorithm~\cite{Bulirsch:1965ab, Olver:2010aa, Abramowitz:1964aa}.
\end{remark*}

\subsection{Distance to coordinate half-spaces}
\label{Sec:SolHalfSpaces}
Given $\alpha \in \RR$, we write $H^+_z(\alpha) = \{ z \geq \alpha\}$ and $H^-_z(\alpha) = \{ z \leq \alpha\}$.
Note that the boundary $\{ z = \alpha \}$ of these half-spaces is isometric to a euclidean plane, but is not convex as a subspace of $X$.
Recall that we write $\sdf(\cdot,S)$ for the signed distance function for the scene $S$.

\begin{lemma}
	Fix a real number $\alpha$.
	For every point $p = [x,y,z,1]$ in $X$, we have 
	\begin{equation*}
		\sdf\left(p, H^-_z(\alpha)\right) = z - \alpha \quad \text{and} \quad \sdf\left(p, H^+_z(\alpha)\right) = \alpha - z .
	\end{equation*}
\end{lemma}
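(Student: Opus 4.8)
The plan is to exploit the fact that the $z$-coordinate is singled out by the metric \refeqn{metric sol}. Since the coefficients $e^{-2z}$ and $e^{2z}$ of $dx^2$ and $dy^2$ are non-negative, we have the pointwise inequality $ds^2 \geq dz^2$ on $X$. First I would deduce from this that the projection $\pi_z \colon X \to \RR$ sending $[x,y,z,1]$ to $z$ is $1$-Lipschitz: for any path $\gamma = [x(t),y(t),z(t),1]$ from $p$ to $q$ in $X$ its length satisfies $L(\gamma) = \int \sqrt{e^{-2z}\dot x^2 + e^{2z}\dot y^2 + \dot z^2}\,dt \geq \int |\dot z|\,dt \geq |z_p - z_q|$, and taking the infimum over all such paths gives $\dist_X(p,q) \geq |z_p - z_q|$. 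This is the only genuine input; note that it is needed precisely because the boundary plane $\{z=\alpha\}$ is \emph{not} totally geodesic, so one cannot simply invoke a geodesic-hyperplane argument.

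Next I would exhibit a path realizing this bound. Writing $p = [x,y,z,1]$, the curve $\gamma(t) = [x, y, (1-t)z + t\alpha, 1]$, $t \in [0,1]$, joins $p$ to the point $[x,y,\alpha,1]$ lying on the plane $\{z = \alpha\}$; along it $\dot x = \dot y = 0$, so its $X$-length is exactly $\int_0^1 |\alpha - z|\,dt = |\alpha - z|$. Combined with the Lipschitz estimate applied to points $q$ with $\pi_z(q) = \alpha$, this gives $\dist_X\big(p, \{z = \alpha\}\big) = |z - \alpha|$.

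It then remains to translate this into the signed distance function according to \refdef{SDF}, treating the two sides of the half-space separately. If $p \notin H^-_z(\alpha)$, i.e. $z > \alpha$, then $\sdf(p, H^-_z(\alpha)) = \dist_X\big(p, H^-_z(\alpha)\big)$; every $q$ with $\pi_z(q) \leq \alpha$ satisfies $\dist_X(p,q) \geq z - \pi_z(q) \geq z - \alpha$, while the vertical segment above shows this value is attained, so the distance equals $z - \alpha$. If $p \in H^-_z(\alpha)$, i.e. $z \leq \alpha$, then by \refdef{SDF} we have $-\sdf(p, H^-_z(\alpha)) = \dist_X\big(p, X \setminus H^-_z(\alpha)\big) = \dist_X\big(p, \{z > \alpha\}\big)$; every $q$ with $\pi_z(q) > \alpha$ satisfies $\dist_X(p,q) \geq \pi_z(q) - z > \alpha - z$, and the points $[x,y,\alpha+\varepsilon,1]$ show the infimum is $\alpha - z$, so $\sdf(p, H^-_z(\alpha)) = z - \alpha$ (which is indeed non-positive here). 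The statement for $H^+_z(\alpha)$ follows by the identical argument, or simply by applying the isometry $S_2$ of $X$, which fixes $o$ and sends $z \mapsto -z$, carrying $H^-_z(\alpha)$ to $H^+_z(-\alpha)$.

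I do not expect any real obstacle: once $ds^2 \geq dz^2$ is noted, everything is bookkeeping. The one mild subtlety — that the complement $X \setminus H^-_z(\alpha) = \{z > \alpha\}$ is open, so the minimizing $q$ is only approached and not attained — is harmless, since \refdef{SDF} phrases the interior case in terms of the radius of the largest inscribed ball, which is exactly the infimum computed above.
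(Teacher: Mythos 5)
Your proof is correct and takes essentially the same approach as the paper: both rest on the observation that $ds^2 \geq dz^2$ makes the projection $[x,y,z,1]\mapsto z$ $1$-Lipschitz (the paper phrases this as $|\dot z|\leq 1$ for an arc-length geodesic), and both realize the bound with the vertical geodesic. The paper first normalizes to $p=o$ by homogeneity and to $H_z^+$ by the symmetry $S_2$, whereas you argue directly at a general $p$ and spell out the two cases of the signed-distance definition, but this is a cosmetic difference.
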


\begin{proof}
	Observe that the collections $\{H^+_z(\alpha) \mid \alpha \in \RR\}$ and $\{H^-_z(\alpha) \mid \alpha \in \RR\}$ are both invariant under the action of Sol on itself.
	Thus without loss of generality, we can assume that $p$ is the origin $o$.
	Similarly, the symmetry $S_2$ fixes the origin and permutes $H^+_z(\alpha)$ and $H^-_z(\alpha)$.
	Hence it suffices to prove the statement for $H^+_z(\alpha)$.
	Suppose that $\alpha \geq 0$ (the other case works in the same way).
	The path $\gamma(t) = [0,0, t,1]$ is a geodesic starting at the origin and hitting $H^+_z(\alpha)$ at time $t = \alpha$.
	Hence we have $\dist(o, H^+_z(\alpha)) \leq \alpha$.
	
	Let us prove the other inequality.
	Consider a point $q \in H^+_z(\alpha)$ and a minimizing arc length parametrized geodesic $\gamma \colon [0, \ell] \to X$ from $o$ to a point $q$.
	If we write the path $\gamma$ as $\gamma(t) = [x(t), y(t), z(t),1]$, then from the metric given in \refeqn{metric sol} we get that $|\dot z(t)| \leq 1$, because $\gamma$ is arc length parametrized.
	Consequently, we have
	\begin{equation*}
		\dist(o,q) \geq \ell \geq z(\ell) \geq \alpha.
	\end{equation*}
	This inequality holds for every point $q \in H^+_z(\alpha)$, hence the result.
\end{proof}

In \reffig{HalfSpaceHoriz}, we use these signed distance functions to draw horizontal half-spaces, patterned with square tilings.

\begin{figure}[htbp]
\begin{center}
\subfloat[$H_z^+(1)$]{%
\includegraphics[width=0.30\textwidth]{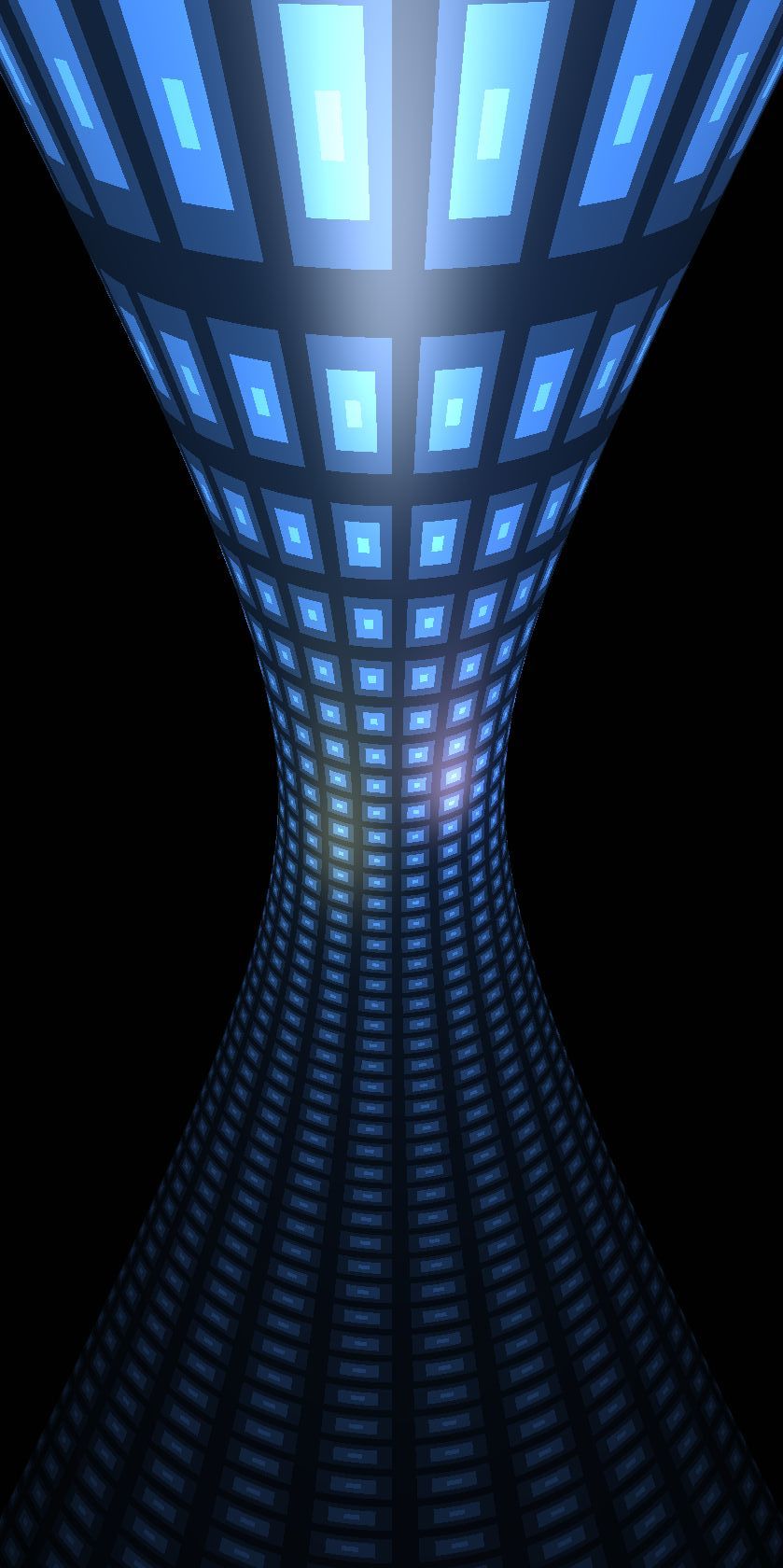}%
\label{Fig:SolPlanes1}%
}%
\quad
\subfloat[$H_z^-(-1)$]{%
\includegraphics[width=0.30\textwidth]{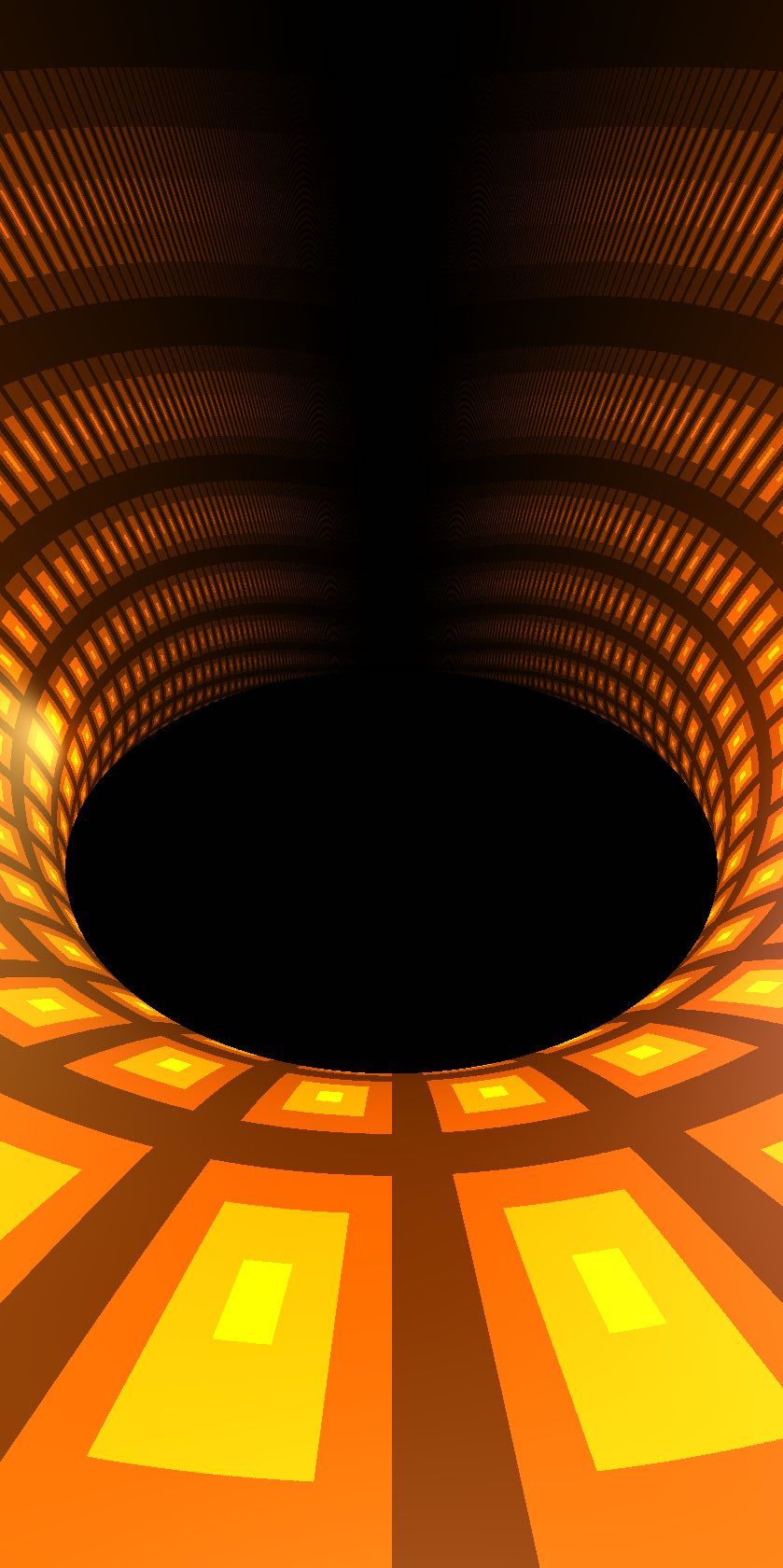}%
\label{Fig:SolPlanes2}%
}%
\quad
\subfloat[$H_z^+(1)\cup H_z^-(-1)$]{%
\includegraphics[width=0.30\textwidth]{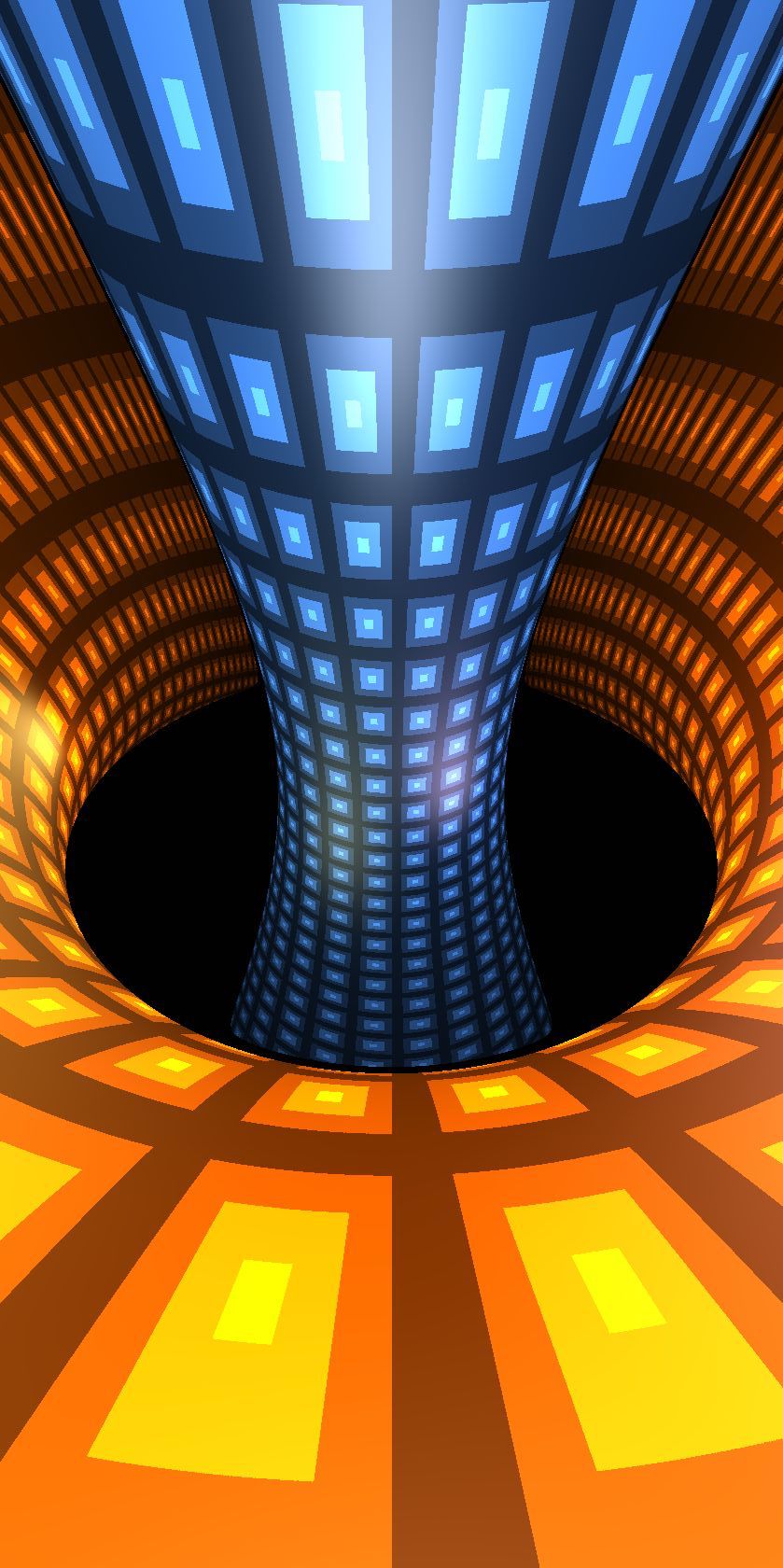}%
\label{Fig:SolPlanes}%
}%
\caption{Wide-angle views of horizontal half-spaces in Sol geometry.  The boundaries of these half-spaces are tiled by squares of side length $1/5$.}
\label{Fig:HalfSpaceHoriz}
\end{center}
\end{figure}

We can similarly compute the exact distance function to a half-space bounded by a hyperbolic plane in Sol.
Let $H_x^+(\alpha) = \{ x \geq \alpha\}$ and $H_x^-(\alpha) = \{ x \leq \alpha\}$.

\begin{lemma}
\label{Res:SolHypSheetDist}
	Fix a real number $\alpha$.
	For every point $p = [x,y,z,1]$ in $X$, we have 
	\begin{equation*}
		\sdf\left(p, H_x^+(\alpha)\right) = - \sdf\left(p, H_x^-(\alpha)\right) = \arcsinh\big((\alpha - x)e^{-z}\big).
	\end{equation*}
\end{lemma}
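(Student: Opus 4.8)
The plan is to reduce to the origin by homogeneity, identify the boundary plane of the half-space with a geodesic in a suitable copy of $\HH^2$, and read the distance off the classical point-to-geodesic formula. First I would dispose of the second equality: since $H_x^-(\alpha) = \overline{X\setminus H_x^+(\alpha)}$, the identity $\sdf(p, H_x^-(\alpha)) = -\sdf(p, H_x^+(\alpha))$ is just the behaviour of signed distance functions under complementation (\refsec{Ray-marching}); alternatively it follows from the isometry $S_1\colon[x,y,z,1]\mapsto[-x,y,z,1]$ fixing $o$, which carries $H_x^-(\alpha)$ to $H_x^+(-\alpha)$. So only $H_x^+(\alpha)$ needs to be treated. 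Next, left translation by the Sol element $g = [-e^{-z}x,\, -e^z y,\, -z,\, 1]$ is an isometry sending $p$ to $o$; a short matrix computation shows it carries $H_x^+(\alpha)$ to $H_x^+(\beta)$ with $\beta = (\alpha - x)e^{-z}$. Hence $\sdf(p, H_x^+(\alpha)) = \sdf(o, H_x^+(\beta))$, and it suffices to prove $\sdf(o, H_x^+(\beta)) = \arcsinh\beta$ for every $\beta\in\RR$.

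The crux is computing the unsigned distance $d := \dist_X(o, \{x=\beta\})$ for $\beta>0$. For the lower bound I would introduce the map $\Pi\colon X\to\HH^2$ onto the upper half-plane model $\{(X,Y): Y>0\}$ with metric $(dX^2+dY^2)/Y^2$, defined by $\Pi([x,y,z,1]) = (x, e^z)$. Using \refeqn{metric sol} one checks that $\Pi^\ast\bigl((dX^2+dY^2)/Y^2\bigr) = e^{-2z}\,dx^2 + dz^2 \leq ds^2$, so $\Pi$ is distance non-increasing. Since $\Pi$ sends $o$ to $(0,1)$ and $\{x=\beta\}$ into the vertical geodesic $\{X=\beta\}$, this gives $d \geq \dist_{\HH^2}\bigl((0,1),\{X=\beta\}\bigr)$, and the standard computation — the common perpendicular is the semicircle of radius $\sqrt{1+\beta^2}$ centred at $(\beta,0)$, meeting $\{X=\beta\}$ at its apex — evaluates this to $\arccosh\sqrt{1+\beta^2} = \arcsinh\beta$. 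For the matching upper bound I would note that $\Pi$ restricts to a Riemannian isometry from the plane $\{y=0\}$ (a totally geodesic copy of $\HH^2$, as the $b=0$ geodesics of \refsec{FlowSol} stay in it) onto all of $\HH^2$; pulling back the common-perpendicular segment produces a path in $X$ from $o$ to $\{x=\beta\}$ of length exactly $\arcsinh\beta$. Hence $d = \arcsinh\beta$.

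Converting $d$ to the signed distance is bookkeeping against the definition in \refsec{Ray-marching}. For $\beta>0$, $o$ lies outside $H_x^+(\beta)$; since every path from $o$ to a point with $x$-coordinate at least $\beta$ crosses $\{x=\beta\}$, the open ball of radius $\arcsinh\beta$ about $o$ misses $H_x^+(\beta)$ while no larger one does (the minimizing geodesic, which exists by completeness, ends on $\{x=\beta\}\subset H_x^+(\beta)$), so $\sdf(o,H_x^+(\beta)) = \arcsinh\beta$. The case $\beta = 0$ is immediate. For $\beta<0$, $o$ lies interior to $H_x^+(\beta)$ and $|\sdf(o, H_x^+(\beta))|$ is the radius of the largest ball about $o$ contained in $\{x\geq\beta\}$, which is $\dist_X(o,\{x=\beta\})$; by $S_1$-symmetry this equals $\dist_X(o,\{x=-\beta\}) = \arcsinh(-\beta)$, so $\sdf(o,H_x^+(\beta)) = -\arcsinh(-\beta) = \arcsinh\beta$. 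Combined with the two reductions, this gives the lemma.

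I expect the lower bound to be the main obstacle: one must choose $\Pi$ so that it is genuinely $1$-Lipschitz — this is precisely what forces the identification $Y = e^z$ rather than some other coordinate — and then invoke the point-to-geodesic distance in $\HH^2$ correctly. A self-contained alternative that bypasses $\HH^2$ is to verify directly, via Cauchy--Schwarz applied to \refeqn{metric sol}, that the candidate function $q\mapsto\arcsinh\bigl((\alpha-x_q)e^{-z_q}\bigr)$ has gradient of norm at most $1$ and is therefore $1$-Lipschitz, which yields the lower bound, with the explicit perpendicular geodesic again supplying the upper bound.
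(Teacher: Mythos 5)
Your proposal is correct and follows essentially the same route as the paper: reduce to the origin by left translation (which produces the same shift $\alpha' = (\alpha-x)e^{-z}$), exhibit a $1$-Lipschitz projection onto a copy of $\HH^2$, and read off the point-to-geodesic distance $\arcsinh$ there. The paper's projection $[x,y,z,1]\mapsto[x,0,z,1]$ onto $U=\{y=0\}$ with horocycle-based coordinates is your map $\Pi$ after the isometric change of coordinates $(x,z)\mapsto(x,e^z)$, so the two arguments differ only in the choice of $\HH^2$ model and in how explicitly the sign bookkeeping and the complementation identity are spelled out.
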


\begin{proof}
	Assume first that $p = o$ is the origin.
	We write the proof for $H^+_x(\alpha)$ with $\alpha > 0$. The other cases work in the same way.
	We claim that the distance from $o$ to $H_x^+(\alpha)$ is also the distance in the hyperbolic plane $U = \{y = 0\}$ from $o$ to the half plane $U^+(\alpha) = \{ x \geq \alpha \ \text{and}\ y = 0\}$.
	We have
	\begin{equation*}
		\dist_X \left(o, H_x^+(\alpha) \right) \leq \dist_U\left(o, U^+(\alpha)\right).
	\end{equation*}
	In order to prove the converse inequality, it suffices to show that the projection $X \to U$ sending $[x,y,z,1]$ to $[x,0,z,1]$ is $1$-Lipschitz.
	To see this, take two points $q$ and $q'$, and a geodesic $\gamma \from [0,T] \to X$ joining them. We write $\gamma(t) = [x(t), y(t), z(t),1]$. From the metric given in \refeqn{metric sol} we get 
	\begin{equation*}
		\begin{split}
			\dist(q,q') = L(\gamma) & = \int_0^T \sqrt{e^{-2z}\dot x^2 + e^{2z}\dot y^2 + \dot z^2}dt \\
			& \geq \int_0^T \sqrt{e^{-2z}\dot x^2 + \dot z^2}dt \\
			& = L(\pi \circ \gamma)
		\end{split}
	\end{equation*}
	where $L(\gamma)$ and $L(\pi \circ \gamma)$ stands for the length in $X$ of $\gamma$ and $\pi \circ \gamma$ respectively. Thus, $\dist(q,q') \geq \dist(\pi(q), \pi(q'))$.

	We now compute $\dist_U(o, U^+(\alpha))$.
	Recall that $U$ is isometric to the hyperbolic plane $\HH^2$.
	More precisely $[x,z]$ is a horocycle-based coordinate system of $\HH^2$: the distance between $p_1 = [x_1,0,z_1,1]$ and $p_2 = [x_2,0, z_2,1]$ is characterized by 
	\begin{equation*}
		\cosh \dist(p_1,p_2) = \cosh(z_1 - z_2) + \frac 12 e^{-(z_1+z_2)} (x_1 - x_2)^2.
	\end{equation*}
	One checks that the projection of $o$ onto $U^+(\alpha)$ is the point 
	\begin{equation*}
		\left[\alpha, 0, \frac 12\ln(1 + \alpha^2),1\right]
	\end{equation*}
	and 
	\begin{equation*}
		\dist_U\left(o, U^+(\alpha)\right) = \arcsinh(\alpha).
	\end{equation*}
	
	Assume now that $p = [x,y,z,1]$ is an arbitrary point.
	There is a unique element $L$ of Sol sending $o$ to $p$.
	Observe that $L^{-1}$ maps $H_x^+(\alpha)$ to $H_x^+(\alpha')$ where $\alpha' = (\alpha-x)e^{-z}$.
	The result then follows from the previous discussion.
\end{proof}

We can define the half-spaces $H_y^\pm(\alpha)$ as we did for $H^\pm_x(\alpha)$.
Using the fact that the isometry $S_2$ fixing the origin sends $[x,y,z,1]$ to $[y,x,-z,1]$ we get the following statement.
\begin{lemma}
	Fix a real number $\alpha$.
	For every point $p = [x,y,z,1]$ in $X$, we have 
	\begin{equation*}
		\sdf\left(p, H_y^+(\alpha)\right) = - \sdf\left(p, H_y^-(\alpha)\right) = \arcsinh\big((\alpha - y)e^{z}\big)
	\end{equation*}
\end{lemma}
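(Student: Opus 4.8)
The plan is to deduce this statement from the formula for $H_x^\pm(\alpha)$ established above, by exploiting the symmetry $S_2$ of Sol. Recall that $S_2 \in K$ is an isometry of $X$ fixing the origin, whose action on $X$ is $[x,y,z,1] \mapsto [y,x,-z,1]$. Since $S_2$ interchanges the first two coordinates, it carries the half-space $H_y^+(\alpha) = \{y \geq \alpha\}$ bijectively onto $H_x^+(\alpha) = \{x \geq \alpha\}$, and similarly $H_y^-(\alpha)$ onto $H_x^-(\alpha)$; in particular it maps $\interior\big(H_y^+(\alpha)\big)$ onto $\interior\big(H_x^+(\alpha)\big)$.

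First I would record that signed distance functions are natural with respect to isometries: if $g$ is an isometry of $X$ and $S \subset X$ is closed, then $\sdf(p, S) = \sdf(gp, gS)$ for every $p$, since $g$ preserves distances as well as the partition of $X$ into $\interior(S)$, $\partial S$, and $X \setminus S$. Applying this with $g = S_2$ and $S = H_y^+(\alpha)$ gives
\[
\sdf\big(p, H_y^+(\alpha)\big) = \sdf\big(S_2 p,\, H_x^+(\alpha)\big) = \sdf\big([y,x,-z,1],\, H_x^+(\alpha)\big).
\]
Next I would invoke the formula for $H_x^+(\alpha)$, which was proved for an arbitrary point of $X$, evaluated at the point $[y,x,-z,1]$ --- that is, with the first coordinate equal to $y$ and the third coordinate equal to $-z$. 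This yields $\sdf\big([y,x,-z,1], H_x^+(\alpha)\big) = \arcsinh\big((\alpha - y)\, e^{-(-z)}\big) = \arcsinh\big((\alpha - y)\, e^{z}\big)$, as claimed. The identity $\sdf(p, H_y^+(\alpha)) = -\sdf(p, H_y^-(\alpha))$ follows either by running the same argument with $H_y^-(\alpha)$ in place of $H_y^+(\alpha)$ and using the corresponding relation for $H_x^\pm(\alpha)$, or directly from the fact that $\arcsinh$ is odd.

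There is essentially no obstacle here: the argument is a pure symmetry transfer with no new analytic content. The only points needing care are (i) checking that $S_2$ really does send $H_y^\pm(\alpha)$ to $H_x^\pm(\alpha)$ with the correct identification of interior and exterior, which is immediate from the coordinate description $[x,y,z,1] \mapsto [y,x,-z,1]$ and the explicit matrix for $S_2$, and (ii) confirming that the earlier lemma for $H_x^\pm(\alpha)$ was stated for all points $p \in X$ (it was), so that substituting $[y,x,-z,1]$ into its formula is legitimate.
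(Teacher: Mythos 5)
Your proof is correct and matches the paper's approach exactly: the paper also deduces this lemma from the $H_x^\pm(\alpha)$ case via the symmetry $S_2$, merely stating the implication in one sentence rather than spelling out the naturality of signed distance functions under isometries as you do. One small imprecision: the identity $\sdf(p, H_y^+(\alpha)) = -\sdf(p, H_y^-(\alpha))$ follows from the general fact that the signed distance function to a closed half-space is the negative of that to the complementary half-space (which is how your primary argument goes), not "directly from the fact that $\arcsinh$ is odd"; oddness alone does not encode that the two half-spaces are complementary.
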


In \reffig{HalfSpaceHyp}, we use these signed distance functions to draw half-spaces with hyperbolic boundary, patterned with square tilings. Combining these signed distance functions with boolean operations, we can make tubes around vertical geodesics with square cross-sections. See \reffig{SolVerticalTubes}.

\begin{figure}[htbp]
\begin{center}
\subfloat[$H_x^+(1)$]{%
\includegraphics[width=0.30\textwidth]{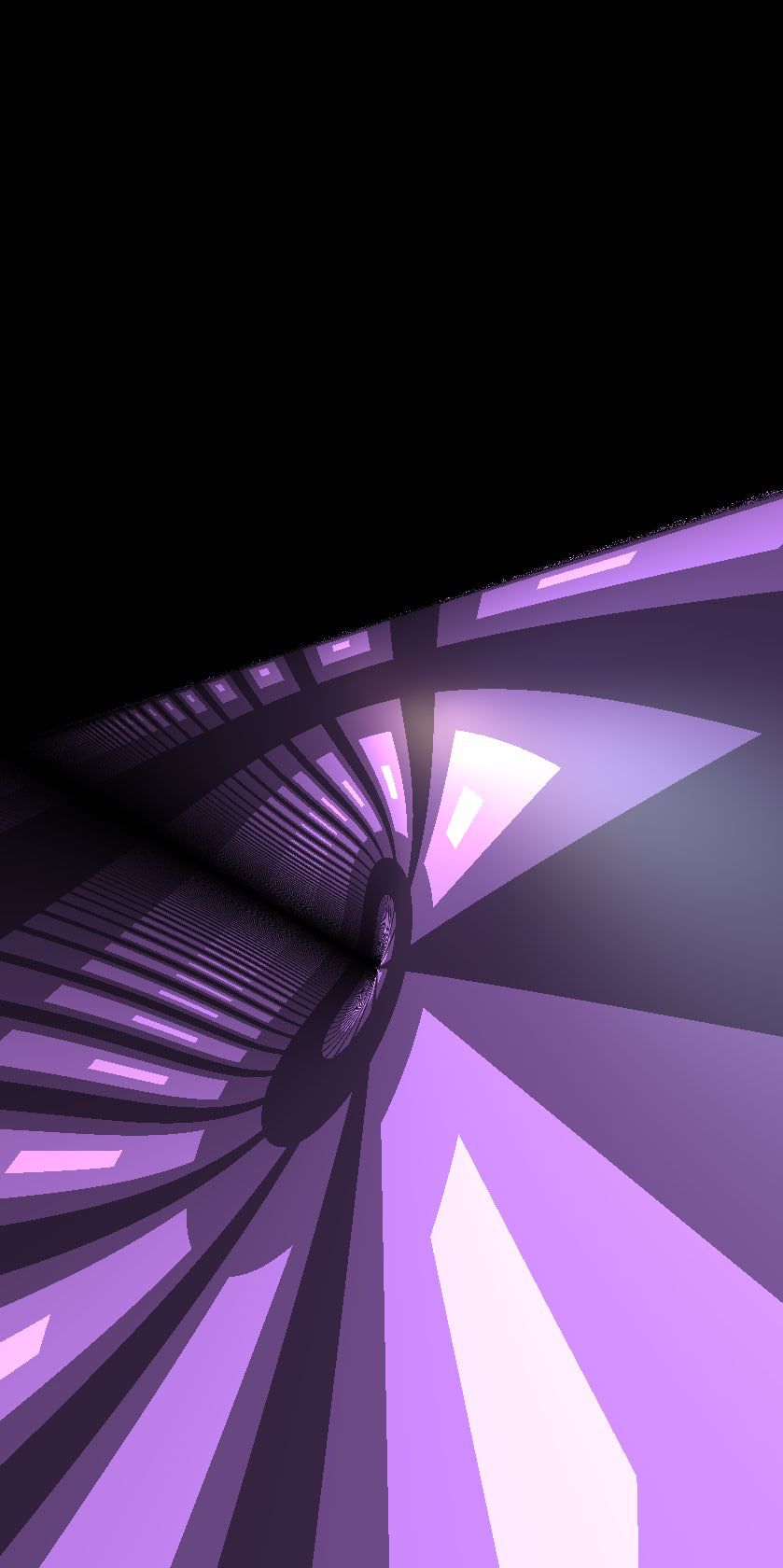}%
\label{Fig:SolPlanesHyp1}%
}%
\quad
\subfloat[$H_y^-(-1)$]{%
\includegraphics[width=0.30\textwidth]{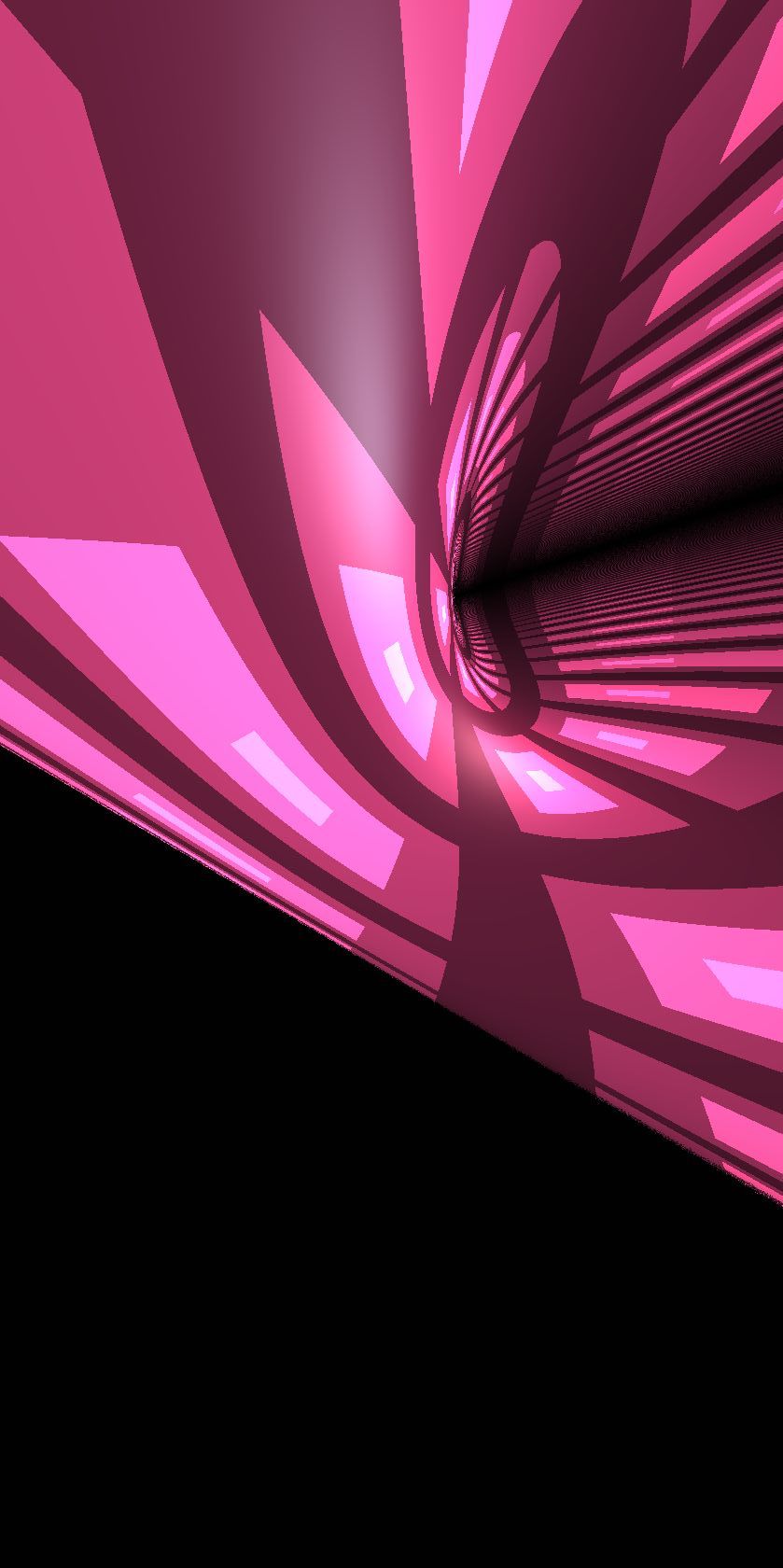}%
\label{Fig:SolPlanesHyp2}%
}%
\quad
\subfloat[$H_x^+(1)\cup H_y^-(-1)$]{%
\includegraphics[width=0.30\textwidth]{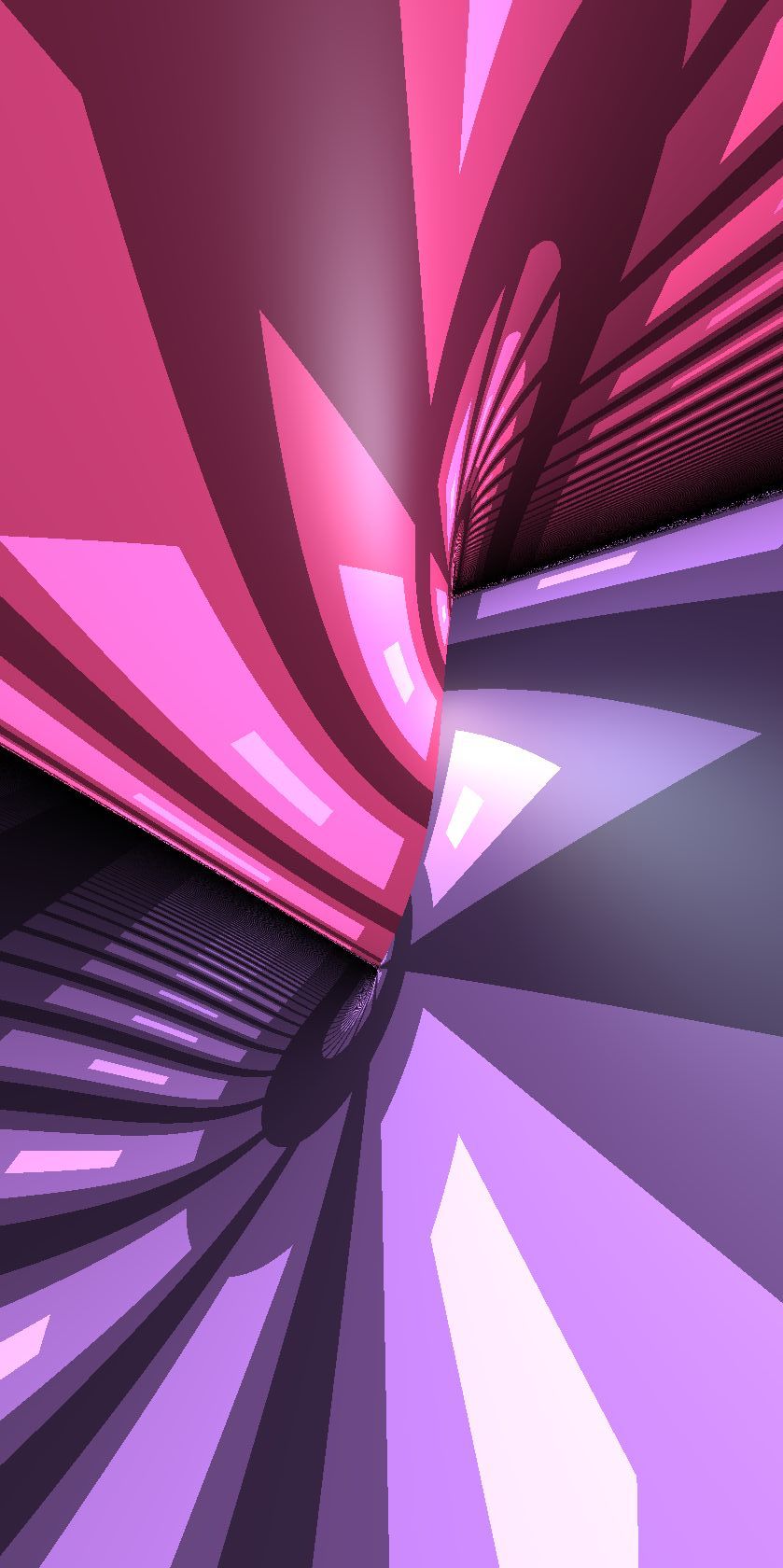}%
\label{Fig:SolPlanesHyp3}%
}%
\caption{Wide-angle view of half-spaces with hyperbolic plane boundary in Sol geometry.  The boundaries are tiled by quadrilaterals formed from a family geodesics parallel to the $z$-axis and the families of orthogonal horocycles.  The horocycles are evenly spaced, with distance one between neighbors.}
\label{Fig:HalfSpaceHyp}
\end{center}
\end{figure}

\begin{figure}[htbp]
\begin{center}
\subfloat[Tubes around vertical geodesics with square cross-sections.]{%
\includegraphics[width=0.30\textwidth]{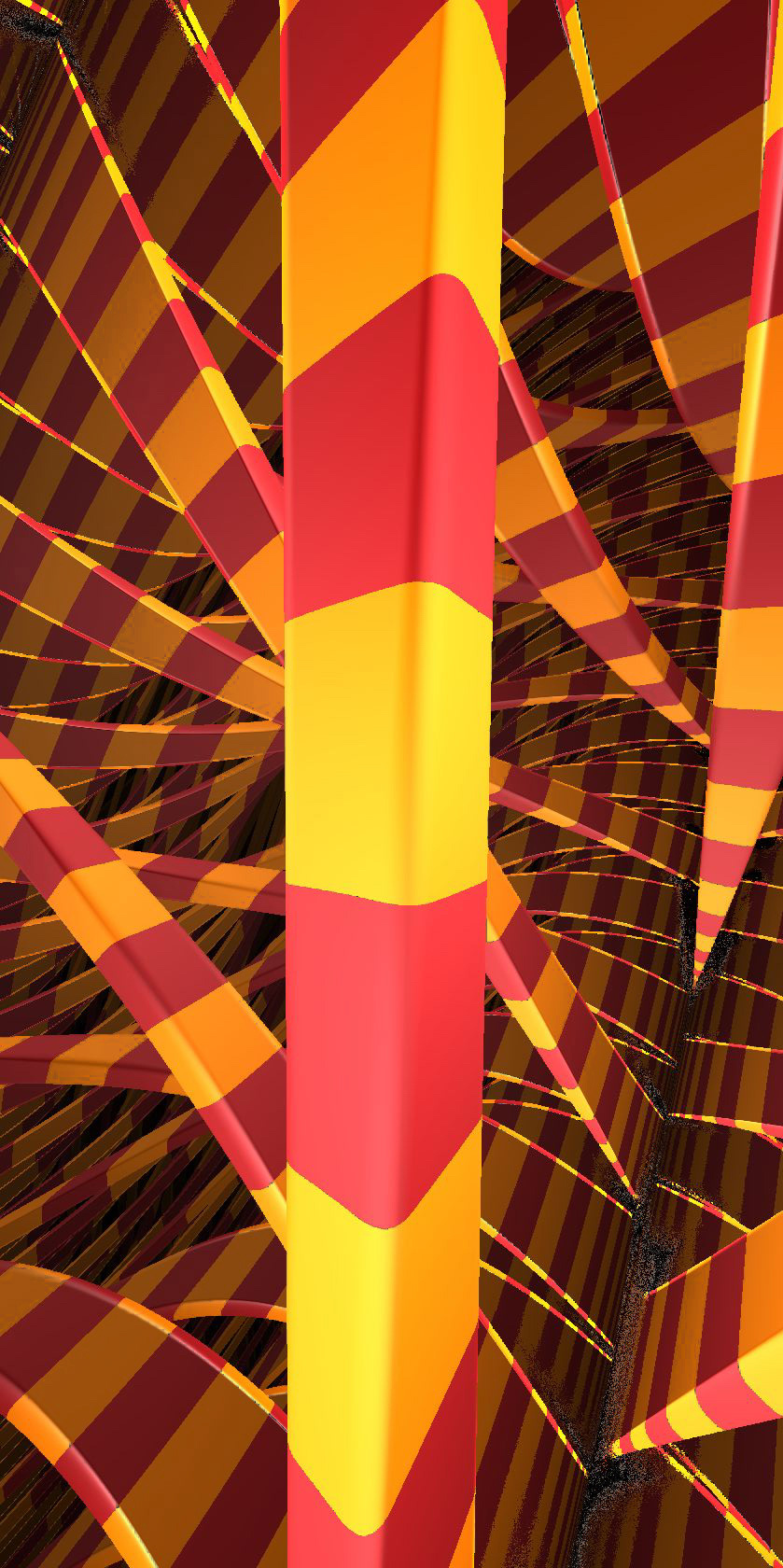}%
\label{Fig:SolVerticalTubes}%
}%
\quad
\subfloat[A cube with sidelength 3.]{%
\includegraphics[width=0.30\textwidth]{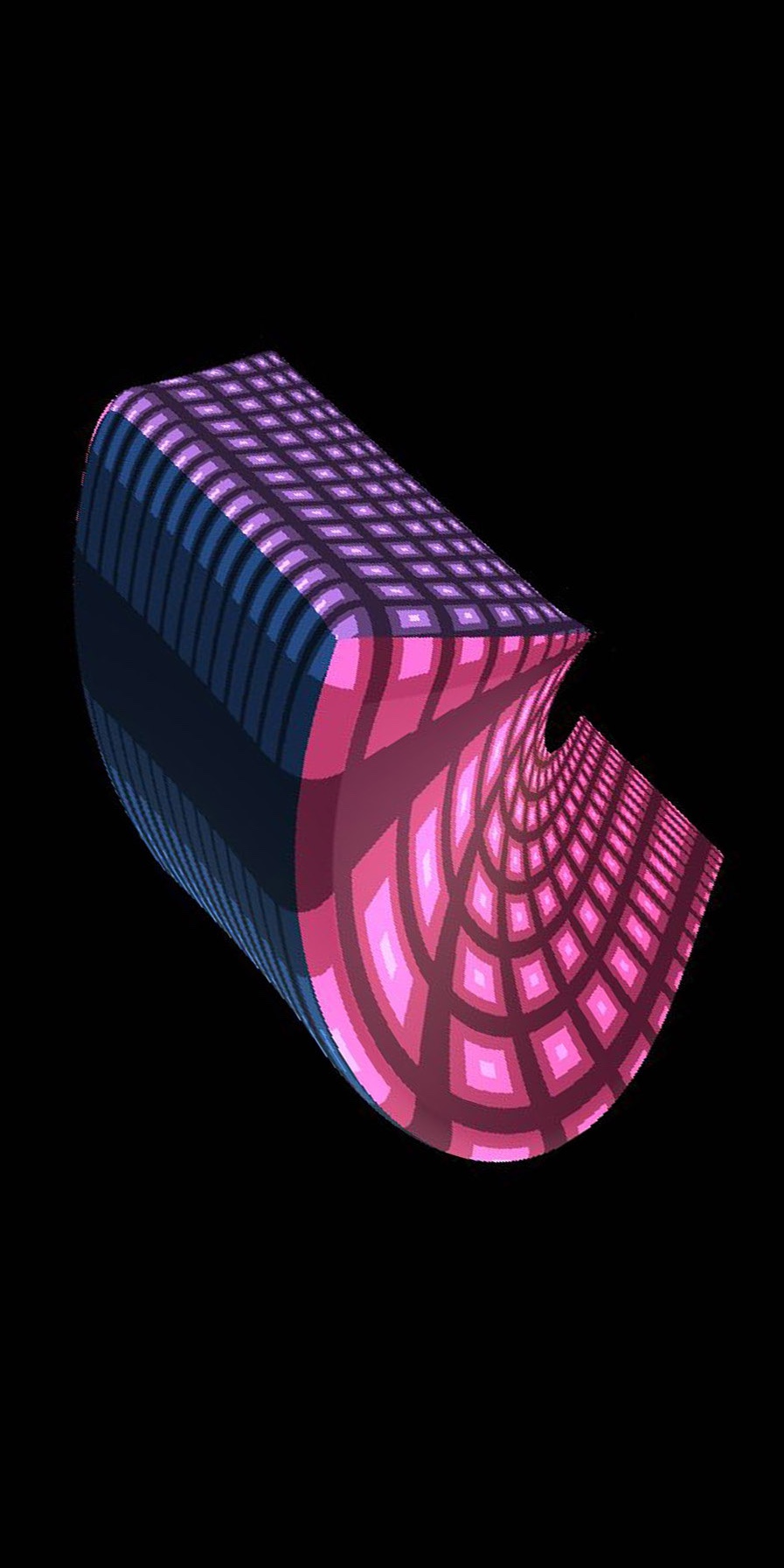}%
\label{Fig:SolCube}%
}%
\quad
\subfloat[The same (single) cube as in \reffig{SolCube}, viewed from a distance.
 ]{%
\includegraphics[width=0.30\textwidth]{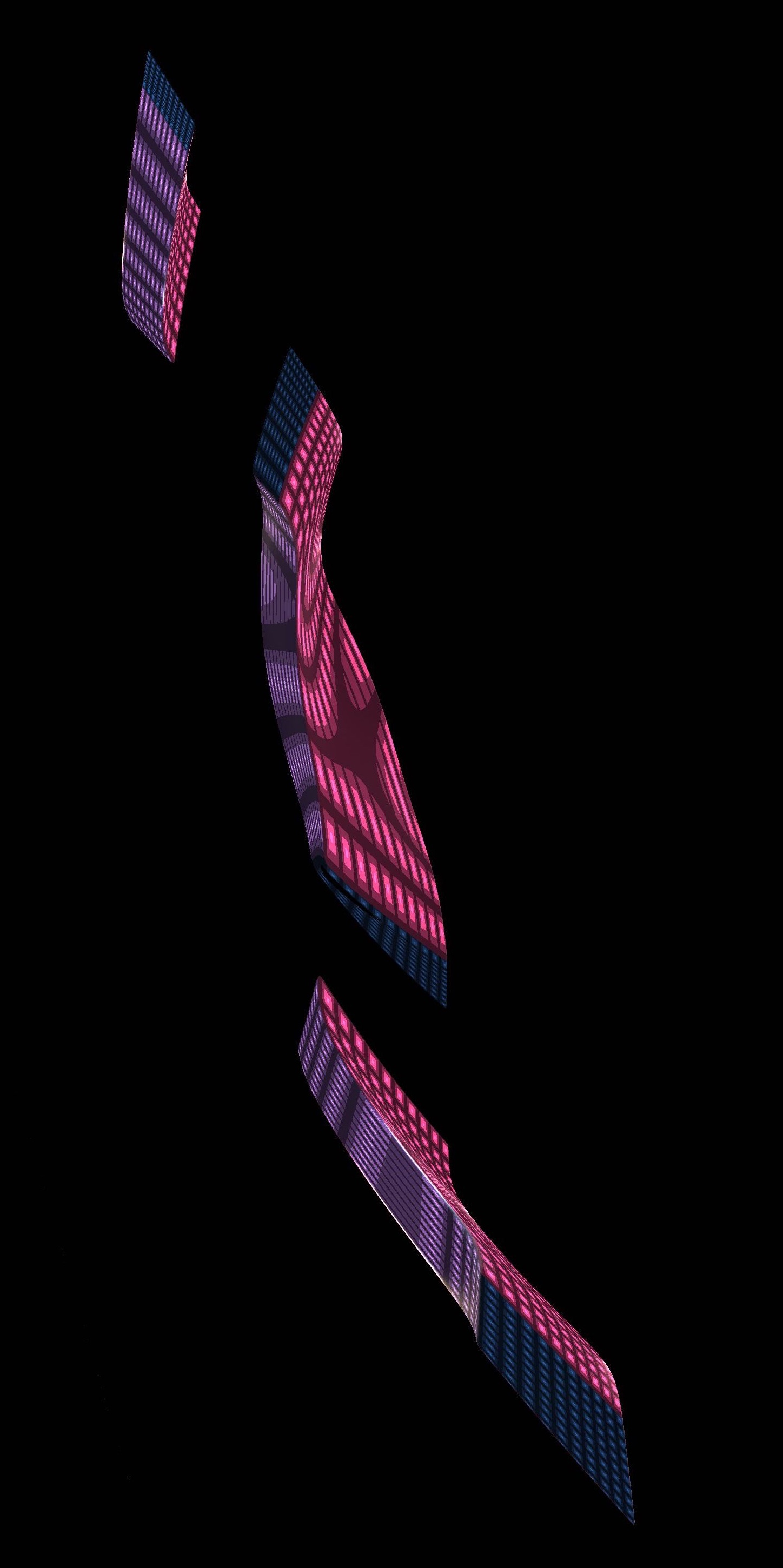}%
\label{Fig:SolCube2}%
}%
\\
\subfloat[A lattice of cubes, dense enough that anomalies seen in \reffig{SolCube2} are mostly hidden from view.]{%
\includegraphics[width=0.97\textwidth]{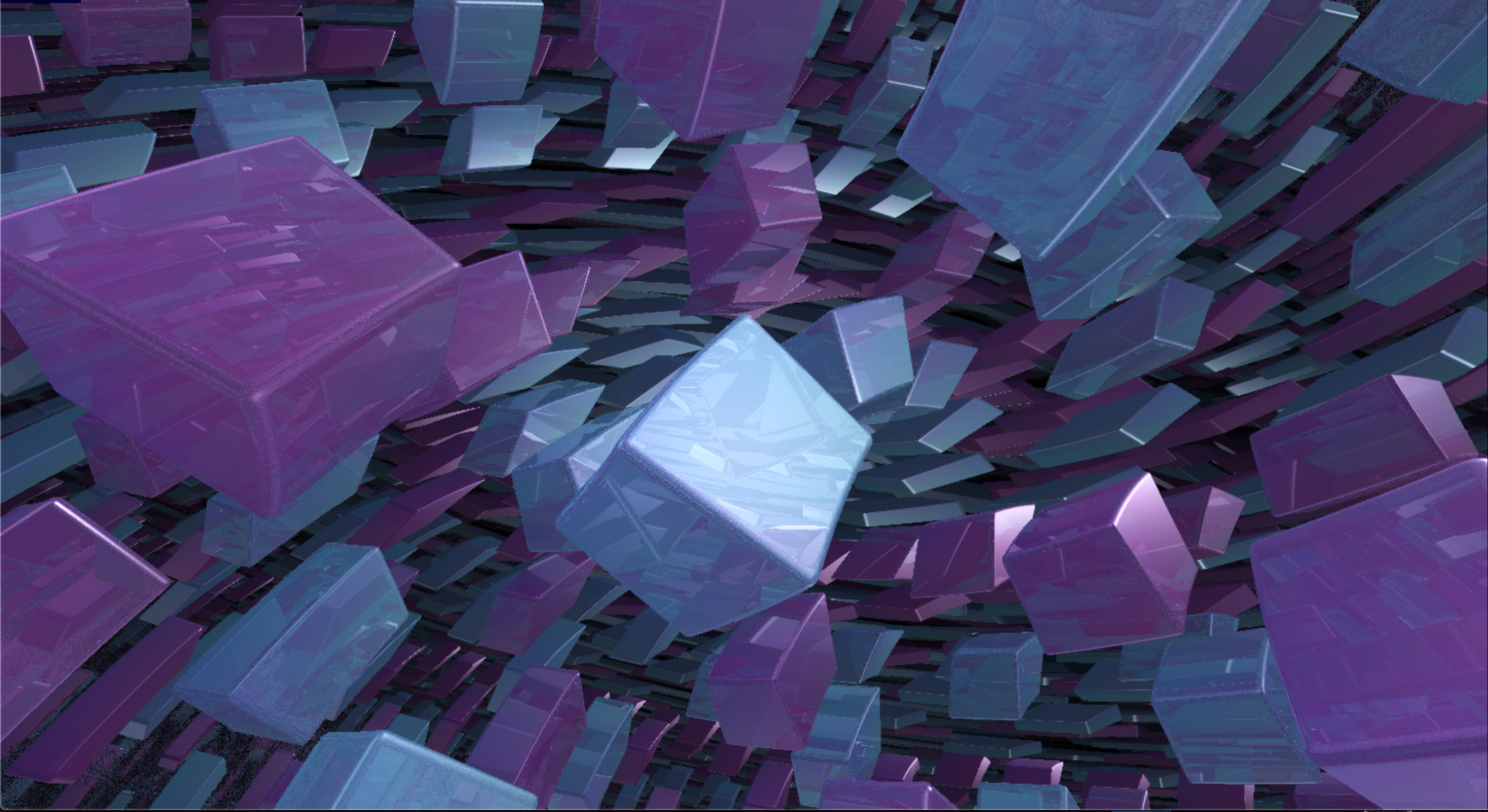}%
\label{Fig:SolMoreCubes2}%
}%
\caption{Scenes made from half-spaces with boolean operations.}
\label{Fig:SolExactTubes}
\end{center}
\end{figure}

\subsection{Distance to horizontal axis-aligned solid cylinders}
Following the same strategy as in \refsec{SolHalfSpaces}, we compute the signed distance function for certain solid cylinders.
Let $c_x \colon \RR \to X$ be the curve given by $c_x(t) = [t,0,0,1]$.
Note that $c_x$ is \emph{not} a geodesic of $X$, but it is a one-parameter subgroup of Sol.

\begin{lemma}
\label{Lem:SolHypDist}
	For every point $p = [x,y,z,1]$ in $X$, we have 
	\begin{equation*}
		\cosh \dist(p, c_x) = \cosh z + \frac 12 e^zy^2.
	\end{equation*}
\end{lemma}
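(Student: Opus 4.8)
The plan is to reduce the computation to distance calculations we have already carried out, namely \reflem{SolHypDist}'s horizontal-axis analogue and the hyperbolic-plane half-space distances of \refsec{SolHalfSpaces}. First I would observe that the curve $c_x$ is the orbit of the origin under the one-parameter subgroup $\{[t,0,0,1] : t\in\RR\}$, and that this subgroup acts on $X$ by isometries (it is a subgroup of Sol acting on itself). So the distance function $p\mapsto \dist(p,c_x)$ is invariant under this subgroup, meaning it depends only on the coordinates $y$ and $z$ of $p$ — indeed $[x,y,z,1]$ is carried to $[0,y,z,1]$ by the element $[-x,0,0,1]$ of the subgroup. Thus it suffices to compute $\dist(q,c_x)$ for $q=[0,y,z,1]$, and the right-hand side $\cosh z + \tfrac12 e^z y^2$ is already written in terms of $y,z$ alone, so this reduction is exactly what we want.

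Next I would exploit the totally geodesic hyperbolic plane $U=\{x=0\}$ identified in \refsec{FlowSol} (the ``Case $a=0$'' discussion): the plane $\{x=0\}$ is totally geodesic and isometric to $\HH^2$, with $[y,z]$ serving as horocycle-based coordinates, the distance formula being $\cosh\dist(p_1,p_2) = \cosh(z_1-z_2) + \tfrac12 e^{-(z_1+z_2)}(y_1-y_2)^2$ — this is the same horocyclic coordinate system appearing in the proof of \reflem{SolHypSheetDist}, with the roles of the hyperbolic planes swapped by $S_2$. The curve $c_x$ is not contained in $U$, so I cannot immediately restrict; instead I would argue that the nearest point of $c_x$ to $q=[0,y,z,1]$ is the origin $o=[0,0,0,1]\in c_x\cap U$. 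To see this, I would use the projection $\pi\colon X\to U$ sending $[x,y,z,1]\mapsto[0,y,z,1]$ and show it is $1$-Lipschitz: from the metric \refeqn{metric sol}, a path $\gamma(t)=[x(t),y(t),z(t),1]$ has length at least $\int\sqrt{e^{2z}\dot y^2+\dot z^2}\,dt = L(\pi\circ\gamma)$, exactly as in the proof of \reflem{SolHypSheetDist}. Since $\pi(c_x)=\{o\}$ is a single point, for any $p'\in c_x$ we get $\dist(q,p')\geq\dist(\pi(q),\pi(p'))=\dist_U(q,o)$, and equality is attained by taking $p'=o$ (the geodesic in $U$ from $q$ to $o$ has the same length in $X$). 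Hence $\dist(q,c_x)=\dist_U(q,o)$, which by the horocyclic distance formula with $p_1=q=[0,y,z,1]$ and $p_2=o=[0,0,0,1]$ equals $\cosh(z) + \tfrac12 e^{-z}\cdot(\text{hmm})$ — here I must be careful about which coordinate plays the ``$x$'' role.

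The one subtlety — and the step I expect to require the most care — is matching conventions: in \refsec{FlowSol}, the plane $\{a=0\}$ is $\{x=0\}$ and the relevant horocyclic formula from \reflem{SolHypSheetDist}'s proof was stated for the plane $U=\{y=0\}$ with coordinate pair $[x,z]$ and the metric term $e^{-2z}dx^2+dz^2$, giving $\cosh\dist = \cosh(z_1-z_2)+\tfrac12 e^{-(z_1+z_2)}(x_1-x_2)^2$. For the plane $\{x=0\}$ the surviving metric term from \refeqn{metric sol} is $e^{2z}dy^2+dz^2$, i.e.\ the same hyperbolic plane but with $z\mapsto -z$; so its distance formula reads $\cosh\dist(p_1,p_2)=\cosh(z_1-z_2)+\tfrac12 e^{(z_1+z_2)}(y_1-y_2)^2$. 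Evaluating at $p_1=[0,y,z,1]$, $p_2=[0,0,0,1]$ gives $\cosh z + \tfrac12 e^{z}y^2$, which is precisely the claimed right-hand side. So the only real work is (i) the Lipschitz estimate for $\pi$, which is a verbatim repeat of the argument in \reflem{SolHypSheetDist}, (ii) correctly transcribing the horocyclic metric on $\{x=0\}$ from \refeqn{metric sol}, and (iii) the homogeneity reduction. I would write the proof in that order: reduce to $x=0$ by the subgroup action, establish the $1$-Lipschitz projection to pin the closest point at $o$, then read off the answer from the two-dimensional hyperbolic distance formula.
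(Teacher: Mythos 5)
Your proposal is correct and follows essentially the same route as the paper's own proof: reduce to $p=[0,y,z,1]$ by $x$-axis translation invariance, pin the nearest point of $c_x$ at the origin via the $1$-Lipschitz projection onto the totally geodesic plane $\{x=0\}$ (the step the paper compresses into ``following the argument given in the proof of Lemma~\ref{Res:SolHypSheetDist}''), and then read off the answer from the horocyclic distance formula in that plane. You have simply written out the Lipschitz argument and the sign bookkeeping ($e^{-2z}dx^2+dz^2$ on $\{y=0\}$ versus $e^{2z}dy^2+dz^2$ on $\{x=0\}$) that the paper leaves implicit.
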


\begin{proof}
	Since $c_x$ is invariant under translations along the $x$-axis (which are isometries of $X$), we can assume that $p$ has the form $p = [0,y,z,1]$.
	Following the argument given in the proof of Lemma~\ref{Res:SolHypSheetDist}, we observe that $\dist (p, c_x) = \dist(p, o)$.
	Using the distance formula in the hyperbolic plane $\{x = 0\}$, we get the result.
\end{proof}

Let $C_x(r)$ be the solid cylinder of radius $r$ around $c_x$. That is, $C_x(r)$ is the set of point $q \in X$ such that $\dist(q,c_x) \leq r$.
It follows from \reflem{SolHypDist} that the signed distance function $\sigma \colon X \to \RR$ for $C_x(r)$ is 
\begin{equation*}
	\sigma(p) = \arccosh \left(\cosh z + \frac 12 e^zy^2\right) - r.
\end{equation*}
Similarly, we define the solid cylinder of radius $r$ around the curve $c_y$ given by $c_y(t) = [0,t,0,1]$.
The signed distance function $\sigma \colon X \to \RR$ for $C_y(r)$ is 
\begin{equation*}
	\sigma(p) = \arccosh \left(\cosh z + \frac 12 e^{-z}x^2\right) - r.
\end{equation*}

Using the elements of Sol, we can translate the solid cylinders $C_x(r)$ and $C_y(r)$ to get signed distance functions for solid cylinders around any translate of the $x$- and $y$-axes. See \reffig{SolExactCylinders}.

\begin{figure}[htbp]
\begin{center}
\subfloat[Around translates of the $x$- and $y$-axes (exact sdfs).]{%
\includegraphics[width=0.30\textwidth]{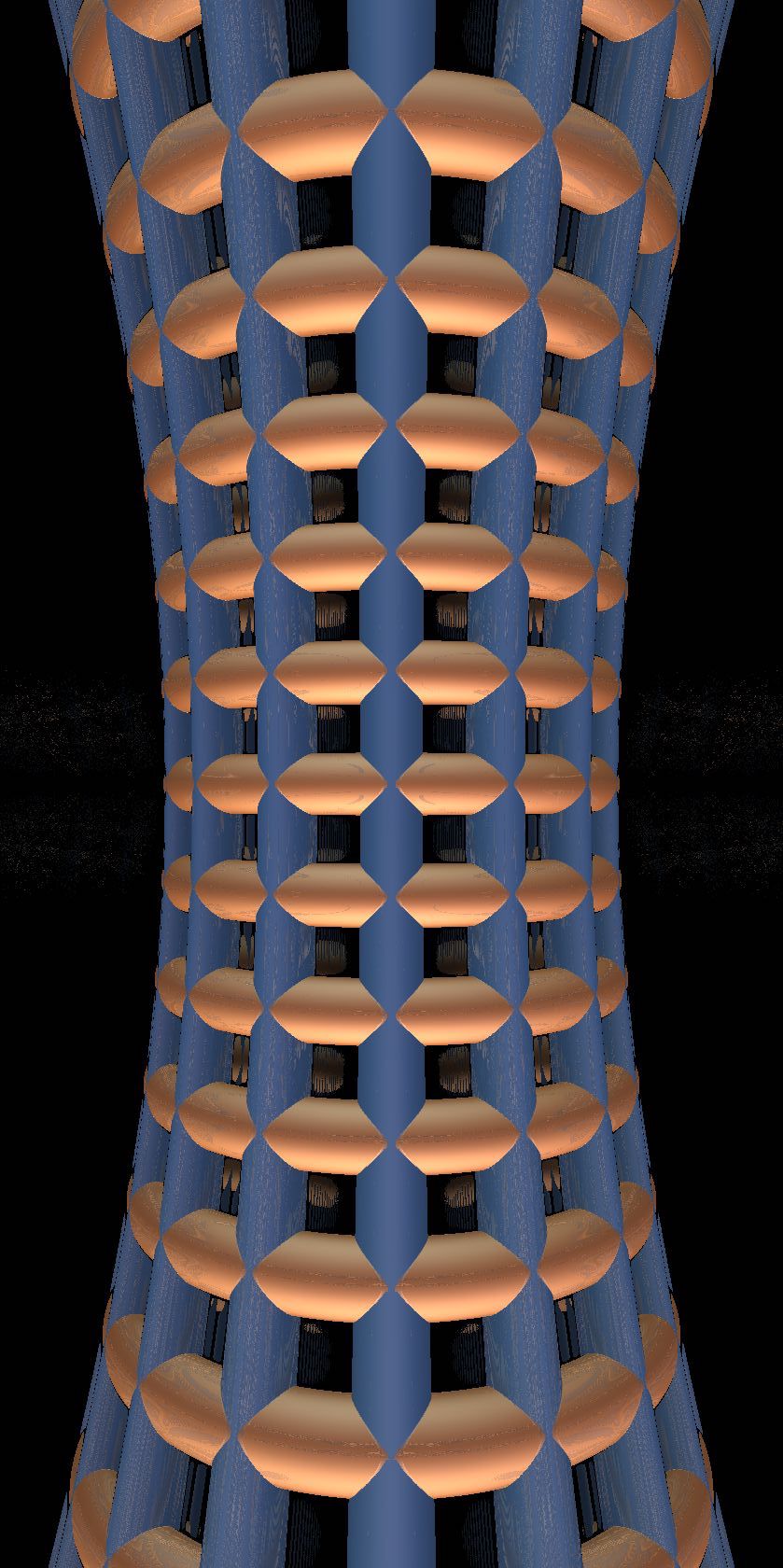}%
\label{Fig:SolExactCylinders}%
}%
\quad
\subfloat[Around translates of the $x$- and $y$-axes (approximations).]{%
\includegraphics[width=0.30\textwidth]{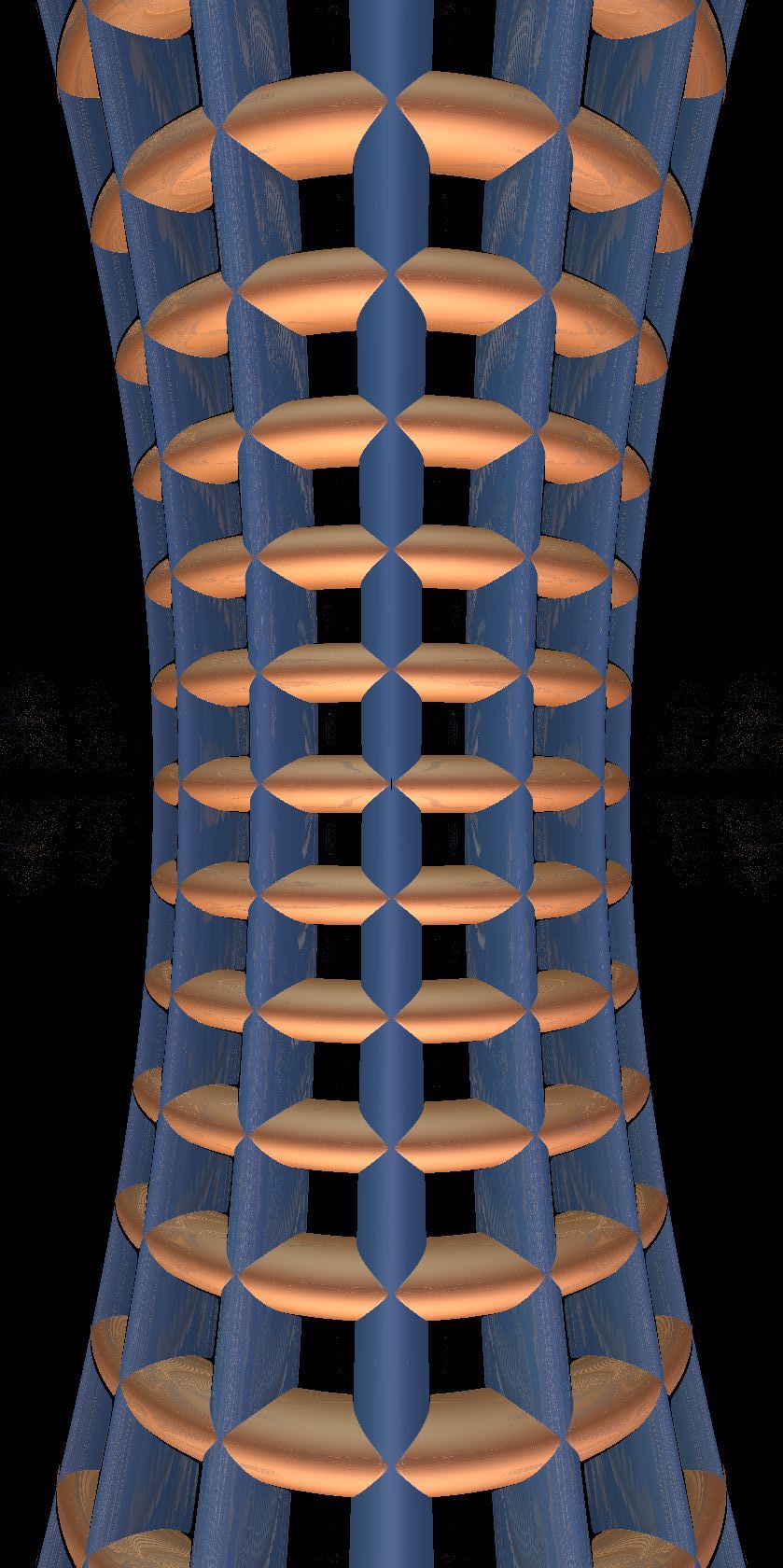}%
\label{Fig:SolFakeCylindersComparison}%
}%
\quad
\subfloat[Around geodesics in horizontal planes.]{%
\includegraphics[width=0.30\textwidth]{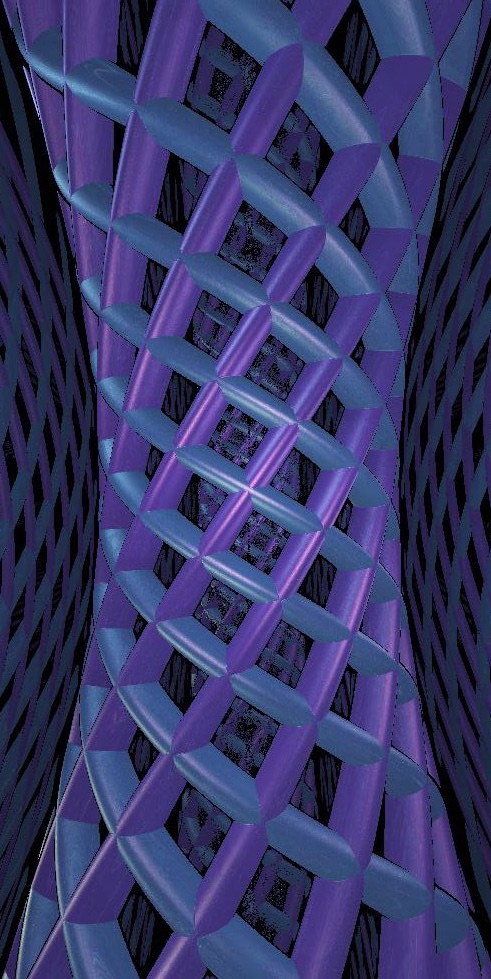}%
\label{Fig:SolFakeCylinders}%
}%
\\
\subfloat[Around horocyclic coordinate lines.]{%
\includegraphics[width=0.97\textwidth]{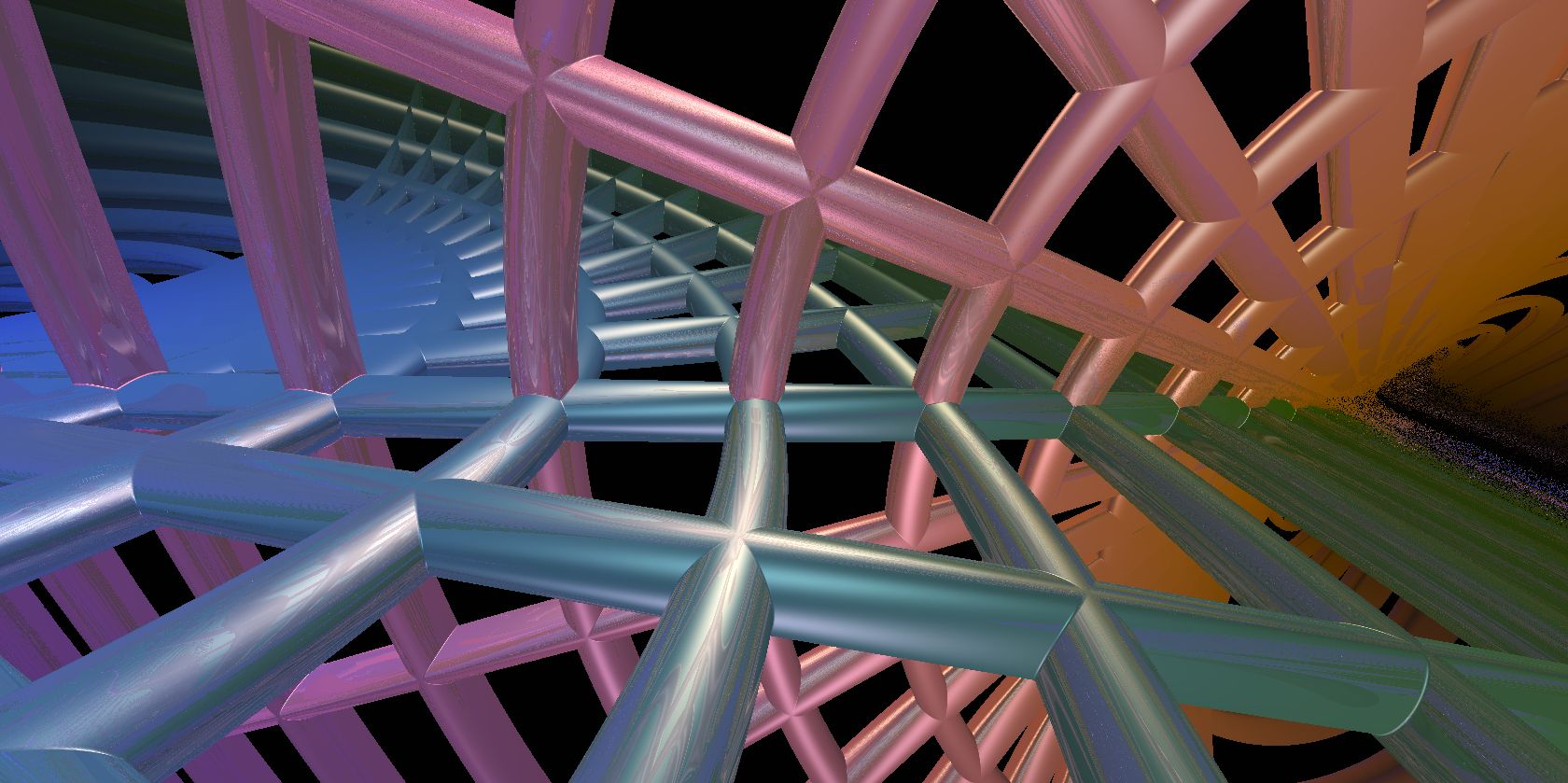}%
\label{Fig:SolHypPlanesGrid}%
}
\caption{Solid cylinders.}
\label{Fig:SolCylinders}
\end{center}
\end{figure}

\subsection{Approximating balls and more general solid cylinders}
\label{Sec:SolPseudo}
Given a point $p = [x,y,z,1]$, we approximate its distance to the origin with the function 
\begin{equation*}
	\sigma(p) = \sqrt{ e^{-2z}x^2 + e^{2z}y^2 + z^2},
\end{equation*}
rescaled by homotheties of the domain and co-domain.
This can be used to render decent ``pseudo-balls'', see Figures \ref{Fig:pseudosol} and \ref{Fig:SolEarthLattice}. It is not currently clear to us whether this function can be used to build a distance underestimator for correct balls.

\begin{figure}[htbp]
\begin{center}
\subfloat[Exact ball of radius~$1$.]{%
\includegraphics[width=0.33\textwidth]{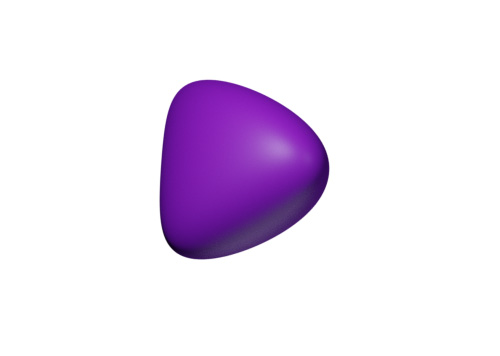}%
\label{Fig:SolExact1}%
}%
\subfloat[Exact ball of radius~$2$.]{%
\includegraphics[width=0.33\textwidth]{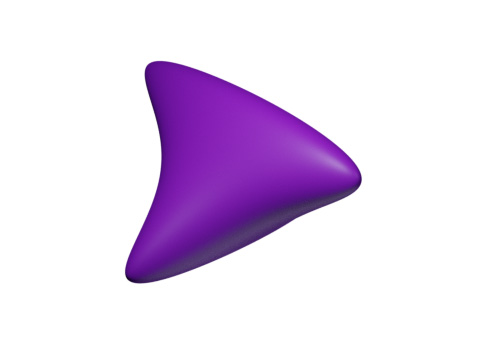}%
\label{Fig:SolExact2}%
}%
\subfloat[Exact ball of radius~$3$.]{%
\includegraphics[width=0.33\textwidth]{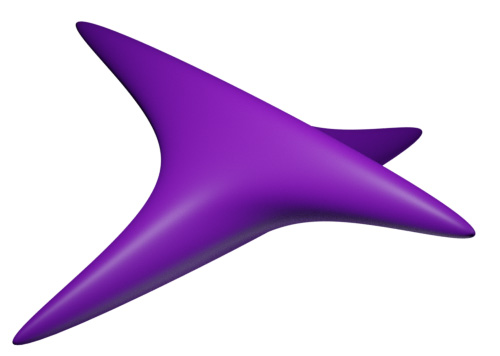}%
\label{Fig:SolExact3}%
}\\
\subfloat[Level set $\sigma = 0.6$.]{%
\includegraphics[width=0.33\textwidth]{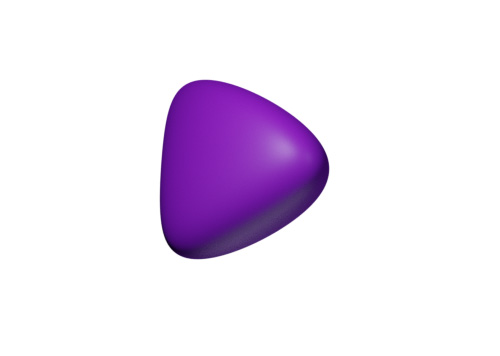}%
\label{Fig:SolFake1}%
}%
\subfloat[Level set $\sigma = 1.34$.]{%
\includegraphics[width=0.33\textwidth]{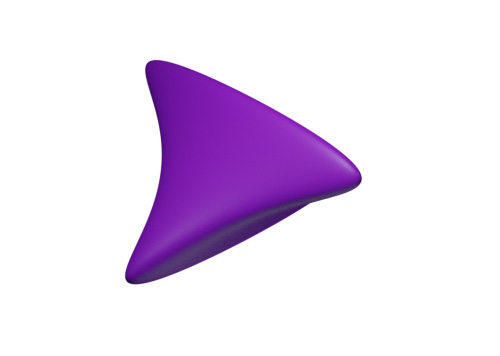}%
\label{Fig:SolFake2}%
}
\subfloat[Level set $\sigma = 2.16.$]{%
\includegraphics[width=0.33\textwidth]{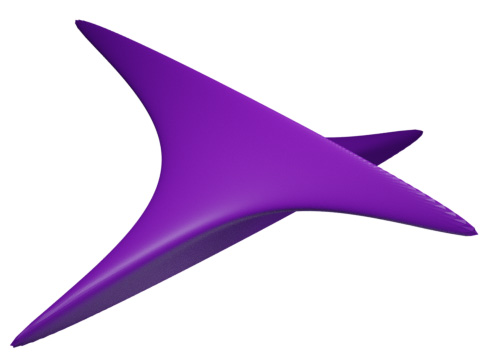}%
\label{Fig:SolFake2}%
}
\caption{
	Extrinsic comparison of exact and pseudo-balls.
	The objects have been rescaled so that they all have approximately the same size. 
}
\label{Fig:pseudosol}
\end{center}
\end{figure}

We can similarly produce ``solid pseudo-cylinders,'' approximating the distance from a point to an orbit of a one-parameter subgroup transverse to a plane (the horizontal plane or either hyperbolic plane). The idea is to move a point under the one-parameter subgroup to put it in the plane, and then calculate a signed distance function there. If distances are difficult to calculate (either theoretically or practically) even when restricted to the plane, then we can cheat further by measuring, say, euclidean distance in the model space.

\reffig{SolFakeCylindersComparison} shows solid pseudo-cylinders around the translates of the $x$- and $y$-axes. This compares well with the exact solid cylinders shown in \reffig{SolExactCylinders}. In \reffig{SolFakeCylinders} we draw solid pseudo-cylinders around the geodesics $x=\pm y$ and their translates. Note that these are the only geodesics contained in the $xy$-plane. In \reffig{SolHypPlanesGrid} we reproduce the two hyperbolic planes of \reffig{HalfSpaceHyp}, represented by grids of solid cylinders. The horocycles in each grid are drawn with exact signed distance functions; for the geodesics we use solid pseudo-cylinders.

\subsection{Direction to a point}
Although it is certainly possible to do so, we did not try to numerically compute the exact direction of geodesics joining two given points in Sol.
Recall that this data is only needed to compute lighting pairs for physically correct illumination as in \refsec{Lighting}.
As we explained in  \refsec{Cheating}, we choose instead a more-or-less arbitrary, continuously varying direction field:
if $s$ is a point of the scene $S$ and $q$ is the position of the light, then we run all the computations in the Phong model as if the direction from $s$ to $q$ were given by the straight line between $s$ and $q$ in the ambient space $\RR^4$ containing our model $X$.

\subsection{Discrete subgroups and fundamental domains}
\label{Sec:SolDiscreteSubgroupsFundDoms}

The classification of Sol manifolds is given in \cite[Theorem~4.17]{Scott}. Every Sol manifold is a surface bundle over a one-dimensional orbifold. In particular,
Sol can be seen as the universal cover of the suspension $M$ of a regular two-torus $T$ by an Anosov homeomorphism.

The fundamental group $\Gamma$ of $M$ provides a lattice in $X$.
We explain here with a concrete example how to construct a fundamental domain $D$ for the action of $\Gamma$ on $X$.

To avoid any confusion, we denote by $[u_1,u_2]$ the coordinates of a point in the universal cover $\RR^2$ of the two-torus $T$.
The fundamental group $\pi_1(T) \cong \ZZ^2$ acts on $\RR^2$ by integer translations.
Let $f$ be the Anosov homeomorphism of $T$ acting on $\RR^2$ as the matrix 
\begin{equation*}
	\left[\begin{array}{cc}
		2 & 1\\
		1 & 1
	\end{array}\right].
\end{equation*}
Let $M$ be the mapping torus of $T$ with monodromy $f$.
Its fundamental group $\Gamma$ is given by the presentation
\begin{equation*}
	\Gamma = \left< A_1,A_2,B \mid [A_1,A_2] = 1, BA_1B^{-1} = A_1^2A_2, BA_2B^{-1}=A_1A_2\right>.
\end{equation*}
Here $A_1$ and $A_2$ are the standard generators of $\ZZ^2$, while the conjugation by $B$ is the automorphism of $\ZZ^2$ induced by $f$.
As in Nil, $\Gamma$ is generated by $A_1$ and $B$.
Nevertheless, is it more convenient to keep three generators, as they correspond to translations in three independent directions.

We identify the universal cover $\cover M$ of $M$ with $\RR^3$, equipped with coordinates $[u_1,u_2,u_3]$. Here the set $\{ u_3 = 0\}$ corresponds to a copy of $\cover T$ inside $\cover M$. The generators $A_1$ and $A_2$ act by translation along $u_1$ and $u_2$, while $B$ translates along $u_3$ and applies $f$ to the orthogonal plane. 

The next step is to identify $X$ with $\cover M$.
Let $b$ be the point of Sol whose coordinates in $X$ are $b = [0,0,\tau,1]$. (The value of $\tau > 0$ will be determined later.)
We require that under our identification, the translation by $b$ in $X$ becomes the action of $B$ on $\cover M$.
Observe that $b$ dilates the $x$-axis while contracting the $y$-axis.
Thus we need to identify the $x$-direction (respectively $y$-direction) of $X$ with the expanding (respectively contracting) direction of $f$.

The matrix defining $f$ has two eigenvalues, namely $\phi^2$ and $\phi^{-2}$, where $\phi = (1 + \sqrt 5)/2$ is the golden ratio.
The corresponding eigenvectors are
\begin{equation*}
	v_+ = [\phi, 1], \quad \text{and}\quad v_- = [-1, \phi].
\end{equation*}
We now define a homeomorphism $h \colon X \to \cover M$ as the restriction to $X$ of the linear map $\RR^4 \to \RR^3$ given by the matrix
\begin{equation*}
	\left[\begin{array}{cccc}
		\phi & -1 & 0 & 0\\
		1 & \phi & 0 & 0 \\
		0 & 0 & \tau^{-1} & 0\\
	\end{array}\right],
\end{equation*}
where we now set $\tau = 2 \ln \phi$.
In addition, we write $a_1$ and $a_2$ for the elements of Sol whose coordinates in $X$ are
\begin{equation*}
	a_1 = \left[\frac \phi{\phi+2},-\frac 1 {\phi+2},0,1\right] \quad \text{and}\quad a_2 = \left[\frac 1 {\phi+2},\frac \phi{\phi+2},0,1\right].
\end{equation*}
It follows from our construction that the map $h$ conjugates the translation by $a_1$ (respectively $a_2$, $b$) in $X$ to the action of $A_1$ (respectively $A_2$, $B$) on $\cover M$.
A fundamental domain $D$ for the action of $\Gamma$ on $X$ is the image under $h^{-1}$ of the cube $[-1/2,1/2]^3 \subset \cover M$. That is, 
\begin{equation*}
	D = \left\{\left. \left[ \frac {u_1\phi + u_2}{\phi+2}, \frac {-u_1 + u_2\phi}{\phi+2}, u_3 \tau, 1 \right] \right| \, u_1,u_2,u_3 \in [-1/2,1/2] \right\}.
\end{equation*}

Our model $X$ for Sol is also a projective model.
The fundamental domain $D$ can be seen as the intersection of a collection of half-spaces $H_1^\pm$, $H_2^\pm$, $H_3^\pm$ as described in \refsec{TeleportingProjective}.
Here
\begin{align*}
	H_1^- & = \left\{ \left. \left[ \frac {u_1\phi + u_2}{\phi+2}, \frac {-u_1 + u_2\phi}{\phi+2}, u_3 \tau, 1 \right] \right|\, u_1 \geq -1/2,\ u_2,u_3 \in \RR\right\}, \\
	H_1^+ & = \left\{ \left. \left[ \frac {u_1\phi + u_2}{\phi+2}, \frac {-u_1 + u_2\phi}{\phi+2}, u_3 \tau, 1 \right] \right|\, u_1 \leq 1/2,\ u_2,u_3 \in \RR\right\}.
\end{align*}
The half-spaces $H_2^\pm$, $H_3^\pm$ are defined in a similar way.
The teleporting algorithm has two main steps.
Let $p = [x,y,z,1]$ be a point in $X$.
\begin{enumerate}
	\item If $p$ does not belong to $H_3^-$ (respectively $H_3^+$), then we move it by $b$ (respectively $b^{-1}$).
	After finitely many steps, the new point $p$ lies in $H_3^- \cap H_3^+$.
	\item Once this is done, if $p$ does not belong to $H_1^-$ (respectively $H_1^+$, $H_2^-$, $H_2^+$), then we translate it by $a_1$ (respectively $a_1^{-1}$, $a_2$, $a_2^{-1}$).
	Note that this does not change the $z$-coordinate of $p$.
	Since $a_1$ and $a_2$ commute, we don't pay attention to the order in which we perform these operations.
	After finitely many steps, the new point $p$ belongs to $D$.
\end{enumerate}

In \reffig{SolWireframeCubes}, we draw a lattice of cubes in a neighborhood of the $xy$-plane. The center of each cube is at a vertex of the tiling of the plane $\cover{T}$ corresponding to the action of the subgroup of $\Gamma$ generated by $A_1$ and $A_2$. \reffig{SolEarthLattice} shows the inside view of an Anosov torus bundle, with a ball textured as the Earth for the scene. \reffig{SolTiling} shows the same manifold, with the complement of three solid pseudo-cylinders around the curves $[t \phi, -t, 0, 1]$, $[t, t \phi, 0, 1]$, and $[0,0,t,1]$ as the scene.

\begin{figure}[htbp]
\centering
\subfloat[Cubes on the $xy$-plane.]{
\includegraphics[width=0.90\textwidth]{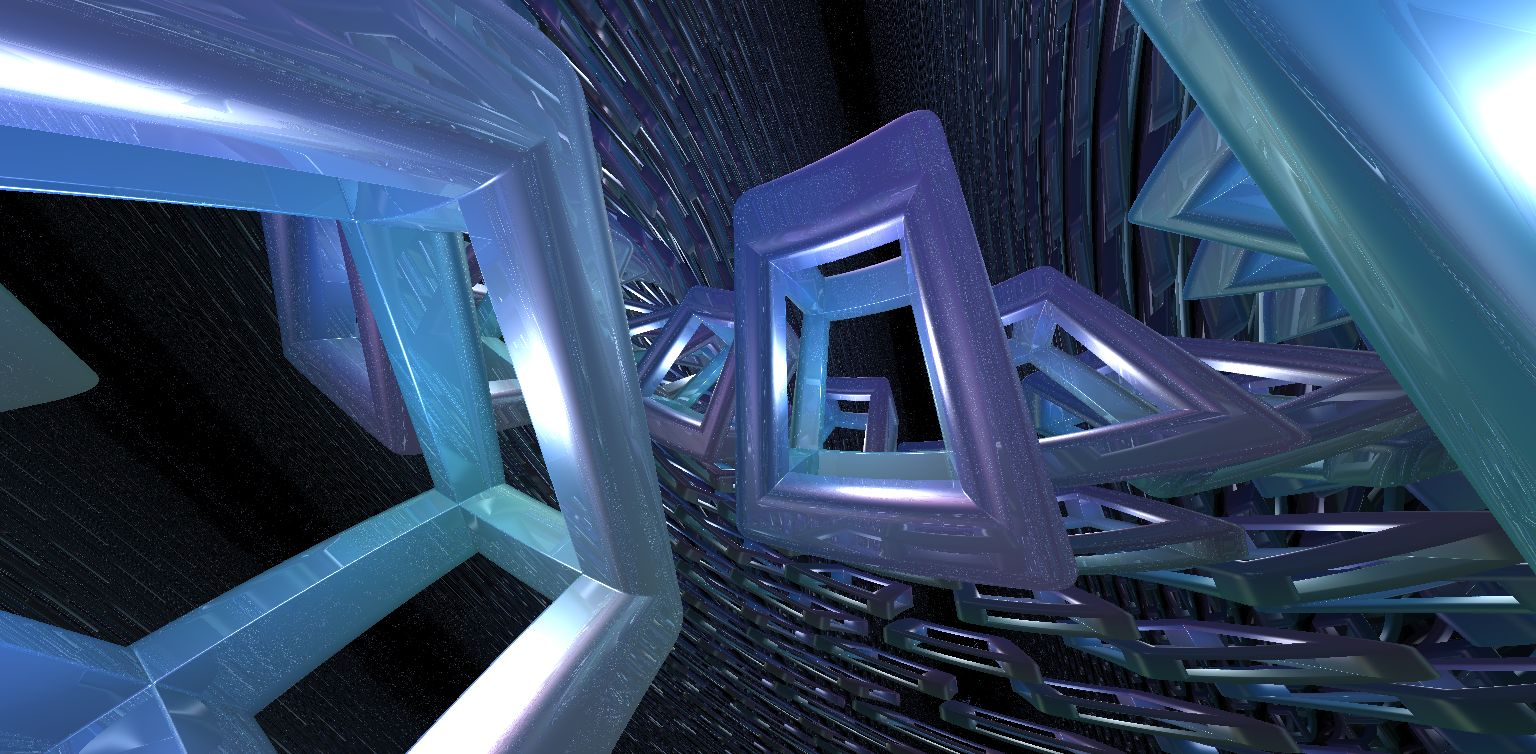}
\label{Fig:SolWireframeCubes}
}\
\subfloat[An Anosov torus bundle.]{
\includegraphics[width=0.90\textwidth]{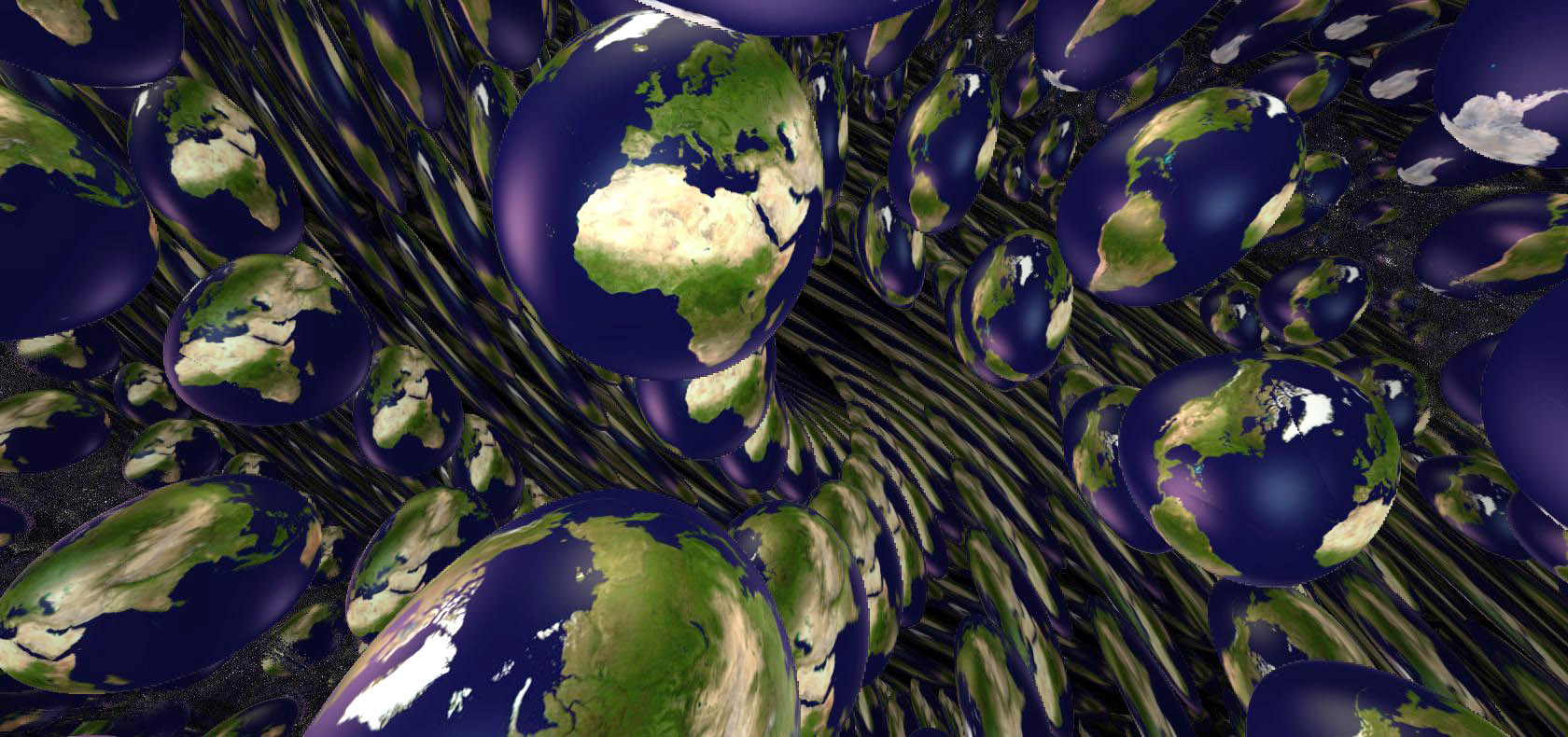}
\label{Fig:SolEarthLattice}
}\
\subfloat[An Anosov torus bundle.]{
\includegraphics[width=0.90\textwidth]{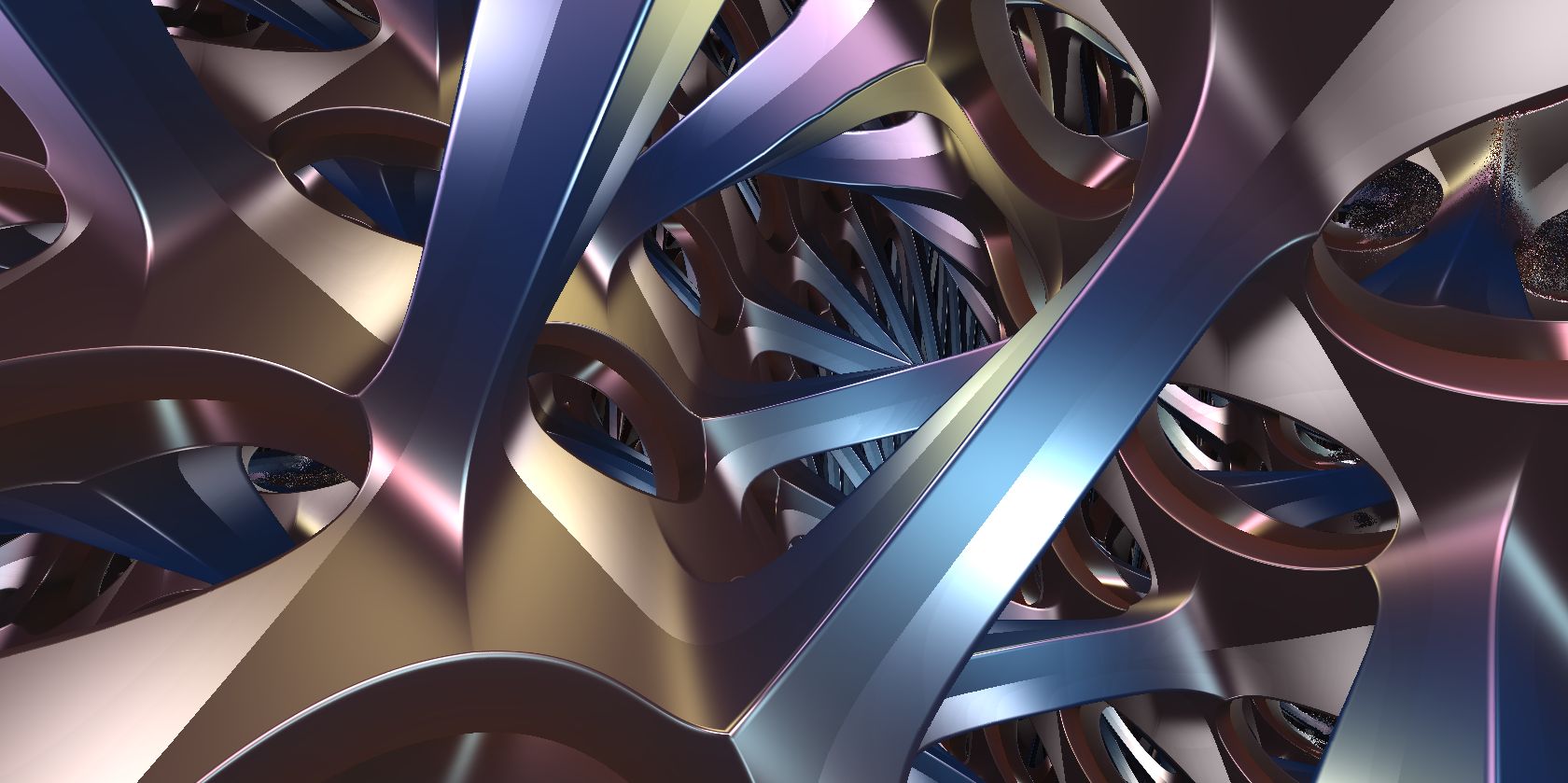}
\label{Fig:SolTiling}
}\
\caption{Sol Geometry.}
\label{Fig:SolExamples}
\end{figure}

\section{Future directions}

\subsection{Virtual reality}

As mentioned in \refsec{Stereoscopic}, there are serious problems that must be addressed before we can use stereoscopic vision to give the user depth cues in a virtual reality experience.

\subsection{Sol}

Some elements of our work are still incomplete for Sol geometry, namely correct lighting, and correct signed distance functions (or even distance underestimators) for balls.
One of the difficulties is that we do not yet have an efficient method to compute the lengths and directions of the geodesics from the origin $o$ to an arbitrary point $p$.

For Nil and $\SLR$, we used the rotation-invariance of our model to build a one-to-one correspondence between those geodesics and the zeros of a function $\phi \to \chi(\phi)$ (depending on $p$), see Sections~\ref{Sec:ExactGeodesicsNil} and \ref{Sec:ExactGeodesicsSLR}.
Since $\chi$ is convex on each interval $I$ where it is defined, Newton's method very efficiently computes its zeros. 
In particular, any value $\phi_0 \in I$ where $\chi(\phi_0) > 0$ can serve as a seed for the algorithm.

The lack of rotation invariance in Sol makes it much harder to implement similar ideas.
One could use a multi-variable Newton's method to find the geodesics from $o$ to $p$.
It is however not obvious where to start the procedure. A deeper analysis of the solutions of the geodesic flow is needed here. 

\subsection{Directed distance underestimators}
For certain scenes it can be difficult to produce the corresponding signed distance function, or even a distance underestimator.
An example is the $xy$-plane in the Nil geometry (\refsec{CreepingNilz=0Plane}).
However, when we are ray-marching along a geodesic $\gamma$, we do not in fact need to know the distance from any point $p \in X$ to the scene, but only the distance to the closest point of the scene lying on $\gamma$.
This leads us to the following definitions. 

\begin{definition}
Given a scene $S \subset X$, the associated \emph{directed signed distance function} $\sigma \colon TX \to \RR$ is a map characterized as follows.
Let $v \in T_pX$ be a tangent vector at $p$.
Let $\gamma$ be the geodesic starting at $p$ in the direction $v$.
\begin{itemize}
	\item If $p$ does not belong to $S$, then $\sigma(v)$ is the distance from $p$ to the closest point of $S$ on $\gamma$.
	\item If $p$ is in $S$, then $-\sigma(v)$ is the distance from $p$ to the closest point of $X \setminus S$ on $\gamma$. \qedhere
\end{itemize}
\end{definition}

Such a function is a priori also very hard to obtain.
Indeed it means that we can compute the intersection of any geodesic with our scene; this is precisely the data required for ray-tracing.
Nevertheless, as in \refsec{DistanceUnderestimators}, we can perform ray-marching using an underestimator that takes as its input a tangent vector to a ray. 

\begin{definition}
A \emph{directed distance underestimator} for the scene $S$ is a map $\sigma' \colon TX \to \RR$ such that 
\begin{enumerate}
\item The signs of $\sigma'(v)$ and $\sigma(v)$ are the same for all points $v \in TX$, 
\item $ |\sigma'(v)| \leq |\sigma(v)|$ for all $v\in TX$, and 
\item If $\{v_1, v_2, \ldots\}$ is a sequence of points in $TX$ such that $\sigma'(v_n)$ converges to zero, then so does $\sigma(v_n)$. \qedhere
\end{enumerate}
\end{definition}
Ray-marching with such a directed distance underestimator will produce the same pictures as ray-marching with an undirected signed distance function. These, in some sense, bridge the gap between ray-tracing and undirected ray-marching.
With directed distance underestimators, we expect to expand the collection of scenes that we can render.

Directed distance underestimators may also help improve efficiency.
When using a standard signed distance function (or distance underestimator), the length of the steps becomes very small as a geodesic ray passes very close to the scene without hitting it. 
If the maximal number of steps for the algorithm is not large enough, this creates background-colored halos around objects.
With a directed distance underestimator, we can hope that the length of the steps in this situation will be larger, thus making the algorithm converge faster.

 \subsection{Non-maximal homogeneous riemannian geometries}
 Recall that the transitive action of a Lie group $G$ on a manifold $X$ determines a homogeneous geometry. To be a Thurston geometry, a homogeneous geometry must satisfy four additional restrictions, see \refsec{ThurstonGeometries}.
 The first two of these conditions, having $X$ simply connected and $G$ act with compact point stabilizer, define a \emph{riemannian homogeneous space}. (For a complete classification of three-dimensional riemannian homogeneous spaces, see \cite{patrangenaru1996}.)
 These two conditions greatly simplify calculations of the geodesic flow, parallel transport, and more.  
 The second two conditions restrict to those needed for geometrization. However, we do not need these conditions anywhere in our ray-marching algorithms. There are many interesting geometries satisfying only the first two conditions that could be visualized in a similar fashion. Celi\'nska-Kopczy\'nska and Kopczy\'nski have begun to investigate visualizations of one-parameter spaces of metrics of this kind on the three-sphere.
 
\subsection{Homogeneous pseudo-riemannian \& lorentzian geometries}
 
Generalizing riemannian geometry, a \emph{pseudo-riemannian manifold} is a manifold $M$ together with a choice of (not necessarily positive definite) nondegenerate bilinear form on each tangent space.
When the bilinear form is not positive definite, the existence of null vectors (nonzero $v\in T_pM$ with $\langle v,v\rangle=0$) makes these spaces difficult to interpret visually (although see \cite{Egan17} for a literary interpretation).
 However, there is one class of pseudo-riemannian manifolds for which there is a clear interpretation of what the intrinsic view looks like:  lorentzian manifolds.
 These have bilinear forms of signature $(n-1,1)$ and are the basic models of space-time in relativistic physics.
 
In relativity, light travels along the null geodesics (geodesics with null tangents) in a lorentzian manifold, and so the intrinsic view may be computed by ray-marching starting with the lightcone of null vectors in the tangent space of the viewer.
In the real world, we see light that travels along null geodesics in a lorentzian four-manifold. Arguably, then, it is more natural to consider the inside view of a lorentzian four-manifold rather than of a riemannian three-manifold. 
 
 The natural starting place is flat space-time: the Minkowski space $\RR^{3,1}$.
 Ray-marching along lightcones in this geometry provides a method of simulating the inside view in special relativity.
Previous visualization work in special relativity includes \cite{Savage07, McGrath10, Muller10specialrelativistic, Sherin16}.
 Generalizing to homogeneous space-times of constant curvature, one could produce intrinsic simulations of de Sitter and anti-de Sitter space-time.
Many of the methods described in \refsec{General} can be adapted to this setting.
 All three of these have natural projective models in $\RR^5$, and explicit descriptions for their null geodesics and isometry groups are well known (see for example, \cite{Sokolowski6} and \cite{Kim:2002uz}).

Beyond these, the classification of general lorentzian homogenous four-manifolds has been completed \cite{Calvaruso14}, although it is more complex than the case of riemannian three-manifolds discussed above.
In all such manifolds we may use analogs of the techniques introduced in \refsec{General} to simplify computations.

\subsection{Non homogeneous geometries} 
Giving up on symmetry, there are many non-homogeneous riemannian and lorentzian manifolds for which intrinsic views may prove useful.
Examples include watching a three-manifold evolve under the Ricci flow, analyzing collapsing space-times, or space-times with singularities (black holes).
In most cases, the lack of symmetry forces us to use numeric solutions for the geodesic flow. However, there are also interesting non-homogeneous spaces with exactly solvable geodesic flow. These include the matrix group $\textrm{SL}(2,\RR)$ with the metric it inherits from the $2\!\times\!2$ matrices $\mathcal M_{2,2}(\RR) \cong \RR^4$ as a hypersurface.
However, these spaces all present considerable difficulties for the methods outlined in \refsec{General}, and will require more work.
 
\appendix
\section{Comparison between methods to integrate the geodesic flow}
\label{Sec:CompareFlowAlgorithms}

In this work, whenever possible we have avoided numerical methods for following geodesics and have instead exploited explicit solutions of the geodesic flow.
This allows us to quickly and accurately ray-march long distances, and thus render scenes with distant objects \cite{NEVR3, NEVR4}.
To support our choice, we ran some numerical experiments.
We explain our protocol below.

\begin{remark}
\label{Rem:ExperimentSetup}
We do not claim to give a comprehensive and rigorous comparison of the various methods to integrate the geodesic flow. 
The computations here are made in Python (using Numpy long double floats) on a standard desktop computer. We do not use the GPU, and no parallel computing is involved. Nevertheless, we can use these experiments to compare the relative efficiency of the algorithms.
\end{remark}

\subsection{Experimental protocol}
Let $(G,X)$ be one of the Thurston geometries.
We fix an integer $N \in \NN$ and a time $t \in \RR_+$.
We compare four methods: using exact formulas, Euler's method, and the Runge--Kutta methods of order two and four. For the numerical methods, we also compare different step sizes $\Delta t$.

We first generate a list $V$ of $N$ unit tangent vectors at the origin $o \in X$, chosen uniformly and independently at random. 
For each experiment $\calE$ in each of the Tables~\ref{Tab:compare Nil6},~\ref{Tab:compare Nil10}, \ref{Tab:compare SLR6}, and~\ref{Tab:compare SLR10}, we fix a method and (for the numerical methods) a time-step.  
We then do the following computations.
\begin{itemize}
	\item For each direction $v \in V$, we flow from $o$ for time $t$ and record the final position.
	This yields a list $Q_\calE$ of $N$ points.
	We also record the time needed to compute $Q_\calE$.
	\item Next, for each $q_\calE \in Q_\calE$, we measure the error of $q_\calE$ with respect to the exact flow.
		We discuss our choice of error measurements in \refsec{MeasuringErrors}.
	\item Finally, we compute the maximal and mean errors for the set $Q_\calE$.
\end{itemize}

\subsection{Measuring errors}
\label{Sec:MeasuringErrors}
We calculate two different measures of error.
Fixing notation, let $v \in V$ be one element in our collection of random tangent vectors and $q_\calE \in Q_\calE$ be the point obtained by following the geodesic flow starting at $o$ in the direction of $v$ in  for time $t$ in the experiment $\calE$.

\subsubsection{Distance error} We compute the coordinates of the point $q$ obtained by following the geodesic flow starting at $o$ in the direction of $v$ in  for time $t$ using the \emph{exact formulas}.
\begin{definition}
The \emph{distance error} is the distance in the metric of $X$ between $q_\calE$ and $q$.
\end{definition}

\begin{remark}
One should worry about how accurate our computer's implementation of the exact formulas is. As mentioned in \refrem{ExperimentSetup}, we use NumPy for all of our calculations here, and long doubles, giving us around 19 decimal digits of accuracy. While we have not looked into the actual implementations of the functions we use, we would certainly hope that these implementations lose at most one or two digits of accuracy on each operation. Of course the results of these functions then need to be combined, which compounds the errors. Without using interval arithmetic, it is hard to say how accurate our final results are. However, as we will see in our experiments, with small values of $t$ and small step size $\Delta t$, our exact calculation matches Runge--Kutta of order four ($\Delta t = 0.01$) up to a distance error of around $10^{-9}$ at worst. This provides evidence that our implementation of the exact formulas are at least this accurate in comparison with the true values, since the exact and Runge--Kutta methods take very different routes to their results.
\end{remark}

The distance error is natural, but the results are sometimes difficult to interpret because of our lack of intuition in those geometries. Moreover, our eyes place far more importance in which direction one sees an object in, over how far away it is. If we have an error in distance, then perhaps at worst the effect of fog is slightly incorrect. An error in direction could cause us to see objects in the wrong place, or distorted in some way. To better measure this, we introduce our second error measurement.

\subsubsection{Angle error}
We compute the tangent vector $v'$  so that the \emph{exact} geodesic flow starting from $o$ in the direction of $v'$ hits the point $q_\calE$. When there are multiple such tangent vectors $v'$, we choose the one which is closest (in angle) to $v$.
\begin{definition}
The \emph{angle error} is the angle between $v$ and $v'$.
\end{definition}

Following our goal of producing accurate images in \refsec{Goals}, it is reasonable to require that each pixel of our screen be colored according to an object that should be visible through that pixel. Therefore, given the resolution of our screen and a desired field of view, one can calculate a maximum acceptable angle error, as follows.

\begin{figure}[htbp]

\labellist
\footnotesize
\pinlabel $1$ [c] at 240 262
\pinlabel $1$ [c] at 414 146
\pinlabel $\beta/2$ [l] at 102 157
\pinlabel $\alpha$ at 188 128
\pinlabel $\Delta\alpha$ [c] at 170 105
\pinlabel $-f_3$ [l] at 136 154
\pinlabel $v$ [l] at 142 131
\pinlabel $v'$ [l] at 115 97
\pinlabel $m$ [l] at 275 96
\pinlabel $m'$ [l] at 240 62
\pinlabel $\Delta m$ [br] at 253 75
\endlabellist

\includegraphics[width=0.9\textwidth]{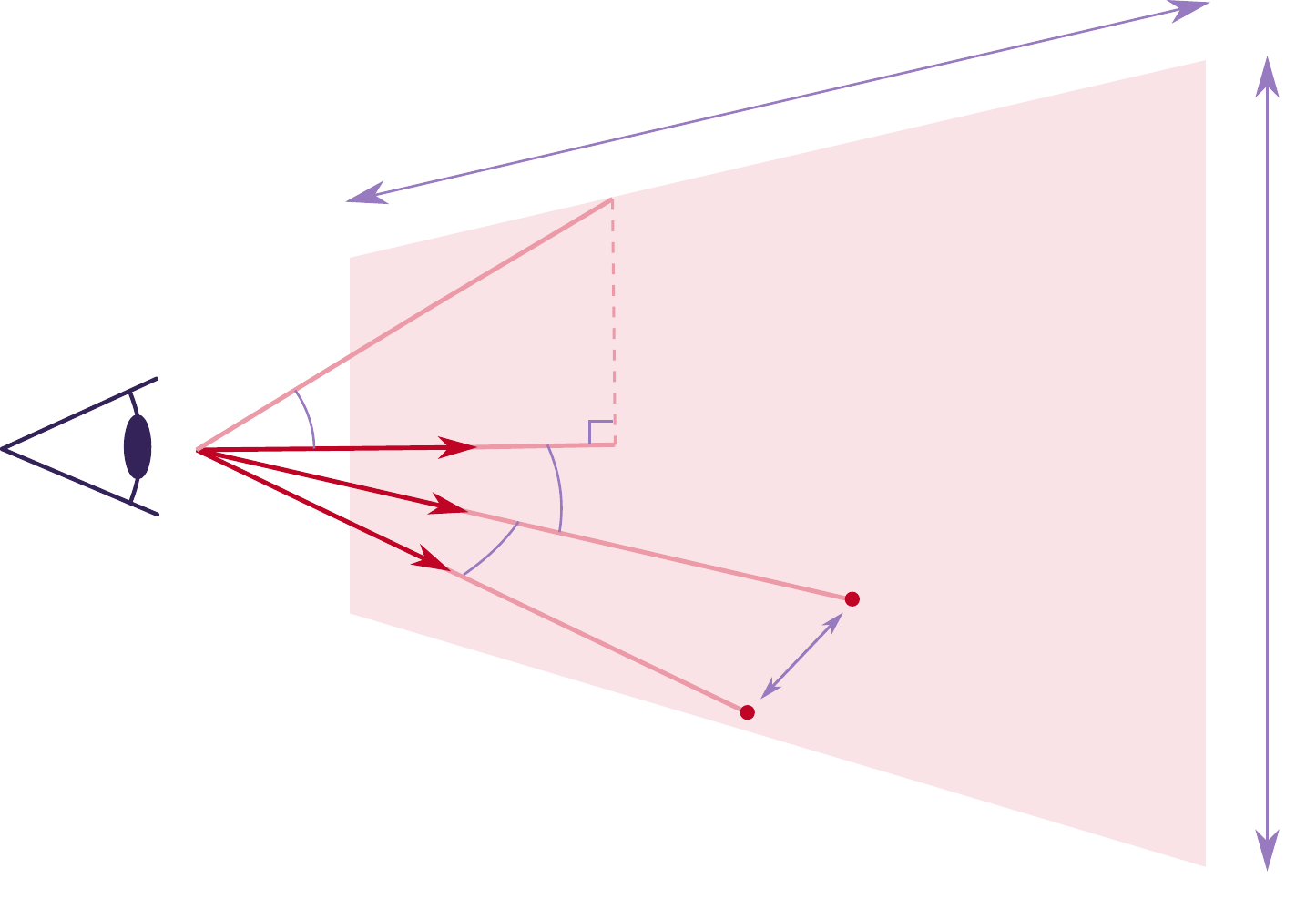}
\caption{Relation between angle error and resolution.}
\label{Fig:Screen}
\end{figure}

Let $m$ and $m'$ be the locations on our screen corresponding to the directions $v$ and $v'$. See \reffig{Screen}.
Suppose that the width of the screen is one unit.
The distance between $m$ and $m'$ can be estimated as follows.

Assume that the field of view is $\beta$.
Let $\alpha$ be the angle between $v$ and the vector $-f_3$ pointing forwards, and let $\Delta\alpha$ be the angle between $v$ and $v'$.
Then the distance $\dist (m, m')$ is at most
\begin{equation*}
	\frac { \left|\tan(\alpha + \Delta\alpha) - \tan (\alpha) \right|}{2 \tan (\beta/2)}.
\end{equation*}
For a fixed angle error $\Delta \alpha$, this quantity is the largest when $m$ is on the border of the screen, that is when $\alpha = \beta/2$.
Hence the worst error $\Delta m$ for $m$ is related to $\Delta \alpha$ by 
\begin{equation*}
	\tan(\Delta \alpha) = \frac{ \Delta m\sin \beta}{ 1+ 2 \Delta m\sin^2(\beta / 2) }
\end{equation*}
For the picture on the screen to be accurate, we need $\Delta m$ to be less than half the width of a pixel.
For example, fixing the field of view at $\beta = 100\degree$, the maximum acceptable angle error (in degrees) is 
\begin{itemize}
	\item $\Delta \alpha \approx$ \texttt{3e-02} to produce a $1000 \times 1000$ pixel image,
	\item $\Delta \alpha \approx$ \texttt{6e-03} to produce a $5000 \times 5000$ pixel image.
\end{itemize}

\begin{figure}[htbp]

\labellist
\normalsize
\pinlabel \parbox{4.5cm}{\footnotesize Set of points joined to $o$ by at least two geodesics} [l] at 650 396
\pinlabel \parbox{4.5cm}{\footnotesize Set of points joined to $o$ by exactly one geodesic} [l] at 650 287
\pinlabel \parbox{4.25cm}{\footnotesize Numerical path with initial direction $v$} [l] at 650 196
\pinlabel \parbox{4.25cm}{\footnotesize Exact geodesic with initial direction $v$} [l] at 650 116
\pinlabel \parbox{4.5cm}{\footnotesize Exact (minimizing) geodesics} [l] at 650 50

\pinlabel $o$ [c] at 225 63
\pinlabel $q$ [c] at 448 363
\pinlabel $q_\calE$ [c] at 390 254
\pinlabel $v$ [c] at 137 20
\pinlabel $v'$ [c] at 305 97
\pinlabel $w$ [c] at 325 30
\endlabellist

\includegraphics[width=\textwidth]{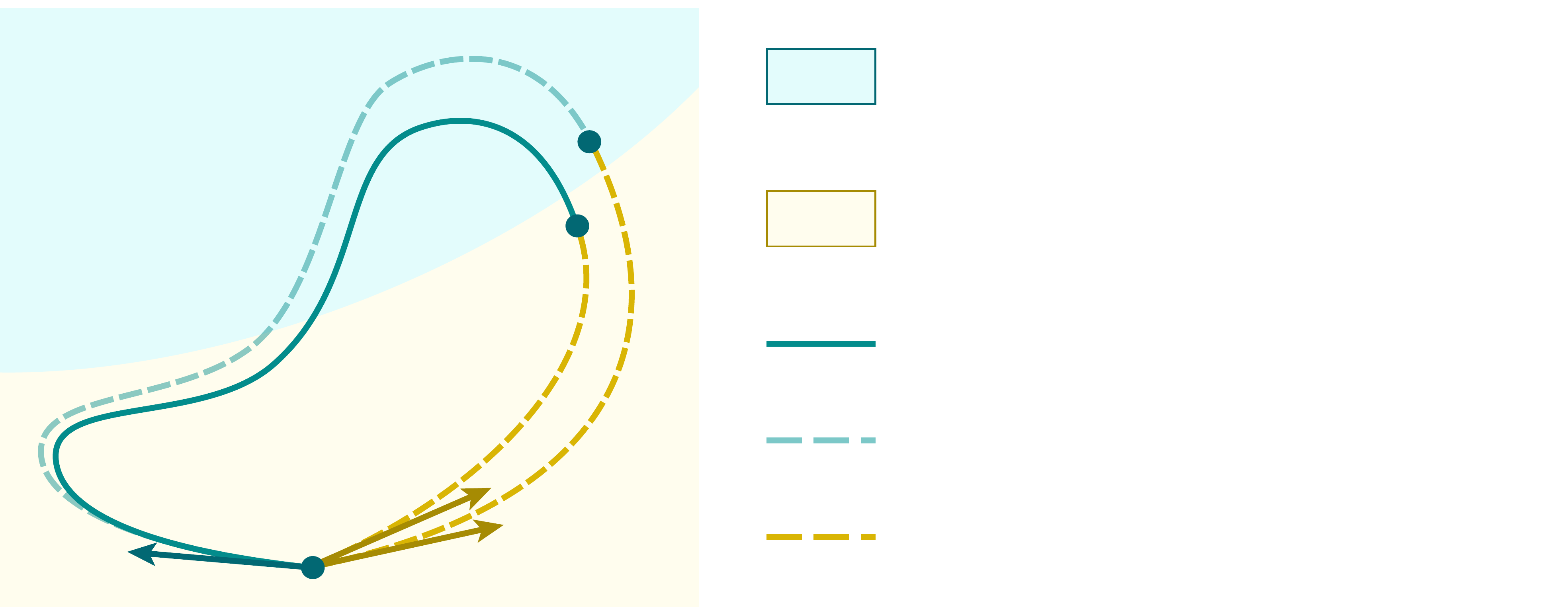}
\caption{A schematic picture of an exceptional sample.}
\label{Fig:Experiment}
\end{figure}

\begin{remark}
\label{Rem:ExceptionalPoints}
When following the geodesic flow for a time $t$ which is smaller than the injectivity radius of the geometry $X$, there is only one exact geodesic joining $o$ to $q_\calE$.
Here there is no choice in the definition of $v'$.
For longer flow times, a new phenomenon arises.
As usual, we numerically compute the path starting at $o$ in the direction $v$ and reach the point $q_\calE$. This path approximates the exact geodesic ray starting at $o$ in the direction $v$, which reaches the point $q$. See \reffig{Experiment}.
Suppose that multiple exact geodesics join $o$ to $q$, so that in addition to the direction $v$, we can also reach $q$ along a geodesic with starting direction $w$.
Suppose also that due to numerical errors, only one exact geodesic joins $o$ to $q_\calE$. The vector $v'$ is then the only possible initial direction pointing from $o$ to $q_\calE$. 
It will be close to one of $v$ and $w$, but it may be close to the wrong one: $w$.
Thus, the angle between $v$ and $v'$ can be very large.
However, the visual effect of this error will be indistinguishable from similar inaccuracies with the same distance error.

This situation is very rare: our numerical path must approximate a non-minimizing geodesic, with endpoint  $q_\calE$ close to the boundary of one of the sets 
\[
X_n = \{ x \in X \mid x \textrm{ is joined to $o$ by $n$ geodesics} \}
\]
\noindent
For the Thurston geometries, these boundaries form a zero-measure set.

In our results, we indicate the number of points for which the angle between $v$ and $v'$ is more than a large threshold, for example $40\degree$. These cases correspond to the situation described above.
We compute the maximal and mean errors excluding these exceptional samples.
\end{remark}

\subsection{Results.}
We carried out our protocol for Nil and $\SLR$. 
We computed the distance and angle errors using the numerical methods described in Sections~\ref{Sec:ExactGeodesicsNil} and \ref{Sec:ExactGeodesicsSLR} respectively. See 
Tables~\ref{Tab:compare Nil6},~\ref{Tab:compare Nil10}, \ref{Tab:compare SLR6}, and~\ref{Tab:compare SLR10}.
We made sure that the errors coming from use of Newton's method to calculate $v'$ are negligible compared to the results.
We ran the experiments for time $t = 6$ and $t = 10$. Note that $6$ is less than the injectivity radius ($2\pi$ for both Nil and $\SLR$).

\begin{table}[htp]
\scriptsize
\renewcommand{\arraystretch}{1.2}
\begin{tabular}{|l|l||r|l|l|r|l|l|}\hline
	\bfseries	Method
		& \bfseries  $\Delta t$
		& \bfseries \rotatebox[origin=c]{90}{\parbox{3cm}{\centering Time needed\\ (in~s.)}} 
		& \bfseries  \rotatebox[origin=c]{90}{\parbox{3.1cm}{\centering Maximal  distance error}}
		& \bfseries \rotatebox[origin=c]{90}{\parbox{3cm}{\centering Mean distance error}} 
		& \bfseries \rotatebox[origin=c]{90}{\parbox{3cm}{\centering Number of directions off ($\Delta\alpha > 60\degree$)}} 
		& \bfseries \rotatebox[origin=c]{90}{\parbox{3cm}{\centering Maximal  angle error (in~$\degree$)}} 
		& \bfseries \rotatebox[origin=c]{90}{\parbox{3cm}{\centering Mean  angle error (in~$\degree$)}} 
		\\\hline\hline
	Exact flow &  -- & 0.2 & -- & -- & --&--&--\\ \hline
	Euler & 0.1 & 12.0 & \texttt{8.4e-01} & \texttt{5.2e-01} & 0 & \texttt{8.7e+00} & \texttt{4.4e+00} \\ \hline
	Euler  & 0.01 & 115.3 & \texttt{8.6e-02} & \texttt{5.2e-02} & 0 & \texttt{9.4e-01} & \texttt{4.8e-01}\\ \hline
	Runge--Kutta 2  & 0.1 & 17.8 & \texttt{9.3e-03} & \texttt{5.0e-03} & 0 & \texttt{1.7e-01} & \texttt{3.1e-02}\\ \hline
	Runge--Kutta 2  & 0.01 & 176.0 & \texttt{9.6e-05} & \texttt{5.1e-05} & 0 & \texttt{1.7e-03} & \texttt{3.0e-04} \\ \hline
	Runge--Kutta 4 & 0.1 & 32.4 & \texttt{8.2e-06} & \texttt{4.2e-06} & 0 & \texttt{8.1e-05} & \texttt{1.8e-05}\\ \hline
	Runge--Kutta 4  & 0.01 & 323.3 & \texttt{8.2e-10} & \texttt{4.2e-10} & 0 & \texttt{2.7e-08} & \texttt{1.2e-09}\\ \hline
\end{tabular}
\renewcommand{\arraystretch}{1}

\medskip
\caption{Integrating the geodesic flow in Nil. Computation made with $N =  10,000$ and $t = 6$.}
\label{Tab:compare Nil6}
\end{table}

\begin{table}[htp]
\scriptsize
\renewcommand{\arraystretch}{1.2}
\begin{tabular}{|l|l||r|l|l|r|l|l|}\hline
	\bfseries	Method
		& \bfseries  $\Delta t$
		& \bfseries \rotatebox[origin=c]{90}{\parbox{3cm}{\centering Time needed \\ (in s.)}} 
		& \bfseries  \rotatebox[origin=c]{90}{\parbox{3.1cm}{\centering Maximal  distance error}}
		& \bfseries \rotatebox[origin=c]{90}{\parbox{3cm}{\centering Mean distance error}} 
		& \bfseries \rotatebox[origin=c]{90}{\parbox{3cm}{\centering Number of directions off ($\Delta\alpha > 60\degree$)}} 
		& \bfseries \rotatebox[origin=c]{90}{\parbox{3cm}{\centering Maximal  angle error (in~$\degree$)}} 
		& \bfseries \rotatebox[origin=c]{90}{\parbox{3cm}{\centering Mean  angle error (in~$\degree$)}} 
		\\\hline\hline
	Exact flow &  -- & 0.3 & -- & -- & --&--&--\\ \hline
	Euler & 0.1 & 20.4 & \texttt{3.8e+00} & \texttt{2.2e+00} & 690 & \texttt{6.0e+01} & \texttt{1.4e+01} \\ \hline
	Euler  & 0.01 & 191.8 & \texttt{4.3e-01} & \texttt{2.2e-01} & 71 & \texttt{5.3e+01} & \texttt{2.4e+00}\\ \hline
	Runge--Kutta 2  & 0.1 & 30.0 & \texttt{3.1e-02} & \texttt{1.4e-02} & 186 & \texttt{2.5e+00} & \texttt{7.5e-02}\\ \hline
	Runge--Kutta 2  & 0.01 & 297.0 & \texttt{3.3e-04} & \texttt{1.4e-04} & 19 & \texttt{1.5e-01} & \texttt{9.8e-04} \\ \hline
	Runge--Kutta 4 & 0.1  & 54.7 & \texttt{2.8e-05} & \texttt{1.2e-05} & 0 & \texttt{2.0e-02} & \texttt{5.7e-05}\\ \hline
	Runge--Kutta 4  & 0.01 & 540.1 & \texttt{2.8e-09} & \texttt{1.2e-09} & 0 & \texttt{3.5e-06} & \texttt{4.5e-09}\\ \hline
\end{tabular}
\renewcommand{\arraystretch}{1}

\medskip
\caption{Integrating the geodesic flow in Nil. Computation made with $N =  10,000$ and $t = 10$.}
\label{Tab:compare Nil10}
\end{table}

\begin{table}[htp]
\scriptsize
\renewcommand{\arraystretch}{1.2}
\begin{tabular}{|l|l||r|l|l|r|l|l|}\hline
	\bfseries	Method
		& \bfseries  $\Delta t$
		& \bfseries \rotatebox[origin=c]{90}{\parbox{3cm}{\centering Time needed (in s.)}} 
		& \bfseries  \rotatebox[origin=c]{90}{\parbox{3.1cm}{\centering Maximal  distance error}}
		& \bfseries \rotatebox[origin=c]{90}{\parbox{3cm}{\centering Mean distance error}} 
		& \bfseries \rotatebox[origin=c]{90}{\parbox{3cm}{\centering Number of directions off ($\Delta\alpha > 40\degree$)}} 
		& \bfseries \rotatebox[origin=c]{90}{\parbox{3cm}{\centering Maximal  angle error (in~$\degree$)}} 
		& \bfseries \rotatebox[origin=c]{90}{\parbox{3cm}{\centering Mean  angle error (in~$\degree$)}} 
		\\\hline\hline
	Exact flow &  -- & 0.8 & -- & -- & --&--&--\\ \hline
	Euler & 0.1 & 33.2 & \texttt{3.7e+00} & \texttt{2.7e+00} & 0 & \texttt{5.2e+01}	& \texttt{5.4e+00}\\ \hline
	Euler  & 0.01 & 325.1 & \texttt{6.6e-01} & \texttt{3.9e-01} & 0 & \texttt{2.7e+00} & \texttt{7.1e-01}\\ \hline
	Runge--Kutta 2  & 0.1 & 49.0 & \texttt{4.8e-02} & \texttt{2.8e-02} & 0 & \texttt{1.0e+00} & \texttt{1.3e-01}\\ \hline
	Runge--Kutta 2  & 0.01 & 485.5 & \texttt{6.6e-04} & \texttt{3.6e-04} & 0 & \texttt{9.4e-03} & \texttt{1.2e-03} \\ \hline
	Runge--Kutta 4 & 0.1 & 83.9 & \texttt{4.1e-05} & \texttt{2.4e-05} & 0 & \texttt{2.0e-03} & \texttt{1.8e-04}\\ \hline
	Runge--Kutta 4  & 0.01 & 817.5 & \texttt{4.7e-09} & \texttt{2.3e-09} & 0 & \texttt{1.9e-07} & \texttt{1.3e-08}\\ \hline
\end{tabular}
\renewcommand{\arraystretch}{1}

\medskip
\caption{Integrating the geodesic flow in $\SLR$. Computation made with $N =  10,000$ and $t = 6$.}
\label{Tab:compare SLR6}
\end{table}

\begin{table}[htp]
\scriptsize
\renewcommand{\arraystretch}{1.2}
\begin{tabular}{|l|l||r|l|l|r|l|l|}\hline
	\bfseries	Method
		& \bfseries  $\Delta t$
		& \bfseries \rotatebox[origin=c]{90}{\parbox{3cm}{\centering Time needed (in s.)}} 
		& \bfseries  \rotatebox[origin=c]{90}{\parbox{3.1cm}{\centering Maximal  distance error}}
		& \bfseries \rotatebox[origin=c]{90}{\parbox{3cm}{\centering Mean distance error}} 
		& \bfseries \rotatebox[origin=c]{90}{\parbox{3cm}{\centering Number of directions off ($\Delta\alpha > 40\degree$)}} 
		& \bfseries \rotatebox[origin=c]{90}{\parbox{3cm}{\centering Maximal  angle error (in~$\degree$)}} 
		& \bfseries \rotatebox[origin=c]{90}{\parbox{3cm}{\centering Mean  angle error (in~$\degree$)}} 
		\\\hline\hline
	Exact flow &  -- & 0.7 & -- & -- & --&--&--\\ \hline
	Euler & 0.1 & 53.7 & \texttt{1.1e+01} & \texttt{7.9e+00} & 594 & \texttt{4.0e+01} & \texttt{8.3e+00} \\ \hline
	Euler  & 0.01 & 523.3 & \texttt{6.2e+00} & \texttt{3.6e+00} & 74 & \texttt{3.7e+01} & \texttt{2.2e+00}\\ \hline
	Runge--Kutta 2  & 0.1 & 78.5 & \texttt{9.0e-01} & \texttt{3.3e-01} & 180 & \texttt{4.0e+01} & \texttt{5.9e-01}\\ \hline
	Runge--Kutta 2  & 0.01 & 775.1 & \texttt{1.3e-02} & \texttt{4.7e-03} & 25 & \texttt{3.4e-01} & \texttt{2.9e-03} \\ \hline
	Runge--Kutta 4 & 0.1 & 133.1 & \texttt{8.6e-04} & \texttt{3.6e-04} & 0 & \texttt{2.8e-01} & \texttt{9.1e-04} \\ \hline
	Runge--Kutta 4  & 0.01 & 1,316.9 & \texttt{7.4e-08} & \texttt{3.1e-08} & 0 & \texttt{1.1e-04} & \texttt{7.5e-08}\\ \hline
\end{tabular}
\renewcommand{\arraystretch}{1}

\medskip
\caption{Integrating the geodesic flow in $\SLR$. Computation made with $N =  10,000$ and $t = 10$.}
\label{Tab:compare SLR10}
\end{table}

\subsection{Discussion}

The maximal angle errors for Euler's method do not produce accurate $1000 \times 1000$ pixel images with field of view $100\degree$, even for small flow time ($t = 6$) and small time step ($\Delta t = 0.01$).
The Runge--Kutta method of order two is accurate enough for small distance only (and sometimes only for smaller step like $\Delta t = 0.01$).
For medium distances ($t = 10$), in $\SLR$ only the Runge--Kutta method of order four with step $\Delta t= 0.01$ meets our criterion.
The Runge--Kutta method of order four is also the only one that does not produce exceptional points in the sense of \refrem{ExceptionalPoints}.

In terms of the time needed to run the computations, the exact method is superior to the numerical ones. 
Lookup tables may be precomputed to avoid long calculation times, although one should then also worry about inaccuracies introduced by interpolation.

\vspace{-5pt}

\newcommand{\etalchar}[1]{$^{#1}$}
\providecommand{\bysame}{\leavevmode\hbox to3em{\hrulefill}\thinspace}
\providecommand{\MR}{\relax\ifhmode\unskip\space\fi MR }
\providecommand{\MRhref}[2]{%
  \href{http://www.ams.org/mathscinet-getitem?mr=#1}{#2}
}
\providecommand{\href}[2]{#2}

\vspace{10pt}
\noindent
\emph{R\'emi Coulon} \\
Univ Rennes, CNRS \\
IRMAR - UMR 6625 \\
F-35000 Rennes, France\\
\texttt{remi.coulon@univ-rennes1.fr} \\
\url{http://rcoulon.perso.math.cnrs.fr}\\

\noindent
\emph{Elisabetta A. Matsumoto} \\
School of Physics \\
Georgia Institute of Technology \\
837 State Street, 
Atlanta, GA, 30332, USA \\
\texttt{sabetta@gatech.edu} \\
\url{http://matsumoto.gatech.edu}\\

\noindent
\emph{Henry Segerman} \\
Department of Mathematics \\
Oklahoma State University \\
Stillwater, OK, 74078, USA \\
\texttt{segerman@math.okstate.edu} \\
\url{https://math.okstate.edu/people/segerman/}\\

\noindent
\emph{Steve J. Trettel} \\
Stanford University\\
450 Jane Stanford Way,\\
 Stanford, CA 94305\\
\texttt{trettel@stanford.edu} \\
\url{http://stevejtrettel.site}

\end{document}